\newtheorem{thm}{Theorem}[section]
\newtheorem{lemma}[thm]{Lemma}
\newtheorem{cor}[thm]{Corollary}
\newtheorem{prop}[thm]{Proposition}
\newtheorem{introthm}{Theorem}
\theoremstyle{remark}
\newtheorem{rk}[thm]{Remark}
\newtheorem{constr}[thm]{Construction}
\newtheorem{warn}[thm]{Warning}
\newtheorem{ex}[thm]{Example}
\newtheorem{nex}[thm]{Non-example}
\newtheorem*{claim*}{Claim}
\renewcommand\qedsymbol{$\triangle$}}
\theoremstyle{definition}
\newtheorem{defi}[thm]{Definition}
\numberwithin{equation}{section}
\newcommand{\Comm}{\mathscr C\!\mathit{om}}
\newcommand{\iP}{\amalg}
\newcommand{\cat}[1]{\textbf{\textup{#1}}}
\newcommand{\glo}[1]{\edef\arg{#1}\edef\brg{GlobalSpectra}\edef\crg{GlobalSpaces}\edef\drg{NUCA}\edef\erg{Mod}\edef\frg{Comm}\underline{\ifx\arg\brg\mathscr S\!p\else\ifx\arg\crg\mathscr S\else\ifx\arg\drg\mathscr N\else\ifx\arg\erg\mathscr M\else\ifx\arg\frg\Comm\else\textup{#1}\fi\fi\fi\fi\fi}}
\newcommand{\GlSp}{\glo{Sp}}
\newcommand{\Sp}{\textup{Sp}}
\newcommand{\op}{{\textup{op}}}
\newcommand{\blank}{{\textup{--}}}
\newcommand{\pr}{{\textup{pr}}}
\newcommand{\Hom}{{\textup{Hom}}}
\newcommand{\id}{{\textup{id}}}
\newcommand{\ev}{{\textup{ev}}}
\newcommand{\colim}{\mathop{\textup{colim}}\nolimits}
\newcommand{\Fun}{\textup{Fun}}
\newcommand{\maps}{\mathord{\textup{maps}}}
\newcommand{\forget}{\mathop{\textup{forget}}}
\newcommand{\ppo}{\mathbin{\raise.25pt\hbox{\scalebox{.67}{$\square$}}}}
\newcommand{\Ho}{{\textup{Ho}}}
\newcommand{\sh}{\mathop{\textup{sh}}\nolimits}
\newcommand{\extsmash}{\mathbin{\widehat\smashp}}
\newcommand{\extotimes}{\mathbin{\widehat\otimes}}
\newcommand{\cofib}{{\textup{cofib}}}
\newcommand{\pos}{{+}}
\newcommand{\triv}{\mathop{\textup{triv}}}
\newcommand{\End}{{\mathbb E\textup{nd}}}
\newcommand{\unit}{\bm 1}
\newcommand{\power}[1]{\mathbb P^{{#1}}}
\DeclareMathOperator{\TAQ}{TAQ}
\DeclareMathOperator{\Ab}{Ab}
\DeclareMathOperator{\Mod}{Mod}
\DeclareMathOperator{\CAlg}{CAlg}
\DeclareMathOperator{\NUCA}{NUCA}
\let\ul=\underline
\let\del=\partial
\let\smashp=\wedge
\let\setminus=\smallsetminus
\let\phi=\varphi
\def\twocell[#1]{\arrow[#1, dash, phantom, "\Rightarrow"{scale=1.125, yshift=-.4pt, description, allow upside down, sloped, inner sep=0pt}]}
\title[Global model categories and top.~André-Quillen cohomology]{Global model categories and\\ topological André-Quillen cohomology}
\author{Tobias Lenz}
\author{Michael Stahlhauer}
\address{T.L.:  Mathematisches Institut, Rheinische Friedrich-Wilhelms-Universit\"at Bonn, Endenicher Allee 60, 53115 Bonn, Germany\hskip0pt plus 5pt \&\hskip 0pt plus 5pt Mathematical Institute, University of Utrecht, Budapestlaan 6, 3584 CD Utrecht, The Netherlands (\textit{current address})}
\address{M.S.: Mathematisches Institut, Rheinische Friedrich-Wilhelms-Universit\"at Bonn, Endenicher Allee 60, 53115 Bonn, Germany}
\subjclass[2020]{Primary 55P91, 
55P43, 
secondary 18N40, 
55U35} 
\keywords{Global homotopy theory, model categories, genuine stability, topological André-Quillen cohomology, ultra-commutative ring spectra}
\begin{document}
\begin{abstract}
We introduce \emph{global model categories} as a general framework to capture several phenomena in global equivariant homotopy theory. We then construct \emph{genuine stabilizations} of these, generalizing the usual passage from unstable to stable global homotopy theory. Finally, we define the \emph{global topological André-Quillen cohomology} of an ultra-commutative ring spectrum and express it in terms of a genuine stabilization in our framework in analogy with the classical non-equivariant description obtained by Basterra and Mandell.
\end{abstract}

\maketitle
\setcounter{tocdepth}{1}
\tableofcontents

\section*{Introduction}
Cohomology theories like topological $K$-theory and the various flavors of cobordism are among the most fundamental tools of algebraic topology. Many of the examples one encounters in practice come with additional structure in the form of multiplications and power operations, and these can often be exploited fruitfully, as for example in the classical proof that the Hopf maps define non-trivial elements in the stable homotopy groups of spheres via the Steenrod operations on singular cohomology. In terms of the representing spectra, these extra algebraic data are encoded in a highly structured multiplication, making the representing spectra so-called \emph{$E_\infty$-ring spectra}. In good model categories of diagram spectra like symmetric or orthogonal spectra \cite{hss, mmss}, such $E_\infty$-ring spectra are represented by strictly commutative algebras, and they can also be conveniently described in the language of $\infty$-categories.

\emph{$G$-equivariant cohomology theories}, as modelled by \emph{genuine $G$-spectra} in the sense of equivariant stable homotopy theory, are a refinement of cohomology theories to the context of objects endowed with extra symmetries in the form of an action of a (finite) group $G$. Many classical cohomology theories have equivariant analogues, leading for example to $G$-equivariant topological $K$-theory and cobordism.

When we study highly structured multiplications on these, interesting new structure arises in the form of \emph{norm maps}, which can be thought of as twisted multiplications. As a motivating algebraic example, consider an ordinary commutative ring $R$ with an action by a finite group $G$ and an $H$-fixed point $r\in R^H$ for a subgroup $H\subset G$. Then the \emph{norm} of $r$ is the product
\[ N_H^G(r) = \prod_{gH\in G/H} g. r,\]
which is now a \emph{$G$-fixed} point. This yields a multiplicative map $N_H^G\colon R^H\to R^G$, and together with inclusions of fixed points as restrictions and similarly defined additive \emph{transfers}, this gives rise to the structure of a \emph{Tambara functor} \cite{tambara} on the collection $\{ R^H\}_{H\subset G}$ of fixed points of $R$. More generally, the zeroth homotopy groups of any \emph{genuine $G$-$E_\infty$-ring spectrum} naturally admit the structure of a Tambara functor and in particular come with norm maps. These norms have been famously exploited in the solution of the Kervaire-invariant-one problem by Hill, Hopkins, and Ravenel \cite{hhr}, renewing interest in this rich structure.

It turns out that many important equivariant cohomology theories like cobordism and $K$-theory exist in a uniform fashion for large classes of groups, like all finite or all compact Lie groups. This is the perspective taken by \emph{global homotopy theory} \cite{schwede-book, hausmann-global}. Such global objects come with extra structure in the form of \emph{inflations} (restrictions along surjective group homomorphisms), and incorporating this additional information can allow for easier analysis of equivariant phenomena, the most prominent example being the recent proof of an equivariant Quillen theorem linking equivariant bordism to formal group laws by Hausmann \cite{hausmann-formal}, which crucially relies on the global perspective.

The correct notion of `multiplicative global cohomology theories' are the \emph{ultra-commutative ring spectra}, which are commutative algebras in a suitable model category of global spectra. The zeroth homotopy groups of such an ultra-commutative ring spectrum form a so-called \emph{global power functor}, the global analogue of a $G$-Tambara functor for fixed $G$, in particular coming with norm maps for any inclusion $H\subset G$ of finite groups. Currently, no purely $\infty$-categorical description of ultra-commutative ring spectra is known.

\subsection*{Obstruction theory and topological André-Quillen cohomology}
As commutative ring spectra encode such a rich additional structure in their homotopy groups and represented cohomology, their study has received much attention. However, the existence of structured multiplications on spectra is a very subtle question: while many important spectra do come with the structure of an $E_\infty$-ring spectrum, such as the sphere spectrum, Eilenberg-MacLane spectra, $K$-theory and Thom spectra, or spectra of topological modular forms, some other naturally defined spectra do not admit such a structured multiplication. The prime examples of this are Moore spectra, where it has been long known that $\mathbb S/2$ does not admit a unital multiplication in the homotopy category and no $\mathbb S/p$ can have a structured associative \hbox{(i.e.~$A_\infty$-)}\kern0ptmultiplication, see \cite[Theorem 1.1]{araki-toda} and \cite[Example 3.3]{angeltveit-thh}. Similarly, the Brown-Peterson spectrum $BP$ (for any prime $p$) does not support an $E_\infty$-multiplication \cite{lawson-BP, senger-BP}.

In order to derive positive results, in some cases obstruction theoretic methods can be applied, with famous examples being the result of Goerss and Hopkins \cite{goerss-hopkins} that Morava $E$-theory admits a unique $E_\infty$-ring structure, or the usage of Postnikov towers of commutative ring spectra in order to construct an $E_4$-multiplication on $BP$ by Basterra and Mandell \cite{basterra-mandell-BP}. Both of these results rely on \emph{topological André-Quillen cohomology}, a cohomology theory for commutative ring spectra introduced by Basterra \cite{basterra-TAQ} as an adaptation of a cohomology theory originally defined by André and Quillen for ordinary commutative rings \cite{andre-homology, quillen-cohomology-algebras}. The latter is defined as a derived functor of Kähler differentials, and a similar approach is used by Basterra for the topological version, using a model category of spectra in which commutative ring spectra model $E_\infty$-ring spectra.

A crucial observation in the construction of classical \emph{algebraic} André-Quillen cohomology is that for a fixed commutative ring $R$, one can identify the category of $R$-modules with the category of abelian group objects in augmented $R$-algebras, which allows to interpret André-Quillen homology and cohomology as a derived abelianization procedure. In the topological case, a similar result was obtained by Basterra and Mandell \cite{basterra-mandell-stab}, who showed that for a commutative ring spectrum $R$, topological André-Quillen cohomology exhibits the category of $R$-modules as a stabilization of the category of augmented $R$-algebras.

\subsection*{Global topological André-Quillen cohomology}
Given the wealth of structure encoded in the multiplication on an ultra-commutative ring spectrum (even compared to a non-equivariant $E_\infty$-ring spectrum), a \emph{global obstruction theory} would be particularly desirable. As the first step towards this, we adapt the theory of topological André-Quillen cohomology to the context of global homotopy theory in this article. In particular, we define the global topological André-Quillen homology and cohomology of ultra-commutative ring spectra, prove a Hurewicz theorem in this setting, and construct Postnikov towers for ultra-commutative ring spectra. As the main result of this paper (see Theorem~\ref{introthm:global-stab-calg} below) we then identify global topological André-Quillen cohomology as a suitable \emph{global stabilization} in analogy with the non-equivariant result of Basterra--Mandell, justifying that this is indeed the `correct' global analogue of classical topological André-Quillen cohomology.

To put this into perspective, recall that any suitably nice model or $\infty$-category admits a \emph{stabilization} \cite{schwede-stab,higher-algebra}, obtained by inverting the suspension-loop adjunction. However, the passage between unstable and stable equivariant or global homotopy theory is more subtle, related to the existence of transfer maps between equivariant stable homotopy groups mentioned above. In the equivariant setting, we can instead concisely express it as a \emph{genuine stabilization} \cite[Appendix~C]{gepner-meier}: the passage from unstable to stable $G$-equivariant homotopy theory is given by universally inverting the $1$-point compactification $S^G$ of the regular real representation, instead of just the usual sphere $S^1$ (i.e.~the $1$-point compactification of the trivial $1$-dimensional representation). Sadly, however, this approach can not be immediately adapted to the global world---for example, a computation by Schwede shows that only the ordinary non-equivariant spheres are inverted when passing from unstable to stable global homotopy theory.

Similarly to an idea described by Gepner and Nikolaus \cite{gepner-nikolaus}, we solve this issue in the present paper by looking more generally at \emph{$G$-global homotopy theory} in the sense of \cite{g-global} for all finite groups $G$, which for the trivial group $G=1$ recovers usual global homotopy theory. In this setting, we then describe the passage from unstable to stable $G$-global homotopy theory \emph{for all finite groups $G$ simultaneously}: it is given by inverting $S^G$ in $G$-global homotopy theory for all $G$ in a compatible way (see Theorem~\ref{introthm:global-stab-s} below).

\subsection*{Global model categories}
To make this precise, we introduce the notion of a \emph{global model category} (Definition~\ref{defi:global-model-cat}). Roughly speaking, such a global model category $\ul{\mathscr C}$ consists of a category $\mathscr C$ together with two (suitably nice) Quillen equivalent model structures on the category $G\text-\mathscr C$ of $G$-objects in $\mathscr C$, called the \emph{projective} and \emph{flat} model structures, that interact in a prescribed way with restriction along group homomorphisms, formalizing the behaviour established for unstable and stable $G$-global homotopy theory in \cite{g-global}. As our main examples, we introduce and study global model categories $\glo{GlobalSpaces}$ of global spaces, $\glo{GlobalSpectra}$ of global spectra, $\glo{Mod}_R$ of $R$-modules, and $\glo{Comm}_R/R$ of augmented $R$-algebras for an ultra-commutative ring spectrum $R$.

If $\ul{\mathscr C}$ is a \emph{pointed} global model category (i.e.~$\mathscr C$ has a zero object) and $G$ is finite, the homotopy category $\Ho(G\text-\mathscr C)$ comes with an equivariant suspension-loop adjunction
\begin{equation*}
S^G\smashp^{\cat L}\blank\colon \Ho(G\text-\mathscr C)\rightleftarrows\Ho(G\text-\mathscr C) :\!\cat{R}\Omega^G
\end{equation*}
(recovering the usual one for pointed global spaces), and we call $\ul{\mathscr C}$ \emph{(genuinely) stable} if this adjunction is an adjoint equivalence for every $G$.

With this definition, both the global model category of global spectra and of $R$-modules are stable. For a general global model category $\ul{\mathscr C}$ on the other hand, we can construct a \emph{global stabilization} in the form of a homotopy universal map to a stable global model category by considering suitable $G$-global spectrum objects in $\mathscr C$, refining and generalizing the non-equivariant construction of \cite{schwede-stab}. We then compute this global stabilization in two key cases:

First, we show that the global stabilization of global spaces is indeed given by global spectra in this setting:
\begin{introthm}[see \Cref{thm:stabilization-global-spaces}]\label{introthm:global-stab-s}
	The suspension spectrum-loop space adjunction
	\begin{equation*}
	\ul{\Sigma^\bullet_+}\colon\glo{GlobalSpaces}\rightleftarrows\glo{GlobalSpectra} :\!\ul{\Omega^\bullet}
	\end{equation*}
	exhibits the global model category $\glo{GlobalSpectra}$ of global spectra as global stabilization of the global model category $\glo{GlobalSpaces}$ of global spaces.
\end{introthm}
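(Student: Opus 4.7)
The plan is to verify the universal property of the global stabilization directly for the adjunction $\ul{\Sigma^\bullet_+}\dashv\ul{\Omega^\bullet}$, by first checking that $\glo{GlobalSpectra}$ is genuinely stable as a global model category and then identifying the abstract $G$-global spectrum object construction with honest $G$-global (orthogonal) spectra for every finite group $G$, naturally in $G$.

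First I would confirm that $\glo{GlobalSpectra}$ is a stable global model category. For each finite group $G$, the category $G\text-\Sp$ (with either projective or flat model structure) models $G$-global spectra in the sense of \cite{g-global}, and the functor $S^G\smashp^{\cat L}(\blank)\colon\Ho(G\text-\Sp)\to\Ho(G\text-\Sp)$ is an equivalence—this is the genuine stability of $G$-global spectra, recovering the known $G$-equivariant stability at the $G$-fixed level and being detected on underlying $G$-equivariant homotopy groups. Compatibility of this equivalence with restriction along arbitrary group homomorphisms $\alpha\colon H\to G$ is automatic from the point-set behavior of restriction on orthogonal spectra, so the axioms of a stable global model category are satisfied.

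Next, by the general construction of the global stabilization, it suffices to identify, for every finite $G$, the stabilization of pointed $G$-global spaces at $S^G$ with $G$-global spectra, and to check that these identifications are natural in $G$. The former reduces to a recognition result for orthogonal $G$-global spectra: the positive flat (or projective) level model structure satisfies the hypotheses of a Hovey-style stabilization theorem \cite{hss, mmss} for the endofunctor $S^G\smashp(\blank)$ on pointed $G$-global spaces, so that its stable localization is Quillen equivalent to $\Omega^G$-spectra, which in turn are detected by the standard fibrancy criterion for orthogonal $G$-global spectra. The left adjoint in this equivalence is precisely $\Sigma^\bullet_+$ applied objectwise, and the right adjoint is $\Omega^\bullet$, as desired. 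Naturality in $G$ follows because all constructions are carried out internally to orthogonal spectra and restriction functors are defined level-wise and commute with loop and suspension at the point-set level.

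The main obstacle is the step identifying the abstract spectrum object stabilization with $G$-global spectra uniformly in $G$: while for each fixed $G$ this is an equivariant refinement of Hovey's theorem that follows from standard techniques, one must carefully set up the $G$-global positive flat model structure so that (i) it admits the generating (trivial) cofibrations making $\Sigma^\bullet_+\dashv\Omega^\bullet$ into a Quillen adjunction, (ii) fibrant objects are precisely $G$-global $\Omega^G$-spectra, and (iii) the identifications intertwine restriction functors across varying $G$. Once these three compatibilities are verified, the theorem follows by combining the stability established in the first step with the universal property of the spectrum object construction.
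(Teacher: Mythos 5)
There is a genuine gap, and it sits at the heart of your argument. Your central reduction — ``it suffices to identify, for every finite $G$, the stabilization of pointed $G$-global spaces at $S^G$ with $G$-global spectra'' — is both misaligned with the definition being used and false as a mathematical statement. The paper's notion of global stabilization (Definition~\ref{defi:global-stabilization}) does not ask you to invert the single endofunctor $S^G\smashp\blank$ on $G\text-\glo{GlobalSpaces}$ for each $G$; it asks that the induced adjunction $\GlSp(\glo{GlobalSpaces})\rightleftarrows\GlSp(\glo{GlobalSpectra})$ on the $\bm\Sigma$-indexed spectrum object categories be a global Quillen equivalence, where the local objects are $G$-global $\Omega$-spectra whose structure maps $X(A)\to\cat{R}\Omega^BX(A\amalg B)$ are required to be equivalences for \emph{all} finite groups $H$ and all finite $H$-sets $A,B$, not just for the fixed group $G$. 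Worse, the statement you reduce to is simply not true: already for $G=1$ one has $S^G=S^1$, and a Hovey-style stabilization of global spaces at $S^1$ inverts only the non-equivariant spheres (this is exactly Schwede's observation recalled in the introduction), so it does \emph{not} produce global spectra. This is the very subtlety that forces the paper to formulate genuine stability via all shifted categories $H\text-(G\text-\mathscr C)$ and all representation spheres $S^A$ simultaneously, and it cannot be recovered group-by-group from a single-sphere localization. (Incidentally, the models here are symmetric spectra in simplicial sets, not orthogonal spectra, though that is a minor point by comparison.)

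Consequently, the actual content of the theorem is missing from your proposal. The first step (stability of $\glo{GlobalSpectra}$) is fine in spirit and matches Proposition~\ref{prop:GlobalSpectra-stable}, but what remains is to show that $G\text-\Sp(\ul{\Omega^\bullet})$ is a Quillen equivalence $G\text-\Sp(\glo{GlobalSpectra})\to G\text-\Sp(\glo{GlobalSpaces})$ for every $G$, and this is not ``an equivariant refinement of Hovey's theorem that follows from standard techniques.'' The paper does it by constructing a diagonal adjunction $\Delta_!\colon\cat{$\bm G$-Spectra}\rightleftarrows G\text-\Sp(\glo{GlobalSpaces}):\!\Delta^*$ with $(\Delta^*X)(A)=X(A)(A)$, and proving it is a Quillen equivalence (Theorem~\ref{thm:delta-quillen-equivalence}); the non-formal inputs are that $\Delta^*$ is fully homotopical (which uses the tensor construction $X\otimes Y$, the fact that $\blank\otimes\blank$ preserves $G$-global weak equivalences in each variable, and that free quotients are homotopical, Proposition~\ref{prop:free-quotient-spectra}), that $\cat{R}\Delta^*$ is conservative via a faithfulness argument on evaluations, and that the unit is an equivalence on the corepresentables $\bm\Sigma(A,\blank)\smashp_\phi G_+$, again via a free-quotient argument, followed by cell induction. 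None of these steps, nor any substitute for them, appears in your outline, and the fibrancy criterion you invoke (``$\Omega^G$-spectra'') is the fixed-group equivariant one rather than the genuinely global condition quantified over all finite groups. To repair the proof you would need to abandon the single-sphere stabilization picture and carry out (or replace) the diagonal comparison.
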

This in particular serves as a sanity check for our framework, but it also allows us to provide a description of the previously elusive passage between unstable and stable global homotopy theory in terms of a universal property.

The main application of this theory, however, lies in the calculation of the global stabilization of the category of augmented $R$-algebras for an ultra-commutative ring spectrum $R$:

\begin{introthm}[see \Cref{thm:stabilization-aug-algebras} for a precise statement]\label{introthm:global-stab-calg}
	Let $R$ be a flat ultra-commutative ring spectrum. Then the global model category $\glo{Mod}_R$ of $R$-modules is the global stabilization of the global model category $\glo{Comm}_R/R$ of augmented $R$-algebras, and the universal map reduces on homotopy categories to the `global abelianization' functor from the construction of global topological André-Quillen cohomology.
\end{introthm}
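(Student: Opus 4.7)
The plan is to follow the template of the classical Basterra-Mandell argument, translated into the global model category framework developed in the paper. First, I would verify that the augmentation ideal/square-zero extension pair $(Q, R\vee\blank)$ is a Quillen adjunction between $\glo{Comm}_R/R$ and $\glo{Mod}_R$ in both the projective and flat global model structures: the right adjoint $M\mapsto R\vee M$ visibly preserves fibrations and weak equivalences (after cofibrant replacement in the flat structure, where norms are homotopically controlled), and its left adjoint $Q$—the global derived indecomposables functor—is by construction the `global abelianization' that was identified earlier in the paper with global topological André-Quillen homology.

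Since $\glo{Mod}_R$ is already genuinely stable, the universal property of global stabilization yields a comparison functor $\Phi\colon\mathrm{Stab}(\glo{Comm}_R/R)\to\glo{Mod}_R$ whose precomposition with the canonical map from $\glo{Comm}_R/R$ agrees with $Q$. Conversely, one builds $\Psi\colon\glo{Mod}_R\to\mathrm{Stab}(\glo{Comm}_R/R)$ by sending an $R$-module $M$ to the $G$-global spectrum object whose $n$-th level is a cofibrant replacement of $R\vee(S^{nG}\smashp M)$, using that $\glo{Mod}_R$ is already stable so $R\vee\blank$ extends to spectrum objects. The composite $\Phi\circ\Psi\simeq\mathrm{id}$ follows at once from the cofibrant computation $Q(R\vee M)\simeq M$.

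The heart of the matter is the other composite $\Psi\circ\Phi\simeq\mathrm{id}$. For a (cofibrant) augmented $R$-algebra $A$, this amounts to showing that the unit map $A\to R\vee Q(A)$ becomes an equivalence after passing to the tower of iterated global loop-suspension pairs that defines the spectrum object. Following Basterra-Mandell, I would filter the non-unital bar construction computing $Q(A)$ and show inductively that, after sufficient suspension in $\glo{Comm}_R/R$, the fiber of the $n$-th stage of this filtration grows in global connectivity. The essential technical input is a \emph{global Hurewicz theorem} showing that if the augmentation ideal of $A$ is globally $n$-connected, then $A\to R\vee Q(A)$ is globally $(2n+1)$-connected. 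Iterating and passing to the homotopy colimit within spectrum objects then yields the desired equivalence.

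The main obstacle is to prove the global Hurewicz estimate with bounds that are uniform across all finite groups $G$ and compatible with restriction along group homomorphisms, since the relevant `global connectivity' is a delicate invariant assembling equivariant connectivities for all $G$, interacting non-trivially with transfers and with the norms carried by ultra-commutative $R$-algebras. This forces the analysis to be carried out in the flat global model structure, where norms preserve cofibrations and cofibrant objects are suitably free for the bookkeeping to go through; the projective structure alone is too coarse. Once the Hurewicz bound is established uniformly, convergence of the spectrum-object tower follows formally from the definition of global stabilization, completing the proof.
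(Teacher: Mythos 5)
There is a genuine gap, and it sits exactly where you place ``the heart of the matter.'' Your plan makes the essential input a global Hurewicz estimate (``if the augmentation ideal is globally $n$-connected, then $A\to R\vee Q(A)$ is globally $(2n+1)$-connected''), to be iterated along a bar-construction filtration and the suspension tower. But for the stabilization theorem no connectivity hypotheses are available: the statement concerns \emph{arbitrary} augmented algebras and arbitrary spectrum objects, so an induction whose engine is a connectivity gain has nothing to start from; and in the genuine global setting the gain itself is not uniform, since suspension by a representation sphere $S^A$ gains nothing on $H$-fixed points when $A^H$ is small, so a single bound ``uniform across all finite groups and compatible with restrictions/norms'' of the kind you postulate is precisely what you cannot write down. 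You correctly identify this as the main obstacle, but you offer no mechanism to overcome it, and the paper's own Hurewicz theorem (\Cref{thm:Hurewicz}) is proved only for connective algebras via the t-structure and plays no role in the stabilization proof. There are also smaller mismatches: spectrum objects here are $\bm\Sigma$-indexed with $\Sigma_A$-actions (not indexed by multiples of $S^{nG}$ for a fixed $G$), and the comparison is set up for all finite groups simultaneously.

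What actually replaces the connectivity bookkeeping in the paper is a structural vanishing result with no connectivity input: after factoring $\glo{Comm}_R/R\simeq\glo{NUCA}_R$ via the unitalization/augmentation-ideal Quillen equivalence (\Cref{prop:aug-ideal-Quillen-equivalence}, using stability to split the augmentation), everything reduces to showing that the stabilized free--forgetful adjunction between $\glo{Sp}(\glo{Mod}_R)$ and $\glo{Sp}(\glo{NUCA}_R)$ is a global Quillen equivalence (\Cref{thm:stabilization-nucas}). The key computation is \Cref{thm:diamond-sp-sp-trivial}: for levelwise (positively) flat spectrum objects the levelwise smash product $X\diamond Y$ and all higher symmetric powers $X^{\diamond n}/\Sigma_n$, $n>1$, are $G$-globally weakly contractible. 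This rests on the observation (\Cref{prop:diamond-sp-trivial}) that the structure maps of $X\diamond Y$ factor through diagonals $S^F\to S^{A\times F}$, which are \emph{equivariantly nullhomotopic} once $F$ contains a free orbit because the fixed-point dimensions of the target strictly exceed those of the source---a geometric nullhomotopy argument, not a Hurewicz estimate---together with homotopicality of the external smash product (\Cref{thm:ext-smash}), freeness of the $\Sigma_n$-action on positive-flat smash powers, and the explicit filtration of pushouts of free NUCAs. If you want to salvage your route, you would have to prove your uniform global connectivity statement from scratch; in effect the only known way to do so is the diagonal-vanishing argument above, at which point you are running the paper's proof.
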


\subsection*{André-Quillen (co)homology of global power functors} Just like topological André-Quillen cohomology is a `higher algebra' version of a cohomology theory for ordinary rings, there is a notion of \emph{global André-Quillen cohomology} \cite[Chapter~1]{stahlhauer} of `commutative global rings,' i.e.~the aforementioned global power functors; a similar theory of \emph{equivariant André-Quillen cohomology} for Tambara functors has moreover been studied by Hill \cite{hill-EAQ}.

This theory enjoys several properties one would expect of a global analogue of André-Quillen cohomology: for example, it can be described in degree $0$ in terms of derivations of global power functors and in degree $1$ by a global version of Grothendieck's $\text{Exalcomm}$-functor. However, it also suffers from a serious defect: it does not admit a global analogue of Quillen's \emph{transitivity long exact sequence}. As explained in \cite{stahlhauer}, this failure can be traced back to an anomaly of the abelian category of global functors, first observed by Lewis \cite{lewis-proj-flat}: its projective objects are not necessarily flat. In contrast to that, global \emph{topological} André-Quillen cohomology, as considered in this paper, does come with a transitivity exact sequence, see Corollary~\ref{cor:transitivity-seq-homotopy-groups}.

\subsection*{Related work}
Another perspective on the relation between unstable and stable equivariant homotopy theory is provided by \emph{parameterized higher category theory} in the sense of Barwick, Dotto, Glasman, Nardin, and Shah \cite{elements-param,nardin-orbi}, which emphasizes the extra algebraic structure encoded in the additive transfers (or more precisely the so-called \emph{Wirthmüller isomorphisms} underlying them) as a form of `genuine semiadditivity.'

Using this language, Cnossen, Linskens, and the first author \cite{global-param} have concurrently introduced the concept of \emph{global $\infty$-categories}. These again come with a notion of \emph{genuine stability} (now defined via the aforementioned Wirthmüller isomorphisms), and the main result of \emph{op.~cit.} describes a certain explicit global $\infty$-category of global spectra (again built from $G$-global spectra for all finite $G$) as the genuine stabilization in this sense of an analogous global $\infty$-category of global spaces---in fact, global spaces and global spectra admit universal descriptions in this framework as the free presentable and free presentable genuinely stable global $\infty$-category, respectively, which immediately implies the above description.

It is not hard to show that any global model category gives rise to a (presentable) global $\infty$-category via Dwyer-Kan localization, which for $\glo{GlobalSpaces}$ and $\glo{GlobalSpectra}$ precisely recovers the aforementioned global $\infty$-categories of global spaces and global spectra; in particular, the two notions of global stabilization agree in this case. However, while it is natural to expect them to also agree in general, this is not clear a priori; the first author plans to come back to this question in future work.

\subsection*{Outline}
In Section~\ref{sec:g-global} we give a recollection of $G$-equivariant and $G$-global homotopy theory, in particular describing the rich `change of group'-calculus present in the latter. We then formalize this calculus in Section~\ref{sec:global-model-cat} in the notion of a \emph{global model category}.

Section~\ref{sec:global-stab} is devoted to the notion of \emph{stability} for global model categories and the general construction of global stabilizations. In Section~\ref{sec:global-spectra} we compute this stabilization in the case of global spaces, proving Theorem~\ref{introthm:global-stab-s}.

In Section~\ref{section:brave-new-algebra} we develop the theory of modules and algebras in stable $G$-global homotopy theory, and in particular define corresponding global model categories. Afterwards, we introduce ($G$-)global and $G$-equivariant topological André-Quillen cohomology in Section~\ref{sec:gtaq} and express the former via the global stabilization of augmented commutative algebras, proving Theorem~\ref{introthm:global-stab-calg}. This proof in turn relies on a hard technical result about the stabilization of the free-forgetful adjunction between modules and non-unital commutative algebras (so-called `NUCAs'), to which all of Section~\ref{section:stab-nucas} is devoted.

\subsection*{Acknowledgements} The first author would like to thank Bastiaan Cnossen and Sil Linskens for helpful discussions. The second author would like to thank Stefan Schwede for his support and supervision. Both authors would further like to thank the anonymous referee for helpful feedback that in particular led to the inclusion of Subsection~\ref{subsec:equivariant-TAQ}.

This article is partially based on work supported by the Swedish Research Council under grant no. 2016-06596 while the first author was in residence at Institut Mittag-Leffler in Djursholm, Sweden in early 2022.

During the time this article was written, the second author was supported by the Max-Planck Insitute for Mathematics, Bonn, and was an associate member of the Hausdorff Center for Mathematics at the University of Bonn. Subsections~6.1 and~6.2 are adapted from results contained in the dissertation of the second author.

\section{A reminder on $G$-equivariant and $G$-global homotopy theory}\label{sec:g-global}
Throughout, let $G$ be a finite group. To set the stage, we recall several model categorical aspects of \emph{$G$-equivariant} and \emph{$G$-global homotopy theory}.

\subsection{\texorpdfstring{$\bm G$}{G}-equivariant homotopy theory} We begin with the classical unstable equivariant story:

\begin{prop}
Let $\mathcal F$ be a family of subgroups of $G$, i.e.~a non-empty collection of subgroups that is closed under subconjugates. Then there is a unique model structure on $\cat{$\bm G$-SSet}$ in which a map $f$ is a weak equivalence or fibration if and only if $f^H$ is a weak homotopy equivalence or Kan fibration, respectively, of simplicial sets for all $H\in\mathcal F$. We call this the \emph{$\mathcal F$-equivariant model structure} and its weak equivalences the \emph{$\mathcal F$-weak equivalences}. It is simplicial, proper, and combinatorial with generating cofibrations
\begin{equation*}
\{ G/H\times(\del\Delta^n\hookrightarrow\Delta^n) : H\in\mathcal F,n\ge0\}
\end{equation*}
and generating acyclic cofibrations
\begin{equation*}
\{ G/H\times(\Lambda^n_k\hookrightarrow\Delta^n) : H\in\mathcal F,0\le k\le n\}.
\end{equation*}
Moreover, a map is a cofibration in it if and only if it is levelwise injective and every simplex not in its image has isotropy in $\mathcal F$. Finally, filtered colimits in it are homotopical.
\begin{proof}
By \cite[Example~2.14]{cellular} we get a cofibrantly generated (hence combinatorial) model structure with the above weak equivalences, fibrations, and generating (acyclic) cofibrations, while Proposition~2.16 of \emph{op.~cit.} provides the characterization of the cofibrations. The remaining properties are easy to check, also see \cite[Proposition~1.1.2]{g-global} for a complete proof.
\end{proof}
\end{prop}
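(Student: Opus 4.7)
The plan is to apply the standard recognition principle for cofibrantly generated model structures (Kan's lifting theorem in its combinatorial form) to the two proposed sets of generating (acyclic) cofibrations.

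The first step is to translate all lifting problems to non-equivariant ones via the adjunction $G/H \times (\blank) \colon \cat{SSet} \rightleftarrows \cat{$G$-SSet} : (\blank)^H$. A map $f$ has the right lifting property against $G/H \times (\del\Delta^n\hookrightarrow\Delta^n)$ (resp.\ $G/H\times(\Lambda^n_k\hookrightarrow\Delta^n)$) if and only if $f^H$ has the right lifting property against $\del\Delta^n\hookrightarrow\Delta^n$ (resp.\ $\Lambda^n_k\hookrightarrow\Delta^n$), i.e.\ is a trivial Kan fibration (resp.\ Kan fibration). In particular the fibrations and trivial fibrations cut out by the two generating sets are precisely those claimed, and the class of maps $f$ such that each $f^H$ ($H\in\mathcal F$) is a weak equivalence is clearly closed under retracts and satisfies $2$-out-of-$3$ (inherited from simplicial sets). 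The recognition criterion then reduces to checking that transfinite compositions of pushouts of the proposed generating acyclic cofibrations are weak equivalences, which follows from the corresponding fact for simplicial sets together with the observation that $(\blank)^H$ commutes with pushouts along $G$-injective maps and with transfinite compositions thereof. Smallness is automatic since $\cat{$G$-SSet}$ is locally presentable.

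For the characterization of cofibrations, the generating cofibrations are levelwise injective with the required isotropy property, and this class is closed under pushouts, transfinite compositions and retracts; conversely a small-object argument expresses any such inclusion as a cell complex built from the generators by attaching simplices along their boundaries. The simplicial structure comes from the usual tensoring/cotensoring of $\cat{$G$-SSet}$ over $\cat{SSet}$; the pushout-product axiom follows by applying $(\blank)^H$ and invoking the simplicial structure on $\cat{SSet}$, since $(\blank)^H$ commutes with tensoring by a simplicial set with trivial $G$-action. Properness likewise descends from $\cat{SSet}$: cofibrations are in particular injective, so $(\blank)^H$ preserves pushouts along cofibrations, and it always preserves pullbacks, reducing both statements to ordinary properness of simplicial sets. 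Finally, filtered colimits commute with finite limits and in particular with $(\blank)^H$ for finite $H$, so homotopicality of filtered colimits again follows from the non-equivariant fact.

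I expect the only delicate point to be verifying the acyclicity compatibility in the recognition step, i.e.\ that every relative cell complex built from the second generating set is genuinely a weak equivalence; this is precisely where one uses that horn inclusions of simplicial sets are acyclic cofibrations in a manner stable under pushouts and transfinite compositions, and where the general cellularity framework of~\cite{cellular} does the bookkeeping. All remaining items are formal consequences of the adjunction $G/H\times(\blank)\dashv(\blank)^H$ together with the analogous properties of the Kan--Quillen model structure, so the proof consists essentially of pointing to~\cite{cellular} for existence and the cofibration description, and then checking the listed compatibilities one by one on fixed points.
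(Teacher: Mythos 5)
Your proposal is correct and is essentially the same argument as the one the paper relies on: the paper's proof simply cites \cite{cellular} and \cite{g-global}, and those references carry out exactly this transfer of the Kan--Quillen structure along the adjunctions $G/H\times(\blank)\dashv(\blank)^H$, with the acyclicity step, pushout-product axiom, properness, and filtered colimits all checked on $H$-fixed points as you describe. One minor point: for the characterization of cofibrations the small object argument by itself only exhibits such a map as a \emph{retract} of a cell complex, so the converse direction should be phrased via the skeletal induction over nondegenerate simplices (attaching a cell $G/K\times(\del\Delta^n\hookrightarrow\Delta^n)$ for each new nondegenerate simplex with isotropy $K\in\mathcal F$), which is evidently what you intend.
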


\begin{ex}
If we take $\mathcal F=\mathcal A\ell\ell$ to be the collection of all subgroups of $G$, then we get a model structure with cofibrations the underlying cofibrations of simplicial sets, while weak equivalences and fibrations are those maps $f$ such that $f^H$ is a weak equivalence or fibration, respectively, for every subgroup $H\subset G$.

We will refer to this model structure simply as the \emph{$G$-equivariant model structure} and to its weak equivalences as \emph{$G$-equivariant weak equivalences}.
\end{ex}

\begin{ex}\label{ex:graph-model-structure}
Let $G,H$ be finite groups. We write $\mathcal G_{G,H}$ for the family of \emph{graph subgroups} of $G\times H$, i.e.~groups of the form $\Gamma_{K,\phi}\mathrel{:=}\{(k,\phi(k)) : k\in K\}$ for a subgroup $K\subset G$ and a homomorphism $\phi\colon K\to H$. Note that $K$ and $\phi$ are actually uniquely determined for any graph subgroup, and a subgroup $L\subset G\times H$ is a graph subgroup if and only if it intersects $H$ trivially, i.e.~$L\cap (1\times H)=1$.

We now apply the proposition for $\mathcal F=\mathcal G_{G,H}$ to get a \emph{graph model structure} on $\cat{$\bm{(G\times H)}$-SSet}$, which in the case $H=1$ recovers the previous model structure. For general $H$, a map is a cofibration in this model structure if and only if it is levelwise injective and $H$ acts freely outside the image.
\end{ex}

We will also need a variant of the above model structure with more cofibrations:

\begin{prop}
Let $G$ be a finite group and let $\mathcal F$ be a family of subgroups. Then there is a unique model structure on $\cat{$\bm G$-SSet}$ with weak equivalences the $\mathcal F$-equivariant weak equivalences and cofibrations the injective cofibrations, i.e.~the underlying cofibrations of simplicial sets. We call this the \emph{injective $\mathcal F$-equivariant model structure}. It is simplicial, proper, combinatorial with generating cofibrations
\begin{equation*}
\{G/H\times(\del\Delta^n\hookrightarrow\Delta^n) : H\subset G,n\ge0\},
\end{equation*}
and filtered colimits in it are homotopical.
\end{prop}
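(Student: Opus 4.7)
The plan is to invoke Jeff Smith's recognition theorem for combinatorial model structures (see e.g.~\cite[Proposition~A.2.6.13]{higher-algebra} or Beke's version). Write $W$ for the class of $\mathcal{F}$-equivariant weak equivalences and $I$ for the proposed generating set. Uniqueness of the model structure (given weak equivalences and cofibrations) is automatic, so only existence requires work.

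First I would identify the cofibrations. The $I$-cellular maps are exactly the levelwise injective maps of $G$-simplicial sets: for any equivariant monomorphism $A\hookrightarrow B$, the equivariant skeletal filtration attaches orbits $G/H\times\Delta^n$ along their boundaries, where $H$ ranges over the isotropy groups (in $G$, \emph{not} restricted to $\mathcal{F}$) of simplices in $B\setminus A$. Next I would verify Smith's hypotheses: (i) $W$ satisfies 2-out-of-3 and retracts, inherited from the $\mathcal{F}$-equivariant model structure; (ii) $W$ is accessible because each $(\blank)^H$ for $H\in\mathcal F$ commutes with filtered colimits and weak equivalences of simplicial sets form an accessible class; (iii) every $I$-injective map $f$ is a weak equivalence, since RLP against all $G/H\times(\partial\Delta^n\hookrightarrow\Delta^n)$ translates to $f^H$ being an acyclic Kan fibration for every $H\subset G$, hence certainly for $H\in\mathcal F$; (iv) $W\cap\mathop{\textup{cof}}(I)$ is closed under pushouts and transfinite composition, which reduces after applying fixed points (and using that monomorphisms are flat, i.e.~fixed points commute with cellular attachments) to the analogous statement for the Kan--Quillen model structure.

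For the extra properties: simpliciality follows from the pushout-product axiom, which holds on generators since the pushout-product of two monomorphisms is a monomorphism and one of them being acyclic transfers to the pushout-product via the standard Kan--Quillen argument on $(\blank)^H$. Left properness follows from cofibrations being monomorphisms, together with the fact that fixed points preserve pushouts along monomorphisms, reducing to left properness of the Kan--Quillen structure. For right properness, note that the new structure has more cofibrations than the $\mathcal F$-equivariant one and the same weak equivalences, hence a larger class of acyclic cofibrations, hence a smaller class of fibrations; so right properness is inherited from the $\mathcal F$-equivariant model structure. Homotopicity of filtered colimits also follows since they are detected on fixed points, which commute with filtered colimits.

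The main technical obstacle I expect is the verification of condition (iv) above, i.e.~that the intersection $W\cap\mathop{\textup{cof}}(I)$ is closed under transfinite compositions and pushouts along arbitrary morphisms; writing this out carefully requires a cell-attachment analysis over each orbit type simultaneously for all $H\in\mathcal F$. The other steps are all fairly routine given the already-established $\mathcal F$-equivariant model structure and the good behavior of injective cofibrations under fixed points.
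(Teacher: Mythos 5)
Your argument is correct, but it is worth pointing out that the paper itself does not prove this proposition at all: it records it as a folklore result and refers to \cite[Proposition~1.3]{shipley-mixed} and to \cite[Proposition~1.1.15]{g-global} for a complete proof. So your Smith-theorem argument is a genuine, self-contained alternative to a citation, and it is the standard way to obtain such injective-type structures. The key verifications are all in place: the identification $\textup{cof}(I)=\{\text{monomorphisms}\}$ via the equivariant skeletal filtration (note that for simplicial sets the stabilizer of a nondegenerate simplex automatically fixes it on the nose, so each orbit of new $n$-simplices really attaches a copy of $G/H\times(\del\Delta^n\hookrightarrow\Delta^n)$ with $H$ an arbitrary subgroup, which is exactly why the generating set must range over all $H\subset G$ and not just $H\in\mathcal F$); the inclusion $\textup{inj}(I)\subset W$ because RLP against the generators makes \emph{all} fixed points acyclic Kan fibrations; and the closure of $W\cap\textup{cof}(I)$ under pushouts and transfinite compositions, which reduces to the Kan--Quillen statement since $(\blank)^H$ preserves pushouts along levelwise injections and filtered colimits. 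Your observations for the additional properties are also fine: simpliciality and left properness reduce to Kan--Quillen on fixed points (left properness is in fact automatic since every object is cofibrant), right properness is inherited from the projective $\mathcal F$-equivariant structure because the two structures share weak equivalences and every injective fibration is a projective one, and homotopical filtered colimits follow since fixed points commute with them. The only stylistic caveat is that your phrase ``monomorphisms are flat'' should be spelled out as the elementary fact that $H$-fixed points of $G$-sets commute with pushouts along injections; once that is stated, the verification of Smith's condition on acyclic cofibrations is exactly as routine as you predict.
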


Observe that for $\mathcal F=\mathcal A\ell\ell$ this recovers the $G$-equivariant model structure again.

\begin{proof}
This is a folklore result; the earliest appearance in the literature (with $\cat{SSet}_*$ instead of $\cat{SSet}$) we are aware of is \cite[Proposition~1.3]{shipley-mixed} where this is already referred to as a `well-known' model structure. A full proof in our setting can be found as \cite[Proposition~1.1.15]{g-global}.
\end{proof}

We will frequently use the following well-known `change of group' properties of the above model structures, all of which can also be found in \cite[1.1.4]{g-global}:

\begin{lemma}\label{lemma:graph-target}
Let $G$ be a finite group and let $\alpha\colon H\to H'$ be a homomorphism of finite groups. Then the adjunction
\begin{equation*}
\alpha_!\mathrel{:=}(G\times\alpha)_!\colon \cat{$\bm{(G\times H)}$-SSet}_{\mathcal G_{G,H}}\rightleftarrows \cat{$\bm{(G\times H')}$-SSet}_{\mathcal G_{G,H
}} :(G\times\alpha)^*\mathrel{=:}\alpha^*
\end{equation*}
is a Quillen adjunction. If $\alpha$ is injective, then also
\begin{equation*}
\alpha^*\mathrel{:=}(G\times\alpha)^*\colon \cat{$\bm{(G\times H')}$-SSet}_{\mathcal G_{G,H'}}\rightleftarrows \cat{$\bm{(G\times H)}$-SSet}_{\mathcal G_{G,H}} :(G\times\alpha)_*\mathrel{=:}\alpha_*
\end{equation*}
is a Quillen adjunction.\qed
\end{lemma}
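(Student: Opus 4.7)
The plan for both Quillen adjunctions is to verify the cofibration and acyclic cofibration side for the respective left adjoint, using the explicit descriptions recalled in the propositions above. For the first adjunction, I would compute $\alpha_! = (G\times\alpha)_!$ on a generating (acyclic) cofibration of the graph model structure. On orbits one has $(G\times\alpha)_!\bigl((G\times H)/\Gamma_{K,\phi}\bigr) = (G\times H')/\Gamma_{K,\alpha\circ\phi}$, and since $\alpha\circ\phi\colon K\to H'$ is still a homomorphism, $\Gamma_{K,\alpha\circ\phi}$ is again a graph subgroup of $G\times H'$ (this step does \emph{not} require $\alpha$ to be injective). Combined with the observation that $\alpha_!$ commutes with cartesian products by simplicial sets carrying trivial action, this shows that the generating (acyclic) cofibrations are sent to generating (acyclic) cofibrations, yielding the first Quillen adjunction.

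For the second adjunction, where $\alpha^*$ is now the left adjoint, the approach is instead to directly verify preservation of cofibrations and weak equivalences using the characterizations from \Cref{ex:graph-model-structure}. Injectivity on underlying simplicial sets is automatic since $\alpha^*$ does not alter them. For the free-action condition outside the image, the $H$-action on $\alpha^*(X)$ factors through $\alpha$, so the hypothesis that $\alpha$ be injective is precisely what is needed to conclude that a free $H'$-action restricts to a free $H$-action. Weak equivalences are preserved because $(\alpha^* f)^{\Gamma_{K,\phi}} = f^{\Gamma_{K,\alpha\circ\phi}}$, and the latter is a weak equivalence as $\Gamma_{K,\alpha\circ\phi}$ is a graph subgroup of $G\times H'$ (so this check in fact does not require injectivity either).

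The main---and rather mild---obstacle is keeping careful bookkeeping of how graph subgroups transform under $G\times\alpha$ and identifying exactly where the injectivity hypothesis is actually invoked: it is needed only for the free-action condition in the cofibrations of the second adjunction, and nowhere else. Everything else reduces to the formal descriptions of cofibrations, fibrations, and weak equivalences supplied by the propositions above.
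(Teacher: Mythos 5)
Your argument is correct. The paper itself gives no proof of this lemma (it is stated with a reference to \cite[1.1.4]{g-global}), and your verification is the standard one: for the first adjunction, the identification $(G\times\alpha)_!\bigl((G\times H)/\Gamma_{K,\phi}\bigr)\cong (G\times H')/\Gamma_{K,\alpha\circ\phi}$ together with compatibility with products by trivial-action simplicial sets shows that generating (acyclic) cofibrations go to generating (acyclic) cofibrations; for the second, the characterization of cofibrations as levelwise injections with free action of the second factor outside the image pins down injectivity of $\alpha$ as exactly what makes $\ker\alpha$-isotropy trivial, while $(\alpha^*f)^{\Gamma_{K,\phi}}=f^{\Gamma_{K,\alpha\circ\phi}}$ handles weak equivalences without any injectivity hypothesis. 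The only remark worth adding is that the first adjunction admits an even shorter check on the right adjoint: the same fixed-point identity shows directly that $\alpha^*$ preserves fibrations and acyclic fibrations in the graph model structures, so you could avoid the computation of $\alpha_!$ on orbits altogether.
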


\begin{lemma}\label{lemma:graph-source}
Let $\alpha\colon G\to G'$ be an injective homomorphism of finite groups and let $H$ be a finite group. Then
\begin{equation*}
\alpha^*\mathrel{:=}(\alpha\times H)^*\colon\cat{$\bm{(G'\times H)}$-SSet}_{\mathcal G_{G',H}}\to \cat{$\bm{(G\times H)}$-SSet}_{\mathcal G_{G,H}}
\end{equation*}
is both left and right Quillen.\qed
\end{lemma}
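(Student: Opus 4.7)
The plan is to reduce both Quillen conditions to the preservation of cofibrations, fibrations, and weak equivalences by $\alpha^*$. First, I would note that $\alpha^*$ admits both a left adjoint $\alpha_!$ (induction) and a right adjoint $\alpha_*$ (coinduction), since restriction of group actions on simplicial sets along any group homomorphism has both adjoints. Hence, verifying that $\alpha^*$ preserves all three classes of maps will imply it is simultaneously left and right Quillen.

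The key computation is how $\alpha\times H$ behaves on graph subgroups. For any $\Gamma_{K,\phi}\in\mathcal G_{G,H}$, its image
\[(\alpha\times H)(\Gamma_{K,\phi})=\{(\alpha(k),\phi(k)) : k\in K\}=\Gamma_{\alpha(K),\phi\circ(\alpha|_K)^{-1}}\]
is again a graph subgroup, now of $G'\times H$, precisely because $\alpha|_K\colon K\to\alpha(K)$ is bijective (this is where the injectivity assumption on $\alpha$ enters), so that $\phi\circ(\alpha|_K)^{-1}$ is a well-defined homomorphism $\alpha(K)\to H$. For any $(G'\times H)$-simplicial set $X$ one then has $(\alpha^*X)^{\Gamma_{K,\phi}}=X^{(\alpha\times H)(\Gamma_{K,\phi})}$.

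From this description the preservation of $\mathcal G$-weak equivalences and $\mathcal G$-fibrations by $\alpha^*$ is immediate, since both are defined levelwise on fixed points over graph subgroups and the above identity shows that each such fixed point of $\alpha^*f$ agrees with a fixed point of $f$ under some graph subgroup of $G'\times H$. This gives that $\alpha^*$ is right Quillen. For the cofibrations, the essential point is that $\alpha\times H$ restricts to the identity on the subgroup $1\times H$, so the $H$-action on $\alpha^*X$ coincides with the original $H$-action on $X$, and moreover $\alpha^*$ does not change the underlying simplicial set. Consequently, the characterization of graph cofibrations from Example~\ref{ex:graph-model-structure} (levelwise injective with free $H$-action outside the image) is preserved by $\alpha^*$, proving it is also left Quillen.

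There is no real obstacle here; the only subtlety is the elementary observation that the image of a graph subgroup under $\alpha\times H$ is again a graph subgroup, which is exactly where injectivity of $\alpha$ is used. Everything else reduces to routine unraveling of the characterizations of graph (acyclic) (co)fibrations and of restriction along a group homomorphism.
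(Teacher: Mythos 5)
Your proof is correct and is exactly the standard argument; the paper itself gives no proof of this lemma, citing it as well known from the reference \cite[1.1.4]{g-global}, and your key observation (that injectivity of $\alpha$ makes the image of a graph subgroup $\Gamma_{K,\phi}$ under $\alpha\times H$ again a graph subgroup, so fixed points, hence weak equivalences and fibrations, are preserved, while the explicit description of cofibrations via the $H$-action handles the left Quillen half) is precisely what that verification amounts to. One small remark: as your own argument shows, the cofibration part does not use injectivity at all—only the right Quillen half does.
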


Next, we come to the stable situation, where we will use Hausmann's model \cite{hausmann-equivariant} based on symmetric spectra, which we briefly recall:

\begin{constr}
We write $\bm\Sigma$ for the $\cat{SSet}_*$-enriched category whose objects are finite sets and with morphism spaces
\begin{equation*}
\maps_{\bm\Sigma}(A,B)\mathrel{:=}\bigvee_{i\colon A\to B\text{ injective}} S^{B\setminus i(A)}.
\end{equation*}
Composition is given by smashing, i.e.~if $C$ is yet another object, then the composition $\maps_{\bm\Sigma}(A,B)\smashp\maps_{\bm\Sigma}(B,C)\to\maps_{\bm\Sigma}(A,C)$ is given on the wedge summands corresponding to $i\colon A\to B$ and $j\colon B\to C$ as
\begin{equation*}
S^{B\setminus i(A)}\smashp S^{C\setminus j(B)}\cong S^{j(B)\setminus ji(A)}\smashp S^{C\setminus j(B)}\cong S^{C\setminus ji(A)}\hookrightarrow \bigvee_{k\colon A\to C\text{ injective}} S^{C\setminus k(A)}
\end{equation*}
where the first isomorphism is induced by $j$, the second one is the canonical isomorphism, and the final map is the inclusion of the summand indexed by $ji$.
\end{constr}

\begin{defi}
A \emph{symmetric spectrum} (in simplicial sets) is an $\cat{SSet}_*$-enriched functor $\bm\Sigma\to\cat{SSet}_*$. We write $\cat{Spectra}$ for the corresponding category of enriched functors and enriched natural transformations, which is itself enriched, tensored, and cotensored over $\cat{SSet}_*$ with (co)tensoring defined levelwise.

If $G$ is any group, then we write $\cat{$\bm G$-Spectra}$ for the category of $G$-objects in $\cat{Spectra}$ and call its objects \emph{$G$-symmetric spectra} or simply (by slight abuse of language) \emph{$G$-spectra}.
\end{defi}

\begin{rk}
Symmetric spectra are often instead defined in a `coordinatized' fashion as sequences $(X_n)_{n\ge0}$ of based $\Sigma_n$-simplicial sets together with maps $S^1\smashp X_n\to X_{n+1}$ that interact suitably with the actions. For the equivalence to the above definition we refer the reader to \cite[2.4]{hausmann-equivariant}.
\end{rk}

We will now construct equivariant model structures on the category of $G$-spectra. Just like non-equivariantly, these come in a \emph{projective} and a \emph{flat} version, and will be obtained by Bousfield localizing suitable level model structures:

\begin{prop}
There is a unique model structure on $\cat{$\bm G$-Spectra}$ in which a map $f$ is a weak equivalence or fibration if and only if $f(A)$ is a $\mathcal G_{G,\Sigma_A}$-weak equivalence or fibration, respectively, for every finite set $A$. We call this the \emph{$G$-equivariant projective level model structure} and its weak equivalences the \emph{$G$-equivariant level weak equivalences}. It is combinatorial with generating cofibrations
\begin{equation*}
\big\{\big(G_+\smashp\bm\Sigma(A,\blank)\big)/H\smashp(\del\Delta^n\hookrightarrow\Delta^n)_+ : \text{$A$ finite set}, H\in\mathcal G_{G,\Sigma_A}, n\ge0\big\}.
\end{equation*}
\end{prop}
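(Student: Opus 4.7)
The plan is to construct this model structure by right-induction (transfer) along the evaluation functor
\[
\ev\colon \cat{$\bm G$-Spectra} \to \prod_A \cat{$\bm{(G\times\Sigma_A)}$-SSet}_{\mathcal G_{G,\Sigma_A}}, \qquad X \mapsto (X(A))_A,
\]
where $A$ runs over a skeleton of finite sets and the target carries the product model structure. By design, a map $f$ is a weak equivalence or fibration in the transferred structure if and only if $\ev(f)$ is, which is exactly the description in the statement. The functor $\ev$ has a left adjoint $F = \coprod_A F_A$ with $F_A$ the free $G$-spectrum functor at level $A$; writing $F_A\bigl((G\times\Sigma_A)_+/H \smashp K\bigr) \cong \bigl((G_+\smashp\bm\Sigma(A,\blank))/H\bigr) \smashp K$, one sees that $F$ applied to the generating cofibrations of the source (with $K = (\del\Delta^n\hookrightarrow\Delta^n)_+$) recovers exactly the generating cofibrations named in the proposition.

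To invoke a standard transfer theorem for combinatorial model categories (e.g.\ Hirschhorn's formulation of Kan's recognition theorem), two conditions must be verified. Smallness of the domains is immediate, since both categories are locally presentable and $F_A$ preserves presentable objects. The real content is showing that every relative $F(J)$-cell complex, with $J$ the image under $F$ of the generating acyclic cofibrations of the source, is a level weak equivalence. Via left-properness and the homotopicality of filtered colimits in each $\mathcal G_{G,\Sigma_B}$-model structure---both granted by the preceding proposition---this reduces to checking that each individual $F_A(j)$, for $j$ a generating acyclic cofibration of the graph model structure at level $A$, is itself a level acyclic cofibration.

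The main obstacle I expect is precisely this last level-wise check: for every finite set $B$ one must show that $F_A(j)(B)$ is a $\mathcal G_{G,\Sigma_B}$-acyclic cofibration. The decomposition $\bm\Sigma(A,B) = \bigvee_{i\colon A\hookrightarrow B} S^{B\setminus i(A)}$ followed by passing to $\Sigma_A$-orbits of injections $i$ expresses $F_A((G\times\Sigma_A)_+/H)(B)$ as a wedge whose summands I need to identify as free $G\times\Sigma_B$-cells with \emph{graph} isotropy, i.e.\ with isotropy intersecting $\Sigma_B$ trivially. Once this isotropy calculation is carried out---using essentially that $H$ was a graph subgroup of $G\times\Sigma_A$ and that the residual $\Sigma_{B\setminus i(A)}$-factor coming from the sphere acts freely outside the basepoint---each summand, smashed with the horn inclusion, is visibly of the generating form in $\cat{$\bm{(G\times\Sigma_B)}$-SSet}_{\mathcal G_{G,\Sigma_B}}$, so the previous proposition supplies the required acyclic cofibration and the transfer criterion concludes.
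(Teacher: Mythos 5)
Your overall strategy---transferring the product of the $\mathcal G_{G,\Sigma_A}$-equivariant model structures along the evaluation functor, with left adjoint a wedge of semifree functors, and identifying the image of the generating cofibrations with the stated set---is the standard route, and it is essentially what underlies the reference the paper cites for this statement (the paper's own ``proof'' is just a pointer to Hausmann, whose argument verifies a consistency condition of exactly this transfer type). The problem is the step you yourself single out as the real content: the isotropy claim is false. In the decomposition $\bm\Sigma(A,B)=\bigvee_{i\colon A\hookrightarrow B}S^{B\setminus i(A)}$ the group $\Sigma_B$ does act freely on the set of injections, but the residual $\Sigma_{B\setminus i(A)}$-action on the simplicial sphere $S^{B\setminus i(A)}=(S^1)^{\smashp (B\setminus i(A))}$ permutes smash coordinates and has non-basepoint fixed simplices: already for two coordinates the image of the diagonal $1$-simplex of $\Delta^1\times\Delta^1$ is a non-basepoint simplex fixed by the transposition. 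So whenever $|B|\ge|A|+2$ (e.g.\ for $G=1$, $H=1$, $A=\varnothing$, where the spectrum in question is $\mathbb S$ with levels $S^B$), the $(G\times\Sigma_B)$-simplicial set $\bigl(G_+\smashp\bm\Sigma(A,B)\bigr)/H$ has simplices whose isotropy meets $1\times\Sigma_B$ nontrivially, hence is not a cofibrant object of the $\mathcal G_{G,\Sigma_B}$-model structure, and $F_A(j)(B)$ is not a (projective) $\mathcal G_{G,\Sigma_B}$-acyclic cofibration. This is precisely the point where projective cofibrations of (equivariant) symmetric spectra fail to be levelwise ``graph'' cofibrations, and it is the reason the flat level structure is a genuinely different notion; your proposed final step therefore does not go through as written.

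The gap is repairable, and in fact the repair is easier than what you attempt, because the transfer criterion does not require levelwise projective cofibrancy---only that relative $F(J)$-cell complexes are level weak equivalences. For a generating acyclic cofibration $j$ of the graph model structure at level $A$, the map $F_A(j)(B)$ has the form $X\smashp(\Lambda^n_k\hookrightarrow\Delta^n)_+$ with $X=\bigl(G_+\smashp\bm\Sigma(A,B)\bigr)/H$. This is a levelwise injection, and for \emph{every} subgroup $L\subset G\times\Sigma_B$ one has $(X\smashp K_+)^L=X^L\smashp K_+$ since $L$ acts trivially on $K$, so on $L$-fixed points the map becomes $X^L\smashp(\Lambda^n_k)_+\to X^L\smashp\Delta^n_+$, a weak equivalence by the gluing lemma. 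Thus $F_A(j)(B)$ is an acyclic cofibration in the \emph{injective} $\mathcal G_{G,\Sigma_B}$-model structure; this class is closed under pushout and transfinite composition, so every relative $F(J)$-cell complex is a level equivalence (no left properness is needed), and Kan's transfer theorem then produces the model structure with the stated generating cofibrations. With that substitution your argument matches the cited proof in spirit; as stated, however, the crucial verification is incorrect.
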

More precisely, for the above to be a set (as opposed to a proper class), we should restrict to a set of finite sets hitting all isomorphism classes; in all what follows we will ignore this and similar technicalities.
\begin{proof}
See \cite[Corollary~2.26 and discussion afterwards]{hausmann-equivariant}.
\end{proof}

\begin{prop}
There is a unique model structure on $\cat{$\bm G$-Spectra}$ in which a map $f$ is a weak equivalence or fibration if and only if $f(A)$ is a weak equivalence or fibration, respectively, in the \emph{injective} $\mathcal G_{G,\Sigma_A}$-model structure for every finite set $A$. We call this the \emph{$G$-equivariant flat level model structure}. Its weak equivalences are precisely the $G$-equivariant level weak equivalence; moreover, a map is a cofibration in the flat level model structure on $\cat{$\bm G$-Spectra}$ if and only if it is a cofibration in the flat level model structure on $\cat{Spectra}$ (i.e.~for $G=1$); we will refer to these maps as \emph{flat cofibrations}.

Finally, the $G$-equivariant flat level model structure is combinatorial with generating cofibrations
\begin{equation*}
\big\{\big(G_+\smashp\bm\Sigma(A,\blank)\big)/H\smashp(\del\Delta^n\hookrightarrow\Delta^n)_+ : \text{$A$ finite set}, H\subset G\times\Sigma_A, n\ge0\big\}.
\end{equation*}
\begin{proof}
The construction of the model structure and the identification of the generating cofibrations is \cite[Corollary~2.25 and discussion afterwards]{hausmann-equivariant}, while the characterization of the flat cofibrations is Remark~2.20 of \emph{op.~cit.}
\end{proof}
\end{prop}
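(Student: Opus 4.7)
The plan is to apply Kan's recognition theorem for cofibrantly generated model categories, taking as generating cofibrations $I$ the set in the statement and as generating acyclic cofibrations $J$ the union, over finite sets $A$, of the images under the left adjoint $F_A$ of the evaluation functor $\ev_A\colon\cat{$\bm G$-Spectra}\to\cat{$\bm{(G\times\Sigma_A)}$-SSet}_*$ of generating acyclic cofibrations for the injective $\mathcal{G}_{G,\Sigma_A}$-model structure. Smallness of both $I$ and $J$ is automatic because $\cat{$\bm G$-Spectra}$ is locally presentable, and from the adjunctions $F_A\dashv\ev_A$ one reads off that $I$-injectives are precisely those $f$ with $\ev_A(f)$ a trivial fibration in the injective $\mathcal{G}_{G,\Sigma_A}$-model structure for every $A$, with an analogous statement for $J$-injectives; these then form the classes of trivial fibrations and fibrations of the sought-after model structure.

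The substantive hypothesis of Kan recognition is that every $J$-cofibration is a level weak equivalence. A single pushout of a generating acyclic cofibration $F_A(j)$ along an arbitrary map is, at level $A$, a pushout of $j$ and hence an acyclic cofibration in the injective $\mathcal{G}_{G,\Sigma_A}$-model structure, while at every other level it is an isomorphism by freeness; in particular it is a level weak equivalence. Closure under transfinite composition then follows because filtered colimits in $\cat{$\bm G$-Spectra}$ are computed pointwise and because filtered colimits of weak equivalences in the injective $\mathcal{F}$-equivariant model structure are themselves weak equivalences, as recorded in the earlier proposition.

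With the model structure established, the coincidence of its weak equivalences with the $G$-equivariant level weak equivalences is immediate from the fact that the injective and projective $\mathcal{F}$-equivariant model structures on $\cat{$\bm G$-SSet}$ share the same class of weak equivalences by construction. For the characterization of flat cofibrations, the forgetful functor $U\colon\cat{$\bm G$-Spectra}\to\cat{Spectra}$ preserves all colimits (being left adjoint to the coinduction functor), and a decomposition of the based $\Sigma_A$-set $(G\times\Sigma_A)/H_+$ into $\Sigma_A$-orbits exhibits $U$ applied to each generating $G$-flat cofibration as a wedge of generating flat cofibrations of plain symmetric spectra; closure under pushouts, transfinite compositions, and retracts then gives that every $G$-flat cofibration has an underlying flat cofibration. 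For the converse direction, I would invoke the standard latching-map characterization of flat cofibrations of symmetric spectra---each latching map $X_A\cup_{L_A X}L_A Y\to Y_A$ is a monomorphism of simplicial sets---which is a level-by-level set-theoretic condition and hence is not affected by the presence of an additional $G$-action.

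I expect the main obstacle to be the step verifying that $J$-cofibrations are level weak equivalences, which ultimately rests on the homotopicity of filtered colimits in the injective $\mathcal{F}$-equivariant model structure of $G$-simplicial sets---a fact recorded (but itself non-trivial) in the earlier proposition; everything else amounts largely to bookkeeping with the free-evaluation adjunctions.
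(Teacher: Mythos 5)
Your proposal founders on the one step that actually carries the content. You claim that a pushout of a generating acyclic cofibration $F_A(j)$ is ``at level $A$ a pushout of $j$, while at every other level an isomorphism by freeness.'' This is false: the left adjoint of $\ev_A\colon\cat{$\bm G$-Spectra}\to\cat{$\bm{(G\times\Sigma_A)}$-SSet}_*$ is the \emph{semifree} spectrum functor $X\mapsto\bm\Sigma(A,\blank)\smashp_{\Sigma_A}X$, not the ``concentrated in level $A$'' functor of symmetric sequences (the latter ignores the structure maps). Consequently $F_A(j)(B)=\bm\Sigma(A,B)\smashp_{\Sigma_A}j$ is non-trivial for every $B$ with $|B|\ge|A|$ (for instance $F_\varnothing(X)(B)=S^B\smashp X$), and a pushout along $F_A(j)$ changes all of these levels. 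The consistency condition you must verify is therefore that each $\bm\Sigma(A,B)\smashp_{\Sigma_A}j$ is a weak equivalence (and monomorphism) in the injective $\mathcal G_{G,\Sigma_B}$-model structure, where $j$ is an acyclic cofibration for the injective $\mathcal G_{G,\Sigma_A}$-structure. This is a genuinely equivariant statement: one identifies $\bm\Sigma(A,B)\cong(\Sigma_B)_+\smashp_{\Sigma_{B\setminus A}}S^{B\setminus A}$ and has to control graph-subgroup fixed points of the resulting smash-and-induce construction. This is exactly the non-formal input supplied by Hausmann (the paper's proof is simply a citation to his Corollary~2.25 and Remark~2.20), and it is mirrored in this paper's own proof of Proposition~\ref{prop:level-model-structures}, where the consistency condition is checked by showing $\bm\Sigma(A,B)\smashp_{\Sigma_A}\blank$ is left Quillen via the factorization through $S^{B\setminus A}\smashp\blank$ and induction along $\Sigma_A\times\Sigma_{B\setminus A}\hookrightarrow\Sigma_B$. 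Your closing remark that the ``main obstacle'' is homotopicity of filtered colimits misplaces the difficulty: that part is routine, whereas the single-pushout step you dismissed is where the theorem lives.

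The remaining pieces are broadly fine but inherit dependencies on the same machinery. The identification of the weak equivalences is indeed immediate. For the cofibrations, your forward direction (decomposing $(G\times\Sigma_A)/H$ into $\Sigma_A$-orbits to see that the forgetful functor sends generating $G$-flat cofibrations to wedges of generating flat cofibrations, then closing up under colimits and retracts) works; but the converse as you state it presupposes that cofibrations in the newly built $G$-equivariant flat level structure are characterized by their latching maps being monomorphisms, which is not automatic from the transfer argument and is again part of the level-model-structure calculus (Hausmann's Remark~2.20, resp.\ the latching-map description of cofibrations in the general criterion). So even granting the model structure, this step needs to be argued rather than asserted as ``unaffected by the $G$-action.''
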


\begin{rk}
We will never need to know how the generating \emph{acyclic} cofibrations of the above two model structures look like; the curious reader can find them in Hausmann's treatment referred to above.
\end{rk}

\begin{defi}
A $G$-spectrum $X$ is called a \emph{$G$-$\Omega$-spectrum} if for every $H\subset G$ and all finite $H$-sets $A\subset B$ the derived adjoint structure map $X(A)\to \cat{R}\Omega^{B\setminus A} X(B)$ is an $H$-equivariant weak equivalence, where $H$ acts on $X$, $A$, and $B$.
\end{defi}

Here we are deriving $\Omega^{B\setminus A}$ with respect to the $H$-equivariant model structure on $\cat{$\bm H$-SSet}_*$; in particular, if $X$ is fibrant in either of the above level model structures, then the above is already represented by the ordinary adjoint structure map.

We will also frequently reexpress the above condition as saying that for all finite $H$-sets $A,C$ the map $X(A)\to\cat{R}\Omega^C X(A\amalg C)$ is a weak equivalence.

\begin{thm}\label{thm:equiv-stable}
The projective $G$-equivariant level model structure on $\cat{$\bm G$-Spectra}$ admits a Bousfield localization with fibrant objects precisely the projectively level fibrant $G$-$\Omega$-spectra. Similarly, the flat $G$-equivariant level model structure admits a Bousfield localization with fibrant objects the flatly level fibrant $G$-$\Omega$-spectra. Both of these model structures are combinatorial, and they have the same weak equivalences, which we call the \emph{$G$-equivariant weak equivalences}.
\begin{proof}
See \cite[Theorems~4.7 and~4.8]{hausmann-equivariant}.
\end{proof}
\end{thm}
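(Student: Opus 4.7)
The plan is to obtain both stable model structures as left Bousfield localizations of the corresponding level model structures at an explicit set of stabilization maps, and then to see that the two localizations have the same weak equivalences because they are already forced by a common set of local objects (the $\Omega$-spectra).

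\textbf{Step 1: Constructing the localization.} Both level model structures are combinatorial and proper, so by Smith's theorem (or \cite{hirschhorn}) a left Bousfield localization exists at any set of maps, and is again combinatorial. For each subgroup $H\subset G$ and each pair of finite $H$-sets $A\subset B$ we produce a map
\begin{equation*}
\lambda_{H,A,B}\colon \big((G\times\Sigma_B)_+\smashp_{(H\times\Sigma_{B\setminus A})} \bm\Sigma(B,\blank)\smashp S^{B\setminus A}\big) \longrightarrow (G\times\Sigma_A)_+\smashp_H \bm\Sigma(A,\blank)
\end{equation*}
adjoint to the canonical evaluation (i.e.\ the composite of the structure map of the free spectrum with the quotient map), and smash it with the generating (acyclic) cofibrations of $\cat{$\bm{(G\times\Sigma_A)}$-SSet}_*$ in the appropriate model structure. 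For the flat localization we use all subgroups of $G\times\Sigma_A$ in place of $H\subset G$ and $\mathcal G$-subgroups of $G\times\Sigma_B$ on the source; for the projective version we only range over graph subgroups. The resulting sets are small, so the localizations exist and are combinatorial.

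\textbf{Step 2: Identifying the fibrant objects.} By the universal property of Bousfield localization, the fibrant objects in the localized model structure are precisely the level fibrant spectra $X$ for which the derived mapping space $\maps(\blank, X)$ carries each $\lambda_{H,A,B}$-cell to a weak equivalence. Using the enriched adjunction between the free spectra and the evaluation functors, and the fact that $X$ is level fibrant so that $\Omega^{B\setminus A}$ need not be further derived, one translates this condition into saying that $X(A)\to\Omega^{B\setminus A} X(B)$ is an $H$-equivariant weak equivalence for every $H\subset G$ and every pair of finite $H$-sets $A\subset B$; that is, $X$ is a $G$-$\Omega$-spectrum. This is the standard `localization theorem for stabilizations', going back to \cite{hovey-spectra} in the non-equivariant case.

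\textbf{Step 3: Comparing projective and flat.} The identity is a left Quillen equivalence between the projective and flat level model structures (both have the same weak equivalences, and the flat one has more cofibrations). Since the projective $\lambda$-maps are contained up to level weak equivalence in the flat ones, and conversely the flat local objects are in particular projectively local, the two classes of localized fibrant objects coincide (both are the level fibrant $G$-$\Omega$-spectra in the respective sense, but since the underlying weak equivalences of the level structures agree, these detect the same class of weak equivalences between cofibrant objects). Consequently the identity remains a Quillen equivalence after localization, and in particular the two localized model structures share the same class of weak equivalences---the \emph{$G$-equivariant weak equivalences}.

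\textbf{Main obstacle.} The technically delicate step is Step~2: one has to verify that the derived mapping space computation is compatible with the $H$-equivariance buried in the graph subgroups, i.e.\ that a level fibrant $X$ is $\lambda_{H,A,B}$-local if and only if the \emph{$H$-derived} adjoint structure map is a weak equivalence. This amounts to an untangling of the $(G\times\Sigma_A)$-equivariant information on free spectra into genuinely $H$-equivariant data on their values, and relies on the fact that the graph model structures of Example~\ref{ex:graph-model-structure} detect fixed points along graph subgroups precisely as $H$-fixed points of the relevant $H$-spaces. Once this identification is in place, the rest of the argument is formal.
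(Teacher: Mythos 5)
Your overall strategy---localize both level model structures at an explicit set of stabilization maps, identify the local objects as the level fibrant $G$-$\Omega$-spectra, and then deduce that the two localizations have the same weak equivalences---is the right one; the paper itself only cites Hausmann here, but this is exactly how it proceeds in the $G$-global analogue (Proposition~\ref{prop:projective-model-structure-G-gl} and the corollary following it), where, importantly, the \emph{same} localizing set is used for the projective and for the flat level model structure. This is where your Step~1 contains a genuine error: for the flat localization you index the stabilization maps by \emph{all} subgroups of $G\times\Sigma_A$ instead of by graph subgroups (equivalently, by subgroups $H\subset G$ together with $H$-actions on $A\subset B$). The flat level structure has more cofibrations than the projective one, but its weak equivalences---and the notion of $G$-$\Omega$-spectrum---only involve fixed points along the graph subgroups $\mathcal G_{G,\Sigma_A}$; localizing at conditions on $L$-fixed points of the adjoint structure maps for non-graph $L\subset G\times\Sigma_A$ imposes requirements that are not part of the definition of a $G$-$\Omega$-spectrum and are not automatic for flatly level fibrant $G$-$\Omega$-spectra (the naive $\Sigma_A$-fixed points of the levels of a symmetric spectrum carry no meaningful equivariant information beyond the graph subgroups, which is precisely why the level structures are built from the families $\mathcal G_{G,\Sigma_A}$; note also that your indexing no longer even provides an action on $B$). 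Consequently the fibrant objects of your flat localization are not identified as claimed in Step~2, its weak equivalences are in general strictly larger than intended, and the comparison in Step~3 collapses. The fix is simply to localize both level structures at the same graph-indexed set.

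Two further points. In Step~3 the assertion that ``the two classes of localized fibrant objects coincide'' is false as stated: flat level fibrancy is strictly stronger than projective level fibrancy, so the classes differ. The correct argument, which your parenthetical gestures at, is the abstract one: in a left Bousfield localization a map $f$ is a weak equivalence if and only if $[f,X]$ is bijective for every local object $X$, where $[\,{,}\,]$ is computed in the localization at the (common) level weak equivalences; since the $G$-$\Omega$-spectrum condition is invariant under level weak equivalences, the projectively fibrant and the flatly fibrant $\Omega$-spectra determine the same class of local equivalences, namely the $G$-equivariant weak equivalences. Finally, your explicit formula for $\lambda_{H,A,B}$ is garbled: the object corepresenting $H$-fixed points of evaluation at $A$ is the semifree spectrum $(G_+\smashp\bm\Sigma(A,\blank))/\Gamma_{H,\phi}$, with no extra free $\Sigma_A$-factor, and the stabilization maps should be the $G$-inductions $G_+\smashp_H\lambda_{H,A,B}$ of Hausmann's maps (his $B$ playing the role of your $B\setminus A$), as recalled in Remark~\ref{rk:equivariant-proj-gen-cof}; this is fixable, but as written your sources and targets do not corepresent the functors needed for the translation in Step~2.
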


\begin{rk}\label{rk:equivariant-proj-gen-cof}
Let us say something about the generating acyclic cofibrations of the above model structure, see \cite[Example~2.37 and discussion after Theorem~4.8]{hausmann-equivariant}: for any finite set $A$, the spectrum $\bm\Sigma(A,\blank)$ corepresents evaluation at $A$ by the enriched Yoneda lemma; similarly $S^B\smashp\bm\Sigma(A\amalg B,\blank)$ corepresents $X\mapsto\Omega^B X(A\amalg B)$. By another application of the Yoneda Lemma we therefore get a map $\lambda_{A,B}\colon S^B\smashp\bm\Sigma(A\amalg B,\blank)\to\bm\Sigma(A,\blank)$ such that $\maps(\lambda_{A,B},X)$ agrees up to conjugation by natural isomorphisms with the adjoint structure map $X(A)\to\Omega^B X(A\amalg B)$ for any symmetric spectrum $X$.

If now $H\subset G$ acts on $A,B$, then $\lambda_{A,B}$ becomes a map of $H$-spectra (denoted $\lambda_{H,A,B}$) with respect to the induced actions, and we factor it as a projective cofibration $\kappa_{H,A,B}$ followed by a level weak equivalence $\rho_{H,A,B}$. Then a set of generating acyclic cofibrations is given by taking a set of generating acyclic \emph{level} cofibrations and adding the pushout product maps $(G_+\smashp_H\kappa_{H,A,B})\ppo i$ for all $H\subset G$, all finite $H$-sets $A,B$ (up to isomorphism), and all generating cofibrations $i$.
\end{rk}

\begin{rk}\label{rk:equivariant-stabilization}
As a teaser for the things to come, we recall that any combinatorial simplicial (left) proper model category admits a \emph{stabilization} \cite{schwede-stab}. We remark without proof that on associated $\infty$-categories this models the universal stabilization in the sense of \cite[Corollary~1.4.4.5]{higher-algebra}, i.e.~the initial example of an adjunction to a presentable stable $\infty$-category, or equivalently the result of universally inverting $\Sigma\mathrel{:=} S^1\smashp\blank$ in the presentable world.

However, while we have a Quillen adjunction $\Sigma^\infty_+\colon\cat{$\bm G$-SSet}\rightleftarrows\cat{$\bm G$-Spectra}:\!\Omega^\infty$ for either of the above model structures, as already mentioned in the introduction this does \emph{not} model the stabilization in the above na\"ive sense. Rather, this defines a `genuine stabilization' universally inverting the functor $S^G\smashp\blank$ (in presentable $\infty$-categories), where $G$ acts via permuting the smash functors of $S^G=\bigwedge_GS^1$, or equivalently the functors $S^A\smashp\blank$ for all finite $G$-sets $A$, see~\cite[Appendix C]{gepner-meier} or \cite[Theorem~A.2]{clausen-mathew-naumann-noel}.
\end{rk}

The main disadvantage of the approach via symmetric spectra is that the weak equivalences are only indirectly defined in terms of specifying the local objects. However, there is a notion of \emph{$\ul\pi_*$-isomorphism}, which we will now introduce, that while not accounting for all $G$-equivariant weak equivalences is at least coarse enough for many purposes:

\begin{constr}
Let $H\subset G$ and let $\mathcal U_H$ be a complete $H$-set universe, i.e.~a countable $H$-set into which any other countable $H$-set embeds equivariantly, and write $s(\mathcal U_H)$ for the poset of finite $H$-subsets of $\mathcal U_H$.

For every $G$-spectrum $X$ and every $k\ge0$ we then define
\begin{equation*}
\pi_k^H(X)=\colim_{A\in s(\mathcal U_H)} [S^{A\amalg\{1,\dots,k\}}, |X(A)|]^H_*
\end{equation*}
where $[\,{,}\,]^H_*$ denotes the set of $H$-equivariant based homotopy classes (for maps of $H$-topological spaces) and the transition maps are given by
\begin{align*}
[S^{A\amalg\{1,\dots,k\}}, |X(A)|]^H_*&\xrightarrow{S^{B\setminus A}\smashp\blank}
[S^{B\setminus A}\smashp S^{A\amalg\{1,\dots,k\}}, S^{B\setminus A}\smashp|X(A)|]^H_*\\
&\cong[S^{B\amalg\{1,\dots,k\}},S^{B\setminus A}\smashp |X(A)|]^H_*\\
&\xrightarrow{\sigma} [S^{B\amalg\{1,\dots,k\}},|X(B)|]^H_*
\end{align*}
for all $A\subset B$, where $\sigma$ denotes the structure map of the symmetric spectrum and the unlabelled isomorphism is the canonical one. Similarly, for $k<0$ we define
\begin{equation*}
\pi_k^H(X)=\colim_{A\in s(\mathcal U_H)}[S^A, |X(A\amalg\{1,\dots,-k\})|]^H_*
\end{equation*}
with the analogously defined transition maps.

For every $k\in\mathbb Z$ and $H\subset G$, $\pi_k^HX$ is naturally an abelian group \cite[Definition~3.1]{hausmann-equivariant}; however, we will not need this group structure below.
\end{constr}

\begin{defi}
A map $f\colon X\to Y$ of $G$-spectra is called a \emph{($G$-equivariant) $\ul\pi_*$-isomorphism} if $\pi_k^Hf$ is an isomorphism for all $H\subset G$ and all $k\in\mathbb Z$.
\end{defi}

\begin{rk}
The above homotopy groups are independent of the choice of $\mathcal U_H$ up to natural, but in general non-canonical isomorphism \cite[3.3]{hausmann-equivariant}. In particular, the notion of $\ul\pi_*$-isomorphism is independent of any choices.
\end{rk}

\begin{thm}
Every $\ul\pi_*$-isomorphism of $G$-spectra is a $G$-equivariant weak equivalence.
\begin{proof}
See \cite[Theorem~3.36]{hausmann-equivariant}
\end{proof}
\end{thm}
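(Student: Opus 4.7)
The plan is to show that a stable fibrant replacement functor preserves $\ul\pi_*$-isomorphisms, and that $\ul\pi_*$-isomorphisms between fibrant objects are level weak equivalences; since level weak equivalences are $\ul\pi_*$-isomorphisms by definition of the transition maps, this will let us conclude via a standard two-out-of-three argument.

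First, I would verify that $\ul\pi_*$-isomorphisms satisfy good closure properties: they are preserved under filtered colimits (by commuting two colimits), under wedge sums, under cobase change along flat cofibrations (via a cofiber long exact sequence for $\ul\pi_*$, which requires checking that for flat cofibrations $A\hookrightarrow B$ the quotient $B/A$ computes the same homotopy groups as the cofiber, uniformly in $H\subset G$ and in the choice of universe $\mathcal U_H$), and under pushout products with injective maps of based simplicial sets. These are the exact properties needed to run the small object argument inside $\ul\pi_*$-isomorphisms.

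Next, the maps $\lambda_{H,A,B}\colon S^B\smashp\bm\Sigma(A\amalg B,\blank)\to\bm\Sigma(A,\blank)$ from Remark~\ref{rk:equivariant-proj-gen-cof} are $\ul\pi_*$-isomorphisms of $H$-spectra: by Yoneda, mapping into any symmetric spectrum $X$ reproduces the adjoint structure map $X(A)\to\Omega^B X(A\amalg B)$, and one can compute $\pi_*^K$ of $\bm\Sigma(A,\blank)$ directly from the defining colimit and check both sides give the same answer for every $K\subset H$ (essentially because the source and target are built from the same building blocks, shifted). Consequently, the cofibrant replacements $\kappa_{H,A,B}$ are also $\ul\pi_*$-isomorphisms, and taking pushout products with generating cofibrations preserves this by the closure properties above. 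By the small object argument, a stable fibrant replacement $X\to X^f$ is obtained as a transfinite composition of cobase changes of such maps together with generating acyclic \emph{level} cofibrations, hence is a $\ul\pi_*$-isomorphism.

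Finally, I would show that a $\ul\pi_*$-isomorphism $g\colon X\to Y$ between projectively level fibrant $G$-$\Omega$-spectra is a level weak equivalence. The key observation here is that for a $G$-$\Omega$-spectrum $X$, the transition maps in the colimit defining $\pi_k^H(X)$ are already bijections once $A$ contains a copy of the regular $H$-set, so $\pi_k^H(X)\cong[S^{A\amalg\{1,\dots,k\}},|X(A)|]^H_*$ for such $A$. Together with an analogous statement for higher-dimensional spheres (obtained by looping), this identifies the $\ul\pi_*$-isomorphism condition with an equivariant weak equivalence condition on every level $X(A)$ for $A$ large enough; the remaining levels are then controlled by the $\Omega$-spectrum structure via restriction. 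Combining this with the previous step and two-out-of-three on the naturality square $X\to X^f$, $Y\to Y^f$ gives the conclusion. The main obstacle is the technical analysis in this last step: making the identification between $\pi_k^H$ and honest equivariant homotopy classes precise, and carefully tracking how different subgroups $H\subset G$ and different choices of universes interact — this is where the bulk of Hausmann's work lies.
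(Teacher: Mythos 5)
The key step in your second paragraph is false, and the argument collapses there. The maps $\lambda_{H,A,B}\colon S^B\smashp\bm\Sigma(A\amalg B,\blank)\to\bm\Sigma(A,\blank)$ are $G$-equivariant weak equivalences, but they are \emph{not} $\ul\pi_*$-isomorphisms in general: already for $H=1$, $A=\varnothing$ and $B$ a one-point set, the source is the free symmetric spectrum $S^1\smashp\bm\Sigma(B,\blank)$, whose naive $\pi_0$ is a free abelian group of countably infinite rank, while the target is $\mathbb S$; this is the standard non-semistable example in symmetric spectra. For the same reason your conclusion that a stable fibrant replacement $X\to X^f$ is a $\ul\pi_*$-isomorphism cannot hold: it would say that the naive homotopy groups of every ($G$-)symmetric spectrum agree with those of an $\Omega$-spectrum replacement, i.e.~that every symmetric spectrum is semistable, which is exactly what fails for $S^1\smashp\bm\Sigma(B,\blank)$. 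Your remaining steps are fine in isolation---the closure properties of $\ul\pi_*$-isomorphisms, and the fact that a $\ul\pi_*$-isomorphism between level fibrant $G$-$\Omega$-spectra is a level equivalence---but they cannot be assembled along these lines, precisely because the naive homotopy groups are not invariant under $G$-equivariant weak equivalence.

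The actual proof (Hausmann's \cite[Theorem~3.36]{hausmann-equivariant}, following the non-equivariant argument of Hovey--Shipley--Smith and Schwede) runs in the opposite direction: one characterizes the $G$-equivariant weak equivalences as those maps $f$ for which $[f,E]$ is a bijection for every level fibrant (injective) $G$-$\Omega$-spectrum $E$, and then shows that every $\ul\pi_*$-isomorphism has this property---for instance by factoring through the mapping cylinder to reduce to a levelwise cofibration whose cofiber has vanishing $\ul\pi_*$, and proving that any map from a $G$-spectrum with trivial homotopy groups into such an $E$ is homotopic to the constant map, by a level-by-level induction exploiting the extension property and the $\Omega$-spectrum property of $E$. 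Nowhere in that argument does one claim that the localizing maps $\lambda_{H,A,B}$ (or fibrant replacement maps) are $\ul\pi_*$-isomorphisms; avoiding that claim is the whole point of the detour through cohomology with coefficients in injective $\Omega$-spectra.
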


\begin{warn}
While restriction along \emph{injective} homomorphisms preserves all of the above structure \cite[5.2]{hausmann-equivariant}, the equivariant model structures do not interact reasonably with restrictions along \emph{general} homomorphisms, unlike their unstable siblings. In particular, if $\alpha\colon G\to G'$ is not injective, then $\alpha^*$ will typically \emph{not} send $G'$-equivariant weak equivalences to $G$-equivariant ones.

One nice property of the $G$-global theory we will introduce in the following two subsections is that it comes with homotopically meaningful `change of group' adjunctions (which we will later formalize in the notion of a \emph{global model category}), and in particular that restriction along arbitrary homomorphisms will indeed be homotopical.
\end{warn}

\subsection{Unstable \texorpdfstring{$\bm G$}{G}-global homotopy theory}
Let $G$ continue to denote a finite group. We will now recall \emph{$G$-global homotopy theory} in the sense of \cite{g-global}; this generalizes global homotopy theory (for finite groups) in the sense of Schwede \cite{schwede-book}, and will be the key tool in this article to express and prove properties of the latter. Again, we begin with the unstable story:

\begin{constr}\label{constr:indiscrete}
The forgetful functor $\cat{SSet}\to\cat{Set}$ sending a simplicial set $X$ to its set of vertices admits a right adjoint $E$, given explicitly by $(EX)_n=X^{1+n}$ with the evident functoriality in $X$ and with structure maps induced by the canonical identification $X^{1+n}\cong\Hom_{\cat{Set}}([n], X)$.
\end{constr}

\begin{defi}
We write $I$ for the category of finite sets and injective maps, and we let $\mathcal I$ denote the simplicial category obtained by applying $E$ to all hom sets. We write $\cat{$\bm{\mathcal I}$-SSet}$ for the enriched category of enriched functors $\mathcal I\to\cat{SSet}$ and call its objects \emph{$\mathcal I$-spaces} or \emph{global spaces}. More generally, we write $\cat{$\bm G$-$\bm{\mathcal I}$-SSet}$ for the category of $G$-objects in $\cat{$\bm{\mathcal I}$-SSet}$ and call its objects \emph{$G$-$\mathcal I$-spaces} or \emph{$G$-global spaces}.
\end{defi}

\begin{prop}\label{prop:I-g-glob-level}
There is a unique model structure on $\cat{$\bm G$-$\bm{\mathcal I}$-SSet}$ in which a map is a weak equivalence or fibration if and only if $f(A)$ is a weak equivalence or fibration, respectively, in the $\mathcal G_{\Sigma_A,G}$-equivariant model structure for every finite set $A$. We call this the \emph{$G$-global level model structure} and its weak equivalences the \emph{$G$-global level weak equivalences}. This model structure is proper, simplicial, and combinatorial with generating cofibrations
\begin{equation*}
\{ \mathcal I(A,\blank)\times_\phi G\times (\del\Delta^n\hookrightarrow\Delta^n) : H\subset\Sigma_A, \phi\colon H\to G,n\ge0\},
\end{equation*}
where $\times_\phi$ denotes the quotient of the ordinary product by the diagonal of the right action of $H$ on $\mathcal I(A,\blank)$ via its tautological action on $A$ and the right action on $G$ via $g.h=g\phi(h)$.

Finally, filtered colimits of $G$-global level weak equivalences are again $G$-global level weak equivalences.
\begin{proof}
See \cite[Proposition~1.4.3]{g-global}.
\end{proof}
\end{prop}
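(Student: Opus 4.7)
The plan is to construct this model structure by transfer from the product of $\mathcal G_{\Sigma_A, G}$-equivariant model structures on $\cat{$\bm{(\Sigma_A\times G)}$-SSet}$ along a suitable evaluation-free adjunction, and to then deduce all the ancillary properties from their levelwise analogues. Uniqueness is automatic from the fact that both weak equivalences and fibrations are specified.

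For existence, I would first introduce the adjunction
\[
F\colon\prod_A \cat{$\bm{(\Sigma_A\times G)}$-SSet}_{\mathcal G_{\Sigma_A,G}}\rightleftarrows \cat{$\bm G$-$\bm{\mathcal I}$-SSet}:\!U,
\]
where the product ranges over a skeleton of $\mathcal I$, the right adjoint $U$ reads off all levels simultaneously, and the left adjoint is built componentwise out of the free functors $F_A(K)=\mathcal I(A,\blank)\times_{\Sigma_A}K$, with the residual $G$-action inherited from $K$. Under the identification $(\Sigma_A\times G)/\Gamma_{H,\phi}\cong\Sigma_A\times_\phi G$, the images $F_A\bigl((\Sigma_A\times G)/\Gamma_{H,\phi}\times (\del\Delta^n\hookrightarrow\Delta^n)\bigr)$ are precisely the maps listed in the statement, and $U$ detects both the proposed weak equivalences and the proposed fibrations componentwise by definition. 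Kan's recognition criterion then reduces existence to showing that every relative cell complex built from the images under $F$ of generating acyclic cofibrations of the source is a levelwise $\mathcal G_{\Sigma_B,G}$-equivariant weak equivalence for every $B$. Since colimits, evaluation, and $F$ all commute, the key point is that for each finite set $B$ and each generating acyclic cofibration $j=G/H\times (\Lambda^n_k\hookrightarrow\Delta^n)$ (with $H$ a graph subgroup) of $\cat{$\bm{(\Sigma_A\times G)}$-SSet}_{\mathcal G_{\Sigma_A,G}}$, the evaluation $(F_Aj)(B)$ is a disjoint union, indexed by the $\Sigma_A$-orbits in $\mathcal I(A,B)$, of maps of the form $(\Sigma_B\times G)/\Gamma'\times(\Lambda^n_k\hookrightarrow\Delta^n)$ for graph subgroups $\Gamma'\subset\Sigma_B\times G$, and hence an acyclic cofibration in the $\mathcal G_{\Sigma_B,G}$-equivariant model structure.

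The crux of this orbit analysis is the following: the $\Sigma_B$-stabilizer of any injection $i\colon A\to B$ is the subgroup $\Sigma_{B\setminus i(A)}\subset\Sigma_B$, which is in particular a graph subgroup; combining it with the $\phi$-twisted $G$-action used to form $\mathcal I(A,\blank)\times_\phi G$ still yields a graph subgroup of $\Sigma_B\times G$ because the definition of graph subgroup only requires trivial intersection with the right-hand factor, and this property is inherited. With this in hand, $F$ is a left Quillen functor from the product of graph model structures, the model structure on $\cat{$\bm G$-$\bm{\mathcal I}$-SSet}$ with the prescribed generating sets exists, and $U$ is a right Quillen functor that detects both weak equivalences and fibrations.

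The remaining properties reduce to levelwise checks and follow from the analogous properties of the graph model structures already established. Combinatoriality is immediate from the explicit generating set; simpliciality holds because $\cat{$\bm G$-$\bm{\mathcal I}$-SSet}$ is simplicially (co)tensored with (co)tensors formed levelwise; left and right properness hold because pushouts and pullbacks are levelwise and each $\mathcal G_{\Sigma_A,G}$-equivariant model structure is proper; finally, filtered colimits commute with evaluation, so the invariance of $G$-global level weak equivalences under filtered colimits follows from the corresponding property of the graph model structures. I expect the main obstacle to be exactly the orbit-stabilizer bookkeeping described above, as the rest of the argument is a fairly standard instance of transferring a model structure along a free–forgetful adjunction.
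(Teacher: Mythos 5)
Your overall strategy -- transferring along the free--forgetful adjunction $F\dashv U$ with $F_A(K)=\mathcal I(A,\blank)\times_{\Sigma_A}K$ and verifying the consistency condition levelwise -- is sound and is essentially the standard argument (the paper itself only cites \cite[Proposition~1.4.3]{g-global}, but this is the same machinery it uses later for $\Sp(\ul{\mathscr C})$, via the consistency criterion of \cite[Proposition~C.23]{schwede-book}). However, the step you single out as the crux is wrong as stated. You claim that $(F_Aj)(B)$ decomposes as a disjoint union, indexed by $\Sigma_A$-orbits of injections, of maps $(\Sigma_B\times G)/\Gamma'\times(\Lambda^n_k\hookrightarrow\Delta^n)$ with $\Gamma'$ a graph subgroup. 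This treats $\mathcal I(A,B)$ as if it were the discrete set $\Inj(A,B)$, i.e.~as if we were working with $I$-simplicial sets. But by definition $\mathcal I(A,B)=E\,\Inj(A,B)$, which is a non-discrete (in fact nonequivariantly contractible, when non-empty) simplicial set, so $\mathcal I(A,B)\times_\phi G$ is not a coproduct of orbits $(\Sigma_B\times G)/\Gamma'$ and the claimed decomposition does not exist. (Even in the discrete $I$-case the bookkeeping is off: $\Sigma_B$ acts transitively on $\Inj(A,B)$, so $I(A,B)\times_\phi G$ is a single orbit when non-empty, not a union indexed by $\Sigma_A$-orbits; and the stabilizer relevant for the balanced product is that of a class $[i,g]$, which is larger than your $\Sigma_{B\setminus i(A)}$.)

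The good news is that the conclusion you need -- that $(F_Aj)(B)\cong\bigl(\mathcal I(A,B)\times_\phi G\bigr)\times(\Lambda^n_k\hookrightarrow\Delta^n)$ is an acyclic cofibration in the $\mathcal G_{\Sigma_B,G}$-model structure -- is still true, with a short argument that does not use any orbit decomposition. First, every simplex of $\mathcal I(A,B)\times_\phi G$ has isotropy in $\mathcal G_{\Sigma_B,G}$: if $(1,\gamma)$ fixes the class of $(u,g)$ with $u$ a tuple of injections, then $u=u.h$ for some $h\in H$, and injectivity forces $h=1$, hence $\gamma=1$; by the explicit description of cofibrations in the $\mathcal F$-equivariant model structure this makes the map a cofibration. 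Second, for acyclicity note that the horn inclusion carries the trivial action, so on $\Gamma$-fixed points for any graph subgroup $\Gamma$ the map is of the form $X^\Gamma\times(\Lambda^n_k\hookrightarrow\Delta^n)$, which is a weak homotopy equivalence for any simplicial set $X^\Gamma$. (Alternatively: $\mathcal I(A,B)\times_\phi G$ is cofibrant in the $\mathcal G_{\Sigma_B,G}$-model structure by the isotropy computation, and that model structure is simplicial.) With this repair your verification of the transfer criterion goes through; also note that for left properness you should record that all cofibrations of the transferred model structure are levelwise cofibrations in the graph model structures (which follows from the same isotropy analysis applied to the generating cofibrations), since cofibrations here are not defined levelwise.
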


Just like for $G$-symmetric spectra, we will now Bousfield localize this to get the model structure we are actually after. However, unlike for $G$-symmetric spectra, we can actually explicitly describe both the weak equivalences and the local objects. We start with the former:

\begin{constr}
Let $A$ be any set, possibly infinite, and let $X$ be an $\mathcal I$-simplicial set. Then we define
\begin{equation*}
X(A)\mathrel{:=}\colim\limits_{B\subset A\text{ finite}} X(B)
\end{equation*}
with transition maps induced by functoriality of $X$. This becomes a functor in $X$ in the obvious way; in particular, if $G$ acts on $X$, then $X(A)$ becomes naturally a $G$-simplicial set.

In addition, the monoid $\textup{End}(A)$ of self-maps of $A$ acts naturally on the above by permuting the terms of the colimit. Thus, if $A$ is an $H$-set and $X$ is a $G$-$\mathcal I$-simplicial set, then $X(A)$ becomes an $(H\times G)$-simplicial set.
\end{constr}

\begin{defi}
A map $f\colon X\to Y$ of $G$-$\mathcal I$-simplicial sets is called a \emph{$G$-global weak equivalence} if for every finite group $H$ and some (hence any) complete $H$-set universe $\mathcal U_H$ the induced map $f(\mathcal U_H)$ is a $\mathcal G_{H,G}$-equivariant weak equivalence, or equivalently (replacing $H$ by a subgroup if necessary) for every $\phi\colon H\to G$ the map $(\phi^* f)(\mathcal U_H)$ is an $H$-equivariant weak equivalence.
\end{defi}

Next, we come to the analogue of the notion of an $\Omega$-spectrum in this setting:

\begin{defi}
A $G$-$\mathcal I$-simplicial set is called \emph{static} if for every finite group $H$ and all finite \emph{faithful} $H$-sets $A\subset B$ the map $X(A)\to X(B)$ induced by the inclusion is a $\mathcal G_{H,G}$-weak equivalence.
\end{defi}

\begin{thm}\label{thm:I-G-glob}
The $G$-global level model structure on $\cat{$\bm G$-$\bm{\mathcal I}$-SSet}$ admits a Bousfield localization with weak equivalences the $G$-global weak equivalences. Its fibrant objects are precisely the level fibrant \emph{static} $G$-$\mathcal I$-simplicial sets.

This model structure is again combinatorial (with the same generating cofibrations), simplicial, proper, and filtered colimits in it are homotopical.
\begin{proof}
See \cite[Theorem~1.4.30]{g-global}.
\end{proof}
\end{thm}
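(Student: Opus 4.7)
The plan is to produce this model structure as a left Bousfield localization of the $G$-global level model structure at an explicit set $S$ of maps, and then to identify the resulting local equivalences and fibrant objects.

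First, I would choose the localizing set. For every homomorphism $\phi\colon H\to G$ from a finite group $H$ and every inclusion $A\subset B$ of finite faithful $H$-sets, consider the map of representables $\mathcal I(B,\blank)\to \mathcal I(A,\blank)$ induced by the inclusion. Crossing with $G$ over $\phi$ (as in \Cref{prop:I-g-glob-level}) and taking a cofibrant replacement if necessary, this yields a map between cofibrant $G$-$\mathcal I$-simplicial sets, and I let $S$ be the set of all such maps (running over isomorphism classes). Since the $G$-global level model structure is combinatorial, simplicial, and left proper, Barwick's or Hirschhorn's theorem on left Bousfield localization produces a new combinatorial, simplicial, left proper model structure on $\cat{$\bm G$-$\bm{\mathcal I}$-SSet}$ whose fibrant objects are the level fibrant $G$-$\mathcal I$-simplicial sets $X$ for which every map in $S$ induces a weak equivalence on derived mapping spaces into $X$. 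By the enriched Yoneda lemma, this locality condition unwinds to the assertion that $X(A)\to X(B)$ is a $\mathcal G_{H,G}$-weak equivalence for all such $A\subset B$, i.e.\ that $X$ is static.

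Next I would identify the weak equivalences. One direction is the easy observation that every map in $S$ is a $G$-global weak equivalence: the $G$-global weak equivalences are tested by $(\phi^*f)(\mathcal U_H)$, and $X(\mathcal U_H)$ is the colimit over the filtered poset of finite $H$-subsets of $\mathcal U_H$, so an inclusion of faithful $H$-sets induces, after evaluating at $\mathcal U_H$, an iso of $(H\times G)$-simplicial sets in the colimit. Hence every $G$-global weak equivalence is an $S$-local equivalence is immediate once we know static objects detect $G$-global equivalences. For the converse, the key lemma is that a level fibrant static object $X$ has the property that the canonical map $X(A)\to X(\mathcal U_H)$ is a $\mathcal G_{H,G}$-weak equivalence for every finite faithful $H$-set $A$ (write $\mathcal U_H$ as a filtered colimit of finite faithful $H$-sets containing $A$ and use that filtered colimits are homotopical in the $\mathcal G_{H,G}$-equivariant model structure by \Cref{prop:I-g-glob-level} together with the $G$-equivariant case already recalled). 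Combining this with the fact that every $G$-$\mathcal I$-simplicial set admits a fibrant replacement in the localization, one checks that the $S$-local equivalences are exactly those $f$ for which $f(\mathcal U_H)$ is a $\mathcal G_{H,G}$-weak equivalence for every $H$, i.e.\ the $G$-global weak equivalences.

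Finally, the remaining properties are bookkeeping. Filtered colimits of $G$-global weak equivalences are again such because filtered colimits commute with the colimit defining $X(\mathcal U_H)$ and with simplicial weak equivalences, so the claim reduces to the corresponding statement in the $\mathcal G_{H,G}$-equivariant model structure. Right properness has to be argued more carefully: I would check that any pullback of a $G$-global weak equivalence along a fibration in the localized structure is again a $G$-global weak equivalence, using that fibrations between static fibrant objects are characterized as level fibrations and that the functor $X\mapsto X(\mathcal U_H)$ preserves pullbacks and takes such fibrations to $\mathcal G_{H,G}$-fibrations, where right properness is known.

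I expect the hard step to be the identification of the local equivalences with the $G$-global weak equivalences, in particular the cofinality/universe argument showing that staticness propagates from finite faithful $H$-sets to the complete universe $\mathcal U_H$, and the concomitant right properness check; everything else is a direct application of standard Bousfield localization machinery to the combinatorial simplicial proper model structure of \Cref{prop:I-g-glob-level}.
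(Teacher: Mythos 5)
Your overall architecture is the standard one (and, as far as one can tell, the one used in the reference \cite[Theorem~1.4.30]{g-global} that the paper cites instead of giving a proof): localize the level model structure at the restriction maps of the free objects $\mathcal I(A,\blank)\times_\phi G$ for faithful $A\subset B$, identify the local objects via the enriched Yoneda lemma as the level fibrant static objects, and then compare the local equivalences with the $G$-global weak equivalences using the cofinality lemma that for static level fibrant $X$ the map $X(A)\to X(\mathcal U_H)$ is a $\mathcal G_{H,G}$-weak equivalence for faithful $A$. That part of your plan, as well as the treatment of right properness (all localized fibrations are level fibrations, and $X\mapsto X(\mathcal U_H)^\Gamma$ preserves pullbacks and sends level fibrations to Kan fibrations) and of filtered colimits, is sound.

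There are, however, two genuine gaps in the identification of the weak equivalences. First, your justification that the maps in $S$ are $G$-global weak equivalences is wrong as stated: evaluating $\mathcal I(B,\blank)\times_\phi G\to\mathcal I(A,\blank)\times_\phi G$ at a complete universe $\mathcal U_K$ does \emph{not} give an isomorphism. What is true is that $\mathcal I(A,\mathcal U_K)=E\,I(A,\mathcal U_K)$ has graph-fixed points that are either empty or contractible, that completeness of $\mathcal U_K$ guarantees the relevant fixed points of source and target are nonempty simultaneously (any equivariant injection out of $A$ extends over $B$), and that one then still has to divide out the $H$-action via $\times_\phi G$, which requires a free-quotient statement (faithfulness of $A$ makes the $H$-action on $I(A,\mathcal U_K)$ free, and one needs the analogue of \cite[Proposition~1.1.22]{g-global} to conclude the quotient is still an equivalence). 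Second, and more importantly, the step ``combining this with the fact that every object admits a fibrant replacement in the localization, one checks that the $S$-local equivalences are exactly the $G$-global weak equivalences'' hides the linchpin of the whole proof: you must show that the localization map $X\to X^{\mathrm{loc}}$ (an $S$-local acyclic cofibration built by the small object argument from $S$ and the level acyclic cofibrations) is itself a $G$-global weak equivalence. Knowing that the maps in $S$ are $G$-global weak equivalences is not enough for this; you also need that $G$-global weak equivalences which are levelwise injective are preserved by cobase change along arbitrary maps (a gluing lemma, proved at the universe using left properness of simplicial sets and the fact that graph-fixed points preserve pushouts along injections), by transfinite composition, and by pushout products with $\partial\Delta^n\hookrightarrow\Delta^n$. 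Only with this cell-induction in hand do both inclusions follow: $S$-local equivalences are $G$-global equivalences by replacing a map by one between local objects (where both classes coincide with level equivalences, by your key lemma), and conversely every $G$-global equivalence becomes, after the now-$G$-globally-acyclic localization maps, a level equivalence between static fibrant objects and hence an $S$-local equivalence. As written, your proposal identifies the easy half (the cofinality/universe lemma) as the hard step and leaves this closure-property argument entirely implicit.
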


We will also need the following injective variant of the above model structure:

\begin{thm}\label{thm:I-inj-G-glob}
There is a unique model structure on $\cat{$\bm G$-$\bm{\mathcal I}$-SSet}$ with weak equivalences the $G$-global weak equivalences and cofibrations the injective cofibrations. We call this the \emph{injective $G$-global model structure}. It is combinatorial, simplicial, proper, and filtered colimits in it are homotopical.
\begin{proof}
See~\cite[Theorem~1.4.37]{g-global}.
\end{proof}
\end{thm}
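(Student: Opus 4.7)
The approach parallels the construction of the (projective) $G$-global model structure in \Cref{thm:I-G-glob}: first produce an auxiliary ``injective level'' model structure, then left Bousfield localize.

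First I would construct an injective analogue of \Cref{prop:I-g-glob-level}: a combinatorial simplicial proper model structure on $\cat{$\bm G$-$\bm{\mathcal I}$-SSet}$ whose weak equivalences are the $G$-global level weak equivalences (i.e.~levelwise $\mathcal G_{\Sigma_A,G}$-weak equivalences) and whose cofibrations are the injective cofibrations. This can be obtained via Jeff Smith's recognition theorem; as generating cofibrations one would take cells of the same shape as in \Cref{prop:I-g-glob-level}, but with $\phi$ ranging over homomorphisms from \emph{all} subgroups of $G\times\Sigma_A$ (not only graph subgroups), so that the saturation is large enough to contain every injective cofibration. The required accessibility of the weak equivalences and closure of acyclic injective cofibrations under pushouts and transfinite compositions reduce levelwise to the analogous properties of the injective $\mathcal G_{\Sigma_A,G}$-equivariant model structure, whose existence is already recorded earlier in this section.

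Next I would apply left Bousfield localization to the injective level structure at the same set of maps used to produce the projective $G$-global model structure of \Cref{thm:I-G-glob}. Bousfield localization preserves cofibrations, and the class of local equivalences depends only on the localizing maps and the original class of level equivalences (both of which agree with those used for the projective version). Hence the localization yields a combinatorial, simplicial, left proper model structure on $\cat{$\bm G$-$\bm{\mathcal I}$-SSet}$ with cofibrations the injective cofibrations and weak equivalences the $G$-global weak equivalences. Uniqueness is automatic since a model structure is determined by its cofibrations and weak equivalences.

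The remaining properties follow formally by comparison with \Cref{thm:I-G-glob}: right properness transfers because the injective structure has at least as many cofibrations, hence at most as many fibrations as the projective one, so any pullback along an injective fibration is in particular a pullback along a projective fibration; and homotopicality of filtered colimits is a statement purely about weak equivalences and colimits, so it transfers immediately. The main technical obstacle in this outline is the first step—verifying that the proposed generating set saturates to precisely the injective cofibrations, and that acyclic injective cofibrations are stable under pushouts; the remainder is a fairly formal combination of Smith's theorem and Bousfield localization for combinatorial simplicial model categories.
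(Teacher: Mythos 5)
Since the paper itself only cites \cite[Theorem~1.4.37]{g-global}, there is no in-paper proof to compare against; your overall architecture (an injective \emph{level} model structure with cofibrations the levelwise injections, followed by a left Bousfield localization at the same set of maps as for \Cref{thm:I-G-glob}, with the weak equivalences identified by mapping into level-fibrant static objects, and properness and homotopical filtered colimits transferred formally) is indeed the standard route and is essentially what the cited reference does. The localization and transfer steps in your outline are fine: injective fibrations form a subclass of projective fibrations, so right properness transfers; all objects are cofibrant, so left properness is automatic; and the local equivalences agree with those of \Cref{thm:I-G-glob} because a map is a local equivalence precisely when mapping into (a suitable fibrant model of) every static level-fibrant object is an equivalence, a condition that only depends on the level weak equivalences and the localizing set.

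The genuine gap is in your first step, and it is exactly the point you flag as the ``main technical obstacle'': the proposed generating set, cells of the shape of \Cref{prop:I-g-glob-level} but with the isotropy ranging over \emph{all} subgroups $H\subset\Sigma_A\times G$, does \emph{not} saturate to the class of injective cofibrations. Its saturation is a ``flat''-type class of cofibrations, constrained by latching-map conditions, and this is strictly smaller than the levelwise injections --- this is the same phenomenon that forces the paper to distinguish the flat from the injective model structure on $\cat{$\bm G$-Spectra}$, whose generating flat cofibrations have precisely this all-subgroups shape. Concretely (already for $G=1$): let $Y$ be the $\mathcal I$-simplicial set with $Y(\varnothing)=\varnothing$ and $Y(A)=\ast$ for all $A\neq\varnothing$. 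Then $\varnothing\to Y$ is an injective cofibration, but $Y$ is not a retract of a cell complex built from your cells: for any such cell complex $Z$ the map $Z(\bm 1)\amalg_{Z(\varnothing)}Z(\bm 1)\to Z(\bm 2)$ induced by the two standard injections $\bm 1\to\bm 2$ is a monomorphism (this holds for each cell and is preserved by the relevant colimits and by retracts), whereas for $Y$ this map is $\ast\amalg\ast\to\ast$. The analogy with $\cat{$\bm G$-SSet}$, where orbit cells $G/H\times(\del\Delta^n\hookrightarrow\Delta^n)$ do generate all monomorphisms, breaks down for diagram categories over $\mathcal I$ because arbitrary levelwise injections need not be ``freely generated'' level by level. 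So the injective level model structure has to be produced by the abstract accessibility argument for injective model structures on (enriched) diagram categories --- the class of levelwise injections is cofibrantly generated by a set obtained from a cardinality argument, not by explicit orbit cells --- after which the rest of your outline goes through.
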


\begin{rk}
The category $\cat{$\bm G$-$\bm I$-SSet}$ of $G$-objects in $\Fun(I,\cat{SSet})$ also carries a $G$-global level model structure analogous to Proposition~\ref{prop:I-g-glob-level} and this admits a Bousfield localization with the static objects as local objects such that the resulting model category is Quillen equivalent to $\cat{$\bm G$-$\bm{\mathcal I}$-SSet}$, see~\cite[Theorem~1.4.31]{g-global}. However, the weak equivalences of this model structure are somewhat complicated (similarly to the situation for symmetric spectra), and in particular they cannot just be checked by evaluation at complete $H$-set universes. The passage from $I$ to $\mathcal I$ is precisely what eliminates this subtlety, which is why the above model will be more convenient for us.

In addition to these, \cite[Chapter~1]{g-global} also studies various models of $G$-global homotopy theory based on a certain monoid $\mathcal M$ and the simplicial monoid $E\mathcal M$ obtained via Construction~\ref{constr:indiscrete} from this, that are related to the above via (zig-zags of) Quillen equivalences.
\end{rk}

As promised, the above $G$-global model structures support a rich `change of group' calculus:

\begin{prop}\label{prop:I-functoriality-general}
Let $\alpha\colon G\to G'$ be any group homomorphism. Then the restriction $\alpha^*\colon\cat{$\bm{G'}$-$\bm{\mathcal I}$-SSet}\to\cat{$\bm G$-$\bm{\mathcal I}$-SSet}$ is homotopical and we have Quillen adjunctions
\begin{align*}
\alpha_!\colon\cat{$\bm{G}$-$\bm{\mathcal I}$-SSet}_\textup{$G$-global}&\rightleftarrows\cat{$\bm{G'}$-$\bm{\mathcal I}$-SSet}_\textup{$G'$-global} :\!\alpha^*\\
\alpha^*\colon\cat{$\bm{G'}$-$\bm{\mathcal I}$-SSet}_\textup{injective $G'$-global}&\rightleftarrows\cat{$\bm{G}$-$\bm{\mathcal I}$-SSet}_\textup{injective $G$-global} :\!\alpha_*.
\end{align*}
\begin{proof}
See \cite[Lemma~1.4.40 and Corollary~1.4.41]{g-global}.
\end{proof}
\end{prop}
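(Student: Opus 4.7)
The plan is to address each of the three claims in turn, exploiting throughout the equivalent characterization of $G$-global weak equivalences in terms of arbitrary homomorphisms $\phi\colon H\to G$.

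First, I would deduce the homotopicality of $\alpha^*$ directly: if $\phi\colon H\to G$ is any homomorphism then so is $\alpha\phi\colon H\to G'$, and evaluating on a complete $H$-set universe yields the identity $(\phi^*\alpha^*f)(\mathcal U_H)=((\alpha\phi)^*f)(\mathcal U_H)$. If $f$ is a $G'$-global weak equivalence then the right-hand side is an $H$-equivariant weak equivalence for every $\phi$, so $\alpha^*f$ is a $G$-global weak equivalence. Since $\alpha^*$ moreover leaves underlying simplicial sets unchanged, it preserves injective cofibrations, and is thus left Quillen for the injective model structures, yielding the second Quillen adjunction.

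For the first Quillen adjunction I would first establish the corresponding statement at the $G$-global \emph{level} model structures and then descend. At the level, a direct computation gives $\alpha_!(\mathcal I(A,\blank)\times_\phi G)\cong\mathcal I(A,\blank)\times_{\alpha\phi}G'$, so $\alpha_!$ sends the generating cofibration indexed by $(A,H,\phi,n)$ to the one indexed by $(A,H,\alpha\phi,n)$; in particular $\alpha_!$ preserves cofibrations. Dually, since $(\Sigma_A\times\alpha)(\Gamma_{K,\psi})=\Gamma_{K,\alpha\psi}$, the $\Gamma_{K,\psi}$-fixed points of $(\alpha^*Y)(A)$ coincide with the $\Gamma_{K,\alpha\psi}$-fixed points of $Y(A)$ for every $\psi\colon K\to G$, so $\alpha^*$ carries $\mathcal G_{\Sigma_A,G'}$-fibrations to $\mathcal G_{\Sigma_A,G}$-fibrations and hence preserves level fibrations. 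This yields a Quillen adjunction on the level model structures.

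To pass to the Bousfield localizations, I would invoke the standard criterion that a Quillen adjunction between underlying model structures descends to one between the localizations as soon as the right adjoint preserves locally fibrant objects. The $G'$-global fibrant objects are precisely the level fibrant \emph{static} $G'$-$\mathcal I$-simplicial sets, and preservation of level fibrations is already in hand. Preservation of staticness follows from exactly the same fixed-point translation: if $X$ is static, then for any faithful $H$-sets $A\subset B$ the map $X(A)\to X(B)$ is a $\mathcal G_{H,G'}$-weak equivalence, and passing to $\Gamma_{K,\phi}$-fixed points for any $\phi\colon K\to G$ therefore again gives a weak equivalence, showing that $\alpha^*X$ is static. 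The only step requiring any care is this descent argument; once the fixed-point dictionary is in place the remainder is essentially formal.
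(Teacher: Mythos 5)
Your proposal is correct, and it is a sound self-contained reconstruction: the paper itself does not argue the statement but simply defers to \cite{g-global}, where the cited proofs proceed along essentially the same lines as yours (the graph-subgroup fixed-point dictionary, the computation $\alpha_!(\mathcal I(A,\blank)\times_\phi G)\cong\mathcal I(A,\blank)\times_{\alpha\phi}G'$ on generating cofibrations, and descent to the localization via preservation of fibrant objects). The only step where you should be explicit about hypotheses is the descent criterion: it is exactly Lemma~\ref{lemma:check-QA-fibrant} (i.e.\ \cite[Corollary~A.3.7.2]{htt}), which requires a \emph{simplicial} adjunction between left proper simplicial model categories; both conditions hold here, since $\alpha_!\dashv\alpha^*$ is simplicial and the $G$-global model structures are left proper and simplicial by Theorem~\ref{thm:I-G-glob}, and $\alpha_!$ preserves cofibrations because these agree with those of the level model structure. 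With that made precise, your verification that $\alpha^*$ preserves level fibrancy and staticness is exactly what the criterion demands, and the injective case is, as you say, immediate from homotopicality of $\alpha^*$ plus preservation of levelwise injections.
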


\begin{prop}\label{prop:I-functoriality-injective}
Let $\alpha\colon G\to G'$ be an \emph{injective} homomorphism. Then also
\begin{align*}
\alpha_!\colon\cat{$\bm{G}$-$\bm{\mathcal I}$-SSet}_\textup{injective $G$-global}&\rightleftarrows\cat{$\bm{G'}$-$\bm{\mathcal I}$-SSet}_\textup{injective $G'$-global} :\!\alpha^*\\
\alpha^*\colon\cat{$\bm{G'}$-$\bm{\mathcal I}$-SSet}_\textup{$G'$-global}&\rightleftarrows\cat{$\bm{G}$-$\bm{\mathcal I}$-SSet}_\textup{$G$-global} :\!\alpha_*.
\end{align*}
are Quillen adjunctions.
\begin{proof}
See \cite[Lemmas~1.4.42 and~1.4.43]{g-global}.
\end{proof}
\end{prop}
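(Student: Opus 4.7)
Both adjunctions require checking that the indicated left adjoint preserves cofibrations and acyclic cofibrations. The key input is that injectivity of $\alpha$ allows a very explicit description: as a left $G$-set via $\alpha$ we have $G'\cong\coprod_{G'/\alpha(G)}G$, so that induction $\alpha_!$ becomes a coproduct of copies on the underlying level, while restriction $\alpha^*$ splits $G'$-orbits according to double cosets. Throughout, I will use that $\alpha^*$ is already known to be homotopical by \Cref{prop:I-functoriality-general}.

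For the first adjunction (injective to injective), since $\alpha$ is injective the underlying $\mathcal I$-simplicial set of $\alpha_!X$ is a disjoint union of copies of $X$, so $\alpha_!$ preserves levelwise injections, i.e.\ injective cofibrations. For $G$-global weak equivalences, the fact that $\alpha_!$ commutes with $\mathcal I$-colimits gives $(\alpha_!X)(\mathcal U_H)=\alpha_!(X(\mathcal U_H))$, reducing the claim to showing that equivariant induction $(H\times\alpha)_!\colon\cat{$\bm{(H\times G)}$-SSet}_{\mathcal G_{H,G}}\to\cat{$\bm{(H\times G')}$-SSet}_{\mathcal G_{H,G'}}$ takes weak equivalences to weak equivalences. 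A direct fixed-point computation shows that for any graph subgroup $\Gamma_{K,\phi}\subset H\times G'$, one has
\[
(\alpha_!Y)^{\Gamma_{K,\phi}}\;\cong\;\coprod_{[c]\in K\backslash G'/\alpha(G)}Y^{\Gamma_{L_c,\psi_c}}
\]
for explicit graph subgroups $\Gamma_{L_c,\psi_c}\subset H\times G$ (stabilizer data of the $K$-action on $G'/\alpha(G)$), whence the claim follows from the $\mathcal G_{H,G}$-weak equivalence hypothesis applied summandwise. Combining these two preservation properties yields preservation of acyclic injective cofibrations, so $\alpha_!$ is left Quillen as required.

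For the second adjunction, since $\alpha^*$ is homotopical and the $G'$-global model structure is cofibrantly generated, it suffices to show that $\alpha^*$ sends each generating cofibration $\mathcal I(A,\blank)\times_{\phi'}G'\times(\del\Delta^n\hookrightarrow\Delta^n)$ to a cofibration. Viewing $G'$ as a $(G,H^\op)$-biset and decomposing by double cosets in $\alpha(G)\backslash G'/\phi'(H)$, the coset containing a representative $c\in G'$ contributes the transitive biset $G\times_{K_c}H$, where $K_c\mathrel{:=}\phi'^{-1}(c^{-1}\alpha(G)c)\subset H$ and $\psi_c\colon K_c\to G$ is characterized by $\alpha\psi_c(k)=c\phi'(k)c^{-1}$. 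Unwinding this yields
\[
\alpha^*\bigl(\mathcal I(A,\blank)\times_{\phi'}G'\bigr)\;\cong\;\coprod_{d\in\alpha(G)\backslash G'/\phi'(H)}\mathcal I(A,\blank)\times_{\psi_d}G,
\]
a disjoint union of sources/targets of generators for the $G$-global model structure, so smashing with $\del\Delta^n\hookrightarrow\Delta^n$ gives a coproduct of generating cofibrations. The main obstacle is precisely this last combinatorial unwinding: tracking how the commuting left $G$- and right $H$-actions on $G'$ interact through $\alpha$ and $\phi'$, and verifying that the subgroups $K_c\subset\Sigma_A$ and homomorphisms $\psi_c\colon K_c\to G$ arising from each double coset indeed lie in the index set allowed for generating cofibrations of the $G$-global model structure.
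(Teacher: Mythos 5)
The paper itself only cites \cite{g-global} for this statement, so there is no in-text proof to compare against; your argument is the standard one behind those citations, and your second half is correct as written: since $\alpha^*$ preserves colimits and retracts and is homotopical by Proposition~\ref{prop:I-functoriality-general}, it suffices to treat the generating cofibrations, and the double-coset decomposition $\alpha^*\bigl(\mathcal I(A,\blank)\times_{\phi'}G'\bigr)\cong\coprod_{[c]\in\alpha(G)\backslash G'/\phi'(H)}\mathcal I(A,\blank)\times_{\psi_c}G$ with $K_c=\phi'^{-1}(c^{-1}\alpha(G)c)\subset H\subset\Sigma_A$ and $\alpha\psi_c(\blank)=c\phi'(\blank)c^{-1}$ is exactly the right computation; the resulting summands are generating cofibrations for the $G$-global model structure.

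In the first half, however, the displayed fixed-point formula is false as stated: a coproduct over \emph{all} double cosets $[c]\in K\backslash G'/\alpha(G)$ with summands $Y^{\Gamma_{L_c,\psi_c}}$ for the \emph{stabilizer} subgroups $L_c\subset K$ is (up to exchanging coproduct and product) the double-coset formula for the \emph{co}induction $\alpha_*$, not for $\alpha_!$. For the induced object one has $(k,\phi(k))\cdot[c,y]=[\phi(k)c,ky]$, so a class $[c,y]$ can only be fixed if the coset $c\alpha(G)$ is fixed by all of $\phi(K)$; writing $\phi(k)c=c\,\alpha(\psi_c(k))$ for such a coset, the correct statement is
\begin{equation*}
\bigl((H\times\alpha)_!Y\bigr)^{\Gamma_{K,\phi}}\;\cong\;\coprod_{c\alpha(G)\in(G'/\alpha(G))^{\phi(K)}}Y^{\Gamma_{K,\psi_c}},
\end{equation*}
with the full group $K$ and $\psi_c=\alpha^{-1}\bigl(c^{-1}\phi(\blank)c\bigr)$. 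A concrete counterexample to your version: take $G=1$, $G'=H=K=\mathbb Z/2$ and $\phi=\id$; then $(H\times\alpha)_!Y\cong G'\times Y$ has empty $\Gamma_{K,\phi}$-fixed points, whereas your formula (one double coset, trivial stabilizer) would return all of $Y$. The damage is limited: the corrected formula still exhibits $\bigl((H\times\alpha)_!f\bigr)^{\Gamma_{K,\phi}}$ as a coproduct of maps $f^{\Gamma_{K,\psi_c}}$ with $\Gamma_{K,\psi_c}\in\mathcal G_{H,G}$, so your conclusion that induction along an injective homomorphism preserves graph-family weak equivalences---and hence, together with preservation of levelwise injections (freeness of $G'$ as a $G$-set via $\alpha$), that $\alpha_!$ is left Quillen for the injective $G$-global model structures---goes through once the formula is repaired.
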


As every $G$-$\mathcal I$-simplicial set is injectively cofibrant, the above implies via Ken Brown's Lemma that $\alpha_!$ is homotopical for \emph{injective} $\alpha$. The following generalization of this (which makes precise that `free quotients are homotopical') will be a key input in many arguments, see in particular~Example~\ref{ex:S-Sp-global}.

\begin{prop}\label{prop:I-free-quotients}
Let $\alpha\colon G\to G'$ be any homomorphism and let $f\colon X\to Y$ be a $G$-global weak equivalence such that $\ker(\alpha)$ acts levelwise freely on $X$ and $Y$. Then $\alpha_!f$ is a $G'$-global level weak equivalence.
\begin{proof}
We factor $f$ in the $G$-global model structure as an acyclic cofibration $j\colon X\to Z$ followed by a fibration $p$ (automatically acyclic). Then $\alpha_!j$ is a $G'$-global weak equivalence by Proposition~\ref{prop:I-functoriality-general}, so it suffices to show that also $\alpha_!p$ is a weak equivalence; we will show that it is even a $G'$-global level weak equivalence.

To this end, we observe that for any generating cofibration $i$ and every finite set $A$ the map $i(A)$ is a cofibration in the $\mathcal G_{\Sigma_A,G}$-equivariant model structure since $G$ acts freely on $\mathcal I(B,A)\times_\phi G$ for every finite faithful $H$-set $B$ and homomorphism $\phi\colon H\to G$. As evaluation at $A$ is cocontinuous, we see that the claim holds more generally for all cofibrations, and in particular for the above map $j$. Thus, $\ker(\alpha)$ also acts levelwise freely on $Z$; the claim therefore follows by applying \cite[Proposition~1.1.22]{g-global} levelwise (with $M=\Sigma_A$ and $\mathcal E=\mathcal A\ell\ell$).
\end{proof}
\end{prop}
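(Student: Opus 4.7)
The plan is to factor $f$ in the $G$-global model structure of \Cref{thm:I-G-glob} as an acyclic cofibration $j\colon X\to Z$ followed by an acyclic fibration $p\colon Z\to Y$, and then handle the two halves separately. By \Cref{prop:I-functoriality-general}, $\alpha_!$ is left Quillen from the $G$-global to the $G'$-global model structure, so $\alpha_! j$ is automatically an acyclic cofibration, and in particular a $G'$-global weak equivalence. Hence by two-out-of-three it is enough to prove that $\alpha_! p$ is a $G'$-global weak equivalence; in fact I would aim for the stronger statement that $\alpha_! p$ is a level weak equivalence, since this is what is actually needed and is cleaner to analyze.

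Next, I would show that the class of $G$-$\mathcal I$-simplicial sets on which $\ker(\alpha)$ acts levelwise freely is closed under cofibrations in the $G$-global model structure, so that in particular $\ker(\alpha)$ still acts levelwise freely on $Z$. Since evaluation at any finite set $A$ commutes with colimits, it suffices to check that every generating cofibration $\mathcal I(B,\blank)\times_\phi G\times(\partial\Delta^n\hookrightarrow\Delta^n)$ from \Cref{prop:I-g-glob-level} has the property that $\ker(\alpha)$ acts freely on the codomain evaluated at $A$. This is immediate from the description of $\mathcal I(B,A)\times_\phi G$: the subgroup $G$ (and hence $\ker(\alpha)\subset G$) acts freely because quotienting by the diagonal $H$-action only entangles the $G$-factor with the $\phi(H)$-action from the right, which does not affect freeness from the left. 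Starting from the assumption that $\ker(\alpha)$ acts freely on $X$ and pushing out cofibrations preserves this, $Z$ inherits the free action.

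Finally, for each finite set $A$, the map $p(A)\colon Z(A)\to Y(A)$ is a $\mathcal G_{\Sigma_A,G}$-equivariant weak equivalence between $(\Sigma_A\times G)$-simplicial sets on which $\ker(\alpha)$ acts freely. Since $\alpha_!$ is computed levelwise and on each level reduces to the quotient by the free $\ker(\alpha)$-action (followed by extending the $G$-action to a $G'$-action, which is harmless), the statement reduces to the general principle that quotienting by a free action of a subgroup is homotopical in the family-of-subgroups model structures; this is precisely \cite[Proposition~1.1.22]{g-global}, applied with the monoid $M=\Sigma_A$ and the family $\mathcal A\ell\ell$ of all subgroups. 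Packaging these levelwise conclusions yields that $\alpha_!p$ is a $G'$-global level weak equivalence, and combined with the first paragraph this completes the proof.

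The main obstacle, and the reason the argument is not simply a formal consequence of $\alpha_!$ being left Quillen, is that $\alpha_!$ is \emph{not} homotopical in general: it only preserves weak equivalences between levelwise $\ker(\alpha)$-free objects. The whole point of factoring through $Z$ is thus to ensure that the freeness hypothesis propagates, and the real content is the verification that the $G$-global cofibrations preserve levelwise freeness, after which the black-box \cite[Proposition~1.1.22]{g-global} handles the homotopical content.
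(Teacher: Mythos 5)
Your proposal is correct and follows essentially the same route as the paper: factor $f$ into an acyclic cofibration $j$ and an (acyclic) fibration $p$, dispose of $\alpha_!j$ via left Quillen-ness of $\alpha_!$, check on generating cofibrations (using cocontinuity of evaluation) that cofibrations preserve levelwise freeness of the $\ker(\alpha)$-action so that $Z$ inherits it, and then apply \cite[Proposition~1.1.22]{g-global} levelwise with $M=\Sigma_A$ and $\mathcal E=\mathcal A\ell\ell$ to conclude that $\alpha_!p$ is a $G'$-global level weak equivalence.
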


\subsection{Stable \texorpdfstring{$\bm G$}{G}-global homotopy theory} Finally, we come to models of \emph{stable} $G$-global homotopy theory \cite[Chapter~3]{g-global}; we restrict ourselves to the basics here and will recall further constructions and results (in particular monoidal properties and the tensoring over $G$-global spaces) later when needed.

\subsubsection{Model structures} On the pointset level, our models will again be simply given by symmetric spectra with a $G$-action, and we once more start with suitable level model structures \cite[Propositions~3.1.20 and~3.1.23]{g-global}:

\begin{prop}
There is a unique model structure on $\cat{$\bm G$-Spectra}$ in which a map is a weak equivalence or fibration if and only if $f(A)$ is a $\mathcal G_{\Sigma_A,G}$-weak equivalence or fibration, respectively, for every $A\in\bm\Sigma$. We call this the \emph{$G$-global projective level model structure} and its weak equivalences the \emph{$G$-global level weak equivalences}. It is proper, simplicial, combinatorial with generating cofibrations
\begin{equation*}
\{\bm\Sigma(A,\blank)\smashp_H G_+\smashp(\del\Delta^n\hookrightarrow\Delta^n)_+ : A\in\bm\Sigma, H\in\mathcal G_{\Sigma_A,G},n\ge0\},
\end{equation*}
and filtered colimits in it are homotopical.\qed
\end{prop}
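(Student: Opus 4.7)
The plan is to construct the model structure via Kan's recognition theorem for cofibrantly generated model structures, using the generators listed in the statement together with an analogous set of generating acyclic cofibrations $\bm\Sigma(A,\blank)\smashp_H G_+\smashp(\Lambda^n_k\hookrightarrow\Delta^n)_+$. Everything is governed by the adjunctions $F_A\colon\cat{$\bm{(\Sigma_A\times G)}$-SSet}_*\rightleftarrows\cat{$\bm G$-Spectra}:\!\ev_A$ for $A\in\bm\Sigma$, where $\ev_A$ sends a $G$-spectrum $X$ to $X(A)$ regarded as a $(\Sigma_A\times G)$-simplicial set (with $\Sigma_A$ acting through the enrichment via $\Sigma_A\subset\bm\Sigma(A,A)$), and where $F_A(Y)=\bm\Sigma(A,\blank)\smashp_{\Sigma_A}Y$. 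Under $F_A$, the generating cofibrations $((\Sigma_A\times G)/H)_+\smashp(\del\Delta^n\hookrightarrow\Delta^n)_+$ of the $\mathcal G_{\Sigma_A,G}$-equivariant model structure on the source (from Example~\ref{ex:graph-model-structure}) correspond to the listed generators via the standard identification $\bm\Sigma(A,\blank)\smashp_{\Sigma_A}((\Sigma_A\times G)/H)_+\cong\bm\Sigma(A,\blank)\smashp_H G_+$.

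The key technical step is to show that each $F_A$ is left Quillen for the levelwise graph model structures, which in particular means $F_A$ preserves (acyclic) cofibrations. For this I would verify that for every $B\in\bm\Sigma$, the $(\Sigma_B\times G)$-simplicial set $\bm\Sigma(A,B)\smashp_H G_+$ is $\mathcal G_{\Sigma_B,G}$-cofibrant. Since $\bm\Sigma(A,B)=\bigvee_{i\in\Inj(A,B)}S^{B\setminus i(A)}$ and the right $\Sigma_A$-action on $\Inj(A,B)$ by precomposition is free, every $\Sigma_B\times H$-orbit on $\Inj(A,B)$ is of the form $(\Sigma_B\times H)/L$ for some graph subgroup $L\in\mathcal G_{\Sigma_B,G}$; combined with the contractibility of the smashed spheres at basepoints away from $0$, this yields the required cellular decomposition.

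Given this Quillen property, Kan's recognition theorem applies: smallness of the domains is automatic since all generators are finite simplicial sets, and the condition that transfinite compositions of pushouts of generating acyclic cofibrations are level weak equivalences reduces via $F_A$ to the analogous statement inside each $\mathcal G_{\Sigma_B,G}$-model structure (which holds because these are combinatorial with homotopical filtered colimits). The remaining properties then all follow by reducing to the corresponding properties of the graph model structures at each level: combinatoriality is automatic, simpliciality is verified via the pushout-product axiom on generators (reducing to the levelwise pushout-product), and properness as well as homotopicality of filtered colimits are checked levelwise. The principal obstacle is the combinatorial analysis of $\bm\Sigma(A,B)\smashp_H G_+$ in the key lemma, which requires unwinding how the graph subgroup structure at level $A$ translates to graph subgroup structure at every other level under the free functor $F_A$.
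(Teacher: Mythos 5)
Your proposal is correct and follows the same overall strategy as the paper's source for this statement (the proposition is quoted from \cite[Proposition~3.1.20]{g-global}, and the same pattern reappears verbatim in the paper's proof of the level model structures on $\Sp(\ul{\mathscr C})$): one builds the level structure by the lifting/transfer criterion along the free--evaluation adjunctions $F_A\dashv\ev_A$, and the only non-formal input is that $\bm\Sigma(A,B)\smashp_{\Sigma_A}\blank$ is left Quillen from the $\mathcal G_{\Sigma_A,G}$-model structure to the $\mathcal G_{\Sigma_B,G}$-model structure, after which combinatoriality, simpliciality, properness, and homotopical filtered colimits all reduce to the levelwise graph model structures exactly as you say. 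Where you differ is in how this key fact is verified: the paper's analogous arguments use the $(\Sigma_B,\Sigma_A)$-equivariant identification $\bm\Sigma(A,B)\cong(\Sigma_B)_+\smashp_{\Sigma_{B\setminus A}}S^{B\setminus A}$ to factor $\bm\Sigma(A,B)\smashp_{\Sigma_A}\blank$ as smashing with $S^{B\setminus A}$ followed by induction along $\Sigma_A\times\Sigma_{B\setminus A}\subset\Sigma_B$, whereas you check directly that the images $\bm\Sigma(A,B)\smashp_HG_+$ of the generators are $\mathcal G_{\Sigma_B,G}$-cofibrant, the essential point being freeness of the right $\Sigma_A$-action on $\Inj(A,B)$ and hence freeness of the residual $G$-action away from the basepoint; both verifications are valid, and yours is arguably more elementary while the paper's factorization is what generalizes to abstract $\ul{\mathscr C}$. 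Two small inaccuracies that do not affect the argument: the cofibrancy statement should be phrased as an isotropy condition on the non-basepoint simplices of $\bm\Sigma(A,B)\smashp_HG_+$ as a $(\Sigma_B\times G)$-simplicial set (namely that their stabilizers meet $1\times G$ trivially), rather than as a statement about ``$\Sigma_B\times H$-orbits on $\Inj(A,B)$''; and smallness of the domains is best justified by local presentability of $\cat{$\bm G$-Spectra}$ rather than by finiteness of the generators.
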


\begin{prop}
There is a unique model structure on $\cat{$\bm G$-Spectra}$ in which a map is a weak equivalence or fibration if and only if $f(A)$ is a weak equivalence or fibration, respectively, in the \emph{injective} $\mathcal G_{\Sigma_A,G}$-model structure. We call this the \emph{$G$-global flat level model structure}; its weak equivalences are precisely the $G$-global level weak equivalences and its cofibrations are the flat cofibrations. This model structure is proper, simplicial, combinatorial with generating cofibrations
\begin{equation*}
\{\bm\Sigma(A,\blank)\smashp_H G_+\smashp(\del\Delta^n\hookrightarrow\Delta^n)_+ : A\in\bm\Sigma, H\subset \Sigma_A\times G,n\ge0\},
\end{equation*}
and filtered colimits in it are homotopical.\qed
\end{prop}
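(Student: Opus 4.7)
The plan is to construct the model structure by lifting the injective $\mathcal G_{\Sigma_A,G}$-equivariant model structures on $\cat{$\bm{(\Sigma_A\times G)}$-SSet}$ from each level to $\cat{$\bm G$-Spectra}$ via the evaluation functors. For each finite set $A$, the level-$A$ evaluation $\ev_A\colon\cat{$\bm G$-Spectra}\to\cat{$\bm{(\Sigma_A\times G)}$-SSet}_*$ admits a left adjoint $F_A$ given by $F_A(Y)=\bm\Sigma(A,\blank)\smashp_{\Sigma_A}Y$, and under the canonical identification $\bm\Sigma(A,\blank)\smashp_{\Sigma_A}(\Sigma_A\times G)/H_+\cong\bm\Sigma(A,\blank)\smashp_H G_+$ the $F_A$-images of the generating cofibrations of the injective $\mathcal G_{\Sigma_A,G}$-model structure are precisely the maps listed in the statement. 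Applying a standard recognition result for combinatorial model categories in close parallel with Hausmann's argument for the $G$-equivariant flat level structure (\cite[Corollary~2.25]{hausmann-equivariant}), but using the injective rather than projective model structures at each level, one obtains the desired model structure.

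Next, to identify the cofibrations as flat cofibrations, I would appeal to the fact (established in the earlier proposition for the injective $\mathcal F$-equivariant model structure) that cofibrations in $\cat{$\bm K$-SSet}$ with respect to any family of subgroups are precisely the underlying monomorphisms of simplicial sets, independently of both $K$ and the family. Since cofibrations in the new model structure are detected levelwise (as they are characterized by a levelwise lifting condition), a map $f$ in $\cat{$\bm G$-Spectra}$ is a cofibration if and only if each $f(A)$ is a monomorphism of simplicial sets; this is exactly the condition for $f$ to be a flat cofibration in $\cat{Spectra}$ after forgetting the $G$-action. The coincidence of the weak equivalences with the $G$-global level weak equivalences from the projective case is then tautological, since both are defined by the same levelwise $\mathcal G_{\Sigma_A,G}$-equivariant condition.

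Finally, the remaining properties should be inherited from the levels: combinatoriality is built into the recognition argument; properness is detected and witnessed levelwise; the simplicial pushout-product axiom reduces to the fact that smashing with $(\del\Delta^n\hookrightarrow\Delta^n)_+$ preserves injective cofibrations and injective acyclic cofibrations in each $\mathcal G_{\Sigma_A,G}$-model structure; and homotopicality of filtered colimits follows because evaluation commutes with filtered colimits and each $\mathcal G_{\Sigma_A,G}$-model structure has this property. The main technical nuisance I would anticipate is set-theoretic (restricting $\bm\Sigma$ to a small skeleton so the displayed generating class is genuinely a set, as flagged in the excerpt) together with the usual care in treating the generating acyclic cofibrations for Smith's theorem—though this appears already in Hausmann's parallel argument and no genuinely new idea is required.
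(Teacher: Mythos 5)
Your overall strategy---lifting the levelwise injective $\mathcal G_{\Sigma_A,G}$-model structures along the left adjoints $\bm\Sigma(A,\blank)\smashp_{\Sigma_A}(\blank)$ and invoking the consistency criterion as in Hausmann---is the standard route, and indeed the paper itself disposes of this proposition by citation (to \cite{g-global}, with the equivariant analogue citing \cite[Corollary~2.25, Remark~2.20]{hausmann-equivariant}). The genuine gap is in your identification of the cofibrations. Cofibrations of a model structure produced this way are \emph{not} detected levelwise: they are characterized by the left lifting property against the levelwise acyclic fibrations, which by the latching-object analysis (\cite[Proposition~C.23]{schwede-book}, exactly the tool the paper invokes in Lemma~\ref{lemma:positive-vs-absolute}) translates into the condition that each latching map $X(A)\cup_{L_AX}L_AY\to Y(A)$ be a cofibration in the injective $\mathcal G_{\Sigma_A,G}$-structure, i.e.\ a monomorphism. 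That latching-map condition makes no reference to $G$, which is why the cofibrations agree with those for $G=1$, i.e.\ with Hausmann's flat cofibrations. By contrast, ``$f(A)$ is a monomorphism for every $A$'' describes the \emph{injective} cofibrations, a strictly larger class: for instance $0\to X$ is an injective cofibration for every $X$ but a flat cofibration only when $X$ is flat, and flat $\neq$ injective already non-equivariantly---otherwise the flat and injective ($G$-global) model structures, which the paper treats as genuinely different, would coincide.

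Note also that your claimed characterization is internally inconsistent with your own construction: if the cofibrations were all levelwise monomorphisms while the fibrations and weak equivalences are levelwise in the injective structures, the lifting axiom would fail, since a levelwise choice of lifts cannot in general be made compatible with the structure maps; this is precisely why the $G$-global \emph{injective} model structure (which does have the levelwise monomorphisms as cofibrations) has a different, non-levelwise class of fibrations. The fix is to replace the levelwise-detection claim by the latching-map characterization just described and then quote (or reprove) Hausmann's Remark~2.20 to see that this class is the class of flat cofibrations. The remaining points of your outline---tautological identification of the weak equivalences with the $G$-global level weak equivalences, properness, the pushout-product axiom, homotopical filtered colimits, and the set-theoretic caveat about a skeleton of $\bm\Sigma$---are fine as stated.
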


\begin{warn}
Beware that the notion of \emph{$G$-global level weak equivalence} differs from the \emph{$G$-equivariant level weak equivalences}: the former is a condition on the $H$-fixed points for $H\in\mathcal G_{\Sigma_A,G}$ for varying $A$, while the latter is a condition for $H\in\mathcal G_{G,\Sigma_A}$.
\end{warn}

\begin{defi}
A $G$-spectrum $X$ is called a \emph{$G$-global $\Omega$-spectrum} if for all finite groups $H$ and all finite \emph{faithful} $H$-sets $A\subset B$ the derived adjoint structure map
\begin{equation*}
X(A)\to\cat{R}\Omega^{B\setminus A}X(B)
\end{equation*}
is a $\mathcal G_{H,G}$-weak equivalence.
\end{defi}

Again, for a $G$-spectrum that is fibrant in either of the above level model structures, the derived adjoint structure map is already modelled by the ordinary one.

\begin{defi}
A map $f$ in $\cat{$\bm G$-Spectra}$ is called a \emph{$G$-global weak equivalence} if $\phi^*f$ is an $H$-equivariant weak equivalence for every finite group $H$ and every homomorphism $\phi\colon H\to G$.
\end{defi}

\begin{thm}
The $G$-global projective level model structure admits a Bousfield localization with weak equivalences the $G$-global weak equivalences. We call this the \emph{$G$-global projective model structure}; its fibrant objects are precisely those $G$-global $\Omega$-spectra that are fibrant in the $G$-global projective level model structure.

This model structure is again combinatorial (with the same generating cofibrations as before), simplicial, proper, and filtered colimits in it are homotopical.
\begin{proof}
See \cite[Theorem~3.1.41 and Proposition~3.1.47]{g-global}.
\end{proof}
\end{thm}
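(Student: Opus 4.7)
The plan is to realize the $G$-global projective model structure as a left Bousfield localization of the $G$-global projective level model structure, using the standard machinery for left proper, combinatorial, simplicial model categories. Combinatoriality and the simplicial enrichment are given, and I would verify left properness directly from the explicit generating cofibrations, using that each underlying $\mathcal G_{\Sigma_A,G}$-equivariant model structure is proper.

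For the set $S$ of maps to invert, I would adapt \Cref{rk:equivariant-proj-gen-cof} to the $G$-global setting: for every finite group $H$, every homomorphism $\phi\colon H\to G$, and every pair of finite \emph{faithful} $H$-sets $A\subset B$, the enriched Yoneda lemma provides a map $\lambda_{A,B}\colon S^{B\setminus A}\wedge\bm\Sigma(B,\blank)\to\bm\Sigma(A,\blank)$ for which $\maps(\lambda_{A,B},X)$ agrees with the adjoint structure map $X(A)\to\Omega^{B\setminus A}X(B)$ up to natural isomorphism. Twisting via $\phi$, smashing with $G_+\wedge_H(\blank)$, and factoring into a cofibration $\kappa_{H,A,B,\phi}$ followed by a level equivalence yields the candidate generators; $S$ then consists of the pushout products of these $\kappa$-maps with the boundary inclusions $\partial\Delta^n\hookrightarrow\Delta^n$. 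By construction, an object is $S$-local precisely when it is projectively level fibrant and its adjoint structure maps are $\mathcal G_{H,G}$-equivalences for all the relevant $H,A,B$, i.e.\ a level fibrant $G$-global $\Omega$-spectrum.

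The main obstacle is identifying the Bousfield local equivalences with the $G$-global weak equivalences. My strategy is a Whitehead-style argument: a $G$-global weak equivalence between $G$-global $\Omega$-spectra should already be a level equivalence, so for a general $G$-global weak equivalence $f$ I can pass to $S$-fibrant replacements, reducing to a level equivalence, and then conclude by 2-out-of-3. For the Whitehead step itself, I would either introduce a $G$-global $\underline\pi_*$-theory modelled on the $G$-equivariant one recalled earlier (and show it detects level equivalences on $\Omega$-spectra), or argue more directly by restricting along all $\phi\colon H\to G$ using \Cref{prop:I-functoriality-general} and invoking the $H$-equivariant version of the Whitehead statement implicit in \Cref{thm:equiv-stable}.

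The remaining claims then follow: combinatoriality and the simplicial enrichment are inherited from Bousfield localization; right properness reduces to the compatibility of $G$-global weak equivalences with homotopy pullbacks, which can be checked levelwise after restricting along each $\phi\colon H\to G$; and preservation of weak equivalences under filtered colimits reduces to the analogous property in the level model structure together with the fact that filtered colimits commute with the formation of the relevant derived adjoint structure maps up to weak equivalence.
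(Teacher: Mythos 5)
Your construction of the localization itself is the standard one and matches the route taken in the cited reference (and reused in this paper in \Cref{prop:projective-model-structure-G-gl} and \Cref{prop:pos-proj}): localize the level structure via \cite[Proposition~A.3.7.3]{htt} at cofibration replacements of the maps $\phi_!\lambda_{H,A,B}=G_+\smashp_{H,\phi}\lambda_{H,A,B}$ for faithful $H$-sets, and identify the local objects by corepresentability. The genuine gaps are in the identification of the weak equivalences. First, you never show that the maps in $S$ (hence the $S$-fibrant replacement maps $X\to\hat X$, which are relative $S$-cell complexes) are $G$-global weak equivalences. This is exactly where faithfulness of $A$ enters: one needs that each $\lambda_{H,A,B}$ is an $H$-global weak equivalence (via $\psi^*\lambda_{H,A,B}\cong\lambda_{K,\psi^*A,\psi^*B}$ and the equivariant theory) \emph{and} that $H$ acts levelwise freely on $\bm\Sigma(A,\blank)$ outside the basepoint, so that \Cref{prop:free-quotient-spectra} lets you pass to $\phi_!$. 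Without this you get neither the inclusion ``$S$-local equivalence $\Rightarrow$ $G$-global weak equivalence'' (which your write-up does not address at all but which the theorem asserts), nor the hypothesis of your own Whitehead step, since you need $\hat f$ to be a $G$-global weak equivalence before you can apply Whitehead to it.

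Second, the Whitehead step as you justify it would fail: if $X$ is a level fibrant $G$-global $\Omega$-spectrum and $\phi\colon H\to G$ is a homomorphism, then $\phi^*X$ is in general \emph{not} an $H$-equivariant $\Omega$-spectrum in the sense of \Cref{thm:equiv-stable}, because the $G$-global condition only controls the adjoint structure maps at \emph{faithful} $H$-sets, while the equivariant $\Omega$-condition has no faithfulness restriction (this is precisely the difference between global and genuine fibrancy, cf.\ \Cref{prop:equivariant-vs-global}, where the comparison Quillen adjunction goes the other way). So you cannot simply restrict along $\phi$ and quote the equivariant Whitehead statement. The standard repair is to shift: for a faithful $H$-set $A$, the shifted spectrum $\sh^A\phi^*X$ \emph{is} a genuine $H$-$\Omega$-spectrum (since $A\amalg C$ is faithful for every $H$-set $C$), shifting preserves $H$-equivariant weak equivalences, and evaluating $\sh^A\phi^*f$ at the empty set recovers $f(A)$ with its diagonal graph-subgroup actions, which yields exactly the $\mathcal G_{\Sigma_A,G}$-level statement you need. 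Your alternative via a ``$G$-global $\ul\pi_*$-theory'' has the analogous problem that $G$-global weak equivalences are not $\ul\pi_*$-isomorphisms in general, so detection of level equivalences by $\ul\pi_*$ on $\Omega$-spectra does not by itself apply. Finally, note that right properness is not inherited by Bousfield localization, so ``checked levelwise after restricting along each $\phi$'' needs an actual argument (pullbacks along level fibrations preserving $G$-global weak equivalences); this is a smaller issue than the two above but should not be waved through.
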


\begin{thm}
The $G$-global flat level model structure admits a Bousfield localization with weak equivalences the $G$-global weak equivalences. We call this the \emph{$G$-global flat model structure}; its fibrant objects are precisely those $G$-global $\Omega$-spectra that are fibrant in the $G$-global flat level model structure.

This model structure is again combinatorial (with the same generating cofibrations as before), simplicial, proper, and filtered colimits in it are homotopical.
\begin{proof}
See \cite[Theorem~3.1.40 and Proposition~3.1.47]{g-global}.
\end{proof}
\end{thm}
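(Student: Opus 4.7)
The strategy is to construct the $G$-global flat model structure as a left Bousfield localization of the $G$-global flat level model structure at a small set $S$ of morphisms. Since the level model structure is combinatorial, simplicial, and left proper, a standard localization theorem (as invoked repeatedly in \cite{g-global}) produces such a localization as soon as $S$ is given.

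I would build $S$ in direct analogy with Remark~\ref{rk:equivariant-proj-gen-cof}, but indexed by the $G$-global data governing the $\Omega$-spectrum condition. For each (isomorphism class of) finite group $H$, each homomorphism $\phi\colon H\to G$, and each inclusion $A\subset B$ of finite faithful $H$-sets, I would take the map of $G$-spectra obtained by applying the free functor $\blank\smashp_H G_+$ (with $H$ acting on $G$ via $\phi$) to the stabilization map $\lambda_{A,B}\colon S^{B\setminus A}\smashp\bm\Sigma(B,\blank)\to\bm\Sigma(A,\blank)$, factor the result as a flat cofibration followed by a level weak equivalence, and include the pushout products of these cofibrations with the generating cofibrations of $\cat{SSet}_*$ in $S$. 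An enriched Yoneda argument together with the adjunction $(\blank\smashp_H G_+)\dashv\phi^*$ then identifies the $S$-local objects among the flatly level fibrant $G$-spectra with precisely the $G$-global $\Omega$-spectra.

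The main obstacle is to show that the $S$-local equivalences coincide with the $G$-global weak equivalences. I would reduce this to the analogous identification already established in the previous (projective) theorem. The identity functor is left Quillen from the projective to the flat level model structure (they share weak equivalences while the flat structure has strictly more cofibrations), and a comparison of the factorizations used to build the two versions of $S$ shows that this remains a Quillen equivalence between the corresponding Bousfield localizations: the flatly level fibrant $G$-global $\Omega$-spectra are in particular projectively level fibrant, so every $S$-local object on the flat side is $S'$-local on the projective side and vice versa after fibrant replacement. This forces the two classes of local weak equivalences to coincide, and by the previous theorem they are the $G$-global weak equivalences.

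Combinatoriality is automatic from the localization machinery. Simpliciality follows because $S$ is a set of morphisms in a simplicial model category, and the homotopicity of filtered colimits and left properness transfer from the level model structure by standard arguments (using that filtered colimits of $G$-global level weak equivalences are again level weak equivalences, hence local equivalences, and that the class of local equivalences is closed under pushout along cofibrations in a left proper setting).
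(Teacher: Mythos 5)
Your construction of the localization itself is sound, and it is essentially the standard route (the one this paper also follows for its in-text analogues, cf.\ the proofs of Proposition~\ref{prop:projective-model-structure-G-gl} and Propositions~\ref{prop:pos-proj} and~\ref{prop:pos-flat}; the theorem at hand is only cited to \cite{g-global}): localize the flat level structure at the induced-and-factored stabilization maps $\kappa\ppo i$ indexed by homomorphisms $\phi\colon H\to G$ and \emph{faithful} $H$-sets $A\subset B$ (you correctly kept the faithfulness restriction, which is what distinguishes the $G$-global from the $G$-equivariant localization), identify the local objects with the level fibrant $G$-global $\Omega$-spectra by adjunction, and then identify the local equivalences with the $G$-global weak equivalences by comparing with the projective localization, using that both level structures have the same weak equivalences and that their local objects agree up to level equivalence. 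That comparison argument is exactly the ``abstract nonsense about Bousfield localizations'' the paper uses elsewhere, and it is legitimate to quote the projective theorem, which precedes this one.

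There are, however, genuine gaps in the auxiliary claims. First, the theorem asserts \emph{properness}, and you only address left properness (which is indeed automatic for left Bousfield localizations of left proper model categories). Right properness is \emph{not} preserved by left Bousfield localization and needs its own argument: here one cannot invoke Rezk's observation \cite{rezk-proper} that right properness only depends on the weak equivalences unless one already has a right proper model structure with the $G$-global weak equivalences, which is precisely what is being constructed; one has to argue directly, e.g.\ that pullbacks along level fibrations preserve $G$-global weak equivalences (reducing via $\phi^*$ to the corresponding fact for $H$-equivariant stable equivalences of symmetric spectra), and your proposal contains no such step. Second, your justification that filtered colimits are homotopical only shows that filtered colimits of $G$-global \emph{level} weak equivalences are weak equivalences in the localized structure; the claim is about filtered colimits of arbitrary $G$-global weak equivalences, and this does not follow from the level statement. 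One either needs closure of $G$-global weak equivalences under filtered colimits (again reducing along $\phi^*$ to the equivariant case), or a general preservation result for localizations at maps with compact (co)domains of the type invoked elsewhere in the paper as \cite[Lemma~A.2.4]{g-global}. The main existence-and-identification statement stands as you argue it, but these two claims require additional input.
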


\begin{rk}
For $G=1$ the above two model structures agree and recover Hausmann's \emph{global model structure} \cite[Theorem~2.18]{hausmann-global}.
\end{rk}

Again, there is also an injective version of the above model structures:

\begin{thm}
There is a unique model structure on $\cat{$\bm G$-Spectra}$ with weak equivalences the $G$-global weak equivalences and cofibrations the injective cofibrations. We call this the \emph{$G$-global injective model structure}. It is combinatorial, simplicial, proper, and filtered colimits in it are homotopical.
\begin{proof}
See \cite[Corollary~3.1.46]{g-global}.
\end{proof}
\end{thm}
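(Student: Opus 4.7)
The plan is to first construct an \emph{injective $G$-global level model structure} on $\cat{$\bm G$-Spectra}$—with $G$-global level weak equivalences and injective (i.e.\ level monomorphism) cofibrations—and then to left Bousfield localize it to enforce the $G$-global $\Omega$-spectrum condition. Uniqueness is automatic: a model structure is determined by its cofibrations and weak equivalences via the lifting conditions.

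For the level model structure, I would apply Jeff Smith's recognition theorem for combinatorial model categories, using a set of generating cofibrations built from cells for \emph{every} subgroup $K\subset G\times\Sigma_A$ (analogously to the generators of the injective $\mathcal G_{\Sigma_A,G}$-equivariant model structure on $\cat{$\bm{(G\times\Sigma_A)}$-SSet}$). The $G$-global level weak equivalences satisfy the required closure properties (2-out-of-3, retracts, closure under filtered colimits) because they do so level-wise, and containment of trivial injections follows from the analogous fact for the injective level model structures. Since every object is injectively cofibrant, this level model structure is automatically left proper; right properness and simpliciality are easily verified level-wise. I would then left Bousfield localize at a set of maps adapted from \Cref{rk:equivariant-proj-gen-cof} that enforces the $G$-global $\Omega$-spectrum condition for all finite groups $H$ and finite faithful $H$-sets $A\subset B$, invoking the existence theorem for left Bousfield localizations of combinatorial simplicial left proper model categories.

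To identify the resulting local equivalences with the $G$-global weak equivalences I would compare with the flat $G$-global model structure: every flat cofibration is a level monomorphism, so the identity is a left Quillen functor from the flat to the injective level model structure, and in fact a Quillen equivalence since the two structures share the same weak equivalences. Left Bousfield localizing both sides at the \emph{same} set of maps preserves this Quillen equivalence, so the local equivalences of the injective localization coincide with those of the flat $G$-global structure, namely the $G$-global weak equivalences. Right properness then transfers from the flat $G$-global structure because every injective (trivial) fibration is in particular a flat (trivial) fibration; homotopicity of filtered colimits follows immediately from the same fact on the flat side, since the two model categories share the same weak equivalences. The principal technical obstacle is this identification of the local equivalences with the $G$-global weak equivalences; without the prior construction of the flat $G$-global model structure one would need to characterize them directly via explicit fibrant replacements, which is considerably more involved.
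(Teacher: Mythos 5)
Note first that the paper does not prove this statement at all---it simply cites \cite[Corollary~3.1.46]{g-global}---so the only question is whether your construction goes through. The overall architecture (injective level structure, then left Bousfield localization, then identification of the local equivalences by comparison with the flat $G$-global structure, with properness and homotopical filtered colimits transferred along the inclusion of fibrations) is viable and mirrors how the paper itself treats the analogous statements for $\Sp(\ul{\mathscr C})$. However, there is a concrete error in your input to Smith's theorem: the set of ``cells for every subgroup $K\subset G\times\Sigma_A$,'' i.e.\ the maps $\bm\Sigma(A,\blank)\smashp_K(G\times\Sigma_A)_+\smashp(\del\Delta^n\hookrightarrow\Delta^n)_+$, is exactly the paper's generating set for the \emph{flat} level model structure, and it generates the flat cofibrations, not the levelwise monomorphisms (which form a strictly larger class for symmetric spectra, in contrast to the situation for $G$-simplicial sets, where attaching non-degenerate simplices orbitwise does exhaust all monomorphisms). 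With your proposed generators you would simply reconstruct the flat level structure. To get the injective level structure you need a genuinely different generating set, e.g.\ a cardinality-bounded set of monomorphisms produced by the standard accessibility argument; alternatively you can skip the two-step construction entirely and apply Smith's theorem once, with cofibrations the monomorphisms and weak equivalences the $G$-global weak equivalences directly (these form an accessible class, being the weak equivalences of the combinatorial flat $G$-global structure, and acyclic monomorphisms are closed under pushout and transfinite composition by the levelwise argument you already have in mind, cf.\ Lemma~\ref{lemma:g-global-sp-sp-colim}).

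A second, smaller point: the step ``localizing both sides at the same set preserves the Quillen equivalence, hence the local equivalences coincide'' is not quite the right justification. A Quillen equivalence given by identity adjunctions yields equivalent homotopy categories after localization, but by itself it does not formally give \emph{equality} of the two classes of weak equivalences. What does the work here is that your injective level structure and the flat level structure have the same weak equivalences, so they have the same homotopy function complexes; hence the $S$-local objects and the $S$-local equivalences literally agree, and the latter are the $G$-global weak equivalences by the construction of the flat $G$-global model structure. This is exactly the ``abstract nonsense about Bousfield localizations'' argument the paper uses after Proposition~\ref{prop:projective-model-structure-G-gl}, and you should phrase the identification that way. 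With these two repairs (a correct generating set of monomorphisms, or the one-step Smith argument, plus the mapping-space characterization of local equivalences), your proof goes through; the remaining claims about left properness (all objects cofibrant), right properness (injective fibrations are flat fibrations, or Rezk's observation that right properness only depends on the weak equivalences), and homotopical filtered colimits are fine as stated.
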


\subsubsection{Change of group adjunctions} As promised (and unlike their equivariant counterparts), these model structures behave nicely under changing the group:

\begin{prop}\label{prop:sp-functoriality-general}
Let $\alpha\colon G\to G'$ be any homomorphism. Then the restriction $\alpha^*\colon\cat{$\bm{G'}$-Spectra}\to\cat{$\bm G$-Spectra}$ is homotopical and we have Quillen adjunctions
\begin{align*}
\alpha_!\colon\cat{$\bm{G}$-Spectra}_\textup{$G$-gl.~proj.}&\rightleftarrows\cat{$\bm{G'}$-Spectra}_\textup{$G'$-gl.~proj.} :\!\alpha^*\\
\alpha^*\colon\cat{$\bm{G'}$-Spectra}_\textup{$G'$-gl.~flat} &\rightleftarrows\cat{$\bm{G}$-Spectra}_\textup{$G$-gl.~flat} :\!\alpha_*\\
\alpha^*\colon\cat{$\bm{G'}$-Spectra}_\textup{$G'$-gl.~inj.}&\rightleftarrows\cat{$\bm{G}$-Spectra}_\textup{$G$-gl.~inj.} :\!\alpha_*.
\end{align*}
\begin{proof}
Everything except for the statement about the injective model structures appears in \cite[Lemmas~3.1.49 and~3.1.50]{g-global}. For the final statement, it then only remains to show that $\alpha^*$ preserves injective cofibrations, which is immediate from the definition.
\end{proof}
\end{prop}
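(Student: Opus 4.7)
The plan is to reduce most of the statement to known results and then dispatch the remaining case by a direct unravelling of definitions. Specifically, the homotopicity of $\alpha^*$ and the Quillen adjunctions for the projective and flat model structures are Lemmas~3.1.49 and~3.1.50 of \cite{g-global}, so I would simply cite those. The only genuinely new content is the injective Quillen adjunction $\alpha^*\dashv\alpha_*$.

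To establish this, I would verify that the left adjoint $\alpha^*$ preserves cofibrations and acyclic cofibrations. Acyclic cofibrations will come for free from the cofibration case together with the fact that $\alpha^*$ preserves weak equivalences, which has already been asserted in the first part of the proposition. For cofibrations, I would use the characterization that cofibrations in the $G'$-global (or $G$-global) injective model structure are precisely the injective cofibrations, meaning maps $f$ such that each $f(A)\colon X(A)\to Y(A)$ is a monomorphism of simplicial sets (forgetting all group actions). Since $\alpha^*$ acts only by restricting the group action and leaves the underlying diagram of simplicial sets untouched, it trivially preserves this property.

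There is no real obstacle here: the observation that injective cofibrations are detected on underlying simplicial sets is what makes the injective model structure behave well under restriction along \emph{arbitrary} homomorphisms, in contrast to the projective model structure where one has to worry about the graph subgroup condition $\mathcal G_{\Sigma_A,G'}$ pulling back correctly under $\alpha$. In effect, the flexibility we gain on the cofibration side by enlarging them to all injective cofibrations is exactly what allows the right adjoint $\alpha_*$ to become a right Quillen functor for every $\alpha$.
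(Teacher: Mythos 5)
Your proposal is correct and follows the same route as the paper: cite \cite[Lemmas~3.1.49 and~3.1.50]{g-global} for everything except the injective case, and then note that $\alpha^*$ preserves injective cofibrations because these are detected on underlying (levelwise) simplicial sets, with acyclic cofibrations handled by combining this with the already-established homotopicity of $\alpha^*$. The paper's proof is just a terser version of exactly this argument.
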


\begin{prop}\label{prop:sp-functoriality-injective}
Let $\alpha\colon G\to G'$ be an \emph{injective} homomorphism. Then we also have Quillen adjunctions
\begin{align*}
\alpha^*\colon\cat{$\bm{G'}$-Spectra}_\textup{$G'$-gl.~proj.}&\rightleftarrows\cat{$\bm{G}$-Spectra}_\textup{$G$-gl.~proj.} :\!\alpha_*\\
\alpha_!\colon\cat{$\bm{G}$-Spectra}_\textup{$G$-gl.~flat} &\rightleftarrows\cat{$\bm{G'}$-Spectra}_\textup{$G'$-gl.~flat} :\!\alpha^*\\
\alpha_!\colon\cat{$\bm{G}$-Spectra}_\textup{$G$-gl.~inj.}&\rightleftarrows\cat{$\bm{G'}$-Spectra}_\textup{$G'$-gl.~inj.} :\!\alpha^*.
\end{align*}
\begin{proof}
The latter two statements are \cite[Propositions~3.1.52 and~3.1.53]{g-global}. For the first statement it then only remains (as $\alpha^*$ is homotopical) that $\alpha_*$ sends acyclic fibrations of the $G$-global projective (level) model structure to acyclic fibrations in the $G$-global projective (level) model structure. Using that acyclic fibrations are defined levelwise and adjoining, this amounts to saying that
\begin{equation*}
(\Sigma_A\times\alpha)^*\colon\cat{$\bm{(\Sigma_A\times G')}$-SSet}_{\mathcal G_{\Sigma_A,G'}}\to \cat{$\bm{(\Sigma_A\times G)}$-SSet}_{\mathcal G_{\Sigma_A,G}}
\end{equation*}
preserves cofibrations for every finite set $A$. This is immediate from Lemma~\ref{lemma:graph-target}.
\end{proof}
\end{prop}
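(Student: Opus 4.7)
The plan is to handle the three adjunctions separately. For the second and third, I would simply defer to the corresponding results from \cite{g-global} cited at the end of the excerpt. All the real work concerns the first adjunction $(\alpha^*, \alpha_*)$ for the projective model structures, which is not covered by the cited references, so I focus on that.

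The first step is to dispose of the acyclic cofibrations side: by Proposition~\ref{prop:sp-functoriality-general}, $\alpha^*$ is homotopical, so in particular it sends acyclic cofibrations of the $G'$-global projective model structure to weak equivalences. Thus by Ken Brown's lemma it suffices to show that $\alpha^*$ preserves cofibrations, or dually that $\alpha_*$ preserves acyclic fibrations. I would work with the latter formulation.

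The next step is to identify the acyclic fibrations. Since the $G$-global projective model structure is a Bousfield localization of the $G$-global projective level model structure, the two share the same cofibrations and hence the same acyclic fibrations. In particular, a map $f$ of $G$-spectra is an acyclic fibration in the projective model structure if and only if $f(A)$ is an acyclic fibration in the $\mathcal G_{\Sigma_A,G}$-equivariant model structure on $\cat{$\bm{(\Sigma_A\times G)}$-SSet}$ for every $A\in\bm\Sigma$. Since $\alpha_*$ is computed levelwise (evaluation at $A$ commutes with coinduction along $\alpha$, as both are right adjoints and the inverse adjunctions commute in the obvious way), checking that $\alpha_*f$ is a levelwise acyclic fibration reduces, for each $A$, to checking that
\[
(\Sigma_A\times\alpha)_*\colon\cat{$\bm{(\Sigma_A\times G)}$-SSet}_{\mathcal G_{\Sigma_A,G}}\to\cat{$\bm{(\Sigma_A\times G')}$-SSet}_{\mathcal G_{\Sigma_A,G'}}
\]
preserves acyclic fibrations.

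The final step is to invoke Lemma~\ref{lemma:graph-target} applied with the roles of the two factors as in that lemma and with the injective homomorphism $\alpha\colon G\to G'$: the lemma yields that $(\Sigma_A\times\alpha)^*$ is left Quillen between the two graph model structures, so its right adjoint $(\Sigma_A\times\alpha)_*$ is right Quillen and in particular preserves acyclic fibrations, completing the argument. I do not expect any serious obstacle here; the main subtlety is merely the bookkeeping in matching the indexing conventions of Lemma~\ref{lemma:graph-target} (where the injective homomorphism acts in the second factor) with the situation at hand, which works out directly because $\alpha$ likewise acts in the second factor of $\Sigma_A\times G$.
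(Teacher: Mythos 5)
Your argument is correct and essentially identical to the paper's: both reduce the first adjunction to showing that $\alpha_*$ preserves acyclic fibrations, note that these are detected levelwise (the projective model structure being a Bousfield localization of the level model structure, hence having the same cofibrations and acyclic fibrations) and that coinduction commutes with evaluation at each $A$, and then conclude from Lemma~\ref{lemma:graph-target} --- the paper phrases the final step as $(\Sigma_A\times\alpha)^*$ preserving cofibrations while you phrase it as $(\Sigma_A\times\alpha)_*$ preserving acyclic fibrations, which is the same statement by adjunction. The only blemish is your appeal to Ken Brown's lemma: the reduction ``$\alpha^*$ homotopical and cofibration-preserving implies left Quillen'' needs no such input, just the trivial observation that an acyclic cofibration is then sent to a cofibration that is also a weak equivalence.
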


Again, suitably free quotients are homotopical in our setting:

\begin{prop}\label{prop:free-quotient-spectra}
Let $\alpha\colon G\to G'$ be any homomorphism, and let $f\colon X\to Y$ be a $G$-global weak equivalence in $\cat{$\bm G$-Spectra}$ such that $\ker(\alpha)$ acts levelwise freely on $X$ and $Y$ outside the basepoint. Then $\alpha_!f$ is a $G'$-global weak equivalence.
\begin{proof}
See \cite[Proposition~3.1.54]{g-global}.
\end{proof}
\end{prop}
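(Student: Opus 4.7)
The strategy is to adapt the argument of Proposition~\ref{prop:I-free-quotients} to the stable setting. Factor $f$ in the $G$-global projective model structure as an acyclic cofibration $j\colon X\to Z$ followed by a fibration $p\colon Z\to Y$; since $f$ is a weak equivalence, $p$ is automatically acyclic. By Proposition~\ref{prop:sp-functoriality-general}, $\alpha_!$ is left Quillen for the projective model structures, so $\alpha_!j$ is a $G'$-global weak equivalence. It therefore suffices to show that $\alpha_!p$ is a $G'$-global weak equivalence; I will in fact prove it is a $G'$-global \emph{level} weak equivalence, using that acyclic fibrations in the $G$-global projective model structure coincide with acyclic fibrations of the underlying projective level model structure.

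The key step is to show that cofibrations in the $G$-global projective model structure preserve the property that $\ker(\alpha)$ acts levelwise freely outside the basepoint. Since this property is stable under retracts, pushouts along injections, and transfinite compositions of injections, it reduces to checking the generating cofibrations $\bm\Sigma(A,\blank)\smashp_H G_+\smashp(\del\Delta^n\hookrightarrow\Delta^n)_+$ for $A\in\bm\Sigma$ and $H\in\mathcal G_{\Sigma_A,G}$. Evaluating at a finite set $B$, the $G$-action is by translation on the $G_+$ factor and commutes with the diagonal $H$-action. Writing $H=\Gamma_{K,\phi}$, one checks directly that a relation $[i,s,gn]=[i,s,g]$ (for $n\in\ker(\alpha)$) forces $i=i\cdot k^{-1}$ for some $k\in K$, whence $k=1$ by left-cancellability of injections $i\colon A\hookrightarrow B$, and then $gn=\phi(1)g=g$ gives $n=1$. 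Hence $\ker(\alpha)$ acts freely outside the basepoint, and the property propagates to $Z$ through the cofibration $j$.

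With $Z$ and $Y$ now both equipped with levelwise free $\ker(\alpha)$-actions (outside the basepoint), I can check level by level that $\alpha_!p$ is a $\mathcal G_{\Sigma_A,G'}$-weak equivalence at each finite set $A$, using that $\alpha_!$ commutes with evaluation. Concretely, at level $A$ one has a $\mathcal G_{\Sigma_A,G}$-weak equivalence $p(A)$ between pointed $(\Sigma_A\times G)$-simplicial sets on which $1\times\ker(\alpha)$ acts freely outside the basepoint, and one must conclude that $(\Sigma_A\times\alpha)_!p(A)$ is a $\mathcal G_{\Sigma_A,G'}$-weak equivalence. This is exactly the pointed version of the unstable free-quotient statement applied with ambient monoid $\Sigma_A$, i.e.~\cite[Proposition~1.1.22]{g-global} (used in the same way as in the proof of Proposition~\ref{prop:I-free-quotients}). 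The main obstacle is the verification of the freeness-preservation property for the generating projective cofibrations; once that bookkeeping is done, the rest is a faithful translation of the unstable argument to the level-by-level situation of symmetric spectra.
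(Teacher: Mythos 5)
Your proof is correct and takes essentially the same route as the source: the paper itself only cites \cite[Proposition~3.1.54]{g-global} for this statement, and your argument reconstructs that proof by transporting the paper's own proof of the unstable analogue (Proposition~\ref{prop:I-free-quotients}) to symmetric spectra — factor $f$ in the $G$-global projective model structure, check that $G$ (hence $\ker\alpha$) acts levelwise freely off the basepoint on the generating cofibrations $\bm\Sigma(A,\blank)\smashp_H G_+\smashp(\del\Delta^n\hookrightarrow\Delta^n)_+$, and apply the free-quotient result \cite[Proposition~1.1.22]{g-global} levelwise to the acyclic fibration. Two cosmetic remarks: the cell-induction step is cleanest when phrased as in the unstable proof (every projective cofibration is levelwise injective with $G$ acting freely outside its image and the basepoint, so freeness passes from $X$ to $Z$; your ``stable under retracts'' phrasing should be implemented via the standard exhibition of the cofibration as a retract under $X$ of a relative cell complex), and the levelwise step requires the pointed, ``free outside the basepoint'' variant of \cite[Proposition~1.1.22]{g-global}, which is exactly what the cited proof of \cite[Proposition~3.1.54]{g-global} invokes.
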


\subsubsection{The smash product} The usual smash product of symmetric spectra gives us a smash product on $\cat{$\bm G$-Spectra}$ by pulling through the $G$-actions. This is compatible with the above model structures:

\begin{thm}\label{thm:smash-g-global}
The smash product defines left Quillen bifunctors
\begin{align*}
\cat{$\bm G$-Spectra}_\textup{$G$-global flat}\times\cat{$\bm G$-Spectra}_\textup{$G$-global flat}&\to\cat{$\bm G$-Spectra}_\textup{$G$-global flat}\\
\cat{$\bm G$-Spectra}_\textup{$G$-global proj.}\times\cat{$\bm G$-Spectra}_\textup{$G$-global flat}&\to\cat{$\bm G$-Spectra}_\textup{$G$-global proj.}
\end{align*}
\end{thm}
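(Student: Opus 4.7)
The overall plan is to verify the pushout-product axiom for a Quillen bifunctor by reducing to generating (acyclic) cofibrations and bootstrapping from the level model structures to their Bousfield localizations.

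First I would establish the analogous pushout-product statements for the $G$-global \emph{level} model structures. Since both are combinatorial, it suffices to verify the axiom on generators. The generating cofibrations are all of the form $\bm\Sigma(A,\blank)\smashp_H G_+\smashp(\del\Delta^n\hookrightarrow\Delta^n)_+$, and the key algebraic ingredient is the enriched Yoneda identification $\bm\Sigma(A,\blank)\smashp\bm\Sigma(B,\blank)\cong\bm\Sigma(A\amalg B,\blank)$ as $(\Sigma_A\times\Sigma_B)$-equivariant symmetric spectra. Combining this with compatibility of smash products with quotients by finite group actions, the pushout product of two generators evaluated at any $C\in\bm\Sigma$ becomes a map of pointed $(\Sigma_C\times G)$-simplicial sets whose equivariant structure is controlled by the combinatorics of graph subgroups; Lemma~\ref{lemma:graph-target} together with the standard pushout-product property for equivariant simplicial sets then gives the desired (acyclic) cofibration. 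The asymmetry between the flat--flat and projective--flat cases comes down to the combinatorial fact that a product of a graph subgroup with an arbitrary subgroup restricts along the diagonal $G\hookrightarrow G\times G$ to a graph subgroup of $\Sigma_{A\amalg B}\times G$.

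Second I would upgrade to the Bousfield-localized stable model structures. Since cofibrations are unchanged under Bousfield localization, only the acyclic-cofibration half of the pushout-product axiom requires new input. By the standard criterion for monoidal Bousfield localizations it suffices to show that smashing a flat cofibrant spectrum with a $G$-global weak equivalence yields a $G$-global weak equivalence, and analogously for the mixed projective--flat case. Using cellular induction over the above generators, this reduces to showing that smashing with the spectra $\bm\Sigma(A,\blank)\smashp_H G_+$ preserves $G$-global weak equivalences; this should follow from Proposition~\ref{prop:free-quotient-spectra} after rewriting the smash product as a composition of restriction and induction functors along suitable group homomorphisms, exploiting the free diagonal action present in $\bm\Sigma(A,\blank)\smashp_H G_+$.

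The main obstacle will be precisely this last homotopical invariance step. Since $G$-global weak equivalences are only indirectly defined through the fibrant objects of the localization, one cannot check this levelwise; instead one has to route through the change-of-group calculus of Propositions~\ref{prop:sp-functoriality-general} and~\ref{prop:sp-functoriality-injective}. Concretely, the strategy is to exhibit the operation $\bm\Sigma(A,\blank)\smashp_H G_+\smashp\blank$ as a left Kan extension along a projection between groups, applied to a suitably flat cofibrant $(\Sigma_A\times G)$-spectrum built from the input, so that Proposition~\ref{prop:free-quotient-spectra} applies because the kernel of the projection acts freely. Making this uniform enough to handle both the projective and the flat cases simultaneously, while tracking the subtle difference between graph and arbitrary subgroups, is the main technical hurdle.
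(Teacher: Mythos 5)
First, a remark on the comparison: the paper does not actually prove Theorem~\ref{thm:smash-g-global} itself but quotes it from \cite{g-global} (Propositions~3.1.63 and~3.1.64), so your proposal is measured against the strategy of that reference. Your first step is sound: monoidality of the level model structures can indeed be checked on the generating (acyclic) cofibrations using $\bm\Sigma(A,\blank)\smashp\bm\Sigma(B,\blank)\cong\bm\Sigma(A\amalg B,\blank)$, and your combinatorial observation is correct -- if $H\subset\Sigma_A\times G$ is a graph subgroup and $K\subset\Sigma_B\times G$ is arbitrary, then the diagonal $G$-action on $\big(\bm\Sigma(A,\blank)\smashp_HG_+\big)\smashp\big(\bm\Sigma(B,\blank)\smashp_KG_+\big)$ is levelwise free away from the basepoint, which is exactly what distinguishes the projective from the flat case. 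Likewise, reducing the localized pushout-product axiom to ``smashing with a flat spectrum preserves $G$-global weak equivalences'' is a legitimate route; note that this statement is precisely Proposition~\ref{prop:flatness-theorem}, which in \cite{g-global} precedes the monoidality results, so simply invoking it would not be circular.

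The genuine gap is in your last step, where you claim that the homotopical invariance of $\big(\bm\Sigma(A,\blank)\smashp_HG_+\big)\smashp\blank$ follows from Proposition~\ref{prop:free-quotient-spectra} after rewriting the smash product via restrictions and inductions. The change-of-group calculus only removes the $G_+$-factor and the $H$-quotient: the $H$-action on $\bm\Sigma(A,\blank)\smashp G_+\smashp X$ is indeed levelwise free outside the basepoint, and the untwisting isomorphism $G_+\smashp X\cong G_+\smashp\triv(X)$ exhibits the $G_+$-factor as an induction along an injective homomorphism, so Proposition~\ref{prop:free-quotient-spectra} applies to those layers. But what remains after stripping them off is exactly the assertion that smashing with the representable spectrum $\bm\Sigma(A,\blank)$ itself preserves $G$-global weak equivalences, and this is not a formal consequence of free quotients or of any adjunction bookkeeping: $G$-global weak equivalences are not levelwise, $\bm\Sigma(A,\blank)\smashp\blank$ is a twisted shift-desuspension whose right adjoint $\sh^A$ being right Quillen does not make the left adjoint homotopical, and already non-equivariantly the corresponding fact (smashing with free, or more generally flat, symmetric spectra preserves stable equivalences) is the non-formal Flatness Theorem. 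So the ``main technical hurdle'' you identify is real, but the tool you propose cannot resolve it; you must either quote Proposition~\ref{prop:flatness-theorem} (proved in \cite{g-global} by a $\ul\pi_*$-isomorphism analysis following Hausmann's equivariant argument), or redo such an analysis directly, e.g.\ by showing that the localizing maps $\lambda_{H,A,B}$ remain $G$-global weak equivalences after smashing with the flat sources and targets of the generating cofibrations.
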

Note that in the second adjunction, indeed only \emph{one} of the input factors is equipped with the projective model structure (the corresponding statement where both factors are equipped with the projective model structures follows immediately).
\begin{proof}
See \cite[Propositions~3.1.63 and~3.1.64]{g-global}.
\end{proof}

In particular, smashing with a fixed flat $G$-spectrum is left Quillen for either of the above model structures, so it preserves weak equivalences between cofibrant objects by Ken Brown's Lemma. The following (easy) $G$-global analogue of the equivariant \emph{Flatness Theorem} \cite[Proposition~6.2]{hausmann-equivariant} strengthens this result:

\begin{prop}\label{prop:flatness-theorem}
\begin{enumerate}
\item Let $X$ be any $G$-spectrum. Then $X\smashp\blank$ preserves $G$-global weak equivalences between \emph{flat} $G$-spectra.
\item Let $X$ be a \emph{flat} $G$-spectrum. Then $X\smashp\blank$ preserves $G$-global weak equivalences.
\end{enumerate}
\begin{proof}
See~\cite[Proposition~3.1.62]{g-global}.
\end{proof}
\end{prop}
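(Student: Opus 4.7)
My plan is to reduce both parts to the equivariant Flatness Theorem \cite[Proposition~6.2]{hausmann-equivariant}, which provides the analogous statements for the $H$-equivariant model structure on $\cat{$\bm H$-Spectra}$ for any finite group $H$.

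The reduction rests on three simple observations about restriction along a homomorphism $\phi\colon H\to G$ of finite groups. First, the functor $\phi^*\colon\cat{$\bm G$-Spectra}\to\cat{$\bm H$-Spectra}$ is strict symmetric monoidal for the smash product, since both the $G$- and the $H$-action on a smash product are simply pulled through the smash product of the underlying symmetric spectra. Second, $\phi^*$ preserves the property of being flat, since a $G$-spectrum is flat by definition exactly when its underlying non-equivariant symmetric spectrum is a flat cofibration, a condition unaffected by restriction. Third, by the very definition given in the paper, a map $f$ of $G$-spectra is a $G$-global weak equivalence if and only if $\phi^*f$ is an $H$-equivariant weak equivalence for every homomorphism $\phi\colon H\to G$ from a finite group $H$.

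Granting these, both parts follow at once. For any $G$-spectrum $X$, any map $f\colon Y\to Z$ of $G$-spectra, and any homomorphism $\phi\colon H\to G$, the map $\phi^*(X\smashp f)$ identifies with $\phi^*X\smashp \phi^*f$. In the setting of~(1), $\phi^*f$ is an $H$-equivariant weak equivalence between flat $H$-spectra while $\phi^*X$ is arbitrary, so the first part of the equivariant Flatness Theorem makes $\phi^*X\smashp\phi^*f$ into an $H$-equivariant weak equivalence. In the setting of~(2), $\phi^*X$ is flat and $\phi^*f$ is an arbitrary $H$-equivariant weak equivalence, and the same conclusion follows from the second part of the equivariant Flatness Theorem. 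Varying $\phi$, in either case $X\smashp f$ is a $G$-global weak equivalence. There is no real technical obstacle beyond invoking the equivariant statement: the whole content of the proposition is that the equivariant Flatness Theorem glues across all restrictions, which matches the authors' characterization of this as the \emph{easy} $G$-global analogue.
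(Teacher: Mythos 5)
Your reduction is correct: since $G$-global weak equivalences are by definition detected by $\phi^*$ for all homomorphisms $\phi\colon H\to G$, since $\phi^*$ is strict symmetric monoidal for the smash product, and since flatness is a condition on the underlying non-equivariant symmetric spectrum (so is preserved by restriction), both parts follow from the corresponding two halves of the equivariant Flatness Theorem \cite[Proposition~6.2]{hausmann-equivariant}. The paper merely cites \cite[Proposition~3.1.62]{g-global}, and your argument is essentially the one behind that reference, so your proposal matches the intended proof.
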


\subsubsection{Relation to stable equivariant homotopy theory} On the pointset level, $G$-global and $G$-equivariant spectra are the same objects, and every $G$-global weak equivalence is in particular a $G$-equivariant weak equivalence. Thus, the identity of $\cat{$\bm G$-Spectra}$ descends to exhibit the $G$-equivariant stable homotopy category as a localization of the $G$-global one. This localization admits both adjoints (fully faithful for formal reasons), which has the following model categorical manifestation:

\begin{prop}\label{prop:equivariant-vs-global}
The adjunctions
\begin{align*}
\id\colon\cat{$\bm G$-Spectra}_\textup{$G$-equivariant projective}&\rightleftarrows\cat{$\bm G$-Spectra}_\textup{$G$-global flat} :\!\id\\
\id\colon\cat{$\bm G$-Spectra}_\textup{$G$-global flat}&\rightleftarrows\cat{$\bm G$-Spectra}_\textup{$G$-equivariant flat} :\!\id
\end{align*}
are Quillen adjunctions.
\begin{proof}
See \cite[Proposition~3.3.1 and Corollary~3.3.3]{g-global}.
\end{proof}
\end{prop}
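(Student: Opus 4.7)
The second Quillen adjunction is essentially formal: by the explicit characterizations recalled above, the cofibrations of the $G$-global flat and $G$-equivariant flat model structures both coincide with the flat cofibrations of $\cat{Spectra}$, and since every $G$-global weak equivalence is a $G$-equivariant weak equivalence, every $G$-global flat acyclic cofibration is automatically a $G$-equivariant flat acyclic cofibration.

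For the first adjunction, cofibration preservation is immediate from the generating cofibrations: a graph subgroup of $G\times\Sigma_A$ is, after swapping the two factors, in particular a subgroup of $\Sigma_A\times G$, so the generating cofibrations of the $G$-equivariant projective model structure are contained in those of the $G$-global flat model structure. Preservation of acyclic cofibrations reduces, by cocontinuity and closure under retracts, to checking it on the generating acyclic cofibrations described in Remark~\ref{rk:equivariant-proj-gen-cof}, which fall into two classes.

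The first class consists of the generating acyclic level cofibrations, whose underlying maps at each level $B$ have the form $\bigl((G_+\wedge\bm\Sigma(A,B))/K\bigr)\wedge(\Lambda^n_k\hookrightarrow\Delta^n)_+$. These are plainly injective cofibrations, and since fixed points distribute over smash products with a basepoint-trivial factor, their $L$-fixed points for any subgroup $L\subset G\times\Sigma_B$ are coproducts of horn inclusions, hence weak equivalences; thus they are acyclic cofibrations in the $G$-global flat level model structure. The second class consists of the pushout products $(G_+\wedge_H\kappa_{H,A,B})\ppo i$, which by the pushout product property reduces to showing that $G_+\wedge_H\kappa_{H,A,B}$ is itself a $G$-global flat acyclic cofibration.

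The main technical obstacle is verifying that $\kappa_{H,A,B}$ is an $H$-global weak equivalence---a priori it is only designed to witness the $H$-\emph{equivariant} $\Omega$-spectrum condition. By two-out-of-three with the level weak equivalence $\rho_{H,A,B}$, this reduces to showing that $\lambda_{H,A,B}$ is an $H$-global weak equivalence. For \emph{faithful} $H$-actions on $A\amalg B$ this is essentially a defining feature of the $H$-global model structure. For a general, possibly non-faithful, $H$-action, let $N\subset H$ be the kernel of the action on $A\amalg B$ and $\alpha\colon H\twoheadrightarrow H/N$ the quotient; then $A$ and $B$ become faithful $(H/N)$-sets, one checks directly that $\lambda_{H,A,B}=\alpha^*\lambda_{H/N,A,B}$, and the faithful case combined with the homotopicality of $\alpha^*$ (Proposition~\ref{prop:sp-functoriality-general}) yields the claim. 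To conclude, Proposition~\ref{prop:sp-functoriality-injective} applied to the injective inclusion $H\hookrightarrow G$ shows that $G_+\wedge_H(\blank)$ is left Quillen for the $G$-global flat model structures; combined with Ken Brown's lemma---noting that $\kappa_{H,A,B}$ is a projective (hence flat) cofibration between flatly cofibrant objects---this gives that $G_+\wedge_H\kappa_{H,A,B}$ is a $G$-global flat acyclic cofibration, as required.
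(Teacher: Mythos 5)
The second adjunction and the cofibration half of the first one are fine, and you have correctly located the essential content in the generating acyclic cofibrations $(G_+\smashp_H\kappa_{H,A,B})\ppo i$ (the paper itself simply cites \cite{g-global} for this proposition). However, your treatment of $\kappa_{H,A,B}$ has a genuine gap, in two places. First, the $2$-out-of-$3$ step treats $\rho_{H,A,B}$ as if it were an $H$-global weak equivalence, but by construction it is only an $H$-\emph{equivariant} level weak equivalence, i.e.\ a condition on fixed points for the graph subgroups in $\mathcal G_{H,\Sigma_C}$; $H$-global (level) weak equivalences are governed by the other family $\mathcal G_{\Sigma_C,H}$, and the paper explicitly warns that these notions differ and that restriction along non-injective homomorphisms does not preserve equivariant equivalences. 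So knowing that $\lambda_{H,A,B}$ is an $H$-global weak equivalence does not let you conclude the same for $\kappa_{H,A,B}$ unless you also know something extra about $\rho_{H,A,B}$ (e.g.\ that the chosen factorization is Hausmann's mapping cylinder, in which case $\rho_{H,A,B}$ is a levelwise simplicial homotopy equivalence and in particular an $H$-global weak equivalence). Second, your reduction to the ``faithful'' case misses the actual faithfulness condition: the defining $\Omega$-spectrum condition of the $H$-global model structure requires the $H$-action on $A$ (not on $A\amalg B$) to be faithful, and dividing by the kernel of the action on $A\amalg B$ does not make $A$ faithful (take $A$ a trivial orbit and $B$ a free one). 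The case you declare to be ``essentially a defining feature'' is thus precisely the non-formal content of the proposition, equivalent to the statement that $G$-global $\Omega$-spectra are $G$-equivariant $\Omega$-spectra, and it is not established by your argument.

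Both gaps can be closed by exploiting how $G$-global weak equivalences are defined, namely through restrictions: a map of $H$-spectra is an $H$-global weak equivalence if and only if $\phi^*$ of it is a $K$-equivariant weak equivalence for every homomorphism $\phi\colon K\to H$. Now $\phi^*\lambda_{H,A,B}=\lambda_{K,\phi^*A,\phi^*B}$, and \emph{all} maps $\lambda_{K,A',B'}$ (no faithfulness needed) are $K$-equivariant weak equivalences by the construction of the $K$-equivariant stable model structure; this proves the statement about $\lambda$ without any reduction. If moreover the factorization $\lambda=\rho\kappa$ is chosen naturally (mapping cylinder), then likewise $\phi^*\kappa_{H,A,B}=\kappa_{K,\phi^*A,\phi^*B}$ is a $K$-equivariant acyclic cofibration, so $\kappa_{H,A,B}$ is an $H$-global weak equivalence directly, and the rest of your argument (Proposition~\ref{prop:sp-functoriality-injective} for $G_+\smashp_H\blank$, then the pushout product property) goes through. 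Alternatively one can bypass the generating acyclic cofibrations altogether and verify, via Lemma~\ref{lemma:check-QA-fibrant}, that the right adjoint sends flatly level fibrant $G$-global $\Omega$-spectra to $G$-equivariant $\Omega$-spectra---but note this is the same non-formal assertion, not a shortcut around it.
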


\subsubsection{Suspension spectra} Finally, we come to the relation between unstable and stable $G$-global homotopy theory:

\begin{constr}
Let $X$ be an $\mathcal I$-simplicial set (or an $I$-simplicial set). We define a symmetric spectrum $\Sigma^\bullet_+X$ via $(\Sigma^\bullet_+X)(A) = S^A\smashp X(A)_+$; if $i\colon A\to B$ is an injection of finite sets, then the structure map is given by
\begin{equation*}
S^{B\setminus i(A)}\smashp (S^A\smashp X(A)_+)\cong S^B\smashp X(A)_+\xrightarrow{S^B\smashp X(i)_+} S^B\smashp X(B)_+
\end{equation*}
where the unlabelled isomorphism is induced by $i$. This becomes an enriched functor in $X$ as follows: given a map $f\colon X\times\Delta^n\to Y$ of $\mathcal I$-simplicial sets (or $I$-simplicial sets), we have a map $(\Sigma^\bullet_+ X)\smashp\Delta^n_+\to\Sigma^\bullet_+ Y$ of symmetric spectra given in degree $A$ by
\begin{equation*}
	(S^A\smashp X(A)_+)\smashp\Delta^n_+\cong S^A\smashp (X(A)\times\Delta^n)_+\xrightarrow{S^A\smashp f_A} S^A\smashp Y(A)_+
\end{equation*}
where the unlabelled isomorphism is the obvious one. We can then lift this to a functor
\begin{equation}\label{eq:suspension}
\Sigma^\bullet_+\colon\cat{$\bm G$-$\bm{\mathcal I}$-SSet}\to\cat{$\bm G$-Spectra}
\end{equation}
by pulling through the $G$-actions.
\end{constr}

\begin{prop}\label{prop:suspension-loop-G-gl}
The functor $(\ref{eq:suspension})$ admits a simplicial right adjoint $\Omega^\bullet$. We have Quillen adjunctions
\begin{align*}
\Sigma^\bullet_+\colon\cat{$\bm G$-$\bm{\mathcal I}$-SSet}_\textup{$G$-gl.~proj.}&\rightleftarrows\cat{$\bm G$-Spectra}_\textup{$G$-gl.~proj.} :\!\Omega^\bullet\\
\Sigma^\bullet_+\colon\cat{$\bm G$-$\bm{\mathcal I}$-SSet}_\textup{$G$-gl.~inj.}&\rightleftarrows\cat{$\bm G$-Spectra}_\textup{$G$-gl.~inj.} :\!\Omega^\bullet,
\end{align*}
and in particular $\Sigma^\bullet_+$ is homotopical.
\end{prop}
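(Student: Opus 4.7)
The plan is to construct $\Omega^\bullet$ explicitly and then establish the two Quillen adjunctions by combining a levelwise argument with a verification of compatibility with the Bousfield localizations on both sides.

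For the right adjoint, I define $(\Omega^\bullet Y)(A) \mathrel{:=} \maps_*(S^A, Y(A))$ for any $G$-spectrum $Y$, equipped with the diagonal $\Sigma_A$-action and the $G$-action inherited from $Y$. An injection $i\colon A\hookrightarrow B$ should send a map $\omega\colon S^A\to Y(A)$ to the composite $S^B\cong S^{B\setminus i(A)}\smashp S^A \xrightarrow{\id\smashp\omega} S^{B\setminus i(A)}\smashp Y(A)\xrightarrow{\sigma_i} Y(B)$, where $\sigma_i$ is the structure map of $Y$ corresponding to $i$; this extends to a simplicial enrichment via the enriched structure of $Y$ over $\bm\Sigma$. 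A direct unwinding of definitions then verifies the adjunction $\Sigma^\bullet_+\dashv\Omega^\bullet$ as well as its simplicial compatibility.

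To check the Quillen properties, I first establish them between the underlying \emph{level} model structures. Preservation of injective cofibrations is immediate, since $\Sigma^\bullet_+$ is levelwise given by smashing with $S^A$ and adjoining a basepoint. For the projective level case, I check directly on generating cofibrations that $\Sigma^\bullet_+$ sends $\mathcal I(A,\_)\times_\phi G \times (\del\Delta^n \hookrightarrow \Delta^n)$ to a cofibration in the $G$-global projective level model structure on $\cat{$\bm G$-Spectra}$; this uses a comparison between $\mathcal I(A,\_) = E(I(A,\_))$ and the corepresentable symmetric spectrum $\bm\Sigma(A,\_)$ together with Proposition~\ref{prop:I-free-quotients} to handle the free $\Sigma_A$-actions.

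Upgrading to Quillen adjunctions between the $G$-global model structures then proceeds via the universal property of left Bousfield localization: it suffices to check that $\cat L\Sigma^\bullet_+$ inverts the localizing maps in $\cat{$\bm G$-$\bm{\mathcal I}$-SSet}$, i.e.~sends them to $G$-global weak equivalences in $\cat{$\bm G$-Spectra}$. These localizing maps arise from inclusions of finite faithful $H$-sets $A\subset B$, and I aim to identify their derived suspension spectra, modulo free parts handled via Propositions~\ref{prop:I-free-quotients} and~\ref{prop:free-quotient-spectra}, with the maps $\lambda_{H,A,B\setminus A}$ of Remark~\ref{rk:equivariant-proj-gen-cof}, which are by construction inverted in the $G$-global stable model structures. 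This last identification is the main technical obstacle, requiring careful bookkeeping to bridge the $\mathcal I$-enrichment (built from the indiscrete functor $E$) and the $\bm\Sigma$-enrichment (built from sphere spectra); once it is in place, both Quillen adjunction statements follow.
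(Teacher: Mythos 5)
The central gap is in your construction of the right adjoint. The source of $\Sigma^\bullet_+$ is $\cat{$\bm G$-$\bm{\mathcal I}$-SSet}$, i.e.~\emph{enriched} functors on $\mathcal I$, whose hom-objects are the chaotic simplicial sets $E(I(A,B))$; an object $Z$ of this category must come with action maps $E(I(A,B))\times Z(A)\to Z(B)$, so that any two injections $A\to B$ act in a specified, coherently homotopic way. Your proposed $(\Omega^\bullet Y)(A)=\maps_*(S^A,Y(A))$, with structure maps given only for single injections, is merely an $I$-simplicial set: for a general symmetric spectrum $Y$ the maps induced by two different injections $A\to B$ are genuinely different and carry no preferred coherence (this is the usual non-semistability phenomenon, visible already for $Y=\bm\Sigma(\bm 1,\blank)$). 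So your $\Omega^\bullet Y$ is not an object of $\cat{$\bm G$-$\bm{\mathcal I}$-SSet}$, and the ``direct unwinding of definitions'' verifying the adjunction cannot go through; concretely, by enriched Yoneda the value of the true right adjoint at $A$ must be $\maps(\Sigma^\bullet_+\mathcal I(A,\blank),Y)$, which differs from $\maps_*(S^A,Y(A))=\maps(\Sigma^\bullet_+I(A,\blank),Y)$. What you have written down is the right adjoint $\omega^\bullet$ of $\Sigma^\bullet_+$ viewed as a functor on $\cat{$\bm G$-$\bm I$-SSet}$; since $\Sigma^\bullet_+$ on $\mathcal I$-spaces factors through the forgetful functor to $I$-spaces, the actual $\Omega^\bullet$ is $\omega^\bullet$ followed by the right adjoint of that forgetful functor, exactly as described in Remark~\ref{rk:Omega-bullet-on-fibrant} (and its values agree with yours only up to $G$-global level equivalence on suitably fibrant inputs). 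Any verification you intended on the right-adjoint side would have to be redone for this composite. Note also that the paper does not reprove this statement but cites \cite{g-global} for it.

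The remaining outline is structurally reasonable (check the level model structures first, then apply the localization criterion to the source; since both sides are localized you need the unlocalized adjunction into the \emph{localized} target plus inversion of the source's localizing set), but the two non-formal inputs are only asserted: that $\Sigma^\bullet_+\big(\mathcal I(A,\blank)\times_\phi G\big)\smashp(\del\Delta^n\hookrightarrow\Delta^n)_+$ is a cofibration in the $G$-global projective level model structure, and that the derived images of the localizing maps agree, after dividing out suitably free actions, with the maps $\lambda_{H,A,B\setminus A}$ of Remark~\ref{rk:equivariant-proj-gen-cof}. For the first, the appeal to Proposition~\ref{prop:I-free-quotients} does not help: that result concerns weak equivalences of free quotients, not cofibrancy; one needs instead a cell decomposition of $\Sigma^\bullet_+\mathcal I(A,\blank)$ (say via the skeletal filtration of $E(I(A,\blank))$) into $\bm\Sigma$-corepresentables. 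For the second, the comparison between $\Sigma^\bullet_+\mathcal I(A,\blank)$ and $\Sigma^\bullet_+I(A,\blank)\cong S^A\smashp\bm\Sigma(A,\blank)$ is itself a nontrivial $G$-global equivalence statement; as you acknowledge, this is the main technical obstacle, and it is precisely the content of the results in \cite{g-global} that the paper invokes.
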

Beware that \cite{g-global} uses $\Omega^\bullet$ for the corresponding right adjoint in $I$-simplicial sets instead and introduces more complicated notation for the above right adjoint in $\mathcal I$-simplicial sets. As we will only need the latter, we have decided to change notation here.
\begin{proof}
See \cite[Corollary~3.2.6 and Remark~3.2.7]{g-global}.
\end{proof}

\begin{rk}\label{rk:Omega-bullet-on-fibrant}
We briefly remark on the above right adjoint. As a functor from $\cat{$\bm I$-SSet}$, $\Sigma^\bullet_+$ has a left adjoint $\omega^\bullet$ defined via $(\omega^\bullet X)(A)=\Omega^AX(A)$ with the evident functoriality in each variable. The functor $\Omega^\bullet$ is accordingly obtained by postcomposing $\omega^\bullet$ with the right adjoint $\cat{$\bm I$-SSet}\to\cat{$\bm{\mathcal I}$-SSet}$ of the forgetful functor, and this as usual gives the right adjoint for general $G$ by pulling through the action. As the forgetful functor $\cat{$\bm G$-$\bm{\mathcal I}$-SSet}\to\cat{$\bm G$-$\bm I$-SSet}$ is the left half of a Quillen equivalence for the projective $G$-global model structures \cite[Theorem~1.4.49]{g-global}, we obtain a natural $G$-global level weak equivalence between the restriction of
\begin{equation*}
\cat{$\bm G$-Spectra}\xrightarrow{\Omega^\bullet}\cat{$\bm{G}$-$\bm{\mathcal I}$-SSet}\xrightarrow{\textup{forget}}\cat{$\bm G$-$\bm I$-SSet}
\end{equation*}
to projectively fibrant objects and the corresponding restriction of $\omega^\bullet$; this is all we will need about $\Omega^\bullet$ below.
\end{rk}

\section{Global model categories}\label{sec:global-model-cat}
In this section, we introduce the framework of \emph{global model categories} which will then in particular allow us later to express the universal property of the passage from global spaces to global spectra.

\subsection{Preglobal model categories} We begin by describing a slightly more general notion:

\begin{defi}
A \emph{preglobal model category} consists of a locally presentable category $\mathscr C$, which is enriched, tensored, and cotensored over $\cat{SSet}$, together with two model structures on the category $G\text{-}\mathscr C$ of $G$-objects in $\mathscr C$ for each finite group $G$, called the \emph{projective} and \emph{flat $G$-global model structures}, such that the following conditions are satisfied:
\begin{enumerate}
\item The projective and the flat $G$-global model structure have the same weak equivalences (which we call the \emph{$G$-global weak equivalences}) and the adjunction $\id\colon G\text-\mathscr C_{\text{proj}}\rightleftarrows G\text{-}\mathscr C_\text{flat} :\!\id$ is a Quillen adjunction (i.e.~every projective cofibration is also a flat cofibration, or equivalently every fibration of the flat model structure is also a fibration in the projective one).
\item Both the projective and the flat model structure on $G\text{-}\mathscr C$ are left proper, combinatorial, and simplicial.
\item For every homomorphism $\alpha\colon H\to G$ of finite groups the restriction functor $\alpha^*\colon G\text{-}\mathscr C\to H\text-\mathscr C$ preserves weak equivalences, flat cofibrations, and projective fibrations. Moreover, if $\alpha$ is injective, then $\alpha^*$ also preserves projective cofibrations and flat fibrations.

In particular, $\alpha^*$ is always left Quillen for the flat model structures and right Quillen for the projective ones; if $\alpha$ is injective, then $\alpha^*$ is also right Quillen for the flat model structures and left Quillen for the projective ones.
\end{enumerate}
\end{defi}

\begin{ex}
Let $\mathscr C=\cat{$\bm{\mathcal I}$-SSet}$. For every finite group $G$, we can equip $\cat{$\bm G$-$\bm{\mathcal I}$-SSet}$ with the \emph{$G$-global model structure} and the \emph{injective $G$-global model structure} as projective and flat model structures, respectively, and these are left proper, combinatorial, and simplicial (Theorems~\ref{thm:I-G-glob} and~\ref{thm:I-inj-G-glob}). Propositions~\ref{prop:I-functoriality-general} and~\ref{prop:I-functoriality-injective} then show that this yields a preglobal model category, which we denote by $\glo{GlobalSpaces}$ and call the \emph{preglobal model category of global spaces}.
\end{ex}

\begin{ex}\label{ex:global-spectra}
Let $\mathscr C=\cat{Spectra}$ be the category of symmetric spectra. For every finite $G$, we can equip $\cat{$\bm G$-Spectra}$ with the projective and injective $G$-global model structure. Propositions~\ref{prop:sp-functoriality-general} and~\ref{prop:sp-functoriality-injective} then show that this yields a preglobal model category $\glo{GlobalSpectra}$, which we call the \emph{preglobal model category of global spectra}.
\end{ex}

\begin{rk}
By the same argument we could have used the \emph{flat} instead of the \emph{injective} model structures to yield another preglobal model category (which is the motivation for the above terminology). However, for now the above choices will be more convenient; later, when we consider algebraic structures on global spectra, we will see a return of the flat model structures (or more precisely their `positive' cousins).
\end{rk}

\begin{nex}
	If we instead consider the \emph{$G$-equivariant} projective and flat model structures from Theorem~\ref{thm:equiv-stable}, this does \emph{not} make $\cat{Spectra}$ into a preglobal model category: for example, the restriction $\alpha^*$ along any non-injective homomorphism $\alpha$ does not preserve equivariant stable weak equivalences, and it is not right Quillen for the equivariant projective model structures.

	We emphasize that this should not be seen as some pointset level artifact---instead, it is a model categorical manifestation of the fact that the left derived functor $\cat{L}\alpha^*$ does not admit a left adjoint \cite[Warning~9.8]{clefts}. In the alternative higher categorical framework of \cite{global-param, clefts} this leads to the global $\infty$-category of equivariant spectra not being `globally presentable,' but only `equivariantly presentable.'
\end{nex}

\begin{ex}\label{ex:shifts}
Let $\underline{\mathscr C}$ be a preglobal model category and let $G$ be a finite group. Then we have a preglobal model category $\underline{G\text{-}\mathscr C}$ with underlying category $G\text{-}\mathscr C$ and with the model structures on $G'\text{-}(G\text{-}\mathscr C)$ transported from $(G'\times G)\text{-}\mathscr C$ along the evident isomorphism of categories. In particular, specializing to the previous examples we get preglobal model categories of $G$-global spaces and of $G$-global spectra.
\end{ex}

\begin{ex}\label{ex:exotic}
Finally, we introduce a more exotic example. Let $\mathcal F$ be a \emph{global family}, i.e.~a non-empty collection of finite groups closed under subquotients (hence in particular under isomorphisms). We make $\mathscr E\mathrel{:=}\cat{SSet}$ into a preglobal model category as follows: for every finite group $G$, we write $G\cap\mathcal F$ for the family of subgroups of $G$ that belong to $\mathcal F$, and we equip $\cat{$\bm G$-SSet}$ with the $(G\cap\mathcal F)$-model structure and the injective $(G\cap\mathcal F)$-model structure, respectively.

If now $\alpha\colon G\to G'$ is any group homomorphism, then $\alpha^*$ clearly preserves injective cofibrations, and it preserves weak equivalences and projective fibrations as $(\alpha^*X)^H=X^{\alpha(H)}$ and $\alpha(H)$ is a quotient of $H$. Moreover, if $\alpha$ is injective, then $\alpha^*$ also preserves projective cofibrations as for any $G'$-simplicial set $X$ and any simplex $x$ the isotropy groups satisfy $\textup{Iso}_{\alpha^*X}(x)=\alpha^{-1}(\textup{Iso}_X(x))$, which is isomorphic to a subgroup of $\textup{Iso}_X(x)$. Finally, $\alpha_!$ always preserves injective cofibrations (as quotients by group actions preserve injections of sets), and if $\alpha$ is injective, then this moreover sends $(G\cap\mathcal F)$-weak equivalences to $(G'\cap\mathcal F)$-weak equivalences by \cite[Proposition~1.1.18]{g-global} (for $M=1$), so it is left Quillen for the injective equivariant model structures as claimed.

We call the resulting preglobal model category $\ul{\mathscr E}_{\mathcal F}$ the \emph{preglobal model category of $\mathcal F$-equivariant spaces} (the reader is invited to choose for themselves whether $\mathscr E$ is an abbreviation for `equivariant' or `exotic').
\end{ex}

The simplicial enrichment, tensoring, and cotensoring of $\mathscr C$ make $G\text{-}\mathscr C$ not only enriched, tensored, and cotensored over $\cat{SSet}$ but over all of $\cat{$\bm G$-SSet}$: the tensoring and cotensoring are just given by equipping the non-equivariant tensoring or cotensoring with the diagonal and conjugation $G$-action, and the enrichment is given by taking the mapping space in $\mathscr C$ (i.e.~without regards to $G$-actions) and equipping it with the conjugation $G$-action. In the same way, we obtain for any finite group $H$ a functor $\blank\times\blank\colon\cat{$\bm{(G\times H)}$-SSet}\times G\text-\mathscr C\to (G\times H)\text-\mathscr C$ generalizing the tensoring.

\begin{lemma}\label{lemma:g-sset-enriched}\label{lemma:tensoring-generalized}
Let $G,H$ be finite groups. Then the above functors
\begin{align*}
\cat{$\bm{(G\times H)}$-SSet}_{\mathcal G_{G,H}}\times G\text-\mathscr C_\textup{proj}&\to (G\times H)\text-\mathscr C_\textup{proj}\\
\cat{$\bm{(G\times H)}$-SSet}\times G\text-\mathscr C_\textup{flat}&\to (G\times H)\text-\mathscr C_\textup{flat}
\end{align*}
are left Quillen bifunctors.

In particular, specializing to $H=1$, both the projective and flat model structure on $G\text{-}\mathscr C$ are enriched in the model categorical sense over $\cat{$\bm G$-SSet}$ with the usual equivariant model structure.
\begin{proof}
Let us consider the case of the projective model structures first. By adjointness, it is enough to show that
\begin{align*}
G\text-\mathscr C^\op_\textup{proj}\times (G\times H)\text-\mathscr C_\textup{proj}&\to\cat{$\bm{(G\times H)}$-SSet}_{\mathcal G_{G,H}}\\
(X,Y)&\mapsto \maps(\textup{triv}_H X,Y)
\end{align*}
is a right Quillen bifunctor. By definition of the model structure on the right this amounts to saying that for every $K\subset G$ and $\phi\colon K\to H$ the functor $(X,Y)\mapsto \maps^K(i^*X,(i,\phi)^*Y)$ to $\cat{SSet}$ is a right Quillen bifunctor, where $i\colon K\hookrightarrow G$ is the inclusion. However, as $(i,\phi)^*$ is right Quillen and $i^*$ is left Quillen for the projective model structures, this follows at once from the fact that $K\text{-}\mathscr C_\textup{proj}$ is a simplicial model category.

For the flat model structures, we observe that $\text{triv}_H\colon G\text-\mathscr C_\textup{flat}\to (G\times H)\text-\mathscr C_\textup{flat}$ is left Quillen. Replacing $G$ by $G\times H$ if necessary, we may therefore assume without loss of generality that $H=1$. However, in this case we are similarly reduced by adjointness to proving that $X,Y\mapsto\maps^K(X,Y)$ is a right Quillen bifunctor for any subgroup $K\subset G$. This in turns follows again from the fact that the restriction $G\text-\mathscr C_\textup{flat}\to K\text-\mathscr C_\text{flat}$ is both left and right Quillen and that $K\text-\mathscr C_\textup{flat}$ is simplicial.
\end{proof}
\end{lemma}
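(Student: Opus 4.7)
The plan is to use adjointness to recast the left Quillen bifunctor claim about the tensoring $\times$ as a right Quillen bifunctor statement about the enriched mapping space $\maps(\blank,\blank)$ with values in $\cat{$\bm{(G\times H)}$-SSet}$ (equipped with the graph model structure $\mathcal G_{G,H}$, respectively the injective model structure). Since being fibrant or a weak equivalence in the graph model structure is detected by passing to $\Gamma_{K,\phi}$-fixed points for $K\subset G$ and $\phi\colon K\to H$, and since fixed points commute with $\maps$ in the appropriate sense, checking the right Quillen bifunctor condition reduces to checking that each functor $(X,Y)\mapsto \maps^K(i^*X,(i,\phi)^* Y)$ is a right Quillen bifunctor to $\cat{SSet}$, where $i\colon K\hookrightarrow G$ is the inclusion and $(i,\phi)\colon K\to G\times H$ is the (injective) graph homomorphism $k\mapsto(i(k),\phi(k))$.

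For the projective case, the variance works out perfectly because of the Quillen properties built into the definition of a preglobal model category: restriction along the injective inclusion $i\colon K\hookrightarrow G$ is left Quillen for the projective structures, while restriction along the (also injective) $(i,\phi)\colon K\to G\times H$ is right Quillen for the projective structures. Since $K\text{-}\mathscr C_\text{proj}$ is itself a simplicial model category by hypothesis, the pushout-product condition then follows by combining these Quillen functors with the simplicial SM7 axiom.

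For the flat case, first note that $\mathrm{triv}_H\colon G\text{-}\mathscr C_\text{flat}\to(G\times H)\text{-}\mathscr C_\text{flat}$ is left Quillen because restriction along the projection $G\times H\to G$ is left Quillen for the flat structures (this is part of the definition of preglobal model category, where restrictions are always left Quillen for flat). This lets us reduce to the case $H=1$ after replacing $G$ by $G\times H$, where we must only analyse fixed points for genuine subgroups $K\subset G$. Here the argument is symmetric to the projective one, using instead that restriction to $K$ is both left and right Quillen for the flat model structure (the right Quillen-ness is automatic, the left Quillen-ness uses injectivity of $K\hookrightarrow G$), together with the simplicial enrichment of $K\text{-}\mathscr C_\text{flat}$.

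The main subtlety I anticipate is bookkeeping the variance of restriction functors: the projective case crucially uses that injective restrictions are left Quillen (the `non-trivial' half of the preglobal axiom), while the flat case uses the dual statement. Once the reduction to non-equivariant simplicial sets is in place via the graph-subgroup characterization of fibrations, no further honest work remains. The $H=1$ specialization giving the $\cat{$\bm G$-SSet}$-enrichment is then immediate from the first statement.
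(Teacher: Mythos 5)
Your proposal is correct and follows essentially the same route as the paper: adjoin over to the mapping-space functor into $\cat{$\bm{(G\times H)}$-SSet}$, use the graph-subgroup (resp.\ all-subgroup) description of fibrations and weak equivalences to reduce to the functors $(X,Y)\mapsto\maps^K(i^*X,(i,\phi)^*Y)$, and conclude via the Quillen properties of restriction functors together with simpliciality of $K\text-\mathscr C$, with the same $\triv_H$ reduction to $H=1$ in the flat case. The only slip is a harmless swap of variances in the flat case: for the flat model structures restriction along \emph{any} homomorphism is left Quillen, while it is the \emph{right} Quillen property that requires injectivity of $K\hookrightarrow G$ (not the other way around, as you wrote); since the inclusion is injective, both hold and your argument goes through unchanged.
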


\subsection{Global model categories} In order to support a good theory of \emph{genuine stabilizations} we will need one extra condition in addition to the axioms of a preglobal model category. To motivate this we first recall:

\begin{lemma}\label{lemma:Beck-Chevalley}
Let
\begin{equation}\label{diag:pb-surj}
\begin{tikzcd}
A\arrow[d, "q"']\arrow[r, "g"] & B\arrow[d, "p"]\\
C\arrow[r, "f"'] & D
\end{tikzcd}
\end{equation}
be a pullback square of groups such that $p$ (whence also $q$) is surjective, and let $\mathscr C$ be a complete category. Then the Beck-Chevalley transformation
\begin{equation*}
\begin{tikzcd}
A\text-\mathscr C\arrow[d, "q_*"']\twocell[from=dr] & \arrow[l, "g^*"'] B\text-\mathscr C\arrow[d, "p_*"]\\
C\text-\mathscr C & \arrow[l, "f^*"] D\text-\mathscr C
\end{tikzcd}
\end{equation*}
(i.e.~the canonical mate
\begin{equation*}
f^*p_*\xrightarrow{\eta} q_*q^*f^*p_*=q_*g^*p^*p_*\xrightarrow{\epsilon} q_*g^*
\end{equation*}
of the identity transformation) is an isomorphism.
\begin{proof}
This is well-known (see e.g.~\cite[Proposition~11.6]{joyal-book} for a result in much greater generality), but also easy enough to prove directly: if $X\in B\text-\mathscr C$, then $f^*p_*X=X^{\ker p}$ as objects of $\mathscr C$, while $q_*g^*X=(g^*X)^{\ker q}$, and the Beck-Chevalley transformation is the unique map over $X$. The claim follows as $g(\ker q)=\ker p$ by virtue of $(\ref{diag:pb-surj})$ being a pullback.
\end{proof}
\end{lemma}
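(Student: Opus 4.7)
The plan is to compute both sides of the Beck--Chevalley transformation explicitly using the fact that right Kan extensions along surjective group homomorphisms are given by taking fixed points under the kernel, and then to verify that the transformation reduces to the identity.

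First, I would establish the following explicit description of the right adjoint. If $p\colon B\to D$ is a surjective group homomorphism with kernel $K\subset B$, and $X\in B\text-\mathscr C$, then I claim $p_*X$ has underlying object $X^K$ (the limit of the diagram $K\to\mathscr C$ picking out the $K$-action on $X$) with the $D$-action induced by the identification $D\cong B/K$. This follows from the universal property of $p_*$: a $D$-equivariant map $Y\to p_*X$ corresponds to a $B$-equivariant map $p^*Y\to X$; since $K$ acts trivially on $p^*Y$, such a map factors uniquely through $X^K$, and $B$-equivariance of the factorization is exactly $D$-equivariance via $p$.

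Second, I would apply this to both composites. On the one hand, $f^*p_*X$ is $X^{\ker p}$ with $C$-action obtained by first acting via $f$ into $D$ and then via the induced $D$-action. On the other hand, $g^*X$ is $X$ equipped with the $A$-action pulled back along $g$, and because $q$ is also surjective (as the pullback of the surjection $p$), the previous step yields $q_*g^*X=(g^*X)^{\ker q}$ with the induced $C$-action; but an element $x\in X$ is fixed by $\ker q$ acting through $g$ if and only if it is fixed by the image $g(\ker q)\subset B$, so this underlying object is $X^{g(\ker q)}$.

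Third, the pullback assumption gives $\ker q=\{(1,b)\in C\times_D B:p(b)=1\}$, whence $g(\ker q)=\ker p$ as subgroups of $B$. Thus the underlying objects of $f^*p_*X$ and $q_*g^*X$ literally coincide, and a direct check (using the explicit form of the $D$-action on $X^{\ker p}$ and of $q$, $g$, $f$ on the nose) shows that the two residual $C$-actions agree as well. Finally, I would verify that under these identifications the Beck--Chevalley map is the identity by tracing through its definition as the mate of $\id$: both the unit $\eta$ and counit $\epsilon$ act as the canonical inclusions/projections between $X$ and $X^{\ker p}=X^{g(\ker q)}$, whose composite is the identity on the fixed-point object. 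No step here is a serious obstacle; the only mild subtlety is to keep the various restriction actions straight, which is cleanly handled once the description $p_*=(\blank)^{\ker p}$ is in place.
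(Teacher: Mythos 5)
Your proposal is correct and follows essentially the same route as the paper: identify $p_*$ (and $q_*$) with fixed points under the kernel, observe $g(\ker q)=\ker p$ from the pullback property, and check the Beck--Chevalley map becomes the canonical (identity) comparison over $X$. The paper compresses the final verification by noting the map is the unique one over $X$, whereas you trace the unit and counit explicitly, but the substance is the same.
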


\begin{prop}\label{prop:global-tfae}
Let $\underline{\mathscr C}$ be a preglobal model category. Then the following are equivalent:
\begin{enumerate}
\item For every pullback square $(\ref{diag:pb-surj})$ of finite groups in which all maps are surjections, the Beck-Chevalley transformation $f^*\circ\cat{R}p_*\Rightarrow \cat{R}q_*\circ g^*$ is an isomorphism in $\Ho(C\text-\mathscr C)$.
\item For every pullback square $(\ref{diag:pb-surj})$ of finite groups and surjections, the Beck-Chevalley transformation $\cat{L}g_!\circ q^*\Rightarrow p^*\circ\cat{L}f_!$ is an isomorphism in $\Ho(B\text-\mathscr C)$.
\item \label{item:gtfae-cofree-fixed-points} For every diagram
\begin{equation}\label{diag:span-global}
\begin{tikzcd}
C & \arrow[l, "q"'] A\arrow[r, "g"] & B
\end{tikzcd}
\end{equation}
of finite groups such that $\ker q\cap\ker g=1$, every fibrant $X\in B\text-\mathscr C_\textup{flat}$, and some (hence any) fibrant replacement $g^*X\to Y$ in $A\text-\mathscr C_\textup{flat}$ the induced map $q_*g^*X\to q_*Y$ is a $C$-global weak equivalence.
\item \label{item:gtfae-free-quotient} For every diagram $(\ref{diag:span-global})$, every cofibrant $X\in C\text{-}\mathscr C_\textup{proj}$, and some (hence any) cofibrant replacement $Y\to q^*X$ in $A\text-\mathscr C_\textup{proj}$ the induced map $g_!Y\to g_!q^*X$ is a $B$-global weak equivalence.
\end{enumerate}
\end{prop}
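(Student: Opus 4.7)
I would prove the equivalence by establishing (1)$\Leftrightarrow$(3), (2)$\Leftrightarrow$(4) (dually), and (1)$\Leftrightarrow$(2) (via mate calculus). The key observation is that pullback squares of finite groups in which all maps are surjective correspond precisely to surjective spans $C\xleftarrow{q}A\xrightarrow{g}B$ with $\ker q\cap\ker g = 1$: given such a span, set $D \mathrel{:=} A/(\ker q\cdot\ker g)$ with induced quotient maps $f, p$, and verify the resulting square is a pullback using surjectivity together with the kernel condition.

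For (1)$\Leftrightarrow$(3), I would unfold the derived Beck-Chevalley transformation on a flatly fibrant $X\in B\text-\mathscr C$: since $p_*$ is right Quillen for the flat model structure and $X$ is fibrant, $\cat{R}p_*X = p_*X$, and $f^*\cat{R}p_*X = f^*p_*X = q_*g^*X$ by Lemma~\ref{lemma:Beck-Chevalley}; meanwhile $\cat{R}q_*g^*X = q_*Y$ for any flat fibrant replacement $g^*X\to Y$, and the derived Beck-Chevalley becomes the canonical map $q_*g^*X\to q_*Y$ (modulo inverting the underived Beck-Chevalley isomorphism). Thus (1) on fully surjective pullback squares is literally (3) on the corresponding surjective spans. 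To extend (3) from surjective to general spans with $\ker q\cap\ker g = 1$, I would factor $q = \iota_C\bar q$ and $g = \iota_B\bar g$ through the images: the surjective subspan $(\bar q,\bar g)$ still satisfies the kernel condition, and $\iota_B^*X$ remains flatly fibrant on $g(A)$ since $\iota_B$ is injective; so the surjective case of (3) yields that $\bar q_*g^*X\to\bar q_*Y$ is a weak equivalence, and pushing forward via $\iota_{C*}$ (right Quillen for flat) should produce (3) for the original span. The argument for (2)$\Leftrightarrow$(4) is entirely dual, working with projective cofibrant replacements.

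For (1)$\Leftrightarrow$(2), the two Beck-Chevalley transformations are mates of one another under the four derived adjunctions $\cat{L}g_!\dashv g^*$, $\cat{L}f_!\dashv f^*$, $q^*\dashv\cat{R}q_*$, and $p^*\dashv\cat{R}p_*$, all valid derived adjunctions in the preglobal setting; since a natural transformation of right adjoints is an isomorphism if and only if its left-adjoint mate is, this yields the equivalence. The main technical subtlety I anticipate is the reduction from surjective to general spans in (3) and (4): transferring a weak equivalence through $\iota_{C*}$ requires care, since right Quillen functors only preserve weak equivalences between fibrant objects in general, while here the source $\bar q_*g^*X$ is not obviously fibrant (as $g^*$ is only left Quillen for the flat structure). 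I expect this to be handled by exploiting the special status of injective homomorphisms in the preglobal axioms---$\iota_C^*$ is both left and right Quillen for both the flat and projective structures---to obtain the stronger homotopical control over $\iota_{C*}$ needed to push the weak equivalence through.
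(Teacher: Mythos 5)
Your overall architecture is the same as the paper's: the mate argument for (1)$\Leftrightarrow$(2), the dictionary between pullback squares of surjections and spans $C\xleftarrow{q}A\xrightarrow{g}B$ with $\ker q\cap\ker g=1$ via $D=A/\ker(q)\ker(g)$, the identification of the derived Beck--Chevalley map on a flatly fibrant $X$ with $q_*g^*X\to q_*Y$ using Lemma~\ref{lemma:Beck-Chevalley}, the reduction of the general case of (3) to the surjective case by factoring $q$ and $g$ through their images, and the dual treatment of (2)$\Leftrightarrow$(4). All of that is fine and matches the paper.

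The genuine gap is exactly the step you flag and then defer: pushing the weak equivalence $\bar q_*g^*X\to\bar q_*Y$ forward along $\iota_{C*}$. Your hope that the "special status of injective homomorphisms" suffices does not close this. The axioms give you that $\iota_{C*}$ is right Quillen for the \emph{projective} model structures (since $\iota_C^*$ is left Quillen projectively when $\iota_C$ is injective), so Ken Brown applies only once you know that \emph{both} $\bar q_*g^*X$ and $\bar q_*Y$ are projectively fibrant. The target is easy ($Y$ is flat fibrant and $\bar q_*$ is right Quillen flat, and flat fibrant implies projectively fibrant), but the source is the crux: $g^*X$ is not fibrant, so no Quillen adjunction property of $\bar q_*$ or $\iota_{C*}$ by itself gives you anything about $\bar q_*g^*X$. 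The paper's resolution requires an additional idea you are missing: apply the pointset Beck--Chevalley isomorphism of Lemma~\ref{lemma:Beck-Chevalley} a second time, to the pullback square of surjections built from the sub-span $(\bar q,\bar g)$ (which again has $\ker\bar q\cap\ker\bar g=1$), to rewrite $\bar q_*g^*X=\bar q_*\bar g^*\iota_B^*X\cong (p')^*f'_*\iota_B^*X$; since $\iota_B^*X$ is flat fibrant ($\iota_B$ injective), $f'_*\iota_B^*X$ is flat fibrant, hence projectively fibrant, and $(p')^*$ preserves projective fibrations, so the source is projectively fibrant after all. Without this (or an equivalent argument), the transfer through $\iota_{C*}$ is unjustified, and the reduction from surjective spans to general spans --- and hence (1)$\Rightarrow$(3) and, dually, (2)$\Rightarrow$(4) --- remains incomplete.
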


\begin{defi}\label{defi:global-model-cat}
A preglobal model category satisfying the equivalent conditions of Proposition~\ref{prop:global-tfae} is called a \emph{global model category}.
\end{defi}

\begin{proof}[Proof of Proposition~\ref{prop:global-tfae}]
The equivalence $(1)\Leftrightarrow(2)$ follows at once from the fact that the two maps in question are total mates of each other. We will now show that $(1)\Leftrightarrow(3)$; the argument that $(2)\Leftrightarrow(4)$ is then analogous.

For the proof of $(3)\Rightarrow(1)$ we consider a pullback as in $(\ref{diag:pb-surj})$ and we fix a fibrant object $X\in B\text-{\mathscr C}_\text{flat}$ and a fibrant replacement functor $\iota\colon\id\Rightarrow P$ for $A\text-\mathscr C_\text{flat}$. By naturality of $\iota$, we then have a commutative diagram
\begin{equation*}
\begin{tikzcd}
f^*p_*X\arrow[r,"\eta"] & q_*q^*f^*p_*X\arrow[d, "q_*\iota"'] \arrow[r,equal] & q_*g^*p^*p_*X\arrow[r, "q_*g^*\epsilon"]\arrow[d, "q_*\iota"] &[1em] q_*g^*X\arrow[d, "q_*\iota"]\\
& q_*Pq^*f^*p_*X\arrow[r,equal] & q_*Pg^*p^*p_*X\arrow[r, "q_*Pg^*\epsilon"'] & q_*Pg^*X
\end{tikzcd}
\end{equation*}
in $C\text{-}\mathscr C$. Here the top horizontal composite is an isomorphism by the previous lemma; moreover, the bottom composite $f^*p_*X\to q_*Pg^*X$ represents the Beck-Chevalley transformation $f^*\cat{R}p_*\Rightarrow \cat{R}q_*g^*$, so this is the map we want to be a $C$-global weak equivalence. However, the right hand vertical map is a $C$-global weak equivalence by assumption, so the claim follows by $2$-out-of-$3$.

Conversely, for the proof of $(1)\Rightarrow(3)$ we first observe that the above shows that for any pullback square $(\ref{diag:pb-surj})$ of surjections and any fibrant $X\in B\text{-}\mathscr C_\text{flat}$ the canonical map $q_*g^*X\to\cat{R}q_*g^*X$ is a $C$-global weak equivalence.

We will now use this to prove the special case of $(3)$ in which both $q$ and $g$ are surjective: namely, we set $D=A/\ker(g)\ker(q)$ and we define $p\colon B\to D$ as the composite
\begin{equation*}
B\xrightarrow[\cong]{g^{-1}} A/\ker(q)\twoheadrightarrow A/\ker(g)\ker(q)=D
\end{equation*}
and analogously we define a surjective homomorphism $f\colon C\to D$. We now claim that the commutative square
\begin{equation*}
\begin{tikzcd}
A\arrow[d, "q"']\arrow[r, "g"] & B\arrow[d, "p"]\\
C\arrow[r, "f"'] & D
\end{tikzcd}
\end{equation*}
of surjections is a pullback square; with this established, the above observation will then complete the proof of the special case.

To prove that this is indeed a pullback, first note that $(g,q)\colon A\to B\times_DC\subset B\times C$ is injective as $\ker(g,q)=\ker(g)\cap\ker(q)=1$. Thus, it only remains to prove surjectivity. For this we let $(b,c)\in B\times C$ with $p(b)=f(c)$; thus, if $a\in A$ with $g(a)=b$, then $fq(a)=pg(a)=p(b)=f(c)$, so $c=q(a)\bar x$ for some $\bar x\in\ker(f)$. By surjectivity of $q$, we can then write $c=q(ax)$ for some $x\in q^{-1}(\ker(f))=\ker(g)$; but we also have $g(ax)=g(a)g(x)=b$, so $ax$ is the desired preimage of $(b,c)\in B\times_DC$.

Now we can prove the general case of $(3)$: we factor $q$ as a surjection $q'\colon A\to C'$ followed by an injection $i\colon C'\to C$, and we factor $g$ as a surjection $g'\colon A\to B'$ followed by an injection $j\colon B'\to B$. If now $X\in B\text{-}\mathscr C_\text{flat}$ is fibrant, then also $j^*X\in B'\text-\mathscr C_{\text{flat}}$ is fibrant as $j$ is injective. Thus, if $g^*X=(g')^*j^*X\to Y$ is any fibrant replacement in $A\text{-}\mathscr C_\text{flat}$, then applying the above special case to $j^*X$ shows that $q'_*g^*X\to q'_*Y$ is a $C'$-global weak equivalence. The right hand side is fibrant in $C'\text-\mathscr C_\text{flat}$, and we claim that the left hand side is at least fibrant in $C'\text-\mathscr C_\text{proj}$; as $i_*$ is right Quillen for the projective model structures by injectivity, Ken Brown's Lemma will then show that also the induced map $i_*q'_*g^*X\to i_*q'_*Y$ is a $C$-global weak equivalence, and as this agrees with $q_*g^*X\to q_*Y$ up to conjugation by isomorphisms, this will then complete the proof of the proposition.

To prove the claim, we note that $\ker(g')\cap\ker(q')=\ker(g)\cap\ker(q)=1$, so the above argument yields a pullback square
\begin{equation*}
\begin{tikzcd}
A\arrow[r, "g'"]\arrow[d,"q'"'] & B'\arrow[d, "f'"]\\
C'\arrow[r, "p'"'] & D'
\end{tikzcd}
\end{equation*}
of finite groups and surjections. Thus, another application of Lemma~\ref{lemma:Beck-Chevalley} shows $q'_*g^*X=q'_*(g')^*i^*X\cong (p')^*f'_*i^*X$. However, $f'_*i^*X$ is fibrant in $D'\text-\mathscr C_\text{flat}$, hence in particular in $D'\text-\mathscr C_\text{proj}$, so $(p')^*f'_*i^*X$ is fibrant in $C'\text-\mathscr C_\text{proj}$ as desired.
\end{proof}

\begin{rk}
The first two formulations above are the `morally correct ones,' and they correspond to the notions of \emph{global continuity} and \emph{global cocontinuity} in parameterized higher category theory \cite{elements-param, martini-wolf}. In contrast to that, Condition~$(\ref{item:gtfae-cofree-fixed-points})$ is the statement we will actually use later (namely, in the proof of Theorem~\ref{thm:sp-global-model-cat}), while the final formulation is the one that is easiest to check in our examples below. Intuitively, we can think of a projectively cofibrant object in $G\text-\mathscr C$ as a flat one for which the $G$-action is `free' in some sense (cf.~for example Lemma~\ref{lemma:tensoring-generalized}); the final formulation of the above axiom can then be viewed as a very abstract incarnation of the slogan that `free quotients are homotopical.'
\end{rk}

\begin{rk}
	In the above proof we have seen how a pointset level condition (certain fibrant replacements being sent to weak equivalences) encodes a Beck-Chevalley condition for derived functors. Similarly, the pointset level conditions for a preglobal model category can be seen to encode various compatibility conditions between the derived functors. As a concrete example, consider an injective homomorphism $i\colon H\to G$ and a surjection $p\colon K\to G$. If we write $j\colon p^{-1}(H)\to K$ and $q\colon p^{-1}(H)\to H$ for the restricted maps, then Lemma~\ref{lemma:Beck-Chevalley} shows after dualizing that $j_!q^*\cong p^*i_!$. As all functors are left Quillen for the flat model structures (using that $i$ and $j$ are injective), this then induces an equivalence $\cat{L}j_!q^*\cong p^*\cat{L}i_!$ of derived functors. Similarly, one can give a `Mackey double coset formula' expressing $k^*\cat{L}i_!$ for injective $k$ as a (derived) coproduct of maps of the form $\cat{L}\ell_!r^*$, again coming from an analogous formula for non-derived functors.

	Note that conversely this type of argument tells us that we cannot get away with a single model structure for each $G\text{-}\mathscr C$ for which restrictions are always both left and right Quillen. For example, for any $G$ the projection $s\colon G\to 1$ satisfies $s_!s^*\cong\id$, so if both $s_!$ and $s^*$ were left Quillen we would deduce $\cat{L}s_!s^*\cong\id$. This is however not true for $G$-global spaces: we have $\cat{L}s_!s^*(*)\cong\mathcal I(G,\blank)/G$, whose underlying non-equivariant space is a $K(G,1)$.
\end{rk}

\begin{ex}\label{ex:S-Sp-global}
Both $\glo{GlobalSpaces}$ and $\glo{GlobalSpectra}$ are global model categories: this follows from Propositions~\ref{prop:I-free-quotients} and~\ref{prop:free-quotient-spectra}, respectively: if $X\in B\text-\mathscr C$ is projectively cofibrant, then $B$ acts freely on it, hence $\ker(q)$ acts freely on $g^*X$ as $g$ is injective when restricted to $\ker(q)$.
\end{ex}

\begin{ex}
Any shift of a global model category is again a global model category. This follows immediately from the third formulation.
\end{ex}

\begin{nex}
Let $p$ be a prime and let $\mathcal F$ be a global family of groups containing $\mathbb Z/p$ but not $\mathbb Z/p\times\mathbb Z/p$, for example the global family of groups of order at most $p$. We claim that the preglobal model category $\ul{\mathscr E}_{\mathcal F}$ from Example~\ref{ex:exotic} is \emph{not} a global model category; in particular, the conditions of the above proposition are not vacuous.

To this end, we will show that Condition~$(\ref{item:gtfae-free-quotient})$ is not satisfied. We set $B=C=\mathbb Z/p$ and $A=B\times C$ with $q$ and $g$ the respective projections. Then the $(B\cap\mathcal F)$-model structure is just the $\mathcal A\ell\ell$-model structure, and in particular $X=*$ is cofibrant in it. We now let $Y\to *=g^*(*)$ be a cofibrant replacement in the $(A\cap\mathcal F)$-model structure, and we claim that $q_!(Y)=Y/B$ has non-connected $C$-fixed points, so it is in particularly not weakly equivalent to $q_!(*)=*$.

For this, let us observe that the isotropy $I_\sigma$ of a simplex $\sigma$ of $Y$ only depends on the class $[\sigma]$ in $Y/B$ (as $A$ is abelian). Moreover, $[\sigma]$ is $C$-fixed if and only if $I_\sigma$ contains an element of the form $(b,1)$ with $b\in B$ arbitrary. Conversely, for any $b\in B$ there exists a vertex $y_b$ of $Y$ with $I_{y_b}\ni (b,1)$: namely, $(b,1)$ generates a subgroup $K\subset A$ isomorphic to $\mathbb Z/p$, so $Y^{K}$ is weakly contractible, hence in particular non-empty.

We now claim that $[y_0]$ and $[y_1]$ cannot be joined by a sequence of $C$-fixed edges in $Y/B$. For this, let us consider any $C$-fixed edge $[e]$ with vertices $[x],[y]$. Then $I_x\supset I_e\subset I_y$. On the other hand, $I_e\not=1$ as $[e]$ is $C$-fixed, while $I_x,I_y\not=A$ as $Y^A=\varnothing$ since $A\notin\mathcal F$. Thus, it follows for cardinality reasons that all of the above inclusions are equalities, and in particular $I_x=I_y$.

Now assume $[y_0]$ and $[y_1]$ were actually connected by a sequence of $C$-fixed edges. Then it would follow from the above by induction that $I_{y_0}=I_{y_1}$, whence in particular $(0,1),(1,1)\in I_{y_0}$. However, then $I_{y_0}=A$ which is impossible by the same argument as above.
\end{nex}

\subsection{Global Quillen adjunctions} Finally, let us discuss the appropriate notion of Quillen adjunctions in this context:

\begin{defi}
Let $\ul{\mathscr C},\ul{\mathscr D}$ be preglobal model categories. A \emph{global Quillen adjunction} $\underline F\colon\underline{\mathscr C}\rightleftarrows\underline{\mathscr D} :\!\underline U$ is a simplicially enriched adjunction $F\colon\mathscr C\rightleftarrows\mathscr D:\!U$ of the underlying categories such that for every finite group $G$ the induced adjunction $G\text{-}\mathscr C\rightleftarrows G\text{-}\mathscr D$ is a Quillen adjunction for both model structures.

We call $\underline{F}\dashv\underline{G}$ a \emph{global Quillen equivalence} if in addition each $G\text{-}\mathscr C\rightleftarrows G\text{-}\mathscr D$ is a Quillen equivalence (for the projective or, equivalently, for the flat model structures).
\end{defi}

\begin{ex}\label{ex:S-Sp-adjunction}
The suspension-loop adjunction $\Sigma^\bullet_+\colon\cat{$\bm{\mathcal I}$-SSet}\rightleftarrows \cat{Spectra}:\!\Omega^\bullet$ defines a global Quillen adjunction $\glo{GlobalSpaces}\rightleftarrows\glo{GlobalSpectra}$, see Proposition~\ref{prop:suspension-loop-G-gl}.
\end{ex}

\begin{lemma}
Let $\underline F\colon\underline{\mathscr C}\rightleftarrows\underline{\mathscr D} :\!\underline U$ be a global Quillen equivalence of preglobal model categories. Then $\underline{\mathscr C}$ is a global model category if and only if $\underline{\mathscr D}$ is so.
\begin{proof}
Associated to any pullback square $(\ref{diag:pb-surj})$ of finite groups and surjective maps, we obtain a coherent cube
\begin{equation*}
\begin{tikzcd}[row sep=small, column sep=small]
& \Ho(A\text-\mathscr C)\arrow[from=dd, "q^*"{near start}] &&\arrow[ll, "g^*"'] \Ho(B\text-\mathscr C)\arrow[from=dd,"p^*"']\\
\Ho(A\text-\mathscr D)\arrow[ur] &&\arrow[ll, "g^*"'{near start}, crossing over] \Ho(B\text-\mathscr D)\arrow[ur]\\
& \Ho(C\text-\mathscr C) &&\arrow[ll, "f^*"'{near end}] \Ho(D\text-\mathscr C)\\
\Ho(C\text-\mathscr D)\arrow[ur]\arrow[uu, "q^*"] && \arrow[ll, "f^*"] \Ho(D\text-\mathscr D)\arrow[ur]\arrow[uu, "p^*"'{near start}, crossing over]
\end{tikzcd}
\end{equation*}
where all front-to-back maps are given by $\cat{R}U$, the front and back face commute strictly, and the remaining squares are filled by the natural isomorphisms coming from the fact that all functors are right Quillen for the projective model structures and commute strictly on the pointset level.

Passing to canonical mates with respect to the adjunctions $q^*\dashv\cat{R}q_*$ and $p^*\dashv\cat{R}p_*$ we then get a coherent cube
\begin{equation*}
\begin{tikzcd}[row sep=small, column sep=small]
& \Ho(A\text-\mathscr C)\arrow[dd, "\cat{R}q_*"'{near end}] &&\arrow[ll, "g^*"'] \Ho(B\text-\mathscr C)\arrow[dd,"\cat{R}p_*"]\\
\Ho(A\text-\mathscr D)\arrow[ur]\arrow[dd, "\cat{R}q_*"'] &&\arrow[ll, "g^*"'{near start}, crossing over] \Ho(B\text-\mathscr D)\arrow[ur]\\
& \Ho(C\text-\mathscr C) &&\arrow[ll, "f^*"'{near end}] \Ho(D\text-\mathscr C)\\
\Ho(C\text-\mathscr D)\arrow[ur] && \arrow[ll, "f^*"] \Ho(D\text-\mathscr D)\arrow[ur]\arrow[from=uu, "\cat{R}p_*"{near end}, crossing over]
\end{tikzcd}
\end{equation*}
in which the transformations in the top, bottom, left, and right face are isomorphisms (the latter two use that $\cat{R}U$ is an equivalence). Using again that all front-to-back maps are equivalences, it follows that the natural transformation filling the front square is an isomorphism if and only if the one filling the back square is so, which immediately yields the claim.
\end{proof}
\end{lemma}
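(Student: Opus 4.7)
The plan is to apply Proposition~\ref{prop:global-tfae}\,(1), which reduces being a global model category to checking, for each pullback square of finite groups and surjections, that the Beck--Chevalley transformation $f^*\circ\cat{R}p_*\Rightarrow\cat{R}q_*\circ g^*$ is an isomorphism on homotopy categories. The core idea is that the levelwise equivalences $\cat{R}U\colon\Ho(G\text{-}\mathscr D)\xrightarrow{\simeq}\Ho(G\text{-}\mathscr C)$ supplied by the global Quillen equivalence will intertwine the two Beck--Chevalley transformations, so that one is invertible if and only if the other is.

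First I would record the two key coherences. On the pointset level, $U$ commutes strictly with every restriction $\alpha^*$ (because restriction is pure reindexing and the adjunction $F\dashv U$ is of underlying enriched categories), and, being a simplicial right adjoint, it also commutes strictly with the fixed-point-type right adjoints $\alpha_*$ along surjective $\alpha$. Combining this with the fact that $U$ is right Quillen for both the projective and flat model structures in $\underline{\mathscr C}$, that $\alpha^*$ is right Quillen for the projective structures, and that $\alpha_*$ (as right adjoint to a functor that is left Quillen for the flat structures) is right Quillen for the flat ones, Ken Brown's lemma upgrades these equalities to natural isomorphisms $\cat{R}U\circ\alpha^*\cong\alpha^*\circ\cat{R}U$ and $\cat{R}U\circ\cat{R}\alpha_*\cong\cat{R}\alpha_*\circ\cat{R}U$.

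Given the pullback square, I would then assemble a coherent cube of homotopy categories whose back and front faces carry the restriction squares for the $\mathscr C$- and $\mathscr D$-sides respectively, with all front-to-back arrows given by the equivalences $\cat{R}U$. The back and front faces commute strictly, and the four side faces are filled by the natural isomorphisms from the previous paragraph. Passing to mates with respect to the adjunctions $p^*\dashv\cat{R}p_*$ and $q^*\dashv\cat{R}q_*$ converts this into a second cube whose front and back $2$-cells are precisely the Beck--Chevalley transformations for $\mathscr D$ and $\mathscr C$, while the four side faces continue to commute via the mate isomorphisms. Since the vertical arrows $\cat{R}U$ are equivalences, the $2$-cell on the front face is an isomorphism if and only if the one on the back face is, which gives both directions of the claimed equivalence at once.

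The main obstacle I anticipate is purely bookkeeping: carefully verifying that the mate construction, applied face-by-face to a cube, indeed yields another coherent cube whose front and back $2$-cells are the expected Beck--Chevalley maps, and that the side coherences transport correctly. This is a standard $2$-categorical manipulation but does require writing out the relevant pasting diagrams to ensure nothing is lost in translation between $\underline{\mathscr C}$ and $\underline{\mathscr D}$.
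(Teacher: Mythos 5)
Your proposal is correct and follows essentially the same route as the paper: build the coherent cube of restriction functors with $\cat{R}U$ as the front-to-back equivalences, pass to mates along $q^*\dashv\cat{R}q_*$ and $p^*\dashv\cat{R}p_*$, and conclude by invertibility of the side faces and of $\cat{R}U$. The only cosmetic difference is that you justify invertibility of the mated side faces via (derived) commutation of $U$ with $\alpha_*$ — which is a canonical isomorphism rather than a strict equality — whereas the paper simply observes that these mates are invertible because $\cat{R}U$ is an equivalence; both are fine.
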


\section{Global stability}\label{sec:global-stab}
In this section we will introduce a notion of \emph{genuine stability} for global model categories and construct universal stabilizations in this setting.

\subsection{Pointed (pre)global model categories} As usual, in order to talk about stability we first have to talk about pointedness:

\begin{defi}
A preglobal model category $\mathscr C$ is called \emph{pointed} if the underlying category $\mathscr C$ is pointed in the usual sense, i.e.~has a zero object.
\end{defi}

\begin{rk}\label{rk:basepoint}
Recall \cite{hirschhorn, hirschhorn-slice} that for a model category $\mathscr C$ the category $\mathscr C_*$ of pointed objects (i.e.~the slice $*/\mathscr C$ under our favourite terminal object) carries a model structure in which a map is a weak equivalence, fibration, or cofibration, if and only if it is so in $\mathscr C$. If $\mathscr C$ is left proper, right proper, or combinatorial, then so is $\mathscr C_*$, with generating (acyclic) cofibrations in the latter case being given by applying the left adjoint $(\blank)_+=(\blank)\amalg*\colon\mathscr C\to\mathscr C_*$ to a set of generating (acyclic) cofibrations of $\mathscr C$. Moreover, if $\mathscr C$ is simplicial, then $\mathscr C_*$ is enriched as a model category over $\cat{SSet}_*$ (hence in particular over $\cat{SSet}$) by taking the basepoints of the mapping spaces to be the zero maps, while the tensoring $K\smashp X$ is induced by the tensoring $K\times X$ in $\mathscr C$ over $\cat{SSet}$ by collapsing $*\times X\amalg K\times *$. Analogously, we again get $\blank\smashp\blank\colon\cat{$\bm{(G\times H)}$-SSet}_*\times G\text-\mathscr C_*\to (G\times H)\text-\mathscr C_*$.

If now $F\colon \mathscr C\rightleftarrows\mathscr D :\!U$ is a Quillen adjunction, then $U$ lifts to $U_*\colon\mathscr D_*\to \mathscr C_*$ (as it preserves terminal objects); $F$ does not necessarily lift directly, but we get $\mathscr C_*\to  F(*)/\mathscr D$, which we can postcompose with pushforward along the unique map $F(*)\to *$ to get a functor $F_*\colon\mathscr C_*\to\mathscr D_*$ left adjoint to $U_*$. As fibrations and weak equivalences are defined in the underlying categories, we immediately see that $F_*\dashv U_*$ is a Quillen adjunction again.

In particular, given a preglobal model category $\ul{\mathscr C}$ we can make the category $\mathscr C_*$ into a preglobal model category $\ul{\mathscr C}_*$ this way, coming with a global Quillen adjunction $\ul{(\blank)_+}\colon\ul{\mathscr C}\rightleftarrows\ul{\mathscr C}_* :\!\ul{\text{forget}}$. If $\ul{\mathscr C}$ is actually a global model category, then so is $\ul{\mathscr C}_*$ (see the third formulation in Proposition~\ref{prop:global-tfae}). Moreover, if
\begin{equation}\label{eq:gQa-pointed}
\ul F\colon\ul{\mathscr C}\rightleftarrows\ul{\mathscr D}:\!\ul{U}
\end{equation}
is a global Quillen adjunction, then $F_*\dashv U_*$ defines a global Quillen adjunction $\ul{F}_*\dashv\ul{U}_*$.
\end{rk}

\begin{lemma}\label{lemma:pointed-equivalence}
If $(\ref{eq:gQa-pointed})$ is a global Quillen \emph{equivalence}, then so is the induced global Quillen adjunction $\ul{F}_*\dashv\ul{U}_*$.
\begin{proof}
We prove more generally that for any Quillen equivalence \hbox{$L\colon\mathscr A\rightleftarrows\mathscr B :\!R$} of \emph{left proper} model categories the induced Quillen adjunction $L_*\dashv R_*$ is a Quillen equivalence. This is well-known, and can also be deduced with a bit of work from \cite[Proposition~2.7]{rezk-proper}, but we do not know of an explicit reference in the literature, so we provide a direct argument.

For this, we first observe that $R_*$ still reflects weak equivalences between fibrant objects (as everything is defined in underlying categories), i.e.~its right derived functor is conservative. To complete the proof it suffices that for every cofibration $*\to X$ and some (hence any) fibrant replacement $L_*(*\to X)\to Z$ the induced map $X\to R_*L_*(*\to X)\to RZ$ in $\mathscr C$ is a weak equivalence.

For this, we pick a cofibrant replacement $Q\to *$. Factoring the induced map $Q\to *\to X$ as a cofibration followed by a weak equivalence we then get a commutative diagram
\begin{equation}\label{diag:pseudo-po}
\begin{tikzcd}
Q\arrow[d]\arrow[r] & Y\arrow[d]\\
*\arrow[r] & X
\end{tikzcd}
\end{equation}
in which both vertical maps are weak equivalences. In particular, this is a homotopy pushout, i.e.~(as $Q\to Y$ is a cofibration), the induced map $Y/Q\to X$ is a weak equivalence of cofibrant objects in $\mathscr A_*$. It therefore suffices to prove the claim for $Y/Q$ instead of $X$, i.e.~we may assume without loss of generality that $(\ref{diag:pseudo-po})$ is an \emph{honest} pushout. We then consider the diagram
\begin{equation*}
\begin{tikzcd}
LQ\arrow[d]\arrow[r] & LY\arrow[d]\\
L*\arrow[r]\arrow[d] & LX\arrow[d]\\
*\arrow[r] & L_*X
\end{tikzcd}
\end{equation*}
where the upper half is the image of $(\ref{diag:pseudo-po})$ under $L$ (whence a pushout) and the lower half is the pushout defining $L_*(*\to X)$; in particular, also the total rectangle is a pushout. As $L(Q\to Y)$ is a cofibration ($L$ being left Quillen) and the left hand vertical map is a weak equivalence, we conclude that the right hand vertical composite $LY\to L_*(*\to X)$ is a weak equivalence because $\mathscr D$ is left proper. Thus, if $L_*(*\to X)\to Z$ is any fibrant replacement (in $\mathscr C_*$), then the composite $LY\to Z$ is a fibrant replacement (in $\mathscr C$). We conclude that the composite $Y\to RLY\to RZ$ represents the derived unit of $L\dashv R$ (as $Y$ is cofibrant), so it is a weak equivalence. The claim follows by $2$-out-of-$3$ as the composite $X\to Y\to RZ$ represents the derived unit for $L_*\dashv R_*$.
\end{proof}
\end{lemma}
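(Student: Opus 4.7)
My plan is to reduce the global statement to the non-parametrized assertion that for a Quillen equivalence $L \colon \mathscr{A} \rightleftarrows \mathscr{B} : R$ of \emph{left proper} model categories the induced adjunction $L_* \dashv R_*$ on pointed categories is again a Quillen equivalence. Being a global Quillen equivalence is checked group-by-group on either model structure, and for each finite group $G$ the pointed category $(G\text-\mathscr{C})_*$ agrees with $G\text-(\mathscr{C}_*)$ (the basepoint being automatically $G$-fixed), with the induced model structures created from the underlying preglobal ones; left properness is available because it is built into the definition of a preglobal model category.

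For the ordinary statement I would use the criterion that $R_*$ reflects weak equivalences between fibrant objects and that the derived unit is a weak equivalence on every cofibrant object. The reflection property is immediate, since weak equivalences and fibrations in the pointed model structures are exactly the underlying ones. For the derived unit the difficulty is that a cofibrant pointed object $X$ need not be of the form $(\blank)_+$ of a cofibrant object of $\mathscr{A}$, so one cannot directly invoke the derived unit of $L \dashv R$. I would circumvent this by choosing a cofibrant replacement $Q \to *$ in $\mathscr{A}$ and factoring the composite $Q \to X$ as a cofibration $Q \hookrightarrow Y$ followed by a weak equivalence $Y \to X$; then $Y/Q \in \mathscr{A}_*$ is cofibrant and maps to $X$ by a weak equivalence (using left properness to compare the pushouts $* \cup_Q Y$ and $X \cup_Q Y$), so it suffices to treat $X = Y/Q$.

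For such an $X$, the key observation is that $L_*(X) = L(Y) \cup_{L(Q)} *$, and since $L(Q) \to *$ is a weak equivalence of cofibrant objects in $\mathscr{B}$ (as $L \dashv R$ is a Quillen equivalence) and $L(Q) \to L(Y)$ is a cofibration, left properness of $\mathscr{B}$ yields that $L(Y) \to L_*(X)$ is a weak equivalence. Hence any fibrant replacement $L_*(X) \to Z$ in $\mathscr{B}_*$ simultaneously gives a fibrant replacement $L(Y) \to Z$ in $\mathscr{B}$, so $Y \to R(Z)$ is a weak equivalence by the Quillen equivalence hypothesis on $L \dashv R$, and then $X \to R_*(Z)$ is one as well by $2$-out-of-$3$ via the triangle through $Y \to X$. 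The principal technical obstacle is precisely this pushout comparison $L(Y) \simeq L_*(Y/Q)$, which is what forces the left properness hypothesis and without which one would have to appeal to a more abstract slice-category Quillen equivalence result.
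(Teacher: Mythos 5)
Your proposal is essentially the paper's own proof: the same reduction to the claim that a Quillen equivalence of left proper model categories induces one on pointed objects, the same device of replacing a cofibrant pointed $X$ by $Y/Q$ via a cofibrant replacement $Q\to *$ and a factorization $Q\rightarrowtail Y\xrightarrow{\sim}X$, the same left-properness argument giving $LY\xrightarrow{\sim}L_*(Y/Q)$, and the same conclusion by $2$-out-of-$3$ against the derived unit of $L\dashv R$. Two cosmetic points only: the terminal object of $\mathscr B$ need not be cofibrant (what you actually use is that $LQ\to *$ represents the derived counit at the fibrant terminal object, hence is a weak equivalence), and the comparison giving $Y/Q\xrightarrow{\sim}X$ is cleanest as the pushout of the weak equivalence $Q\to *$ along the cofibration $Q\to Y$ (left properness) followed by $2$-out-of-$3$, rather than your comparison of $*\cup_Q Y$ with $X\cup_Q Y$.
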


Finally, arguing as in Remark~\ref{rk:basepoint}, we deduce from Lemma~\ref{lemma:g-sset-enriched}:

\begin{cor}\label{cor:pointed-g-sset-enriched}
Let $G,H$ be finite groups and let $\ul{\mathscr C}$ be a pointed preglobal model category. Then the smash product defines left Quillen bifunctors
\begin{align*}
\big(\cat{$\bm{(G\times H)}$-SSet}_*\big)_{\mathcal G_{G,H}}\times G\text-\mathscr C_\textup{proj}&\to (G\times H)\text-\mathscr C_\textup{proj}\\
\cat{$\bm{(G\times H)}$-SSet}_*\times G\text-\mathscr C_\textup{flat}&\to (G\times H)\text-\mathscr C_\textup{flat}.
\end{align*}
In particular, both the projective and flat model structure on $G\text{-}\mathscr C$ are enriched as model categories over $\cat{$\bm G$-SSet}_*$.\qed
\end{cor}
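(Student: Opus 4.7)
The plan is to deduce this from Lemma~\ref{lemma:tensoring-generalized} by realising the smash pushout-product as a cobase change of the cartesian pushout-product. Recall from Remark~\ref{rk:basepoint} that cofibrations, fibrations, and weak equivalences in both $\cat{$\bm{(G\times H)}$-SSet}_*$ and $G\text-\mathscr C_*$ coincide with those of the underlying unpointed categories, and that the smash product is defined by the pushout $K\smashp X = (K\times X)\cup_{K\times *\,\cup\,*\times X} *$ taken in the underlying unpointed category.

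Given cofibrations $i\colon K\to L$ in $\cat{$\bm{(G\times H)}$-SSet}_*$ and $j\colon X\to Y$ in $G\text-\mathscr C_*$ (with either model structure), write $P = (L\times X)\cup_{K\times X}(K\times Y)$ and $Q = (L\smashp X)\cup_{K\smashp X}(K\smashp Y)$ for the sources of the cartesian and smash pushout-products respectively, with canonical basepoint-collapsing map $\pi\colon P\to Q$. The key step is a pointset identification: letting $B\subseteq P$ denote the union (equivalently, pushout) of $L\vee X$, $K\vee X$, $K\vee Y$ inside $P$, one checks that $B\cong L\vee Y$, that $\pi$ identifies $Q$ with $P/B$, and that the composite $B\hookrightarrow P\xrightarrow{i\ppo_\times j} L\times Y$ is the canonical inclusion $L\vee Y\hookrightarrow L\times Y$. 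By the pushout pasting lemma, the square
\begin{equation*}
\begin{tikzcd}
P \arrow[d, "\pi"'] \arrow[r, "i\ppo_\times j"] & L\times Y \arrow[d] \\
Q \arrow[r, "i\ppo_\smashp j"'] & L\smashp Y
\end{tikzcd}
\end{equation*}
is then a pushout in the underlying unpointed category, exhibiting $i\ppo_\smashp j$ as a cobase change of $i\ppo_\times j$.

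The left Quillen bifunctor property now follows immediately: if $i$ and $j$ are cofibrations then $i\ppo_\times j$ is a cofibration in $(G\times H)\text-\mathscr C$ by Lemma~\ref{lemma:tensoring-generalized} (with the matching projective or flat structure), and cobase change preserves cofibrations, so $i\ppo_\smashp j$ is a cofibration, hence also one in $(G\times H)\text-\mathscr C_*$; the acyclic case is identical, using that pushouts preserve acyclic cofibrations. The enrichment claim is the $H=1$ specialization. The sole nontrivial step is the pointset identification $B\cong L\vee Y$ together with the pushout pasting, which is formal but requires care.
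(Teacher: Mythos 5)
Your argument is correct and is exactly the route the paper intends: it deduces the pointed (smash) statement from the unpointed Lemma~\ref{lemma:tensoring-generalized} by the standard passage to pointed objects sketched in Remark~\ref{rk:basepoint}, of which your cobase-change identification of $i\ppo_\smashp j$ as a pushout of $i\ppo_\times j$ (via the interchange-of-colimits computation $Q\cong P/B$ with $B\cong L\vee Y$) is precisely the worked-out form. Since cofibrations and weak equivalences in the pointed categories are created in the underlying unpointed ones, the (acyclic) cofibration cases follow as you say, so the proposal matches the paper's proof.
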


\subsection{Genuine stability} Recall from Remark~\ref{rk:equivariant-stabilization} that the passage from $G$-spaces to $G$-spectra can be understood as inverting smashing with the spheres $S^A$ for all finite $G$-sets $A$. While we cannot apply this directly to the passage from global spaces to global spectra in that way (as smashing with $S^A$ for a non-trivial $G$-set $A$ does not make sense directly), smashing with $S^A$ makes sense for \emph{$G$-global} spaces, and following an idea of \cite{gepner-nikolaus} (or more generally the philosophy of parameterized higher category theory) we can then try to characterize the passage from unstable to stable global homotopy theory by looking more generally at what happens in $G$-global homotopy theory for all $G$ simultaneously:

\begin{defi}\label{def:global-stability}
A global model category $\ul{\mathscr C}$ is called \emph{genuinely stable} (or simply \emph{stable}) if it is pointed and the Quillen adjunction $S^A\smashp\blank\colon G\text{-}\mathscr C\rightleftarrows G\text{-}\mathscr C :\!\Omega^A$ is a Quillen equivalence for every finite group $G$ and every finite $G$-set $A$ (for the flat or, equivalently, the projective model structures).
\end{defi}

Note that specializing to $A=*$ with trivial $G$-action shows that each $G\text{-}\mathscr C$ is in particular stable in the usual sense.

\begin{prop}\label{prop:GlobalSpectra-stable}
The global model category $\glo{GlobalSpectra}$ from Example~\ref{ex:global-spectra} is stable.
\begin{proof}
The usual smash product of symmetric spectra defines a left Quillen bifunctor
\begin{equation}\label{eq:equivariant-global-bifunctor}
\cat{$\bm G$-Spectra}_\text{$G$-equiv.~proj.}\times \cat{$\bm G$-Spectra}_\text{$G$-gl.~flat}\to  \cat{$\bm G$-Spectra}_\text{$G$-gl.~flat}
\end{equation}
by Theorem~\ref{thm:smash-g-global} together with Proposition~\ref{prop:equivariant-vs-global}, making the $G$-global flat model structure tensored over the $G$-equivariant projective one.

If now $A$ is any finite $G$-set, then $\Sigma^\infty S^A$ is cofibrant in the $G$-equivariant projective model structure, so $\Sigma^\infty S^A\smashp^{\cat L}\blank$ agrees with the left derived functor of $S^A\smashp\blank$; it therefore suffices to show that $\Sigma^\infty S^A\smashp^{\cat L}\blank$ is an autoequivalence of the $G$-global stable homotopy category. But by \cite[Proposition~4.9]{hausmann-equivariant} the analogous functor is an autoequivalence of the $G$-\emph{equivariant} stable homotopy category, so we get a projectively cofibrant $G$-equivariant spectrum $D$ together with a zig-zag of weak equivalences of projectively cofibrant objects between $D\smashp \Sigma^\infty S^A$ and $\mathbb S$. As $(\ref{eq:equivariant-global-bifunctor})$ is a left Quillen bifunctor, this then shows that $D\smashp^{\cat L}\blank$ is the desired quasi-inverse.
\end{proof}
\end{prop}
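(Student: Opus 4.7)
The plan is to transport the known equivariant stability result to the global setting by exploiting the compatibility of the $G$-equivariant and $G$-global model structures through the smash product. First, combining Theorem~\ref{thm:smash-g-global} with the identity Quillen adjunction $\cat{$\bm G$-Spectra}_{\textup{$G$-eq.~proj.}} \rightleftarrows \cat{$\bm G$-Spectra}_{\textup{$G$-gl.~flat}}$ from Proposition~\ref{prop:equivariant-vs-global}, I observe that the smash product refines to a left Quillen bifunctor
\[
\cat{$\bm G$-Spectra}_{\textup{$G$-eq.~proj.}} \times \cat{$\bm G$-Spectra}_{\textup{$G$-gl.~flat}} \to \cat{$\bm G$-Spectra}_{\textup{$G$-gl.~flat}}.
\]
Informally, this expresses that the $G$-global stable homotopy category is a module over the $G$-equivariant one.

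Now fix a finite $G$-set $A$. Since $\Sigma^\infty S^A$ is cofibrant in the $G$-equivariant projective model structure, and hence also in the $G$-global flat one, the derived functor $\Sigma^\infty S^A \smashp^{\cat L} \blank$ is modelled by the pointset smash and agrees with the derived simplicial tensor $S^A \smashp^{\cat L} \blank$ on $\cat{$\bm G$-Spectra}_{\textup{$G$-gl.~flat}}$. It therefore suffices to show that $\Sigma^\infty S^A \smashp^{\cat L} \blank$ is an autoequivalence of $\Ho(\cat{$\bm G$-Spectra}_{\textup{$G$-gl.~flat}})$, for then its right adjoint $\Omega^A$ automatically furnishes a quasi-inverse and the Quillen pair is a Quillen equivalence.

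To produce such a quasi-inverse I would invoke Hausmann's equivariant Flatness Theorem (\cite[Proposition~4.9]{hausmann-equivariant}), which gives a projectively cofibrant $G$-equivariant spectrum $D$ together with zig-zags of $G$-equivariant weak equivalences between equivariantly projectively cofibrant objects connecting both $D \smashp \Sigma^\infty S^A$ and $\Sigma^\infty S^A \smashp D$ to the sphere spectrum $\mathbb S$. The main obstacle is upgrading these equivariant weak equivalences to $G$-global ones, and this is precisely where the bifunctor above becomes essential: for any $G$-global flatly cofibrant $X$, smashing with $X$ on the right yields a left Quillen functor $\cat{$\bm G$-Spectra}_{\textup{$G$-eq.~proj.}} \to \cat{$\bm G$-Spectra}_{\textup{$G$-gl.~flat}}$, which by Ken Brown's lemma preserves weak equivalences between equivariantly projectively cofibrant objects. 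Smashing the equivariant zig-zag with $X$ and reassociating thus yields a chain of $G$-global weak equivalences $D \smashp^{\cat L} (\Sigma^\infty S^A \smashp^{\cat L} X) \simeq \mathbb S \smashp^{\cat L} X \simeq X$, and the symmetric argument handles the other composition. Hence $D \smashp^{\cat L} \blank$ is the desired quasi-inverse, completing the proof.
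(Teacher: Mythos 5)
Your proposal is correct and follows essentially the same route as the paper: the same left Quillen bifunctor built from Theorem~\ref{thm:smash-g-global} and Proposition~\ref{prop:equivariant-vs-global}, the same reduction to showing that $\Sigma^\infty S^A\smashp^{\cat L}\blank$ is an autoequivalence, and the same import of an inverse $D$ from \cite[Proposition~4.9]{hausmann-equivariant}, with Ken Brown's lemma for the bifunctor making explicit the step the paper leaves implicit. One small correction: the result you invoke from \cite[Proposition~4.9]{hausmann-equivariant} is the invertibility of representation spheres in the $G$-equivariant stable homotopy category, not the Flatness Theorem (which is Proposition~6.2 of that reference).
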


\begin{rk}
In the above argument we crucially use that $\Sigma^\infty S^A$ is actually cofibrant in the equivariant \emph{projective} model structure (and not just flat); in particular, $(\ref{eq:equivariant-global-bifunctor})$ is not left Quillen for the $G$-equivariant \emph{flat} model structure. On the other hand, we could have just as well used the $G$-global projective model structure instead of the flat one everywhere.
\end{rk}

\begin{lemma}\label{lemma:stability-stable}
Let $\ul{F}\colon \ul{\mathscr C}\rightleftarrows\ul{\mathscr D} :\!\ul{U}$ be a global Quillen equivalence of pointed global model categories. Then $\ul{\mathscr C}$ is stable if and only if $\ul{\mathscr D}$ is so.
\begin{proof}
Let $G$ be a finite group and $A$ a finite $G$-set. As $F$ is a simplicial and hence also $\cat{SSet}_*$-enriched left adjoint, there is a (canonical) isomorphism filling
\begin{equation*}
\begin{tikzcd}
G\text-\mathscr C\arrow[d, "F"']\arrow[r, "S^A\smashp\blank"] &[1em] G\text-\mathscr C\arrow[d, "F"]\\
G\text-\mathscr D\arrow[r, "S^A\smashp\blank"'] & G\text-\mathscr D,
\end{tikzcd}
\end{equation*}
and as all participating functors are left Quillen (say, for the projective model structures), this induces an isomorphism $\cat{L}(S^A\smashp\blank)\circ\cat{L}F\cong\cat{L}F\circ\cat{L}(S^A\smashp\blank)$ of left derived functors. The claim follows by $2$-out-of-$3$.
\end{proof}
\end{lemma}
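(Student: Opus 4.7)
The plan is to reduce the question to a statement about derived functors by exploiting the fact that $F$ is simplicially (hence $\cat{SSet}_*$-) enriched, and then use that $\cat{L}F$ is already known to be an equivalence. Fix a finite group $G$ and a finite $G$-set $A$; by Definition~\ref{def:global-stability} we must compare whether $S^A\smashp^{\cat L}\blank$ is an autoequivalence of $\Ho(G\text-\mathscr C)$ with the same question for $\Ho(G\text-\mathscr D)$.

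First I would observe that since $F$ is an enriched left adjoint it commutes with tensors strictly on the point-set level: there is a natural isomorphism $F(S^A\smashp X)\cong S^A\smashp F(X)$ of $G$-objects in $\mathscr D$. Equipping both sides with, say, the projective $G$-global model structure, Corollary~\ref{cor:pointed-g-sset-enriched} tells us that $S^A\smashp\blank$ is left Quillen on both $G\text-\mathscr C$ and $G\text-\mathscr D$, and $F\colon G\text-\mathscr C\rightleftarrows G\text-\mathscr D$ is left Quillen by the definition of a global Quillen adjunction. Hence all four functors in the square descend to homotopy categories and the pointset isomorphism gives an isomorphism of left derived functors
\[
\cat L(S^A\smashp\blank)\circ \cat LF\;\cong\;\cat LF\circ \cat L(S^A\smashp\blank).
\]

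Now I would invoke that $\ul F\dashv\ul U$ is a global Quillen \emph{equivalence}, so $\cat LF\colon\Ho(G\text-\mathscr C)\to\Ho(G\text-\mathscr D)$ is an equivalence of categories. In any commutative square of functors in which both vertical arrows are equivalences, the top horizontal arrow is an equivalence if and only if the bottom one is. Applied to the diagram above, this immediately yields that $S^A\smashp^{\cat L}\blank$ is an autoequivalence of $\Ho(G\text-\mathscr C)$ precisely when it is so on $\Ho(G\text-\mathscr D)$. Since $G$ and $A$ were arbitrary, this proves the claim.

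There is no real obstacle here: the only thing worth checking carefully is the enriched commutation $F(S^A\smashp X)\cong S^A\smashp FX$, which follows directly from the fact that the simplicial adjunction $F\dashv U$ transports the $\cat{SSet}_*$-tensoring (defined from the $\cat{SSet}$-tensoring as in Remark~\ref{rk:basepoint}) and that pulling the $G$-action through a simplicial left adjoint is compatible with tensoring by $\cat{$\bm G$-SSet}_*$. Everything else is formal $2$-out-of-$3$ for equivalences of categories.
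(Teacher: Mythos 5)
Your proposal is correct and follows essentially the same route as the paper: use the enriched commutation $F(S^A\smashp X)\cong S^A\smashp FX$, note all functors in the square are left Quillen so the isomorphism descends to left derived functors, and then conclude by $2$-out-of-$3$ using that $\cat{L}F$ is an equivalence. No gaps.
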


\begin{rk}
Let $\ul{\mathscr C}$ be a stable global model category and let $G$ be a finite group. Then the shift $\ul{G\text-\mathscr C}$ is again stable: if $G'$ is any finite group and $A$ is a finite $G'$-set, then viewing it as a $(G'\times G)$-set with trivial $G$-action shows that the left Quillen functor $S^A\smashp\blank\colon G'\text-G\text-\mathscr C\to G'\text-G\text-\mathscr C$ is a Quillen equivalence.
\end{rk}

\subsection{Spectrification}\label{subsec:spectrification}
Non-equivariantly, the universal stabilization of a suitably nice simplicial model category $\mathscr C$ can be computed by spectrum objects in $\mathscr C$. In this subsection, we introduce a refinement of this construction to our framework; the universal property will then be established in the next subsection.

\begin{constr}
Let $\underline{\mathscr C}$ be a pointed preglobal model category. We write $\Sp(\underline{\mathscr C})$ for the category of $\cat{SSet}_*$-enriched functors $\bm\Sigma\to\mathscr C$. A map $f$ in $\Sp(\underline{\mathscr C})$ is called a \emph{global level weak equivalence} if $f(A)$ is a weak equivalence in $\Sigma_A\text{-}\mathscr C$ for every finite set $A$. Moreover, $f$ is called a \emph{projective global level fibration} or \emph{flat global level fibration} if each $f(A)$ is a fibration in the projective $\Sigma_A$-global model structure or flat $\Sigma_A$-global model structure, respectively.

More generally, if $\ul{\mathscr C}$ is an arbitrary preglobal model category, then we define $\Sp(\ul{\mathscr C})\mathrel{:=}\Sp(\ul{\mathscr C}_*$).
\end{constr}

\begin{prop}\label{prop:level-model-structures}
Assume $\ul{\mathscr C}$ is pointed. The global level weak equivalences and global projective (flat) level fibrations are part of a simplicial, combinatorial, and left proper model structure on $\Sp(\underline{\mathscr C})$. We call this the \emph{global projective (flat) level model structure}. A possible set of generating cofibrations is given by
\begin{equation*}
\{\bm\Sigma(A,\blank)\smashp_{\Sigma_A}i : \text{$A\in\bm\Sigma$, $i\in I_{\Sigma_A}$}\}
\end{equation*}
where $I$ denotes a set of generating cofibrations of the projective (flat) model structure on $\Sigma_A\text{-}\mathscr C$, and similarly a set of generating acyclic cofibrations is given by
\begin{equation*}
\{\bm\Sigma(A,\blank)\smashp_{\Sigma_A}j : \text{$A\in\bm\Sigma$, $j\in J_{\Sigma_A}$}\}
\end{equation*}
for sets $J_{\Sigma_A}$ of generating acyclic cofibrations of the respective model structure on $\Sigma_A\text-\mathscr C$.
\end{prop}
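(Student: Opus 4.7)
The plan is to present these as projective-style model structures on a category of $\cat{SSet}_*$-enriched functors and invoke Smith's recognition theorem for combinatorial model categories. As setup, I first observe that $\Sp(\ul{\mathscr C})$ is the category of $\cat{SSet}_*$-enriched functors from the small enriched category $\bm\Sigma$ to the locally presentable pointed category $\mathscr C_*$. In particular $\Sp(\ul{\mathscr C})$ is itself locally presentable, and both its limits and colimits are computed levelwise. Consequently each evaluation functor $\ev_A\colon\Sp(\ul{\mathscr C})\to\Sigma_A\text-\mathscr C_*$ preserves all small limits and colimits, and the enriched Yoneda lemma supplies a left adjoint $F_A\mathrel{:=}\bm\Sigma(A,\blank)\smashp_{\Sigma_A}\blank$. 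The proposed generating (acyclic) cofibrations of $\Sp(\ul{\mathscr C})$ are then precisely the $F_A$-images of the (acyclic) generators of the projective (flat) $\Sigma_A$-global model structure.

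With this setup in hand, I would verify the hypotheses of Smith's theorem for the class $W$ of level weak equivalences and the proposed generating sets $I$ and $J$. The class $W$ is accessibly embedded in the arrow category and closed under 2-out-of-3 and retracts: this follows since each $\ev_A$ preserves filtered colimits, the weak equivalences of each combinatorial category $\Sigma_A\text-\mathscr C$ are accessibly embedded, and a small intersection of accessibly embedded subclasses is again accessibly embedded. The $I$-injective maps are precisely those $f$ for which every $\ev_A(f)$ is an acyclic fibration of $\Sigma_A\text-\mathscr C$, by the adjunction $F_A\dashv\ev_A$; in particular $I\textup{-inj}\subseteq W$. Finally, since colimits in $\Sp(\ul{\mathscr C})$ are levelwise, each $\ev_A$ sends a $J$-cell complex to a transfinite composition of pushouts of maps in $J_{\Sigma_A}$, hence to a weak equivalence by combinatoriality of $\Sigma_A\text-\mathscr C$. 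Smith's theorem then produces the desired model structure with weak equivalences $W$, generating cofibrations $I$, and generating acyclic cofibrations $J$.

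The remaining properties are inherited levelwise: the simplicial tensoring, cotensoring, and mapping spaces on $\Sp(\ul{\mathscr C})$ are induced from those on $\mathscr C_*$, and the pushout-product axiom reduces via $F_A\dashv\ev_A$ to the corresponding axiom for each $\Sigma_A\text-\mathscr C$, as provided by Corollary~\ref{cor:pointed-g-sset-enriched}. Combinatoriality is immediate from local presentability together with the explicit generating sets, and left properness follows because pushouts and level weak equivalences are both computed levelwise. The only non-formal step is the accessibility of $W$ in the verification of Smith's first condition, and this is a standard consequence of combinatoriality of each $\Sigma_A\text-\mathscr C$; everything else is a routine unwinding of the adjunctions $F_A\dashv\ev_A$.
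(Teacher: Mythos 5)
The key step of your argument fails at exactly the non-formal point. You claim that, since colimits are levelwise, each $\ev_A$ sends a $J$-cell complex to a transfinite composition of pushouts of maps in $J_{\Sigma_A}$. This is not true: a cell of $J$ attached at level $B$ is $F_B(j)=\bm\Sigma(B,\blank)\smashp_{\Sigma_B}j$, and its evaluation at $A$ is $\bm\Sigma(B,A)\smashp_{\Sigma_B}j$, which is \emph{not} a map in $J_{\Sigma_A}$ (nor a $J_{\Sigma_A}$-cell map) — it is $j$ tensored with the $(\Sigma_A,\Sigma_B)$-bimodule $\bm\Sigma(B,A)$ and quotiented by $\Sigma_B$. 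So to get $J\text{-cell}\subseteq W$ one must prove that $\bm\Sigma(B,A)\smashp_{\Sigma_B}\blank\colon\Sigma_B\text-\mathscr C\to\Sigma_A\text-\mathscr C$ sends acyclic cofibrations to maps along which pushouts stay weak equivalences (e.g.\ to acyclic flat cofibrations). This is the genuine content of the proposition: one identifies $\bm\Sigma(B,A)\cong(\Sigma_A)_+\smashp_{\Sigma_{A\setminus B}}S^{A\setminus B}$ as a $(\Sigma_A,\Sigma_B)$-bimodule and factors the functor as $S^{A\setminus B}\smashp\blank$ followed by induction along the injective homomorphism $\Sigma_B\times\Sigma_{A\setminus B}\to\Sigma_A$; both are left Quillen for the flat structures, the first by Corollary~\ref{cor:pointed-g-sset-enriched} and the second by the change-of-group axioms of a preglobal model category. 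Nothing in your write-up supplies this input, and without it the "routine unwinding of the adjunctions $F_A\dashv\ev_A$" does not go through. The same omission undermines your left-properness argument: you implicitly assume cofibrations of $\Sp(\ul{\mathscr C})$ are levelwise cofibrations, but a generating cofibration evaluated at $A$ is again of the form $\bm\Sigma(B,A)\smashp_{\Sigma_B}i$, so one needs the same left-Quillen statement (for the flat structures) to see that cofibrations are levelwise flat cofibrations before invoking levelwise left properness.

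There is also a secondary packaging issue: Smith's recognition theorem produces \emph{some} set of generating acyclic cofibrations, not the asserted set $\{\bm\Sigma(A,\blank)\smashp_{\Sigma_A}j\}$, and it does not by itself identify the fibrations with the level fibrations. To obtain the statement as formulated one should instead transfer along the adjunction $\coprod_A\bm\Sigma(A,\blank)\smashp_{\Sigma_A}\blank\dashv(\ev_A)_A$ (Crans--Kan), which is what the paper does; the transfer criterion reduces precisely to the consistency condition above, i.e.\ to the missing left-Quillen statement. Your observations that $I\text{-inj}$ consists of the level acyclic fibrations and that $W$ is accessible are fine, but they are the formal part; the heart of the proof is the equivariant analysis of $\bm\Sigma(B,A)$, which your proposal skips.
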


Here $\bm\Sigma(A,\blank)\smashp_{\Sigma_A} f$ is the map obtained from the levelwise tensoring over $\cat{SSet}_*$ by dividing out the diagonal $\Sigma_A$-action.

\begin{rk}
Replacing $\ul{\mathscr C}$ by $\ul{\mathscr C}_*$ we immediately get an analogous statement for unpointed $\mathscr C$, with generating (acyclic) cofibrations now of the form $\bm\Sigma(A,\blank)\smashp_{\Sigma_A} f_+$ for generating (acyclic) cofibrations $f$ of $\Sigma_A\text-\mathscr C$.
\end{rk}

For the proof of the proposition we will use the following easy criterion, cf.~\cite[Proposition~C.23]{schwede-book} or \cite[Definition~2.21]{hausmann-equivariant}:

\begin{lemma}
Let $\mathbb I$ be a small $\cat{SSet}_*$-enriched category, and let $\mathscr C$ be a locally presentable category enriched over $\cat{SSet}_*$. Assume we are given for each $X\in\mathbb I$ a combinatorial model structure on $\End(X)\text-\mathscr C$ (the category of enriched functors from the full $\cat{SSet}_*$-subcategory spanned by $X$ to $\mathscr C$) with generating cofibrations $I_X$ and generating acyclic cofibrations $J_X$, such that the following `consistency condition' is satisfied: for every $Y\in\mathbb I$, any relative $\{{\mathbb I}(X,Y)\smashp_{\End(X)} j : X\in\mathbb I,j\in J_X\}$-cell complex is a weak equivalence in $\End(Y)\text-\mathscr C$.

Then there is a unique model structure on the category $\mathbb I\text-\mathscr C$ of enriched functors $\mathbb I\to\mathscr C$ in which a map $f$ is a weak equivalence if and only if $f(X)$ is a weak equivalence or fibration, respectively, in the given model structure on $\End(X)\text-\mathscr C$ for all $X\in\mathbb I$. This model structure is combinatorial with generating cofibrations
\begin{equation*}
\{\mathbb I(X,\blank)\smashp_{\End(X)} i : X\in\mathbb I, i\in I_X\}
\end{equation*}
and generating acyclic cofibrations
\begin{equation*}
\{\mathbb I(X,\blank)\smashp_{\End(X)} j : X\in\mathbb I, j\in J_X\}.
\end{equation*}
\begin{proof}
The forgetful functor $\mathbb I\text-\mathscr C\to\prod_{X\in\mathbb I}\End(X)\text-\mathscr C$ has a left adjoint given by $F\mathrel{:=}\coprod_{X\in\mathbb I}\mathbb I(X,\blank)\smashp_{\End(X)}\pr_{X}$. We will verify the conditions of the Crans-Kan Transfer Criterion \cite[Theorem~11.3.2]{hirschhorn} for this adjunction, which will then provide the desired model structure and show that it is cofibrantly generated (hence combinatorial) with the above sets of generating (acyclic) cofibrations. By local presentability, every set permits the small object argument, so we only have to show that every relative $F(J)$-cell complex is a weak equivalence where $J$ is a set of generating acyclic cofibrations of the right hand side. But for the standard choice of generating acyclic cofibrations this precisely amounts to the above consistency condition.
\end{proof}
\end{lemma}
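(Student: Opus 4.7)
The plan is to apply the Crans--Kan (right) transfer criterion to the forgetful-left Kan adjunction between $\mathbb I\text-\mathscr C$ and the product category $\prod_{X\in\mathbb I}\End(X)\text-\mathscr C$ equipped with the product model structure. Write $U$ for the forgetful functor sending an enriched functor to the tuple of its restrictions along the full $\cat{SSet}_*$-subcategory inclusions on single objects. Its left adjoint, produced by enriched left Kan extension and then coproduct, is
\[
F \;\cong\; \coprod_{X\in\mathbb I}\mathbb I(X,\blank)\smashp_{\End(X)}\pr_X,
\]
as one checks directly from the enriched Yoneda lemma. The product model structure on the right is combinatorial with generating (acyclic) cofibrations indexed by $X\in\mathbb I$ and given by the maps of $I_X$ (resp.~$J_X$) placed in the $X$-th factor and zero elsewhere.

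Next I would verify the hypotheses of the Crans--Kan transfer theorem. Local presentability of $\mathscr C$ implies local presentability of $\mathbb I\text-\mathscr C$, so every set of morphisms admits the small object argument; this also ensures that domains of generating maps are small relative to the relevant cell classes. Granting the acyclicity step below, the transfer then immediately yields the model structure described in the statement: a map is a weak equivalence or fibration iff $U$ sends it to one, i.e.~iff its evaluation at every $X\in\mathbb I$ is such; moreover its generating (acyclic) cofibrations are $F$ applied to those of the product structure, which produces exactly the sets $\{\mathbb I(X,\blank)\smashp_{\End(X)}i : X\in\mathbb I,i\in I_X\}$ and $\{\mathbb I(X,\blank)\smashp_{\End(X)}j : X\in\mathbb I,j\in J_X\}$ claimed in the statement. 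Uniqueness of the model structure is automatic since weak equivalences and fibrations determine cofibrations by the right lifting property.

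The main obstacle---and the reason the consistency hypothesis is formulated exactly as it is---is verifying that every relative $F(J)$-cell complex in $\mathbb I\text-\mathscr C$ is a weak equivalence, i.e.~its evaluation at each $Y\in\mathbb I$ is a weak equivalence in $\End(Y)\text-\mathscr C$. Here the key observation is that the evaluation functor $\ev_Y\colon\mathbb I\text-\mathscr C\to\End(Y)\text-\mathscr C$ preserves colimits: since $\mathscr C$ is cotensored over $\cat{SSet}_*$, $\ev_Y$ admits a right adjoint given by cotensoring with $\mathbb I(Y,\blank)$. Consequently $\ev_Y$ sends transfinite compositions of pushouts of $F(J)$ to transfinite compositions of pushouts of $\ev_Y F(J)=\{\mathbb I(X,Y)\smashp_{\End(X)}j : X\in\mathbb I, j\in J_X\}$, i.e.~to a relative cell complex for exactly the set appearing in the hypothesis. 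The consistency condition then states precisely that any such cell complex is a weak equivalence, closing the argument.

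This completes the verification of the Crans--Kan criterion and hence establishes the lemma. I do not expect any further subtleties: the only nontrivial input beyond standard adjoint-functor bookkeeping and local presentability is the consistency assumption, which is isolated exactly to handle the one step that the general transfer machinery cannot absorb.
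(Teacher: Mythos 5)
Your proposal is correct and follows essentially the same route as the paper: transfer along the adjunction $F\dashv U$ with $F=\coprod_{X\in\mathbb I}\mathbb I(X,\blank)\smashp_{\End(X)}\pr_X$, use local presentability for the small object argument, and reduce the acyclicity condition of the Crans--Kan criterion to the stated consistency hypothesis. The only difference is that you spell out explicitly why $\ev_Y$ sends relative $F(J)$-cell complexes to relative $\{\mathbb I(X,Y)\smashp_{\End(X)}j\}$-cell complexes (evaluation preserves colimits), a step the paper leaves implicit.
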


\begin{proof}[Proof of Proposition~\ref{prop:level-model-structures}]
Let us consider the case of the flat model structures first. To verify the above consistency condition, it suffices to show that for all finite sets $A\subset B$ the map $\bm\Sigma(A,B)\smashp_{\Sigma_A}\blank\colon\Sigma_A\text{-}\mathscr C_\text{flat}\to\Sigma_B\text{-}\mathscr C_\text{flat}$ is left Quillen. To this end, we observe that we can identify $\bm\Sigma(A,B)$ with $(\Sigma_B)_+\smashp_{\Sigma_{B\setminus A}}S^{B\setminus A}$ as simplicial sets with left $\Sigma_B$- and right $\Sigma_A$-action, see \cite[proof of Proposition~2.24]{hausmann-equivariant}, so $\bm\Sigma(A,B)\smashp_{\Sigma_A}\blank$ factors as the composite
\begin{equation*}
\Sigma_A\text{-}\mathscr C_\text{flat}\xrightarrow{S^{B\setminus A}\smashp\blank}(\Sigma_A\times\Sigma_{B\setminus A})\text{-}\mathscr C_\text{flat}\xrightarrow{k_!}\Sigma_{B}\text{-}\mathscr C_\text{flat}
\end{equation*}
where $k\colon\Sigma_A\times\Sigma_{B\setminus A}\to\Sigma_B$ is the evident embedding. As $k$ is injective, the second arrow is left Quillen, and so is the first one by Corollary~\ref{cor:pointed-g-sset-enriched}.

The consistency condition for the projective model structure follows immediately from the one for the flat model structure as it has fewer cofibrations and the same weak equivalences, proving the existence of the projective level model structure.

As (acyclic) fibrations and the cotensoring on $\Sp(\ul{\mathscr C})$ are simply defined levelwise, we immediately see that these model structures are again simplicial. Similarly, for left properness it is enough to observe that all generating cofibrations of either model structure are levelwise flat cofibrations (as $\bm\Sigma(A,B)\smashp_{\Sigma_A}\blank$ is left Quillen for the flat model structures by the above), so that every cofibration in $\Sp(\ul{\mathscr C})$ is in particular a levelwise flat cofibration.
\end{proof}

Applying this to the shifts $\underline{G\text-\mathscr C}$ of $\underline{\mathscr C}$ (Example~\ref{ex:shifts}) gives us two left proper, simplicial, and combinatorial model structures on $G\text{-}\Sp(\underline{\mathscr C})$ that we call the \emph{$G$-global projective level model structure} and \emph{$G$-global flat level model structure}. Their weak equivalences and fibrations are those maps $f$ such that $f(A)$ is a weak equivalence or fibration, respectively, in the corresponding model structure on $(G\times\Sigma_A)\text{-}\mathscr C$ for every $A\in\bm\Sigma$.

In contrast to this, the cofibrations are not just simply defined levelwise, but rather in terms of a left lifting property. Nevertheless we can say something about the individual levels of the above cofibrations; we begin with the flat case where we have already noticed in the above proof:

\begin{cor}\label{cor:flat-implies-level-flat}
Let $f$ be a cofibration in $G\text-\Sp(\ul{\mathscr C})_\textup{flat}$ and let $B$ be a finite set. Then $f(B)$ is a cofibration in $(G\times\Sigma_B)\text-\mathscr C_\textup{flat}$.\qed
\end{cor}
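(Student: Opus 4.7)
The idea is to reduce to a statement about generating cofibrations and exploit the factorization of the transition functors $\bm\Sigma(A,B)\smashp_{\Sigma_A}\blank$ already used in the proof of Proposition~\ref{prop:level-model-structures}. Concretely, evaluation at $B$ defines a functor
\begin{equation*}
\ev_B\colon G\text-\Sp(\ul{\mathscr C})\to (G\times\Sigma_B)\text-\mathscr C
\end{equation*}
which is left adjoint to $X\mapsto\maps_*(\bm\Sigma(B,\blank), X)$, hence cocontinuous. As the class of cofibrations in $(G\times\Sigma_B)\text-\mathscr C_\textup{flat}$ is closed under pushouts, transfinite compositions, and retracts, it suffices to verify that $\ev_B$ sends each generating cofibration of the $G$-global flat level model structure to a cofibration in $(G\times\Sigma_B)\text-\mathscr C_\textup{flat}$.

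By Proposition~\ref{prop:level-model-structures} applied to the shift $\ul{G\text-\mathscr C}$ (Example~\ref{ex:shifts}), these generating cofibrations are of the form $\bm\Sigma(A,\blank)\smashp_{\Sigma_A}i$ where $A\in\bm\Sigma$ and $i$ is a generating flat cofibration of $(G\times\Sigma_A)\text-\mathscr C$. Evaluation at $B$ sends this to $\bm\Sigma(A,B)\smashp_{\Sigma_A}i$, so the problem reduces to showing that
\begin{equation*}
\bm\Sigma(A,B)\smashp_{\Sigma_A}\blank\colon (G\times\Sigma_A)\text-\mathscr C_\textup{flat}\to (G\times\Sigma_B)\text-\mathscr C_\textup{flat}
\end{equation*}
is left Quillen for every pair $A,B$. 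Using the identification $\bm\Sigma(A,B)\cong(\Sigma_B)_+\smashp_{\Sigma_{B\setminus A}}S^{B\setminus A}$ as $(\Sigma_B,\Sigma_A)$-bisimplicial sets (which holds whenever $A$ embeds into $B$, and is trivial otherwise since $\bm\Sigma(A,B)=*$), this functor factors as
\begin{equation*}
(G\times\Sigma_A)\text-\mathscr C_\textup{flat}\xrightarrow{S^{B\setminus A}\smashp\blank}(G\times\Sigma_A\times\Sigma_{B\setminus A})\text-\mathscr C_\textup{flat}\xrightarrow{k_!}(G\times\Sigma_B)\text-\mathscr C_\textup{flat},
\end{equation*}
where $k\colon G\times\Sigma_A\times\Sigma_{B\setminus A}\hookrightarrow G\times\Sigma_B$ is the evident inclusion. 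The first functor is left Quillen by Corollary~\ref{cor:pointed-g-sset-enriched} (applied to $S^{B\setminus A}$ viewed as a pointed $(G\times\Sigma_A\times\Sigma_{B\setminus A})$-simplicial set with trivial action of $G\times\Sigma_A$), and the second one is left Quillen because $k$ is injective, so the axioms of a preglobal model category apply.

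Thus $\bm\Sigma(A,B)\smashp_{\Sigma_A}\blank$ is left Quillen, so it carries the generating cofibration $i$ to a flat cofibration in $(G\times\Sigma_B)\text-\mathscr C$, completing the argument. There is no essential obstacle: once one notes that $\ev_B$ is a left adjoint, everything is bookkeeping around the factorization already appearing in the proof of Proposition~\ref{prop:level-model-structures}, now carried out with an extra spectator $G$-action.
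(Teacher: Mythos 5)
Your argument is correct and is essentially the paper's own proof: the corollary is there deduced from the observation, made in the proof of Proposition~\ref{prop:level-model-structures} (applied to the shift $\ul{G\text-\mathscr C}$), that $\bm\Sigma(A,B)\smashp_{\Sigma_A}\blank$ is left Quillen for the flat model structures via exactly the factorization through $S^{B\setminus A}\smashp\blank$ followed by $k_!$ that you use, so that the generating cofibrations are levelwise flat cofibrations and cocontinuity of evaluation does the rest. (Only your description of the adjoint is slightly off---the right adjoint of $\ev_B$ is a $\Sigma_B$-equivariant cotensor against $\bm\Sigma(\blank,B)$, not $\maps_*(\bm\Sigma(B,\blank),\blank)$---but this is immaterial, since cocontinuity of $\ev_B$ is clear anyway because colimits in $G\text-\Sp(\ul{\mathscr C})$ are computed levelwise.)
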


In the projective case we get a slightly weaker statement:

\begin{lemma}\label{lemma:projective-cofibrations-levelwise}
Let $i$ be a projective cofibration in $G\text-\Sp(\ul{\mathscr C})$. Then $i(B)$ is a cofibration in $G\text-\mathscr C_\textup{proj}$ for every finite $G$-set $B$.
\begin{proof}
It is enough to prove this for the generating cofibrations. But for any finite set $A$ and any $(G\times\Sigma_A)$-global projective cofibration $i$
\begin{equation*}
(\bm\Sigma(A,\blank)\smashp_{\Sigma_A} i)(B)= (\bm\Sigma(A,B)\smashp i)/\Sigma_A
\end{equation*}
and $\bm\Sigma(A,B)\smashp i$ is a $(G\times\Sigma_A)$-global projective cofibration by Corollary~\ref{cor:pointed-g-sset-enriched}; the claim follows as quotients preserve projective cofibrations.
\end{proof}
\end{lemma}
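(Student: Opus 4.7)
The plan is to reduce to a check on the generating projective cofibrations and then exploit the enriched bifunctor properties established in Corollary~\ref{cor:pointed-g-sset-enriched}. Evaluation at $B$ is a simplicially enriched left adjoint, so the class of maps in $G\text-\Sp(\ul{\mathscr C})$ whose value at $B$ is a projective cofibration in $G\text-\mathscr C$ is closed under pushouts, transfinite compositions, and retracts. It therefore suffices to verify the claim for a set of generating projective cofibrations.

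By Proposition~\ref{prop:level-model-structures} applied to the shift $\ul{G\text-\mathscr C}$, these have the form $\bm\Sigma(A,\blank)\smashp_{\Sigma_A} i$ for a finite set $A$ and a generating projective cofibration $i$ in $(G\times\Sigma_A)\text-\mathscr C$. Commuting evaluation at $B$ past the coend gives $(\bm\Sigma(A,B)\smashp i)/\Sigma_A$. Here $\bm\Sigma(A,B)$ carries the left $G$-action inherited from $\rho\colon G\to\Sigma_B$ (the action map of $B$) together with the right $\Sigma_A$-action by precomposition, while $i$ keeps its original $(G\times\Sigma_A)$-action, so the smash becomes a $(G\times\Sigma_A)$-object under the diagonal action.

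The key observation is that $\bm\Sigma(A,B)$, viewed as a $(G\times\Sigma_A)$-simplicial set in this way, has isotropies in $\mathcal G_{G,\Sigma_A}$: any non-basepoint element lies in a summand $S^{B\setminus j(A)}$ indexed by an injection $j\colon A\to B$, and a pair $(g,\sigma)\in G\times\Sigma_A$ stabilizes it only if $\rho(g)\circ j=j\circ\sigma$; since $j$ is injective this determines $\sigma$ from $g$, so in particular every isotropy subgroup intersects $1\times\Sigma_A$ trivially. Hence the map $*\to\bm\Sigma(A,B)$ is a cofibration in the $\mathcal G_{G,\Sigma_A}$-equivariant model structure, and Corollary~\ref{cor:pointed-g-sset-enriched} (with its $G$ and $H$ specialized to our $G$ and $\Sigma_A$) exhibits $\bm\Sigma(A,B)\smashp i$ as a projective cofibration in $(G\times\Sigma_A)\text-\mathscr C$.

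To finish, I push forward along the projection $\alpha\colon G\times\Sigma_A\to G$. The preglobal axioms guarantee that $\alpha_!$ is left Quillen for the projective model structures, and the standard formula $\alpha_!X=X/(1\times\Sigma_A)$ identifies $\alpha_!(\bm\Sigma(A,B)\smashp i)$ with the desired quotient $(\bm\Sigma(A,B)\smashp i)/\Sigma_A$. So $i(B)$ is a projective cofibration in $G\text-\mathscr C$, as claimed. The main subtlety is the bookkeeping of group actions that shows $\bm\Sigma(A,B)$ sits in the graph-isotropy world demanded by Corollary~\ref{cor:pointed-g-sset-enriched}; once this is in place, the preglobal axioms take care of the rest.
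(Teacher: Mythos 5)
Your reduction to generating cofibrations, the identification $(\bm\Sigma(A,\blank)\smashp_{\Sigma_A}i)(B)=(\bm\Sigma(A,B)\smashp i)/\Sigma_A$ with the diagonal actions, and your final step (pushing forward along the projection $\alpha\colon G\times\Sigma_A\to G$, which is left Quillen for the projective structures) all agree with the paper's argument. The gap is in the middle step. Corollary~\ref{cor:pointed-g-sset-enriched}, instantiated with its two groups taken to be your $G$ and $\Sigma_A$, is a statement about a bifunctor whose second input is a \emph{$G$-object} of $\mathscr C$ and whose output carries the $\Sigma_A$-action \emph{only through the simplicial-set factor}. Your $i$, however, is a generating projective cofibration of $(G\times\Sigma_A)\text-\mathscr C$, and the object you must quotient by $\Sigma_A$ is $\bm\Sigma(A,B)\smashp i$ with the \emph{diagonal} $\Sigma_A$-action (through $\bm\Sigma(A,B)$ and through $i$ simultaneously), as you yourself note. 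The instance of the corollary you cite does not produce this object: applied to $\bm\Sigma(A,B)$ and the underlying $G$-map of $i$ it yields the smash with $\Sigma_A$ acting only on $\bm\Sigma(A,B)$, whose $\Sigma_A$-quotient is in general a different $G$-object from $(\bm\Sigma(A,\blank)\smashp_{\Sigma_A}i)(B)$. So the assertion that this instance exhibits $\bm\Sigma(A,B)\smashp i$ (diagonal action) as a projective cofibration in $(G\times\Sigma_A)\text-\mathscr C$ is not justified as written, and your isotropy computation, though correct, is feeding a hypothesis that this instance cannot use.

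The conclusion is true, and there are two clean repairs. The paper's route: apply Corollary~\ref{cor:pointed-g-sset-enriched} with the first group equal to $G\times\Sigma_A$ and the second trivial. Then the simplicial-set variable carries the $\mathcal G_{G\times\Sigma_A,1}=\mathcal A\ell\ell$ model structure, in which \emph{every} pointed $(G\times\Sigma_A)$-simplicial set is cofibrant, so smashing the $(G\times\Sigma_A)$-simplicial set $\bm\Sigma(A,B)$ with the $(G\times\Sigma_A)$-projective cofibration $i$ (this is exactly the diagonal-action smash) is again a projective cofibration; no isotropy analysis is needed at all. Alternatively, if you want your graph-family computation to do the work: regard $\bm\Sigma(A,B)$ as a $\bigl((G\times\Sigma_A)\times\Sigma_A\bigr)$-simplicial set with the inner $\Sigma_A$ acting trivially and the outer $\Sigma_A$ by precomposition; your freeness argument shows precisely that its isotropy groups lie in $\mathcal G_{G\times\Sigma_A,\Sigma_A}$, so the corollary with groups $G\times\Sigma_A$ and $\Sigma_A$ applies to the pair $(\bm\Sigma(A,B),i)$, and restricting the output along the injective diagonal $G\times\Sigma_A\to(G\times\Sigma_A)\times\Sigma_A$ (restriction along injective homomorphisms preserves projective cofibrations by the preglobal axioms) produces exactly the diagonal-action object. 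After either repair, your quotient step coincides with the paper's and finishes the proof.
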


\begin{lemma}\label{lemma:level-model-structures-functoriality}
The projective and flat $G$-global level model structures define a preglobal model category $\underline{\Sp}(\underline{\mathscr C})_\textup{level}$. If $\ul{\mathscr C}$ is actually a global model category, then so is $\underline{\Sp}(\underline{\mathscr C})_\textup{level}$.
\begin{proof}
We may assume without loss of generality that $\ul{\mathscr C}$ is pointed.

By the above, both model structures are simplicial, combinatorial, and left proper. Moreover, it is clear that they have the same weak equivalences and that every (generating) projective cofibration is also a flat cofibration. Thus, it only remains to verify the change-of-group properties.

For this let $\alpha\colon G\to G'$ be any group homomorphism. Then the restriction $(\alpha\times\id)^*\colon (G'\times\Sigma_A)\text-\mathscr C\to (G\times\Sigma_A)\text-\mathscr C$ preserves weak equivalences and projective fibrations. Thus, $\alpha^*\colon G'\text-\Sp(\underline{\mathscr C})\to G\text-\Sp(\underline{\mathscr C})$ preserves level weak equivalences as well as projective level fibrations. Moreover, if $\alpha$ is injective, then each $(\alpha\times\id)^*\colon(G'\times\Sigma_A)\text-\mathscr C\to (G\times\Sigma_A)\text-\mathscr C$ also preserves flat fibrations, so that $\alpha^*$ preserves flat level fibrations.

Similarly, one shows that $\alpha_*\colon G\text-\Sp(\underline{\mathscr C})\to G'\text-\Sp(\underline{\mathscr C})$ preserves acyclic flat level fibrations for any $\alpha$, so that $\alpha^*$ preserves flat level cofibrations, and that $\alpha_*$ also preserves acyclic projective level fibrations if $\alpha$ is injective, so that $\alpha^*$ preserves projective level cofibrations in this case.

Altogether we have shown that $\underline{\Sp}(\underline{\mathscr C})$ is a preglobal model category. Now assume $\ul{\mathscr C}$ is actually a global model category, let $g\colon A\to B$ and $q\colon A\to C$ be homomorphisms with $\ker g\cap\ker q = 1$, let $X\in B\text-\Sp(\underline{\mathscr C})_\text{flat level}$ be fibrant, and let $i\colon g^*X\to Y$ be a fibrant replacement in $A\text-\Sp(\underline{\mathscr C})_\text{flat level}$; we have to show that $q_*(i)$ is a $C$-global level weak equivalence. But for any finite set $D$, $X(D)$ is fibrant in $(B\times\Sigma_D)\text-\mathscr C_\text{flat}$ and $i(D)$ is a fibrant replacement in $(A\times\Sigma_D)\text-\mathscr C_\text{flat}$. Thus, $(g_*i)(D)=(g\times\Sigma_D)_*(i(D))$ is a $(C\times\Sigma_D)$-global weak equivalence as $\underline{\mathscr C}$ is a global model category and $\ker(g\times\id)\cap \ker(q\times\id)=1$, whence $g_*i$ is a $G$-global level weak equivalence as desired.
\end{proof}
\end{lemma}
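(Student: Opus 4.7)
The plan is to reduce everything to the corresponding levelwise statements and then invoke the hypotheses on $\ul{\mathscr C}$ at each $A \in \bm\Sigma$. Without loss of generality we may assume $\ul{\mathscr C}$ is pointed, since $\Sp(\ul{\mathscr C}) = \Sp(\ul{\mathscr C}_*)$ by definition and $\ul{\mathscr C}_*$ inherits all the relevant structure from $\ul{\mathscr C}$ by Remark~\ref{rk:basepoint}.

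First I would observe that all the non-functorial axioms of a preglobal model category are already in hand: the $G$-global projective and flat level model structures are combinatorial, simplicial, and left proper by Proposition~\ref{prop:level-model-structures} (applied to each shift $\ul{G\text-\mathscr C}$), they visibly share the same weak equivalences, and the comparison between projective and flat is Quillen because every projective generating cofibration is a flat one. The key point to exploit throughout the rest is that for any homomorphism $\alpha \colon G \to G'$, the functors $\alpha^*$, $\alpha_!$, $\alpha_*$ on $G\text-\Sp(\ul{\mathscr C})$ (and $G'\text-\Sp(\ul{\mathscr C})$) act only on the $G$-action and commute with evaluation at each $A \in \bm\Sigma$; explicitly, $(\alpha^*X)(A) = (\alpha \times \id_{\Sigma_A})^*(X(A))$ as $(G \times \Sigma_A)$-objects of $\mathscr C$, and analogously for $\alpha_!$ and $\alpha_*$.

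Since weak equivalences and fibrations in the projective and flat level model structures are defined levelwise, the preservation claims for $\alpha^*$ follow immediately from the assumption that each $(\alpha \times \id_{\Sigma_A})^* \colon (G' \times \Sigma_A)\text-\mathscr C \to (G \times \Sigma_A)\text-\mathscr C$ enjoys the corresponding preservation property in the preglobal model category $\ul{\mathscr C}$ (noting that $\alpha \times \id_{\Sigma_A}$ is injective whenever $\alpha$ is). The remaining cofibration assertions I would handle by passing to right adjoints: $\alpha^*$ preserves flat level cofibrations iff $\alpha_*$ preserves acyclic flat level fibrations, and $\alpha^*$ preserves projective level cofibrations (for injective $\alpha$) iff $\alpha_*$ preserves acyclic projective level fibrations. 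Both right-adjoint statements are again levelwise and so reduce to the corresponding axioms for $\ul{\mathscr C}$. This completes the proof that $\ul{\Sp}(\ul{\mathscr C})_\textup{level}$ is a preglobal model category.

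For the second assertion I would use formulation~(\ref{item:gtfae-cofree-fixed-points}) of Proposition~\ref{prop:global-tfae}. Given a span $C \xleftarrow{q} A \xrightarrow{g} B$ with $\ker q \cap \ker g = 1$, a fibrant $X \in B\text-\Sp(\ul{\mathscr C})_\textup{flat level}$, and a flat level fibrant replacement $g^*X \to Y$ in $A\text-\Sp(\ul{\mathscr C})_\textup{flat level}$, I need $q_*g^*X \to q_*Y$ to be a $C$-global level weak equivalence. Evaluating at any $D \in \bm\Sigma$, this becomes the map $(q \times \id_{\Sigma_D})_*\bigl((g \times \id_{\Sigma_D})^* X(D) \to Y(D)\bigr)$, where $X(D)$ is fibrant in $(B \times \Sigma_D)\text-\mathscr C_\textup{flat}$ and $Y(D)$ is a fibrant replacement of $(g \times \id_{\Sigma_D})^*X(D)$ in $(A \times \Sigma_D)\text-\mathscr C_\textup{flat}$. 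Since $\ker(g \times \id_{\Sigma_D}) \cap \ker(q \times \id_{\Sigma_D}) = (\ker g \cap \ker q) \times 1 = 1$, the hypothesis that $\ul{\mathscr C}$ is a global model category delivers the required $(C \times \Sigma_D)$-global weak equivalence. I do not expect any genuine obstacle here; the main thing to be careful about is that the level-$D$ component of $g^*X \to Y$ really is a fibrant replacement in $(A \times \Sigma_D)\text-\mathscr C_\textup{flat}$, which is precisely the content of the flat level model structure on $\Sp$.
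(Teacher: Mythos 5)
Your proposal is correct and follows essentially the same route as the paper: reduce to the pointed case, note the non-functorial axioms come from the level model structure construction, verify the change-of-group conditions levelwise (handling the cofibration statements by passing to the right adjoints $\alpha_*$, since cofibrations in $\Sp(\ul{\mathscr C})$ are not defined levelwise), and check the global condition via formulation~(\ref{item:gtfae-cofree-fixed-points}) of Proposition~\ref{prop:global-tfae} by evaluating at each $D\in\bm\Sigma$ and using $\ker(g\times\id)\cap\ker(q\times\id)=1$. No gaps.
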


\begin{lemma}\label{lemma:sp-level-induced-adjunction}
Let $\ul F\colon\ul{\mathscr C}\rightleftarrows\ul{\mathscr D} :\!\ul U$ be a global Quillen adjunction. Then also
\begin{equation*}
\ul\Sp(\ul{F})\colon \ul\Sp(\ul{\mathscr C})_\textup{level}\rightleftarrows\Sp(\ul{\mathscr D})_\textup{level} :\!\ul\Sp(\ul U)
\end{equation*}
is a global Quillen adjunction. If $\ul F\dashv\ul G$ is a global Quillen equivalence, then so is $\ul\Sp(\ul F)\dashv\ul\Sp(\ul U)$.
\begin{proof}
By Lemma~\ref{lemma:pointed-equivalence}, we may assume without loss of generality that $\mathscr C$ and $\mathscr D$ are pointed. Passing to shifts it further suffices to prove that $\Sp(\ul F)\dashv\Sp(\ul U)$ is a Quillen adjunction for both level model structures and a Quillen equivalence if $\ul F\dashv\ul U$ is a global Quillen equivalence.

For the first statement it is enough to observe that $\Sp(\ul U)$ preserves (acyclic) fibrations in either model structure as they are simply defined levelwise.

For the second statement, it suffices to prove that this is a Quillen equivalence for the flat model structures. But indeed, if $X\in\Sp(\ul{\mathscr C})_\text{flat}$ is flat, then $X(A)$ is flat for every $A$ by Corollary~\ref{cor:flat-implies-level-flat}; similarly, if $FX\to Y$ is a fibrant replacement, then each $FX(A)\to Y(A)$ is a fibrant replacement in $\Sigma_A\text-\mathscr C_\text{flat}$. Now the composite $X\to UFX\to UY$ represents the derived unit for $X$ by definition, but at the same time each $X(A)\to UY(A)$ represents the derived unit for $\Sigma_A\text-F\dashv \Sigma_A\text-U$ by the above, so it is a $\Sigma_A$-global weak equivalence by assumption. Analogously one shows that also the derived \emph{co}unit is a global level weak equivalence, finishing the proof of the lemma.
\end{proof}
\end{lemma}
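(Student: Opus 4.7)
The plan is to successively reduce the problem to a statement about ordinary (non-equivariant) spectrum objects and then exploit the fact that both the weak equivalences and the (acyclic) fibrations in the level model structures are defined levelwise. First, applying Lemma~\ref{lemma:pointed-equivalence} to $\ul F \dashv \ul U$ produces a global Quillen adjunction (resp.\ equivalence) $\ul F_* \dashv \ul U_*$ between the pointed preglobal model categories $\ul{\mathscr C}_*$ and $\ul{\mathscr D}_*$; since by definition $\Sp(\ul{\mathscr C}) = \Sp(\ul{\mathscr C}_*)$, this lets me assume from the start that $\ul{\mathscr C}$ and $\ul{\mathscr D}$ are pointed. Next, unwinding definitions, the $G$-global (projective or flat) level model structure on $G\text-\Sp(\ul{\mathscr C})$ coincides with the (non-equivariant) level model structure on $\Sp(\ul{G\text-\mathscr C})$, so it suffices to prove the statement in the non-equivariant form: $\Sp(F)\dashv \Sp(U)$ is a Quillen adjunction for both level model structures, and a Quillen equivalence whenever $F\dashv U$ is so at every group level.

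For the Quillen adjunction part, the observation is that $\Sp(U)$ acts by postcomposition and fibrations and acyclic fibrations in either level model structure are characterized levelwise; since $U$ is right Quillen at each group level, $\Sp(U)$ preserves both classes, and we are done.

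For the Quillen equivalence part, it is enough to work with the flat level model structures (they share the weak equivalences with the projective ones). Pick a cofibrant $X \in \Sp(\ul{\mathscr C})_{\text{flat level}}$ and a fibrant replacement $\Sp(F)(X) \to Y$ in $\Sp(\ul{\mathscr D})_{\text{flat level}}$. Then, for every finite set $A$, Corollary~\ref{cor:flat-implies-level-flat} ensures that $X(A)$ is cofibrant in $\Sigma_A\text-\mathscr C_{\text{flat}}$, while $Y(A)$ is fibrant in $\Sigma_A\text-\mathscr D_{\text{flat}}$ and $F(X(A)) = \Sp(F)(X)(A) \to Y(A)$ is a fibrant replacement there. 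Consequently the $A$-th component of the derived unit $X \to \Sp(U)(Y)$ represents the derived unit of the Quillen adjunction $\Sigma_A\text- F \dashv \Sigma_A\text- U$, which is a $\Sigma_A$-global weak equivalence by the global Quillen equivalence assumption. Hence the derived unit of $\Sp(F) \dashv \Sp(U)$ is a level weak equivalence, and an entirely analogous argument with a cofibrant replacement of $\Sp(U)(Y)$ handles the derived counit.

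The only subtle step is the second paragraph above: it relies crucially on cofibrant objects in the flat level model structure being \emph{levelwise} flat cofibrant, and on fibrant replacements being computed levelwise. Both are exactly what Corollary~\ref{cor:flat-implies-level-flat} and the levelwise characterization of (acyclic) fibrations provide, so there is no real obstacle once one sets up the reduction correctly. The projective analogue of the equivalence claim then follows formally since the two model structures share their weak equivalences.
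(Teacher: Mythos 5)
Your proposal is correct and follows essentially the same route as the paper's own proof: reduce to the pointed case via Lemma~\ref{lemma:pointed-equivalence}, pass to shifts, observe that $\Sp(\ul U)$ preserves the levelwise-defined (acyclic) fibrations, and then check the derived unit and counit levelwise for the flat model structures using Corollary~\ref{cor:flat-implies-level-flat}. No gaps to report.
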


As before, we now want to restrict to a suitable notion of {$\Omega$-spectra} via Bousfield localization.

\begin{defi}
Let $\underline{\mathscr C}$ be a preglobal model category. An object $X\in\Sp(\underline{\mathscr C})$ is called a \emph{global $\Omega$-spectrum} if for every finite group $H$, every finite $H$-set $A$ and every finite $H$-set $B$ the derived adjoint structure map
\begin{equation}\label{eq:der-adj-str-global}
X(A)\to\cat{R}\Omega^BX(A\amalg B)
\end{equation}
is an $H$-global weak equivalence.
\end{defi}

Here we are deriving $\Omega^B$ with respect to the $H$-global projective model structure; in particular, if $X$ is fibrant in either of the above level model structures, then $(\ref{eq:der-adj-str-global})$ is already represented by the ordinary adjoint structure map. Note that there are \emph{no} faithfulness assumptions on $A$ and $B$ here.

Again we can apply this to the shifts of $\mathscr C$ by $G$; we will refer to the corresponding objects of $G\text{-}\Sp(\underline{\mathscr C})$ as \textit{$G$-global $\Omega$-spectra}.

\begin{rk}\label{rk:g-global-omega-g-h-sets}
In the definition of a $G$-global $\Omega$-spectrum we can equivalently ask for the adjoint structure map $X(A)\to\cat{R}\Omega^BX(A\amalg B)$ to be a $(G\times H)$-global weak equivalence for all finite $(G\times H)$-sets $A, B$ (instead of just for $H$-sets). Namely, we can simply replace $H$ by $G\times H$ in the above and then restrict along the diagonal $G\times H\to G\times H\times G$.
\end{rk}

In order to construct the corresponding Bousfield localizations we need some additional notation:

\begin{constr}
As already used in the description of the generating (acyclic) cofibrations, performing the tensoring over $\cat{SSet}_*$ levelwise gives us a bifunctor
\begin{equation}\label{eq:tensoring-spectra}
\blank\smashp\blank\colon\cat{Spectra}\times\mathscr C\to\Sp(\underline{\mathscr C}),
\end{equation}
which preserves colimits in each variable separately. In particular, $X\smashp\blank$ has a right adjoint $F(X,\blank)\colon\Sp(\underline{\mathscr C})\to\mathscr C$ for every spectrum $X$. From this, we get bifunctors
\begin{align}
\blank\smashp\blank\colon \cat{$\bm G$-Spectra}\times G\text-{\mathscr C} &\to G\text-\Sp(\underline{\mathscr C})\label{eq:tensoring-g-spectra}\\
F\colon\cat{$\bm G$-Spectra}^\op\times G\text-\Sp(\underline{\mathscr C})&\to G\text-\mathscr C\label{eq:cotensoring-g-spectra}
\end{align}
by pulling through the $G$-actions everywhere.
\end{constr}

\begin{lemma}\label{lemma:tensoring-g-global-level}
The smash product $(\ref{eq:tensoring-g-spectra})$ is a left Quillen bifunctor with respect to the $G$-\emph{equivariant} projective level model structure on $\cat{$\bm G$-Spectra}$ and the projective $G$-global (level) model structures elsewhere. Dually, $(\ref{eq:cotensoring-g-spectra})$ is a right Quillen bifunctor for these model structures.
\begin{proof}
Again, we may assume $\ul{\mathscr C}$ to be pointed. By adjunction, it will be enough to show that the functor
\begin{equation*}
G\text-\mathscr C_\text{proj.}^\op\times G\text-\Sp(\underline{\mathscr C})_\text{proj.~level}\to \cat{$\bm G$-Spectra}_\text{$G$-equiv.~proj.~level}
\end{equation*}
given by taking mapping spaces levelwise is a right Quillen bifunctor. As also the weak equivalences and fibrations are defined levelwise, it suffices to show that
\begin{equation*}
\maps\colon G\text-\mathscr C_\text{proj.}^\op\times (G\times \Sigma_A)\text-\mathscr C_\text{proj.}\to \big(\cat{$\bm{(G\times \Sigma_A)}$-SSet}_*\big)_\text{$\mathcal G_{G,\Sigma_A}$-equiv.}
\end{equation*}
is a right Quillen bifunctor for every finite set $A$. By another adjointness argument, this then follows from Corollary~\ref{cor:pointed-g-sset-enriched}.
\end{proof}
\end{lemma}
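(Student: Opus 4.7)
The plan is to cascade the bifunctor claim down to Corollary~\ref{cor:pointed-g-sset-enriched} via adjunction pivots and a levelwise reduction. As a preliminary, I would pass to the pointed case, which is harmless because the bifunctors in the statement are defined through $\ul{\mathscr C}_*$ when $\ul{\mathscr C}$ is not already pointed (see Remark~\ref{rk:basepoint}).

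By the defining adjunction between $(\ref{eq:tensoring-g-spectra})$ and $(\ref{eq:cotensoring-g-spectra})$, the claim that the smash product is a left Quillen bifunctor is equivalent to showing that the `levelwise mapping space' bifunctor
\begin{equation*}
G\text-\mathscr C_\textup{proj}^\op \times G\text-\Sp(\ul{\mathscr C})_\textup{proj.~level} \longrightarrow \cat{$\bm G$-Spectra}_\textup{$G$-equiv.~proj.~level}
\end{equation*}
is a right Quillen bifunctor. Since weak equivalences and fibrations on both sides are defined at each level $A\in\bm\Sigma$, this reduces to the corresponding statement levelwise: for every finite set $A$, the mapping space bifunctor
\begin{equation*}
G\text-\mathscr C_\textup{proj}^\op \times (G\times\Sigma_A)\text-\mathscr C_\textup{proj} \longrightarrow \big(\cat{$\bm{(G\times\Sigma_A)}$-SSet}_*\big)_{\mathcal G_{G,\Sigma_A}}
\end{equation*}
should be right Quillen. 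One final adjunction pivot turns this into the left Quillen bifunctor claim
\begin{equation*}
\big(\cat{$\bm{(G\times\Sigma_A)}$-SSet}_*\big)_{\mathcal G_{G,\Sigma_A}} \times G\text-\mathscr C_\textup{proj} \longrightarrow (G\times\Sigma_A)\text-\mathscr C_\textup{proj},
\end{equation*}
which is precisely Corollary~\ref{cor:pointed-g-sset-enriched} applied with $H = \Sigma_A$. The dual assertion that the cotensoring $F$ is right Quillen then follows by the same chain of adjunctions.

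I do not expect a serious obstacle beyond careful bookkeeping; the substantive content already lives in Corollary~\ref{cor:pointed-g-sset-enriched}. The one detail worth flagging is the matching of graph subgroup families: the $G$-equivariant projective level model structure on $\cat{$\bm G$-Spectra}$ uses $\mathcal G_{G,\Sigma_A}$ at level $A$, and this is exactly the family governing the tensoring in the corollary when $H=\Sigma_A$. This is precisely why the hypothesis specifies the $G$-\emph{equivariant} (rather than $G$-global) model structure on the spectra factor; if one replaced it by $G$-global, the family would become $\mathcal G_{\Sigma_A,G}$ and the final reduction would no longer match the corollary.
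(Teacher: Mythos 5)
Your proposal is correct and follows essentially the same route as the paper's own proof: reduce by adjunction to the levelwise mapping-space bifunctor being right Quillen, reduce levelwise to the statement in $(G\times\Sigma_A)$-simplicial sets with the $\mathcal G_{G,\Sigma_A}$-family, and conclude by one more adjunction from Corollary~\ref{cor:pointed-g-sset-enriched} with $H=\Sigma_A$. The remark about why the $G$-\emph{equivariant} (rather than $G$-global) model structure is needed on the spectrum factor is also accurate.
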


\begin{prop}\label{prop:projective-model-structure-G-gl}
The $G$-global projective level model structure on $G\text{-}\Sp(\underline{\mathscr C})$ admits a Bousfield localization whose fibrant objects are precisely the $G$-globally projectively level fibrant $G$-global $\Omega$-spectra. We call the resulting model structure the \emph{$G$-global projective model structure}. It is again left proper, combinatorial, and simplicial.
\begin{proof}
As before we may assume that $\ul{\mathscr C}$ is pointed, and replacing $\ul{\mathscr C}$ by $\underline{G\text{-}\mathscr C}$ if necessary, it suffices to prove this for $G=1$.

As $\Sp(\underline{\mathscr C})_\text{proj.~level}$ is left proper, combinatorial, and simplicial, it will be enough by the localization machinery of \cite[Proposition A.3.7.3]{htt} to give a set $S$ of cofibrations such that a fibrant $X\in\Sp(\underline{\mathscr C})_\text{proj.~level}$ is a global $\Omega$-spectrum if and only if for every $f\in S$ the restriction $\maps(f,X)$ is an acyclic Kan fibration.

For this we recall from Remark~\ref{rk:equivariant-proj-gen-cof} for any finite group $H$ and any finite $H$-sets $A,B$ the map $\lambda_{H,A,B}\colon S^B\smashp\bm\Sigma(A\amalg B,\blank)\to\bm\Sigma(A,\blank)$ corepresenting $X(A)^H\to(\Omega^B X(A\amalg B))^H$ and its factorization $\lambda_{H,A,B}=\rho_{H,A,B}\kappa_{H,A,B}$ into an $H$-equivariant projective cofibration followed by a level weak equivalence. Moreover, let us pick for each finite group $H$ a set $I_H$ of generating cofibrations of $H\text-\mathscr C_\text{proj}$. We now claim that the set
\begin{equation}\label{eq:defining-weak-equivalences}
S\mathrel{:=}\{\kappa_{H,A,B}\ppo_H i : H,A,B,i\in I_H\}
\end{equation}
of (balanced) pushout products, where $H$ runs through all finite groups and $A$ and $B$ through finite $H$-sets, has the desired properties.

To this end we first observe that each ordinary pushout product $\kappa_{H,A,B}\ppo i$ is a cofibration in $H\text-\Sp(\ul{\mathscr C})_\text{proj.~level}$ by Lemma~\ref{lemma:tensoring-g-global-level}, so that $\kappa_{H,A,B}\ppo_Hi$ is indeed a projective cofibration in $\Sp(\ul{\mathscr C})$ as $\ul{\Sp}(\ul{\mathscr C})_\textup{level}$ is a preglobal model category. Now by adjointness $\maps(\kappa_{H,A,B}\ppo_H i, X)$ is an acyclic Kan fibration if and only if the map $F(\kappa_{H,A,B},\triv_HX)$ has the right lifting property in $H\text{-}\mathscr C$ against all maps of the form $i\ppo(\del\Delta^n\hookrightarrow\Delta^n)_+$. Letting $i$ vary, we conclude (as on the one hand $H\text{-}\mathscr C$ is tensored over $\cat{SSet}$ and as on the other the pushout product with $(\del\Delta^0\to\Delta^0)_+$ gives back the original map up to isomorphism) that $\maps(\kappa_{H,A,B}\ppo_Hi,X)$ is an acyclic Kan fibration for all $i\in I_H$ if and only if $F(\kappa_{H,A,B},\triv_HX)$ is an acyclic fibration in $H\text{-}\mathscr C_\text{proj}$. On the other hand, $F(\kappa_{H,A,B},\triv_HX)$ is always a fibration because $F$ is a right Quillen bifunctor (Lemma~\ref{lemma:tensoring-g-global-level} again) and $X$ was assumed to be projectively level fibrant (so that $\triv_HX$ is $H$-globally projectively level fibrant). Thus, $\maps(\kappa_{H,A,B}\ppo_H i, X)$ is an acyclic fibration for all $i$ if and only if $F(\kappa_{H,A,B},\triv_HX)$ is a weak equivalence in $H\text{-}\mathscr C$. However, $\rho_{H,A,B}$ is an $H$-equivariant level weak equivalence between projectively cofibrant $H$-equivariant spectra (as $\kappa_{H,A,B}$ is a cofibration and the source and target of $\lambda_{H,A,B}$ were cofibrant), so Ken Brown's Lemma shows that $F(\rho_{H,A,B},\triv_HX)$ is an $H$-global weak equivalence. Thus, $F(\lambda_{H,A,B},\triv_HX)$ is an $H$-global weak equivalence if and only if $F(\kappa_{H,A,B},\triv_HX)$ is so. Since $F(\lambda_{H,A,B},\triv_HX)$ is conjugate to the adjoint structure map $X(A)\to\Omega^B X(A\amalg B)$, the claim follows.
\end{proof}
\end{prop}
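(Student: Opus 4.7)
The plan is to invoke the standard Bousfield localization machinery for left proper combinatorial simplicial model categories (e.g.~\cite[Proposition~A.3.7.3]{htt}), which applies to the $G$-global projective level model structure by Proposition~\ref{prop:level-model-structures} and Lemma~\ref{lemma:level-model-structures-functoriality}. It therefore suffices to exhibit a set $S$ of cofibrations such that among projectively level fibrant objects, the $S$-local ones are precisely the $G$-global $\Omega$-spectra; left properness, combinatoriality, and the simplicial structure are then automatic from the localization theorem. As a first reduction, replace $\ul{\mathscr C}$ by its shift $\ul{G\text-\mathscr C}$ (and, if needed, by $\ul{\mathscr C}_*$) to assume $G = 1$ and $\mathscr C$ pointed.

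The set $S$ will be built from corepresenting morphisms. By the enriched Yoneda lemma, $\bm\Sigma(A,\blank)$ corepresents evaluation at $A$ while $S^B\smashp\bm\Sigma(A\amalg B,\blank)$ corepresents the functor $X\mapsto\Omega^B X(A\amalg B)$, so for each finite group $H$ and each pair of finite $H$-sets $A,B$ one obtains a natural map $\lambda_{H,A,B}\colon S^B\smashp\bm\Sigma(A\amalg B,\blank)\to\bm\Sigma(A,\blank)$ of $H$-spectra whose image under $F(\blank,X)$ recovers the adjoint structure map $X(A)\to\Omega^B X(A\amalg B)$ up to natural isomorphism. I would factor $\lambda_{H,A,B}=\rho_{H,A,B}\kappa_{H,A,B}$ into an $H$-equivariant projective cofibration followed by a level weak equivalence, and set
\begin{equation*}
S\mathrel{:=}\{\kappa_{H,A,B}\ppo_H i : H \text{ finite}, A,B \text{ finite } H\text{-sets}, i\in I_H\},
\end{equation*}
where $I_H$ is a chosen set of generating cofibrations of $H\text-\mathscr C_\text{proj}$ and $(H,A,B,i)$ run through representatives. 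That each element of $S$ is a cofibration in $\Sp(\ul{\mathscr C})_\text{proj.~level}$ would follow by first applying the bifunctor version of Lemma~\ref{lemma:tensoring-g-global-level} to see that $\kappa_{H,A,B}\ppo i$ is a projective level cofibration in $H\text-\Sp(\ul{\mathscr C})$, and then invoking the preglobal structure on $\ul{\Sp}(\ul{\mathscr C})_\text{level}$ (Lemma~\ref{lemma:level-model-structures-functoriality}) to conclude that the left Quillen quotient by $H$ lands in projective cofibrations.

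Finally, I would verify that a level fibrant $X$ is $S$-local iff it is a global $\Omega$-spectrum. By adjunction, $\maps(\kappa_{H,A,B}\ppo_H i,X)$ being an acyclic Kan fibration for all $i\in I_H$ is equivalent to $F(\kappa_{H,A,B},\triv_H X)$ being an acyclic fibration in $H\text-\mathscr C_\text{proj}$; the fibration part is automatic from the right Quillen bifunctor property (Lemma~\ref{lemma:tensoring-g-global-level}) together with level fibrancy of $X$, so the condition reduces to a weak equivalence statement. Since $\rho_{H,A,B}$ is a level weak equivalence between projectively cofibrant $H$-spectra, Ken Brown's Lemma shows that $F(\rho_{H,A,B},\triv_H X)$ is an $H$-global weak equivalence, so $F(\kappa_{H,A,B},\triv_H X)$ is a weak equivalence exactly when $F(\lambda_{H,A,B},\triv_H X)$ is; as the latter is naturally isomorphic to the adjoint structure map, this is precisely the $\Omega$-spectrum condition. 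The main obstacle is the bookkeeping around the enriched Yoneda identification together with the passage from $H$-equivariant cofibrations to non-equivariant ones via the balanced pushout product; this is exactly where the preglobal axioms pay off, since they are what guarantee that quotienting by $H$ preserves projective cofibrations.
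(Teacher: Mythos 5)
Your proposal is correct and follows essentially the same route as the paper: the same reduction to $G=1$ pointed, the same localizing set of balanced pushout products $\kappa_{H,A,B}\ppo_H i$, the same use of Lemma~\ref{lemma:tensoring-g-global-level} plus the preglobal axioms to see these are projective cofibrations, and the same adjunction/Ken Brown argument identifying $S$-locality of level fibrant objects with the $\Omega$-spectrum condition. No gaps to report.
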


\begin{rk}
We also explicitly describe the set of maps used to obtain the $G$-global projective model structure by localization in the above proof. Considering the shift $\ul{G\text-\mathscr C}$, we see that this is given as
\begin{equation}\label{eq:localizing-set-G-gl}
S_G\mathrel{:=}\{\kappa_{H,A,B}\ppo_H i : H\text{ finite group}, A,B \text{ finite $H$-sets}, i\in I_{G\times H}\}.
\end{equation}
\end{rk}

\begin{cor}
For any finite group $G$, there is a unique model structure on $G\text-\Sp(\underline{\mathscr C})$ whose cofibrations are the $G$-global flat level cofibrations and whose weak equivalences are the $G$-global weak equivalences. This model structure is simplicial, combinatorial, and left proper. Moreover, its fibrant objects are precisely the $G$-global $\Omega$-spectra that are fibrant in the $G$-global flat level model structure.
\begin{proof}
Again, we may assume that $\ul{\mathscr C}$ is pointed and $G=1$. As every global projective level cofibration is also a global flat level cofibration, we can localize the global flat level model structure at the maps $(\ref{eq:defining-weak-equivalences})$; as every fibration in the global flat level model structure is also a fibration in the global projective level model structure, the above argument then shows that a level fibrant spectrum is fibrant in this new model structure if and only if it is a global $\Omega$-spectrum. It only remains to show that the weak equivalences of this model structure are again the global weak equivalences. But by abstract nonsense about Bousfield localizations, a map $f$ in $\Sp(\underline{\mathscr C})$ is a weak equivalence in this global flat model structure if and only if $[f,X]$ is bijective for every global $\Omega$-spectrum, where $[\,{,}\,]$ denotes the hom set in the localization of $\Sp(\underline{\mathscr C})$ at the global level weak equivalences. As the same characterization applies to the global projective model structure, the claim follows immediately.
\end{proof}
\end{cor}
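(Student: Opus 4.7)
The plan is to essentially mimic the construction of the $G$-global projective model structure from the preceding proposition, but starting instead from the $G$-global flat level model structure. As in that proof, we may reduce to the pointed case and to $G=1$ by passing to the shift $\ul{G\text-\mathscr C}$ (using Lemma~\ref{lemma:level-model-structures-functoriality}).

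The flat level model structure on $\Sp(\ul{\mathscr C})$ is left proper, combinatorial, and simplicial by Proposition~\ref{prop:level-model-structures}, so the localization machinery of \cite[Proposition~A.3.7.3]{htt} applies and it is enough to exhibit a set $S$ of cofibrations whose local fibrant objects are precisely the flat level fibrant global $\Omega$-spectra. I would take exactly the set $S$ from $(\ref{eq:defining-weak-equivalences})$ used for the projective localization: these maps are flat level cofibrations, because every projective level cofibration is a flat level cofibration (as both level model structures on $\Sp(\ul{\mathscr C})$ fit into a preglobal model category by Lemma~\ref{lemma:level-model-structures-functoriality}).

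For the identification of the fibrant objects, I would repeat the adjointness argument verbatim. A fibrant object $X$ in the flat level model structure is in particular projectively level fibrant, so by Lemma~\ref{lemma:tensoring-g-global-level} the map $F(\kappa_{H,A,B},\triv_H X)$ is still a fibration in $H\text-\mathscr C_{\text{proj}}$; the same computation via pushout products and Ken Brown's Lemma then shows that $\maps(s,X)$ is an acyclic Kan fibration for every $s\in S$ if and only if the adjoint structure maps of $X$ are $H$-global weak equivalences for all $H$, $A$, $B$, i.e.~precisely when $X$ is a $G$-global $\Omega$-spectrum.

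The only genuinely new point is the identification of the weak equivalences. Here I would use the abstract characterization that, for either Bousfield localization, a map $f$ is a weak equivalence exactly when $[f,X]$ is a bijection in the level homotopy category $\Ho\bigl(\Sp(\ul{\mathscr C})_{\text{level}}\bigr)$ for every $S$-local object $X$; since the projective and flat level model structures have the same weak equivalences (hence the same homotopy category) and the $S$-locality condition is stated intrinsically there, both localizations produce the same class of weak equivalences, namely the $G$-global weak equivalences from the projective model structure. The remaining properties (simplicial, combinatorial, left proper) are inherited from the flat level model structure by the general theory of Bousfield localizations. The most delicate step is really the weak-equivalence comparison, but once one frames it via the common level homotopy category it becomes essentially tautological.
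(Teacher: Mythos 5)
Your proposal is correct and follows essentially the same route as the paper: localize the flat level model structure at the same set $S$ from $(\ref{eq:defining-weak-equivalences})$ (using that projective level cofibrations are flat level cofibrations and that flat level fibrant objects are projectively level fibrant, so the fibrant-object identification carries over), and then identify the weak equivalences of both localizations via the characterization in terms of $[f,X]$ in the common localization at the level weak equivalences. No gaps to report.
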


\begin{warn}
It might be tempting to assume that the generating acyclic cofibrations of the above $G$-global model structures are given by adding the set $S_G$ from $(\ref{eq:localizing-set-G-gl})$ to the generating acyclic cofibrations of the level model structure, analogously to the equivariant situation (Remark~\ref{rk:equivariant-proj-gen-cof}). We warn the reader however that while all of these are clearly acyclic cofibrations, it is not clear whether they generate, and more generally the localization machinery employed above does \emph{not} provide any control about the generating acyclic cofibrations. In fact, the explicit identification of the generating acyclic cofibrations in the equivariant case referred to above crucially relies on \emph{right} properness of the level model structure, which is not assumed in our setting.
\end{warn}

Despite these words of warning, the following result will often allow us to \emph{pretend} that the generating acyclic cofibrations are of the above form:

\begin{prop}\label{prop:proj-level-to-global}
Let $G$ be a finite group and let $\mathscr D$ be a left proper simplicial model category with a simplicial Quillen adjunction $F\colon G\text-\Sp(\ul{\mathscr C})_\textup{proj.~level}\rightleftarrows\mathscr D :\!U$. Then the following are equivalent:
\begin{enumerate}
\item $F$ is left Quillen as a functor $G\text-\Sp(\ul{\mathscr C})_\textup{proj}\to\mathscr D$.
\item $F$ sends the maps in $S_G$ to weak equivalences.
\item $U$ sends fibrant objects to $G$-global $\Omega$-spectra.
\end{enumerate}
The analogous statement for the flat $G$-global (level) model structure also holds.
\end{prop}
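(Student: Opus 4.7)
The plan is to prove the cyclic chain $(1)\Rightarrow(2)\Rightarrow(3)\Rightarrow(1)$, following the standard recognition template for when a Quillen adjunction descends through a left Bousfield localization. The flat variant will go through by the exact same argument applied to the flat level structure and its localization, so I focus on the projective case.

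For $(1)\Rightarrow(2)$, observe that by the very construction in the proof of Proposition~\ref{prop:projective-model-structure-G-gl} every element of $S_G$ is a projective level cofibration that becomes a weak equivalence after localization, hence an acyclic cofibration in the localized $G$-global projective model structure. Any left Quillen functor for this localized structure therefore sends such maps to acyclic cofibrations in $\mathscr D$, in particular to weak equivalences. For $(2)\Rightarrow(3)$, fix a fibrant $Y\in\mathscr D$; then $U(Y)$ is projectively level fibrant since $F\dashv U$ is Quillen for the level structure. The characterization established in the proof of Proposition~\ref{prop:projective-model-structure-G-gl} says that a level fibrant object is a $G$-global $\Omega$-spectrum precisely when $\maps(f,\blank)$ takes it to an acyclic Kan fibration for every $f\in S_G$. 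The simplicial adjunction identifies $\maps(f,U(Y))$ with $\maps(F(f),Y)$, and by $(2)$ the map $F(f)$ is a cofibration in $\mathscr D$ (being the image of a level cofibration under a left Quillen functor for the level structure) that is a weak equivalence, i.e.~an acyclic cofibration; SM7 applied to the fibration $Y\to*$ then yields the desired acyclic Kan fibration.

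The main obstacle will be the final implication $(3)\Rightarrow(1)$. Since the localized and level model structures share the same cofibrations, $F$ automatically preserves cofibrations, and it only remains to show that it preserves acyclic cofibrations in the localized structure. This is precisely the content of the universal property of left Bousfield localizations (see e.g.~\cite[Theorem~3.3.20]{hirschhorn}, with the combinatorial variant in \cite[\S A.3.7]{htt}): a simplicial left Quillen functor out of $G\text-\Sp(\ul{\mathscr C})_\textup{proj.~level}$ descends to the localization if and only if its right adjoint sends fibrant objects of the target to $S_G$-local objects, i.e.~to fibrant objects of the localized structure. The hypothesis $(3)$, combined with the automatic projective level fibrancy of $U(Y)$ for fibrant $Y$, provides exactly this condition, closing the cycle.
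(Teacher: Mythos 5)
Your proposal is correct and follows essentially the same route as the paper: the implications $(1)\Rightarrow(2)$ and $(2)\Rightarrow(3)$ are argued exactly as in the text (acyclicity of the maps in $S_G$, respectively enriched adjointness $\maps(f,UY)\cong\maps(Ff,Y)$ together with the characterization of fibrant objects from the proof of Proposition~\ref{prop:projective-model-structure-G-gl}). Your closing step $(3)\Rightarrow(1)$ via the descent criterion for left Bousfield localizations is just a repackaging of the paper's argument, which deduces $(1)\Leftrightarrow(3)$ from Lemma~\ref{lemma:check-QA-fibrant} (i.e.~\cite[Corollary~A.3.7.2]{htt}), using that the localized and level structures share their cofibrations and that $U$ of a fibrant object is automatically level fibrant.
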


The proof will in turn rely on the following general result:

\begin{lemma}\label{lemma:check-QA-fibrant}
Let $F\colon\mathscr C\rightleftarrows\mathscr D :\!U$ be a simplicial adjunction of left proper simplicial model categories. Then $F\dashv U$ is a Quillen adjunction if and only if $F$ preserves cofibrations and $U$ preserves fibrant objects.
\begin{proof}
See \cite[Corollary~A.3.7.2]{htt}.
\end{proof}
\end{lemma}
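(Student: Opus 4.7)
The direction $(\Rightarrow)$ is immediate: any right Quillen functor preserves fibrations, hence in particular fibrant objects.

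For the converse, my plan is to first handle the case of an acyclic cofibration $i\colon A\to B$ between \emph{cofibrant} objects of $\mathscr C$, and then to extend to arbitrary acyclic cofibrations via cofibrant generation. For the key case, I would fix any fibrant $Z\in\mathscr D$; by hypothesis $U(Z)$ is fibrant in $\mathscr C$, so the pushout-product axiom in the simplicial model category $\mathscr C$ forces
\[ \maps_{\mathscr C}(B,U(Z))\longrightarrow\maps_{\mathscr C}(A,U(Z)) \]
to be an acyclic Kan fibration of simplicial sets. Via the simplicial adjunction this identifies with
\[ \maps_{\mathscr D}(F(B),Z)\longrightarrow\maps_{\mathscr D}(F(A),Z), \]
which is therefore also an acyclic Kan fibration. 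Since $F$ preserves cofibrations and $A,B$ are cofibrant, $F(A)$ and $F(B)$ are cofibrant in $\mathscr D$ and $F(i)$ is a cofibration between them; the standard mapping-space detection of weak equivalences between cofibrant objects of a simplicial model category then forces $F(i)$ to be a weak equivalence, hence an acyclic cofibration.

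For the extension step, my plan is to invoke combinatoriality (as implicit in the HTT reference) to choose a set $J$ of generating acyclic cofibrations with \emph{cofibrant} domains; since each such $j\in J$ is itself a cofibration, the codomain is cofibrant as well. The key case then shows $F(j)$ is an acyclic cofibration for every $j\in J$. As every acyclic cofibration in $\mathscr C$ is a retract of a transfinite composition of pushouts of maps in $J$, and $F$ preserves these constructions (being a left adjoint) while the class of acyclic cofibrations in $\mathscr D$ is closed under them, $F$ sends all acyclic cofibrations to acyclic cofibrations.

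The main obstacle is the extension step: the claim that generating acyclic cofibrations can be chosen with cofibrant domain is the essential combinatorial ingredient that promotes the cofibrant-case argument to the statement for all acyclic cofibrations. Left properness does not play a direct role in the plan above, though it features in the Bousfield-localization-style arguments underlying \cite[Corollary~A.3.7.2]{htt} from which the result is cited.
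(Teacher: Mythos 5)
Your forward direction and your key case are correct: for an acyclic cofibration $i\colon A\to B$ with $A$ (hence $B$, and then $F(A),F(B)$) cofibrant, SM7 together with the simplicial adjunction shows that $\maps_{\mathscr D}(F(B),Z)\to\maps_{\mathscr D}(F(A),Z)$ is an acyclic Kan fibration for every fibrant $Z$, and the standard detection principle for maps between cofibrant objects then makes $F(i)$ acyclic; note that left properness is nowhere used in this step. The genuine gap is the extension step. The lemma as stated does not assume cofibrant generation at all, and even granting combinatoriality (which \cite{htt} does assume), the existence of generating \emph{acyclic} cofibrations with cofibrant domains is not automatic: this is precisely Barwick's \emph{tractability}, and \cite[Corollary~2.7]{barwick-tractable} -- which this paper invokes for exactly this purpose in Remark~\ref{rk:gen-acyclic-pos-flat} -- only produces such a set under the additional hypothesis that the generating \emph{cofibrations} already have cofibrant sources; it is not known that every combinatorial model category satisfies this. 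Moreover, in the situations where the lemma is actually applied in this paper (e.g.\ Proposition~\ref{prop:proj-level-to-global}, whose source is the Bousfield localized model structure on $G\text{-}\Sp(\ul{\mathscr C})$), the Warning in Subsection~\ref{subsec:spectrification} stresses that one has no control over the generating acyclic cofibrations, and for a general global model category $\ul{\mathscr C}$ there is also no reason for the generating cofibrations $\bm\Sigma(A,\blank)\smashp_{\Sigma_A}i$ to have cofibrant sources. So your argument proves a correct statement about tractable model categories, but not the lemma as stated, and not one usable in its intended applications here.

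The reduction to cofibrant domains is also not repaired by a routine replacement argument, which is where left properness would have to enter: taking a cofibrant replacement $A'\to A$, factoring $A'\to B$ as an acyclic cofibration $A'\to B'$ between cofibrant objects followed by an acyclic fibration, and forming $P=B'\amalg_{A'}A$, left properness does give that $A\to P$ is an acyclic cofibration and your key case gives that $F(A)\to F(P)$ is one too; but one is then stuck comparing $F(P)$ with $F(B)$ along the weak equivalence $P\to B$, which is not an acyclic cofibration and hence not yet known to be preserved by $F$. This is why the paper simply cites \cite[Corollary~A.3.7.2]{htt}, where left properness (together with combinatoriality) is used to show directly that a cofibration inducing equivalences on mapping spaces into all fibrant objects is acyclic, with no cofibrancy hypotheses on its source; your own observation that left properness plays no role in your plan is the symptom that the stated hypotheses and your strategy do not match.
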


\begin{proof}[Proof of Proposition~\ref{prop:proj-level-to-global}]
The equivalence $(1)\Leftrightarrow(3)$ is an instance of the above lemma, while for $(1)\Rightarrow(2)$ it suffices to observe that the maps in $S_G$ are acyclic cofibrations. Thus, it only remains to prove $(2)\Rightarrow(3)$.

For this, let $X$ be fibrant and let $f\in S_G$. Then $\maps(f,UX)$ agrees by enriched adjointness up to conjugation with $\maps(Ff,X)$, so it is an acyclic fibration as $Ff$ is an acyclic cofibration and $X$ was assumed to be fibrant. But $UX$ is level fibrant, so letting $f$ vary this implies by the proof of Proposition~\ref{prop:projective-model-structure-G-gl} that $UX$ is a $G$-global $\Omega$-spectrum as desired.
\end{proof}

\begin{thm}\label{thm:sp-global-model-cat}
Let $\ul{\mathscr C}$ be a global model category. Then the projective and flat $G$-global model structures on $G\text{-}\Sp(\underline{\mathscr C})$ assemble into a global model category $\GlSp(\underline{\mathscr C})$.
\begin{proof}
Again, we may assume that $\ul{\mathscr C}$ is pointed. We have already seen that all of these model structures are left proper and combinatorial. Moreover, the projective and the flat $G$-global model structure have the same weak equivalences (namely, the $G$-global weak equivalences), and every projective cofibration is also a flat one by Lemma~\ref{lemma:level-model-structures-functoriality}.

For the functoriality properties, let $\alpha\colon G\to G'$ be any homomorphism. Then $\alpha_!\dashv\alpha^*$ is a Quillen adjunction for the projective \emph{level} model structures, hence in particular with respect to the projective \emph{level} model structure on $G\text-\Sp(\ul{\mathscr C})$ and the actual projective model structure on $G'\text-\Sp(\ul{\mathscr C})$. By Proposition~\ref{prop:proj-level-to-global} it will therefore suffice to prove that $\alpha^*$ sends any fibrant $X\in G'\text-\Sp(\underline{\mathscr C})_\text{proj}$ to a $G$-global $\Omega$-spectrum. Indeed, as we already know that $\alpha^*X$ is projectively level fibrant, this amount to saying that the adjoint structure map
\begin{equation*}
(\alpha^*X)(A)\to\Omega^B(\alpha^*X)(A\amalg B)
\end{equation*}
is a $(G\times H)$-global weak equivalence for all finite $H$-sets $A,B$. However, this map agrees with the restriction of the adjoint structure map along $\alpha\times\id\colon G\times H\to G'\times H$; as $(\alpha\times \id)^*$ is homotopical, the claim follows.

Next, we will show that $\alpha^*\dashv\alpha_*$ is a Quillen adjunction for the flat model structures; arguing as above, this amounts to saying that if $X\in G\text-\Sp(\underline{\mathscr C})_\text{flat}$ is fibrant, then $\alpha_*(X)(A)\to\Omega^B(\alpha_*X)(A\amalg B)$ is a $(G'\times H)$-global weak equivalence for all finite $H$-sets $A,B$. This is where we will need that $\underline{\mathscr C}$ is a \emph{global} (and not just a preglobal) model category: namely, we pick functorial fibrant replacements in $(G\times H)\text-\mathscr C_\text{flat}$ to get a commutative diagram
\begin{equation*}
\begin{tikzcd}
X(A)\arrow[d, "\sim"'] \arrow[r, "\sim"] & \Omega^B X(A\amalg B)\arrow[d, "\sim"]\\
Y_1\arrow[r] & Y_2\rlap.
\end{tikzcd}
\end{equation*}
Here the top arrow is a $(G\times H)$-global weak equivalence by assumption on $X$, while the vertical arrows are so by construction; thus, also the lower horizontal arrow is a weak equivalence by $2$-out-of-$3$. We want to show that applying $(\alpha\times\id)_*$ sends the top arrow to a $(G'\times H)$-global weak equivalence, for which it is then enough to show by another application of $2$-out-of-$3$ that it sends all the remaining arrows to $(G'\times H)$-global weak equivalences. For the lower horizontal arrow this is simply an instance of Ken Brown's Lemma. We will now show that also the left hand vertical arrow is sent to a $(G'\times H)$-global weak equivalence; the argument for the right hand vertical arrow will then be analogous.

For this, we observe that $X(A)$ is fibrant in $(G\times\Sigma_A)\text-\mathscr C_\text{flat}$ by definition of the $G$-global flat model structure. Thus, if $\rho\colon H\to\Sigma_A$ classifies the $H$-action on $A$ and we define $g\mathrel{:=}(\id\times\rho)\colon G\times H\to G\times\Sigma_A, q\mathrel{:=}(\alpha\times \id)\colon G\times H\to G'\times H$, then we precisely want to show that $q_*$ sends the fibrant replacement $g^*X(A)\to Y_1$ in $(G\times H)\text-{\mathscr C}_\text{flat}$ to a weak equivalence. However, as $\ker g=1\times(\ker\rho)$ has trivial intersection with $(\ker\alpha)\times1=\ker q$, this is simply an instance of what it means for a preglobal model category to be global.

As $\alpha^*$ is left Quillen for the flat global model structures, it in particular sends acyclic cofibrations to $G$-global weak equivalences. However, any acyclic \emph{fibration} in the $G'$-global flat model structure is in particular a $G'$-global level weak equivalence, and as $\underline\Sp(\underline{\mathscr C})_\text{level}$ is a preglobal model category, it follows that $\alpha^*$ sends these to $G$-global (level) weak equivalences. Thus, $\alpha^*$ is actually homotopical.

Now assume that $\alpha$ is injective. Then $\alpha^*$ is left Quillen for the projective model structures as it preserves projective (level) cofibrations by Lemma~\ref{lemma:level-model-structures-functoriality} and is homotopical by the above. Moreover, $\alpha_!\dashv\alpha^*$ is a Quillen adjunction for the flat level model structures and $\alpha^*$ sends fibrant objects to $G$-global $\Omega$-spectra by the above, so $\alpha_!\dashv\alpha^*$ is also a Quillen adjunction for the flat model structures.

Finally, let $g\colon A\to C$ and $q\colon A\to B$ be homomorphisms with $\ker(g)\cap\ker(q)=1$, let $X$ be fibrant in $C\text-\Sp(\underline{\mathscr C})_\text{flat}$, and let $\iota\colon g^*X\to Y$ be a fibrant replacement in $A\text-\Sp(\underline{\mathscr C})_\text{flat}$; we have to show that $q_*\iota$ is a $B$-global weak equivalence. However, by the above $g^*X$ is an $A$-global $\Omega$-spectrum, so $\iota$ is actually an $A$-global \emph{level} weak equivalence. The claim therefore follows from $\underline{\Sp}(\underline{\mathscr C})_\textup{level}$ being a global model category.
\end{proof}
\end{thm}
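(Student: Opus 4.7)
The plan is to reduce immediately to the pointed case (the unpointed setting is defined via $\underline{\mathscr C}_*$), and then work through the axioms of a global model category in order. Left properness, combinatoriality, simpliciality, coincidence of weak equivalences in the projective and flat structures, and the inclusion of projective cofibrations in the flat ones all follow directly from the construction (Proposition~\ref{prop:level-model-structures} applied to shifts, together with the fact that our global model structures are Bousfield localizations of the corresponding level structures, and Lemma~\ref{lemma:level-model-structures-functoriality} for the level preglobal structure).

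The substantive work is the change-of-group calculus. For a homomorphism $\alpha\colon G\to G'$, the adjunctions $\alpha_!\dashv\alpha^*$ and (for injective $\alpha$) $\alpha^*\dashv\alpha_*$ are already Quillen for the level model structures by Lemma~\ref{lemma:level-model-structures-functoriality}; to promote this to the localized model structures I would invoke Proposition~\ref{prop:proj-level-to-global}, which reduces the task to checking that the right adjoints preserve the $\Omega$-spectrum condition. For $\alpha^*$ between the projective structures this is straightforward: the adjoint structure maps of $\alpha^*X$ are the restrictions along $\alpha\times\id$ of those of $X$, and $(\alpha\times\id)^*$ is homotopical by Proposition~\ref{prop:I-functoriality-general} (applied to $\underline{\mathscr C}$), so the property is transported. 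For $\alpha_*$ between the flat structures (and, dually, for $\alpha^*$ in the injective case) the argument is more delicate: one must compare $\alpha_*$ applied to the adjoint structure map of $X$ with a fibrant replacement, and this is precisely the point where the global (rather than merely preglobal) hypothesis on $\underline{\mathscr C}$ is used, via Condition~(3) of Proposition~\ref{prop:global-tfae} applied levelwise to $X(A\amalg B)$. Once the Quillen adjunction $\alpha^*\dashv\alpha_*$ is in place, homotopicality of $\alpha^*$ on the entire flat global model structure follows from Ken Brown's lemma applied separately to acyclic cofibrations and acyclic fibrations (the latter being level weak equivalences, hence treated by the preglobal level structure).

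It remains to verify the global axiom itself, for which I would use formulation~(\ref{item:gtfae-cofree-fixed-points}) of Proposition~\ref{prop:global-tfae}. Given $g\colon A\to C$, $q\colon A\to B$ with $\ker g\cap\ker q=1$ and a flat-fibrant $X$ in $C\text-\Sp(\underline{\mathscr C})$, the key observation is that by the preceding paragraph $g^*X$ is already an $A$-global $\Omega$-spectrum, so any fibrant replacement $g^*X\to Y$ is automatically an $A$-global \emph{level} weak equivalence. The problem therefore reduces to showing that $q_*$ sends such a map to a $B$-global level weak equivalence, which is exactly the content of $\underline{\Sp}(\underline{\mathscr C})_{\text{level}}$ being a global model category (Lemma~\ref{lemma:level-model-structures-functoriality}). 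The hardest point in the whole argument is the $\alpha_*$ step in paragraph two: it is the single place where the global (not just preglobal) hypothesis on $\underline{\mathscr C}$ is genuinely needed, and the payoff is that this hypothesis propagates cleanly through the spectrification construction.
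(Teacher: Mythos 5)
Your proposal follows essentially the same route as the paper's proof: reduce to the pointed case and the level preglobal structure, use Proposition~\ref{prop:proj-level-to-global} to reduce the change-of-group Quillen adjunctions to the right adjoints preserving the $\Omega$-spectrum condition, invoke condition~(3) of Proposition~\ref{prop:global-tfae} for $\ul{\mathscr C}$ levelwise in the $\alpha_*$ step, deduce homotopicality of $\alpha^*$ from the two classes of acyclic maps, and verify globality of $\ul{\Sp}(\ul{\mathscr C})$ by noting that $g^*X$ is already an $\Omega$-spectrum, so the fibrant replacement is a level weak equivalence and one can fall back on $\ul{\Sp}(\ul{\mathscr C})_\textup{level}$. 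Two harmless slips: homotopicality of $(\alpha\times\id)^*$ is just the preglobal axiom for $\ul{\mathscr C}$ (Proposition~\ref{prop:I-functoriality-general} concerns global spaces only), and for injective $\alpha$ the right adjoint $\alpha^*$ of $\alpha_!$ preserves $\Omega$-spectra by the same easy restriction argument as in the projective case, so the global (rather than preglobal) hypothesis is genuinely needed only for $\alpha_*$.
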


The above spectrification construction is compatible with global Quillen adjunctions and global Quillen equivalences:

\begin{prop}\label{prop:sp-induced-adjunction}
Let $\underline{F}\colon\underline{\mathscr C}\rightleftarrows\underline{\mathscr D}:\!\underline{U}$ be a global Quillen adjunction of global model categories. Then $\GlSp(\underline{F})\colon\GlSp(\underline{\mathscr C})\rightleftarrows\GlSp(\underline{\mathscr D}):\!\GlSp(\underline{U})$ is a global Quillen adjunction. If $\underline{F}\dashv\underline{U}$ is a global Quillen equivalence, then so is $\GlSp(\underline{F})\dashv\GlSp(\underline{U})$.
\begin{proof}
By Lemma~\ref{lemma:pointed-equivalence} we may assume that $\ul{\mathscr C}$ and $\ul{\mathscr D}$ are pointed. Moreover, it suffices as usual to show that $\Sp(\ul{F})\colon\Sp(\ul{\mathscr C})\rightleftarrows\Sp(\ul{\mathscr D}):\!\Sp(\ul{U})$ is a Quillen adjunction in the usual sense for the projective and flat model structures, and a Quillen equivalence (say, for the flat ones) if $\ul{F}\dashv\ul{U}$ is a global Quillen equivalence.

For the first statement, we observe that this holds for the level model structures by Lemma~\ref{lemma:sp-level-induced-adjunction}, so that it suffices that the right adjoint sends projectively fibrant objects to global $\Omega$-spectra, which is a direct consequence of Ken Brown's Lemma.

For the second statement, we let $Y\in\Sp(\ul{\mathscr D})_\text{flat}$ fibrant and $X\to GY$ a cofibrant replacement in the global flat level model structure. Then $FX\to FUY\to Y$ represents the derived counit $\cat{L}F\cat{R}UY\to Y$ for the global flat model structure, but also for the global flat \emph{level} model structure; thus, it is a global weak equivalence by Lemma~\ref{lemma:sp-level-induced-adjunction}.

The proof that also the derived unit is an isomorphism is more involved. The crucial observation for this is the following:

\begin{claim*}
Let $W\in\Sp(\ul{\mathscr C})$ be a flat global $\Omega$-spectrum. Then $\Sp(\ul F)(W)$ is again a global $\Omega$-spectrum.
\begin{proof}
Let $H$ be a finite group. As $F\dashv U$ is an $\cat{SSet}_*$-enriched adjunction, there are natural comparison isomorphisms $K\smashp F(X)\to F(K\smashp X)$ for all $K\in\cat{$\bm H$-SSet}_*, X\in H\text-\mathscr C$. Specializing to $K=S^B$ for some finite $H$-set $B$ and using that all functors in sight are left Quillen for the flat model structures, these derive to isomorphisms $S^B\smashp^{\cat L}\cat{L}F(X)\to \cat{L}F(S^B\smashp^{\cat L}X)$ in the homotopy category. Passing to canonical mates gives us a natural comparison map $\cat{L}F\cat{R}\Omega^B X\to \cat{R}\Omega^B\cat{L}F(X)$ which is again an isomorphism as $\cat{L}F$ is assumed to be an equivalence. Plugging in the definitions, this comparison map is represented for an $X$ that is cofibrant in the \emph{flat} model structure and fibrant in the \emph{projective} one by
\begin{equation}\label{eq:comparison-Omega-LF}
F(Y)\xrightarrow{F(\pi)} F(\Omega^BX) \to \Omega^B F(X) \xrightarrow{\Omega^B\iota} \Omega^B Z
\end{equation}
where $\pi\colon Y\to\Omega^BX$ is a cofibrant replacement in the flat model structure, the unlabelled arrow is the comparison map coming from the enrichment, and $\iota\colon F(X)\to Z$ is a fibrant replacement (say, in the flat model structure); in particular, the composite $(\ref{eq:comparison-Omega-LF})$ is an $H$-global weak equivalence.

With this at hand, we can now prove the claim. As $\Sp(\ul{F})$ is left Quillen with respect to the flat level model structures, we may assume without loss of generality that $W$ is also fibrant in the flat level model structure. Let now $H$ be a finite group, and let $A,B$ be finite $H$-sets; we have to show that the composite
\begin{equation}\label{eq:derived-adjunct-struc-map-LF}
F(W(A))\xrightarrow{\tilde\sigma}\Omega^B F(W(A\amalg B))\xrightarrow{\Omega^B\iota} \Omega^B Z
\end{equation}
is an $H$-global weak equivalence, where the first map is the ordinary adjoint structure map and $\iota$ is a fibrant replacement $F(W(A\amalg B))\to Z$ in $H\text-\mathscr C_\text{flat}$. For this we pick a factorization
\begin{equation*}
\begin{tikzcd}[cramped]
W(A)\arrow[r, tail, "\kappa"] & Y\arrow[r, "\pi", "\sim"'] & \Omega^B W(A\amalg B)
\end{tikzcd}
\end{equation*}
in $H\text-\mathscr C_\text{flat}$ of $\tilde\sigma$ into a cofibration $\kappa$ followed by a weak equivalence $\pi$; note that $\kappa$ is actually an acyclic cofibration by $2$-out-of-$3$ and that $Y$ is cofibrant as $W(A)$ is, so that $\pi$ is a cofibrant replacement. Then we have a commutative diagram
\begin{equation*}
\begin{tikzcd}
&[1.5em] \Omega^B F(W(A\amalg B))\arrow[r, "\Omega^B\iota"] & \Omega^B Z\\
F(W(A))\arrow[r, "F(\tilde\sigma)"{description}]\arrow[dr, "F(\kappa)"', bend right=17pt]\arrow[ur, "\tilde\sigma", bend left=15pt] & F(\Omega^B W(A\amalg B))\arrow[u] & \\
& F(Y)\arrow[u, "F(\pi)"']
\end{tikzcd}
\end{equation*}
in which $F(\kappa)$ is a weak equivalence as $F$ is left Quillen, while the composite $F(Y)\to\Omega^BZ$ agrees with $(\ref{eq:comparison-Omega-LF})$ for $X\mathrel{:=}W(A\amalg B)\in H\text-\mathscr C$, so it is a weak equivalence by the above. Thus, also the top composite $(\ref{eq:derived-adjunct-struc-map-LF})$ is a weak equivalence by $2$-out-of-$3$, finishing the proof of the claim.
\end{proof}
\end{claim*}

It suffices now to show that the derived unit $X\to\cat{R}U\cat{L}FX$ is an isomorphism whenever $X$ is a flat \emph{global $\Omega$-spectrum}. But indeed, this is represented by the composite $X\to UFX\to UY$ where $FX\to Y$ is a fibrant replacement in the global projective model structure. By the claim, $FX$ is a global $\Omega$-spectrum, so this is actually a level fibrant replacement; thus, this composite also represents the derived unit for the corresponding level model structures and the claim follows again from Lemma~\ref{lemma:sp-level-induced-adjunction}.
\end{proof}
\end{prop}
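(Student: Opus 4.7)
After the usual reductions---making both categories pointed via Lemma~\ref{lemma:pointed-equivalence} and passing to shifts $\ul{G\text-\mathscr C}$, $\ul{G\text-\mathscr D}$ for each finite $G$---the task becomes to show that $\Sp(F)\dashv\Sp(U)$ is a Quillen adjunction (respectively equivalence) for both the projective and flat global model structures on $\Sp(\ul{\mathscr C})$ and $\Sp(\ul{\mathscr D})$. Since Lemma~\ref{lemma:sp-level-induced-adjunction} already provides the analogous statement for the level model structures, the work lies in descending it to the Bousfield localizations.

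For the Quillen adjunction assertion I would apply Proposition~\ref{prop:proj-level-to-global}: it suffices to verify that $\Sp(U)$ sends projectively (respectively flatly) fibrant objects to global $\Omega$-spectra. For a fibrant $Y$, the adjoint structure map of $\Sp(U)(Y)$ is obtained by applying $U$ (which commutes with cotensoring by $S^B$, being $\cat{SSet}_*$-enriched) to the adjoint structure map of $Y$, which is a weak equivalence between fibrant objects in the relevant $H$-global model structure on $H\text-\mathscr D$; Ken Brown's lemma then concludes.

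For the Quillen equivalence assertion I would work throughout with the flat model structures. The derived counit is straightforward: a cofibrant replacement $X\to UY$ in the flat global \emph{level} model structure is simultaneously a cofibrant replacement in the genuine flat global model structure, so $FX\to FUY\to Y$ represents both the level and the genuine derived counits, and Lemma~\ref{lemma:sp-level-induced-adjunction} finishes. The main work then goes into the derived unit, and its crux is the following claim: \emph{if $X\in\Sp(\ul{\mathscr C})$ is a flat global $\Omega$-spectrum, then $\Sp(F)(X)$ is again a global $\Omega$-spectrum.} Granting this, a flat global level fibrant replacement $FX\to Z$ of a flat cofibrant global $\Omega$-spectrum $X$ is automatically a genuine flat global fibrant replacement, so the derived unit for the global model structure coincides with that for the level model structure and is a weak equivalence by Lemma~\ref{lemma:sp-level-induced-adjunction}.

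To prove the claim I would exploit the $\cat{SSet}_*$-enrichment of $F$: the natural isomorphism $F(S^B\smashp Z)\cong S^B\smashp F(Z)$, together with all participating functors being left Quillen for the flat model structures, derives to a natural isomorphism $\cat{L}F\circ\Sigma^B\cong\Sigma^B\circ\cat{L}F$. Passing to total mates under the adjunctions $\Sigma^B\dashv\Omega^B$ yields a comparison transformation $\cat{L}F\circ\cat{R}\Omega^B\to\cat{R}\Omega^B\circ\cat{L}F$, which is again an isomorphism because $\cat{L}F$ is assumed to be an equivalence; this identifies the derived adjoint structure map of $\Sp(F)(X)$ with the image under $\cat{L}F$ of the derived adjoint structure map of $X$, and the latter is a weak equivalence by hypothesis. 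The main obstacle I anticipate is constructing an explicit pointset representative of this comparison and checking that it agrees up to conjugation by natural isomorphisms with the actual adjoint structure map of $\Sp(F)(X)$. I expect to pass through representatives that are simultaneously flatly cofibrant and projectively fibrant, using Proposition~\ref{prop:flatness-theorem} (flatness of $X$ ensures that $S^B\smashp\blank$ preserves the relevant weak equivalences) and Ken Brown's lemma to control the resulting diagram.
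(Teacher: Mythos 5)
Your proposal follows essentially the same route as the paper's proof: reduce to the level statement of Lemma~\ref{lemma:sp-level-induced-adjunction}, verify via the enrichment and Ken Brown's Lemma that the right adjoint sends fibrant objects to global $\Omega$-spectra, treat the derived counit through a level cofibrant replacement, and reduce the derived unit to the claim that $\Sp(\ul F)$ preserves flat global $\Omega$-spectra, proved by deriving the tensor comparison isomorphism and passing to mates using that $\cat{L}F$ is an equivalence. The pointset identification you flag as the remaining obstacle is precisely what the paper carries out, representing the mate on a flatly cofibrant, projectively fibrant object and comparing it with the actual adjoint structure map via a cofibration--weak equivalence factorization and $2$-out-of-$3$.
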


Finally, let us show that the above construction actually fulfills its purpose:

\begin{thm}\label{thm:sp-is-stable}
Let $\ul{\mathscr C}$ be a global model category. Then $\GlSp(\underline{\mathscr C})$ is stable.
\end{thm}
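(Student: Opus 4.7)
The goal is to show that for every finite group $G$ and every finite $G$-set $A$, the Quillen adjunction $S^A\smashp\blank\colon G\text-\Sp(\ul{\mathscr C})\rightleftarrows G\text-\Sp(\ul{\mathscr C}):\!\Omega^A$ is a Quillen equivalence. The plan is to reduce to the argument of Proposition~\ref{prop:GlobalSpectra-stable}: I will exhibit $G\text-\Sp(\ul{\mathscr C})$ as tensored as a model category over $\cat{$\bm G$-Spectra}$ via a natural external smash product, and then invoke the equivariant stability of symmetric spectra to produce a derived inverse of $\Sigma^\infty S^A$, and hence of $S^A\smashp\blank$, on the homotopy category.

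Concretely, I would define the external smash product via Day convolution with respect to the symmetric monoidal structure $(\bm\Sigma,\amalg)$; for $K\in\cat{$\bm G$-Spectra}$ and $X\in G\text-\Sp(\ul{\mathscr C})$, this takes the form
\[ (K\smashp X)(C)=\int^{A,B\in\bm\Sigma} K(A)\smashp\bm\Sigma(A\amalg B,C)\smashp X(B), \]
with $G$ acting diagonally. A direct unwinding of this coend (using $\bm\Sigma(\emptyset,\blank)\cong\mathbb S$ and the fact that $\mathbb S$ is the Day convolution unit) identifies $\Sigma^\infty Y\smashp X$, for $Y\in\cat{$\bm G$-SSet}_*$, with the levelwise tensoring of $X$ by $Y$; in particular $\Sigma^\infty S^A\smashp\blank\cong S^A\smashp\blank$ as endofunctors of $G\text-\Sp(\ul{\mathscr C})$.

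The key technical step is then to upgrade this to a left Quillen bifunctor
\[ \cat{$\bm G$-Spectra}_\textup{$G$-eq.~proj}\times G\text-\Sp(\ul{\mathscr C})_\textup{proj}\longrightarrow G\text-\Sp(\ul{\mathscr C})_\textup{proj}. \]
I would first check this at the level of the projective \emph{level} model structures, adapting the pushout-product analysis of Lemma~\ref{lemma:tensoring-g-global-level}; the main observation here is that Day convolution of representables is again a representable (up to induction along inclusions of symmetric groups), which reduces the check on generators to the existing Quillen bifunctor structure of the tensoring over $\cat{$\bm G$-SSet}_*$. I would then promote the statement to the genuine stable model structures using Proposition~\ref{prop:proj-level-to-global}, which reduces the task to verifying that smashing a projectively cofibrant $G$-equivariant $\Omega$-spectrum with a $G$-global $\Omega$-spectrum in $\Sp(\ul{\mathscr C})$ again yields a $G$-global $\Omega$-spectrum.

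With this bifunctor in hand, the argument of Proposition~\ref{prop:GlobalSpectra-stable} applies essentially verbatim: $\Sigma^\infty S^A$ is cofibrant in $\cat{$\bm G$-Spectra}_\textup{$G$-eq.~proj}$ and, by \cite[Proposition~4.9]{hausmann-equivariant}, admits a derived inverse $D$ in the $G$-equivariant stable homotopy category represented by a projectively cofibrant spectrum connected to $\mathbb S$ by a zig-zag of weak equivalences between projectively cofibrant spectra after smashing with $\Sigma^\infty S^A$; then $D\smashp^{\cat L}\blank$ is a quasi-inverse of $S^A\smashp^{\cat L}\blank$ on $\Ho(G\text-\Sp(\ul{\mathscr C}))$. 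The main obstacle is the upgrade to a left Quillen bifunctor on the stable level, and specifically the verification that the $G$-global $\Omega$-spectrum condition is preserved under the external smash product; I expect this to follow from a careful analysis of the adjoint structure maps arising from the coend above, using Proposition~\ref{prop:flatness-theorem} to control the relevant derived smash products.
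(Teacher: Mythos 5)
Your overall strategy is the paper's: the Day-convolution pairing you write down is exactly the tensoring of $G\text-\Sp(\ul{\mathscr C})$ over $\cat{$\bm G$-Spectra}$ constructed before \Cref{prop:tensoring-omega}, the identification $\Sigma^\infty S^A\smashp\blank\cong S^A\smashp\blank$ is the same, and the endgame via Hausmann's equivariant invertibility of $S^A$ is precisely how \Cref{thm:sp-is-stable} is deduced from \Cref{prop:tensoring-omega} in the paper, following \Cref{prop:GlobalSpectra-stable}. So the only substantive content is your middle step, and there the proposal has a genuine gap.

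The problem is the promotion from level to genuine model structures. \Cref{prop:proj-level-to-global} only promotes left Quillen functors \emph{out of} $G\text-\Sp(\ul{\mathscr C})_\textup{proj.~level}$, i.e.~it handles the localization on the $\Sp(\ul{\mathscr C})$ variable; it says nothing about the other, and harder, direction, namely compatibility with the passage from the $G$-equivariant projective \emph{level} structure to the genuine $G$-equivariant stable structure on the $\cat{$\bm G$-Spectra}$ variable. That direction is indispensable even for your endgame (you need $\blank\smashp X$ to send the zig-zag of equivariant weak equivalences between projectively cofibrant spectra connecting $D\smashp\Sigma^\infty S^A$ to $\mathbb S$ to weak equivalences), and your stated reduction --- ``smashing a projectively cofibrant $G$-equivariant $\Omega$-spectrum with a $G$-global $\Omega$-spectrum yields a $G$-global $\Omega$-spectrum'' --- is not the correct criterion: it conflates cofibrant and fibrant objects and left and right adjoints. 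A correct fibrancy-type criterion (via \Cref{lemma:check-QA-fibrant}) would instead require showing that for genuinely fibrant $Y\in G\text-\Sp(\ul{\mathscr C})$ and cofibrant $X$ the mapping spectrum $A\mapsto\maps(X,\sh^AY)$ is a $G$-equivariant $\Omega$-spectrum, or, as the paper does, one checks the \emph{known} extra generating acyclic cofibrations $(G_+\smashp_H\kappa_{H,A,B})\ppo k$ of the genuine equivariant structure (\Cref{rk:equivariant-proj-gen-cof}) directly: their pushout product with a generating cofibration $i$ is identified with $\kappa_{H,A,B}\ppo_H\bigl(G_+\smashp(k\ppo i)\bigr)$, which is an acyclic cofibration precisely because the $G$-global projective structure on $\Sp(\ul{\mathscr C})$ was \emph{constructed} as the localization at the set $S_G$ of \eqref{eq:localizing-set-G-gl} built from the very same maps $\kappa_{H,A,B}$. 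Finally, your proposed tool for the verification, \Cref{prop:flatness-theorem}, is a statement about the smash product of symmetric spectra and has no analogue for an abstract global model category $\ul{\mathscr C}$, so it cannot ``control'' anything in $\Sp(\ul{\mathscr C})$; the inputs that actually make the argument run are the enrichment statement \Cref{cor:pointed-g-sset-enriched} applied to $\ul{\Sp}(\ul{\mathscr C})$, \Cref{lemma:shift-right-Quillen}, and the global model category axiom (to know $\triv_{\Sigma_A}$ is right Quillen), which is also where the hypothesis that $\ul{\mathscr C}$ is global --- and not merely preglobal --- enters.
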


For the proof of the theorem, we may as usual assume without loss of generality that $\ul{\mathscr C}$ is pointed. We now want to proceed in the same way as for $\glo{GlobalSpectra}$ (see Proposition~\ref{prop:GlobalSpectra-stable} above), so we first need to introduce a suitable smash product with $G$-equivariant symmetric spectra:

\begin{constr}
Let us write $\smashp$ for the essentially unique $\cat{SSet}_*$-enriched functor $\cat{Spectra}\times\Sp(\underline{\mathscr C})\to\Sp(\underline{\mathscr C})$
that preserves tensors and colimits in each variable and such that $\bm\Sigma(A,\blank)\smashp (\bm\Sigma(B,\blank)\smashp X)=\bm\Sigma(A\amalg B,\blank)\smashp X$ with the evident functoriality in $A,B\in\bm\Sigma$ and $X\in\mathscr C$. For any finite group $G$, we then obtain a pairing
\begin{equation}\label{eq:tensoring-g-global}
\blank\smashp\blank\colon\cat{$\bm G$-Spectra}\times G\text{-}\Sp(\underline{\mathscr C})\to G\text{-}\Sp(\underline{\mathscr C})
\end{equation}
by pulling through the $G$-actions.
\end{constr}

There is then a unique way to extend the evident isomorphisms
\begin{align*}
\bm\Sigma(\varnothing,\blank)\smashp (\bm\Sigma(B,\blank)\smashp X)&= \bm\Sigma(\varnothing\amalg B,\blank)\smashp X\cong\bm\Sigma(B,\blank)\smashp X
\intertext{and}
(\bm\Sigma(A,\blank)\smashp\bm\Sigma(B,\blank))\smashp(\bm\Sigma(C,\blank)\smashp X)&=\bm\Sigma((A\amalg B)\amalg C,\blank)\smashp X\\&\cong \bm\Sigma(A\amalg(B\amalg C),\blank)\smashp X\\&=\bm\Sigma(A,\blank)\smashp(\bm\Sigma(B,\blank)\smashp (\bm\Sigma(C,\blank)\smashp X))
\end{align*}
to make $G\text-\Sp(\ul{\mathscr C})$ tensored over $\cat{$\bm G$-Spectra}$. Since all adjoints exist by local presentability, $G\text-\Sp(\ul{\mathscr C})$ is then also enriched and cotensored over $\cat{$\bm G$-Spectra}$; the next proposition can therefore be reformulated as saying that $G\text-\Sp(\ul{\mathscr C})_\text{proj}$ is enriched in the model categorical sense over $\cat{$\bm G$-Spectra}_\text{$G$-equiv.~proj.}$.

\begin{prop}\label{prop:tensoring-omega}
The pairing $(\ref{eq:tensoring-g-global})$ is a left Quillen bifunctor with respect to the $G$-equivariant projective model structure on $\cat{$\bm G$-Spectra}$ and the $G$-global projective model structure on $G\text-\Sp(\ul{\mathscr C})$.
\end{prop}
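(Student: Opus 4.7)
The plan is to bootstrap from the level version. By Lemma~\ref{lemma:tensoring-g-global-level}, $\smashp$ is already a left Quillen bifunctor for the corresponding \emph{level} model structures on both factors, so pushout-products of cofibrations remain cofibrations after passing to the localized model structures (cofibrations being unchanged by Bousfield localization). It therefore only remains to verify that $i\ppo j$ is a $G$-global weak equivalence whenever $i$ is an acyclic cofibration in $\cat{$\bm G$-Spectra}_\textup{$G$-equiv.~proj.}$ or $j$ is an acyclic cofibration in $G\text-\Sp(\ul{\mathscr C})_\textup{proj}$. A standard argument---factoring $i\ppo j$ through the pushout $A\smashp Y_1\coprod_{A\smashp Y_0}B\smashp Y_0$ and combining $2$-out-of-$3$ with left properness---reduces this to the following two auxiliary claims:
\begin{enumerate}
\item[(a)] For every projectively cofibrant $X\in\cat{$\bm G$-Spectra}_\textup{$G$-equiv.~proj.}$, $X\smashp(\blank)$ is left Quillen as an endofunctor of $G\text-\Sp(\ul{\mathscr C})_\textup{proj}$.
\item[(b)] For every projectively cofibrant $Y\in G\text-\Sp(\ul{\mathscr C})_\textup{proj}$, $(\blank)\smashp Y$ is left Quillen from $\cat{$\bm G$-Spectra}_\textup{$G$-equiv.~proj.}$ to $G\text-\Sp(\ul{\mathscr C})_\textup{proj}$.
\end{enumerate}

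Both functors are known to be left Quillen for the level model structures on the source by Lemma~\ref{lemma:tensoring-g-global-level}. By Proposition~\ref{prop:proj-level-to-global} (in case (a)) and its evident analogue for the localization from the $G$-equivariant projective level to the $G$-equivariant projective model structure on $\cat{$\bm G$-Spectra}$ (in case (b), which follows from the same general left Bousfield localization criterion invoked in the proof of Proposition~\ref{prop:projective-model-structure-G-gl}), it suffices to check that the respective right adjoints carry fibrant objects of $G\text-\Sp(\ul{\mathscr C})_\textup{proj}$ to $G$-global $\Omega$-spectra in case (a), respectively to $G$-equivariant $\Omega$-spectra in case (b). Since right adjoints preserve limits and both classes of fibrant $\Omega$-spectra are closed under limits of fibrant objects, I can further reduce to the case where $X$ (resp.\ $Y$) is a generating projective cofibration.

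For (a), the generators of the $G$-equivariant projective model structure on $\cat{$\bm G$-Spectra}$ are of the form $\phi\smashp k_+$ with $\phi=(G_+\smashp\bm\Sigma(C,\blank))/H$ for $H\in\mathcal G_{G,\Sigma_C}$ a graph subgroup and $k$ a generating cofibration of $\cat{SSet}$. The simplicial factor is absorbed into the simplicial enrichment of the model structure, so I reduce to $X=\phi$. Using the defining identity $\bm\Sigma(C,\blank)\smashp(\bm\Sigma(D,\blank)\smashp V)\cong\bm\Sigma(C\amalg D,\blank)\smashp V$ and the enriched Yoneda lemma, the right adjoint $F(\phi,Z)$ can be identified with a suitably equivariant shift of $Z$ by $C$ whose value at $A\in\bm\Sigma$ is $Z(A\amalg C)$, carrying the appropriate residual $G\times H$-action; the $G$-global $\Omega$-spectrum condition for this shift is then an immediate consequence of the one for $Z$ by inserting $C$ into the domain of the adjoint structure maps.

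The main obstacle lies in case (b), where the two different notions of graph subgroup collide: the $G$-global $\Omega$-spectrum condition on $Z$ tests against all finite groups $H$ and all homomorphisms $H\to G$ (equivalently, graph subgroups in $\mathcal G_{\Sigma_A,G}$), whereas the $G$-equivariant condition I need to produce only involves subgroups $H\subset G$ (graph subgroups in $\mathcal G_{G,\Sigma_A}$). After reducing to a generating cofibration $Y=\bm\Sigma(D,\blank)\smashp_{\Sigma_D}i'$, I will unfold the right adjoint as a mapping spectrum whose level $A$ is an internal hom out of $i'$ into a $D$-shift of $Z$, and then verify the required $H$-equivariant weak equivalences of the adjoint structure maps by restricting along the inclusion $H\hookrightarrow G$, exploiting Proposition~\ref{prop:sp-functoriality-general} to see that restriction along this inclusion is homotopical, together with Theorem~\ref{thm:sp-global-model-cat}, which tells us that $\ul{\Sp}(\ul{\mathscr C})$ is a global model category and so the full change-of-groups calculus on the target is available.
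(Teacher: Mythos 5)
Your opening step is already on shaky ground: Lemma~\ref{lemma:tensoring-g-global-level} concerns the pairing $(\ref{eq:tensoring-g-spectra})$, whose second input is $G\text-\mathscr C$, not the pairing $(\ref{eq:tensoring-g-global})$, whose second input is $G\text-\Sp(\ul{\mathscr C})$. Since $(\ref{eq:tensoring-g-global})$ is a Day-convolution-type product rather than a levelwise one, the ``level bifunctor statement'' you start from is not a formal consequence of that lemma and has to be proven; the paper does this by adjointness, showing that $(X,Y)\mapsto\bigl(A\mapsto\maps(X,\sh^AY)\bigr)$ is a right Quillen bifunctor into the $G$-equivariant projective \emph{level} structure, using Lemma~\ref{lemma:shift-right-Quillen} and Corollary~\ref{cor:pointed-g-sset-enriched} applied to the global model category $\ul{\Sp}(\ul{\mathscr C})$. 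Moreover, that level statement is proved with the honest (localized) $G$-global projective structure in the second slot, which already disposes of the ``cofibration $\ppo$ acyclic cofibration'' case; so your claim (a) is not where the content lies.

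The genuine gap is your case (b), which you yourself flag as the main obstacle but only outline. First, the reduction ``to a generating cofibration $Y$'' does not go through as stated: the sources and targets of the generating cofibrations $\bm\Sigma(D,\blank)\smashp_{\Sigma_D}i'$ of $G\text-\Sp(\ul{\mathscr C})$ need not be cofibrant, and fibrant/local objects are closed only under suitable homotopy limits along fibrations, not under the arbitrary limits your cell induction would require. Second, the tools you name do not touch the actual difficulty: the $G$-$\Omega$-spectrum condition for $A\mapsto\maps(Y,\sh^AZ)$ asks for equivalences on fixed points of graph subgroups in $\mathcal G_{G,\Sigma_A}$, i.e.\ for $H\subset G$ acting simultaneously on $Z$, on $Y$, and on the shift coordinate; homotopicity of restriction along $H\hookrightarrow G$ (Proposition~\ref{prop:sp-functoriality-general}) and Theorem~\ref{thm:sp-global-model-cat} give no handle on these twisted-diagonal fixed points of mapping objects. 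What one needs is precisely the equivariant right-Quillen-bifunctor statement above together with the $\Omega$-condition of $Z$ for non-faithful actions---so your (b) is essentially a restatement of the proposition rather than a stepping stone to it. The paper circumvents (b) entirely: by Remark~\ref{rk:equivariant-proj-gen-cof} the equivariant projective stable structure has explicit generating acyclic cofibrations (the level ones together with the maps $(G_+\smashp_H\kappa_{H,A,B})\ppo k$; these are available because the equivariant level structure is right proper, a property not at our disposal for the localizations of $G\text-\Sp(\ul{\mathscr C})$), and the pushout product of such a map with a generating cofibration $i$ is identified with $\kappa_{H,A,B}\ppo_H\bigl(G_+\smashp(k\ppo i)\bigr)$, which is acyclic essentially by the construction of the $G$-global projective model structure from the localizing set $(\ref{eq:localizing-set-G-gl})$, once $G_+\smashp\blank$ is known to be left Quillen (Corollary~\ref{cor:pointed-g-sset-enriched}). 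You should either reproduce an argument of this kind or supply an actual proof of (b); as it stands the proposal is not a proof.
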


For the proof we will need:

\begin{lemma}\label{lemma:shift-right-Quillen}
Let $A$ be any finite $G$-set. Then the shift $\sh^A\colon G\text-\Sp(\underline{\mathscr C})\to G\text-\Sp(\underline{\mathscr C})$ is right Quillen for the $G$-global projective model structures.
\begin{proof}
As before, it suffices to show that $\sh^A$ is right Quillen for the corresponding level model structures and that it sends fibrant objects to $G$-global $\Omega$-spectra.

For the first statement, we simply note that $\ev_B\circ\sh^A\colon G\text{-}\Sp(\mathscr C)\to (G\times\Sigma_B)\text-\mathscr C$ factors as the composition
\begin{equation*}
G\text{-}\Sp(\underline{\mathscr C})_\text{proj.~lev.}\hskip0pt minus 2pt\xrightarrow{\hskip0pt minus 1pt\ev_{A\amalg B}}\hskip0pt minus 2pt(G\times\Sigma_{A\amalg B})\text{-}\mathscr C_\text{proj}\hskip0pt minus 2pt\xrightarrow{\textup{res}}\hskip0pt minus 2pt(G\times\Sigma_A\times\Sigma_B)\text{-}\mathscr C_\text{proj}\hskip0pt minus 2pt\xrightarrow{\phi^*\hskip0pt minus 3pt}\hskip0pt minus 2pt (G\times\Sigma_B)\text-\mathscr C_\text{proj}
\end{equation*}
where $\phi$ is defined via $\phi(g,\sigma)=(g, g.\blank,\sigma)$, and each of these is right Quillen by definition.

Finally, if $X$ is fibrant, then $\sh^A X$ is level fibrant by the above, hence $\sh^AX$ will be a $G$-global $\Omega$-spectrum if and only if $X(A\amalg B)\to\Omega^{C} X(A\amalg B\amalg C)$ is a $(G\times H)$-global weak equivalence for all finite $H$-sets $B,C$. This follows easily from Remark~\ref{rk:g-global-omega-g-h-sets} (letting $H$ act trivially on $A$ and $G$ act trivially on $B,C$).
\end{proof}
\end{lemma}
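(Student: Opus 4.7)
The plan is to verify the two conditions of Lemma~\ref{lemma:check-QA-fibrant} applied to the adjunction $F \dashv \sh^A$, where $F$ is the left adjoint guaranteed by local presentability together with the fact that $\sh^A$ preserves limits (being built from evaluation and restriction, both of which are limit-preserving). Since the cofibrations of the $G$-global projective model structure coincide with those of the $G$-global projective level model structure, it suffices to show: (a) $\sh^A$ is right Quillen for the $G$-global projective level model structures on source and target, so that $F$ preserves the common class of cofibrations, and (b) $\sh^A$ carries level fibrant $G$-global $\Omega$-spectra to level fibrant $G$-global $\Omega$-spectra, which is exactly the preservation of fibrant objects for the full projective model structure.

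For (a), since weak equivalences and fibrations in the $G$-global projective level model structure are defined levelwise, it is enough to check for each finite set $B$ that the composite $\ev_B \circ \sh^A\colon G\text-\Sp(\underline{\mathscr C}) \to (G\times\Sigma_B)\text-\mathscr C$ is right Quillen when the source carries the projective level model structure and the target carries the projective $(G\times\Sigma_B)$-global model structure. I would unwind $\sh^A$ to rewrite this composite as $X\mapsto X(A\amalg B)$ with a $(G\times\Sigma_B)$-action pulled back along a suitable homomorphism $\phi\colon G\times\Sigma_B\to G\times\Sigma_{A\amalg B}$ built from the canonical embedding $\Sigma_A\times\Sigma_B\hookrightarrow\Sigma_{A\amalg B}$ and the classifying homomorphism of the $G$-action on $A$. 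The composite then factors as $\ev_{A\amalg B}$, which is right Quillen by definition of the projective level model structure, followed by restriction along $\phi$, which is right Quillen for the projective model structures by the axioms of a preglobal model category.

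For (b), let $X$ be a level fibrant $G$-global $\Omega$-spectrum; level fibrancy of $\sh^A X$ is immediate from (a). For any finite group $H$ and finite $H$-sets $B,C$, the adjoint structure map $(\sh^A X)(B)\to\Omega^C(\sh^A X)(B\amalg C)$ agrees under the identification from (a) with the adjoint structure map $X(A\amalg B)\to\Omega^C X(A\amalg B\amalg C)$ of $X$, where $A\amalg B$ and $C$ are regarded as $(G\times H)$-sets with $G$ acting tautologically on $A$ and trivially on $B,C$, and $H$ acting trivially on $A$ and via the given actions on $B,C$. This map is a $(G\times H)$-global weak equivalence by the version of the $\Omega$-spectrum condition for $(G\times H)$-sets given in Remark~\ref{rk:g-global-omega-g-h-sets}.

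The main obstacle is really just the bookkeeping in (a): one must correctly identify the $(G\times\Sigma_B)$-action on $(\sh^A X)(B)=X(A\amalg B)$, which intertwines the permutation action on $A$ coming from $G$ with the $\Sigma_B$-action, and write it as the restriction of the standard $(G\times\Sigma_{A\amalg B})$-action along the correct $\phi$. Once this identification is in place, both (a) and (b) reduce mechanically to invoking the preglobal axioms and Remark~\ref{rk:g-global-omega-g-h-sets}, with no further homotopical input required.
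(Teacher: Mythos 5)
Your proposal is correct and follows essentially the same route as the paper: reduce via Lemma~\ref{lemma:check-QA-fibrant} (equivalently Proposition~\ref{prop:proj-level-to-global}) to showing that $\sh^A$ is right Quillen for the level model structures and preserves the $\Omega$-spectrum condition, prove the level statement by factoring $\ev_B\circ\sh^A$ through $\ev_{A\amalg B}$ and a restriction along the homomorphism twisting in the $G$-action on $A$, and conclude the fibrancy statement from Remark~\ref{rk:g-global-omega-g-h-sets}. The only cosmetic difference is that the paper splits your single restriction along $\phi\colon G\times\Sigma_B\to G\times\Sigma_{A\amalg B}$ into two consecutive restrictions through $(G\times\Sigma_A\times\Sigma_B)\text-\mathscr C$.
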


\begin{proof}[Proof of Proposition~\ref{prop:tensoring-omega}]
Let us first prove the analogous statement where we equip $\cat{$\bm G$-Spectra}$ with the $G$-equivariant projective \emph{level} model structure. Adjoining over, it will be enough here to show that
\begin{align*}
G\text{-}\Sp(\underline{\mathscr C})^\op\times G\text{-}\Sp(\underline{\mathscr C})&\to \cat{$\bm G$-Spectra}\\
X, Y&\mapsto \big(A\mapsto \maps(X, \sh^A Y)\big)
\end{align*}
(with the evident functoriality in $A,X,Y$) is a right Quillen bifunctor. As all structure in sight is defined levelwise, this amounts to saying that $(X,Y)\mapsto \maps(X,\sh^AY)$ is a right Quillen bifunctor to $\cat{$\bm{(G\times\Sigma_A)}$-SSet}_{*}$ with the $\mathcal G_{G,\Sigma_A}$-model structure. However, as $\underline{\Sp}(\underline{\mathscr C})$ is a global model category, $\text{triv}_{\Sigma_A}\colon G\text-\Sp(\underline{\mathscr C})\to(G\times\Sigma_A)\text-\Sp(\underline{\mathscr C})$ is right Quillen for the projective model structures, and so is the endofunctor $\sh^A$ of $(G\times\Sigma_A)\text-\Sp(\underline{\mathscr C})$ (where $G$ acts trivially on $A$ and $\Sigma_A$ acts tautologically) by the previous lemma. Thus, the claim follows from Corollary~\ref{cor:pointed-g-sset-enriched} applied to the global model category $\ul{\Sp}(\ul{\mathscr C})$.

For the original statement, all that remains now is to show that the pushout product of any standard generating acyclic cofibration $j$ of $\cat{$\bm G$-Spectra}_\text{$G$-equiv.~proj.}$ with any generating cofibration $i$ of $G\text{-}\Sp(\ul{\mathscr C})$ is again a $G$-global weak equivalence. However, if $j$ is even a level weak equivalence, this follows from the above, while the other generating acyclic cofibrations are precisely the maps $(G_+\smashp_H\kappa_{H,A,B})\ppo k$ for subgroups $H\subset G$, finite $H$-sets $A,B$, and (generating) cofibrations $k$ of $G\text-\Sp(\cat{SSet})_\text{$G$-equiv.~proj.}$, see Remark~\ref{rk:equivariant-proj-gen-cof}. The pushout product $(G_+\smashp_H\kappa_{H,A,B}\ppo k)\ppo i$ can then be identified with $\kappa_{H,A,B}\ppo_H(G_+\smashp (k\ppo i))$ where $H$ acts on the second factor via its right action on $G$. By the above, $k\ppo i$ is a (level) cofibration; by construction of the $G$-global projective model structure it therefore only remains to show that
\begin{equation*}
G_+\smashp\blank\colon G\text-\Sp(\underline{\mathscr C})\to (G\times H)\text-\Sp(\underline{\mathscr C})
\end{equation*}
(where $G$ acts diagonally and $H$ acts from the right) is left Quillen for the projective (level) model structures. However, as $\ul{\Sp}(\ul{\mathscr C})$ is a (pre)global model category and $G$ is cofibrant in $\cat{$\bm{(G\times H)}$-SSet}_{\mathcal G_{G,H}}$ ($H$ acting freely from the right), this is simply an instance of Corollary~\ref{cor:pointed-g-sset-enriched} again.
\end{proof}

\begin{proof}[Proof of Theorem~\ref{thm:sp-is-stable}]
This follows from Proposition~\ref{prop:tensoring-omega} in the same way as in the proof of Proposition~\ref{prop:GlobalSpectra-stable}.
\end{proof}

\subsection{The universal property of spectrification} Throughout, let $\ul{\mathscr C}$ and $\ul{\mathscr D}$ be global model categories. We begin by explaining in which sense the above construction $\ul{\Sp}$ is idempotent:

\begin{lemma}
Assume $\ul{\mathscr C}$ is pointed. Then we have a global Quillen adjunction
\begin{equation}\label{eq:stabilization-adjunction}
\ul{\Sigma^\infty}\mathrel{:=}\ul{\mathbb S\smashp\blank}\colon\underline{\mathscr C}\rightleftarrows\GlSp(\underline{\mathscr C}) :\!\ul{\Omega^\infty}\mathrel{:=}\ul{\ev_\varnothing}.
\end{equation}
\begin{proof}
It is clear from the definitions that $\ul{\Omega^\infty}$ is right Quillen (already for the level model structures).
\end{proof}
\end{lemma}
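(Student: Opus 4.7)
My plan is to verify the two defining properties of a global Quillen adjunction: honest adjointness of $\Sigma^\infty=\mathbb S\smashp\blank$ and $\Omega^\infty=\ev_\varnothing$ as enriched functors, and the Quillen property of the induced adjunction $G\text-\mathscr C\rightleftarrows G\text-\Sp(\ul{\mathscr C})$ for every finite group $G$ and for both the projective and the flat model structures.

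For the adjunction itself I would invoke the enriched Yoneda lemma. Since $\bm\Sigma(\varnothing,A)=S^A$ for every finite set $A$, the sphere spectrum $\mathbb S$ agrees with the $\cat{SSet}_*$-corepresentable $\bm\Sigma(\varnothing,\blank)$, so the defining property of the levelwise tensoring~(\ref{eq:tensoring-spectra}) identifies $\mathbb S\smashp X$ with the value at $X\in\mathscr C$ of the left adjoint of $\ev_\varnothing\colon\Sp(\ul{\mathscr C})\to\mathscr C$. Concretely, a map $\mathbb S\smashp X\to Y$ consists of compatible simplicial maps $S^A\to\maps_{\mathscr C}(X,Y(A))$, which by enriched Yoneda correspond to a single map $X\to Y(\varnothing)$ in $\mathscr C$. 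Pulling through $G$-actions promotes this bijection to an adjunction on $G$-objects for every $G$.

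For the Quillen property, the crucial observation is that $\ev_\varnothing$ is right Quillen with respect to the $G$-global \emph{level} model structures. By the description of the level model structures obtained by applying Proposition~\ref{prop:level-model-structures} to the shift $\ul{G\text-\mathscr C}$, a (acyclic) fibration in the $G$-global projective or flat level model structure on $G\text-\Sp(\ul{\mathscr C})$ is one whose value at every finite set $A$ is a (acyclic) fibration in the corresponding $(G\times\Sigma_A)$-global model structure on $(G\times\Sigma_A)\text-\mathscr C$; taking $A=\varnothing$ with trivial $\Sigma_\varnothing$ gives precisely the condition that $\ev_\varnothing(f)$ is a (acyclic) fibration in the $G$-global model structure on $G\text-\mathscr C$. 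To pass from the level to the actual (stabilized) model structures, I would observe that the latter are obtained by left Bousfield localization, hence share the same cofibrations and therefore the same acyclic fibrations, while their fibrations form a subclass of the level fibrations. Consequently $\ev_\varnothing$ still preserves fibrations and acyclic fibrations for the stabilized model structures, so it is right Quillen there as well. There is no substantive obstacle: the lemma is really bookkeeping about the definitions of Subsection~\ref{subsec:spectrification}, which is presumably why the authors dispose of it in a single sentence.
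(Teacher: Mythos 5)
Your argument is correct and is essentially the paper's (one-line) proof spelled out: $\Omega^\infty=\ev_\varnothing$ preserves (acyclic) fibrations already for the projective and flat level model structures, since at $A=\varnothing$ the levelwise condition is exactly the corresponding condition in $G\text-\mathscr C$, and passing to the Bousfield localizations only shrinks the fibrations while leaving the acyclic fibrations unchanged. The identification $\mathbb S=\bm\Sigma(\varnothing,\blank)$ and the enriched-Yoneda verification of the adjunction are fine, if more than the paper bothers to record.
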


\begin{cor}
Let $\ul{\mathscr C}$ be any global model category. Then we have a global Quillen adjunction
\begin{equation*}
\ul{\Sigma^\infty_+}\mathrel{:=}\ul{\mathbb S\smashp(\blank)_+}\colon\ul{\mathscr C}\rightleftarrows\ul{\Sp}(\ul{\mathscr C}) :\!\ul{\Omega^\infty}\mathrel{:=}\ul{\ev_\varnothing}.\pushQED{\qed}\qedhere\popQED
\end{equation*}
\end{cor}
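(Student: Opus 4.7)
The plan is to reduce the corollary to the previous lemma by composing with the basepoint-adjoining adjunction. By definition $\ul{\Sp}(\ul{\mathscr C}) = \ul{\Sp}(\ul{\mathscr C}_*)$, so the previous lemma applied to the pointed global model category $\ul{\mathscr C}_*$ directly furnishes a global Quillen adjunction
\[
\ul{\Sigma^\infty}\colon\ul{\mathscr C}_*\rightleftarrows\ul{\Sp}(\ul{\mathscr C}) :\!\ul{\Omega^\infty}
\]
with left adjoint $\mathbb S \smashp \blank$ and right adjoint $\ev_\varnothing$.

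Next, I would invoke the observation recorded in Remark~\ref{rk:basepoint} that adding a disjoint basepoint yields a global Quillen adjunction
\[
\ul{(\blank)_+}\colon\ul{\mathscr C}\rightleftarrows\ul{\mathscr C}_*:\!\ul{\textup{forget}}
\]
and compose the two. On left adjoints this gives $\mathbb S\smashp(\blank)_+ = \Sigma^\infty_+$, matching the definition of $\ul{\Sigma^\infty_+}$; on right adjoints it gives $\textup{forget}\circ\ev_\varnothing$, which is the meaning of $\ul{\ev_\varnothing}$ in the unpointed setting (the zero map is discarded once we land in $\mathscr C$).

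The only thing to verify is that the composite of two global Quillen adjunctions is again a global Quillen adjunction. This is immediate from the definition: at each finite group $G$, the induced adjunction $G\text-\mathscr C\rightleftarrows G\text-\Sp(\ul{\mathscr C})$ factors as the composite of the two induced Quillen adjunctions $G\text-\mathscr C\rightleftarrows G\text-\mathscr C_*\rightleftarrows G\text-\Sp(\ul{\mathscr C})$, and the composite of Quillen adjunctions is a Quillen adjunction (for both the projective and the flat model structures, since composites of right adjoints preserve the corresponding classes of (acyclic) fibrations).

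I do not expect any real obstacle here; the corollary is essentially a bookkeeping step that transports the pointed statement of the previous lemma across the free-forgetful adjunction between pointed and unpointed objects.
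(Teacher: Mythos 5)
Your proposal is correct and matches the paper's intended argument: the paper gives no explicit proof (the corollary is marked as immediate), precisely because it follows by applying the lemma to the pointed global model category $\ul{\mathscr C}_*$ and composing with the global Quillen adjunction $\ul{(\blank)_+}\dashv\ul{\textup{forget}}$ from Remark~\ref{rk:basepoint}, exactly as you do. Your verification that composites of global Quillen adjunctions are again global Quillen adjunctions, and the identification of the composite right adjoint with $\ul{\ev_\varnothing}$, are the only points to check, and you handle both correctly.
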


\begin{thm}\label{thm:stabilization-preserves-stable}
Let $\underline{\mathscr C}$ be a global model category. Then $\underline{\mathscr C}$ is stable if and only if it is pointed and the global Quillen adjunction $(\ref{eq:stabilization-adjunction})$ is a global Quillen equivalence.
\begin{proof}
Stability is preserved under global Quillen equivalences (Lemma~\ref{lemma:stability-stable}), so `$\Leftarrow$' follows from Theorem~\ref{thm:sp-is-stable}. By the usual shifting argument it now suffices to show that $\Sigma^\infty\colon\mathscr C\rightleftarrows\Sp(\underline{\mathscr C}) :\!\Omega^\infty$ is a Quillen equivalence whenever $\ul{\mathscr C}$ is stable.

For this we first observe that $\cat{L}\Sigma^\infty$ takes values in global $\Omega$-spectra. Namely, if $X\in \mathscr C_\text{flat}$ is cofibrant, then the ordinary structure maps $S^B\smashp (\Sigma^\infty X)(A)\to(\Sigma^\infty X)(A\amalg B)$ are isomorphisms for all finite groups $H$ and finite $H$-sets $A$ and $B$. However, $(\Sigma^\infty X)(A)=S^A\smashp X$ is cofibrant in the flat $H$-global model structure, so we can identify this with the derived map $S^B\smashp^{\cat L}(\Sigma^\infty X)(A)\to (\Sigma^\infty X)(A\amalg B)$. By stability, the adjoint map $(\Sigma^\infty X)(A)\to \cat{R}\Omega^B(\Sigma^\infty X)(A\amalg B)$ is then also an $H$-global weak equivalence as desired.

It follows immediately that for every cofibrant $X\in \mathscr C_\text{flat}$ the ordinary unit $X\to\Omega^\infty\Sigma^\infty X$ already represents the derived unit. As the former is an isomorphism, the derived unit is a global weak equivalence.

To complete the proof, it is now enough to show that $\Omega^\infty$ reflects weak equivalences between global $\Omega$-spectra. But indeed, if $f\colon X\to Y$ is a map of global $\Omega$-spectra such that $f(\varnothing)$ is a global weak equivalence, then $\cat{R}\Omega^A f(A)$ is a $\Sigma_A$-global weak equivalence as it is conjugate to $f(\varnothing)$ (equipped with the trivial $\Sigma_A$-action). Thus, $f(A)$ must be a $\Sigma_A$-global weak equivalence as $\Omega^A\colon \Sigma_A\text-\mathscr C\to\Sigma_A\text-\mathscr C$ is part of a Quillen equivalence (say, for the flat model structures).
\end{proof}
\end{thm}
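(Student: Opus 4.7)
The easy direction follows immediately from the machinery already in place: by \Cref{thm:sp-is-stable}, the global model category $\GlSp(\underline{\mathscr C})$ is always stable, and stability transfers across global Quillen equivalences by \Cref{lemma:stability-stable}. Hence if $\ul{\mathscr C}$ is pointed and $(\ref{eq:stabilization-adjunction})$ is a global Quillen equivalence, then $\ul{\mathscr C}$ is itself stable.

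For the converse, assume $\ul{\mathscr C}$ is stable. Since the shift $\ul{G\text-\mathscr C}$ of a stable global model category is again stable and $\GlSp(\ul{G\text-\mathscr C})$ identifies with $G\text-\GlSp(\ul{\mathscr C})$, it is enough to check that $\Sigma^\infty\dashv\Omega^\infty$ is an ordinary Quillen equivalence between $\mathscr C$ and $\Sp(\underline{\mathscr C})$ (say, for the flat model structures). The plan is to verify the two usual criteria separately: that the derived unit is a weak equivalence on every cofibrant object, and that $\Omega^\infty$ reflects weak equivalences between fibrant objects.

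For the derived unit, I would first observe that for cofibrant $X\in\mathscr C_\text{flat}$ the suspension spectrum $\Sigma^\infty X$ is already a global $\Omega$-spectrum. Indeed, the pointset level structure map $S^B\smashp(\Sigma^\infty X)(A)\to (\Sigma^\infty X)(A\amalg B)$ is an isomorphism essentially by construction, and each $(\Sigma^\infty X)(A)=S^A\smashp X$ is cofibrant in the flat $H$-global model structure on $H\text-\mathscr C$, so this map already represents $S^B\smashp^{\cat L}(\Sigma^\infty X)(A)\to(\Sigma^\infty X)(A\amalg B)$. Passing to mates along the Quillen equivalence $S^B\smashp\blank\dashv\Omega^B$ guaranteed by stability, the derived adjoint structure map $(\Sigma^\infty X)(A)\to\cat{R}\Omega^B(\Sigma^\infty X)(A\amalg B)$ is a weak equivalence. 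Consequently the plain unit $X\to\Omega^\infty\Sigma^\infty X$ already represents the derived unit, and it is an isomorphism on the nose.

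For the reflection property, suppose $f\colon X\to Y$ is a map of global $\Omega$-spectra such that $\Omega^\infty f=f(\varnothing)$ is a weak equivalence in $\mathscr C$. Applying the $\Omega$-spectrum property with $A=\varnothing$, naturality produces a square identifying $\cat{R}\Omega^B f(B)$ with $f(\varnothing)$ up to weak equivalence in $\Sigma_B\text-\mathscr C$ (with trivial $\Sigma_B$-action), so $\cat{R}\Omega^B f(B)$ is a $\Sigma_B$-global weak equivalence. Since $\Omega^B$ participates in a Quillen equivalence on $\Sigma_B\text-\mathscr C$ by stability, its right derived functor reflects weak equivalences, and we conclude that $f(B)$ is a $\Sigma_B$-global weak equivalence for every finite set $B$, i.e.\ $f$ is a global level weak equivalence, in particular a global weak equivalence. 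The main conceptual hurdle is bookkeeping: one must keep track of which model structure (projective vs.\ flat, level vs.\ stable) is being used at each step, and exploit that stability can be read off on the flat model structures where $S^A\smashp\blank$ has good cofibrancy behaviour.
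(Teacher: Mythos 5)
Your proposal is correct and follows essentially the same route as the paper: the easy direction via Theorem~\ref{thm:sp-is-stable} and Lemma~\ref{lemma:stability-stable}, the shifting reduction, showing $\Sigma^\infty$ of a flatly cofibrant object is already a global $\Omega$-spectrum (so the pointset unit computes the derived unit), and concluding via the fact that $\Omega^\infty$ reflects weak equivalences between global $\Omega$-spectra because each $\Omega^A$ participates in a Quillen equivalence by stability. No gaps to report.
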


\begin{defi}\label{defi:global-stabilization}
A global Quillen adjunction $\underline{\mathscr C}\rightleftarrows\underline{\mathscr D}$ is called a \emph{global stabilization} if $\underline{\mathscr D}$ is stable and the induced global Quillen adjunction $\GlSp(\underline{\mathscr C})\rightleftarrows\GlSp(\underline{\mathscr D})$ is a global Quillen equivalence.
\end{defi}

\begin{thm}\label{thm:sp-is-stabilization}
Let $\underline{\mathscr C}$ be any global model category. Then $\ul{\Sigma^\infty_+}\colon\underline{\mathscr C}\rightleftarrows\GlSp(\underline{\mathscr C}):\!\ul{\Omega^\infty}$ is a global stabilization.
\begin{proof}
We have already shown in Theorem~\ref{thm:sp-is-stable} that $\GlSp(\underline{\mathscr C})$ is stable. Replacing $\underline{\mathscr C}$ by $\underline{G\text{-}\mathscr C}$ as usual, it then only remains that $\Sp(\underline{\Sigma^\infty_+})\colon\Sp(\underline{\mathscr C})\rightleftarrows\Sp(\GlSp(\underline{\mathscr C})):\Sp(\underline{\Omega^\infty})$ is a Quillen equivalence. Comparing the right adjoints we immediately see that this agrees with $\Sp(\ul{\Sigma^\infty}):\Sp(\ul{\mathscr C}_*)\rightleftarrows\Sp(\Sp(\ul{\mathscr C}_*)):\Sp(\ul{\Omega^\infty})$, so we may further assume that $\ul{\mathscr C}$ is pointed.

For the proof, we identify the $\cat{SSet}_*$-category $\Sp(\GlSp(\underline{\mathscr C}))$ with the category of $\cat{SSet}_*$-enriched functors $\bm\Sigma\otimes\bm\Sigma\to\mathscr C$ where $\bm\Sigma\otimes\bm\Sigma$ denotes the $\cat{SSet}_*$-category with objects the pairs $(A,B)$ of finite sets and with mapping spaces
\begin{equation*}
\maps_{\bm\Sigma\otimes\bm\Sigma}((A,B),(A',B'))=\maps_{\bm\Sigma}(A,A')\smashp\maps_{\bm\Sigma}(B,B')
\end{equation*}
with the evident composition. We take the convention that the first factor of $\bm\Sigma\otimes\bm\Sigma$ corresponds to the `outer $\Sp$', so $\Sp(\underline{\Omega^\infty})$ corresponds to restriction along the inclusion $i_1\colon\bm\Sigma\to \bm\Sigma\otimes\bm\Sigma,A\mapsto(A,\varnothing)$. On the other hand, restricting along the other inclusion $i_2\colon\bm\Sigma\to\bm\Sigma\otimes\bm\Sigma$ corresponds to the Quillen equivalence $\Omega^\infty$.

We now write $\Pi\colon\bm\Sigma\otimes\bm\Sigma\to\bm\Sigma$ for the usual symmetric monoidal structure (used to construct the smash product of spectra), given on objects by $(A,B)\mapsto A\amalg B$. By restricting, this gives rise to a simplicial functor $\Pi^*\colon\Sp(\underline{\mathscr C})\to\Sp(\GlSp(\underline{\mathscr C}))$, which admits a simplicial left adjoint $\Pi_!$ by enriched Kan extension.

\begin{claim*}
The simplicial adjunction $\Pi_!\dashv\Pi^*$ is a Quillen adjunction with respect to the projective model structures.
\begin{proof}
Let us consider the case of the level model structures first, which amounts to saying that $\Sp(\ul{\mathscr C})\to \Sigma_A\text-\Sp(\ul{\mathscr C}), X\mapsto (\Pi^*X)(A)$ is right Quillen for the projective model structures everywhere. However, this can be identified with the composition
\begin{equation*}
\Sp(\underline{\mathscr C})_\text{proj}\xrightarrow{\text{triv}_{\Sigma_A}}\Sigma_A\text-\Sp(\underline{\mathscr C})_\text{proj}\xrightarrow{\sh^A}\Sigma_A\text-\Sp(\underline{\mathscr C})_\text{proj}
\end{equation*}
of which the first functor is right Quillen as $\underline{\Sp}(\underline{\mathscr C})$ is a global model category while the second one is so by Lemma~\ref{lemma:shift-right-Quillen}.

It then only remains to show that if $X$ is fibrant in the global projective model structure on $\Sp(\underline{\mathscr C})$, then $\Pi^*X$ is a global $\Omega$-spectrum, i.e.~for every finite group $H$ and every finite $H$-sets $A,B$, the map $\sh^A X\to\Omega^B\sh^{A\amalg B}X$ is an $H$-global weak equivalence. But indeed, this is even an $H$-global level weak equivalence: if $C$ is any finite set, then after evaluating at $C$ this is simply the adjoint structure map $X(C\amalg A)\to \Omega^B(C\amalg A\amalg B)$, which is an $(H\times\Sigma_C)$-global weak equivalence since $X$ was assumed to be a global $\Omega$-spectrum (letting $\Sigma_C$ act trivially on $A$ and $B$ while $H$ acts trivially on $C$).
\end{proof}
\end{claim*}

The unitality isomorphisms of the symmetric monoidal structure on $\bm\Sigma$ now give us isomorphisms $\Omega^\infty\circ\Pi^*\cong\id\cong \Sp(\underline{\Omega^\infty})\circ\Pi^*$. As all functors are right Quillen, this induces isomorphisms $\cat{R}\Omega^\infty\circ\cat{R}\Pi^*\cong\id\cong \cat{R}\Sp(\ul{\Omega^\infty})\circ\cat{R}\Pi^*$ of derived functors. In particular, as $\cat{R}\Omega^\infty$ is an equivalence (Theorems~\ref{thm:sp-is-stable} and~\ref{thm:stabilization-preserves-stable}), also $\cat{R}\Pi^*$ is an equivalence by $2$-out-of-$3$, and hence so is $\cat{R}\Sp(\underline{\Omega^\infty})$ by the same argument.
\end{proof}
\end{thm}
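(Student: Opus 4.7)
The plan is to verify the two clauses of Definition~\ref{defi:global-stabilization}. Stability of $\GlSp(\ul{\mathscr C})$ is already Theorem~\ref{thm:sp-is-stable}, so the substantive content is that the induced global Quillen adjunction $\GlSp(\ul{\Sigma^\infty_+})\dashv\GlSp(\ul{\Omega^\infty})$ between $\GlSp(\ul{\mathscr C})$ and $\GlSp(\GlSp(\ul{\mathscr C}))$ is a global Quillen equivalence. By passing to shifts this reduces to a plain Quillen equivalence $\Sp(\ul{\Sigma^\infty_+})\colon\Sp(\ul{\mathscr C})\rightleftarrows\Sp(\GlSp(\ul{\mathscr C}))\colon\!\Sp(\ul{\Omega^\infty})$, and since by construction $\Sp(\ul{\mathscr C})=\Sp(\ul{\mathscr C}_*)$ and $\GlSp(\ul{\mathscr C})=\GlSp(\ul{\mathscr C}_*)$, a comparison of right adjoints identifies this with $\Sp(\Sigma^\infty)\dashv\Sp(\Omega^\infty)$ for the pointed global model category $\ul{\mathscr C}_*$. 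So I may assume $\ul{\mathscr C}$ pointed and aim to prove $\Sp(\Sigma^\infty)\dashv\Sp(\Omega^\infty)$ is a Quillen equivalence.

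The key structural move is to identify $\Sp(\Sp(\ul{\mathscr C}))$ with the category of $\cat{SSet}_*$-enriched bifunctors $\bm\Sigma\otimes\bm\Sigma\to\mathscr C$, where $\bm\Sigma\otimes\bm\Sigma$ has mapping spaces $\maps_{\bm\Sigma}(A,A')\smashp\maps_{\bm\Sigma}(B,B')$. Under this identification, the right adjoint $\Sp(\Omega^\infty)$ I wish to invert is restriction along the inclusion $i_1\colon A\mapsto(A,\varnothing)$, whereas restriction along the other inclusion $i_2\colon B\mapsto(\varnothing,B)$ realizes the stabilization $\Omega^\infty_{\Sp(\ul{\mathscr C})}$ of the already-stable global model category $\Sp(\ul{\mathscr C})$; in particular, $\cat{R}i_2^*$ is a Quillen equivalence by Theorem~\ref{thm:stabilization-preserves-stable}. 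To compare the two I would introduce the concatenation $\Pi\colon\bm\Sigma\otimes\bm\Sigma\to\bm\Sigma$, $(A,B)\mapsto A\amalg B$, together with its restriction functor $\Pi^*$. Unitality of $\amalg$ yields strict equalities $i_1^*\circ\Pi^*=\id=i_2^*\circ\Pi^*$; granted that $\Pi^*$ is right Quillen for the projective global model structures, the second equality combined with $\cat{R}i_2^*$ being an equivalence forces $\cat{R}\Pi^*$ to be an equivalence by $2$-out-of-$3$, and then the first equality forces $\cat{R}i_1^*=\cat{R}\Sp(\Omega^\infty)$ to be one as well.

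The main obstacle is therefore to establish that $\Pi^*$ is right Quillen for the projective global model structures. On the level structures this reduces, after adjoining through each evaluation, to the assertion that $(\Pi^*X)(A)=\sh^AX$ is right Quillen in $X$, which is Lemma~\ref{lemma:shift-right-Quillen}. By Proposition~\ref{prop:proj-level-to-global} the remaining point is to verify that $\Pi^*$ sends fibrant objects to global $\Omega$-spectra: for every finite group $H$ and finite $H$-sets $A,B$, the adjoint structure map $\sh^AX\to\Omega^B\sh^{A\amalg B}X$ in $\Sp(\ul{\mathscr C})$ has to be an $H$-global weak equivalence, and in fact a level weak equivalence. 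Evaluating at any finite set $C$ identifies it up to canonical isomorphism with the adjoint structure map $X(A\amalg C)\to\Omega^BX(A\amalg B\amalg C)$ of $X$, and this is an $(H\times\Sigma_C)$-global weak equivalence by Remark~\ref{rk:g-global-omega-g-h-sets} (with $\Sigma_C$ acting trivially on $A,B$ and $H$ trivially on $C$) since $X$ is itself an $\Omega$-spectrum, completing the sketch.
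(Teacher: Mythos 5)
Your proposal is correct and follows essentially the same route as the paper: reduce to the pointed case, identify $\Sp(\GlSp(\ul{\mathscr C}))$ with bifunctors on $\bm\Sigma\otimes\bm\Sigma$, show $\Pi^*$ is right Quillen (level model structures via the shift functors, then fibrant objects land in global $\Omega$-spectra), and conclude by $2$-out-of-$3$ against the restriction along $i_2$. The only cosmetic difference is that the paper spells out $(\Pi^*X)(A)$ as the composite $\sh^A\circ\triv_{\Sigma_A}$ (so that the $\Sigma_A$-equivariance is handled by the global model category axioms plus Lemma~\ref{lemma:shift-right-Quillen}), and records the unit comparisons only as natural isomorphisms rather than strict equalities; neither point affects your argument.
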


\begin{rk}
Write $\cat{GLOBMOD}$ for the opposite of the large category of global model categories and global right Quillen functors, localized with respect to the global Quillen equivalences. Then Proposition~\ref{prop:sp-induced-adjunction} implies that $\ul\Sp$ descends to an endofunctor $Q$ of $\cat{GLOBMOD}$. Moreover the global Quillen adjunctions $\smash{\ul{\Sigma^\infty_+}\dashv \ul{\Omega^\infty}}$ induce a natural transformation $\eta\colon\id\Rightarrow Q$.\footnote{Here it comes in handy that we defined the underlying $1$-category to consist of right Quillen functors: $\ul{\Omega^\infty}=\ul{\ev_\varnothing}$ is strictly natural while $\ul{\Sigma^\infty_+}$ is only pseudonatural. However, the approach via left Quillen functors or by encoding both adjoints at the same time could also be made to work, as it is not hard to show using a cocylinder argument that isomorphic functors become \emph{equal} after localizing at the global Quillen equivalences in either case.}

By Theorems~\ref{thm:sp-is-stable} and~\ref{thm:stabilization-preserves-stable}, the map $\eta_{Q\ul{\mathscr C}}$ is an isomorphism in $\cat{GLOBMOD}$ for every $\ul{\mathscr C}$; similarly, Theorem~\ref{thm:sp-is-stabilization} above shows that $Q\eta_{\ul{\mathscr C}}$ is an isomorphism. Thus, it follows by abstract nonsense that $Q$ is a Bousfield localization onto its essential image (i.e.~the stable global model categories) with unit given by $\eta$. Put differently, for any global model category $\ul{\mathscr C}$, $\ul{\Omega^\infty}\colon\Sp(\ul{\mathscr C})\to \ul{\mathscr C}$ (or more generally the right adjoint in any global stabilization in the sense of Definition~\ref{defi:global-stabilization}) is the homotopy universal example of a global right Quillen functor from a \emph{stable} global model category, and dually for the left adjoints.
\end{rk}

\section{The stabilization of global spaces}\label{sec:global-spectra}
In this section we will prove:

\begin{thm}\label{thm:stabilization-global-spaces}
The global Quillen adjunction
\begin{equation}\label{eq:global-stabilization-S-Sp}
\ul{\Sigma^\bullet_+}\colon\glo{GlobalSpaces}\rightleftarrows\glo{GlobalSpectra} :\!\ul{\Omega^\bullet}
\end{equation}
from Example~\ref{ex:S-Sp-adjunction} is a global stabilization.
\end{thm}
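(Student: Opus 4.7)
The plan is to verify the two standard criteria for a Quillen equivalence. By Proposition~\ref{prop:GlobalSpectra-stable}, $\glo{GlobalSpectra}$ is stable, so by Definition~\ref{defi:global-stabilization} it suffices to show that the induced global Quillen adjunction $\ul\Sp(\ul\Sigma^\bullet_+)\dashv\ul\Sp(\ul\Omega^\bullet)$ is a global Quillen equivalence. Shifting by $G$ as in earlier proofs, this reduces to showing that $\Sp(\Sigma^\bullet_+)$ is an ordinary Quillen equivalence between $G\text-\Sp(\glo{GlobalSpaces})$ and $G\text-\Sp(\glo{GlobalSpectra})$ for each finite $G$. The key structural input is the natural isomorphism of left adjoints $\Sp(\Sigma^\bullet_+)\circ\Sigma^\infty_+\cong\Sigma^\infty\circ\Sigma^\bullet_+$, immediate from the definitions since both send an $\mathcal I$-space $X_0$ to the bispectrum $(B,A)\mapsto S^{A\amalg B}\smashp X_0(A)_+$. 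Combined with the facts that $\ul\Sigma^\infty$ is a global Quillen equivalence (Theorem~\ref{thm:stabilization-preserves-stable} plus stability) and $\ul\Sigma^\infty_+$ is a global stabilization (Theorem~\ref{thm:sp-is-stabilization}), this compatibility will be used to transport information between the two candidate stabilizations of $\glo{GlobalSpaces}$.

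For the first criterion, I would show that $\Sp(\Omega^\bullet)$ reflects weak equivalences between fibrant objects. Fibrancy of $Y\in\Sp(\glo{GlobalSpectra})$ means each $Y_B$ is a $(G\times\Sigma_B)$-global $\Omega$-spectrum and the outer adjoint structure maps $Y_B\to\cat R\Omega^CY_{B\amalg C}$ are global weak equivalences. By Remark~\ref{rk:Omega-bullet-on-fibrant}, $\Omega^\bullet Y_B$ computes the nonnegatively indexed equivariant homotopy groups of $Y_B$, so if $\Sp(\Omega^\bullet)f$ is a level weak equivalence then each $f_B$ induces isomorphisms on $\pi_k^H$ for $k\geq0$. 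For negative indices, the outer structure gives $\pi_{-k}^H(Y_B)\cong\pi_0^H(Y_{B\amalg\{1,\dots,k\}})$ with $H$ acting trivially on $\{1,\dots,k\}$, which is again captured by $\Omega^\bullet$ applied to the shifted level. Hence each $f_B$ is a weak equivalence of spectra, so $f$ is a level (and thus global) weak equivalence in $\Sp(\glo{GlobalSpectra})$.

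The harder second criterion is that the derived unit $\eta_X\colon X\to\Sp(\Omega^\bullet)(\Sp(\Sigma^\bullet_+)X)^{\textup{fib}}$ is a weak equivalence on cofibrant $X$. By an induction on the cellular structure built from the generating cofibrations $\bm\Sigma(A,\blank)\smashp_{\Sigma_A}i$, using left properness together with Corollary~\ref{cor:flat-implies-level-flat} and Lemma~\ref{lemma:projective-cofibrations-levelwise} to control cofibrations levelwise, I reduce to the base case $X=\Sigma^\infty_+X_0$ for cofibrant $X_0\in\glo{GlobalSpaces}$. Under the natural isomorphism, $\Sp(\Sigma^\bullet_+)X$ identifies with $\Sigma^\infty\Sigma^\bullet_+X_0$; the Quillen equivalence $\Sigma^\infty\dashv\Omega^\infty$ on the stable category $\glo{GlobalSpectra}$ then provides an explicit fibrant replacement whose outer-$B$ levels model the successive deloopings of $\Sigma^\bullet_+X_0$, and applying $\Sp(\Omega^\bullet)$ produces a family of $\mathcal I$-spaces that I compare against a fibrant replacement of $\Sigma^\infty_+X_0$ in $\Sp(\glo{GlobalSpaces})$. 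The main obstacle lies precisely in this final comparison: it asserts that the $\mathcal I$-space stabilization of $X_0$ agrees levelwise with the underlying infinite loop space of the symmetric-spectrum stabilization $\Sigma^\bullet_+X_0$, which is the substantive geometric content of the theorem and likely requires an explicit spectrification-style model for fibrant replacement in $\Sp(\glo{GlobalSpaces})$ combined with a direct homotopy-group comparison of the two constructions on generators.
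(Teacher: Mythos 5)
Your reduction is set up correctly (stability of $\glo{GlobalSpectra}$ plus the claim that each $\Sp(\Sigma^\bullet_+)\dashv\Sp(\Omega^\bullet)$ is a Quillen equivalence, checked via conservativity of the derived right adjoint and the derived unit on cofibrant objects), and the compatibility $\Sp(\ul{\Sigma^\bullet_+})\circ\Sigma^\infty_+\cong\Sigma^\infty\circ\ul{\Sigma^\bullet_+}$ is true. But there are two genuine problems. First, the cell-induction step is misstated: induction over the generating cofibrations $\bm\Sigma(A,\blank)\smashp_{\Sigma_A}i_+$ reduces the derived-unit check to the free objects $\bm\Sigma(A,\blank)\smashp_{\Sigma_A}Z_+$ for \emph{all} finite sets $A$ and (cofibrant) $(G\times\Sigma_A)$-global spaces $Z$, not to the level-zero case $\Sigma^\infty_+X_0$; the quotients by $\Sigma_A$ (and by graph subgroups in the generators of $(G\times\Sigma_A)\text-\glo{GlobalSpaces}$) are precisely where one needs that suitably free quotients are homotopical (Proposition~\ref{prop:free-quotient-spectra} and its bispectrum analogue), and none of this is addressed. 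Second, and more importantly, the step you yourself flag as the "main obstacle" — comparing the spectrum-object stabilization of a global space with its genuine global suspension spectrum — is the entire non-formal content of the theorem, and your proposal contains no argument for it; saying it "likely requires an explicit spectrification-style model" is not a proof.

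For comparison, the paper supplies exactly this missing content by a different route: it introduces the diagonal restriction $\Delta^*\colon G\text-\Sp(\glo{GlobalSpaces})\to\cat{$\bm G$-Spectra}$, $(\Delta^*X)(A)=X(A)(A)$, with left adjoint $\Delta_!$, and proves $\Delta_!\dashv\Delta^*$ is a Quillen equivalence for the projective $G$-global model structures (Theorem~\ref{thm:delta-quillen-equivalence}). The key inputs are that the levelwise tensor $X\otimes Y$ of a $G$-spectrum with a $G$-global space is homotopical in both variables (Proposition~\ref{prop:tensor-homotopical}, resting on the Flatness Theorem and the comparison of $X\smashp\Sigma^\bullet Y$ with $X\otimes Y$ from the $G$-global literature), that $\Delta^*$ creates $G$-global weak equivalences (Proposition~\ref{prop:Delta-star-homotopical}), homotopical free quotients to handle the $\Sigma_A$- and $\phi$-twisted generators, and an explicit unit computation on the corepresentables $\bm\Sigma(A,\blank)\smashp_\phi G_+$. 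The main theorem then follows by a $2$-out-of-$3$ argument: on fibrant objects the composite $\Delta^*\circ G\text-\Sp(\ul{\Omega^\bullet})$ is naturally weakly equivalent to $\Omega^\infty$, which is a derived equivalence by stability of $\glo{GlobalSpectra}$, so $G\text-\Sp(\ul{\Omega^\bullet})$ derives to an equivalence. Your conservativity sketch (via homotopy groups of $\Omega^\bullet Y_B$) roughly parallels the paper's Lemma~\ref{lemma:Delta-star-conservative} and could likely be repaired, but without an analogue of the $\Delta^*$-machinery (or some other substitute for the tensor-homotopical and free-quotient results) the derived-unit step cannot be completed, so the proposal as it stands has a real gap rather than just missing details.
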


This can be seen as a sanity check for our framework, but it also allows us to describe the passage from unstable to stable global homotopy theory via a universal property: $(\ref{eq:global-stabilization-S-Sp})$ is the homotopy-universal example of a global Quillen adjunction from $\glo{GlobalSpaces}$ to a stable global model category.

For the proof, we will compare $\glo{GlobalSpectra}$ to the global stabilization constructed in the previous section. This will require some preparations:

\begin{constr}
We define a functor $\Delta^*\colon\Sp(\glo{GlobalSpaces})\to\cat{Spectra}$ as follows: if $X$ is a spectrum in pointed $\mathcal I$-simplicial sets, then $\Delta^*X(A)=X(A)(A)$ with structure maps
\begin{equation*}
S^{B\setminus i(A)} \smashp X(A)(A) \xrightarrow{\sigma} X(B)(A) \xrightarrow{X(B)(i)} X(B)(B)
\end{equation*}
for every injection $i\colon A\to B$ and with the evident enriched functoriality in $X$.

By the enriched Yoneda Lemma, $\Delta^*$ admits an enriched left adjoint $\Delta_!$ which is characterized up to unique enriched isomorphism by the condition that it preserves colimits and tensors and satisfies $\Delta_!\bm\Sigma(A,\blank)=\bm\Sigma(A,\blank)\smashp \mathcal I(A,\blank)_+$ with the evident functoriality in $A$.
Writing simplices of $\bm\Sigma(A,B)$ for $B\in\bm\Sigma$ as equivalence classes $[i,\sigma]$ of an injection $i\colon A\to B$ and a simplex $\sigma$ of $S^{B\setminus i(A)}$, the unit of $\bm\Sigma(A,\blank)$ is then given in degree $B$ by $\bm\Sigma(A,B)\to\bm\Sigma(A,B)\smashp\mathcal I(A,B)_+, [i,\sigma]\mapsto [i,\sigma]\smashp i$.

By enriched Kan extension, $\Delta^*$ moreover admits a simplicial right adjoint $\Delta_*$.
\end{constr}

Our actual goal now is to prove:

\begin{thm}\label{thm:delta-quillen-equivalence}
For any finite group $G$ the simplicial adjunction
\begin{equation}\label{eq:delta-shriek-star}
\Delta_!\colon \cat{$\bm G$-Spectra}_\textup{$G$-global projective}\rightleftarrows G\text-\Sp(\glo{GlobalSpaces})_\textup{$G$-global projective} :\!\Delta^*
\end{equation}
is a Quillen equivalence.
\end{thm}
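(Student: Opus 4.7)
The plan is to proceed in two stages: first establish that $\Delta_!\dashv\Delta^*$ is a Quillen adjunction via \Cref{prop:proj-level-to-global}, and then verify it is a Quillen equivalence by reducing to a check on cofibrant generators.

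For the first stage, \Cref{prop:proj-level-to-global} reduces the task to showing (i) that $\Delta_!\dashv\Delta^*$ is Quillen for the projective \emph{level} model structures, and (ii) that $\Delta^*$ carries fibrant objects to $G$-global $\Omega$-spectra. For (i) it suffices to show that for each finite set $A$ the evaluation $X\mapsto X(A)(A)$ is right Quillen from the $G$-global projective level structure to the $\mathcal G_{\Sigma_A,G}$-equivariant structure on $(G\times\Sigma_A)\text-\cat{SSet}_*$. Unwinding, level fibrancy gives $\mathcal G_{\Sigma_A,G\times\Sigma_A}$-fibrancy of $X(A)(A)$ (with two separate $\Sigma_A$-actions), and restricting along the diagonal embedding $G\times\Sigma_A\hookrightarrow G\times\Sigma_A\times\Sigma_A$ sends graph subgroups to graph subgroups, yielding the desired $\mathcal G_{\Sigma_A,G}$-fibrancy. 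For (ii), if $X$ is a fibrant $G$-global $\Omega$-spectrum in $\Sp(\glo{GlobalSpaces})$ and $A\hookrightarrow B=A\amalg C$ is a faithful $H$-embedding, the adjoint structure map of $\Delta^*X$ factors canonically as
\[
X(A)(A)\xrightarrow{\tilde\sigma_X(A)} \Omega^C X(B)(A)\xrightarrow{\Omega^C X(B)(\incl)} \Omega^C X(B)(B).
\]
The first arrow is the evaluation at $A\in\mathcal I$ of the adjoint structure map $X(A)\to\Omega^C X(B)$, a $(G\times H)$-global weak equivalence of global spaces by the $\Omega$-spectrum condition on $X$; the second is $\Omega^C$ applied to the structure map of $X(B)$ for the inclusion $A\hookrightarrow B$, which is a $\mathcal G_{H,G}$-weak equivalence because the projectively fibrant global space $X(B)$ is static.

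For the second stage I would verify that the derived unit $\eta_X\colon X\to\cat{R}\Delta^*\cat{L}\Delta_!X$ is a $G$-global weak equivalence for every projectively cofibrant $X$. Since $\cat{L}\Delta_!$ preserves all colimits and $\cat{R}\Delta^*$ preserves filtered colimits and homotopy pushouts along cofibrations, cell induction reduces this to checking $\eta$ on the cofibrant generators $\bm\Sigma(A,\blank)\smashp_H G_+$ for finite sets $A$ and graph subgroups $H\in\mathcal G_{\Sigma_A,G}$. Using $\Delta_!\bm\Sigma(A,\blank)=\bm\Sigma(A,\blank)\smashp\mathcal I(A,\blank)_+$ and compatibility of $\Delta_!$ with colimits and quotients, the unit at such a generator is, at level $B$, the $H$-quotient of the `tautological section'
\[
\bm\Sigma(A,B)\smashp G_+\to\bm\Sigma(A,B)\smashp\mathcal I(A,B)_+\smashp G_+,\qquad [i,\sigma]\smashp g\mapsto [i,\sigma]\smashp i\smashp g.
\]

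The main obstacle is then to prove that this tautological section is a $G$-global stable weak equivalence. Splitting $\bm\Sigma(A,B)=\bigvee_{i\colon A\hookrightarrow B}S^{B\setminus i(A)}$ into wedge summands, each contribution reduces to the map $S^{B\setminus i(A)}_+\to S^{B\setminus i(A)}\smashp\mathcal I(A,B)_+$ picking out the vertex $i\in\mathcal I(A,B)$; the $\Gamma$-fixed points of $\mathcal I(A,B)$ for any graph subgroup $\Gamma$ are either empty or weakly contractible (the standard indiscrete-simplicial-set contractibility underlying all of $G$-global homotopy theory, cf.~\cite[1.2]{g-global}). As $B$ ranges over a cofinal system of large faithful $H$-sets, the relevant $\Gamma$-fixed injections exist for all $\Gamma$ of interest, and detecting $G$-global stable equivalences via $\ul\pi_*$-isomorphisms of \cite{hausmann-equivariant} converts this contractibility into the desired stable weak equivalence. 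The bookkeeping needed to make this argument precise, and in particular to track the interaction between the two different $\Sigma_A$-actions on $\bm\Sigma(A,B)\smashp\mathcal I(A,B)_+$ and the $H$-action encoded in the subgroup, will be the main technical work; the rest of the Quillen equivalence then follows formally by cell induction.
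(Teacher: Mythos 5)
Your first stage is essentially the paper's argument for the Quillen adjunction and is fine in substance: one checks that evaluation $X\mapsto X(A)(A)$ turns level (acyclic) fibrations into $\mathcal G_{\Sigma_A,G}$-equivariant ones via the diagonal restriction, and that $\Delta^*$ of a fibrant object is a $G$-global $\Omega$-spectrum by factoring the adjoint structure map through $\Omega^CX(A\amalg C)(A)\to\Omega^CX(A\amalg C)(A\amalg C)$ and using staticness. Two small corrections: Proposition~\ref{prop:proj-level-to-global} does not apply here, since it governs left Quillen functors \emph{out of} $G\text-\Sp(\ul{\mathscr C})$, whereas your left adjoint $\Delta_!$ maps \emph{into} it; the tool you actually need is Lemma~\ref{lemma:check-QA-fibrant} (cofibrations are unchanged by the localization, so it suffices that $\Delta^*$ preserves fibrant objects). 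Also, for the first arrow in your factorization you need faithfulness of $A$ together with Lemma~\ref{lemma:graph-source} to pass from a $\mathcal G_{\Sigma_A,G\times H}$-equivalence to a $\mathcal G_{H,G}$-equivalence; faithfulness is not only needed for the second arrow.

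The second stage has genuine gaps. First, verifying the derived unit on cofibrant objects only shows that $\cat{L}\Delta_!$ is fully faithful; to get a Quillen equivalence you must also know that $\cat{R}\Delta^*$ reflects weak equivalences between fibrant objects (equivalently, control the derived counit). This is a nontrivial statement -- the paper devotes Lemma~\ref{lemma:Delta-star-conservative} to it, using the $\Omega$-spectrum condition and staticness at all levels -- and nothing in your sketch supplies it. Second, your cell induction silently assumes that $\cat{R}\Delta^*$ commutes with filtered homotopy colimits and homotopy pushouts along cofibrations, and, more basically, that the point-set unit at $\Delta_!(\bm\Sigma(A,\blank)\smashp_H G_+)$ (which is not fibrant) represents the \emph{derived} unit. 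Both require knowing that $\Delta^*$ is homotopical; in the paper this is Proposition~\ref{prop:Delta-star-homotopical}, proved by showing $\Delta^*$ is even left Quillen into the injective $G$-global model structure, which in turn rests on Proposition~\ref{prop:tensor-homotopical} and an analysis of the localizing maps $\kappa_{H,A,B}\ppo_H i_+$. This is the technical heart of the theorem and is missing from your proposal. Finally, in the generator computation the splitting $\bm\Sigma(A,B)=\bigvee_i S^{B\setminus i(A)}$ is permuted by the relevant group actions, so a summand-by-summand argument does not work as stated; the cleaner route is to observe that the unit is split by the map induced by $p\colon\mathcal I(A,\blank)\to *$, reduce by $2$-out-of-$3$ to showing that $(\bm\Sigma(A,\blank)\otimes p_+)\smashp_\phi G_+$ is a $G$-global weak equivalence, and then combine the homotopicality of $\blank\otimes\blank$ with the free-quotient statement (Proposition~\ref{prop:free-quotient-spectra}), where faithfulness of the $H$-action on $A$ provides the required freeness -- this is exactly what your ``bookkeeping'' would have to amount to.
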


The proof will occupy the remainder of this section; for now, let us already remark how it implies Theorem~\ref{thm:stabilization-global-spaces}:

\begin{proof}[Proof of Theorem~\ref{thm:stabilization-global-spaces}]
We already know that $\ul{\Sigma^\bullet_+}\dashv\ul{\Omega^\bullet}$ is a global Quillen adjunction and that $\glo{GlobalSpectra}$ is stable (Proposition~\ref{prop:GlobalSpectra-stable}). Thus, it only remains to show that $G\text-\Sp(\ul{\Omega^\bullet})$ derives to an equivalence of homotopy categories for every finite $G$. To this end, we consider the diagram
\begin{equation}\label{diag:omega-delta}
\begin{tikzcd}[column sep=tiny]
G\text-\Sp(\glo{GlobalSpectra})_\text{proj.~$G$-gl.}\arrow[dr, bend right=10pt, "G\text-\Sp(\underline{\Omega^\bullet})"']\arrow[rr, "\Omega^\infty"] && \cat{$\bm G$-Spectra}_\text{proj.~$G$-gl.}\\
& G\text-\Sp(\glo{GlobalSpaces})_\text{proj.~$G$-gl.}\arrow[ur, bend right=10pt, "\Delta^*"']
\end{tikzcd}
\end{equation}
of right Quillen functors. This does not commute strictly, but after restricting to fibrant objects, Remark~\ref{rk:Omega-bullet-on-fibrant} shows that the lower composite is given up to natural $G$-global (level) weak equivalence by sending a $G$-bispectrum $X$ to the spectrum $\delta(X)$ given by $A\mapsto\Omega^A X(A)(A)$ with the evident functoriality. We moreover have a natural map $\sigma\colon\Omega^\infty X\to \delta(X)$ given in degree $A$ by the map $X(\varnothing)(A)\to \Omega^AX(A)(A)$ induced by the adjunct structure map. If $X$ is fibrant, then each $X(\varnothing)\to\Omega^AX(A)$ is a $(G\times\Sigma_A)$-global weak equivalence of fibrant objects in the projective $(G\times\Sigma_A)$-global model structure, hence a $(G\times\Sigma_A)$-global level weak equivalence. In particular, after evaluating at $A$, this is a $\mathcal G_{\Sigma_A,G\times\Sigma_A}$-equivariant weak equivalence, hence a $\mathcal G_{\Sigma_A,G}$-equivariant weak equivalence with respect to the diagonal $\Sigma_A$-action. Thus, $\sigma$ is a $G$-global (level) weak equivalence for every fibrant $X$. Altogether, we therefore have an isomorphism $\cat{R}\Omega^\infty\cong\cat{R}(G\text-\Sp(\ul{\Omega^\bullet}))\circ\cat{R}\Delta^*$ of right derived functors.

However, the top arrow in $(\ref{diag:omega-delta})$ induces an equivalence of homotopy categories by stability of $\glo{GlobalSpectra}$ (Proposition~\ref{prop:GlobalSpectra-stable}), and so does $\Delta^*$ by the previous theorem. The claim now follows from $2$-out-of-$3$.
\end{proof}

It remains to prove Theorem~\ref{thm:delta-quillen-equivalence}.

\begin{lemma}
The simplicial adjunction $(\ref{eq:delta-shriek-star})$ is a Quillen adjunction.
\begin{proof}
We will first show that $\Delta^*$ preserves (acyclic) level fibrations. Indeed, if $f\colon X\to Y$ is any map in $G\text-\Sp(\glo{GlobalSpaces})$, then $(\Delta^*f)(A)=f(A)(A)$. If now $f$ is a $G$-global (acyclic) level fibration, then $f(A)$ is a $(G\times\Sigma_A)$-global (acyclic) fibration, hence in particular a $(G\times\Sigma_A)$-global (acyclic) level fibration, so that $f(A)(A)$ is an (acyclic) $\mathcal G_{\Sigma_A,G\times\Sigma_A}$-equivariant fibration. As before, it is then in particular a $\mathcal G_{\Sigma_A,G}$-equivariant (acyclic) fibration with respect to the diagonal action, i.e.~$\Delta^*f$ is an (acyclic) level fibration as desired.

To complete the proof, it now only remains to show that $\Delta^*$ sends fibrant objects to $G$-global $\Omega$-spectra. But indeed, if $X$ is fibrant and $A,B$ are finite $H$-sets, then $X(A)\to\Omega^BX(A\amalg B)$ is a $(G\times H)$-global weak equivalence between projectively fibrant $(G\times H)$-global spaces, hence a $(G\times H)$-global level weak equivalence. In particular, $(\Delta^*X)(A)=X(A)(A)\to\Omega^BX(A\amalg B)(A)$ is a $\mathcal G_{\Sigma_A,G\times H}$-equivariant weak equivalence. Thus, if $A$ is \emph{faithful}, then this is a $\mathcal G_{H,G\times H}$-weak equivalence by Lemma~\ref{lemma:graph-source} and whence a $\mathcal G_{H,G}$-equivariant weak equivalence with respect to the diagonal action.

Now the adjoint structure map of $\Delta^*X$ factors as
\begin{equation*}
X(A)(A)\xrightarrow{\tilde\sigma(A)}\Omega^BX(A\amalg B)(A)\xrightarrow{\Omega^BX(A\amalg B)(\text{incl})} \Omega^BX(A\amalg B)(A\amalg B);
\end{equation*}
if $H$ acts faithfully on $A$, then the first map is a $\mathcal G_{H,G}$-equivariant weak equivalence by the above, and so is the second one by the same computation as $X(A\amalg B)$ is a fibrant $(G\times H)$-$\mathcal I$-space.
\end{proof}
\end{lemma}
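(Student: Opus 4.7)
My plan is to apply Proposition \ref{prop:proj-level-to-global} to reduce the Quillen adjunction claim to two more manageable subgoals: first, that $\Delta^*$ is right Quillen for the \emph{level} projective model structures on both sides, and second, that it sends every fibrant object of $G\text-\Sp(\glo{GlobalSpaces})_{\textup{proj}}$ to a $G$-global $\Omega$-spectrum in the sense of symmetric spectra. Since that proposition lets me check fibrancy preservation rather than pin down generating acyclic cofibrations, this is the natural pivot point for such a Quillen adjunction check.

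For the first subgoal, (acyclic) fibrations on both sides are defined levelwise, so it suffices to verify that whenever $f(A)$ is a (trivial) fibration in the projective $(G\times\Sigma_A)$-global model structure on $\mathcal I$-simplicial sets, the diagonal evaluation $(\Delta^*f)(A)=f(A)(A)$ is a (trivial) fibration in the $\mathcal G_{\Sigma_A,G}$-graph model structure. By projective level fibrancy, $f(A)(A)$ is at least a (trivial) fibration in the $\mathcal G_{\Sigma_A,G\times\Sigma_A}$-graph model structure on $(\Sigma_A\times G\times\Sigma_A)$-simplicial sets. The passage to the diagonal $\Sigma_A$-action is then a short graph-subgroup chase: every $\Gamma_{L,\psi}\in\mathcal G_{\Sigma_A,G}$ lifts along $(\sigma,g)\mapsto(\sigma,g,\sigma)$ to the subgroup $\{(\sigma,\psi(\sigma),\sigma):\sigma\in L\}\in\mathcal G_{\Sigma_A,G\times\Sigma_A}$, and fixed points under the restricted action coincide with fixed points under this lifted subgroup.

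For the second subgoal, let $X$ be fibrant and let $A\subset B$ be faithful finite $H$-sets. I factor the adjoint structure map of $\Delta^*X$ as
\begin{equation*}
	X(A)(A)\xrightarrow{\tilde\sigma(A)}\Omega^{B\setminus A}X(B)(A)\xrightarrow{\Omega^{B\setminus A}X(B)(\textup{incl})}\Omega^{B\setminus A}X(B)(B),
\end{equation*}
where $\tilde\sigma$ denotes the spectral adjoint structure map of $X$. By the $\Omega$-spectrum hypothesis on $X$, $\tilde\sigma\colon X(A)\to\Omega^{B\setminus A}X(B)$ is a $(G\times H)$-global weak equivalence between projectively fibrant $(G\times H)$-$\mathcal I$-simplicial sets (restrictions along $H\to\Sigma_A$ and $H\to\Sigma_B$ preserve projective fibrancy by Proposition \ref{prop:I-functoriality-general}), hence a level weak equivalence, so $\tilde\sigma(A)$ is a $\mathcal G_{\Sigma_A,G\times H}$-equivalence. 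Faithfulness of $A$ now lets me restrict further along $H\hookrightarrow\Sigma_A$ via Lemma \ref{lemma:graph-source} to obtain a $\mathcal G_{H,G\times H}$-equivalence, and a diagonal graph-subgroup argument of the same flavor as above then yields the required $\mathcal G_{H,G}$-equivalence with respect to the combined inner-plus-outer diagonal $H$-action. The second factor is handled in the same spirit: staticness of $X(B)$ (inherited from its projective fibrancy) makes $X(B)(\textup{incl})$ a $\mathcal G_{H,G\times H}$-equivalence since $A,B$ are faithful $H$-sets, and a second diagonal chase, plus the observation that $\Omega^{B\setminus A}$ preserves such equivalences, produces a $\mathcal G_{H,G}$-equivalence as well.

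The main obstacle is keeping track of the two distinct copies of $\Sigma_A$ entering the picture: an outer one coming from the spectrum structure of $X$ and an inner one coming from the $\mathcal I$-simplicial set structure of $X(A)$. These two copies are precisely what get identified on $\Delta^*X(A)$ to yield a single diagonal $\Sigma_A$-action, so every equivariance statement extracted from the fibrancy of $X$ naturally lives over a graph family involving $G\times\Sigma_A$ or $G\times H$ and must be funneled via diagonal restrictions down to $\mathcal G_{\Sigma_A,G}$ or $\mathcal G_{H,G}$. Faithfulness of $A$ as an $H$-set is the essential ingredient that makes Lemma \ref{lemma:graph-source} available during this funneling, and once the bookkeeping is set up carefully both subgoals follow by routine verification.
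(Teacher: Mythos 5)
Your argument is, in substance, the paper's own proof: the same two subgoals (preservation of level (acyclic) fibrations via the diagonal graph-subgroup restriction along $(\sigma,g)\mapsto(\sigma,g,\sigma)$, and preservation of fibrant objects via factoring the adjoint structure map through $\Omega^{B\setminus A}X(B)(A)$, using level fibrancy of $X$, Lemma~\ref{lemma:graph-source} together with faithfulness of $A$, and staticness of $X(B)$), and your individual verifications match the paper's. The one slip is the framing citation: Proposition~\ref{prop:proj-level-to-global} does not apply to the adjunction $\Delta_!\dashv\Delta^*$ as you invoke it, since that proposition concerns adjunctions whose \emph{left} adjoint has source $G\text{-}\Sp(\ul{\mathscr C})_{\textup{proj.~level}}$, whereas here $\Delta_!$ maps \emph{into} $G\text{-}\Sp(\glo{GlobalSpaces})$ and its source $\cat{$\bm G$-Spectra}$ is not presented as a spectrification (the paper only uses that proposition later, for the other adjunction $\Delta^*\dashv\Delta_*$, where $\Delta^*$ is the left adjoint). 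The reduction you want is nevertheless valid and is what the paper does implicitly: cofibrations are unchanged under Bousfield localization, so your first subgoal shows that $\Delta_!$ preserves cofibrations of the localized structures; and since the fibrant objects of $\cat{$\bm G$-Spectra}_{\textup{$G$-gl.~proj.}}$ are precisely the projectively level fibrant $G$-global $\Omega$-spectra, your two subgoals together show that $\Delta^*$ preserves fibrant objects, so Lemma~\ref{lemma:check-QA-fibrant} (both localized model structures being left proper and simplicial) gives the Quillen adjunction. Replacing the appeal to Proposition~\ref{prop:proj-level-to-global} by one to Lemma~\ref{lemma:check-QA-fibrant} makes your proof complete and essentially identical to the paper's.
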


\begin{lemma}\label{lemma:Delta-star-conservative}
The functor $\Delta^*\colon G\text-\Sp(\glo{GlobalSpaces})_\textup{$G$-global proj.}\to\cat{$\bm G$-Spectra}_\textup{$G$-global proj.}$ reflects $G$-global weak equivalences between fibrant objects, i.e.~its right derived functor $\cat{R}\Delta^*$ is conservative.
\begin{proof}
Let $f\colon X\to Y$ be a map of fibrant objects such that $\Delta^*f$ is a $G$-global weak equivalence. As $\Delta^*$ is right Quillen, $\Delta^*X$ and $\Delta^*Y$ are fibrant, so $\Delta^*f$ is even a $G$-global level weak equivalence, i.e.~$f(A)(A)$ is a $\mathcal G_{H,G}$-equivariant weak equivalence of $(G\times H)$-simplicial sets for every finite group $H$ and any finite faithful $H$-set $A$. We want to show that $f(A)$ is already a $(G\times\Sigma_A)$-global level weak equivalence, i.e.~that for every finite set $B$ the map $f(A)(B)$ is a $\mathcal G_{\Sigma_B,G\times\Sigma_A}$-equivariant weak equivalence, or put differently that for every finite group $H$ acting arbitrarily on $A$ and faithfully on $B$ the map $f(A)(B)$ is a $\mathcal G_{H,G}$-equivariant weak equivalence.

For this we consider the commutative diagram
\begin{equation*}
\begin{tikzcd}
X(A)(B)\arrow[r,"\tilde\sigma"]\arrow[d,"f(A)(B)"'] & \Omega^B X(A\amalg B)(B)\arrow[d, "\Omega^B f(A\amalg B)(B)"]\arrow[r, "\Omega^B X(A\amalg B)(\textup{incl})"] &[5em] \Omega^B(\Delta^*X)(A\amalg B)\arrow[d, "\Omega^B(\Delta^*f)(A\amalg B)"]\\
Y(A)(B)\arrow[r,"\tilde\sigma"'] & \Omega^B Y(A\amalg B)(B)\arrow[r, "\Omega^B X(A\amalg B)(\textup{incl})"'] & \Omega^B(\Delta^*Y)(A\amalg B)
\end{tikzcd}
\end{equation*}
As $H$ acts faitfully on $A\amalg B$, the right hand vertical map is a $\mathcal G_{H,G}$-equivariant weak equivalence by the above. Moreover, as $X(A)\to\Omega^BX(A\amalg B)$ is a $(G\times H)$-global weak equivalence of fibrant objects and $H$ acts faithfully on $B$, the top left horizontal map is a $\mathcal G_{H,G}$-weak equivalence, and so is the lower left horizontal map by the same argument. Finally, $X(A\amalg B)$ is a fibrant $(G\times H)$-$\mathcal I$-space, so that the top right horizontal map is a $\mathcal G_{H,G}$-equivariant weak equivalence, and likewise for the lower right horizontal map. The claim now follows by $2$-out-of-$3$.
\end{proof}
\end{lemma}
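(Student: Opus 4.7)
My plan is to unpack both the hypothesis and the target into explicit equivariant weak equivalence conditions and then bridge them using the $\Omega$-spectrum property of $X$ and $Y$. Since $\Delta^*$ is right Quillen, $\Delta^* X$ and $\Delta^* Y$ are fibrant $G$-spectra, so the hypothesis upgrades $\Delta^* f$ to a $G$-global \emph{level} weak equivalence: $f(C)(C)$ is a $\mathcal{G}_{\Sigma_C, G}$-equivariant weak equivalence for every finite set $C$. Dually, what I need to show is that $f$ is a $G$-global level weak equivalence, which, using fibrancy of each $X(A), Y(A)$ in the projective $(G \times \Sigma_A)$-global model structure on $\mathcal{I}$-spaces, reduces to: $f(A)(B)$ is a $\mathcal{G}_{\Sigma_B, G \times \Sigma_A}$-equivariant weak equivalence for all finite sets $A, B$. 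Parameterising graph subgroups, this is the statement that for every finite group $H$ acting arbitrarily on $A$, faithfully on $B$, and every homomorphism $\phi\colon H \to G$, the map $f(A)(B)$ is an $H$-weak equivalence with respect to the resulting diagonal action.

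\textbf{The main comparison.} The tension is that the hypothesis sees only tautological/diagonal actions, whereas the action on $A$ is unrestricted. The key observation is that while $H$ need not act faithfully on $A$, it acts faithfully on the larger index $A \amalg B$ (since it already does on $B$), so the $\Delta^*$-hypothesis at level $A \amalg B$ is directly applicable. To bridge the levels $A$ and $A \amalg B$ I would exploit the $\Omega$-spectrum structure and consider the commutative diagram
\begin{equation*}
\begin{tikzcd}[column sep=1em]
X(A)(B) \arrow[r, "\tilde\sigma"]\arrow[d, "f(A)(B)"'] & \Omega^B X(A \amalg B)(B) \arrow[r]\arrow[d] & \Omega^B (\Delta^* X)(A \amalg B) \arrow[d, "\Omega^B (\Delta^* f)(A \amalg B)"]\\
Y(A)(B) \arrow[r, "\tilde\sigma"'] & \Omega^B Y(A \amalg B)(B) \arrow[r] & \Omega^B (\Delta^* Y)(A \amalg B)
\end{tikzcd}
\end{equation*}
whose left horizontal arrows come from evaluating the adjoint structure map at $B$ and whose right horizontal arrows come from applying $X(A \amalg B)$ (resp.~$Y(A \amalg B)$) to the inclusion $B \hookrightarrow A \amalg B$. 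Two applications of 2-out-of-3 reduce the claim to verifying that the three arrows other than $f(A)(B)$ are all $\mathcal{G}_{H,G}$-equivariant weak equivalences.

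\textbf{Verifying the auxiliary maps and main obstacle.} The left horizontal maps are equivalences because the adjoint structure map $X(A) \to \Omega^B X(A \amalg B)$ is a $(G \times H)$-global weak equivalence between fibrant $(G \times H)$-$\mathcal{I}$-spaces (by the $G$-global $\Omega$-spectrum property of $X$), hence a $(G \times H)$-global level weak equivalence, so evaluating at $B$ with $H$ acting faithfully there gives the required $\mathcal{G}_{H, G}$-equivalence after restricting to the diagonal $H$-action. The right horizontal maps are equivalences by the static condition: $X(A \amalg B)$ is a fibrant $(G \times H)$-$\mathcal{I}$-space and both $B \subset A \amalg B$ are faithful $H$-sets, so the inclusion induces a $\mathcal{G}_{H, G \times H}$-weak equivalence, preserved by $\Omega^B$. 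Finally, the hypothesis gives that $(\Delta^* f)(A \amalg B)$ is a $\mathcal{G}_{\Sigma_{A \amalg B}, G}$-weak equivalence, which restricts along the faithful $H$-action on $A \amalg B$ to an $H$-equivariant weak equivalence, and this is preserved by the right Quillen functor $\Omega^B$. The main obstacle, and indeed the conceptual heart of the argument, is recognizing that enlarging the index from $A$ to $A \amalg B$ is exactly the maneuver needed to convert the $\Delta^*$-hypothesis (which constrains only tautological/faithful actions) into the statement about arbitrary $H$-actions on $A$ that we want; the remaining work is a careful but routine bookkeeping of restrictions of actions.
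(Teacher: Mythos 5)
Your proof is correct and follows essentially the same route as the paper's: the same reduction to showing $f(A)(B)$ is a $\mathcal G_{H,G}$-equivariant weak equivalence, the same comparison diagram through $\Omega^B X(A\amalg B)(B)$ and $\Omega^B(\Delta^*X)(A\amalg B)$, and the same three verifications (the $\Omega$-spectrum property for the left horizontal maps, staticness/fibrancy of $X(A\amalg B)$ for the right horizontal maps, and faithfulness of the $H$-action on $A\amalg B$ for the right vertical map) followed by $2$-out-of-$3$. No substantive differences.
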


We now want to prove the following strengthening of the above lemma:

\begin{prop}\label{prop:Delta-star-homotopical}
The functor $\Delta^*$ creates $G$-global weak equivalences.
\end{prop}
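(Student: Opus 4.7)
The plan is to bootstrap from Lemma~\ref{lemma:Delta-star-conservative} (reflection between fibrant objects) and Ken Brown's lemma (preservation between fibrant objects) to the statement for arbitrary maps via a functorial fibrant replacement argument. Given $f\colon X \to Y$, I would pick functorial fibrant replacements $\eta_X\colon X \to X^{\textup{fib}}$ and $\eta_Y\colon Y \to Y^{\textup{fib}}$ in $G\text-\Sp(\glo{GlobalSpaces})_\textup{$G$-gl.~proj.}$, yielding a map $f^{\textup{fib}}\colon X^{\textup{fib}} \to Y^{\textup{fib}}$ between fibrant objects. Since $f$ is a $G$-global weak equivalence iff $f^{\textup{fib}}$ is, and (by the already-established fibrant case) $\Delta^* f^{\textup{fib}}$ is iff $f^{\textup{fib}}$ is, a $2$-out-of-$3$ argument reduces both preservation and reflection of $G$-global weak equivalences to the single \emph{Key Claim}: for every object $Z$, the unit of fibrant replacement $\eta_Z$ is sent by $\Delta^*$ to a $G$-global weak equivalence.

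To prove the Key Claim I would construct $\eta_Z$ explicitly via the small object argument. Since the $G$-global projective model structure on $G\text-\Sp(\glo{GlobalSpaces})$ is a left Bousfield localization of the projective $G$-global level model structure at the set $S_G$ from the proof of Proposition~\ref{prop:projective-model-structure-G-gl}, such an $\eta_Z$ can be written as a transfinite composition of cobase changes of maps in $J_\textup{lev} \cup S_G$, where $J_\textup{lev}$ is a set of generating acyclic cofibrations of the level model structure. Because $\Delta^*$ preserves colimits (it has the right adjoint $\Delta_*$), and because the $G$-global weak equivalences in $\cat{$\bm G$-Spectra}_\textup{$G$-gl.~proj.}$ are closed under transfinite composition and under cobase change along cofibrations, the Key Claim further reduces to showing that $\Delta^*$ sends every map in $J_\textup{lev} \cup S_G$ to an acyclic cofibration in $\cat{$\bm G$-Spectra}_\textup{$G$-gl.~proj.}$.

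For $J_\textup{lev}$---whose elements have the form $\bm\Sigma(A, \blank) \smashp_{\Sigma_A} j$ with $j$ an acyclic cofibration of the $(G \times \Sigma_A)$-global projective model structure on $\mathcal I$-spaces---I would compute $\Delta^*$ using the decomposition $\bm\Sigma(A, B) \cong (\Sigma_B)_+ \smashp_{\Sigma_{B\setminus A}} S^{B\setminus A}$ from the proof of Proposition~\ref{prop:level-model-structures}, identifying the image with an appropriate shift of the suspension spectrum $\Sigma^\bullet_+ j$; this is an acyclic cofibration because $\Sigma^\bullet_+$ is left Quillen (Example~\ref{ex:S-Sp-adjunction}). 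For $S_G$---whose elements are the pushout products $\kappa_{H, A, B} \ppo_H i$ built from a factorization of the comparison map $\lambda_{H, A, B}$ corepresenting the adjoint structure map (cf.~Remark~\ref{rk:equivariant-proj-gen-cof})---an analogous computation should identify $\Delta^*(\kappa_{H, A, B})$ with the corresponding $\kappa$-map used in the construction of the $G$-global projective model structure on $\cat{$\bm G$-Spectra}$, whose pushout products with generating cofibrations are acyclic cofibrations essentially by construction.

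The main obstacle is the second identification: carefully tracking the two independent sources of $\Sigma_A$-action (the ``external'' one from the spectrum direction and the ``internal'' one from evaluating the $\mathcal I$-space at $A$) and verifying that they combine on the diagonal into precisely the $\Sigma_A$-action entering the $G$-global projective model structure on symmetric spectra. This ultimately amounts to a careful unwinding of definitions, but this is where the bulk of the bookkeeping lies.
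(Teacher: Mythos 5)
Your reduction skeleton is sound and in fact parallels the paper's: the paper also reduces, via Lemma~\ref{lemma:Delta-star-conservative} and the fact that $\Delta^*$ preserves acyclic fibrations, to showing that $\Delta^*$ sends the relevant (generating) acyclic cofibrations to weak equivalences, using Proposition~\ref{prop:proj-level-to-global} to get down to the maps in $J_\textup{lev}$ and $S_G$ exactly as you do. (Two small slips in your bookkeeping: a fibrant replacement in the localized structure is a cell complex on $J_\textup{lev}$ together with the pushout products of $S_G$ with simplicial boundary inclusions, not on $S_G$ alone; and the images under $\Delta^*$ are injective, not projective, cofibrations, so you should argue with ``weak equivalence and injective cofibration'' plus left properness of the injective structure rather than with the false claim that $G$-global weak equivalences are closed under cobase change along arbitrary cofibrations. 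Both are fixable.)

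The genuine gap is in the treatment of the generating maps, which you dismiss as bookkeeping of $\Sigma_A$-actions. Unwinding definitions, $\Delta^*\bigl(\bm\Sigma(A,\blank)\smashp_{\Sigma_A}j_+\bigr)=\bigl(\bm\Sigma(A,\blank)\otimes j_+\bigr)/\Sigma_A$ and $\Delta^*(\kappa_{H,A,B}\ppo_H i_+)$ is the analogous $\ppo$-construction formed with $\otimes$, where $\otimes$ is the \emph{diagonal} levelwise smash of Construction~\ref{constr:tensor-Sp-I}: the $\mathcal I$-space is evaluated at the \emph{same} level as the spectrum coordinate. This is not a shift of the suspension spectrum $\Sigma^\bullet_+ j$ (already for $|A|=1$ the levels disagree), and it is not one of the generating acyclic cofibrations of $\cat{$\bm G$-Spectra}_\textup{$G$-gl.~proj.}$, whose extra smash factor lives purely in the simplicial-set direction; so neither claimed identification is available, and nothing here is ``an acyclic cofibration by construction.'' What is actually needed is the non-formal input that $\otimes$ is homotopical in each variable (Proposition~\ref{prop:tensor-homotopical}, which rests on the comparison of $X\otimes Y$ with $X\smashp\Sigma^\bullet Y$ up to $\ul\pi_*$-isomorphism and the Flatness Theorem), together with Proposition~\ref{prop:equivariant-vs-global} to upgrade $\kappa_{H,A,B}$ from an $H$-equivariant to an $H$-global weak equivalence of projectively cofibrant $H$-spectra, a gluing argument for the pushout product (using that $\kappa_{H,A,B}\otimes X_+$ is an injective cofibration), and Proposition~\ref{prop:free-quotient-spectra} to handle the quotients by $\Sigma_A$ resp.\ $H$, which act freely. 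Without these ingredients your argument does not go through; with them, it becomes essentially the paper's proof.
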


For the proof, it will be crucial to understand the behaviour of $\Delta^*$ on generating (acyclic) \emph{cofibrations}, and more generally on maps of the form $\bm\Sigma(A,\blank)\smashp_{\Sigma_A}f$ for maps $f$ in $\cat{$\bm{(G\times\Sigma_A)}$-$\bm{\mathcal I}$-SSet}_*$. For this we recall the following standard construction:

\begin{constr}\label{constr:tensor-Sp-I}
Let $X$ be a spectrum and let $Y$ be a pointed $\mathcal I$-simplicial set. Then we write $X\otimes Y$ for the spectrum with $(X\otimes Y)(A)=X(A)\smashp Y(A)$ and structure maps
\begin{equation*}
S^{B\setminus i(A)}\smashp (X\otimes Y)(A)=S^{B\setminus A}\smashp X(A)\smashp Y(A)\xrightarrow{\sigma\smashp Y(i)} X(B)\smashp Y(B)=(X\otimes Y)(B)
\end{equation*}
for every injection $i\colon A\to B$. This becomes a functor $\cat{Spectra}\times\cat{$\bm{\mathcal I}$-SSet}_*\to\cat{Spectra}$ in the evident way, which we as usual promote to
\begin{equation}\label{eq:tensor-product}
\blank\otimes\blank\colon\cat{$\bm G$-Spectra}\times\cat{$\bm G$-$\bm{\mathcal I}$-SSet}_*\to\cat{$\bm G$-Spectra}.
\end{equation}
\end{constr}

Put differently we have for any $G$-spectrum $X$ and any $G$-$\mathcal I$-simplicial set $Y$ an equality $X\otimes Y=\Delta^*(X\smashp Y)$ for the levelwise smash product $\cat{$\bm G$-Spectra}\times\cat{$\bm G$-$\bm{\mathcal I}$-SSet}_*\to G\text-\Sp(\glo{GlobalSpaces}_*)=G\text-\Sp(\glo{GlobalSpaces})$, and likewise for maps.

\begin{prop}\label{prop:tensor-homotopical}
The tensor product $(\ref{eq:tensor-product})$ preserves $G$-global weak equivalences in each variable.
\begin{proof}
Let $X$ be a $G$-spectrum and let $g\colon Y\to Y'$ be a $G$-global weak equivalence in $\cat{$\bm G$-$\bm{\mathcal I}$-SSet}_*$. We will show that $X\otimes g$ is even a $\ul\pi_*$-isomorphism, for which we let $\phi\colon H\to G$ be any homomorphism. Then the effect of $\phi^*(X\otimes g)=(\phi^*X)\otimes(\phi^*g)$ on $\pi^H_*$ agrees up to conjugation by isomorphisms with the one of $\phi^*X\smashp (\phi^*g)(\mathcal U_H)$, see \cite[Lemma~3.2.11]{g-global}. But $(\phi^*g)(\mathcal U_H)$ is an $H$-equivariant weak equivalence, so $\phi^*X\smashp (\phi^*g)(\mathcal U_H)$ is even an $H$-equivariant level weak equivalence, in particular a $\ul\pi_*$-isomorphism. This completes the proof that the tensor product preserves $G$-global weak equivalences in the second variable.

On the other hand, let $f\colon X\to X'$ be a $G$-global weak equivalence of $G$-global spectra and let $Y$ be any $G$-global space; we want to show that $f\otimes Y$ is a $G$-global weak equivalence. Arguing precisely as above, we see that $\blank\otimes Y$ preserves $\ul\pi_*$-isomorphisms, so we may assume without loss of generality that $f$ is a map between flat $G$-spectra. Under this assumption, \cite[Proposition~3.2.14]{g-global} provides us with a commutative diagram
\begin{equation*}
\begin{tikzcd}
X\smashp\Sigma^\bullet Y\arrow[r, "\psi", "\sim"']\arrow[d, "f\smashp\Sigma^\bullet Y"'] & X\otimes Y \arrow[d, "f\otimes Y"]\\
X'\smashp\Sigma^\bullet Y\arrow[r, "\psi"', "\sim"] & X'\otimes Y
\end{tikzcd}
\end{equation*}
in which the horizontal maps are $\ul\pi_*$-isomorphisms (in particular $G$-global weak equivalences). By Proposition~\ref{prop:flatness-theorem} the left hand vertical arrow is a $G$-global weak equivalence, so the claim follows by $2$-out-of-$3$.
\end{proof}
\end{prop}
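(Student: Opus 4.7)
The plan is to prove the stronger statement that $\otimes$ preserves $\ul\pi_*$-isomorphisms in each variable, with a further reduction to the flat case needed in the spectrum variable. For the second variable, fix a $G$-spectrum $X$ and a $G$-global weak equivalence $g\colon Y\to Y'$ of pointed $G$-$\mathcal I$-spaces, and pick an arbitrary homomorphism $\phi\colon H\to G$. The standard identification of $\pi^H_*$ of such a diagonal tensor product --- namely \cite[Lemma~3.2.11]{g-global} --- compares the action of $\phi^*(X\otimes g)=\phi^*X\otimes\phi^*g$ on $\pi^H_*$ with that of the levelwise smash $\phi^*X\smashp(\phi^*g)(\mathcal U_H)$, where $\mathcal U_H$ is a complete $H$-set universe. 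Since by the definition of $G$-global weak equivalence $(\phi^*g)(\mathcal U_H)$ is already an $H$-equivariant weak equivalence of pointed simplicial sets, this levelwise smash is even an $H$-equivariant level weak equivalence of $H$-spectra, and thus in particular a $\ul\pi_*$-isomorphism.

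For the first variable, fix a $G$-$\mathcal I$-space $Y$ and a $G$-global weak equivalence $f\colon X\to X'$. The same argument as above (with the roles of the two factors swapped) shows that $\blank\otimes Y$ also preserves $\ul\pi_*$-isomorphisms, so after functorial flat cofibrant replacement we may assume that $X$ and $X'$ are flat $G$-spectra. One then introduces the natural comparison map $\psi\colon X\smashp\Sigma^\bullet Y\to X\otimes Y$, which compares the diagonal tensor product with the genuine smash product of $X$ with the suspension spectrum of $Y$, and invokes \cite[Proposition~3.2.14]{g-global} to the effect that $\psi$ is a $\ul\pi_*$-isomorphism whenever its spectrum input is flat. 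The Flatness Theorem (Proposition~\ref{prop:flatness-theorem}) then makes $f\smashp\Sigma^\bullet Y$ a $G$-global weak equivalence, and applying $2$-out-of-$3$ across the natural square with vertical maps $\psi_X$ and $\psi_{X'}$ transports this conclusion to $f\otimes Y$.

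The delicate technical inputs --- the $\pi^H_*$ formula for tensor products and the flat-case comparison with $\smashp\Sigma^\bullet$ --- have already been supplied in \cite{g-global}, so the principal conceptual point is organizational: one first reduces the harder spectrum variable to the flat case by using the easier $\mathcal I$-space variance, and only then brings in the (nontrivial) Flatness Theorem. The main residual obstacle is ensuring that the identification of \cite[Lemma~3.2.11]{g-global} is natural enough in the $\mathcal I$-space slot to be applied to a map, and that flat cofibrant replacement is a $\ul\pi_*$-isomorphism (which follows since its unit is a level weak equivalence in the flat level model structure); both are routine bookkeeping once the correct setup is in place.
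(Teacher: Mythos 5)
Your argument is correct and coincides with the paper's own proof: both variables are handled via the $\pi^H_*$-comparison of \cite[Lemma~3.2.11]{g-global}, the spectrum variable is reduced to flat objects using preservation of $\ul\pi_*$-isomorphisms, and the flat case is settled by the comparison map $\psi$ of \cite[Proposition~3.2.14]{g-global} together with the Flatness Theorem and $2$-out-of-$3$. The bookkeeping points you flag (naturality of the identification and that flat cofibrant replacements are level weak equivalences, hence $\ul\pi_*$-isomorphisms) are indeed routine and implicitly used in the paper as well.
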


\begin{proof}[Proof of Proposition~\ref{prop:Delta-star-homotopical}]
By Lemma~\ref{lemma:Delta-star-conservative}, it suffices to prove that $\Delta^*$ is homotopical. We already know that $\Delta^*$ preserves acyclic fibrations, so it only remains to show that it also sends acyclic cofibrations to weak equivalences. To this end, we will show that $\Delta^*$ is in fact also \emph{left} Quillen as a functor to $\cat{$\bm G$-Spectra}_\textup{injective $G$-global}$.

Let us first prove that $\Delta^*$ is left Quillen as a functor $G\text-\Sp(\glo{GlobalSpaces})_\text{proj.~$G$-global level}\to\cat{$\bm G$-Spectra}_\text{inj.~$G$-global}$. Indeed, it clearly sends generating cofibrations to injective cofibrations, so we only need to show that it sends generating acyclic cofibrations to weak equivalences. Such a generating acyclic cofibration is now of the form $\bm\Sigma(A,\blank)\smashp_{\Sigma_A} j_+$ for some $A\in\bm\Sigma$ and $j$ a (generating) acyclic cofibration in $\cat{$\bm{(G\times\Sigma_A)}$-$\bm{\mathcal I}$-SSet}$. Then
\begin{equation}\label{eq:Delta-star-gen-acyc-level}
\Delta^*(\bm\Sigma(A,\blank)\smashp_{\Sigma_A}j_+)=(\bm\Sigma(A,\blank)\otimes j_+)/\Sigma_A;
\end{equation}
but $\bm\Sigma(A,\blank)\otimes j_+$ is a $(G\times\Sigma_A)$-global weak equivalence by Proposition~\ref{prop:tensor-homotopical}, and $\Sigma_A$ acts freely on $\bm\Sigma(A,\blank)$, so Proposition~\ref{prop:free-quotient-spectra} shows that the quotient $(\ref{eq:Delta-star-gen-acyc-level})$ is a $G$-global weak equivalence. This completes the argument for the level model structure.

For the actual claim, it suffices now by Proposition~\ref{prop:proj-level-to-global} to show that $\Delta^*$ sends the maps in the set $S_G$ from $(\ref{eq:localizing-set-G-gl})$ to weak equivalences, i.e.~the maps $\kappa_{H,A,B}\ppo_H i_+$ for finite groups $H$, finite $H$-sets $A,B$, and generating cofibrations $i\colon X\to Y$ of the projective model structure on $\cat{$\bm{(G\times H)}$-$\bm{\mathcal I}$-SSet}_*$.

For this, we will pick the generating cofibrations as in Theorem~\ref{thm:I-G-glob} (so that they are maps between cofibrant objects), and we will show that $f\mathrel{:=}\Delta^*(\kappa_{H,A,B}\ppo i_+)$ is a $(G\times H)$-global weak equivalence; the claim will then follow from Proposition~\ref{prop:free-quotient-spectra} again as $f$ is an injective cofibration and $H$ acts levelwise freely on projectively cofibrant $(G\times H)$-global spaces.

To show that $f$ is a $(G\times H)$-global weak equivalence, write $S,T$ for the source and target of $\kappa_{H,A,B}$ and consider the image
\begin{equation*}
\begin{tikzcd}
S\otimes X_+\arrow[dr,phantom, "\ulcorner"{very near end}]\arrow[d, "\kappa_{H,A,B}\otimes X_+"']\arrow[r, "S\otimes i_+"] & S\otimes Y_+\arrow[d]\arrow[ddr, bend left=15pt, "\kappa_{H,A,B}\otimes Y_+"]\\
T\otimes X_+\arrow[r]\arrow[drr, bend right=15pt, "T\otimes i_+"'] & P\arrow[dr, "f"{description}]\\[-1em]
&&[-1em] T\otimes Y_+
\end{tikzcd}
\end{equation*}
under $\Delta^*$ of the diagram defining $\kappa_{H,A,B}\ppo i_+$. Now $\kappa_{H,A,B}$ is an $H$-equivariant weak equivalence of $H$-equivariantly \emph{projectively} cofibrant $H$-spectra, whence an $H$-global weak equivalence by Proposition~\ref{prop:equivariant-vs-global} and thus a $(G\times H)$-global weak equivalence with respect to the trivial $G$-actions. Proposition~\ref{prop:tensor-homotopical} therefore shows that $\kappa_{H,A,B}\otimes X_+$ and $\kappa_{H,A,B}\otimes Y_+$ are $(G\times H)$-global weak equivalences. But on the other hand $\kappa_{H,A,B}\otimes X_+$ is also an injective cofibration by direct inspection, so the pushout $S\otimes Y_+\to P$ is again a $(G\times H)$-global weak equivalence. We conclude by $2$-out-of-$3$ that also $f$ is a $(G\times H)$-global weak equivalence, which then completes the proof that $\Delta^*(\kappa_{H,A,B}\ppo_H i_+)$ is a $G$-global weak equivalence and hence the proof of the proposition.
\end{proof}

\begin{prop}
Let $H$ be a finite group, let $A$ be a finite faithful $H$-set, and let $\phi\colon H\to G$ be any homomorphism. Then the unit
\begin{equation*}
\eta\colon\bm\Sigma(A,\blank)\smashp_\phi G_+ \to \Delta^*\Delta_!(\bm\Sigma(A,\blank)\smashp_\phi G_+)
\end{equation*}
is a $G$-global weak equivalence.
\begin{proof}
By design, $\eta$ is induced by the `diagonal' map $\bm\Sigma(A,\blank)\to \bm\Sigma(A,\blank)\otimes \mathcal I(A,\blank)_+$; in particular, it has a left inverse induced by the unique map $p\colon\mathcal I(A,\blank)\to *$. By $2$-out-of-$3$ it therefore suffices to show that $(\bm\Sigma(A,\blank)\otimes p_+)\smashp_\phi G_+=\phi_!(\bm\Sigma(A,\blank)\otimes p_+)$ is a $G$-global weak equivalence.

For this we note that $p$ is an $H$-global weak equivalence, whence so is $\bm\Sigma(A,\blank)\otimes p_+$ by Proposition~\ref{prop:tensor-homotopical}. By faithfulness, $H$ acts freely on $\bm\Sigma(A,\blank)$ outside the basepoint, so the claim follows again from Proposition~\ref{prop:free-quotient-spectra}.
\end{proof}
\end{prop}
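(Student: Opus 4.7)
The plan is to compute both sides of $\eta$ explicitly, exhibit a natural retraction, and reduce via $2$-out-of-$3$ to a weak-equivalence statement amenable to the homotopical tools developed above. Since $\Delta_!$ is a cocontinuous left adjoint it commutes with the coinvariants $\phi_!=\smashp_\phi G_+$, and on the representable $\bm\Sigma(A,\blank)$ one has $\Delta_!\bm\Sigma(A,\blank)=\bm\Sigma(A,\blank)\smashp\mathcal I(A,\blank)_+$ by construction. Combined with the observation after Construction~\ref{constr:tensor-Sp-I} that $\Delta^*(X\smashp Y)=X\otimes Y$ and the fact that $\Delta^*$ is defined levelwise and so commutes with $\phi_!$, this identifies the codomain of $\eta$ with $\phi_!(\bm\Sigma(A,\blank)\otimes\mathcal I(A,\blank)_+)$; tracing through the Yoneda-type description of $\Delta_!$ further shows that $\eta$ is induced by the `diagonal' map $\bm\Sigma(A,\blank)\to\bm\Sigma(A,\blank)\otimes\mathcal I(A,\blank)_+$ sending a simplex $[i,\sigma]$ of $\bm\Sigma(A,B)$ to $([i,\sigma],i)$.

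The unique map $p\colon\mathcal I(A,\blank)\to*$ then induces via the tensor product a retraction of this diagonal, and hence also of $\phi_!(\eta)$ after applying the quotient. By $2$-out-of-$3$, it therefore suffices to prove that $\phi_!(\bm\Sigma(A,\blank)\otimes p_+)$ is a $G$-global weak equivalence.

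I would next argue that $p$ itself is an $H$-global weak equivalence: for any homomorphism $\psi\colon K\to H$ and any complete $K$-set universe $\mathcal U_K$, the $K$-set $\Inj(A,\mathcal U_K)$ (with $K$ acting on $A$ via $\psi$) is non-empty because $\mathcal U_K$ absorbs every countable $K$-set, and therefore the indiscrete simplicial set $\mathcal I(A,\mathcal U_K)=E(\Inj(A,\mathcal U_K))$ is $K$-equivariantly weakly contractible. Proposition~\ref{prop:tensor-homotopical} then yields that $\bm\Sigma(A,\blank)\otimes p_+$ is an $H$-global weak equivalence. Finally, the faithfulness of $A$ means that $H$, and a fortiori $\ker\phi$, acts levelwise freely outside the basepoint on $\bm\Sigma(A,\blank)$ and hence on $\bm\Sigma(A,\blank)\otimes\mathcal I(A,\blank)_+$; Proposition~\ref{prop:free-quotient-spectra} then delivers the claim upon applying $\phi_!$.

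The main obstacles I expect are the bookkeeping in the first paragraph---particularly the identification of $\eta$ with the stated diagonal map and the compatibility of $\Delta^*$ with the quotient $\phi_!$---and the verification that $\mathcal I(A,\blank)$ is $H$-globally contractible, where it is essential that $A$ is finite so that it indeed embeds into any complete $K$-set universe along any restriction $\psi\colon K\to H$; once these are in hand, the argument becomes a clean application of the previously established homotopical properties of $\otimes$ and of free quotients.
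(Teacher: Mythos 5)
Your argument is correct and is essentially the paper's own proof: identify the unit with the quotient of the diagonal map $\bm\Sigma(A,\blank)\to\bm\Sigma(A,\blank)\otimes\mathcal I(A,\blank)_+$, retract via $p_+$, reduce by $2$-out-of-$3$ to $\phi_!(\bm\Sigma(A,\blank)\otimes p_+)$, and conclude using Proposition~\ref{prop:tensor-homotopical} and Proposition~\ref{prop:free-quotient-spectra} together with faithfulness of $A$. The only difference is that you spell out the bookkeeping for $\Delta_!\dashv\Delta^*$ and the contractibility of $\mathcal I(A,\mathcal U_K)$ (via equivariant embeddings of the finite $H$-set $A$ into restricted complete universes), which the paper leaves implicit.
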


\begin{proof}[Proof of Theorem~\ref{thm:delta-quillen-equivalence}]
As $\cat{R}\Delta^*$ is conservative (Lemma~\ref{lemma:Delta-star-conservative}), it is enough to show that the derived unit is an isomorphism in the homotopy category. As $\Delta^*$ is homotopical (Proposition~\ref{prop:Delta-star-homotopical}), this amounts to saying that the ordinary unit $X\to\Delta^*\Delta_!X$ is a $G$-global weak equivalence for every projectively cofibrant $G$-global spectrum $X$.

This is a standard cell induction argument: namely, $\Delta_!$ is left Quillen while $\Delta^*$ is a left adjoint sending cofibrations to \emph{injective} cofibrations, so that it is enough to prove the claim for the sources and targets of the standard generating cofibrations, see e.g.~\cite[Lemma~1.2.64]{g-global}. However, these are of the form $\bm\Sigma(A,\blank)\smashp_\phi G_+\smashp K_+$ for some simplicial set $K$; as the tensoring over $\cat{SSet}$ is homotopical and the unit is compatible with the tensoring ($\Delta_!\dashv\Delta^*$ being a simplicial adjunction), the claim therefore follows from the previous proposition.
\end{proof}

\section{Global brave new algebra}\label{section:brave-new-algebra}
We now turn our attention to multiplicative structures in stable $G$-global homotopy theory, generalizing results for $G=1$ by Schwede \cite[Chapter~5]{schwede-book} and Hausmann \cite[Section~3]{hausmann-global}.

\subsection{Positive model structures} Already in the non-equivariant setting, the study of commutative ring spectra from a model categorical perspective requires one to introduce suitable `positive' model structures. This subsection is devoted to the construction of positive flat and projective $G$-global model structures; however, as most of this is entirely parallel to the construction of the usual $G$-global model structures \cite[3.1]{g-global}, we will be somewhat terse here.

\begin{prop}
There is a unique model structure on $\cat{$\bm G$-Spectra}$ in which a map $f$ is a weak equivalence or fibration if and only if $f(A)$ is a $\mathcal G_{\Sigma_A,G}$-weak equivalence or fibration, respectively, for every \emph{non-empty} finite set $A$. We call this the \emph{positive $G$-global projective level model structure} and its weak equivalences the \emph{positive $G$-global level weak equivalences}. It is combinatorial with generating cofibrations
\begin{equation}\label{eq:gen-cof-pos-proj}
\{\bm\Sigma(A,\blank)\smashp_\phi G_+\smashp(\del\Delta^n\hookrightarrow\Delta^n)_+ : A\not=\varnothing, H\subset\Sigma_A,\phi\colon H\to G,n\ge0\},
\end{equation}
simplicial, proper, and filtered colimits in it are homotopical.
\begin{proof}
To see that the model structure exists and is cofibrantly generated (hence combinatorial) with the above generating cofibrations it suffices to check the `consistency condition' of \cite[Proposition~C.23]{schwede-book} for the $\mathcal G_{\Sigma_A,G}$-model structure on $\cat{$\bm{(G\times\Sigma_A)}$-SSet}_*$ for $A\not=\varnothing$ and the model structure on $\cat{$\bm{(G\times\Sigma_\varnothing)}$-SSet}_*$ in which only isomorphisms are cofibrations. However, it is clear that for $A\not=\varnothing$ each $\bm\Sigma(A,B)\smashp_{\Sigma_A}\blank$ sends the usual generating acyclic cofibrations of $\cat{$\bm{(G\times\Sigma_A)}$-SSet}_*$ to injective cofibrations and weak equivalences (also see \cite[proof of Proposition~3.1.20]{g-global}) and even to isomorphisms for $A=\varnothing$.

Right properness, the statement about filtered colimits, and the Pullback Power Axiom for simplicial model categories follow immediately as all relevant constructions and notions are levelwise. Finally, all (generating) cofibrations are injective cofibrations, so left properness follows in the same way.
\end{proof}
\end{prop}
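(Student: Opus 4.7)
The plan is to imitate the construction of the (non-positive) $G$-global projective level model structure, using a standard transfer/lifting argument along the adjunction between $\cat{$\bm G$-Spectra}$ and $\prod_{A\in\bm\Sigma}\cat{$\bm{(G\times\Sigma_A)}$-SSet}_*$ whose right adjoint takes levels. Concretely, for each non-empty finite set $A$ I would equip $\cat{$\bm{(G\times\Sigma_A)}$-SSet}_*$ with the $\mathcal G_{\Sigma_A,G}$-model structure, while for $A=\varnothing$ I would equip $\cat{$\bm G$-SSet}_*=\cat{$\bm{(G\times\Sigma_\varnothing)}$-SSet}_*$ with the \emph{discrete} model structure in which the cofibrations (equivalently, the weak equivalences) are just the isomorphisms and everything is a fibration. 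The natural candidates for the generating (acyclic) cofibrations on $\cat{$\bm G$-Spectra}$ are then obtained by applying the free functor $\bm\Sigma(A,\blank)\smashp_{\Sigma_A}\blank$ to these; note the level $A=\varnothing$ contributes nothing, matching the advertised list $(\ref{eq:gen-cof-pos-proj})$.

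The existence of the model structure with these generating (acyclic) cofibrations, and the identification of its weak equivalences and fibrations, would then follow from the enriched transfer criterion (for instance Schwede's \cite[Proposition~C.23]{schwede-book}) once the consistency condition is verified. This is the one genuinely non-formal step: one must check that for every pair of finite sets $A,B$ the functor $\bm\Sigma(A,B)\smashp_{\Sigma_A}\blank$ sends generating acyclic cofibrations to weak equivalences at level $B$. For $A\not=\varnothing$ this is exactly the argument already used for the (non-positive) projective level model structure in \cite[proof of Proposition~3.1.20]{g-global} (using the decomposition $\bm\Sigma(A,B)\cong(\Sigma_B)_+\smashp_{\Sigma_{B\setminus A}}S^{B\setminus A}$), so essentially nothing new is needed. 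For $A=\varnothing$ the generating acyclic cofibrations are just isomorphisms, so the consistency condition is automatic; this is exactly where the `positive' modification pays off. I expect this verification to be the only delicate point, and it is essentially inherited from the existing non-positive case.

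The remaining properties are then formal and can be disposed of quickly. The model structure is combinatorial because it is cofibrantly generated with small sources (by local presentability of $\cat{$\bm G$-Spectra}$). Being simplicial (i.e.~satisfying the pullback–power axiom), being right proper, and having homotopical filtered colimits are all levelwise statements: fibrations, weak equivalences, limits, filtered colimits, and the cotensoring over $\cat{SSet}$ are all defined levelwise, so each of these properties is inherited directly from the corresponding property of the $\mathcal G_{\Sigma_A,G}$-model structures on $\cat{$\bm{(G\times\Sigma_A)}$-SSet}_*$ (and is trivial at level $\varnothing$). For left properness I would observe that each generating cofibration is levelwise an injective cofibration of $G$-simplicial sets, hence so is every cofibration; left properness then follows from the fact that pushouts of positive $G$-global level weak equivalences along levelwise injective cofibrations are computed levelwise and that each $\mathcal G_{\Sigma_A,G}$-model structure is left proper. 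In summary, all of the listed properties come for free from the corresponding properties at each level, so the only real input is the consistency check sketched above.
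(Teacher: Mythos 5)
Your overall strategy is exactly the paper's: lift the level model structures along the evaluation functors using the consistency criterion of \cite[Proposition~C.23]{schwede-book}, verify the consistency condition for $A\not=\varnothing$ by the same argument as for the non-positive $G$-global projective level structure, and dispose of combinatoriality, the pullback-power axiom, right properness, homotopical filtered colimits, and left properness levelwise (the last via the observation that all cofibrations are levelwise injective). However, there is a genuine slip in your choice of the auxiliary model structure at level $\varnothing$. The structure you describe --- ``cofibrations (equivalently, the weak equivalences) are just the isomorphisms and everything is a fibration'' --- does not exist: if the cofibrations are the isomorphisms then the acyclic fibrations (maps with the right lifting property against them) are \emph{all} maps, which forces the weak equivalences to be all maps; conversely, if the weak equivalences are the isomorphisms one is forced into the discrete model structure in which \emph{every} map is a cofibration. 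So the two conditions you call equivalent are incompatible, and the factorization axiom fails for the hybrid you wrote down.

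This matters beyond pedantry, because the lifted model structure has its weak equivalences and fibrations defined levelwise from the chosen level structures. To obtain the model structure asserted in the proposition --- where neither the weak equivalences nor the fibrations impose any condition at level $\varnothing$ --- you must take, at $A=\varnothing$, the model structure whose \emph{cofibrations are the isomorphisms} and whose weak equivalences (and fibrations) are \emph{all} maps; this is what the paper does. If you instead used the genuine discrete model structure (weak equivalences the isomorphisms, all maps cofibrations), the resulting structure would demand that $f(\varnothing)$ be an isomorphism for $f$ to be a weak equivalence, which is not the positive $G$-global level model structure, and level $\varnothing$ would then also have to contribute generating cofibrations, contradicting your own remark that it contributes nothing. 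With the level-$\varnothing$ structure corrected as above, the consistency check at $\varnothing$ is trivial (its acyclic cofibrations are isomorphisms), and the rest of your argument goes through exactly as in the paper.
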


Similarly one gets:

\begin{prop}
There is a unique model structure on $\cat{$\bm G$-Spectra}$ in which a map $f$ is a weak equivalence or fibration if and only if $f(A)$ is a weak equivalence or fibration, respectively, in the \emph{injective} $\mathcal G_{\Sigma_A,G}$-model structure for every $A\not=\varnothing$. We call this the \emph{positive flat $G$-global level model structure}; its weak equivalences are again the $G$-global positive level weak equivalences. Moreover, it is combinatorial with generating cofibrations
\begin{equation}\label{eq:gen-cof-pos-flat}
\{(\bm\Sigma(A,\blank)\smashp G_+)/H\smashp(\del\Delta^n\hookrightarrow\Delta^n)_+ : A\not=\varnothing, H\subset\Sigma_A\times G,n\ge0\},
\end{equation}
simplicial, proper, and filtered colimits in it are homotopical.\qed
\end{prop}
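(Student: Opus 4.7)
My plan is to parallel the proof of the positive projective case just above, applying \cite[Proposition~C.23]{schwede-book} to the family of model structures on $\cat{$\bm{(G\times\Sigma_A)}$-SSet}_*$ consisting of the \emph{injective} $\mathcal G_{\Sigma_A,G}$-model structure for $A\neq\varnothing$ and the structure whose only cofibrations are the isomorphisms for $A=\varnothing$; the latter choice matches the omission of the level $A=\varnothing$ in $(\ref{eq:gen-cof-pos-flat})$. Once the hypothesis of \emph{op.~cit.} is verified, this immediately yields a cofibrantly generated (hence combinatorial) model structure with the stated generating cofibrations and with fibrations and weak equivalences detected levelwise for $A\neq\varnothing$. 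The coincidence of its weak equivalences with the positive $G$-global level weak equivalences of the preceding proposition is then automatic since the injective and projective $\mathcal G_{\Sigma_A,G}$-model structures share the same class of weak equivalences.

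The actual content lies in verifying the consistency condition: for every pair $A,B$ of finite sets and every generating acyclic cofibration $j$ of the $A$-structure, $\bm\Sigma(A,B)\smashp_{\Sigma_A}j$ must be a weak equivalence (and ideally an injective cofibration) in the $B$-structure. The case $A=\varnothing$ is vacuous since then $j$ is an isomorphism. For nonempty $A$, I would invoke the identification $\bm\Sigma(A,B)\cong(\Sigma_B)_+\smashp_{\Sigma_{B\setminus A}}S^{B\setminus A}$ as a $(\Sigma_B,\Sigma_A)$-biequivariant pointed simplicial set (used already in the proof of Proposition~\ref{prop:level-model-structures}) to factor $\bm\Sigma(A,B)\smashp_{\Sigma_A}(\blank)$ as
\begin{equation*}
\cat{$\bm{(G\times\Sigma_A)}$-SSet}_*\xrightarrow{S^{B\setminus A}\smashp\blank}\cat{$\bm{(G\times\Sigma_A\times\Sigma_{B\setminus A})}$-SSet}_*\xrightarrow{k_!}\cat{$\bm{(G\times\Sigma_B)}$-SSet}_*
\end{equation*}
with $k\colon\Sigma_A\times\Sigma_{B\setminus A}\hookrightarrow\Sigma_B$ the evident embedding. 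For the injective model structures both arrows are left Quillen: the first by Corollary~\ref{cor:pointed-g-sset-enriched} (applied to $\glo{GlobalSpaces}$), and the second because $k$ is injective, so $k_!$ preserves injective cofibrations and, by \cite[Proposition~1.1.18]{g-global} (cf.~the argument in Example~\ref{ex:exotic}), sends $\mathcal G_{\Sigma_A\times\Sigma_{B\setminus A},G}$-weak equivalences to $\mathcal G_{\Sigma_B,G}$-weak equivalences. This gives much more than the consistency condition requires.

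The remaining properties all reduce to levelwise statements. Right properness, the Pullback Power Axiom (which is precisely the statement that the model structure is simplicial), and the claim that filtered colimits are homotopical follow directly from the corresponding properties of the injective $\mathcal G_{\Sigma_A,G}$-model structures, since (co)fibrations and weak equivalences as well as the simplicial (co)tensoring are all defined levelwise. Left properness is equally immediate once one notes that every generating cofibration in $(\ref{eq:gen-cof-pos-flat})$ is in particular an injective cofibration of symmetric spectra. The only genuinely new technical point compared to the projective version is the compatibility of $k_!$ with the \emph{injective} equivariant model structures; I expect this to be the main (minor) obstacle, and it is essentially already dispatched by the citation above.
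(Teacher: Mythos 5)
Your overall skeleton (transfer via \cite[Proposition~C.23]{schwede-book} with the isomorphisms-only structure at level $\varnothing$, and levelwise deduction of the remaining properties) is indeed how the paper intends this to go, but your verification of the consistency condition contains a genuine gap. The claim that $\bm\Sigma(A,B)\smashp_{\Sigma_A}\blank$ is left Quillen from the injective $\mathcal G_{\Sigma_A,G}$-structure to the injective $\mathcal G_{\Sigma_B,G}$-structure is not justified by Corollary~\ref{cor:pointed-g-sset-enriched}, and in fact it is false. The corollary concerns the flat model structures of a (pointed) preglobal model category; the injective graph-family structures appearing as the levels here are not of that form---for them, restriction along a projection such as $G\times\Sigma_A\times\Sigma_{B\setminus A}\to G\times\Sigma_A$ is \emph{not} homotopical, and this is exactly what the first leg of your factorization would need. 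Concretely, take $G=\mathbb Z/2$, let $A$ be a one-point set (allowed, since $A\neq\varnothing$; then the level-$A$ weak equivalences are just the underlying ones), let $B\setminus A$ have two elements, and let $j\colon EG_+\to (CEG)_+$ be the inclusion of a free contractible $G$-simplicial set into its cone, an injective cofibration and underlying weak equivalence, hence an acyclic cofibration in the injective $\mathcal G_{\Sigma_A,G}$-structure. For the graph subgroup $\Gamma\subset\Sigma_{B\setminus A}\times G$ of an isomorphism $\Sigma_{B\setminus A}\cong G$ one computes $(S^{B\setminus A}\smashp X)^{\Gamma}\cong \Delta(S^1)\smashp X^{G}$ (the diagonal circle in $S^1\smashp S^1$ smashed with the $G$-fixed points), so $S^{B\setminus A}\smashp j$ induces $*\to S^1$ on $\Gamma$-fixed points and is not a weak equivalence for the graph family of $\Sigma_A\times\Sigma_{B\setminus A}\times G$; the defect survives induction along $\Sigma_A\times\Sigma_{B\setminus A}\hookrightarrow\Sigma_B$, since the relevant wedge summand of $\bm\Sigma(A,B)\smashp j$ is $\Gamma$-stable. (Your second leg, the claim about $k_!$, is fine; the problem is entirely in the first one.)

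The failure is specific to the global direction: for the projective structures the generating acyclic cofibrations are smash products with horn inclusions, hence equivariant deformation retract inclusions, and in the $G$-\emph{equivariant} flat case (families $\mathcal G_{G,\Sigma_A}$) the subgroups picked up at level $B$ restrict to subgroups that again lie in the level-$A$ family; here, by contrast, a graph subgroup over $\Sigma_B$ can act on $X$ through a subgroup of $\Sigma_A\times G$ meeting $1\times G$ nontrivially, which the level-$A$ weak equivalences do not see. Accordingly, the paper does not argue as you propose: its (implicit) proof is ``as in the positive projective case,'' with the nontrivial consistency input for the flat levels imported from the non-positive flat level model structure of \cite[Proposition~3.1.23]{g-global} rather than deduced from a left Quillen property of $\bm\Sigma(A,B)\smashp_{\Sigma_A}\blank$ for the injective structures. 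Your own remark that your argument ``gives much more than the consistency condition requires'' should have been a warning sign: the computation above shows that this stronger statement is incompatible with the graph families used here, so the verification genuinely has to proceed along different lines than in your proposal.
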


\begin{rk}
The above generating cofibrations agree with the generating cofibrations of Hausmann's \emph{$G$-equivariant positive flat model structure} from \cite[discussion after Proposition~2.28]{hausmann-equivariant}. In particular, the above cofibrations are independent of the group $G$ and agree with what Hausmann calls \emph{positive flat cofibrations}, cf.~Remark~2.20 of \emph{op. cit.}
\end{rk}

For later use we record the following relationship to the usual projective and flat cofibrations:

\begin{lemma}\label{lemma:positive-vs-absolute}
Let $f\colon X\to Y$ be a map in $\cat{$\bm G$-Spectra}$. Then:
\begin{enumerate}
\item $f$ is a positive flat cofibration if and only if it is a flat cofibration and $f(\varnothing)$ is an isomorphism.
\item $f$ is a positive $G$-global projective cofibration if and only if it is a $G$-global projective cofibration and $f(\varnothing)$ is an isomorphism.
\end{enumerate}
\begin{proof}
This is immediate from the characterization of cofibrations in terms of latching maps given in \cite[Proposition~C.23]{schwede-book}.
\end{proof}
\end{lemma}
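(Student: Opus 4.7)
The plan is to reduce both statements to the latching object characterization of cofibrations provided by \cite[Proposition~C.23]{schwede-book}. That proposition says that for a model structure on $\cat{$\bm G$-Spectra}$ built level-wise from model structures on each $\cat{$\bm{(G\times\Sigma_A)}$-SSet}_*$ subject to the standard consistency condition, a map $f\colon X\to Y$ is a cofibration if and only if for every finite set $A$ the $A$-th latching map $\nu_A(f)\colon X(A)\cup_{L_AX}L_AY\to Y(A)$ is a cofibration in the chosen level model structure on $\cat{$\bm{(G\times\Sigma_A)}$-SSet}_*$, where $L_A$ denotes the usual latching object of a symmetric spectrum at $A$.

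First I would observe that $L_\varnothing X$ is simply the basepoint: by definition it is a colimit indexed over finite sets properly injecting into $\varnothing$, and this indexing category is empty, so the colimit in $\cat{$\bm{(G\times\Sigma_\varnothing)}$-SSet}_*$ is just $*$. Consequently $\nu_\varnothing(f)$ is literally the map $f(\varnothing)$. Next I would note that the ordinary and positive flat level model structures use the exact same level model structures on $\cat{$\bm{(G\times\Sigma_A)}$-SSet}_*$ for every $A\neq\varnothing$, namely the injective $\mathcal G_{\Sigma_A,G}$-equivariant model structures, and likewise for the projective variants; the only difference between the ordinary and positive versions occurs at $A=\varnothing$, where the positive variants replace the $G$-equivariant model structure by the one whose only cofibrations are isomorphisms.

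Combining these two observations with the latching characterization then yields both assertions simultaneously: a positive (flat or projective) cofibration is precisely a map whose latching maps at every $A\neq\varnothing$ are cofibrations in the corresponding equivariant model structure and whose latching map at $\varnothing$ is an isomorphism, which by the analogous characterization applied to the ordinary flat (resp.\ projective) level model structure is the same as a flat (resp.\ projective) cofibration together with the requirement that $f(\varnothing)$ be an isomorphism.

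The main point to verify is that \cite[Proposition~C.23]{schwede-book} really does apply to the positive level model structures as stated; this is automatic, since their very existence in the propositions above was established by checking the consistency condition of that result. No further obstacle arises.
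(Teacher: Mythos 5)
Your proof is correct and is exactly the argument the paper intends: the paper's one-line proof invokes the same latching-map characterization from \cite[Proposition~C.23]{schwede-book}, and your elaboration (empty latching object at $\varnothing$, identical level model structures at all $A\neq\varnothing$, only isomorphisms as cofibrations at $\varnothing$ in the positive case) is precisely the "immediate" verification left to the reader.
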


\begin{defi}
A $G$-spectrum $X$ is called a \emph{positive $G$-global $\Omega$-spectrum} if for every finite group $H$, any \emph{non-empty} finite faithful $H$-set $A$, and every finite $H$-set $B$ the adjoint structure map
\begin{equation*}
X(A)\to\cat{R}\Omega^{B}X(A\amalg B)
\end{equation*}
is a $\mathcal G_{H,G}$-weak equivalence.
\end{defi}

As before, if $X$ is fibrant in either of the above positive level model structures, then this is represented by the ordinary adjoint structure map.

\begin{prop}\label{prop:pos-proj}
There is a unique model structure on $\cat{$\bm G$-Spectra}$ whose cofibrations are the positive $G$-global projective cofibrations and whose weak equivalences are the usual $G$-global weak equivalences. We call this the \emph{positive $G$-global projective model structure}. Its fibrant objects are precisely the positively projectively level fibrant positive $G$-global $\Omega$-spectra.

Moreover, it is again combinatorial with generating cofibrations $(\ref{eq:gen-cof-pos-proj})$, simplicial, proper, and filtered colimits in it are homotopical.
\begin{proof}
We will first construct a Bousfield localization with the above fibrant objects. For this we recall the maps $\lambda_{H,A,B}\colon S^B\smashp\bm\Sigma(A\amalg B,\blank)\to\bm\Sigma(A,\blank)$ from Remark~\ref{rk:equivariant-proj-gen-cof} for finite groups $H$ and finite $H$-sets $A,B$. Varying over all homomorphisms $\phi\colon H\to G$ from finite groups to $H$ and restricting $A$ to \emph{non-empty faithful} $H$-sets, the maps $\phi_!\lambda_{H,A,B}$ are then maps between cofibrant objects of the positive $G$-global projective level model structure corepresenting $\phi$-fixed points of the adjoint structure maps. Factoring each of them as a cofibration $\kappa_{\phi,A,B}$ followed by a $G$-global positive level weak equivalence, \cite[Proposition~A.3.7.3]{htt} applied to the set of all $\kappa_{\phi,A,B}$'s then gives a Bousfield localization with the above fibrant objects, and this is automatically again combinatorial, left proper, simplicial, and filtered colimits in it are homotopical (see \cite[Lemma~A.2.4]{g-global} for the final statement).

We now claim that the weak equivalences agree with the $G$-global weak equivalences. For this we first observe that the identity constitutes a Quillen adjunction $\cat{$\bm G$-Spectra}_\text{positive $G$-global proj.}\rightleftarrows\cat{$\bm G$-Spectra}_\text{$G$-global proj.}$ by Lemma~\ref{lemma:check-QA-fibrant} as the left adjoint preserves cofibrations and the right adjoint preserves fibrant objects. On the other hand, a simple cofinality argument shows that positive $G$-global \emph{level} weak equivalences are $\ul\pi_*$-isomorphisms, hence in particular $G$-global weak equivalences, so every weak equivalence in the above model structure is a $G$-global weak equivalence.

Conversely, let $f\colon X\to Y$ be a $G$-global weak equivalence; we want to show that it is a weak equivalence in the above model structure. Using the previous direction and functorial factorizations in the above model structure, we reduce by $2$-out-of-$3$ to the case that $X$ and $Y$ are fibrant in the above sense, i.e.~they are positive $G$-global $\Omega$-spectra and positively projectively level fibrant. Then the natural maps $X\to\Omega\sh X, Y\to\Omega\sh Y$ are positive $G$-global level weak equivalences, and $\Omega\sh f$ is a $G$-global weak equivalence of $G$-global $\Omega$-spectra, hence in particular a (positive) $G$-global level weak equivalence. Thus, another application of $2$-out-of-$3$ shows that also $f$ is a positive $G$-global level weak equivalence, hence in particular a weak equivalence in the above model structure as claimed.

Finally, we observe that despite its definition right properness is independent of the class of fibrations \cite[Proposition~2.5]{rezk-proper}, so right properness of the positive $G$-global projective model structure follows from right properness of the usual $G$-global projective model structure.
\end{proof}
\end{prop}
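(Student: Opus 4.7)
The plan is to obtain the positive $G$-global projective model structure as a left Bousfield localization of the positive $G$-global projective level model structure at a suitable set of maps, and then to identify its weak equivalences and fibrant objects.

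First I would set up the localizing set. For each finite group $H$, each \emph{non-empty} finite faithful $H$-set $A$, each finite $H$-set $B$, and each homomorphism $\phi\colon H\to G$, the map $\phi_!\lambda_{H,A,B}\colon S^B\smashp\bm\Sigma(A\amalg B,\blank)\smashp_\phi G_+\to\bm\Sigma(A,\blank)\smashp_\phi G_+$ corepresents (up to natural conjugation) the $\phi$-fixed points of the adjoint structure map $X(A)\to\Omega^BX(A\amalg B)$. Source and target are cofibrant in the positive projective level model structure (here it is crucial that $A\neq\varnothing$). Factoring $\phi_!\lambda_{H,A,B}$ as a positive projective level cofibration $\kappa_{\phi,A,B}$ followed by a positive level weak equivalence and applying \cite[Proposition~A.3.7.3]{htt} to the set $\{\kappa_{\phi,A,B}\}$ yields a combinatorial, simplicial, left proper Bousfield localization whose local objects are precisely the positively projectively level fibrant positive $G$-global $\Omega$-spectra.

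The key step is then to identify the weak equivalences of this localization with the $G$-global weak equivalences. For one direction, one notes via Lemma~\ref{lemma:check-QA-fibrant} that the identity provides a Quillen adjunction from the new model structure to the usual $G$-global projective model structure of Theorem~\ref{thm:equiv-stable} (adapted to the $G$-global setting), since every fibrant object in the new structure is in particular fibrant in the $G$-global level sense, and every positive $G$-global $\Omega$-spectrum which is positively projectively level fibrant is actually a $G$-global $\Omega$-spectrum at the non-positive level. For the converse, one uses $2$-out-of-$3$ together with functorial fibrant replacements in the new model structure to reduce to the case of a $G$-global weak equivalence $f\colon X\to Y$ between fibrant objects. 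Then the natural maps $X\to\Omega\sh X$ and $Y\to\Omega\sh Y$ are positive level weak equivalences by the positive $\Omega$-spectrum condition (the shifts remove the possibly-missing $A=\varnothing$ constraint), while $\Omega\sh f$ is a $G$-global weak equivalence between \emph{non-positive} $G$-global $\Omega$-spectra, hence a level weak equivalence; by $2$-out-of-$3$, $f$ itself is a positive level weak equivalence, and in particular a weak equivalence in the localized model structure.

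Finally, the formal properties are routine: combinatoriality and left properness are inherited from the localization; simpliciality follows because (co)tensoring is defined levelwise and the positive level structure is simplicial; right properness is independent of the class of fibrations by \cite[Proposition~2.5]{rezk-proper}, so it follows from right properness of the $G$-global projective model structure (which has the same weak equivalences); filtered colimits are homotopical because $G$-global weak equivalences are closed under filtered colimits and this property passes through the localization. The main obstacle I anticipate is the second half of the weak-equivalence identification: showing that after restricting the defining condition to \emph{non-empty faithful} $H$-sets one still captures all $G$-global weak equivalences rather than a strictly larger class, and the $\Omega\sh$ trick is what makes this work.
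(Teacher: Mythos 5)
Your overall route is the same as the paper's: localize the positive projective level structure at the maps $\kappa_{\phi,A,B}$ built from $\phi_!\lambda_{H,A,B}$ for non-empty faithful $A$, identify the local equivalences with the $G$-global weak equivalences via a comparison with the absolute projective structure in one direction and the $\Omega\sh$-trick in the other, and then quote the usual facts for the formal properties. The second direction and the formal properties are fine, but the first direction contains an error and a gap. The error: to get the Quillen adjunction $\id\colon\cat{$\bm G$-Spectra}_\textup{pos.\ $G$-gl.\ proj.}\rightleftarrows\cat{$\bm G$-Spectra}_\textup{$G$-gl.\ proj.}:\!\id$ from Lemma~\ref{lemma:check-QA-fibrant} you must check that the \emph{right} adjoint preserves fibrant objects, i.e.\ that every projectively level fibrant $G$-global $\Omega$-spectrum is fibrant in the positive structure---which is immediate, since the positive conditions are a subset of the absolute ones. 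You instead assert the converse: that a positively level fibrant positive $G$-global $\Omega$-spectrum is level fibrant and an $\Omega$-spectrum ``at the non-positive level.'' That is false: neither positive level fibrancy nor the positive $\Omega$-condition constrains the value at $A=\varnothing$ (replace $X(\varnothing)$ by a point, say), and indeed the whole point of positive model structures is that their fibrant objects are in general \emph{not} fibrant absolutely.

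The gap: even once the Quillen adjunction is in place, it only shows that \emph{acyclic cofibrations} of the localized structure are $G$-global weak equivalences. A general weak equivalence of the localization factors as an acyclic cofibration followed by an acyclic fibration, and the acyclic fibrations are those of the positive \emph{level} structure; so to conclude that every weak equivalence of the new structure is a $G$-global weak equivalence (one half of the assertion that its weak equivalences are exactly the $G$-global ones) you still need to know that positive $G$-global level weak equivalences are $G$-global weak equivalences. This is not a tautology, since such maps impose nothing at level $\varnothing$; the paper supplies it by a cofinality argument showing that positive level weak equivalences are $\ul\pi_*$-isomorphisms. Your proposal omits this ingredient entirely, so the identification of the weak equivalences is incomplete. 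Note also that the obstacle you flag at the end---capturing all $G$-global weak equivalences despite restricting to non-empty faithful $A$---is exactly what your $\Omega\sh$ argument handles; the genuinely missing piece is the opposite containment just described.
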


\begin{prop}\label{prop:pos-flat}
There is a unique model structure on $\cat{$\bm G$-Spectra}$ whose cofibrations are the positive flat cofibrations and whose weak equivalences are the usual $G$-global weak equivalences. We call this the \emph{positive flat $G$-global model structure}. Its fibrant objects are precisely those positive $G$-global $\Omega$-spectra that are fibrant in the positive flat $G$-global level model structure.

Moreover, this model structure is again combinatorial with generating cofibrations $(\ref{eq:gen-cof-pos-flat})$, simplicial, proper, and filtered colimits in it are homotopical.
\begin{proof}
Arguing precisely as before we get a model structure with the desired cofibrations and fibrant objects. By abstract nonsense about Bousfield localizations, a map $f$ is a weak equivalence in this model structure or the one from the previous proposition if and only if $[f,T]$ is bijective for every positive $G$-global $\Omega$-spectrum $T$, where $[\,{,}\,]$ denotes hom sets in the localization at the positive $G$-global level weak equivalences. In particular, its weak equivalences agree with the ones from the previous proposition, i.e.~with the $G$-global weak equivalences.

Finally, all the remaining properties are established precisely as in the previous proposition.
\end{proof}
\end{prop}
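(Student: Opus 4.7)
The plan is to mimic the construction of the positive $G$-global projective model structure from Proposition~\ref{prop:pos-proj}, replacing the projective level model structure with the positive flat $G$-global level model structure throughout. That is, I will obtain the desired model structure as a Bousfield localization of the positive flat level model structure at a carefully chosen set of maps, and then argue that this localization has the same weak equivalences as the positive projective localization.

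First, I would build the localization. Pick the maps $\phi_!\lambda_{H,A,B}\colon S^B\smashp\bm\Sigma(A\amalg B,\blank)\smashp_\phi G_+\to \bm\Sigma(A,\blank)\smashp_\phi G_+$ indexed by finite groups $H$, non-empty faithful finite $H$-sets $A$, finite $H$-sets $B$, and homomorphisms $\phi\colon H\to G$, and factor each as a positive flat cofibration $\kappa'_{\phi,A,B}$ followed by a positive flat $G$-global level weak equivalence. Apply \cite[Proposition~A.3.7.3]{htt} to localize at the set of all $\kappa'_{\phi,A,B}$. This automatically yields a combinatorial, left proper, simplicial model structure in which filtered colimits are homotopical (the last property using \cite[Lemma~A.2.4]{g-global}). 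The corepresentation argument via $\bm\Sigma(A,\blank)\smashp_\phi G_+$ identifies its fibrant objects as the positively flatly level fibrant positive $G$-global $\Omega$-spectra.

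Next comes the key step: identifying the weak equivalences with the $G$-global weak equivalences. By general Bousfield localization theory, a map $f$ is a weak equivalence in this model structure if and only if $[f,T]$ is a bijection for every positive $G$-global $\Omega$-spectrum $T$, where $[\,{,}\,]$ denotes morphisms in the localization of $\cat{$\bm G$-Spectra}$ at the positive $G$-global level weak equivalences. The same abstract characterization applies to the positive $G$-global \emph{projective} model structure of Proposition~\ref{prop:pos-proj}, since the positive flat and positive projective level model structures share the same weak equivalences and the class of local objects (the positive $G$-global $\Omega$-spectra that are positively level fibrant in the relevant sense) gives rise to the same right-orthogonal in the homotopy category. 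Hence the two localizations have the same weak equivalences, and by Proposition~\ref{prop:pos-proj} these coincide with the $G$-global weak equivalences.

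Finally, the generating cofibrations $(\ref{eq:gen-cof-pos-flat})$ generate the positive flat cofibrations by definition of the level model structure, so they also serve as generating cofibrations for the localized model structure. Properness is the one point where a small additional argument is needed: left properness is inherited from the level model structure via the localization, while right properness is transported from the positive $G$-global projective model structure using Rezk's result \cite[Proposition~2.5]{rezk-proper} that right properness depends only on the classes of weak equivalences and cofibrations, together with the fact that every positive flat cofibration is in particular a cofibration in the $G$-global flat model structure. The anticipated main obstacle is verifying in sufficient detail that the set of $\kappa'_{\phi,A,B}$ indeed cuts out precisely the positive $G$-global $\Omega$-spectra among the positively flatly level fibrant objects---this requires the same pushout-product and corepresentation bookkeeping as in Proposition~\ref{prop:projective-model-structure-G-gl}, now carried out in the positive flat setting.
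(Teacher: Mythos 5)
Your proposal is correct and follows essentially the same route as the paper: one localizes the positive flat level model structure at the factored maps $\kappa'_{\phi,A,B}$ (exactly as in the positive projective case), identifies the local objects as the positively flatly level fibrant positive $G$-global $\Omega$-spectra, and then concludes that the weak equivalences agree with those of the positive projective localization---hence with the $G$-global weak equivalences---via the characterization in terms of $[f,T]$ for positive $G$-global $\Omega$-spectra $T$, computed in the common localization at the positive level weak equivalences. The one slip is in your properness step: Rezk's result says that right properness depends on the class of weak equivalences alone (not also on the cofibrations), which is exactly what legitimizes transporting it from any already-established right proper model structure with the same weak equivalences (e.g.\ the usual $G$-global flat one); your added remark comparing cofibration classes is neither needed nor, as stated, sufficient.
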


\begin{rk}
For $G=1$ the above two model structures again agree, and they recover Hausmann's \emph{positive global model structure} \cite[Theorem~2.18]{hausmann-global}.
\end{rk}

\begin{rk}\label{rk:gen-acyclic-pos-flat}
We will never need to know explicitly how the generating acyclic cofibrations in the above model structures look like. However, we record for later use that the generating cofibrations $(\ref{eq:gen-cof-pos-flat})$ are maps between cofibrant objects, so \cite[Corollary~2.7]{barwick-tractable} shows that we can also find a set of generating acyclic cofibrations for the positive $G$-global flat model structure consisting of maps between cofibrant objects.
\end{rk}

Next, we come to functoriality properties:

\begin{lemma}
Let $\alpha\colon H\to G$ be any homomorphism. Then we have Quillen adjunctions
\begin{align*}
\alpha_!\colon\cat{$\bm H$-Spectra}_\textup{pos.~$H$-global proj.}&\rightleftarrows\cat{$\bm G$-Spectra}_\textup{pos.~$G$-global proj.} :\!\alpha^*\\
\alpha^*\colon\cat{$\bm G$-Spectra}_\textup{pos.~$G$-global flat}&\rightleftarrows
\cat{$\bm H$-Spectra}_\textup{pos.~$H$-global flat} :\!\alpha_*.
\end{align*}
\begin{proof}
For the first statement, we first observe that this holds for the corresponding level model structures as a consequence of Lemma~\ref{lemma:graph-target}. For the actual model structures at hand it suffices then to observe that $\alpha^*$ sends fibrant objects to positive $H$-global $\Omega$-spectrum by direct inspection.

For the second statement, it is clear that $\alpha^*$ preserves positive flat cofibrations and sends $G$-global weak equivalences to $H$-global weak equivalences.
\end{proof}
\end{lemma}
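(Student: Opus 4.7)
The plan is to prove the two adjunctions separately, following the pattern established earlier in the excerpt for the non-positive model structures. For the first (positive projective) adjunction, I would first verify that $\alpha_!\dashv\alpha^*$ is Quillen for the positive projective \emph{level} model structures: since (acyclic) fibrations there are defined levelwise at non-empty finite sets $A$ and tested in the $\mathcal G_{\Sigma_A,G}$-equivariant model structure on $\cat{$\bm{(\Sigma_A\times G)}$-SSet}_*$, this reduces directly to Lemma~\ref{lemma:graph-target} applied to $\Sigma_A\times\alpha$. To upgrade to the full positive projective model structures, I would then invoke Lemma~\ref{lemma:check-QA-fibrant}: the cofibrations agree with those of the level model structures, so all that remains is to check that $\alpha^*$ sends fibrant objects to positive $H$-global $\Omega$-spectra. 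The key observation here is that if $X$ is positive projectively level fibrant, then so is $\alpha^*X$, and its adjoint structure maps are just the restrictions along $\mathrm{id}\times\alpha\colon H'\times H\to H'\times G$ of those of $X$. Since a graph subgroup $\Gamma_{K,\phi}\in\mathcal G_{H',H}$ pulls back from the graph subgroup $\Gamma_{K,\alpha\phi}\in\mathcal G_{H',G}$, the $\mathcal G_{H',G}$-weak equivalence condition on $X$ forces a $\mathcal G_{H',H}$-weak equivalence for $\alpha^*X$, as desired.

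For the second (positive flat) adjunction the argument is more direct. By Lemma~\ref{lemma:positive-vs-absolute}, a positive flat cofibration is precisely a flat cofibration $f$ with $f(\varnothing)$ an isomorphism; since $\alpha^*$ only alters the group action on a fixed underlying symmetric spectrum, the condition at level $\varnothing$ is automatically preserved, and Proposition~\ref{prop:sp-functoriality-general} already ensures that $\alpha^*$ preserves flat cofibrations. Combining these two facts, $\alpha^*$ preserves positive flat cofibrations. The same proposition states that $\alpha^*$ is homotopical on $G$-global weak equivalences, hence in particular carries positive flat acyclic cofibrations to $H$-global weak equivalences, which completes the Quillen adjunction verification.

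The only step requiring any genuine content is the fibrant-objects check in the projective case, but as outlined this amounts to bookkeeping with the way graph subgroups restrict under $\mathrm{id}\times\alpha$, and so I do not expect it to present any real obstacle.
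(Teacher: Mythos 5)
Your proposal is correct and takes essentially the same approach as the paper: reduce the projective case to the level model structures via Lemma~\ref{lemma:graph-target} and then check that $\alpha^*$ sends fibrant objects to positive $H$-global $\Omega$-spectra, and for the flat case observe directly that $\alpha^*$ preserves positive flat cofibrations and $G$-global weak equivalences. The only (harmless) difference is that you spell out the appeals to Lemma~\ref{lemma:check-QA-fibrant} and Lemma~\ref{lemma:positive-vs-absolute}, which the paper leaves implicit in its ``by direct inspection'' and ``it is clear'' steps.
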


\begin{lemma}
Let $\alpha\colon H\to G$ be \emph{injective}. Then we also have Quillen adjunctions
\begin{align*}
\alpha_!\colon\cat{$\bm H$-Spectra}_\textup{pos.~$H$-global flat}&\rightleftarrows\cat{$\bm G$-Spectra}_\textup{pos.~$G$-global flat} :\!\alpha^*\\
\alpha^*\colon\cat{$\bm G$-Spectra}_\textup{pos.~$G$-global proj.}&\rightleftarrows
\cat{$\bm H$-Spectra}_\textup{pos.~$H$-global proj.} :\!\alpha_*.
\end{align*}
\begin{proof}
For the first statement we observe that $\alpha_!$ sends generating cofibrations to generating cofibrations by direct inspection and that it is homotopical by Proposition~\ref{prop:sp-functoriality-injective}.

For the second statement, the corresponding statement for level model structures follows from the fact that $\alpha^*$ sends $\mathcal G_{\Sigma_A,G}$-cofibrations to $\mathcal G_{\Sigma_A,H}$-cofibrations by Lemma~\ref{lemma:graph-target}. The actual claim then follows as $\alpha^*$ is homotopical.
\end{proof}
\end{lemma}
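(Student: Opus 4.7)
The plan is to mirror the argument structure of the immediately preceding lemma (for general $\alpha$), but drawing on the injective-$\alpha$ statements of Proposition~\ref{prop:sp-functoriality-injective} in place of the general-$\alpha$ statements of Proposition~\ref{prop:sp-functoriality-general}. The key observation that allows us to reduce the positive case to the absolute case is Lemma~\ref{lemma:positive-vs-absolute}, which characterizes positive (flat or projective) cofibrations as absolute cofibrations that induce an isomorphism on level $\varnothing$. Since $\alpha^*$ and $\alpha_!$ are both computed levelwise on the underlying symmetric spectra (with $(\alpha_! X)(A)=G_+\smashp_H X(A)$ and $(\alpha^*Y)(A)=Y(A)$ with restricted action), both functors preserve the condition that a map is an isomorphism at level $\varnothing$; thus we only have to verify preservation in the absolute (non-positive) world.

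For the first adjunction, I would show that $\alpha_!$ is left Quillen for the positive flat model structures by checking the two halves of the definition separately. The weak equivalences of the positive flat and absolute flat model structures agree (both are the $H$-global, resp.\ $G$-global, weak equivalences), so by Proposition~\ref{prop:sp-functoriality-injective} the functor $\alpha_!$ already sends flat acyclic cofibrations to $G$-global weak equivalences; combined with preservation of positive flat cofibrations (by the level-$\varnothing$ discussion above applied to the cofibration half of Proposition~\ref{prop:sp-functoriality-injective}) this immediately yields that $\alpha_!$ is left Quillen.

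For the second adjunction, I would argue that $\alpha^*$ is homotopical (by Proposition~\ref{prop:sp-functoriality-general}) and preserves positive projective cofibrations. The latter again reduces via Lemma~\ref{lemma:positive-vs-absolute} to two claims: that $\alpha^*$ preserves absolute $G$-global projective cofibrations, which is the first Quillen adjunction of Proposition~\ref{prop:sp-functoriality-injective}, and that it preserves the property of being an isomorphism on level $\varnothing$, which is immediate since $\alpha^*$ acts by restriction on underlying $G$-simplicial sets at each level. Homotopicality plus cofibration preservation is enough to conclude that $\alpha^*$ is left Quillen.

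There is essentially no hard step here: the whole point of having introduced the preparatory Lemma~\ref{lemma:positive-vs-absolute} and the absolute-case Proposition~\ref{prop:sp-functoriality-injective} is to make the positive variants routine. The only point requiring a moment of care is book-keeping the fact that $\alpha_!$ and $\alpha^*$ commute with level evaluation (in particular at level $\varnothing$), so that `isomorphism on level $\varnothing$' is preserved in both directions; once this is observed, both statements fall out formally.
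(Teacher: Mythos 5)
Your proof is correct, but it takes a slightly different route than the paper. For both adjunctions you reduce the positive case to the absolute one via Lemma~\ref{lemma:positive-vs-absolute} (positive cofibration $=$ absolute cofibration $+$ isomorphism at level $\varnothing$), note that $\alpha_!$ and $\alpha^*$ are computed levelwise and hence preserve the level-$\varnothing$ condition, and then quote the absolute-case Quillen adjunctions of Proposition~\ref{prop:sp-functoriality-injective} together with the fact that the positive model structures have the same weak equivalences as the absolute ones. The paper instead argues directly: for the flat statement it checks that $\alpha_!$ sends the positive generating cofibrations $(\ref{eq:gen-cof-pos-flat})$ to generating cofibrations and uses that $\alpha_!$ is homotopical; for the projective statement it establishes preservation of positive projective (level) cofibrations by going back to the equivariant level, using that $(\Sigma_A\times\alpha)^*$ sends $\mathcal G_{\Sigma_A,G}$-cofibrations to $\mathcal G_{\Sigma_A,H}$-cofibrations (Lemma~\ref{lemma:graph-target}), and then again invokes homotopicality of $\alpha^*$. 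Your approach is more uniform—both halves become formal consequences of the non-positive statements—at the cost of leaning on the latching-map characterization underlying Lemma~\ref{lemma:positive-vs-absolute}; the paper's checks are equally short but require inspecting the generators (flat case) and re-running the levelwise graph-subgroup argument (projective case). Both arguments are complete; in particular your deduction that a positive (acyclic) cofibration is sent to a positive (acyclic) cofibration, by combining cofibration preservation with preservation of weak equivalences, is exactly the right bookkeeping.
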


Arguing as for the usual model structures we then conclude from the above:

\begin{cor}
The positive $G$-global projective and flat model structures for varying $G$ make $\cat{Spectra}$ into a global model category $\glo{GlobalSpectra}^\pos$.\qed
\end{cor}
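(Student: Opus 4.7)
The plan is to verify the two defining properties of a global model category: first, that $\glo{GlobalSpectra}^\pos$ is a preglobal model category, and second, that it satisfies one of the equivalent conditions of Proposition~\ref{prop:global-tfae}.

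For the preglobal axioms, the existence, left properness, combinatoriality, and simplicial enrichment of the positive $G$-global projective and flat model structures are already supplied by Propositions~\ref{prop:pos-proj} and~\ref{prop:pos-flat}, and both model structures have the same class of weak equivalences (the $G$-global weak equivalences) by construction. To check that every positive projective cofibration is a positive flat cofibration, I would invoke Lemma~\ref{lemma:positive-vs-absolute}: positive projective (resp.\ positive flat) cofibrations are exactly the projective (resp.\ flat) cofibrations that are isomorphisms at $\varnothing$, so the inclusion of classes follows from the non-positive case established earlier. The change-of-group Quillen adjunctions for arbitrary and injective $\alpha$ are precisely the content of the two lemmas immediately preceding the statement.

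For globality I would verify formulation~(\ref{item:gtfae-free-quotient}) of Proposition~\ref{prop:global-tfae}, mirroring the argument of Example~\ref{ex:S-Sp-global}. Given a span $C \xleftarrow{q} A \xrightarrow{g} B$ with $\ker q \cap \ker g = 1$ and a positively projectively cofibrant $X \in C\text-\cat{Spectra}$, the key observation is that $X$ is in particular projectively cofibrant, so $C$ acts levelwise freely on $X$ outside the basepoint. Consequently, the isotropy of $q^*X$ at each non-basepoint simplex is exactly $\ker q$, and since $\ker g \cap \ker q = 1$, the subgroup $\ker g$ acts levelwise freely on $q^*X$ outside the basepoint. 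If $Y \to q^*X$ is a cofibrant replacement in the positive projective $A$-global model structure, then $Y$ is again projectively cofibrant, so $\ker g$ also acts levelwise freely on $Y$ outside the basepoint. Proposition~\ref{prop:free-quotient-spectra} therefore applies and gives that $g_!Y \to g_!q^*X$ is a $B$-global weak equivalence.

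There is no real obstacle here; the work was already done in setting up Propositions~\ref{prop:pos-proj}, \ref{prop:pos-flat}, Lemma~\ref{lemma:positive-vs-absolute}, and the two change-of-group lemmas, and the global axiom is reduced verbatim to the argument used for $\glo{GlobalSpectra}$ in Example~\ref{ex:S-Sp-global} via the observation that positive projective cofibrancy implies projective cofibrancy.
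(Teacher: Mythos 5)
Your argument is correct and is exactly the proof the paper intends: the preglobal axioms follow from Propositions~\ref{prop:pos-proj} and~\ref{prop:pos-flat}, Lemma~\ref{lemma:positive-vs-absolute}, and the two preceding change-of-group lemmas, while globality is checked via formulation~(\ref{item:gtfae-free-quotient}) of Proposition~\ref{prop:global-tfae} by noting that positively projectively cofibrant objects are projectively cofibrant, hence carry levelwise free actions, so Proposition~\ref{prop:free-quotient-spectra} applies---precisely the argument of Example~\ref{ex:S-Sp-global} that the paper's ``arguing as for the usual model structures'' refers to.
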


On the other hand we have straight from the definition of cofibrations and weak equivalences:

\begin{cor}
The identity defines a global Quillen equivalence $\glo{GlobalSpectra}^\pos\rightleftarrows\glo{GlobalSpectra}$.\qed
\end{cor}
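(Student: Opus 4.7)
The plan is to verify directly the two defining conditions of a global Quillen equivalence for the identity adjunction $\id\colon\glo{GlobalSpectra}^\pos\rightleftarrows\glo{GlobalSpectra} :\!\id$.

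First, I would check that this is a global Quillen adjunction, i.e.~that at each finite group $G$ the identity is left Quillen from the positive to the non-positive model structure for both the projective and flat $G$-global model structures. For the cofibrations this is immediate from Lemma~\ref{lemma:positive-vs-absolute}: every positive flat (resp.~positive $G$-global projective) cofibration is a flat (resp.~$G$-global projective) cofibration, simply by forgetting the extra condition that the map be an isomorphism in level $\varnothing$. For the acyclic cofibrations nothing new is needed, since by Propositions~\ref{prop:pos-proj} and~\ref{prop:pos-flat} the weak equivalences of the positive model structures are the usual $G$-global weak equivalences, i.e.~they coincide with those of the non-positive model structures.

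Second, I would argue that each of these Quillen adjunctions is a Quillen equivalence. This is essentially formal from the fact that the two model structures share the same class of weak equivalences and the same underlying category. Concretely, given a cofibrant $X$ in the positive $G$-global projective (or flat) model structure, one picks a fibrant replacement $X\to Y$ in the non-positive model structure; then this map represents the derived unit, and it is tautologically a $G$-global weak equivalence. The derived counit is handled analogously, using that every fibrant object in the positive model structure remains an object of the non-positive category and a cofibrant replacement in the non-positive model structure is still a weak equivalence in the shared sense.

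There is no real obstacle here: once Lemma~\ref{lemma:positive-vs-absolute} is in hand, the entire argument reduces to the observation that an identity adjunction between two model structures on the same category with the same weak equivalences, where one has fewer cofibrations than the other, is automatically a Quillen equivalence. The content of the corollary is therefore just the bookkeeping that this holds for every $G$ and both the projective and flat pairings simultaneously, which is exactly what the notion of global Quillen equivalence demands.
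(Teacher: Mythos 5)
Your argument is correct and is exactly the one the paper leaves implicit (the corollary is stated with a \qed, "straight from the definition of cofibrations and weak equivalences"): positive cofibrations are cofibrations via Lemma~\ref{lemma:positive-vs-absolute}, the weak equivalences coincide, and an identity adjunction between two such model structures is automatically a Quillen equivalence, applied for every $G$ and both the projective and flat structures. The only cosmetic slip is that in the counit step the cofibrant replacement should be taken in the \emph{positive} model structure, but since the weak equivalences agree this does not affect anything.
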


Finally let us record how the smash product behaves with respect to the above model structures:

\begin{cor}\label{cor:smash-left-Quillen}
The smash product $\cat{$\bm G$-Spectra}\times\cat{$\bm G$-Spectra}\to\cat{$\bm G$-Spectra}$ is a left Quillen bifunctor in each of the following cases:
\begin{enumerate}
\item the $G$-global positive flat model structure and the $G$-global flat model structure on the source, and the $G$-global positive flat model structure on the target
\item the $G$-global positive flat model structure and the $G$-global projective model structure on the source, and the $G$-global positive projective model structure on the target
\item the $G$-global positive projective model structure and the $G$-global flat model structure on the source, and the $G$-global positive projective model structure on the target.
\end{enumerate}
\begin{proof}
For the ordinary projective and flat model structures this is Theorem~\ref{thm:smash-g-global}. The claims now follow from this via Lemma~\ref{lemma:positive-vs-absolute} and the natural isomorphism $(X\smashp Y)(\varnothing)\cong X(\varnothing)\smashp Y(\varnothing)$ for all symmetric spectra $X,Y$.
\end{proof}
\end{cor}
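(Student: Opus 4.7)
The plan is to deduce all three statements directly from the corresponding non-positive results in Theorem~\ref{thm:smash-g-global} by combining the pointset-level characterizations of positive cofibrations in Lemma~\ref{lemma:positive-vs-absolute} with the obvious compatibility $(X\smashp Y)(\varnothing)\cong X(\varnothing)\smashp Y(\varnothing)$ of the smash product with evaluation at the empty set.

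Concretely, given cofibrations $f, g$ in the two source model structures of any one of the three cases, I would first observe that since positive (projective or flat) cofibrations are in particular projective or flat cofibrations respectively, Theorem~\ref{thm:smash-g-global} immediately tells us that the pushout product $f\ppo g$ is a cofibration in the non-positive version of the target model structure (projective in cases (2) and (3), flat in case (1)). By Lemma~\ref{lemma:positive-vs-absolute}, to upgrade this to a positive cofibration it suffices to check that $(f\ppo g)(\varnothing)$ is an isomorphism. In each of the three cases exactly one of $f$ or $g$ is assumed to be a \emph{positive} cofibration, so by the same lemma the corresponding map evaluated at $\varnothing$ is an isomorphism; since smashing with an isomorphism of pointed objects preserves the pushout-product structure, it follows formally that $(f\ppo g)(\varnothing)$ is an isomorphism as well.

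For the acyclicity half of the left Quillen bifunctor axiom, one notes that the positive and non-positive model structures share the same class of weak equivalences (namely the $G$-global weak equivalences), so if in addition either $f$ or $g$ is a weak equivalence, then $f\ppo g$ is a weak equivalence by another appeal to Theorem~\ref{thm:smash-g-global}. Combining these observations gives the left Quillen bifunctor property in all three cases.

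The only step requiring any thought is the claim that $(f\ppo g)(\varnothing)$ is an isomorphism when one of $f,g$ is so at $\varnothing$; but since evaluation at $\varnothing$ is strong symmetric monoidal and commutes with pushouts, this reduces to the elementary fact that the pushout product of an isomorphism with any map in $\cat{$\bm G$-SSet}_*$ is again an isomorphism. No genuinely hard step is involved.
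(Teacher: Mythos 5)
Your argument is correct and is exactly the paper's approach: the paper's (very terse) proof likewise deduces the positive cases from Theorem~\ref{thm:smash-g-global} by invoking Lemma~\ref{lemma:positive-vs-absolute} together with the isomorphism $(X\smashp Y)(\varnothing)\cong X(\varnothing)\smashp Y(\varnothing)$, and your write-up just spells out the details (pushout product is a non-positive cofibration, evaluation at $\varnothing$ is monoidal and cocontinuous, pushout product with an isomorphism is an isomorphism, and the weak equivalences coincide).
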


\subsection{Smash powers and norms} Next, we come to homotopical properties of smash powers.

\begin{constr}
Let $X$ be a $G$-spectrum and let $n\ge1$. Then the $n$-fold smash power $X^{\smashp n}$ carries $n$ commuting $G$-actions as well as a $\Sigma_n$-action (by permuting the factors). Together these assemble into a natural action of the \emph{wreath product} $\Sigma_n\wr G=\Sigma_n\ltimes G^n$, lifting $(\blank)^{\smashp n}$ to a functor $\cat{$\bm G$-Spectra}\to\cat{$\bm{(\Sigma_n\wr G)}$-Spectra}$.
\end{constr}

\begin{constr}
Let $H\subset G$ be finite groups and set $n\mathrel{:=} |G/H|$. Then any choice of right $H$-coset representatives $g_1,\dots,g_n$ defines an injective homomorphism $\iota\colon G\to\Sigma_n\wr H$ as follows: $\iota(g)=(\sigma(g);h_1(g),\dots,h_n(g))$ with $gg_i=g_{\sigma(g)(i)}h_i(g)$. The composite
\begin{equation}\label{eq:hhr-norm}
N^G_H\colon \cat{$\bm H$-Spectra}\xrightarrow{(\blank)^{\smashp n}} \cat{$\bm{(\Sigma_n\wr H)}$-Spectra}\xrightarrow{\iota^*}\cat{$\bm G$-Spectra}
\end{equation}
is then called the (Hill-Hopkins-Ravenel) \emph{norm}.
\end{constr}

For our purposes, the key result on the equivariant behaviour of the norm will be the following:

\begin{thm}\label{thm:norm-equivariant}
The composite $(\ref{eq:hhr-norm})$ sends $H$-equivariant weak equivalences between flat $H$-spectra to $G$-equivariant weak equivalences (of flat $G$-spectra).
\begin{proof}
This is the special case $N=1$ of \cite[Theorem~6.8]{hausmann-equivariant}.
\end{proof}
\end{thm}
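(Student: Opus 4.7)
The plan is to decompose the norm $N^G_H$ into its two constituents, the smash power $(\blank)^{\smashp n}\colon\cat{$\bm H$-Spectra}\to\cat{$\bm{(\Sigma_n\wr H)}$-Spectra}$ and the restriction $\iota^*$ along the injective homomorphism $\iota\colon G\to\Sigma_n\wr H$, and to analyse each separately.

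For the flatness of the output, I would use that a map of equivariant spectra is a flat cofibration if and only if its underlying non-equivariant map is so, independently of the group action (Remark~2.20 of \cite{hausmann-equivariant}). The pushout-product axiom for the flat model structure on $\cat{Spectra}$ then implies inductively that $X^{\smashp n}$ is flat as a symmetric spectrum, and hence as a $(\Sigma_n\wr H)$-spectrum, whenever $X$ is flat as an $H$-spectrum; and since $\iota^*$ preserves the underlying non-equivariant structure, $N^G_H(X)$ is flat as a $G$-spectrum.

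The weak equivalence statement reduces via the same decomposition to the intermediate claim that $f^{\smashp n}\colon X^{\smashp n}\to Y^{\smashp n}$ is a $(\Sigma_n\wr H)$-equivariant weak equivalence: once this is established, applying $\iota^*$ (which preserves equivariant weak equivalences since $\iota$ is injective, see Section~5.2 of \cite{hausmann-equivariant}) yields the theorem. To prove the intermediate claim, I would factor $f$ in the $H$-equivariant flat model structure as an acyclic flat cofibration $j\colon X\hookrightarrow Z$ followed by an acyclic fibration $p\colon Z\to Y$ between flat spectra, and treat the two halves separately. The cofibration part yields an acyclic $(\Sigma_n\wr H)$-equivariant flat cofibration $j^{\smashp n}\colon X^{\smashp n}\to Z^{\smashp n}$ via an iterated pushout-product argument in the equivariant flat model structures. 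For the fibration part, the natural idea is to realise $p^{\smashp n}$ as a telescope of maps of the form $\id^{\smashp k}\smashp p\smashp\id^{\smashp(n-k-1)}$ and apply the equivariant Flatness Theorem (\cite[Proposition~6.2]{hausmann-equivariant}) at each stage.

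The principal obstacle lies in the $\Sigma_n$-equivariance of the fibration half: the telescope $X^{\smashp n}\to X^{\smashp(n-1)}\smashp Y\to\cdots\to Y^{\smashp n}$ breaks the $\Sigma_n$-symmetry, and the intermediate stages do not even support $\Sigma_n$-actions, so this naive strategy only detects equivariance for subgroups of $H^n\subset\Sigma_n\wr H$. The honest argument instead requires checking fixed points against arbitrary subgroups $K\subset\Sigma_n\wr H$; decomposing the orbits of $K$ on $\{1,\dots,n\}$ reduces the analysis to smash-power problems for smaller wreath products applied to restrictions of $f$ along the isotropy of each orbit, and an induction on $n$ then closes the argument. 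This orbit-wise induction is the technical heart of \cite[Theorem~6.8]{hausmann-equivariant}, the statement at hand being its $N=1$ case.
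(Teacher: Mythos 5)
Your proposal ends exactly where the paper's proof begins and ends: the paper proves Theorem~\ref{thm:norm-equivariant} purely by quoting \cite[Theorem~6.8]{hausmann-equivariant} with $N=1$, and since your final sentence defers the "technical heart" to that very theorem, the load-bearing content of your argument coincides with the paper's. Your outer reductions are correct and are the standard packaging: flatness of $N^G_H X$ follows because flat cofibrations are detected on underlying non-equivariant spectra and $\iota^*$ does not change the underlying spectrum, and the passage through $\iota^*$ is harmless for weak equivalences because $\iota$ is injective.

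One caveat on the sketch itself, so you do not mistake it for an independent proof: the claim that the acyclic-cofibration half is settled by "an iterated pushout-product argument in the equivariant flat model structures" is not justified, and the obstruction is the same one you correctly identify for the fibration half. The pushout-product axiom is internal to a fixed group's category of equivariant spectra; applied to the $H$-equivariant acyclic flat cofibration $j$ it only controls $j^{\smashp n}$ with respect to subgroups of $\Sigma_n\wr H$ that do not mix the smash factors (essentially subgroups of $H^n$), exactly as your telescope only sees $H^n$. Acyclicity of $j^{\smashp n}$ for subgroups involving nontrivial permutations (already for $H=1$ and the full $\Sigma_n$, where fixed points see the "diagonal" of the smash power) is precisely the non-formal wreath-equivariant homotopy invariance of smash powers, and Hausmann's proof of Theorem~6.8 has to do genuine work there even after reducing to generating (acyclic) cofibrations. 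So both halves of your factorization require the orbit/fixed-point analysis you describe at the end; the only complete proof contained in your proposal is the citation, which is precisely the proof the paper gives.
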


We now want to consider the smash powers and norms from a $G$-global perspective. Here we will prove the following stronger result:

\begin{thm}\label{thm:smash-power-wreath}
Let $f\colon X\to Y$ be a $G$-global weak equivalence and assume $X$ and $Y$ are flat. Then $f^{\smashp n}$ is a $(\Sigma_n\wr G)$-global weak equivalence.
\end{thm}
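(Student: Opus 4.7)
\medskip
\noindent\emph{Proof sketch.} By definition of $(\Sigma_n\wr G)$-global weak equivalences, it suffices to fix an arbitrary finite group $H$ and homomorphism $\phi\colon H\to\Sigma_n\wr G$ and prove that $\phi^*(f^{\smashp n})$ is an $H$-equivariant weak equivalence. Writing $\phi(h)=(\pi(h);g_1(h),\dots,g_n(h))$, the projection to $\Sigma_n$ gives an $H$-action $\pi\colon H\to\Sigma_n$ on $\{1,\dots,n\}$; let $O_1,\dots,O_k$ be its orbits, fix representatives $i_j\in O_j$ with stabilizers $H_j\mathrel{:=}\mathrm{Stab}_H(i_j)$, and note that evaluating the $i_j$-th coordinate of $\phi|_{H_j}$ yields a homomorphism $\phi_j\colon H_j\to G$.

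The first step of the proof is to identify $\phi^*(X^{\smashp n})$ as an $H$-spectrum with $\bigwedge_{j=1}^k N_{H_j}^H(\phi_j^*X)$, naturally in $X$. This is an essentially formal unraveling: restricting along any choice of coset representatives $h_{j,1},\dots,h_{j,n_j}\in H$ for $H/H_j$ yields the standard model of the Hill--Hopkins--Ravenel norm $(\phi_j^*X)^{\smashp n_j}$ with its $H$-action pulled back along the embedding $H\to\prod_j\Sigma_{n_j}\wr H_j$ determined by these choices, and the smash of these models is precisely the factor decomposition $X^{\smashp n}\cong\bigwedge_jX^{\smashp O_j}$ with its inherited $H$-action from $\phi$.

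Given this natural decomposition, $\phi^*(f^{\smashp n})$ is identified with $\bigwedge_j N_{H_j}^H(\phi_j^*f)$. Since flatness of a $G$-spectrum is a condition on the underlying symmetric spectrum, each $\phi_j^*X,\phi_j^*Y$ is flat, and by definition of a $G$-global weak equivalence each $\phi_j^*f$ is an $H_j$-equivariant weak equivalence. Theorem~\ref{thm:norm-equivariant} then gives that $N_{H_j}^H(\phi_j^*f)$ is an $H$-equivariant weak equivalence between flat $H$-spectra. Finally, the equivariant Flatness Theorem \cite[Proposition~6.2]{hausmann-equivariant} ensures that smashing $H$-equivariant weak equivalences between flat $H$-spectra together yields an $H$-equivariant weak equivalence, whence $\phi^*(f^{\smashp n})$ is an $H$-equivariant weak equivalence as desired.

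The main technical obstacle is the bookkeeping in the natural iso\-mor\-phism of step one: one has to check that the $H$-action obtained by restricting along $\phi$ and the $H$-action on $\bigwedge_j N_{H_j}^H(\phi_j^*X)$ coming from the definition of the norm (which itself depended on a choice of coset representatives) actually agree, which requires unpacking both conventions carefully; everything else is either formal or a direct appeal to already-established results.
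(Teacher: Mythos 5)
Your argument is correct, but it takes a genuinely different route from the paper. You fix $\phi\colon H\to\Sigma_n\wr G$ at the outset and decompose $\phi^*(X^{\smashp n})$ along the orbits of the induced $H$-action on $\{1,\dots,n\}$ as a smash $\bigwedge_j N_{H_j}^H(\phi_j^*X)$ of norms over the orbit stabilizers; the global hypothesis on $f$ enters through the possibly non-injective coordinate homomorphisms $\phi_j\colon H_j\to G$, and you then combine Theorem~\ref{thm:norm-equivariant} with the equivariant Flatness Theorem (plus the fact that norms of flat spectra are flat, so the orbit factors can be smashed one at a time) to conclude. The paper instead first proves the $(\Sigma_n\wr G)$-\emph{equivariant} statement in one stroke by observing that the smash power is itself a norm: $f^{\smashp n}\cong N^{\Sigma_n\wr G}_K(\pi^*f)$ for the stabilizer $K\subset\Sigma_n\wr G$ of $1\in\{1,\dots,n\}$ and the (non-injective) projection $\pi\colon K\to G$ — this is where globality of $f$ is used — and then upgrades to the global statement by the formal trick of replacing $G$ by $G\times H$ (trivial $H$-action) and restricting along an injective homomorphism $H\to\Sigma_n\wr(G\times H)$. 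Both proofs rest on the same non-formal input, Hausmann's Theorem~\ref{thm:norm-equivariant}; yours is more computational (orbit/coset bookkeeping, which you correctly flag as the delicate point, and an extra appeal to the Flatness Theorem to assemble the orbit factors), while the paper's avoids the orbit decomposition entirely at the price of the slightly less obvious observation that the full wreath smash power is a single norm from $K$. Your identification of $\phi^*(X^{\smashp n})$ with the smash of norms is indeed a standard (if fiddly) fact, so the sketch is sound as written.
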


Restricting along the above homomorphism $\iota$ this immediately implies:

\begin{cor}
Let $H\subset G$ be finite groups. Then $N^G_H\colon\cat{$\bm H$-Spectra}\to\cat{$\bm G$-Spectra}$ sends $H$-global weak equivalences of flat $H$-spectra to $G$-global weak equivalences (of flat $G$-spectra).\qed
\end{cor}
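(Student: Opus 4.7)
The plan is to obtain the statement as a straightforward consequence of Theorem~\ref{thm:smash-power-wreath} together with the injective change-of-group calculus of Proposition~\ref{prop:sp-functoriality-general}. Specifically, given an $H$-global weak equivalence $f\colon X\to Y$ of flat $H$-spectra, I would first apply Theorem~\ref{thm:smash-power-wreath} \emph{with $G$ replaced by $H$} to conclude that $f^{\smashp n}\colon X^{\smashp n}\to Y^{\smashp n}$ is a $(\Sigma_n\wr H)$-global weak equivalence, where $n=[G:H]$.

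Next I would invoke Proposition~\ref{prop:sp-functoriality-general} for the injective homomorphism $\iota\colon G\hookrightarrow\Sigma_n\wr H$ used in the construction of the norm: by that proposition, the restriction functor $\iota^*$ is homotopical, and hence $N^G_H(f)=\iota^*(f^{\smashp n})$ is a $G$-global weak equivalence. The only remaining point is the flatness assertion, but this is immediate: flatness of a $G$-spectrum is a property of its underlying symmetric spectrum, and the smash product of flat spectra is again flat by Theorem~\ref{thm:smash-g-global} (applied iteratively to the left Quillen bifunctor structure of $\smashp$), so $X^{\smashp n}$ is flat and this property is preserved by $\iota^*$. There is no real obstacle here—the corollary is essentially a packaging of Theorem~\ref{thm:smash-power-wreath} through the definition $N^G_H=\iota^*\circ(\blank)^{\smashp n}$.
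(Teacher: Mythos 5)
Your argument is correct and is exactly the paper's: the corollary is deduced from Theorem~\ref{thm:smash-power-wreath} (applied with $H$ in place of $G$) by restricting along the injective homomorphism $\iota\colon G\to\Sigma_n\wr H$, using that restriction is homotopical (Proposition~\ref{prop:sp-functoriality-general}) and that flatness, being a property of the underlying symmetric spectrum, is preserved by smash powers and restriction. Nothing to add.
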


\begin{proof}[Proof of Theorem~\ref{thm:smash-power-wreath}]
Let $f\colon X\to Y$ be a $G$-global weak equivalence of flat $G$-spectra. As a first step, we will show that $f^{\smashp n}$ is a $(\Sigma_n\wr G)$-\emph{equivariant} weak equivalence. For this, the key observation will be that while the norm is defined in terms of the smash power, in the global setting we can also go the other way round, cf.~\cite[Remark~5.1.7-(iv)]{schwede-book}. Namely, write $K\subset\Sigma_n\wr G$ for the subgroup of those $(\sigma;g_1,\dots,g_n)$ with $\sigma(1)=1$, which comes with a projection homomorphism $\pi\colon K\to G,\pi(\sigma;g_1,\dots,g_n)=g_1$. If we now fix for each $i=1,\dots,n$ a permutation $\sigma_i$ with $\sigma_i(1)=i$, then the $(\sigma_i;1,\dots,1)$ form a system of coset representatives of $(\Sigma_n\wr G)/K$, and one easily checks from the definitions that the resulting homomorphism $\iota\colon\Sigma_n\wr G\to \Sigma_n\wr K$ is of the form
\begin{equation*}
(\sigma;g_1,\dots,g_n)\mapsto (\sigma; (?;g_1,?,\dots), (?;g_2,?,\dots),\dots)
\end{equation*}
where `$?$' denotes an entry we don't care about.

Now $\pi^*f$ is a $K$-equivariant weak equivalence by assumption on $f$, so $N^{\Sigma_n\wr G}_K(\pi^*f)$ is a $(\Sigma_n\wr G)$-equivariant weak equivalence by Theorem~\ref{thm:norm-equivariant}. But by the above description of $\iota$, this agrees with $f^{\smashp n}$ as map of $(\Sigma_n\wr G)$-spectra, completing the proof of the claim.

Now let $\phi\colon H\to \Sigma_n\wr G$ be any map. We have to show that $\phi^*(f^{\smashp n})$ is an $H$-equivariant weak equivalence. For this we view $f$ as a map of $(G\times H)$-spectra via the trivial $H$-action. Applying the above with $G$ replaced by $G\times H$ then shows that $f^{\smashp n}$ is a $\Sigma_n\wr (G\times H)$-equivariant weak equivalence where all copies of $H$ act trivially. The claim now follows by restricting along the \emph{injective} homomorphism
\begin{equation*}
H\xrightarrow{(\phi,\id)} (\Sigma_n\wr G)\times H\xrightarrow\delta \Sigma_n\wr (G\times H)
\end{equation*}
with $\delta$ given by $\delta(\sigma;g_1,\dots,g_n;h)=(\sigma;(g_1,h),\dots,(g_n,h))$.
\end{proof}

Finally, we come to the key property of the \emph{positive} model structures that will allow us to establish model structures on commutative algebras below:

\begin{lemma}\label{lemma:smash-power-free}
Let $X$ be a \emph{positive} flat spectrum and let $n\ge1$. Then the $\Sigma_n$-action on $X^{\smashp n}$ is levelwise free outside the basepoint.
\begin{proof}
This is a special case of \cite[Proposition~7.7*-(a)]{harper-corrigendum}.
\end{proof}
\end{lemma}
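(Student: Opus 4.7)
The plan is to reduce the claim to the hypothesis $X(\varnothing)=*$ and then analyze the smash power via the Day-convolution description of the smash product of symmetric spectra. First, Lemma~\ref{lemma:positive-vs-absolute} applied to the cofibration $*\to X$ shows that a positive flat spectrum $X$ must satisfy $X(\varnothing)=*$; this is the only consequence of positivity I will use.

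Second, I would write down the level $X^{\smashp n}(C)$ for a finite set $C$ as the Day-convolution coend
\[
\int^{(A_1,\ldots,A_n)\in\bm\Sigma^n} X(A_1)\smashp\cdots\smashp X(A_n)\smashp\bm\Sigma(A_1\amalg\cdots\amalg A_n,C),
\]
with $\Sigma_n$ acting by permuting the tensor factors (together with the obvious simultaneous permutation of the coend indexing variables and the coproduct in the final factor). Since $X(\varnothing)=*$, every coend-summand indexed by a tuple with some $A_i=\varnothing$ contributes only the basepoint, so any non-basepoint $k$-simplex of $X^{\smashp n}(C)$ admits a representative $(x_1,\ldots,x_n;\iota,s)$ in which all $A_i\neq\varnothing$, each $x_i$ is a non-basepoint $k$-simplex of $X(A_i)$, $\iota\colon A_1\amalg\cdots\amalg A_n\hookrightarrow C$ is an injection, and $s$ is a $k$-simplex of $S^{C\setminus\iota(\coprod_i A_i)}$. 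The coend relations induced by morphisms in $\bm\Sigma^n$ may change the source sets $A_i$, but they always leave invariant the ordered tuple $(\iota(A_1),\ldots,\iota(A_n))$ of image subsets of $C$; hence this tuple is a well-defined $\Sigma_n$-equivariant invariant on non-basepoint simplices, valued in ordered $n$-tuples of pairwise disjoint non-empty subsets of $C$, with $\Sigma_n$ acting by permutation of coordinates.

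Finally, if $\sigma\in\Sigma_n$ fixes a non-basepoint simplex, then the image tuple must satisfy $(\iota(A_{\sigma^{-1}(1)}),\ldots,\iota(A_{\sigma^{-1}(n)}))=(\iota(A_1),\ldots,\iota(A_n))$; since the $\iota(A_i)$ are non-empty and pairwise disjoint, they are distinct elements of the power set of $C$, and this equality forces $\sigma=1$. The $\Sigma_n$-action is therefore levelwise free off the basepoint. The main obstacle is the bookkeeping needed to pin down the coend presentation and verify invariance of the image-tuple; once that is in place, the combinatorial conclusion is essentially immediate from the single hypothesis $X(\varnothing)=*$.
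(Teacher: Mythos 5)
Your first step is fine: by Lemma~\ref{lemma:positive-vs-absolute} a positive flat spectrum indeed has $X(\varnothing)=*$. But the reduction to this single hypothesis is where the argument breaks down, and it cannot be repaired, because the statement you are then proving is false. The error is the claim that the coend relations ``leave invariant the ordered tuple $(\iota(A_1),\ldots,\iota(A_n))$ of image subsets of $C$.'' Morphisms in $\bm\Sigma$ are not just injections: a morphism $A_i'\to A_i$ is an injection $j$ \emph{together with} a simplex of $S^{A_i\setminus j(A_i')}$, and the corresponding coend relation identifies $[x_1,\ldots,X(j,t)(x_i'),\ldots;(\iota,s)]$ with $[x_1,\ldots,x_i',\ldots;(\iota\circ(\id\amalg j\amalg\id),\,s\smashp\iota_*t)]$. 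In other words, smash coordinates slide back and forth between the slots $X(A_i)$ and the slot $\bm\Sigma(A_1\amalg\cdots\amalg A_n,C)$, and in doing so the image $\iota(A_i)$ shrinks (or, reading the relation backwards, grows into the complement of the total image). So your proposed invariant is not well defined on equivalence classes, and the freeness conclusion does not follow from $X(\varnothing)=*$ alone.

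A concrete counterexample is the ``positive truncation'' $Y$ of the sphere, with $Y(\varnothing)=*$, $Y(A)=S^A$ for $A\neq\varnothing$ and structure maps the canonical isomorphisms. This $Y$ satisfies your hypothesis but is not flat (its latching map in level $2$ is the fold map $S^2\vee S^2\to S^2$), and $\Sigma_2$ does \emph{not} act freely on $Y\smashp Y$ off the basepoint: in any level $C$ with $|C|\ge 3$, take the class represented by singletons placed at $1$ and $2$ with all smash coordinates (including the spare one at $3$) equal to a fixed non-basepoint simplex of $S^1$. Using the relations above one can slide the support of either factor through the free coordinate $3$, giving identifications of marked pairs $(1,2)\to(3,2)\to(3,1)\to(2,1)$ without changing the coordinates; hence this non-basepoint class is fixed by the swap. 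This shows that flatness in positive levels is essential: it is exactly what exhibits $X$ as a cell complex built from $(\bm\Sigma(A,\blank)\smashp K)/H$ with $A\neq\varnothing$, for which the indexing injection out of the non-empty set $A^{\amalg n}$ genuinely is a $\Sigma_n$-equivariant invariant forcing freeness, and a cell-induction then handles general positive flat $X$. That inductive argument is precisely the content of the result of Harper cited in the paper's one-line proof, so some input of this kind (and not just $X(\varnothing)=*$) is unavoidable.
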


\begin{cor}\label{cor:sym-powers-pos-flat}
Let $f\colon X\to Y$ be a $G$-global weak equivalence and assume $X$ and $Y$ are \emph{positively} flat. Then $f^{\smashp n}/\Sigma_n$ is a $G$-global weak equivalence again.
\begin{proof}
By Theorem~\ref{thm:smash-power-wreath}, $f^{\smashp n}$ is a $(\Sigma_n\wr G)$-global weak equivalence, hence in particular a $(\Sigma_n\times G)$-global weak equivalence. The claim follows as $\Sigma_n$ acts freely on both source and target by the previous lemma.
\end{proof}
\end{cor}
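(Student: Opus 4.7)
The proof is a direct combination of the two results immediately preceding the statement. My plan is as follows.

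First, I would apply Theorem~\ref{thm:smash-power-wreath} to conclude that $f^{\smashp n}\colon X^{\smashp n}\to Y^{\smashp n}$ is a $(\Sigma_n\wr G)$-global weak equivalence, which uses the positive flatness assumption only through its underlying flatness. Next, I would restrict this weak equivalence along the diagonal embedding $\Sigma_n\times G\hookrightarrow\Sigma_n\wr G$, $(\sigma,g)\mapsto(\sigma;g,\dots,g)$. Since this homomorphism is injective, Proposition~\ref{prop:sp-functoriality-general} (or the injective variant in Proposition~\ref{prop:sp-functoriality-injective}) guarantees that restriction along it preserves global weak equivalences, so $f^{\smashp n}$ is a $(\Sigma_n\times G)$-global weak equivalence.

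Second, I would invoke Lemma~\ref{lemma:smash-power-free}: both $X^{\smashp n}$ and $Y^{\smashp n}$ carry levelwise free $\Sigma_n$-actions outside the basepoint, because $X$ and $Y$ are \emph{positively} flat. The hypothesis about free $\ker(\alpha)$-action in Proposition~\ref{prop:free-quotient-spectra} is therefore met for the surjection $\alpha\colon\Sigma_n\times G\twoheadrightarrow G$, and so $\alpha_!f^{\smashp n}=f^{\smashp n}/\Sigma_n$ is a $G$-global weak equivalence as desired.

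There is essentially no obstacle here: the positivity hypothesis is used solely to activate Lemma~\ref{lemma:smash-power-free}, and the rest is a direct chaining of the homotopical free-quotient principle with the wreath-equivariant refinement of the smash power. If I were looking for a subtlety, it would only be the bookkeeping of checking that the $\Sigma_n$- and $G$-actions on $X^{\smashp n}$ really do commute (so that the restriction to $\Sigma_n\times G\subset\Sigma_n\wr G$ and the subsequent quotient by $\Sigma_n$ make sense as described), but this is immediate from the construction of the smash power.
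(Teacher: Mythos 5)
Your proposal is correct and follows exactly the paper's own argument: Theorem~\ref{thm:smash-power-wreath} plus restriction along $\Sigma_n\times G\hookrightarrow\Sigma_n\wr G$, then Lemma~\ref{lemma:smash-power-free} combined with Proposition~\ref{prop:free-quotient-spectra} applied to the projection $\Sigma_n\times G\to G$ to handle the quotient. The extra details you spell out (injectivity of the diagonal embedding, commuting of the actions) are exactly the steps the paper leaves implicit.
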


\subsection{Global model categories of modules}
Throughout, let $G$ be a finite group.

\begin{defi}
A \emph{$G$-global ring spectrum} is a monoid $R$ (for the smash product) in $\cat{$\bm G$-Spectra}$. We write $\cat{Mod}_R^G$ (or simply $\cat{Mod}_R$ if $G$ is clear from the context) for the category of (left) modules in $\cat{$\bm G$-Spectra}$ over the monoid $R$.
\end{defi}

\begin{prop}
Let $R$ be a $G$-global ring spectrum. Then the positive projective and positive flat $G$-global model structure on $\cat{$\bm G$-Spectra}$ transfer along the free-forgetful adjunction
\begin{equation*}
R\smashp\blank\colon\cat{$\bm G$-Spectra}\rightleftarrows\cat{Mod}_R :\!\mathbb U.
\end{equation*}
This model structure is proper, simplicial, and combinatorial with generating cofibrations $R\smashp I$ and generating acyclic cofibrations $R\smashp J$ for sets of generating (acyclic) cofibrations $I,J$ of the positive projective/flat $G$-global model structure. Moreover, filtered colimits in it are homotopical.
\begin{proof}
For the existence of the model structure, we verify the assumptions of the Crans-Kan Transfer Criterion. By local presentability, every set admits the small object argument, so it only remains to show that for some (hence any) set $J$ of generating acyclic cofibrations of $\cat{$\bm G$-Spectra}_\textup{$G$-global flat}$ the forgetful functor $\mathbb U$ sends relative $(R\smashp J)$-cell complexes to weak equivalences. However, $\mathbb U$ is also a left adjoint, so it suffices to show that it sends maps in $R\smashp J$ to acyclic cofibrations in the \emph{injective $G$-global model structure}. Taking $J$ to consist of maps between cofibrant objects (see~Remark~\ref{rk:gen-acyclic-pos-flat}), this is immediate from Proposition~\ref{prop:flatness-theorem}.

The model structure is clearly combinatorial with the above generating (acyclic) cofibrations, right proper, and simplicial, and filtered colimits in it are homotopical. To see that it is also left proper it suffices to observe that $\mathbb U$ is also left Quillen as a functor into the \emph{injective $G$-global model structure} on $\cat{$\bm G$-Spectra}$ by the above, so that the claim follows from left properness of $\cat{$\bm G$-Spectra}_\textup{$G$-global injective}$ via \cite[Lemma~A.2.15]{g-global}.
\end{proof}
\end{prop}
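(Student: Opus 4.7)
The natural approach is to invoke the Crans-Kan Transfer Criterion along the free-forgetful adjunction $R\smashp\blank\dashv\mathbb U$; this will simultaneously produce the model structure and identify its generating (acyclic) cofibrations as $R\smashp I$ and $R\smashp J$ for generating sets $I, J$ downstairs. Since $\cat{Mod}_R$ is locally presentable (being the category of modules over a monoid in a locally presentable monoidal category), every set permits the small object argument, so the only nontrivial content is to verify that $\mathbb U$ sends relative $(R\smashp J)$-cell complexes to $G$-global weak equivalences in $\cat{$\bm G$-Spectra}$.

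The plan for this key step is to choose $J$ carefully and then reduce to a single pushout. Using Remark~\ref{rk:gen-acyclic-pos-flat} one can take $J$ to consist of acyclic cofibrations of the positive flat $G$-global model structure whose sources and targets are positively flatly cofibrant, in particular flat. By the Flatness Theorem (Proposition~\ref{prop:flatness-theorem}), smashing with the (arbitrary) spectrum $R$ preserves $G$-global weak equivalences between flat objects, so each $R\smashp j$ is a $G$-global weak equivalence; it is moreover a levelwise injection, since smashing with a fixed object preserves injections of pointed simplicial sets. Thus $\mathbb U(R\smashp j)$ is an \emph{acyclic injective cofibration} in the $G$-global injective model structure, and the standard argument (acyclic cofibrations in a model category are closed under pushouts and transfinite compositions) shows that every relative $\mathbb U(R\smashp J)$-cell complex is again an acyclic injective cofibration, hence in particular a $G$-global weak equivalence. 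Here I use that $\mathbb U$ preserves \emph{all} small colimits, which follows from the standard description of colimits in $\cat{Mod}_R$ via reflexive coequalizers together with the fact that the smash product preserves colimits in each variable; equivalently, $\mathbb U$ is itself a left adjoint by the adjoint functor theorem.

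For the projective version one argues identically, noting that the same $J$ works because any flatly acyclic cofibration between flat objects remains suitable (alternatively, pick $J$ for the positive projective structure; the Flatness Theorem still applies). The hard part of the entire argument is really just the interaction between the Flatness Theorem and the cell-complex machinery encapsulated above — once $R\smashp j$ lands in acyclic injective cofibrations, transfinite composition and pushout stability in the injective model structure do the rest.

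Once the model structure exists, the remaining properties are routine. Combinatoriality is automatic from the cofibrant generation by $R\smashp I$ and $R\smashp J$; the model structure is right proper and simplicial because weak equivalences, fibrations, and the cotensoring are all defined in the underlying category $\cat{$\bm G$-Spectra}$, where the corresponding model structure has these properties; filtered colimits are homotopical for the same reason, using that $\mathbb U$ preserves filtered colimits and detects weak equivalences. For left properness one observes that $\mathbb U$ is also left Quillen into $\cat{$\bm G$-Spectra}_\textup{$G$-global injective}$ (by the key argument above, $\mathbb U$ sends generating cofibrations $R\smashp I$ to injective cofibrations and is homotopical), so left properness transfers from the injective model structure via \cite[Lemma~A.2.15]{g-global}.
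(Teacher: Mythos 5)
Your proposal is correct and follows essentially the same route as the paper: verify the Crans--Kan criterion by choosing $J$ to consist of maps between (positively flat) cofibrant objects via Remark~\ref{rk:gen-acyclic-pos-flat}, use the Flatness Theorem (Proposition~\ref{prop:flatness-theorem}) and colimit-preservation of $\mathbb U$ to see that relative $(R\smashp J)$-cell complexes are acyclic injective cofibrations, and then deduce left properness from $\mathbb U$ being left Quillen into the injective model structure via \cite[Lemma~A.2.15]{g-global}. The only point to watch is that for the positive projective transfer you should (as your parenthetical says) take $J$ to be generating acyclic cofibrations of that structure, which can likewise be chosen between cofibrant, hence flat, objects.
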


\begin{rk}
The corresponding statement for the usual projective and flat model structures hold as well (by the same argument); however, for us the above version will be more convenient as we later want to relate the above to categories of commutative algebras.
\end{rk}

\begin{constr}
Let $H$ be a finite group. Identifying $\cat{$\bm H$-Mod}_R^G$ with the category of modules in $\cat{$\bm{(H\times G)}$-Spectra}$ (with $H$ acting trivially on $R$), we get positive projective and flat model structures on $\cat{$\bm H$-Mod}_R^G$, which are equivalently transferred from $\cat{$\bm{(H\times G)}$-Spectra}$ along the evident forgetful functor.
\end{constr}

\begin{lemma}\label{lemma:module-global}
The above make $\cat{Mod}_R^G$ into a global model category $\glo{Mod}_R^G$.
\begin{proof}
We already observed that all of these model structures are combinatorial, simplicial, and (left) proper. To complete the proof that this defines a \emph{pre}global model category it is then enough to note that $\alpha^*$ commutes with both $\mathbb U$ and $R\smashp\blank$ for any $\alpha\colon H\to H'$, so that $\alpha_*$ commutes with $\mathbb U$ by passing to mates; thus, all the functoriality properties follow from the corresponding statements for $\glo{GlobalSpectra}^+$.

Similarly, the fact that $\glo{Mod}_R^G$ is a \emph{global} model category follows from the corresponding result for $G\text-\glo{GlobalSpectra}^+$ using the third formulation in Proposition~\ref{prop:global-tfae}.
\end{proof}
\end{lemma}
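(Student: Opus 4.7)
The plan is to bootstrap everything from the already-established fact that $\glo{GlobalSpectra}^\pos$ (and hence its shift $G\text-\glo{GlobalSpectra}^\pos$) is a global model category, exploiting the crucial observation that all the model structures on $H\text-\cat{Mod}_R^G$ are \emph{transferred} along the forgetful functor
\begin{equation*}
\mathbb U\colon H\text-\cat{Mod}_R^G \to \cat{$\bm{(H\times G)}$-Spectra}
\end{equation*}
from the positive projective (resp.~flat) $(H\times G)$-global model structure. Consequently, a morphism in $H\text-\cat{Mod}_R^G$ is a weak equivalence, fibration, or acyclic fibration if and only if its image under $\mathbb U$ is so, and this detection property will reduce essentially every check to the corresponding statement for spectra.

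First I would quickly dispatch axiom~(1) of a preglobal model category: combinatoriality, simplicial enrichment and left properness were already noted, the projective and flat model structures have the same weak equivalences (namely the maps detected by $\mathbb U$), and every projective cofibration is a flat cofibration since both classes are generated by applying $R\smashp\blank$ to the corresponding generators in spectra and $R\smashp\blank$ preserves inclusions of generating sets. Axiom~(2) for each individual $H$ was addressed in the previous proposition together with its shifted analogue.

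For the change-of-group axiom~(3), the key point is that for any $\alpha\colon H\to H'$ the restriction $\alpha^*$ on $(H'\times G)$-spectra commutes strictly with both the forgetful functor $\mathbb U$ and the free functor $R\smashp\blank$ (since $R$ carries trivial $H'$-action, pulled back trivially to $H$). Passing to right adjoints gives a canonical isomorphism $\mathbb U\circ\alpha_*\cong\alpha_*\circ\mathbb U$. Since weak equivalences and (acyclic) fibrations in modules are created by $\mathbb U$, the statement that $\alpha^*$ or $\alpha_*$ preserves such a class for modules reduces immediately to the corresponding statement for $(H\times G)$- and $(H'\times G)$-spectra, which is exactly the functoriality already available in $\glo{GlobalSpectra}^\pos$. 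The preservation of flat and projective cofibrations is then obtained by adjoining and using the same reductions.

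For the global axiom I would verify formulation~$(\ref{item:gtfae-cofree-fixed-points})$ of Proposition~\ref{prop:global-tfae}: given $C\xleftarrow{q}A\xrightarrow{g}B$ with $\ker q\cap\ker g=1$, a fibrant $X\in B\text-\cat{Mod}_R^G{}_\textup{flat}$, and a fibrant replacement $\iota\colon g^*X\to Y$ in $A\text-\cat{Mod}_R^G{}_\textup{flat}$, one needs $q_*\iota$ to be a $C$-global weak equivalence. Applying $\mathbb U$ and using the above commutation with $\alpha_*$ and $\alpha^*$, this becomes the assertion that $q_*\mathbb U\iota$ is a $C$-global weak equivalence of $(C\times G)$-spectra, where $\mathbb U X$ is flat fibrant and $\mathbb U\iota$ is a flat fibrant replacement of $g^*\mathbb U X$. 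Since $\ker q\cap\ker g=1$ implies the same for the induced maps $q\times\id_G$ and $g\times\id_G$, this is exactly what it means for $G\text-\glo{GlobalSpectra}^\pos$ to be a global model category.

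The only point requiring any care is verifying that $\mathbb U$ really does turn flat (resp.~projective) fibrant replacements in modules into fibrant replacements in $(H\times G)$-spectra, so that the reduction to the spectrum-level global axiom is legitimate; this is however immediate from the transfer definition of the model structures. Everything else is formal.
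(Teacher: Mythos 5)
Your proposal is correct and follows essentially the same route as the paper's own proof: exploit that the module model structures are transferred along $\mathbb U$, use that $\alpha^*$ commutes with $\mathbb U$ and $R\smashp\blank$ (hence $\alpha_*$ commutes with $\mathbb U$ by mates) to reduce the preglobal axioms to $G\text-\glo{GlobalSpectra}^\pos$, and verify the global axiom via formulation~$(\ref{item:gtfae-cofree-fixed-points})$ of Proposition~\ref{prop:global-tfae} by the same reduction. The only difference is that you spell out the detection/reduction steps that the paper leaves implicit.
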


\begin{cor}\label{cor:modules-stable}
The global model category $\glo{Mod}_R^G$ is stable.
\begin{proof}
Let $H$ be a finite group and $A$ a finite $H$-set. Then $S^A\smashp\blank$ commutes with the forgetful functor $\mathbb U$, so it is homotopical. Thus, also the (total or equivalently left) derived functors commute. Similarly, $\Omega^A$ commutes with $\mathbb U$, and as both are right Quillen their right derived functors also commute (via the mate of the above equivalence). The claim now follows immediately from the corresponding statement for $G\text-\glo{GlobalSpectra}$ (Proposition~\ref{prop:GlobalSpectra-stable}) as $\Ho(\mathbb U)$ is conservative.
\end{proof}
\end{cor}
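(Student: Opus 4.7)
By Lemma~\ref{lemma:module-global} it remains only to verify stability. Fix a finite group $H$ and a finite $H$-set $A$; we have to show that the Quillen adjunction
\[
S^A\smashp\blank\colon H\text-\Mod_R^G \rightleftarrows H\text-\Mod_R^G :\!\Omega^A
\]
(for, say, the positive flat model structures) is a Quillen equivalence. The strategy is to transport the corresponding statement for $\glo{GlobalSpectra}^\pos$---which is stable since it is globally Quillen equivalent to $\glo{GlobalSpectra}$, and stability is preserved under global Quillen equivalence by Lemma~\ref{lemma:stability-stable} together with Proposition~\ref{prop:GlobalSpectra-stable}---along the free-forgetful adjunction.

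The key structural input is that the forgetful functor $\mathbb U\colon H\text-\Mod_R^G\to H\text-\cat{Spectra}$ is by construction of the transferred model structure homotopical and preserves fibrations; hence $\Ho(\mathbb U)$ is conservative. Moreover $\mathbb U$ commutes strictly with both $S^A\smashp\blank$ and $\Omega^A$, since the $R$-module structure on $S^A\smashp M$ and $\Omega^A M$ is given by applying $R\smashp\blank$ respectively $F(\blank,\blank)$ to the action map in the spectrum factor. First, this forces $S^A\smashp\blank$ to be homotopical on modules: a map $f$ of $R$-modules is a weak equivalence iff $\mathbb U f$ is, and then $S^A\smashp\mathbb U f = \mathbb U(S^A\smashp f)$ is a weak equivalence by the Flatness Theorem~\ref{prop:flatness-theorem} (since $S^A$ is flat). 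Likewise, the natural equality $\mathbb U\circ\Omega^A=\Omega^A\circ\mathbb U$ between right Quillen functors passes to an equality of right derived functors because $\mathbb U$ is homotopical.

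It remains to deduce that the derived unit and counit of $S^A\smashp\blank\dashv\Omega^A$ on modules are isomorphisms. Since $\mathbb U$ is homotopical and strictly commutes with both adjoints, applying $\mathbb U$ to the derived unit (resp.\ counit) on an $R$-module $M$ yields the corresponding derived unit (resp.\ counit) on $\mathbb U M$ in $H\text-\cat{Spectra}$; this is formal from the fact that mates are compatible with natural equalities between Quillen adjunctions. By stability of $\glo{GlobalSpectra}^\pos$, these images are isomorphisms in $\Ho(H\text-\cat{Spectra})$, and conservativity of $\Ho(\mathbb U)$ then forces the original maps to be isomorphisms in $\Ho(H\text-\Mod_R^G)$. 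The only slightly delicate point is the compatibility of derived units under $\mathbb U$, but this follows from the general principle that a homotopical right adjoint commuting strictly with a right Quillen functor commutes with the corresponding derived mate construction.
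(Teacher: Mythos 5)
Your proof is correct and follows essentially the same route as the paper: you commute $S^A\smashp\blank$ and $\Omega^A$ with the forgetful functor $\mathbb U$, pass to derived functors, and use conservativity of $\Ho(\mathbb U)$ to reduce stability of $\glo{Mod}_R^G$ to Proposition~\ref{prop:GlobalSpectra-stable}. Your extra detour through $\glo{GlobalSpectra}^\pos$ via Lemma~\ref{lemma:stability-stable}, and your explicit check that $\mathbb U$ carries derived units/counits to derived units/counits, merely spell out steps the paper compresses.
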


\begin{lemma}\label{lemma:module-adjunctions}
Let $G$ be a finite group and let $R$ be a \emph{flat} $G$-global ring spectrum. Then we have global Quillen adjunctions
\begin{equation*}
\ul{R\smashp\blank}\colon G\text-\glo{GlobalSpectra}^+\rightleftarrows\glo{Mod}^G_R :\!\ul{\mathbb U}
\qquad\text{and}\qquad
\ul{\mathbb U}\colon \glo{Mod}^G_R\rightleftarrows G\text-\glo{GlobalSpectra}^+ :\!\ul{F(R,\blank)}
\end{equation*}
Moreover, both $\ul{R\smashp\blank}$ and $\ul{\mathbb U}$ are homotopical.
\begin{proof}
By definition, $\mathbb U$ preserves weak equivalences as well as fibrations in either model structure; in particular, it is right Quillen. Moreover, it sends generating cofibrations to cofibrations by Proposition~\ref{cor:smash-left-Quillen}, so it is also left Quillen. Finally, also $R\smashp\blank$ is homotopical by the Flatness Theorem (Proposition~\ref{prop:flatness-theorem}).
\end{proof}
\end{lemma}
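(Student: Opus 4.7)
My plan is to verify the various properties of $\mathbb U$ (and $R\smashp\blank$) directly, leveraging the flatness of $R$ and the fact that the model structures on modules are transferred.

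First I would note that for any finite group $H$, the forgetful functor $\mathbb U\colon H\text-\cat{Mod}_R^G\to (H\times G)\text-\cat{Spectra}$ preserves and reflects weak equivalences and fibrations for either the positive projective or positive flat model structures, simply because these are \emph{defined} as transferred along $\mathbb U$. In particular, $\mathbb U$ is homotopical, and the adjunction $R\smashp\blank\dashv\mathbb U$ is automatically a Quillen adjunction with respect to both pairs of model structures. Applying this to all finite groups $H$ in place of $G$ (replacing $\glo{Mod}_R^G$ by the appropriate shift as in Example~\ref{ex:shifts}) gives the global Quillen adjunction $\ul{R\smashp\blank}\dashv\ul{\mathbb U}$.

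Next I would show that $\mathbb U$ is in addition left Quillen for both model structures, which will give the second global Quillen adjunction. Since $\mathbb U$ has a further right adjoint $F(R,\blank)$, it preserves all colimits, so it suffices to check that it sends generating cofibrations to cofibrations. These generating cofibrations are by definition of the form $R\smashp i$ for $i$ a generating cofibration of the positive projective (resp.~flat) $(H\times G)$-global model structure on $\cat{Spectra}$, and $\mathbb U(R\smashp i)=R\smashp i$. By Corollary~\ref{cor:smash-left-Quillen} together with the assumption that $R$ is flat, smashing with $R$ is a left Quillen endofunctor of both the positive projective and positive flat model structure, so each such $R\smashp i$ is indeed a cofibration; combined with the fact that $\mathbb U$ preserves weak equivalences this makes $\mathbb U$ left Quillen. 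Again, the same argument works for all finite groups in place of $G$.

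Finally, to see that $R\smashp\blank\colon (H\times G)\text-\cat{Spectra}\to H\text-\cat{Mod}_R^G$ is homotopical, I would observe that $\mathbb U$ reflects weak equivalences (by definition of the transferred structure) and $\mathbb U(R\smashp X)=R\smashp X$, so homotopicality follows from the Flatness Theorem (Proposition~\ref{prop:flatness-theorem}), which ensures that smashing with the flat spectrum $R$ preserves $(H\times G)$-global weak equivalences. I don't foresee any real obstacle; the proof is a direct assembly of the transferred-model-structure formalism with the flatness input from Corollary~\ref{cor:smash-left-Quillen} and Proposition~\ref{prop:flatness-theorem}.
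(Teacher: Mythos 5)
Your proposal is correct and follows essentially the same route as the paper: $\ul{\mathbb U}$ is right Quillen (and homotopical) because the module model structures are transferred, it is left Quillen because the generating cofibrations $R\smashp i$ are cofibrations by Corollary~\ref{cor:smash-left-Quillen} together with flatness of $R$, and $\ul{R\smashp\blank}$ is homotopical by the Flatness Theorem (Proposition~\ref{prop:flatness-theorem}). The only difference is that you spell out a few details the paper leaves implicit (e.g.~that $\mathbb U$ preserves colimits via its right adjoint $F(R,\blank)$).
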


As $\cat{Mod}^G_R$ is a stable model category, its homotopy category is naturally a triangulated category. For later use, we record a t-structure on this, for which we first have to recall the \emph{true} homotopy groups (as opposed to the na\"ive ones considered before) of a $G$-global spectrum:

\begin{constr}
Let $X$ be a $G$-global spectrum, let $\phi\colon H\to G$ be a homomorphism from a finite group $H$, and let $k\in\mathbb Z$. We define the \emph{true $k$-th $\phi$-equivariant homotopy group} as
\begin{equation*}
\hat\pi_k^\phi(X)\mathrel{:=}[\Sigma^{\bullet+k}_+ I(H,\blank)\times_\phi G,X],
\end{equation*}
where $[\,{,}\,]$ denotes the hom set in the $G$-global stable homotopy category; note that $\hat\pi_k^\phi(X)$ carries a natural abelian group structure by additivity of the latter, and that distinguished triangles induce long exact sequences in $\hat\pi_*^\phi$ by general nonsense about triangulated categories.

We moreover write $\underline{\hat\pi}_k(X)$ for the collection of all $\phi$-equivariant homotopy groups for varying $\phi$, together with all natural maps between them. For $G=1$, this structure can be explicitly described as a \emph{global (Mackey) functor} \cite[Remark~6.5]{schwede-k-theory}; we will never need an explicit description for non-trivial $G$.
\end{constr}

\begin{rk}
The true $\phi$-equivariant homotopy groups of $X$ can be equivalently described as the true $H$-equivariant homotopy groups of the $H$-equivariant spectrum $\phi^*X$, see \cite[Corollary~3.3.4]{g-global}.
\end{rk}

\begin{prop}\label{prop:t-structure-mod}
Let $R$ be a $G$-global ring spectrum.
\begin{enumerate}
\item The triangulated category $\Ho(\cat{Mod}^G_R)$ is compactly generated with generators $R\smashp\Sigma^\bullet_+\big(I(A,\blank)\times_\phi G\big)$ for finite sets $A$, subgroups $H\subset\Sigma_A$, and homomorphisms $\phi\colon H\to G$.
\item Assume that $R$ is \emph{connective}, i.e.~for every $k<0$ the true homotopy groups $\underline{\hat\pi}_k R$ (of the underlying $G$-global spectrum) are trivial. Then $\Ho(\cat{Mod}^G_R)$ carries a t-structure with
\begin{itemize}
	\item connective part the connective $R$-modules
	\item coconnective part those $R$-modules $X$ that are \emph{coconnective} in the sense that $\underline{\hat\pi}_kX=0$ for all $k>0$.
\end{itemize}
Moreover, the connective part is the smallest subcategory containing the above generators that is closed under coproducts, extensions, and suspension.
\end{enumerate}
\begin{proof}
For $R=\mathbb S$, i.e.~the case of $G$-global spectra, these are part of \cite[Theorem~7.1.12]{global-param} and its proof.

We will now deduce the general case from this. For this we first note that we have an exact adjunction
\begin{equation}\label{eq:exact-adj}
R\smashp^{\cat L}\blank\colon\Ho(\cat{$\bm G$-Spectra}_\textup{$G$-global})\rightleftarrows \Ho(\cat{Mod}^G_R) :\!\mathbb U
\end{equation}
with conservative right adjoint by construction, so the $R\smashp\Sigma^\bullet_+I(A,\blank)\times_\phi G$ are a set of generators (here we secretly used Proposition~\ref{prop:flatness-theorem} to identify $R\smashp^{\cat L}\Sigma^\bullet_+I(A,\blank)\times_\phi G$ with the underived smash product). We now observe that coproducts of $G$-global spectra are homotopical \cite[Lemma~3.1.43]{g-global} and that $\mathbb U$ preserves them on the pointset level; it follows that also coproducts in $\cat{Mod}^G_R$ are homotopical, and that the right adjoint in $(\ref{eq:exact-adj})$ preserves coproducts, so that its left adjoint preserves compact objects. This completes the proof of the first statement.

For the second statement, we use the above together with \cite[Theorem~A.1]{t-struc-compact} to obtain a t-structure on $\Ho(\cat{Mod}^G_R)$ whose connective part is the smallest subcategory closed under coproducts, suspensions, and extensions containing the $R\smashp\Sigma_+^\bullet I(A,\blank)\times_\phi G$'s; we claim that this is the t-structure described above. To see this, we first observe that an $R$-module $X$ is coconnective in this t-structure if and only if $[\Sigma T,X]=0$ for all $T\in\Ho(\cat{Mod}^G_R)_{\ge 0}$. Specializing to $T=R\smashp\Sigma^{\bullet+k}_+ I(A,\blank)\times_\phi G$ and using the adjunction isomorphisms shows that $X$ is then coconnective in the above sense. Conversely, for fixed $X$ the class of objects $T$ such that $[\Sigma^kT,X]=0$ for all $k>0$ is easily seen to be closed under coproducts, suspension, and extensions (using the long exact sequence for the last statement), so if $X$ has trivial positive homotopy groups, then $X\in\Ho(\cat{Mod}^G_R)_{\le0}$.

To identify the connective part, we will first show:

\begin{claim*}
	Let $X,Y$ be connective $G$-global spectra. Then the derived smash product $X\smashp^{\cat L}Y$ is again connective.
	\begin{proof}
		The derived smash product comes from a left Quillen bifunctor, so it preserves coproducts in each variable and is exact in each variable. By the t-structure on $G$-global spectra recalled above, it therefore suffices to show that the (derived, or equivalently underived) smash product of any two of the above generators of the $G$-global model structure is again connective. However, a simple Yoneda argument shows that $\Sigma^\bullet_+I(A,\blank)\smashp\Sigma^\bullet_+ I(B,\blank)\cong\Sigma^\bullet_+ I(A\amalg B,\blank)$ naturally in $A$ and $B$, so
		\begin{equation*}
		\big(\Sigma^\bullet_+ I(A,\blank)\times_\phi G\big)\smashp\big(\Sigma^\bullet_+ I(B,\blank)\times_\psi G\big)\cong \Sigma^\bullet_+\big(I(A\amalg B)\times (G\times G)\big)/(H\times K)
		\end{equation*}
		(where $H\subset\Sigma_A,K\subset\Sigma_B$ are the sources of $\phi$ and $\psi$, respectively), which is clearly connective again.
	\end{proof}
\end{claim*}

As $R$ is connective, the claim shows that the exact coproduct preserving functor $\mathbb U\colon\Ho(\cat{Mod}^G_R)\to\Ho(\cat{$\bm G$-Spectra})$ sends the chosen generators to connective spectra; thus, $\Ho(\cat{Mod}^G_R)_{\ge0}$ is contained in the preimage under $\mathbb U$ of $\Ho(\cat{$\bm G$-Spectra})_{\ge0}$, i.e.~all objects of $\Ho(\cat{Mod}^G_R)_{\ge0}$ are connective in the above sense. Conversely, if $X$ is connective then we have a distinguished triangle
\begin{equation*}
	X_{\ge 0}\to X\to X_{\le -1}\to \Sigma X_{\ge 0}
\end{equation*}
with $X_{\ge0}\in\Ho(\cat{Mod}^G_R)_{\ge0}$ and $X_{\le -1}\in\Ho(\cat{Mod}^G_R)_{\le-1}$ by the axioms of a t-structure. Then $X_{\ge0}$ is connective by the above and hence so is $X_{\le -1}$ by the long exact sequence. But on the other hand, $X_{\le -1}$ has vanishing non-negative homotopy groups by the above identification of the coconnective part, so $X_{\le -1}=0$ and hence $X\cong X_{\ge0}\in\Ho(\cat{Mod}^G_R)_{\ge0}$ as claimed.
\end{proof}
\end{prop}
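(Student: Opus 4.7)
My plan is to transfer the compact generation and t-structure from the case $R=\mathbb S$ (i.e.~stable $G$-global homotopy theory, as recalled from \cite[Theorem~7.1.12]{global-param}) along the free-forgetful adjunction $R\smashp^{\cat L}\blank \colon \Ho(\cat{$\bm G$-Spectra}_\textup{$G$-global})\rightleftarrows \Ho(\cat{Mod}_R^G) :\!\mathbb U$ derived from Lemma~\ref{lemma:module-adjunctions}. The two key structural properties of $\mathbb U$ I will exploit are that it is conservative (by definition of the transferred model structure the weak equivalences of $\cat{Mod}_R^G$ are precisely those maps whose underlying map is a $G$-global weak equivalence) and that it preserves arbitrary coproducts (coproducts are homotopical in $\cat{$\bm G$-Spectra}$, and $\mathbb U$ preserves colimits on the pointset level, so its pointset-level value already computes the derived coproduct).

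For part~(1), coproduct preservation of $\mathbb U$ together with the adjunction isomorphism $[R\smashp^{\cat L} X, Y]_R \cong [X, \mathbb U Y]$ immediately implies that $R\smashp^{\cat L}\blank$ preserves compact objects. Applying this to the compact generators $\Sigma^\bullet_+\bigl(I(A,\blank)\times_\phi G\bigr)$ of $\Ho(\cat{$\bm G$-Spectra})$ yields a set of compact objects in $\Ho(\cat{Mod}_R^G)$; generation is then formal, since an $R$-module $Y$ right-orthogonal to all of these has $\mathbb U Y$ right-orthogonal to a generating set of $\Ho(\cat{$\bm G$-Spectra})$ and hence vanishes, so $Y=0$ by conservativity.

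For part~(2), the compactly generated triangulated category $\Ho(\cat{Mod}_R^G)$ carries a standard t-structure (e.g.~via \cite[Theorem~A.1]{t-struc-compact}) whose connective part $\mathcal C_{\ge 0}$ is the smallest full subcategory containing the generators that is closed under coproducts, suspension, and extensions. The coconnective part is then automatically $\mathcal C_{\le -1} = \{Y : [\Sigma T,Y]_R = 0 \text{ for all } T\in\mathcal C_{\ge 0}\}$; closure of this vanishing condition under coproducts, suspensions, and extensions lets one test it on the generators alone, where by adjunction it translates to vanishing of $\hat\pi^\phi_k(\mathbb U Y)$ for all $k>0$ and all $\phi$. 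This identifies $\mathcal C_{\le 0}$ with the coconnective $R$-modules in the stated sense. To identify $\mathcal C_{\ge 0}$ with the connective $R$-modules, I would invoke the t-structure triangle $X_{\ge 0}\to X\to X_{\le -1}$: once one has the reverse containment $\mathcal C_{\ge 0}\subset\{X:\mathbb U X\text{ connective}\}$, the already-identified coconnective part forces $X_{\le -1}=0$ whenever $X$ is connective in the homotopy-group sense, and so $X\in\mathcal C_{\ge 0}$.

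The hard part is establishing this last inclusion, i.e.~that each generator $R\smashp^{\cat L}\Sigma^\bullet_+\bigl(I(A,\blank)\times_\phi G\bigr)$ has connective underlying $G$-global spectrum. Since the connective $G$-global spectra are closed under coproducts, suspension, and extensions, and since $R$ is assumed connective, it suffices to prove the general smash-closure statement: if $Y$ and $Z$ are connective $G$-global spectra, then so is $Y\smashp^{\cat L} Z$. Using that $\smashp^{\cat L}$ is exact and preserves coproducts in each variable (by Corollary~\ref{cor:smash-left-Quillen}), this in turn reduces to checking connectivity on pairs of the standard generators $\Sigma^\bullet_+\bigl(I(A,\blank)\times_\phi G\bigr)$, where a direct Yoneda computation gives $\Sigma^\bullet_+ I(A,\blank)\smashp\Sigma^\bullet_+ I(B,\blank)\cong \Sigma^\bullet_+ I(A\amalg B,\blank)$, exhibiting the smash of two such generators as a suspension spectrum of a global space—manifestly connective. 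This closes the argument.
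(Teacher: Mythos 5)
Your proposal is correct and follows essentially the same route as the paper: transfer along the free-forgetful adjunction using that $\mathbb U$ is conservative and coproduct-preserving for part~(1), then build the t-structure via the compact generators, identify the coconnective part by testing against the generators via adjunction, prove the key claim that the derived smash of connective $G$-global spectra is connective by reducing to the generators and the Yoneda identification $\Sigma^\bullet_+I(A,\blank)\smashp\Sigma^\bullet_+I(B,\blank)\cong\Sigma^\bullet_+I(A\amalg B,\blank)$, and conclude with the truncation triangle. No substantive differences to report.
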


As a consequence of Corollary~\ref{cor:modules-stable}, the suspension spectrum-loop space adjunction defines a global Quillen equivalence between $\glo{Mod}_R^G$ and its stabilization $\ul\Sp(\glo{Mod}_R^G)$, so the latter contains no new homotopy theoretic information. Nevertheless, the concrete model will be useful later for comparisons to other stabilizations, and we close this discussion by understanding its weak equivalences a bit better.

\begin{lemma}\label{lemma:g-global-sp-sp-colim}
Let $R$ be a $G$-global ring spectrum. Then the $G$-global weak equivalences in $\Sp(\glo{Mod}^G_R)$ are closed under pushouts along injective cofibrations (i.e.~levelwise injections) as well as under arbitrary filtered colimits.
\begin{proof}
For the first statement, let $i\colon X\to Y$ be an injective cofibration, and let $f\colon X\to Z$ be a $G$-global weak equivalence. We factor $f$ (say, in the projective model structure) as an acyclic cofibration $k$ followed by an acyclic fibration $p$; in particular, $p$ is a $G$-global \emph{level} weak equivalence. Then we have an iterated pushout square
\begin{equation*}
\begin{tikzcd}
X \arrow[d, "i"']\arrow[dr, phantom, "\ulcorner"{very near end}]\arrow[rr, "f", yshift=5pt, bend left=15pt] \arrow[r, "k"', tail, "\sim"] & H\arrow[dr, phantom, "\ulcorner"{very near end}] \arrow[r, "p"', "\sim", two heads]\arrow[d, "j"] & Z\arrow[d]\\
Y\arrow[r, "\ell"'] & K\arrow[r, "q"'] & P
\end{tikzcd}
\end{equation*}
in which $\ell$ is an acyclic cofibration (as a pushout of an acyclic cofibration) and $j$ is again an injective cofibration. Thus, applying left properness of the injective model structure levelwise we see that $q$ is a $G$-global (level) weak equivalence; the claim follows as $q\ell$ is a pushout of $f=pk$ along $i$.

The second statement follows similarly from the corresponding statement for $\cat{Mod}_R^G$, also see \cite[Lemma~A.2.4]{g-global}.
\end{proof}
\end{lemma}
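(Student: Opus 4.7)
The plan is to handle the two closure properties separately, each by reducing to what is already known about the projective $G$-global model structure and to classical left properness arguments.

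For the pushout statement, given a $G$-global weak equivalence $f\colon X\to Z$ and an injective cofibration $i\colon X\to Y$ in $\Sp(\glo{Mod}^G_R)$, I would first factor $f$ in the $G$-global projective model structure as an acyclic cofibration $k\colon X\to H$ followed by an acyclic fibration $p\colon H\to Z$. Because acyclic fibrations in this model structure are defined levelwise, $p$ is in particular a $G$-global \emph{level} weak equivalence. Now form the iterated pushout square along $i$: first push out $k$ to get $\ell\colon Y\to K$ together with the cobase change $j\colon H\to K$ of $i$, then push out $p$ to get $q\colon K\to P$. The composite $q\ell\colon Y\to P$ is then the pushout of $f$ along $i$, so it suffices to check it is a weak equivalence.

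The map $\ell$ is an acyclic cofibration in the projective model structure (pushouts of acyclic cofibrations are acyclic cofibrations), hence a $G$-global weak equivalence. The map $j$ is an injective cofibration, being a cobase change of $i$ (levelwise monomorphisms are stable under pushout). Hence at each finite set $A\in\bm\Sigma$, $j(A)$ is an injective cofibration and $p(A)$ is a weak equivalence in $(G\times\Sigma_A)\text-\cat{Mod}_R$ equipped with the injective $G$-global model structure (which exists and is left proper by an argument parallel to the proof of Lemma~\ref{lemma:module-global}, or alternatively by inheriting left properness from the injective level model structure via \cite[Lemma~A.2.15]{g-global}). Left properness applied levelwise then shows $q$ is a $G$-global level weak equivalence, hence a weak equivalence, so $q\ell$ is too.

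For the filtered colimit statement, the strategy is a two-step reduction. First, filtered colimits in $\Sp(\glo{Mod}^G_R)$ are computed levelwise, and at each level they reduce to filtered colimits in $(G\times\Sigma_A)\text-\cat{Mod}^G_R$, which are inherited via the forgetful functor from the corresponding filtered colimits in $\cat{$\bm{(G\times\Sigma_A)}$-Spectra}$. The latter preserve $G$-global weak equivalences by the recalled properties of $\glo{GlobalSpectra}$, so filtered colimits in $\Sp(\glo{Mod}^G_R)$ preserve $G$-global \emph{level} weak equivalences. To bootstrap from level weak equivalences to general $G$-global weak equivalences, one applies the standard lemma that filtered colimits commute with Bousfield localization of a combinatorial left proper simplicial model category at a set of maps under mild conditions---this is exactly the content of \cite[Lemma~A.2.4]{g-global}, which we may apply since the level model structure is combinatorial, simplicial, and left proper.

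The main subtlety, and the place where I would need to be most careful, is the levelwise application of left properness: the cofibrations of the projective model structure on $\Sp(\glo{Mod}^G_R)$ are more restrictive than the injective cofibrations, so one cannot directly invoke left properness of the projective model structure to handle pushouts along arbitrary injective cofibrations. The route above sidesteps this by decomposing $f$ into a part controlled by projective left properness and a part controlled by injective-level left properness applied levelwise in the underlying category of $R$-modules.
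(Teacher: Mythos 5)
Your argument is essentially the paper's own proof: the same factorization of $f$ in the projective model structure into an acyclic cofibration followed by an acyclic fibration (hence a $G$-global level weak equivalence), the same iterated pushout with $\ell$ an acyclic cofibration and $j$ an injective cofibration handled by levelwise left properness of the injective structure, and the same reduction of the filtered-colimit statement to the level model structure together with \cite[Lemma~A.2.4]{g-global}. The only superfluous point is your appeal to an injective model structure on $(G\times\Sigma_A)\text-\cat{Mod}_R$, whose existence you neither need nor establish: since weak equivalences and pushouts of $R$-modules are created in underlying spectra, left properness of the injective $G$-global model structure on $(G\times\Sigma_A)$-spectra, applied levelwise, already shows that $q$ is a level weak equivalence, which is exactly how the paper argues.
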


\begin{lemma}\label{lemma:module-transferred}
Let $R$ be a flat $G$-global ring spectrum. Then both adjoints in the Quillen adjunction
\begin{equation*}
\Sp(\ul{R\smashp\blank})\colon\Sp(G\text-\glo{GlobalSpectra}^+)\rightleftarrows\Sp(\glo{Mod}^G_R):\!\Sp(\ul{\mathbb U})
\end{equation*}
are homotopical. Moreover, the projective model structure on $\Sp(\glo{Mod}^R_G)$ is transferred from the projective model structure on $\Sp(G\text-\glo{GlobalSpectra}^+)$ along the above adjunction, and likewise for the flat model structures.
\end{lemma}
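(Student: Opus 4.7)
My plan is to reduce the entire claim to Proposition~\ref{prop:sp-induced-adjunction} combined with some general model category theory, exploiting the two global Quillen adjunctions from Lemma~\ref{lemma:module-adjunctions}. Applying Proposition~\ref{prop:sp-induced-adjunction} to both $\ul{R\smashp\blank}\dashv\ul{\mathbb U}$ and $\ul{\mathbb U}\dashv\ul{F(R,\blank)}$ yields that $\Sp(\ul{\mathbb U})$ is simultaneously a global left and a global right Quillen functor, while $\Sp(\ul{R\smashp\blank})$ is global left Quillen.

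Homotopicality then follows quickly: for $\Sp(\ul{\mathbb U})$ I factor any $G$-global weak equivalence as an acyclic cofibration followed by an acyclic fibration, both of which $\Sp(\ul{\mathbb U})$ preserves. For $\Sp(\ul{R\smashp\blank})$, the acyclic cofibration part is handled by the left Quillen property; for the acyclic fibration part, I would invoke the general fact that acyclic fibrations in any left Bousfield localization agree with those of the underlying level model structure, so they are in particular level weak equivalences, and the Flatness Theorem (Proposition~\ref{prop:flatness-theorem}) applied levelwise shows that $\Sp(\ul{R\smashp\blank})$ preserves these.

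For the transfer statement, I would invoke the Crans-Kan transfer criterion: combinatoriality yields the small-object argument, and relative $\Sp(\ul{R\smashp\blank})(J)$-cell complexes (for $J$ a generating set of acyclic cofibrations of the source) are themselves acyclic cofibrations, hence weak equivalences, by left-Quillenness of $\Sp(\ul{R\smashp\blank})$. To identify this transferred structure with the projective (respectively flat) one, I match generating cofibrations via
\begin{equation*}
R\smashp\big(\bm\Sigma(A,\blank)\smashp_{\Sigma_A}i\big)\cong \bm\Sigma(A,\blank)\smashp_{\Sigma_A}(R\smashp i);
\end{equation*}
by construction the generators of the projective (resp.\ flat) $(G\times\Sigma_A)$-global model structure on $\Sigma_A\text-\cat{Mod}^G_R$ are precisely obtained by applying $R\smashp\blank$ to the analogous generators on $(G\times\Sigma_A)$-spectra, so the two sets of generating cofibrations on $\Sp(\glo{Mod}^G_R)$ coincide.

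The main obstacle is then showing that $\Sp(\ul{\mathbb U})$ also \emph{reflects} global weak equivalences, which together with the matching cofibrations forces the two model structures to agree. For this I would argue as follows: given $f$ with $\Sp(\ul{\mathbb U})(f)$ a weak equivalence, take a fibrant replacement $f\to\tilde f$ in the projective (or flat) model structure on $\Sp(\glo{Mod}^G_R)$; as $\Sp(\ul{\mathbb U})$ is right Quillen, applying it produces a fibrant replacement of $\Sp(\ul{\mathbb U})(f)$. Between fibrant objects, Bousfield-localized weak equivalences coincide with level weak equivalences; since $\mathbb U$ reflects weak equivalences of spectra levelwise, the hypothesis passes to show $\tilde f$ is a level weak equivalence, and hence $f$ is a $G$-global weak equivalence.
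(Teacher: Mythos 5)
Your proposal is correct and takes essentially the same route as the paper's proof: stabilize the two global Quillen adjunctions of Lemma~\ref{lemma:module-adjunctions} via Proposition~\ref{prop:sp-induced-adjunction}, deduce that both $\Sp(\ul{R\smashp\blank})$ and $\Sp(\ul{\mathbb U})$ are homotopical by a factorization argument, verify the transfer criterion using that both functors are left Quillen, and obtain reflection of weak equivalences by reducing to fibrant objects, where localized weak equivalences are level weak equivalences created by $\Sp(\ul{\mathbb U})$. The only (immaterial) divergence is the final identification of the transferred structure with the given one: you match generating cofibrations via $R\smashp\big(\bm\Sigma(A,\blank)\smashp_{\Sigma_A}i\big)\cong\bm\Sigma(A,\blank)\smashp_{\Sigma_A}(R\smashp i)$, whereas the paper simply observes that the two structures have the same acyclic fibrations and the same weak equivalences.
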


Beware that in general stabilization does \emph{not} commute with transferring model structures!

\begin{proof}
By Lemma~\ref{lemma:module-adjunctions} \emph{both} functors are left Quillen (for either model structure), in particular they send flat acyclic cofibrations to weak equivalences. Moreover, both $\Sp(\ul{\mathbb U})$ and $\Sp(\ul{R\smashp\blank})$ preserve level weak equivalences as $\ul{\mathbb U}$ and $\ul{R\smashp\blank}$ are homotopical by the aforementioned lemma. Factoring an arbitrary weak equivalence into a flat cofibration followed by a level weak equivalence we see that both functors are in fact homotopical.

Now let $f\colon X\to Y$ be a map in $\Sp(\glo{Mod}^G_R)$ such that $\Sp(\ul{\mathbb U})(f)$ is a weak equivalence; we have to show that $f$ is a weak equivalence. As $\Sp(\ul{\mathbb U})$ is homotopical, we may assume by $2$-out-of-$3$ that $f$ is a map of fibrant objects. But then also $\Sp(\ul{\mathbb U})(f)$ is a map of fibrant objects, whence a level weak equivalence, so that also $f$ is a (level) weak equivalence.

Finally, to prove that the projective and flat model structures on $\Sp(\glo{Mod}^G_R)$ are transferred along the above adjunction, we will first show that the transferred model structures exist, and then prove that they agree with the given ones.

For the existence, it suffices by local presentability that $\Sp({R\smashp\blank})(J)$-cell complexes are sent by $\Sp(\ul{\mathbb U})$ to weak equivalences, where $J$ is our favourite set of generating acyclic cofibrations. But as $\Sp(\ul{\mathbb U})$ is left Quillen we only have to show that $\Sp(\ul{\mathbb U})$ sends maps in $\Sp(\ul{R\smashp\blank})(J)$ to acyclic cofibrations, which follows at once since also $\Sp(\ul{R\smashp\blank})$ is left Quillen.

Now we simply observe that the transferred model structure has the correct acyclic fibrations (obviously) as well as weak equivalences (by the above), so it agrees with the given model structure as claimed.
\end{proof}

\begin{rk}
The second half of the lemma holds true more generally for all (not necessarily flat) $G$-global ring spectra $R$; however, we will only need the above version, which is slightly easier to prove.
\end{rk}

\subsection{Global model categories of algebras}\label{subsection:model-structures-algebras} Now we come to the key objects of study for the rest of this paper.

\begin{defi}
	An \emph{ultra-commutative $G$-global ring spectrum} is a commutative monoid $R$ (for the smash product) in $\cat{$\bm G$-Spectra}$. We write:
	\begin{enumerate}
	\item $\cat{UCom}^G=\cat{$\bm G$-UCom}$ for the category of commutative monoids
	\item $\cat{UCom}^G_R$ for the category of $R$-algebras (i.e.~commutative monoids in $\cat{Mod}_R^G$ for the relative smash product, which is canonically isomorphic to the slice $R/\cat{UCom}^G$).
	\end{enumerate}
\end{defi}

\begin{prop}\label{prop:t-structure-smash}
	Let $R$ be a connective $G$-global ultra-commutative ring spectrum and equip $\Ho(\cat{Mod}^G_R)$ with the t-structure of Proposition~\ref{prop:t-structure-mod}. Then the derived relative smash product restricts to functors
	\begin{equation*}
		\blank\smashp_R^{\cat L}\blank\colon\Ho(\cat{Mod}^G_R)_{\ge m}\times\Ho(\cat{Mod}^G_R)_{\ge n}\to\Ho(\cat{Mod}^G_R)_{\ge m+n}
	\end{equation*}
	for all $m,n\in\mathbb Z$.
	\begin{proof}
		The relative smash product comes from a left Quillen bifunctor, so it is exact in each variable and moreover preserves coproducts in each variable. We therefore reduce first to the case $m=n=0$, and then (by the characterization of the connective part of the t-structure) to proving that the $G$-global spectrum $(R\smashp\Sigma^\bullet_+ I(A,\blank)\times_\phi G)\smashp_R(R\smashp\Sigma^\bullet_+ I(B,\blank)\times_\phi G)$ is connective. But this is isomorphic to $R\smashp(\Sigma^\bullet_+ I(A,\blank)\times_\phi G)\smashp (\Sigma^\bullet_+ I(A,\blank)\times_\psi G)$, which is connective by the claim in the proof of Proposition~\ref{prop:t-structure-mod} (and flatness of the two suspension spectra).
	\end{proof}
\end{prop}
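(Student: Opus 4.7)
The plan rests on three ingredients that are all essentially available by this point in the paper: (i) the derived relative smash product $\blank\smashp_R^{\cat L}\blank$ is exact in each variable and preserves coproducts in each variable (it comes from a left Quillen bifunctor after the positive flat transfer); (ii) by Proposition~\ref{prop:t-structure-mod}, $\Ho(\cat{Mod}^G_R)_{\ge 0}$ is the smallest subcategory closed under coproducts, suspensions, and extensions containing the compact generators $R\smashp\Sigma^\bullet_+\big(I(A,\blank)\times_\phi G\big)$; and (iii) the claim established inside the proof of Proposition~\ref{prop:t-structure-mod} shows that the derived smash product of any two connective $G$-global spectra is connective.

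Given (i), exactness in each variable immediately reduces the statement to the case $m=n=0$, since both shifting and the (co)connective parts of the t-structure are preserved by suspensions. Given (ii), for a fixed connective $R$-module $X$ the class of $R$-modules $Y$ with $X\smashp_R^{\cat L}Y\in\Ho(\cat{Mod}^G_R)_{\ge 0}$ is closed under coproducts, suspension, and extensions (by (i) plus closure of $\Ho(\cat{Mod}^G_R)_{\ge 0}$ under these), so it suffices to verify it for $Y$ a generator. Running the same reduction once more in the first variable, I am reduced to checking connectivity when \emph{both} $X$ and $Y$ are generators.

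For two such generators, the free $R$-module functor sends cofibrant $G$-global spectra to cofibrant $R$-modules, so the derived relative smash product agrees with the underived one and unravels as
\[
\big(R\smashp\Sigma^\bullet_+ I(A,\blank)\times_\phi G\big)\smashp_R\big(R\smashp\Sigma^\bullet_+ I(B,\blank)\times_\psi G\big)\cong R\smashp\big(\Sigma^\bullet_+ I(A,\blank)\times_\phi G\big)\smashp\big(\Sigma^\bullet_+ I(B,\blank)\times_\psi G\big).
\]
The two suspension-spectrum factors are flat, so the outer smash product on the right is already derived, and it is the derived smash product of $R$ (connective by hypothesis) with a connective $G$-global spectrum (connectivity of the pair of generators being exactly part of the claim cited in (iii)). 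A final application of (iii) then yields connectivity, concluding the argument.

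The only step that requires a bit of care, and which I expect to be the main technical nuisance, is justifying the passage from derived to underived smash products in the last display: one must know that free $R$-modules on flat $G$-global spectra are flat as $R$-modules (so the derived relative smash product can be computed by the obvious formula) and that the resulting identification lies over the natural isomorphism $(R\smashp_R R)\simeq R$. This is essentially bookkeeping, but it is where the assumed flatness of the generating suspension spectra gets used.
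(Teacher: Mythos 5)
Your proposal is correct and follows essentially the same route as the paper's own proof: reduce by exactness to $m=n=0$, use the generation of $\Ho(\cat{Mod}^G_R)_{\ge0}$ under coproducts, suspensions, and extensions to reduce to a pair of generators, and then identify their relative smash with $R\smashp(\Sigma^\bullet_+ I(A,\blank)\times_\phi G)\smashp(\Sigma^\bullet_+ I(B,\blank)\times_\psi G)$, invoking flatness of the suspension spectra and the connectivity claim from the proof of Proposition~\ref{prop:t-structure-mod}. The derived-versus-underived bookkeeping you flag is exactly what the paper handles via that flatness remark, so there is no gap.
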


Next, we turn to model structures on $\cat{UCom}^G$ and more generally on categories of $R$-algebras for $G$-global ultra-commutative ring spectra $R$. These model structures can be constructed using the monoid axiom of \cite{schwede-shipley-monoidal} and the commutative monoid axiom of \cite{white-cmon}. We recall the relevant results and then show that these axioms are satisfied for the positive projective and flat $G$-global model structures.

\begin{constr}\label{constr:iterated-pushout-product}
	Let $(\mathscr C, \otimes, \unit)$ be a cocomplete closed symmetric monoidal category. For any object $X$ in $\mathscr C$, we denote by $\power{m} X = X^{\otimes m}/\Sigma_m$ the $m$-th symmetric power of $X$. We consider a generalization of the pushout product
	\[ f\ppo g\colon A\otimes Y \iP_{A\otimes B} X\otimes B\to X\otimes Y \]
	of two morphisms $f\colon A\to X$ and $g\colon B\to Y$:

	Let $f\colon A\to X$ be a morphism in $\mathscr C$, and $m\geq 1$. We consider the poset category $\mathscr P(\{1, \ldots, m\})$ and the hypercube-shaped diagram
	\begin{align*}
	W(f)\colon \mathscr P(\{1, \ldots, m\}) &\to \mathscr C, \\
	S & \mapsto W_1(f,S)\otimes \ldots \otimes W_m(f,S),
	\end{align*}
	where
	\[ W_k(f,S) = \begin{cases}
	A & \textup{if } k\notin S,\\
	X & \textup{if } k \in S.
	\end{cases}\]
	To an inclusion $i\colon S\hookrightarrow T$, the functor $W(f)$ assigns the morphism $W(f, i) = W_1(f, i)\otimes \ldots \otimes W_m(f,i)$, where
	\[ W_k(f,i) = \begin{cases}
	\id & \textup{if } k\notin T\setminus S,\\
	f & \textup{if } k \in T\setminus S.
	\end{cases}\]
	This diagram defines a morphism
	\[ f^{\ppo m}\colon Q^m := \colim_{\mathscr P(\{1, \ldots, m\})\setminus \{1, \ldots, m\}} W(f) \to X^{\otimes m} = W(f, \{1, \ldots, m\}).\]
	This morphism is $\Sigma_m$-equivariant for the actions induced by the permutation action of $\Sigma_m$ on $\{1, \ldots, m\}$, and hence induces a map
	\begin{equation}\label{eq:symmetric-iterated-pushout-product}
	f^{\ppo m}/\Sigma_m \colon Q^m/\Sigma_m \to \power{m} X = X^{\otimes m}/\Sigma_m.
	\end{equation}
	on coinvariants.
\end{constr}

\begin{defi}\label{def:monoid-axioms}
	Let $(\mathscr C, \otimes, \unit)$ be a symmetric monoidal model category. We say that $\mathscr C$ satisfies the
	\begin{enumerate}
		\item \emph{monoid axiom} if every $\big(\{\textup{acyclic cofibrations}\}\otimes \mathscr C\big)$-cellular map is a weak equivalence.
		\item \emph{commutative monoid axiom} if for any acyclic cofibration $f\colon X\to Y$ and any $n\geq 0$, the map $f^{\ppo n}/\Sigma_n$ is an acyclic cofibration.
		\item \emph{strong commutative monoid axiom} if for any cofibration or acyclic cofibration $f\colon X\to Y$ and any $n\geq 0$, the map $f^{\ppo n}/\Sigma_n$ is a cofibration or an acyclic cofibration, respectively.
	\end{enumerate}
\end{defi}

\begin{prop}\label{prop:model-str-on-algebras}
	Let $(\mathscr C, \otimes, \unit)$ be a combinatorial symmetric monoidal model category satisfying the monoid axiom and commutative monoid axiom, and let $R$ be a commutative monoid in $\mathscr C$. Then:
	\begin{enumerate}
		\item the category $\Mod_R$ of $R$-modules inherits a model structure from $\mathscr C$, transferred along the adjunction
		\[ \begin{tikzcd}[column sep = large]
		\mathscr C \arrow[r, shift left, "R\otimes \_"] & \Mod_R \arrow[l, shift left, "\mathbb U"].
		\end{tikzcd}\]
		\item the category $\CAlg_R$ of commutative $R$-algebras inherits a model structure from $\mathscr C$, transferred along the free-forgetful adjunction
		\[ \begin{tikzcd}[column sep = large]
		\mathscr C \arrow[r, shift left, "{R\otimes\power{} \mathrel{:=}R\otimes\coprod_{n\ge0}\mathbb P^n}"] &[4.2em] \CAlg_R \arrow[l, shift left, "\mathbb U"].
		\end{tikzcd}\]
		\item the category $\NUCA_R$ of non-unital commutative $R$-algebras (i.e.~$R$-modules $M$ equipped with an associative and commutative multiplication $M\smashp_RM\to M$) inherits a model structure from $\mathscr C$, transferred along the free-forgetful adjunction
		\[ \begin{tikzcd}[column sep = large]
		\mathscr C \arrow[r, shift left, "{R\otimes\power{>0}\mathrel{:=}R\otimes\coprod_{n>0}\mathbb P^n}"]&[4.2em] \NUCA_R \arrow[l, shift left, "\mathbb U"].
		\end{tikzcd}\]
	\end{enumerate}
\begin{proof}
The first assertion is part of \cite[Theorem 4.1]{schwede-shipley-monoidal}, the second of \cite[Theorem 3.2, Remark 3.3]{white-cmon} and the last of \cite[Theorem B.2.6]{stahlhauer}.
\end{proof}
\end{prop}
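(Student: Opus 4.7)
In each of the three cases, the plan is to verify the hypotheses of the Crans-Kan Transfer Criterion along the given free-forgetful adjunction. Local presentability of $\mathscr C$ is inherited by $\Mod_R$, $\CAlg_R$, and $\NUCA_R$ (all three are categories of algebras over an accessible monad on $\mathscr C$), so every set of maps permits the small object argument. Hence the only non-trivial task is to show that for some (any) set $J$ of generating acyclic cofibrations of $\mathscr C$, the forgetful functor sends relative $F(J)$-cell complexes to weak equivalences, where $F$ denotes the respective free functor.

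For modules, the forgetful functor $\mathbb U\colon\Mod_R\to\mathscr C$ preserves all colimits, and a straightforward computation identifies $\mathbb U$ applied to a pushout of $R\otimes j$ (for $j\in J$) along any module map with a pushout of $j\otimes\mathbb U(M)$ for some module $M$; iterating, a relative $(R\otimes J)$-cell complex in $\Mod_R$ is sent by $\mathbb U$ to a transfinite composite of pushouts of maps in $J\otimes\mathscr C$, hence to a weak equivalence by the monoid axiom. This simultaneously shows that the resulting model structure on $\Mod_R$ satisfies the monoid axiom, which will be needed below.

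For commutative $R$-algebras, the classical symmetric-power filtration (see White, Theorem~3.2) plays the key role. If $A\to B$ is a pushout in $\CAlg_R$ of the free map $R\otimes\power{}(f)$ on some morphism $f\colon X\to Y$ in $\Mod_R$, then $B$ is the colimit of a sequence $A=B_0\to B_1\to B_2\to\cdots$ in which each $B_{n-1}\to B_n$ is obtained by cobase change in $\Mod_R$ from $A\otimes_R(f^{\ppo n}/\Sigma_n)$, where $f^{\ppo n}/\Sigma_n$ is the symmetric iterated pushout-product from Construction~\ref{constr:iterated-pushout-product}. When $f$ is an acyclic cofibration in $\mathscr C$, the commutative monoid axiom ensures that $f^{\ppo n}/\Sigma_n$ is an acyclic cofibration in $\mathscr C$; applying the (left Quillen) functor $A\otimes_R\blank$ and using the monoid axiom for $\Mod_R$ established above, each $B_{n-1}\to B_n$ is then a weak equivalence in $\Mod_R$, and so is the transfinite composite $A\to B$. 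Iterating over the cells of a relative cell complex finishes the verification. The argument for $\NUCA_R$ is identical, but only the symmetric powers with $n\ge 1$ contribute to the filtration, so $B_0=A$ is replaced accordingly.

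The main technical obstacle, as usual in this circle of ideas, is the filtration argument for $\CAlg_R$: showing that the pushout $B$ is actually built as the sequential colimit of pushouts along $A\otimes_R(f^{\ppo n}/\Sigma_n)$ requires a careful combinatorial analysis of words in the free commutative monoid, and the reference to White above is precisely for this. Everything else is essentially formal: the small object argument plus the (strong) monoid axioms bootstrap from $\mathscr C$ to $\Mod_R$, and from $\Mod_R$ to $\CAlg_R$ and $\NUCA_R$.
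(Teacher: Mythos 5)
Your sketch is correct and is essentially the approach the paper relies on: the paper's proof simply cites Schwede--Shipley (Theorem~4.1), White (Theorem~3.2), and the second author's thesis (Theorem~B.2.6), whose proofs are exactly the Crans--Kan transfer argument via the monoid axiom for modules and the symmetric-power filtration for $\CAlg_R$ and $\NUCA_R$ that you outline (cf.\ Remark~\ref{rk:pushout-analysis-nucas} for the NUCA filtration). One cosmetic point: the underlying map of a cell in $\Mod_R$ is a pushout of $R\otimes j$ itself, which already lies in $\mathscr C\otimes\{\text{acyclic cofibrations}\}$, so no identification with ``$j\otimes\mathbb U(M)$'' is needed.
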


\begin{rk}\label{rk:pushout-analysis-nucas}
	We comment on the proof of the result for non-unital commutative algebras in \cite{stahlhauer}. As for the other results, the main step is an analysis of certain pushouts in the category of non-unital commutative algebras. For simplicity, we restrict to $R=\unit$. Explicitly, for a symmetric monoidal model category $(\mathscr C, \otimes, \unit)$, we consider morphisms $h\colon K\to L$ and $p\colon K\to X$ in $\mathscr C$, where $X$ is a non-unital commutative algebra, and we analyze the pushout
	\begin{equation}\label{diag:pushout-free-nucas}
	\begin{tikzcd}
	\power{>0} K \arrow[r, "\power{>0}h"] \arrow[d, "\tilde p", swap] & \power{>0} L \arrow[d]\\
	X \arrow[r, "f"] & P
	\end{tikzcd}
	\end{equation}
	in the category $\NUCA$ of non-unital commutative algebras. For the proof of \Cref{prop:model-str-on-algebras}, we need to check that if $h$ is an acyclic cofibration, then the pushout morphism $f\colon X\to P$ is a weak equivalence. We show this by defining a filtration
	\[ X = P_0\xrightarrow{f_1} P_1\xrightarrow{f_2} \ldots \to \colim_{n\geq 0} P_n = P. \]
	Here, the $P_n$ are iteratively defined via pushouts
	\[\hskip-3.76pt\hfuzz=4pt\begin{tikzcd}[cramped,column sep = 12em]
	(Q^{n+1}/\Sigma_{n+1}) \iP X\otimes (Q^{n+1}/\Sigma_{n+1}) \arrow[r, "h^{\ppo n+1}/\Sigma_{n+1} \iP X\otimes h^{\ppo n+1}/\Sigma_{n+1}"] \arrow[d, "t_{n+1}", swap]
	& \power{n+1} L \iP X\otimes \power{n+1} L \arrow[d, "T_{n+1}"]\\
	P_n \arrow[r, "f_{n+1}", swap]
	& P_{n+1}.
	\end{tikzcd} \]
	We will never need to know how the vertical maps are defined precisely; informally, $t_n$ is given by mapping all entries in $K$ to $X$ via $p$ and multiplying all resulting terms from $X$ together, whereas all entries in $L$ are collected into $\power{0<\ast\leq n} L$.
\end{rk}

We will also need the following criterion for left properness:

\begin{prop}\label{prop:alg-left-proper}
Let $\mathscr C$ be a combinatorial symmetric monoidal model category satisfying the monoid axiom and strong commutative monoid axiom. Then the transferred model structure on $\CAlg$ is left proper provided all of the following conditions are satisfied:
\begin{enumerate}
	\item $\mathscr C$ is left proper and filtered colimits in it are homotopical.
	\item There exists a set of generating cofibrations for $\mathscr C$ consisting of maps between \emph{cofibrant} objects.
	\item For any $X\in\mathscr C$ and any cofibration $i$, pushouts along $X\otimes i$ are homotopy pushouts.
	\item For any cofibrant $X$, the functor $X\otimes\blank$ is homotopical.
\end{enumerate}
\begin{proof}
This is a special case of \cite[Theorem~4.17]{white-cmon}, also see \cite[Theorem~2.1.34]{g-global} for the reduction to White's result.
\end{proof}
\end{prop}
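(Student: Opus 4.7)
The plan is to reduce the statement to a filtration argument of the type carried out in \cite{white-cmon}; indeed, hypotheses (1)--(4) are essentially designed to make the general strategy of \emph{op.~cit.} go through, and the proposition is stated as a direct appeal to White's Theorem~4.17.

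First, by a standard retract and transfinite composition argument for checking left properness, it suffices to show the following: given a generating cofibration $f\colon K\to L$ of $\mathscr C$, a map $\power{} K\to X$ in $\CAlg$, and a weak equivalence $\alpha\colon X\to Y$ in $\CAlg$, the induced map between the pushouts $P\mathrel{:=}X\cup_{\power{} K}\power{} L$ and $P'\mathrel{:=}Y\cup_{\power{} K}\power{} L$ (formed in $\CAlg$) is again a weak equivalence in $\mathscr C$. By hypothesis~(2) we may assume $K$ and $L$ are cofibrant. The standard pushout analysis for free commutative algebras (entirely analogous to the $\NUCA$ situation sketched in Remark~\ref{rk:pushout-analysis-nucas}) then expresses $P$ as a sequential colimit $P=\colim_n P_n$ with $P_0=X$, each transition $P_{n-1}\to P_n$ fitting into a pushout square in $\mathscr C$ of the form
\[
\begin{tikzcd}
X\otimes (Q^n/\Sigma_n) \arrow[r, "{X\otimes (f^{\ppo n}/\Sigma_n)}"] \arrow[d] & X\otimes\power{n} L \arrow[d]\\
P_{n-1} \arrow[r] & P_n,
\end{tikzcd}
\]
and likewise $P'=\colim_n P'_n$ with $P'_0=Y$ and $X$ replaced by $Y$ throughout.

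Now the strong commutative monoid axiom applied to the cofibrations $\varnothing\to K\to L$ guarantees that $f^{\ppo n}/\Sigma_n$ is a cofibration in $\mathscr C$ and that the objects $Q^n/\Sigma_n$ and $\power{n} L$ are cofibrant. Hence by~(3) each defining square (and its $Y$-counterpart) is an honest homotopy pushout, while by~(4) the functors $\blank\otimes (Q^n/\Sigma_n)$ and $\blank\otimes\power{n} L$ preserve weak equivalences. An induction on $n$, with base case $P_0=X\xrightarrow{\alpha}Y=P'_0$ and inductive step supplied by the gluing lemma in the left proper model category $\mathscr C$, then shows that each comparison map $P_n\to P'_n$ is a weak equivalence. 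Finally, hypothesis~(1) upgrades this to the desired weak equivalence $P\to P'$ on the colimit.

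The main technical obstacle is the careful bookkeeping needed to construct the filtration $\{P_n\}$ and to verify that the transition maps have the claimed form; this is precisely the pushout analysis worked out in \cite[proof of Theorem~3.2 and Theorem~4.17]{white-cmon}, which is the reason we invoke \emph{op.~cit.} rather than re-prove the result from scratch.
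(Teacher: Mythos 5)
Your proposal is correct and matches the paper's proof in substance: the paper simply invokes \cite[Theorem~4.17]{white-cmon} (with the reduction of hypotheses~(1)--(4) to White's setting carried out in \cite[Theorem~2.1.34]{g-global}), and your sketch is exactly the filtration argument underlying that result, ending in the same appeal to \emph{op.~cit.} for the pushout bookkeeping. No gaps worth flagging; the only point you gloss over (cofibrancy of $Q^n/\Sigma_n$ via the strong commutative monoid axiom) is likewise handled in White's paper.
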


Let us now specialize this to algebraic structures on $G$-global spectra:

\begin{prop}\label{prop:white-assumptions}
Let $G$ be a finite group. Then the positive projective and positive flat $G$-global model structures on $\cat{$\bm G$-Spectra}$ are symmetric monoidal. Moreover, they satisfy both the monoid axiom as well as the strong commutative monoid axiom.
\begin{proof}
The pushout product axiom is immediate from Corollary~\ref{cor:smash-left-Quillen}. Moreover, if $p\colon\mathbb S^+\to\mathbb S$ is a cofibrant replacement in either of these model structures, and $X$ is any $G$-spectrum, then $X\smashp p$ is a $G$-global weak equivalence by flatness of $\mathbb S$ and Proposition~\ref{prop:flatness-theorem}. Thus, both model structures are symmetric monoidal.

For the monoid axiom, we simply observe once more that for any $G$-global spectrum $R$ and any acyclic cofibration (say, in the positive flat model structure) $j$ the map $R\smashp j$ is a $G$-global weak equivalence and an injective cofibration.

If now $f$ is a positive flat cofibration then $f^{\ppo n}$ is a positive flat cofibration simply by monoidality, also see \cite[Remark~6.9]{hausmann-equivariant}; the strong commutative monoid axiom for positive flat cofibrations follows as $(\blank)/\Sigma_n$ preserves positive flat cofibrations (since $\triv_{\Sigma_n}$ clearly preserves acyclic fibrations).

In the projective case we explicitly compute that for a generating cofibration $f=\bm\Sigma(A,\blank)\smashp_{H} G_+\smashp (\del\Delta^m\hookrightarrow\Delta^m)$ with $H$ acting faithfully on $A\not=\varnothing$ the map $f^{\ppo n}/\Sigma_n$ agrees up to isomorphism with
\begin{equation}\label{eq:com-mon-proj-cof}
\bm\Sigma(\bm n\times A,\blank)\smashp_{\Sigma_n\wr H} G^n_+\smashp(\del\Delta^m\hookrightarrow\Delta^m)^{\ppo n}_+
\end{equation}
where $\Sigma_n\wr H$ acts on $\bm n\times A$ via $(\sigma,h_\bullet).(i,a)\mapsto (\sigma(i), h_i.a)$, which is faithful as $A\not=\varnothing$ and $H$ acts faithfully. Now $(\del\Delta^m\hookrightarrow\Delta^m)^{\ppo n}_+$ is a cofibration of simplicial sets and $G$ acts freely on $G^n$, so $G^n_+\smashp(\del\Delta^m\hookrightarrow\Delta^m)^{\ppo n}_+$ is a $\mathcal G_{\Sigma_n\wr H,G}$-cofibration, whence $(\ref{eq:com-mon-proj-cof})$ is a projective cofibration again.

Finally, for the (strong) commutative monoid axiom for acyclic cofibrations, we note that we can as before pick sets of generating acyclic cofibrations consisting of maps between positively flat $G$-spectra. It therefore suffices by \cite[Corollaries~10 and~23]{sym-powers} that for any $G$-global weak equivalence $f$ of positively flat $G$-spectra also $f^{\smashp n}/\Sigma_n$ is a weak equivalence, which is precisely the content of Corollary~\ref{cor:sym-powers-pos-flat}.
\end{proof}
\end{prop}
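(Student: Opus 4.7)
The plan is to verify the four required axioms (pushout product, unit, monoid, strong commutative monoid) in turn, handling the positive flat and positive projective model structures in parallel where possible. Only the strong commutative monoid axiom for acyclic cofibrations will require real work; the rest reduces almost immediately to results already established in the excerpt.

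First, the pushout product axiom is a direct specialization of Corollary~\ref{cor:smash-left-Quillen}, since every positive cofibration is in particular a flat (or projective) cofibration and every positive weak equivalence is a $G$-global weak equivalence. For the unit axiom, I would pick any cofibrant replacement $p\colon\mathbb S^+\to\mathbb S$ in the positive model structure under consideration, observe that any positively cofibrant $X$ is in particular flat, and apply the Flatness Theorem (Proposition~\ref{prop:flatness-theorem}) to conclude that $X\smashp p$ is a $G$-global weak equivalence. For the monoid axiom, the key observation is that for any $G$-spectrum $R$ and any acyclic cofibration $j$ (in either positive model structure), the map $R\smashp j$ is both a $G$-global weak equivalence (by Proposition~\ref{prop:flatness-theorem}, as the source and target of $j$ are flat) and an injective cofibration. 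Since injective acyclic cofibrations are closed under pushouts, transfinite composition, and retracts, by left properness of the $G$-global injective model structure together with filtered colimits being homotopical there, the monoid axiom follows at once.

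For the strong commutative monoid axiom on \emph{cofibrations}, I would split by model structure. In the positive flat case, monoidality implies that $f^{\ppo n}$ is a positive flat cofibration equipped with a natural $\Sigma_n$-action, and then $(\blank)/\Sigma_n$ preserves positive flat cofibrations because its right adjoint $\triv_{\Sigma_n}$ preserves acyclic fibrations (defined levelwise). In the positive projective case, I would check the axiom on generating cofibrations by a direct computation: for $f=\bm\Sigma(A,\blank)\smashp_H G_+\smashp(\del\Delta^m\hookrightarrow\Delta^m)_+$ with $A$ nonempty and $H\subset\Sigma_A$ acting faithfully on $A$, one identifies $f^{\ppo n}/\Sigma_n$ up to isomorphism with a map involving $\bm\Sigma(\bm n\times A,\blank)\smashp_{\Sigma_n\wr H}G^n_+$, and verifies that $\Sigma_n\wr H$ still acts faithfully on $\bm n\times A$ (using $A\neq\varnothing$ and faithfulness of $H$) while $G$ acts freely on $G^n$, so that the whole map lands in the class of positive projective cofibrations.

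The main obstacle is the acyclic case of the strong commutative monoid axiom, and my plan here would be to apply a Gorchinskiy--Guletskii-style criterion (e.g.~\cite[Corollaries~10 and~23]{sym-powers}). This reduces the required statement to two inputs: first, the existence of a generating set of acyclic cofibrations whose sources and targets are \emph{positively flat}---available because our generating cofibrations~$(\ref{eq:gen-cof-pos-flat})$ and~$(\ref{eq:gen-cof-pos-proj})$ are already maps between cofibrant objects (cf.~Remark~\ref{rk:gen-acyclic-pos-flat}), so the criterion of \cite[Corollary~2.7]{barwick-tractable} applies in both model structures; and second, that the functor $(\blank)^{\smashp n}/\Sigma_n$ preserves $G$-global weak equivalences between positively flat $G$-spectra, which is precisely Corollary~\ref{cor:sym-powers-pos-flat}. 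The real technical weight of the proposition is therefore packed into this last statement, which in turn rests on the norm and smash-power analysis of the preceding subsection.
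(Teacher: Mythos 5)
Your proposal is correct and follows essentially the same route as the paper: pushout product and unit from Corollary~\ref{cor:smash-left-Quillen} and the Flatness Theorem, the monoid axiom via injective acyclic cofibrations, the cofibration half of the strong commutative monoid axiom by the $\triv_{\Sigma_n}$-adjunction argument (flat case) and the explicit wreath-product computation on generating cofibrations (projective case), and the acyclic half via Remark~\ref{rk:gen-acyclic-pos-flat} plus the Gorchinskiy--Guletskii criterion reducing to Corollary~\ref{cor:sym-powers-pos-flat}. One small caveat: in your monoid-axiom step the source and target of an \emph{arbitrary} acyclic cofibration need not be flat, so (as you do for the commutative monoid axiom) one should restrict to the generating acyclic cofibrations between cofibrant objects, which suffices by the standard Schwede--Shipley reduction.
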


\begin{cor}\label{cor:ucomRG}
Let $G$ be a finite group and let $R$ be a $G$-global ultra-commutative ring spectrum. Then the positive projective and flat $G$-global model structures on $\cat{Mod}_R^G$ transfer along the free-forgetful adjunction $\mathbb{P}\colon\cat{Mod}_R^G\rightleftarrows\cat{UCom}_R^G :\!\mathbb U$. The resulting model structures are again combinatorial, proper, simplicial, and filtered colimits in it are homotopical.
\begin{proof}
Identifying $\cat{UCom}_R^G$ with the slice $R/\cat{UCom}^G$, it suffices to consider the case $R=\mathbb S$. Thus, the existence of the model structure follows from White's criterion (Proposition~\ref{prop:model-str-on-algebras}) and the previous proposition.

As the model structure is transferred from $\cat{$\bm G$-Spectra}$, we immediately see that it is combinatorial, right proper, simplicial, and that filtered colimits in it are homotopical. It remains to check left properness, for which we will verify the assumptions of Proposition~\ref{prop:alg-left-proper}:

We know from Propositions~\ref{prop:pos-proj} and~\ref{prop:pos-flat} that both positive model structures on $\cat{$\bm G$-Spectra}$ are left proper, that filtered colimits in them are homotpical, and moreover the standard generating cofibrations are maps of cofibrant objects. As already observed above, for any $X$ and any (generating) cofibration $i$, $X\smashp i$ is an injective cofibration, so pushouts along it are homotopy pushouts. Finally, if $X$ is cofibrant in either of the two model structures, then $X\smashp\blank$ is homotopical by Proposition~\ref{prop:flatness-theorem}.
\end{proof}
\end{cor}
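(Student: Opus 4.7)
My plan is to proceed in two stages: first establish the existence of the transferred model structures via White's machinery, and then verify the remaining properties, with left properness being the only nontrivial item.

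For the first stage, I would reduce to the case $R = \mathbb{S}$ by identifying $\cat{UCom}_R^G$ with the slice category $R/\cat{UCom}^G$, which inherits a transferred model structure from $\cat{UCom}^G$ whenever the latter has one (and the forgetful functor to $\cat{Mod}_R^G = R/\cat{Mod}^G$ still creates weak equivalences and fibrations). For $R = \mathbb{S}$, the existence of the transferred model structure on $\cat{UCom}^G$ is an immediate application of Proposition~\ref{prop:model-str-on-algebras}(2), whose hypotheses are exactly the content of Proposition~\ref{prop:white-assumptions}: the positive projective and positive flat $G$-global model structures on $\cat{$\bm G$-Spectra}$ are combinatorial symmetric monoidal model categories satisfying the monoid axiom and the (strong) commutative monoid axiom.

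For the second stage, combinatoriality, right properness, simplicial enrichment, and homotopicality of filtered colimits are all inherited along a transfer whose right adjoint (the forgetful functor) creates weak equivalences, fibrations, cotensors, and filtered colimits; so each of these properties descends directly from the corresponding property of $\cat{Mod}_R^G$ established earlier. The one property that does not transfer formally is left properness, since cofibrations in $\cat{UCom}_R^G$ are only characterized by a lifting property against underlying acyclic fibrations and need not themselves be underlying cofibrations.

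The main obstacle is therefore left properness, and I would handle it by invoking Proposition~\ref{prop:alg-left-proper} (White's criterion for left properness of categories of commutative algebras). Again reducing to $R = \mathbb{S}$, I need to check the four hypotheses for each of the two positive $G$-global model structures on $\cat{$\bm G$-Spectra}$: (i) left properness and homotopicality of filtered colimits are part of Propositions~\ref{prop:pos-proj} and~\ref{prop:pos-flat}; (ii) the explicit generating cofibrations $(\ref{eq:gen-cof-pos-proj})$ and $(\ref{eq:gen-cof-pos-flat})$ are visibly maps between cofibrant objects; (iii) for any $G$-spectrum $X$ and any (positive) cofibration $i$, the map $X \smashp i$ is an injective cofibration (as smashing with $X$ preserves levelwise injections), so pushouts along it are homotopy pushouts by left properness of the injective $G$-global model structure applied levelwise; and (iv) for $X$ cofibrant in either positive model structure, $X \smashp \blank$ is homotopical by the Flatness Theorem (Proposition~\ref{prop:flatness-theorem}), since a positively (projective or flat) cofibrant $G$-spectrum is in particular flat. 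Assembling these four verifications, Proposition~\ref{prop:alg-left-proper} yields left properness of $\cat{UCom}^G$, and hence of every slice $\cat{UCom}_R^G$, completing the proof.
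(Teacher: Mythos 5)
Your proposal is correct and follows essentially the same route as the paper: reduce to $R=\mathbb S$ via the slice identification, obtain existence from White's criterion (Proposition~\ref{prop:model-str-on-algebras}) together with Proposition~\ref{prop:white-assumptions}, inherit combinatoriality, right properness, simpliciality, and homotopical filtered colimits from the transfer, and verify the four hypotheses of Proposition~\ref{prop:alg-left-proper} with the same references. One small correction to your item (iii): the reason $X\smashp i$ is an injective cofibration is that $i$ is in particular a (positive) flat cofibration, and smashing a flat cofibration with an arbitrary spectrum yields a levelwise injection—not that $X\smashp\blank$ preserves arbitrary levelwise injections, which it does not, since the smash product is not computed levelwise.
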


Again, we more generally get positive projective and flat model structures on $\cat{$\bm H$-UCom}^G_R$ for all finite groups $H$.

\begin{lemma}
These make $\cat{UCom}^G_R$ into a global model category $\glo{Comm}_R^G$, and the free-forgetful adjunction defines a global Quillen adjunction $\ul{\mathbb P}\colon\glo{Mod}^G_R\rightleftarrows\glo{Comm}_R^G :\!\ul{\mathbb U}$.
\begin{proof}
For the first statement one argues precisely as for $\glo{Mod}_R^G$ (Lemma~\ref{lemma:module-global}). The second statement is clear.
\end{proof}
\end{lemma}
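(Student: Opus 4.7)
The plan is to mimic the proof of Lemma~\ref{lemma:module-global} very closely. The key observation, as in the module case, is that weak equivalences and fibrations in the transferred model structures on $H\text-\cat{UCom}_R^G$ are created by the forgetful functor $\mathbb U$ to $H\text-\cat{Mod}_R^G$, so essentially every property we need will reduce through $\mathbb U$ to the corresponding property of the already established global model category $\glo{Mod}_R^G$.

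First, I would assemble the preglobal structure. The existence of combinatorial, simplicial, left proper positive projective and flat model structures on $H\text-\cat{UCom}_R^G$ with homotopical filtered colimits, for each finite group $H$, is immediate from the $H$-shifted version of Corollary~\ref{cor:ucomRG}. Both model structures share the same weak equivalences (namely the maps $f$ with $\mathbb U f$ a $G$-global weak equivalence), and every projective cofibration is a flat cofibration as this holds on generating cofibrations, which are obtained by applying $\mathbb P$ to the corresponding generators in $\cat{Mod}_R^G$. For the change-of-group properties, the point is that for every homomorphism $\alpha\colon H\to H'$ the restriction $\alpha^*$ strictly commutes with $\mathbb U$, and it also commutes with the free functor $\mathbb P$ since $\alpha^*$ is strong symmetric monoidal and preserves colimits; therefore $\alpha_*$ commutes with $\mathbb U$ as well, by passing to right adjoints. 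Each required preservation property of $\alpha^*$ then translates via $\mathbb U$, which creates (acyclic) fibrations and weak equivalences, into the analogous statement for $\glo{Mod}_R^G$ that has already been established in Lemma~\ref{lemma:module-global}.

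To upgrade this preglobal structure to a global model category, I would verify condition~(\ref{item:gtfae-cofree-fixed-points}) of Proposition~\ref{prop:global-tfae}: given a span $C\xleftarrow{q}A\xrightarrow{g}B$ with $\ker q\cap\ker g=1$, a flatly fibrant $X\in B\text-\cat{UCom}_R^G$, and a flat fibrant replacement $g^*X\to Y$ in $A\text-\cat{UCom}_R^G$, we want $q_*g^*X\to q_*Y$ to be a $C$-global weak equivalence. Applying $\mathbb U$ and using the above commutations turns this into $q_*g^*\mathbb U X\to q_*\mathbb U Y$, where $\mathbb U X$ is flatly fibrant and $g^*\mathbb U X\to \mathbb U Y$ is a flat fibrant replacement in $\glo{Mod}_R^G$ (as $\mathbb U$ preserves weak equivalences and fibrations). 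The conclusion now follows from the global axiom for $\glo{Mod}_R^G$ combined with the fact that $\mathbb U$ detects weak equivalences.

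The second assertion is direct: by construction of the transferred model structures, $\mathbb U$ preserves weak equivalences and (projective or flat) fibrations on each $H\text-\cat{UCom}_R^G\to H\text-\cat{Mod}_R^G$, so $\mathbb P\dashv\mathbb U$ is a Quillen adjunction for each finite group $H$ and each of the two model structures, which is exactly what the definition of a global Quillen adjunction requires. I do not foresee any genuine obstacle; the main bookkeeping task is verifying that $\alpha^*$ commutes with $\mathbb P$ (equivalently, that $\alpha_*$ commutes with $\mathbb U$), after which every axiom reduces cleanly to the module case.
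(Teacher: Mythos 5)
Your proposal is correct and follows essentially the same route as the paper: the paper's proof of the module case (Lemma~\ref{lemma:module-global}), which this lemma explicitly imitates, likewise uses that the transferred structures have weak equivalences and fibrations created by $\mathbb U$, that $\alpha^*$ commutes with both adjoints so that $\alpha_*$ commutes with $\mathbb U$ by passing to mates, and that the globality axiom in its third formulation reduces through $\mathbb U$ to the already established base case. Your only deviation is reducing to $\glo{Mod}_R^G$ rather than to $G\text{-}\glo{GlobalSpectra}^+$, which is an equally valid (and if anything more direct) instance of the same argument.
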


\begin{defi}
Let $R$ be a $G$-global ultra-commutative ring spectrum, considered as a commutative algebra over itself in the obvious way. An \emph{augmented commutative $R$-algebra} is an object of the slice $\cat{UCom}^G_R/R$, i.e.~a commutative $R$-algebra $A$ together with an $R$-algebra homomorphism $\epsilon\colon A\to R$ (note that $\epsilon$ is automatically a retraction of the unit $R\to A$ by unitality).
\end{defi}

\begin{rk}
Arguing as in Remark~\ref{rk:basepoint}, $\cat{UCom}^G_R/R$ carries a left proper simplicial combinatorial model structure in which a map is a weak equivalence, fibration, or cofibration, if and only if it is so in the positive projective model structure on $\cat{UCom}^G_R$. Analogously, we get a positive flat model structure and via the usual identifications these make $\cat{UCom}^G_R/R$ into a global model category $\glo{Comm}_R^G/R$.
\end{rk}

\begin{defi}\label{defi:nuca}
	Let $R$ be a $G$-global ultra-commutative ring spectrum. We denote the category of non-unital commutative $R$-algebras by $\cat{NUCA}^G_R$.
\end{defi}

\begin{cor}
	Let $G$ be a finite group and let $R$ be a $G$-global ultra-commutative ring spectrum. Then the positive projective and flat $G$-global model structures on $\cat{Mod}_R^G$ transfer along the free-forgetful adjunction $\mathbb{P}^{>0}\colon\cat{Mod}_R^G\rightleftarrows\cat{NUCA}_R^G :\!\mathbb U$. The resulting model structures are again combinatorial, proper, simplicial, and filtered colimits in them are homotopical.
	\begin{proof}
		All statements except for left properness follow as in the case of ordinary commutative algebras (Corollary~\ref{cor:ucomRG}). For left properness, on the other hand, we observe that the (non-full) inclusion $\cat{UCom}^G_R\hookrightarrow\cat{NUCA}^G_R$ admits a left adjoint $K$ given by $X\mapsto (R\hookrightarrow R\vee X)$ with the unique multiplication extending the one on $X$. Then $K$ preserves (generating) cofibrations by direct inspection, and it is homotopical by \cite[Lemma~3.1.43]{g-global}. Finally, it also reflects weak equivalences as on the level of underlying $G$-spectra $X$ is naturally a retract of $R\vee X=K(X)$. Thus, left properness of $\cat{NUCA}^G_R$ follows from left properness of $\cat{UCom}^G_R$ by \cite[Lemma~A.2.15]{g-global}.
	\end{proof}
\end{cor}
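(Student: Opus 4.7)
The plan is to mirror the strategy of Corollary~\ref{cor:ucomRG} as closely as possible. Existence of the transferred model structures is immediate from Proposition~\ref{prop:model-str-on-algebras}(3), since the monoid axiom and commutative monoid axiom for the positive projective and flat $G$-global model structures on $\cat{$\bm G$-Spectra}$ were verified (in their strong form) in Proposition~\ref{prop:white-assumptions}; passing to shifts as in Example~\ref{ex:shifts} gives the analogous model structures on $\cat{$\bm H$-NUCA}^G_R$ for any finite $H$. Combinatoriality, right properness, the simplicial enrichment, and homotopicality of filtered colimits are then inherited from $\cat{Mod}^G_R$, since the forgetful functor $\mathbb U\colon\cat{NUCA}^G_R\to\cat{Mod}^G_R$ detects fibrations and weak equivalences and preserves all small colimits.

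The only delicate point is left properness, where the direct application of Proposition~\ref{prop:alg-left-proper} used for unital commutative algebras is not available. The plan is to transfer left properness backwards from $\cat{UCom}^G_R$ via the (non-full) inclusion $\iota\colon\cat{UCom}^G_R\hookrightarrow\cat{NUCA}^G_R$ that forgets the unit. This $\iota$ admits a left adjoint $K\colon\cat{NUCA}^G_R\to\cat{UCom}^G_R$ given by free unitalization $K(X)=(R\hookrightarrow R\vee X)$, equipped with the unique multiplication extending that of $X$ and making $R$ the unit. Under this identification, $K$ interchanges the free functors, i.e.~there is a natural isomorphism
\[ K\big(R\smashp\mathbb P^{>0}(M)\big)\cong R\smashp\mathbb P(M). \]
In particular $K$ sends a set of generating (acyclic) cofibrations of $\cat{NUCA}^G_R$ to a corresponding set for $\cat{UCom}^G_R$, so $K$ preserves cofibrations.

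The key verification is then that $K$ additionally is homotopical and reflects weak equivalences. Since weak equivalences in $\cat{NUCA}^G_R$ and $\cat{UCom}^G_R$ are both detected on underlying $G$-global spectra, homotopicality reduces to $R\vee\blank\colon\cat{$\bm G$-Spectra}\to\cat{$\bm G$-Spectra}$ being homotopical, which is an instance of~\cite[Lemma~3.1.43]{g-global}. Similarly, reflection of weak equivalences follows since on underlying $G$-global spectra $X$ is naturally a retract of $R\vee X$ via the obvious split projection $R\vee X\to X$. With these three properties of $K$ in hand, \cite[Lemma~A.2.15]{g-global} deduces left properness of $\cat{NUCA}^G_R$ from left properness of $\cat{UCom}^G_R$, the latter being the content of Corollary~\ref{cor:ucomRG}.

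I do not expect a serious technical obstacle: the adjunction $K\dashv\iota$, its compatibility with free functors, and the retract property are all direct from the definitions. The only step that requires any care is pinning down that $K$ genuinely lands in \emph{unital} commutative $R$-algebras with $R$ as the structure map (rather than merely in $\cat{UCom}^G$), but this is built into the formula $R\hookrightarrow R\vee X$ by construction.
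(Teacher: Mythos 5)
Your proposal is correct and follows essentially the same route as the paper: existence and all properties except left properness are transferred exactly as for unital commutative algebras, and left properness is deduced via the unitalization left adjoint $K(X)=(R\hookrightarrow R\vee X)$ of the non-full inclusion $\cat{UCom}^G_R\hookrightarrow\cat{NUCA}^G_R$, using that $K$ preserves (generating) cofibrations, is homotopical by \cite[Lemma~3.1.43]{g-global}, and reflects weak equivalences via the retract $X\to R\vee X\to X$, so that \cite[Lemma~A.2.15]{g-global} applies. The only cosmetic difference is that you make the compatibility $K(R\smashp\mathbb P^{>0}(M))\cong R\smashp\mathbb P(M)$ explicit where the paper just says ``by direct inspection.''
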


Again, we get more generally positive projective and flat model structures on $\cat{$\bm H$-NUCA}^G_R$ for all finite groups $H$.

\begin{lemma}
	These make $\cat{NUCA}^G_R$ into a global model category $\glo{NUCA}_R^G$, and the free-forgetful adjunction defines a global Quillen adjunction $\ul{\mathbb P^{>0}}\colon\glo{Mod}^G_R\rightleftarrows\glo{NUCA}_R^G :\!\ul{\mathbb U}$.
	\begin{proof}
		For the first statement one argues precisely as for $\glo{Mod}_R^G$, while the second statement is clear from the definitions.
	\end{proof}
\end{lemma}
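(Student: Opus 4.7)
The plan is to imitate the argument for $\glo{Mod}^G_R$ given in Lemma~\ref{lemma:module-global}. The preceding corollary already supplies the local data: each $\cat{$\bm H$-NUCA}^G_R$ with the positive projective and positive flat $H$-global model structures is combinatorial, simplicial, and proper, their weak equivalences coincide, and every projective cofibration is a flat cofibration. Throughout the argument I will exploit that, by the very construction of the transferred model structures, the forgetful functor $\mathbb U\colon\cat{$\bm H$-NUCA}^G_R\to\cat{$\bm H$-Mod}^G_R$ detects (and in particular preserves) weak equivalences and both kinds of fibrations, and that it commutes strictly with restriction $\alpha^*$ along any homomorphism $\alpha\colon H\to H'$.

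For the preglobal-functoriality axioms, the detection property immediately reduces the statements that $\alpha^*$ preserves weak equivalences and (projective or flat) fibrations to the corresponding statements for $\glo{Mod}^G_R$. For the cofibration halves, I pass to mates: the free functor $\mathbb P^{>0}\colon\cat{$\bm H$-Mod}^G_R\to\cat{$\bm H$-NUCA}^G_R$ (left adjoint to $\mathbb U$) commutes with $\alpha^*$ since it is built out of tensor products and colimits, so taking left adjoints of the tautological equality $\alpha^*\circ\mathbb U=\mathbb U\circ\alpha^*$ yields $\alpha_!\circ\mathbb P^{>0}\cong\mathbb P^{>0}\circ\alpha_!$, and symmetrically, taking right adjoints in the other direction, $\alpha_*\circ\mathbb U\cong\mathbb U\circ\alpha_*$. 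Since the generating (acyclic) cofibrations of $\cat{$\bm H$-NUCA}^G_R$ are by construction $\mathbb P^{>0}$ of generating (acyclic) cofibrations of $\cat{$\bm H$-Mod}^G_R$, the remaining preservation statements follow from the module case.

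For the global axiom I will verify formulation~(\ref{item:gtfae-cofree-fixed-points}) of Proposition~\ref{prop:global-tfae}. Given a span $C\xleftarrow{q}A\xrightarrow{g}B$ of finite groups with $\ker q\cap\ker g=1$, a fibrant $X\in B\text-\cat{NUCA}^G_R$ in the flat model structure, and a flat fibrant replacement $g^*X\to Y$ in $A\text-\cat{NUCA}^G_R$, applying $\mathbb U$ yields a flat fibrant replacement $g^*\mathbb U X\to\mathbb U Y$ in $A\text-\cat{Mod}^G_R$ (using that $\mathbb U$ preserves fibrant objects and weak equivalences and commutes with $g^*$). The global axiom for $\glo{Mod}^G_R$ then shows that $q_*(g^*\mathbb U X\to\mathbb U Y)$ is a $C$-global weak equivalence; via the isomorphism $\mathbb U\circ q_*\cong q_*\circ\mathbb U$ this identifies with $\mathbb U\bigl(q_*(g^*X\to Y)\bigr)$, so since $\mathbb U$ reflects weak equivalences, the original map $q_*(g^*X\to Y)$ is a $C$-global weak equivalence in $\cat{$\bm C$-NUCA}^G_R$, as desired.

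Finally, the global Quillen adjunction claim is immediate: since $\mathbb U$ preserves fibrations and acyclic fibrations in either positive $H$-global model structure on $\cat{$\bm H$-NUCA}^G_R$ (for every $H$) by construction of the transfer, the adjunction $\mathbb P^{>0}\dashv\mathbb U$ is a Quillen adjunction for both the projective and the flat $H$-global model structures, which is precisely the definition of a global Quillen adjunction. The only step of the whole proof requiring real care is the verification of $\mathbb U\circ q_*\cong q_*\circ\mathbb U$ used in the global-axiom step, but this is obtained (as in the analogous module argument) by taking right adjoints of the evident commutation $q^*\circ\mathbb P^{>0}\cong\mathbb P^{>0}\circ q^*$ of left adjoints.
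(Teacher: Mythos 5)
Your proposal is correct and is essentially the argument the paper intends: it spells out the "argue precisely as for $\glo{Mod}_R^G$" step by using that the transferred model structures make $\mathbb U$ detect weak equivalences and fibrations, that $\alpha^*$ commutes with $\mathbb U$ and $\mathbb P^{>0}$ (hence $\alpha_*$ with $\mathbb U$ by mates), and then verifying the global axiom via the third formulation of Proposition~\ref{prop:global-tfae}. No gaps; the final Quillen adjunction claim is, as you say, immediate from the transfer.
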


\section{Global topological André-Quillen cohomology\texorpdfstring{\except{toc}{\protect\footnote{The contents of the first two subsections of this section are an adapted version of part of the thesis of the second author \cite[Chapter 2]{stahlhauer}.}}}{}}\label{sec:gtaq}
In this section, we introduce $G$-global versions of topological André-Quillen cohomology. The classical non-equivariant cohomology theory was introduced by Basterra in \cite{basterra-TAQ} as a cohomology theory for (augmented) commutative $S$-algebras. The algebraic predecessor homology theory, as defined by André and Quillen \cite{andre-homology, quillen-cohomology-algebras}, is defined as a derived functor of Kähler differentials which in turn may be constructed as the module of indecomposables of the augmentation ideal of an augmented algebra. Both Basterra's construction of topological André-Quillen (co)homology as well as our $G$-global version below mimic this construction.

\subsection{The cotangent complex in general model categories}\label{subsec:abstract-cotangent}
Our construction works in a general abstract setting: throughout, we fix a combinatorial and stable symmetric monoidal model category $\mathscr C$ satisfying the monoid axiom and commutative monoid axiom, such that finite coproducts in $\mathscr C$ are homotopical; the examples the reader should keep in mind and to which we will later specialize are the positive $G$-global model structures on $\cat{$\bm G$-Spectra}$ constructed in \Cref{section:brave-new-algebra}.

\begin{defi}\label{def:augmentation-ideal}
	Let $R$ be a commutative monoid in $\mathscr C$, and let $S$ be an augmented $R$-algebra, with augmentation $\epsilon\colon S\to R$. Then we define the \emph{augmentation ideal} $I(S)$ of $S$ as the strict pullback
	\[ \begin{tikzcd}
	I(S) \arrow[r]\arrow[dr, phantom, "\lrcorner"{very near start}] \arrow[d] & S \arrow[d, "\epsilon"] \\
	\ast \arrow[r] & R
	\end{tikzcd}\]
	in the category $\Mod_R$ of $R$-modules.
\end{defi}
This inherits the structure of a non-unital $R$-algebra from $S$, yielding a functor $I\colon \CAlg_R/R \to \NUCA_R$. There also is a \emph{unitalization} functor in the other direction, given as
\[ K\colon \NUCA_R \to \CAlg_R/R, \quad J\mapsto R\iP J. \]
This object is equipped with a multiplication using the multiplications on $R$ and $J$ as well as the $R$-module structure on $J$.

Additionally, we define a functor taking a non-unital commutative algebra to its module of indecomposables.

\begin{defi}\label{def:indecomposables}
	Let $R$ be a commutative monoid in $\mathscr C$, and $J$ be a non-unital commutative $R$-algebra with multiplication map $\mu\colon J\otimes_R J\to J$. Then we define the \emph{module of indecomposables} $Q(J)$ of $J$ as the strict pushout
	\[ \begin{tikzcd}
	J\otimes_R J \arrow[r, "\mu"]\arrow[dr, phantom, "\ulcorner"{very near end}] \arrow[d] & J \arrow[d]\\
	\ast \arrow[r] & Q(J)
	\end{tikzcd}\]
	in $\Mod_R$.
\end{defi}
This defines a functor $Q\colon \NUCA_R \to \Mod_R$. In the other direction, we can equip an $R$-module with the zero multiplication to obtain a functor
\[ Z\colon \Mod_R \to \NUCA_R.\]

\begin{prop}\label{prop:aug-ideal-Quillen-equivalence}
	The functors
	\[ \begin{tikzcd}
	\CAlg_R/R \arrow[r, "I", swap, shift right] & \NUCA_R \arrow[l, "K", swap, shift right] \arrow[r, "Q", shift left] & \Mod_R \arrow[l, "Z", shift left]
	\end{tikzcd}\]
	define two Quillen adjunctions for the induced model structures from \Cref{subsection:model-structures-algebras}, with left adjoints the top arrows. The Quillen adjunction $K \dashv I$ is a Quillen equivalence. Moreover, the functors $K$ and $Z$ are homotopical.
\end{prop}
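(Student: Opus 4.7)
The plan is to verify the underlying adjunctions, promote them to Quillen adjunctions, establish the homotopicality of $K$ and $Z$, and finally deduce the Quillen equivalence. The underlying adjunctions follow by direct inspection of universal properties: a morphism $K(J) = R \amalg J \to S$ in $\CAlg_R/R$ consists (up to the forced unit $R \to S$) of a map of non-unital $R$-algebras $J \to S$ whose composite with $\epsilon$ vanishes, i.e., a map $J \to I(S)$; and a morphism $J \to Z(M)$ in $\NUCA_R$ is a module map killing the multiplication on $J$, which factors uniquely through $Q(J)$.

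To see that $I$ and $Z$ are right Quillen, I use that the model structures on both $\CAlg_R/R$ and $\NUCA_R$ are transferred from $\Mod_R$, so fibrations and weak equivalences are detected on underlying modules. Since $\mathbb U_{\NUCA} \circ Z = \id_{\Mod_R}$, the functor $Z$ preserves and reflects fibrations, acyclic fibrations, and weak equivalences, which handles both Quillenness and homotopicality of $Z$ at once. For $I$: the defining pullback identifies $I(f)$, for any $f\colon S\to S'$ in $\CAlg_R/R$, as the pullback of $f$ along the inclusion $I(S') \hookrightarrow S'$, so $I(f)$ is an (acyclic) fibration whenever $f$ is, since these classes are stable under pullback in any model category. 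The homotopicality of $K$ follows because $\mathbb U_{\CAlg/R} K(f) = \id_R \amalg f$, and finite coproducts in $\mathscr C$ (hence in $\Mod_R$, whose model structure is transferred from $\mathscr C$) are homotopical by assumption.

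Finally, for the Quillen equivalence $K \dashv I$: a direct computation shows $IK(J) = \ker(\epsilon\colon R \amalg J \to R) = J$ on the nose, so the strict unit $\eta_J$ is the identity. To convert this into a derived-unit statement, I invoke the stability hypothesis on $\mathscr C$ to show $I$ is in fact homotopical: any morphism $f\colon S \to S'$ in $\CAlg_R/R$ lies in a compatible pair of split fibre sequences $I(S)\to S\to R$ and $I(S')\to S'\to R$, split respectively by the units of $S$ and $S'$, which in the stable homotopy category $\Ho(\Mod_R)$ exhibit $f$ as a direct-sum decomposition $\id_R \oplus I(f)$; hence $f$ is a weak equivalence iff $I(f)$ is. The derived counit at fibrant $S$ is then represented by $\epsilon_S\colon K(I(S)) \to S$, whose underlying module map $R \amalg I(S) \to S$ fits into a commutative ladder of exact triangles in $\Ho(\Mod_R)$ between the canonical split cofibre sequence $I(S) \to R \amalg I(S) \to R$ and the split fibre sequence $I(S)\to S\to R$, to which the triangulated five-lemma applies to yield the desired weak equivalence.

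I expect the main obstacle to be this final use of stability, where the split fibre sequence $I(S) \to S \to R$ must be converted into a biproduct decomposition in the homotopy category so as to identify $R \amalg I(S) \to S$ as a weak equivalence; the remaining steps amount to bookkeeping with transferred model structures, universal properties, and pullback preservation.
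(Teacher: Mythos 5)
Most of your proposal is sound and close to the paper's argument: the adjunction identifications, the fact that $Z$ is right Quillen and homotopical because $\mathbb U\circ Z=\id$, the homotopicality of $K$ from homotopical finite coproducts, and your derived-counit argument for fibrant $S$ (splitting the homotopy fibre sequence $I(S)\to S\to R$ using that $\epsilon$ is a fibration, then comparing with the split triangle for $R\amalg I(S)$) all match or harmlessly rephrase what the paper does. Your alternative verification that $I$ preserves (acyclic) fibrations via pullback-stability, instead of checking $K$ on the generating (acyclic) cofibrations as the paper does, is also fine.

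The genuine gap is your treatment of the derived unit, specifically the claim that stability makes $I$ homotopical. Your argument decomposes an arbitrary weak equivalence $f\colon S\to S'$ of augmented algebras as $\id_R\oplus I(f)$ in $\Ho(\Mod_R)$ using the ``split fibre sequences'' $I(S)\to S\to R$; but $I(S)$ is a \emph{strict} pullback, and this sequence is only known to be a homotopy fibre sequence (hence a distinguished triangle) when the augmentation $\epsilon\colon S\to R$ is a fibration, i.e.\ when $S$ is fibrant. For general $S$ the strict fibre is not homotopy invariant, so the biproduct decomposition is unavailable and the conclusion ``$f$ is a weak equivalence iff $I(f)$ is'' is unjustified; this is precisely why the paper only ever uses $\cat{R}I$ and derives $I$ throughout (cf.\ Lemma~\ref{lemma:aug-ideal-base-change}). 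Without homotopicality of $I$, your strict unit computation (itself a point-set claim that needs an argument in the abstract setting, though it does hold for the spectra examples) does not represent the derived unit, since one would still need $I$ to send the fibrant replacement map $KJ\to (KJ)^{\mathrm{fib}}$ to a weak equivalence. The paper sidesteps the derived unit entirely: it verifies Hovey's criterion by writing $f=(\eta,\mathrm{incl})\circ(R\amalg\tilde f)$ for cofibrant $J$ and fibrant $S$, using that the counit $(\eta,\mathrm{incl})$ is a weak equivalence (legitimate there because $S$ is fibrant) and that $R\amalg\tilde f$ is a weak equivalence if and only if $\tilde f$ is, since $\tilde f$ is a retract of $R\amalg\tilde f$ and $R\amalg(\blank)$ is homotopical. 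Replacing your ``$I$ is homotopical'' step by this retract argument (or by some other argument controlling $\cat{R}I$ on non-fibrant objects) is what is needed to close the proof.
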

\begin{proof}
	The fact that the two pairs of functors are adjoint can be seen from explicitly considering the unit and counit. For the first adjunction, the unit is given by the square
	\[ \begin{tikzcd}
	J \arrow[r, "\textup{incl}"] \arrow[d] & R\iP J \arrow[d, "\pr"] \\
	\ast \arrow[r] & R
	\end{tikzcd}\]
	and the counit by $(\eta,\text{incl})\colon R\iP I(A) \to A$, where $\eta\colon R\to A$ is the unit map for the algebra $A$. For the second adjunction, the unit is given by the projection $J\to Q(J)$ from the defining pushout, considered as a map of non-unital commutative monoids for the zero multiplication on $Q(J)$. The counit is defined by the square
	\[ \begin{tikzcd}
	M\otimes_R M \arrow[r, "\ast"] \arrow[d] & M \arrow[d, "\id"] \\
	\ast \arrow[r] & M
	\end{tikzcd}\]
	for any $R$-module $M$, where the top morphism is the zero map.

	The first adjunction then is a Quillen adjunction, since $K$ preserves both cofibrations and acyclic cofibrations of non-unital commutative algebras. For this, it suffices to check the generating (acyclic) cofibrations, which are of the form $R\otimes \power{>0} f$ for $f$ a generating (acyclic) cofibration of $\mathscr C$. These morphisms are sent by $K$ to $R\otimes \power{} f$, which are precisely the generating (acyclic) cofibrations for the model structure on the category of augmented algebras. We conclude that $K$ is left Quillen and hence the first adjunction is a Quillen adjunction. Furthermore, since coproducts are homotopical in $\mathscr C$, also $K$ is homotopical.

	For the second adjunction, we observe that the right adjoint $Z$ is the identity on underlying objects and morphisms. Since for both the model category of non-unital algebras and modules, the fibrations and weak equivalences are defined on underlying objects, we see that $Z$ is right Quillen and homotopical.

	Finally, we need to check that $K\dashv I$ is a Quillen equivalence. We check the following criterion from \cite[Definition 1.3.12]{hovey}: Let $J\in \NUCA_R$ be a cofibrant non-unital commutative $R$-algebra and $S\in \CAlg_R/R$ be a fibrant augmented algebra. Then we have to show that a morphism $f\colon KJ\to S$ is a weak equivalence if and only if its adjoint $\tilde{f} \colon J\to IS$ is a weak equivalence. Since $S$ is fibrant, the augmentation $\epsilon \colon S\to R$ is a fibration in $\mathscr C$, and hence the defining diagram
	\[ \begin{tikzcd}
	I(S) \arrow[r, "\text{incl}"] \arrow[d] & S \arrow[d, "\epsilon"] \\
	\ast \arrow[r] & R
	\end{tikzcd}\]
	is a homotopy pullback square in $\mathscr C$. The map $\eta\colon R\to A$ defines a section to $\epsilon$, so the distinguished triangle
	\[
		I(S)\xrightarrow{\text{incl}} S\xrightarrow{\;\epsilon\;} R\to \Sigma I(S)
	\]
	in $\Ho(\mathscr C)$ splits and the counit $(\eta,\text{incl})$ is a weak equivalence. Moreover, since $\tilde{f}$ is a retract of $R\iP \tilde{f}$ and $R\iP (\_)$ is homotopical, one is a weak equivalence if and only if the other is one. In total, since we have the relation $f = (\eta, i) \circ (R\iP \tilde{f})$, we see that $K\dashv I$ is a Quillen equivalence.
\end{proof}

We also have a base-change adjunction
\begin{equation}\label{eq:base-change-adj}
\begin{tikzcd}[column sep = large]
\CAlg_R /S \arrow[r, shift left, "S\otimes_R \_"] & \CAlg_S /S \arrow[l, shift left, "\textup{forget}"]
\end{tikzcd}
\end{equation}
for any $R$-algebra $S$. Since the forgetful functor is the identity on underlying objects and morphisms, it is right Quillen, so also this adjunction is a Quillen adjunction.

\begin{defi}
	Let $R$ be a commutative monoid in $\mathscr C$ and let $S$ be a commutative $R$-algebra. Then we define the \emph{abelianization} functor as the composite
	\[ \Ab_{S/R} \colon \Ho(\CAlg_R /S) \xrightarrow{S\otimes_R^{\cat{L}} \_ } \Ho(\CAlg_S /S) \xrightarrow{\cat{R}I} \Ho(\NUCA_S) \xrightarrow{\cat{L}Q} \Ho(\Mod_S). \]
\end{defi}

\begin{rk}
	The name \emph{abelianization} for the above construction originates in algebra, where it is an observation going back to Beck \cite{beck-cohomology} that abelian group objects in the category of augmented algebras over a commutative ring $R$ are equivalently $R$-modules, with the equivalence given by square-zero extensions in one direction and the augmentation ideal in the other. One then observes that Kähler differentials define a left adjoint functor to the inclusion of abelian group objects, interpreted as modules, into all augmented algebras.

	In our context of $G$-global homotopy theory, we show in Theorem~\ref{thm:stabilization-aug-algebras} that $\Ab_{R/R}=\cat{L}Q\cat{R}I$ can be interpreted as a global stabilization.
\end{rk}

\begin{prop}\label{prop:abelianization-adj}
	Let $R$ be a commutative monoid in $\mathscr C$ and $S$ be a commutative $R$-algebra. The functors
	\[ \begin{tikzcd}[column sep = large]
	\Ho(\CAlg_R/S) \arrow[r, shift left, "\Ab_{S/R}"] & \Ho(\Mod_S) \arrow[l, shift left, "\cat{L}K\circ \cat{R}Z"]
	\end{tikzcd}\]
	are adjoint, with abelianization being the left adjoint.
\begin{proof}
	We use the Quillen adjunctions from \Cref{prop:aug-ideal-Quillen-equivalence} and \eqref{eq:base-change-adj} to calculate for an $S$-module $M$ and an $R$-algebra $T$ with augmentation to $S$:\phantom{\qedhere}
	\begin{align*}
	\Ho(\Mod_S)(\Ab_{S/R}(T), M) & \cong \Ho(\NUCA_S) ((\cat{R}I)(S\otimes_R^{\cat{L}} T), (\cat{R}Z)(M))\\
	& \cong \Ho(\CAlg_S/S) ((\cat{L}K)(\cat{R}I)(S\otimes_R^{\cat{L}} T), (\cat{L}K)(\cat{R}Z)(M))\\
	& \cong \Ho(\CAlg_R/S) (T, (\cat{L}K)(\cat{R}Z)(M)).\pushQED{\qed}\qedhere\popQED
	\end{align*}
\end{proof}
\end{prop}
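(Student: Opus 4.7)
The plan is to exhibit $\Ab_{S/R}$ as a composite of three left adjoints between homotopy categories and then take the corresponding composite of right adjoints. By definition, $\Ab_{S/R}$ factors as
\[
\Ho(\CAlg_R/S) \xrightarrow{S\otimes_R^{\cat{L}}\blank} \Ho(\CAlg_S/S) \xrightarrow{\cat{R}I} \Ho(\NUCA_S) \xrightarrow{\cat{L}Q} \Ho(\Mod_S).
\]
The outer two arrows are left derived functors of the left Quillen functors appearing in the base change adjunction~$(\ref{eq:base-change-adj})$ and in the $Q\dashv Z$ half of Proposition~\ref{prop:aug-ideal-Quillen-equivalence}, so they admit right adjoints $\textup{forget}$ and $\cat{R}Z$, respectively.

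The only subtle point is the middle arrow, since $\cat{R}I$ is a priori only a right derived functor and not an obvious candidate for a left adjoint. Here I would invoke that $K\dashv I$ is a Quillen \emph{equivalence} by Proposition~\ref{prop:aug-ideal-Quillen-equivalence}; this makes $\cat{L}K$ and $\cat{R}I$ mutually inverse equivalences of homotopy categories, so in particular $\cat{R}I$ is itself a left adjoint, with right adjoint given by $\cat{L}K$. This is the main conceptual step; everything else is mechanical.

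Composing the three adjunctions now yields a natural bijection
\[
\Ho(\Mod_S)(\Ab_{S/R}(T), M) \cong \Ho(\CAlg_R/S)(T, \cat{L}K\, \cat{R}Z(M)),
\]
where the composite right adjoint $\textup{forget}\circ \cat{L}K \circ \cat{R}Z$ is abbreviated to $\cat{L}K\circ \cat{R}Z$ by suppressing the forgetful functor $\Ho(\CAlg_S/S)\to\Ho(\CAlg_R/S)$, which is the identity on underlying objects. Naturality in both $T$ and $M$ follows directly from the naturality of the three individual adjunction isomorphisms being chained together.
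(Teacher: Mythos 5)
Your proposal is correct and follows essentially the same route as the paper: both arguments chain the base-change adjunction, the $Q\dashv Z$ adjunction, and the key observation that the Quillen equivalence $K\dashv I$ makes $\cat{R}I$ (with inverse $\cat{L}K$) a left adjoint on homotopy categories, the only difference being that you phrase the chaining as a composite of three adjunctions while the paper writes out the resulting string of hom-set isomorphisms.
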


Using these adjunctions, we now define the cotangent complex:

\begin{defi}\label{def:cotangent-complex}
	Let $R$ be a commutative monoid in $\mathscr C$ and $S$ be an $R$-algebra. The cotangent complex of $S$ over $R$ is defined as the object
	\[ \Omega_{S/R} = \Ab_{S/R} (S) =(\cat{L}Q) (\cat{R}I)(S \otimes_R^{\cat{L}} S)\]
	in $\Ho(\Mod_S)$.
\end{defi}

\begin{ex}\label{ex:R=S}
	If $R\to S$ is a weak equivalence, then $S$ is initial in $\Ho(\CAlg_R/S)$. As $\Ab_{S/R}$ is a left adjoint by the proposition above, we conclude that $\Omega_{S/R}=0$.
\end{ex}

For any square
\[\begin{tikzcd}
R\arrow[r]\arrow[d] & R^\prime\arrow[d]\\
S \arrow[r] & S^\prime
\end{tikzcd}\]
in the model category $\CAlg$, we obtain an induced morphism on cotangent complexes $\Omega_{S/R}\to \Omega_{S^\prime/R^\prime}$ as $S$-modules. This arises from the universal property of pushouts and pullbacks.

For any commutative $R$-algebra $S$, we also obtain a natural \emph{universal derivation}
\begin{equation}\label{eq:universal-derivation}
d_{S/R}\colon S\to \Omega_{S/R}
\end{equation}
by composing the unit $S\to S\iP \Omega_{S/R} = (KZ)(\Ab_{S/R}(S))$ of the adjunction in \Cref{prop:abelianization-adj} with the projection to $\Omega_{S/R}$.

This cotangent complex behaves just as in the classical case of commutative algebras, in that is comes with a transitivity exact sequence and a base-change formula. These properties allow us to consider the cotangent complex as defining a cohomology theory for augmented commutative algebras. In order to establish these properties, we first need to consider how the augmentation ideal and indecomposables behave under base changes.

\begin{lemma}\label{lemma:aug-ideal-base-change}
	Let $R$ be a commutative monoid in $\mathscr C$, let $S$ be a commutative $R$-algebra, and let $T$ be an augmented commutative $R$-algebra. Then the commutative square
	\begin{equation}\label{diag:aug-ideal-base-change}
	\begin{tikzcd}
	I_R(T) \otimes_R S \arrow[r]\arrow[d] & T\otimes_R S \arrow[d]\\
	\ast \arrow[r] & S
	\end{tikzcd}
	\end{equation}
	induces a morphism
	\[ \cat{R}I_R (T) \otimes_R^{\cat{L}} S \to \cat{R}I_S(T\otimes_R^{\cat{L}} S), \]
	which is an isomorphism in the homotopy category of non-unital commutative $S$-algebras.
\end{lemma}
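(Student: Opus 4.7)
The plan is to exploit stability of $\mathscr C$: in a stable setting, the fiber of a split epimorphism agrees with the cofiber of its section, and base change preserves the latter since it is a left adjoint.

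First, weak equivalences in $\NUCA_S$ are created by the forgetful functor to $\Mod_S$ (by the construction of the transferred model structure in \Cref{prop:model-str-on-algebras}), so it suffices to check the statement on underlying $S$-modules. After replacing $T$ by a suitable cofibrant object of $\CAlg_R/R$ (so that the augmentation $\epsilon\colon T\to R$ is a fibration and $T\otimes_R S$ computes $T\otimes_R^{\cat{L}} S$), I may assume that the pullback square defining $I_R(T)$ is a homotopy pullback in $\Mod_R$, hence represents $\cat{R}I_R(T)$. A further fibrant replacement of the base-changed augmentation $T\otimes_R S\to S$ gives a concrete model for $\cat{R}I_S(T\otimes_R^{\cat{L}} S)$ as a strict pullback.

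The heart of the argument is the following. The augmentation $\epsilon$ is split by the unit $\eta\colon R\to T$, so in $\Ho(\Mod_R)$ stability identifies the cofiber of $\eta$ with the fiber of $\epsilon$, namely $\cat{R}I_R(T)$. Applying the left Quillen functor $\blank\otimes_R^{\cat{L}} S$, which preserves cofiber sequences, produces a cofiber sequence
\[
	S\longrightarrow T\otimes_R^{\cat{L}} S\longrightarrow \cat{R}I_R(T)\otimes_R^{\cat{L}} S
\]
in $\Ho(\Mod_S)$. The first map is again split, by the base-changed augmentation $T\otimes_R^{\cat{L}} S\to S$, so invoking stability once more identifies the cofiber with the fiber of this augmentation, which is precisely $\cat{R}I_S(T\otimes_R^{\cat{L}} S)$.

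The main obstacle is to verify that this abstract equivalence agrees with the specific morphism induced by the square \Cref{diag:aug-ideal-base-change} from the statement. To resolve this I would appeal to the universal property of the (homotopy) fiber: both the comparison map from the square and the equivalence obtained via the splitting sit in compatible diagrams over $T\otimes_R^{\cat{L}} S$ that become null after composing to $S$, so the essential uniqueness of the induced map into a fiber forces them to coincide. Since the comparison map is built from a diagram of pointset constructions, the non-unital algebra structure is automatically respected.
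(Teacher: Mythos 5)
Your argument is essentially correct, but it takes a genuinely different route from the paper. The paper never touches stability for this lemma: it first produces the derived transformation $\cat{R}I_R(\_)\otimes_R^{\cat{L}}S\Rightarrow \cat{R}I_S(\_\otimes_R^{\cat{L}}S)$ via Shulman's double-pseudofunctoriality of derived functors, and then checks it is an isomorphism by passing to the mate with respect to the Quillen equivalence $K\dashv I$, where the mate is induced by the evident pointset isomorphism $S\iP(\_\otimes_RS)\cong(R\iP\_)\otimes_RS$ of left Quillen functors. You instead reduce to underlying $S$-modules (legitimate: weak equivalences of NUCAs are created there, so the derived forgetful functor is conservative) and use stability: $\cat{R}I_R(T)$ is the cofiber of the unit $R\to T$ since the augmentation is split, exactness of $\blank\otimes_R^{\cat{L}}S$ transports this cofiber sequence, and splitness of the base-changed augmentation identifies the result with $\cat{R}I_S(T\otimes_R^{\cat{L}}S)$. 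What the paper's route buys is that the isomorphism statement is reduced to a strict $1$-categorical identity and the naturality/derivability bookkeeping is outsourced to a citable theorem; what yours buys is a concrete, computation-flavoured argument that also explains \emph{why} the augmentation ideal behaves like a cofiber, at the cost of invoking stability (which is a standing hypothesis anyway) and of having to match your abstract equivalence with the specific comparison map.

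Two points need tightening. First, a cofibrant replacement in $\CAlg_R/R$ does not make the augmentation a fibration; you need a bifibrant replacement (cofibrant to compute $\otimes_R^{\cat{L}}S$, fibrant in the slice so that the strict pullback computes $\cat{R}I_R$), plus the fibrant replacement you already mention on the base-changed side. Second, ``essential uniqueness of the induced map into a fiber'' is false as a general principle: two maps into a homotopy fiber lifting the same map can differ by something factoring through $\Omega S$. In your situation this is easily repaired: since the augmentation of $T\otimes_R^{\cat{L}}S$ is split, the inclusion $\cat{R}I_S(T\otimes_R^{\cat{L}}S)\to T\otimes_R^{\cat{L}}S$ is a split monomorphism in $\Ho(\Mod_S)$, which does force the two lifts to agree; alternatively, you can avoid identifying the maps altogether by noting that the comparison map, the identity of $T\otimes_R^{\cat{L}}S$, and (after completing and using that the augmentation is split epi) the identity of $S$ assemble into a morphism of distinguished triangles, so the triangulated five lemma makes the comparison map an isomorphism directly.
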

\begin{proof}
	The square \eqref{diag:aug-ideal-base-change} induces a natural transformation $I_R(\_) \otimes_R S \Rightarrow I_S(\_\otimes_R S)$ by considering pullbacks. Here, the functor $(\_)\otimes_R S$ can be left derived, and the functors $I$ can be right derived. We thus consider the double category of model categories and left and right derivable functors. In this context, \cite[Theorem 7.6]{shulman-left-right-derived} shows that taking homotopy categories and derived functors is a double pseudofunctor. In particular, the natural transformation above induces a transformation
	\[ \cat{R}I_R(\_) \otimes_R^{\cat{L}} S \Rightarrow \cat{R}I_S(\_\otimes_R^{\cat{L}} S)\]
	as desired.

	Since $I$ is a Quillen equivalence with inverse $K$, we can show that this transformation is a natural isomorphism in the homotopy category by considering its mate
	\[ \cat{L}K_S(\_\otimes_R^{\cat{L}} S) \Rightarrow \cat{L}K_R(\_)\otimes_R^{\cat{L}} S. \]
	This transformation is induced from the natural isomorphism $S\iP (\_\otimes_R S) \cong (R\iP \_) \otimes_R S$ of left Quillen functors, and hence is a natural isomorphism.
\end{proof}

\begin{lemma}\label{lemma:indec-base-change}
	Let $R$ be a commutative monoid in $\mathscr C$, let $S$ be a commutative $R$-algebra and $J$ be a non-unital commutative $R$-algebra. Then the commutative square
	\begin{equation}\label{diag:indec-base-change}
	\begin{tikzcd}
	(J\otimes_R J) \otimes_R S \arrow[r]\arrow[d] & J\otimes_R S \arrow[d]\\
	\ast \arrow[r] & Q_R(J) \otimes_R S
	\end{tikzcd}
	\end{equation}
	induces a morphism
	\[ \cat{L}Q_S (J\otimes_R^{\cat{L}} S) \to \cat{L}Q_R(T) \otimes_R^{\cat{L}} S, \]
	which is an isomorphism in the homotopy category of $S$-modules.
\end{lemma}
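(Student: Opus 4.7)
The plan is to proceed analogously to the proof of \Cref{lemma:aug-ideal-base-change}, though the argument is in fact cleaner because all relevant functors are now \emph{left} Quillen (there is no need for a Quillen-equivalence-and-mate step).

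First, I would observe that via the canonical isomorphism $(J\otimes_R S)\otimes_S(J\otimes_R S)\cong(J\otimes_R J)\otimes_R S$ and the fact that $(\_)\otimes_R S$ preserves all colimits, the square \eqref{diag:indec-base-change} in fact coincides with the defining pushout square for $Q_S(J\otimes_R S)$; the universal property then yields the desired map as a (pointset) natural isomorphism $Q_S(\_\otimes_R S) \cong Q_R(\_)\otimes_R S$. Appealing to the double-pseudofunctoriality of deriving, exactly as in the previous lemma, this lifts to the claimed natural transformation of total left derived functors.

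Next I would check that all four basic functors in sight are left Quillen: both copies of $Q$ by \Cref{prop:aug-ideal-Quillen-equivalence}, and the two base-change functors $(\_)\otimes_R S$ because in each case the right adjoint is a forgetful functor that detects fibrations and weak equivalences; for NUCAs this uses that the positive model structures on $\cat{NUCA}_R$ and $\cat{NUCA}_S$ are transferred from the corresponding module categories. Consequently the iterated derived functors appearing on the two sides of the map agree with the total left derived functors of the respective composites.

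Finally, the fact that the natural transformation is an isomorphism can be verified directly: for any cofibrant $J\in\cat{NUCA}_R$, left-Quillen-ness of $(\_)\otimes_R S\colon\cat{NUCA}_R\to\cat{NUCA}_S$ shows that $J\otimes_R S$ is already cofibrant, so $\cat LQ_S(J\otimes_R^{\cat L}S) = Q_S(J\otimes_R S)$ on the nose; dually, $Q_R(J)$ is already cofibrant as an $R$-module, so $\cat LQ_R(J)\otimes_R^{\cat L}S = Q_R(J)\otimes_R S$. The pointset isomorphism then directly provides the derived isomorphism. There is no real technical obstacle here—this is precisely the easy mirror of the previous lemma because both composites in sight are left Quillen, so that no non-trivial passage through mates is required.
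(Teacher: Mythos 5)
Your proposal is correct and follows essentially the same route as the paper: since $(\_)\otimes_R S$ preserves pushouts, the pointset comparison $Q_S(J\otimes_R S)\cong Q_R(J)\otimes_R S$ is an isomorphism, and since all functors involved are left Quillen the derived statement follows by evaluating on cofibrant objects (the paper phrases this as ``considering cofibrant replacements''). Your appeal to the double-pseudofunctoriality machinery of the previous lemma is unnecessary here—composites of left Quillen functors derive directly—but it is harmless.
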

\begin{proof}
	As both the functors labelled $Q$ and the base change $(\_)\otimes_R S$ are left Quillen functors, the desired morphism can be obtained from the natural morphism $Q_S(J \otimes_R^{\cat{L}} S) \to Q_R(J) \otimes_R^{\cat{L}} S$ induced on pushouts by the diagram \eqref{diag:indec-base-change}, by considering cofibrant replacements. Moreover, as $(\_)\otimes_R S$ is a left adjoint, it preserves pushouts, which implies that this morphism is an isomorphism.
\end{proof}

\begin{lemma}\label{lemma:cofib-base-change}
	Let $R$ be a commutative monoid in $\mathscr C$ and let $S$ and $T$ be commutative $R$-algebras with a map $S\to T$ of commutative $R$-algebras. Assume moreover that $\CAlg(\mathscr C)$ is left proper or that $S$ is cofibrant as a commutative $R$-algebra. Then a homotopy cofiber of $S\otimes_R^{\cat{L}} T\to T\otimes_R^{\cat{L}} T$ in $\CAlg_T/T$ is given by $T\otimes_S^{\cat{L}} T$.
\end{lemma}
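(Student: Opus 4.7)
The plan is to choose a cofibrant model for the map $S\to T$ in $\CAlg_R$ and identify the desired homotopy cofiber with an iterated pushout that collapses to a relative tensor square.

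First I would set up the model: choose a cofibrant replacement $S'\xrightarrow{\sim}S$ in $\CAlg_R$ (with $S'=S$ in the cofibrant case) and functorially factor the composite $S'\to S\to T$ in $\CAlg_{S'}$ as a cofibration $f'\colon S'\hookrightarrow T'$ followed by a weak equivalence $T'\xrightarrow{\sim}T$. Then $S'$ and $T'$ are both cofibrant in $\CAlg_R$, and $f'$ is a cofibration between cofibrant objects representing $S\to T$; in particular $S'\otimes_R T'$ and $T'\otimes_R T'$ represent $S\otimes_R^{\cat{L}}T$ and $T\otimes_R^{\cat{L}}T$ respectively in $\Ho(\CAlg_R)$.

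Next I would apply the left Quillen functor $\blank\otimes_R T'\colon\CAlg_R\to\CAlg_{T'}$ (left adjoint to the forgetful) to $f'$, yielding a cofibration $f'\otimes_R T'\colon S'\otimes_R T'\hookrightarrow T'\otimes_R T'$ between cofibrant objects in $\CAlg_{T'}$. The homotopy cofiber of $f'\otimes_R T'$ in $\CAlg_{T'}/T'$ is therefore represented by the strict pushout in $\CAlg_{T'}$ of the span $T'\xleftarrow{\mu}S'\otimes_R T'\xrightarrow{f'\otimes_R T'}T'\otimes_R T'$, where $\mu(s\otimes t)=f'(s)\cdot t$ is the augmentation. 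A direct Yoneda-style check identifies $f'\otimes_R T'$ with the pushout in $\CAlg_R$ of $f'\colon S'\to T'$ along the unit $S'\to S'\otimes_R T'$, so by iterating pushouts the cofiber reduces to the pushout of the outer span $T'\xleftarrow{f'}S'\xrightarrow{f'}T'$, which is $T'\otimes_{S'}T'$.

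It then remains to identify $T'\otimes_{S'}T'$ with a model of $T\otimes_S^{\cat{L}}T$. In Case 1 ($S$ cofibrant) this is immediate: $S'=S$ and $T'$ is cofibrant in $\CAlg_S$, so $T'\otimes_S T'$ directly computes $T\otimes_S^{\cat{L}}T$. In Case 2 ($\CAlg(\mathscr C)$ left proper), we still have that $T'\otimes_{S'}T'$ models $T\otimes_{S'}^{\cat{L}}T$; left properness then ensures that the extension-of-scalars adjunction $S\otimes_{S'}\blank\colon\CAlg_{S'}\rightleftarrows\CAlg_S$ associated to the weak equivalence $S'\xrightarrow{\sim}S$ is a Quillen equivalence, so $T\otimes_{S'}^{\cat{L}}T\simeq T\otimes_S^{\cat{L}}T$ in $\Ho(\CAlg_T/T)$. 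The main obstacle is exactly this last identification: left properness of $\CAlg(\mathscr C)$ is precisely what is needed to transport derived relative tensor products across weak equivalences of base algebras, and without such a hypothesis (or outright cofibrancy of $S$) one could not reversibly replace $S$ by its cofibrant approximation $S'$.
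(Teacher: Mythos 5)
Your skeleton is close to the paper's (resolve $S\to T$ by a cofibration $S'\to T'$ between cofibrant $R$-algebras, base change, read off the strict pushout, then identify the answer with $T\otimes_S^{\cat{L}}T$), and your Case 2 shortcut — that along the weak equivalence $S'\to S$ the under-category adjunction $\CAlg_{S'}\rightleftarrows\CAlg_S$ is a Quillen equivalence when $\CAlg(\mathscr C)$ is left proper — is a legitimate replacement for the paper's explicit lifting argument in that case. However, there is a genuine gap, caused by tensoring with $T'$ instead of with $T$ itself. The lemma is a statement about $\Ho(\CAlg_T/T)$, but your entire cofiber computation takes place in $\CAlg_{T'}/T'$: the objects $S'\otimes_R T'$, $T'\otimes_R T'$ and $T'\otimes_{S'}T'$ are $T'$-algebras augmented to $T'$, not $T$-algebras augmented to $T$, and you never say how the computed cofiber is transported back to $\CAlg_T/T$. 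Under left properness this can be repaired (derived base change along $T'\to T$ preserves cofiber sequences and carries your span to the span $T\leftarrow S'\otimes_R T\to T'\otimes_R T$ that the paper works with), but in Case 1, where only cofibrancy of $S$ is assumed, no blanket equivalence of these slice categories is available, and the transported answer is $T'\otimes_{S'}T$ rather than your $T'\otimes_{S'}T'$.

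This feeds into the second problem: your Case 1 endgame is not ``immediate.'' In this paper $T\otimes_S^{\cat{L}}T$ means the one-variable derived functor $\cat{L}(\blank\otimes_S T)$ applied to $T$, i.e.\ $\Gamma_S T\otimes_S T$ for a cofibrant replacement $\Gamma_S T\to T$ in $\CAlg_S$ (this is the model used in the definition of the cotangent complex and in the transitivity theorem). Taking $\Gamma_S T=T'$, the identification you assert amounts to $T'\otimes_S T'\xrightarrow{\;\sim\;}T'\otimes_S T$, which is the pushout of the weak equivalence $T'\to T$ (whose target is not cofibrant) along the cofibration $T'\to T'\otimes_S T'$ — exactly a left-properness statement, which is unavailable in Case 1, and the abstract setting of Subsection 6.1 provides no flatness of cofibrant algebras to substitute for it. The fix is to keep the honest $T$ in the second variable from the start: apply the left Quillen functor $\blank\otimes_R T\colon\CAlg_R/T\to\CAlg_T/T$ to $S'\to T'$, compute the strict cofiber $T'\otimes_{S'}T$ inside $\CAlg_T/T$, and then compare $T'\otimes_{S'}S$ with $\Gamma_S T$ (this is where the dichotomy ``left proper or $S$ cofibrant'' genuinely enters, via the pushout of $S'\to S$ along $S'\to T'$ and a lifting argument), finally applying $\blank\otimes_S T$ to the resulting weak equivalence of cofibrant $S$-algebras. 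That repaired argument is precisely the paper's proof.
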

\begin{proof}
	This statement may be shown by considering cofibrant replacements. In order to calculate $S\otimes_R^{\cat{L}} T$, we consider a cofibrant replacement $\Gamma_RS\to S$ of $S$ as a commutative $R$-algebra. Moreover, in order to calculate $T \otimes_R^{\cat{L}} T$, we decompose the morphism $\Gamma_R S\to S\to T$ into a cofibration followed by a weak equivalence as $\Gamma_R S\to \Gamma_R T\to T$.

	We now take the homotopy cofiber of
	\[ \Gamma_R S\otimes_R T\to \Gamma_R T\otimes_R T.\]
	This morphism arises from applying the left Quillen functor $\_\otimes_R T\colon \CAlg_R/T \to \CAlg_T/T$ to the cofibration $\Gamma_RS\to \Gamma_RT$ between cofibrant objects, so it is itself a cofibration between cofibrant objects. Hence, its homotopy cofiber is represented by the $1$-categorical cofiber, which can be calculated as
	\[ (\Gamma_R T\otimes_R T)\otimes_{\Gamma_R S\otimes_R T} T \cong (\Gamma_R T\otimes_{\Gamma_R S} \Gamma_R S\otimes_R T)\otimes_{\Gamma_R S\otimes_R T} T \cong \Gamma_R T\otimes_{\Gamma_R S} T. \]
	We now need to compare this with $\Gamma_S T\otimes_S T$, where $\Gamma_ST$ is a cofibrant replacement of $T$ as a commutative $S$-algebra. For this, it suffices to establish a weak equivalence $\Gamma_RT \otimes_{\Gamma_RS} S \to \Gamma_ST$ of commutative $S$-algebras. As both $\Gamma_RT \otimes_{\Gamma_RS} S$ and $\Gamma_ST$ are cofibrant $S$-algebras (since $\_\otimes_{\Gamma_RS}S$ is left Quillen and by definition, respectively), the left Quillen functor $\_\otimes_ST$ then preserves this weak equivalence.

	In order to establish this weak equivalence, we consider the diagram
	\[\begin{tikzcd}[column sep = large]
	\Gamma_RS \arrow[r, "\simeq"] \arrow[dd] & S \arrow[r] \arrow[d] & \Gamma_S T\arrow[dd, "\simeq"]\\
	& \Gamma_RT\otimes_{\Gamma_RS} S \arrow[ru, dashed] \\
	\Gamma_R T \arrow[rr, "\simeq", swap] \arrow[ru] \arrow[rruu, dashed, bend right=20] && T.
	\end{tikzcd}\]
	Here, $S\to \Gamma_ST \to T$ is the factorization of $S\to T$ into a cofibration followed by an acyclic fibration provided by the cofibrant replacement $\Gamma_S T$. As $\Gamma_R S\to \Gamma_R T$ is a cofibration, so is $S\to \Gamma_RT\otimes_{\Gamma_RS} S$ and the diagonal dashed morphisms exist by the lifting property. Moreover, since either the categories in question are left proper or $S$ is cofibrant, we conclude that the morphism $\Gamma_R T\to \Gamma_RT\otimes_{\Gamma_RS} S$ is a weak equivalence. Hence we indeed have a weak equivalence $\Gamma_RT \otimes_{\Gamma_RS} S \to \Gamma_ST$ as desired by the 2-out-of-3 property.
\end{proof}

From this cofiber calculation, we deduce the transitivity sequence for the cotangent complex.

\begin{thm}\label{thm:transitivity_sequence_topological_cotangent_complex}
	Let $R\to S\to T$ be a sequence of commutative monoids in $\mathscr C$. Moreover, assume that either $\CAlg(\mathscr C)$ is left proper or that $S$ is cofibrant as a commutative $R$-algebra. Then the sequence
	\[ \Omega_{S/R}\otimes_S^{\cat{L}} T \to \Omega_{T/R} \to \Omega_{T/S}, \]
	induced from functoriality of $\Omega$, admits the structure of a homotopy cofiber sequence of $T$-modules.
\end{thm}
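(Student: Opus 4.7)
The plan is to produce the desired cofiber sequence by applying the derived abelianization step by step to the explicit cofiber sequence provided by Lemma~\ref{lemma:cofib-base-change}. Under the hypothesis that $\CAlg(\mathscr C)$ is left proper or $S$ is cofibrant as a commutative $R$-algebra, that lemma identifies $T\otimes_S^{\cat L}T$ as the homotopy cofiber of $S\otimes_R^{\cat L}T\to T\otimes_R^{\cat L}T$ in $\CAlg_T/T$, so that we obtain a homotopy cofiber sequence
\[
T\otimes_R^{\cat L}S\to T\otimes_R^{\cat L}T\to T\otimes_S^{\cat L}T
\]
in the pointed category $\CAlg_T/T$.

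Next, I would push this sequence through $\cat{R}I_T$ to obtain a homotopy cofiber sequence in $\NUCA_T$. Since $K_T\dashv I_T$ is a Quillen equivalence between the pointed model categories $\CAlg_T/T$ and $\NUCA_T$ (Proposition~\ref{prop:aug-ideal-Quillen-equivalence}), the functor $\cat{R}I_T$ is a pointed equivalence of homotopy categories; in particular, being quasi-inverse to the homotopy pushout preserving left derived functor $\cat{L}K_T$, it preserves homotopy cofiber sequences. Subsequently applying the derived left adjoint $\cat{L}Q_T$, which preserves homotopy pushouts and hence (since $\Mod_T$ is stable) homotopy cofiber sequences, yields a homotopy cofiber sequence
\[
\cat{L}Q_T\cat{R}I_T(T\otimes_R^{\cat L}S)\to\cat{L}Q_T\cat{R}I_T(T\otimes_R^{\cat L}T)\to\cat{L}Q_T\cat{R}I_T(T\otimes_S^{\cat L}T)
\]
of $T$-modules. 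The middle and right terms are by definition $\Omega_{T/R}$ and $\Omega_{T/S}$.

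It then remains to identify the left term with $\Omega_{S/R}\otimes_S^{\cat L}T$. For this I would combine Lemmas~\ref{lemma:indec-base-change} and~\ref{lemma:aug-ideal-base-change}: the first gives $\cat{L}Q_S(-)\otimes_S^{\cat L}T\cong\cat{L}Q_T((-)\otimes_S^{\cat L}T)$, while the second gives $\cat{R}I_S(-)\otimes_S^{\cat L}T\cong\cat{R}I_T((-)\otimes_S^{\cat L}T)$. Applying them in turn to $S\otimes_R^{\cat L}S$ and using $(S\otimes_R^{\cat L}S)\otimes_S^{\cat L}T\cong T\otimes_R^{\cat L}S$ yields the desired natural isomorphism
\[
\Omega_{S/R}\otimes_S^{\cat L}T\;\cong\;\cat{L}Q_T\cat{R}I_T(T\otimes_R^{\cat L}S).
\]

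The main obstacle is not any single step individually but rather the bookkeeping needed to ensure that the two maps in the resulting cofiber sequence genuinely agree with the maps coming from functoriality of $\Omega$. This requires checking that the base-change isomorphisms of Lemmas~\ref{lemma:aug-ideal-base-change} and~\ref{lemma:indec-base-change} are natural with respect to morphisms of commutative algebras (which is already built into their proofs via the double pseudofunctoriality of deriving) and that the adjunction between $\Ab_{T/R}(\blank)$ viewed as extension of scalars of $\Ab_{S/R}(\blank)$ and the evident morphism coming from $S\to T$ in $\CAlg_R/T$ produce the same map upon derived abelianization. Both are natural verifications, so no genuinely new homotopical input is needed beyond the tools already developed.
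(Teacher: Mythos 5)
Your proposal is correct and follows essentially the same route as the paper's own proof: apply $\cat{L}Q_T\circ\cat{R}I_T$ to the homotopy cofiber sequence in $\CAlg_T/T$ furnished by Lemma~\ref{lemma:cofib-base-change}, then identify the first term via the base-change Lemmas~\ref{lemma:aug-ideal-base-change} and~\ref{lemma:indec-base-change} together with $(S\otimes_R^{\cat L}S)\otimes_S^{\cat L}T\cong S\otimes_R^{\cat L}T$. The only difference is that you make the naturality/compatibility bookkeeping explicit, which the paper leaves implicit.
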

\begin{proof}
	We consider the morphism $S\otimes_R^{\cat{L}} T \to T\otimes_R^{\cat{L}} T$ of $T$-algebras augmented to $T$. By \Cref{lemma:cofib-base-change}, the homotopy cofiber of this morphism in $\CAlg_T/T$ is given by $T\otimes_S^{\cat{L}} T$. Since $I$ is a Quillen equivalence and $Q$ is left Quillen, applying $\cat{L} Q_T\circ \cat{R} I_T$ to the resulting homotopy cofiber sequence yields a homotopy cofiber sequence of $T$-modules. This takes the form
	\[ \cat{L} Q_T(\cat{R} I_T(S\otimes_R^{\cat{L}} T)) \to \cat{L} Q_T(\cat{R} I_T(T\otimes_R^{\cat{L}} T)) \to \cat{L} Q_T(\cat{R} I_T(T\otimes_S^{\cat{L}} T)). \]
	The last two terms are by definition the cotangent complexes $\Omega_{T/R}$ and $\Omega_{T/S}$. We thus only have to identify the first term as $\Omega_{S/R} \otimes_S^{\cat{L}} T$.

	For this, we observe that $S\otimes_R^{\cat{L}} T \cong (S\otimes_R^{\cat{L}} S) \otimes_S^{\cat{L}} T$ and use \Cref{lemma:aug-ideal-base-change,lemma:indec-base-change} to calculate
	\begin{align*}
	\cat{L} Q_T(\cat{R} I_T(S\otimes_R^{\cat{L}} T)) & \cong \cat{L} Q_T(\cat{R} I_T((S\otimes_R^{\cat{L}} S)\otimes_S^{\cat{L}} T))\\
	& \cong \cat{L} Q_T( \cat{R} I_S(S\otimes_R^{\cat{L}} S) \otimes_S^{\cat{L}} T)\\
	& \cong \cat{L} Q_S(\cat{R} I_S(S\otimes_R^{\cat{L}} S)) \otimes_S^{\cat{L}} T\\
	& \cong \Omega_{S/R} \otimes_S^{\cat{L}} T.
	\end{align*}
	In total, we obtain the desired homotopy cofiber sequence.
\end{proof}

As a special case of the previous theorem we can now make precise that the cotangent complex is homotopy invariant in each variable.

\begin{cor}\label{cor:cotang-htpy-inv-R}
	Assume $\CAlg(\mathscr C)$ is left proper, let $R'\to R$ be a weak equivalence of commutative monoids in $\mathscr C$, and let $S$ be a commutative $R$-algebra. Then the induced map $\Omega_{S/R'}\to\Omega_{S/R}$ in the homotopy category of $S$-modules is an isomorphism.
	\begin{proof}
		By the previous theorem, the homotopy fiber of the this map is given by $\Omega_{R/R'}\otimes^{\cat L}_RS$, which is trivial by Example~\ref{ex:R=S}. The claim follows by stability of $\mathscr C$.
	\end{proof}
\end{cor}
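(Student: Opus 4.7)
The strategy is to invoke the transitivity sequence directly with the three-term chain $R'\to R\to S$. Since we are assuming $\CAlg(\mathscr C)$ is left proper, the hypothesis of Theorem~\ref{thm:transitivity_sequence_topological_cotangent_complex} is satisfied, so the first step is to write down the resulting homotopy cofiber sequence
\begin{equation*}
\Omega_{R/R'}\otimes_R^{\cat L}S\longrightarrow \Omega_{S/R'}\longrightarrow \Omega_{S/R}
\end{equation*}
in the homotopy category of $S$-modules, where the right hand map is the one appearing in the statement.

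Next, I invoke Example~\ref{ex:R=S}: because $R'\to R$ is by hypothesis a weak equivalence, $\Omega_{R/R'}=0$ in $\Ho(\Mod_R)$. The functor $\blank\otimes_R^{\cat L}S$ is the left derived functor of a left Quillen functor between stable model categories, hence exact and in particular preserves zero objects, so $\Omega_{R/R'}\otimes_R^{\cat L}S\cong 0$ in $\Ho(\Mod_S)$.

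Finally, the cofiber sequence above becomes $0\to\Omega_{S/R'}\to\Omega_{S/R}$, and since $\Ho(\Mod_S)$ is triangulated (using that $\mathscr C$, and hence $\Mod_S$, is stable), the associated long exact sequence forces the map $\Omega_{S/R'}\to\Omega_{S/R}$ to be an isomorphism. There is essentially no obstacle here beyond correctly assembling these three ingredients; the work has all been done in the preceding transitivity theorem and in Example~\ref{ex:R=S}, and the role of left properness is solely to license the application of Theorem~\ref{thm:transitivity_sequence_topological_cotangent_complex} without having to arrange cofibrancy of $R$ as a commutative $R'$-algebra by hand.
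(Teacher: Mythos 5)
Your proof is correct and follows exactly the paper's argument: apply the transitivity cofiber sequence to $R'\to R\to S$ (left properness supplying its hypothesis), kill the term $\Omega_{R/R'}\otimes_R^{\cat L}S$ via Example~\ref{ex:R=S}, and conclude by stability that a map with trivial (co)fiber is an isomorphism. No gaps and no meaningful divergence from the paper's proof.
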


In the same way one shows:

\begin{cor}\label{cor:cotang-htpy-inv-S}
	Assume $\CAlg(\mathscr C)$ is left proper, let $R$ be a commutative monoid, and let $S\to S'$ be a weak equivalence of commutative $R$-algebras. Then the induced map $\Omega_{S/R}\otimes^\cat{L}_SS'\to\Omega_{S'/R}$ is an isomorphism in the homotopy category of $S'$-modules.\qed
\end{cor}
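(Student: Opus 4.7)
The plan is to mimic the proof of the preceding corollary but apply the transitivity sequence to the composite $R\to S\to S'$ instead of $R'\to R\to S$. Concretely, Theorem~\ref{thm:transitivity_sequence_topological_cotangent_complex} (applicable here because $\CAlg(\mathscr C)$ is assumed left proper) yields a homotopy cofiber sequence of $S'$-modules
\[
\Omega_{S/R}\otimes^{\cat L}_SS'\to \Omega_{S'/R}\to \Omega_{S'/S},
\]
in which the first arrow is precisely the comparison map we want to show is an isomorphism.

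It then suffices to prove that the cofiber $\Omega_{S'/S}$ vanishes. But this is exactly the situation of Example~\ref{ex:R=S}: since $S\to S'$ is a weak equivalence of commutative $R$-algebras (in particular a weak equivalence of commutative $S$-algebras), $S'$ is initial in the homotopy category $\Ho(\CAlg_S/S')$, and the abelianization functor $\Ab_{S'/S}$ is a left adjoint (Proposition~\ref{prop:abelianization-adj}), hence sends the initial object $S'$ to the zero object; thus $\Omega_{S'/S}=\Ab_{S'/S}(S')=0$.

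Finally, since $\mathscr C$ is stable, so is $\Ho(\Mod_{S'})$, and a homotopy cofiber sequence whose third term is zero witnesses the first map as an isomorphism. This gives the claim. The argument is entirely parallel to that of Corollary~\ref{cor:cotang-htpy-inv-R}, with the only substantive check being that the hypotheses of the transitivity sequence are available for the tower $R\to S\to S'$—which is immediate from the left properness assumption on $\CAlg(\mathscr C)$, so no genuine obstacle arises.
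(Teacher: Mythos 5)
Your argument is correct and is precisely the proof the paper intends when it says ``In the same way one shows'': apply Theorem~\ref{thm:transitivity_sequence_topological_cotangent_complex} to $R\to S\to S'$, note $\Omega_{S'/S}=0$ by Example~\ref{ex:R=S}, and conclude by stability. No gaps; your check of the transitivity hypotheses via left properness is exactly what is needed.
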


Moreover, we also get base change and additivity results.

\begin{prop}\label{prop:base_change-cotangent-complex}
	Let $R$ be a commutative monoid in $\mathscr C$ and $S$ and $T$ be two commutative $R$-algebras. Assume that $\CAlg(\mathscr C)$ is left proper or that $S$ and $T$ are cofibrant in $R/\CAlg(\mathscr C)$. Then there are natural isomorphisms
	\begin{align*}
	\Omega_{S\otimes_R^{\cat{L}} T/T} &\cong \Omega_{S/R} \otimes_R^{\cat{L}} T \qquad \textrm{and}\\
	\Omega_{S\otimes_R^{\cat{L}} T/R} &\cong (\Omega_{S/R} \otimes_R^{\cat{L}} T) \iP (S\otimes_R^{\cat{L}}\Omega_{T/R}).
	\end{align*}
\end{prop}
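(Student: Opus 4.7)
The plan is to chain together the base-change compatibilities for augmentation ideals and indecomposables (Lemmas~\ref{lemma:aug-ideal-base-change} and~\ref{lemma:indec-base-change}) for the first isomorphism, and then to deduce the second from the fact that the abelianization functor is a left adjoint.

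For the first isomorphism, I would start from the canonical identification
\begin{equation*}
(S\otimes_R^{\cat L}T)\otimes_T^{\cat L}(S\otimes_R^{\cat L}T)\cong (S\otimes_R^{\cat L}S)\otimes_S^{\cat L}(S\otimes_R^{\cat L}T)
\end{equation*}
as augmented $(S\otimes_R^{\cat L}T)$-algebras, exhibiting the right-hand side as the base change along $S\to S\otimes_R^{\cat L}T$ of the augmented $S$-algebra $S\otimes_R^{\cat L}S$ (with augmentation the multiplication). Applying Lemma~\ref{lemma:aug-ideal-base-change} and then Lemma~\ref{lemma:indec-base-change} for this base change yields
\begin{equation*}
\Omega_{S\otimes_R^{\cat L}T/T}\cong\Omega_{S/R}\otimes_S^{\cat L}(S\otimes_R^{\cat L}T)\cong\Omega_{S/R}\otimes_R^{\cat L}T,
\end{equation*}
which is the first claim.

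For the second isomorphism, write $A\mathrel{:=}S\otimes_R^{\cat L}T$. By Proposition~\ref{prop:abelianization-adj}, the abelianization functor $\Ab_{A/R}\colon\Ho(\CAlg_R/A)\to\Ho(\Mod_A)$ is a left adjoint, so it preserves homotopy coproducts. The key observation is that in $\Ho(\CAlg_R/A)$ the object $A$ (with identity augmentation) is the homotopy coproduct of $S$ and $T$ (each augmented over $A$ via the respective tensor factor). Granting this for the moment, it follows that
\begin{equation*}
\Omega_{A/R}=\Ab_{A/R}(A)\cong \Ab_{A/R}(S)\iP\Ab_{A/R}(T)
\end{equation*}
in $\Ho(\Mod_A)$, using also that finite coproducts in the stable category $\Ho(\Mod_A)$ agree with $\iP$. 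To identify the summands, I would unravel $\Ab_{A/R}(S)=\cat{L}Q_A\cat{R}I_A(A\otimes_R^{\cat L}S)$ with $A\otimes_R^{\cat L}S\cong(S\otimes_R^{\cat L}S)\otimes_S^{\cat L}A$ as augmented $A$-algebras, so that the same chain of base-change lemmas as in the first part gives $\Ab_{A/R}(S)\cong\Omega_{S/R}\otimes_R^{\cat L}T$, and symmetrically $\Ab_{A/R}(T)\cong S\otimes_R^{\cat L}\Omega_{T/R}$.

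The main technical point is the identification of $A$ as the homotopy coproduct of $S$ and $T$ in $\Ho(\CAlg_R/A)$: this is precisely where the hypothesis that $\CAlg(\mathscr C)$ is left proper or that $S,T$ are cofibrant enters, since under these assumptions one can choose cofibrant replacements $S'\to S$ and $T'\to T$ in $\CAlg_R$ compatible with the augmentations to $A$, and the underived tensor product $S'\otimes_RT'$ then computes both the homotopy coproduct in the slice and, up to weak equivalence, the derived tensor product $S\otimes_R^{\cat L}T=A$ itself.
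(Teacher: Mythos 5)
Your proof of the first isomorphism is essentially the paper's own argument: both unravel $\Omega_{S\otimes_R^{\cat L}T/T}$ via the identification $(S\otimes_R^{\cat L}T)\otimes_T^{\cat L}(S\otimes_R^{\cat L}T)\cong(S\otimes_R^{\cat L}S)\otimes_S^{\cat L}(S\otimes_R^{\cat L}T)$ and then push the base change through $I$ and $Q$ using Lemmas~\ref{lemma:aug-ideal-base-change} and~\ref{lemma:indec-base-change}. For the second isomorphism, however, you take a genuinely different route. The paper deduces it by playing off the two transitivity cofiber sequences (Theorem~\ref{thm:transitivity_sequence_topological_cotangent_complex}) for $R\to S\to S\otimes_R^{\cat L}T$ and $R\to T\to S\otimes_R^{\cat L}T$ against each other: they split one another, and the outer terms are identified using the first assertion; the left-properness/cofibrancy hypothesis enters there through Lemma~\ref{lemma:cofib-base-change}. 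You instead use that $\Ab_{A/R}$ (with $A=S\otimes_R^{\cat L}T$) is a left adjoint on homotopy categories (Proposition~\ref{prop:abelianization-adj}) and hence preserves the coproducts that exist there, together with the observation that $A$ is the coproduct of $(S\to A)$ and $(T\to A)$ in $\Ho(\CAlg_R/A)$; this is correct, since the slice coproduct is computed as $S^c\otimes_R T^c$ on cofibrant replacements, and the stated hypothesis is exactly what guarantees this is a model for $A$ itself. The summands $\Ab_{A/R}(S)$ and $\Ab_{A/R}(T)$ are then identified by the same base-change chain as in the first part (indeed $\Ab_{A/R}(S)\cong\Omega_{A/T}$, so you could simply quote the first assertion here). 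Your version buys an argument that bypasses the transitivity theorem and the splitting of distinguished triangles—only homotopicality of finite coproducts is needed at that step—at the price of the extra verification that the slice coproduct really is $A$; the paper's version is shorter given that transitivity is already available and concentrates the use of the properness/cofibrancy hypothesis in Lemma~\ref{lemma:cofib-base-change}. Both are sound.
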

\begin{proof}
	For the first assertion, we calculate
	\begin{align*}
	\Omega_{S\otimes_R^{\cat{L}} T/T} & \cong \cat{L} Q_{S\otimes_R^{\cat{L}} T}(\cat{R} I_{S\otimes_R^{\cat{L}} T}((S\otimes_R^{\cat{L}} T)\otimes_T^{\cat{L}} (S\otimes_R^{\cat{L}} T)))\\
	& \cong \cat{L} Q_{S\otimes_R^{\cat{L}} T}(\cat{R} I_{S\otimes_R^{\cat{L}} T}((S\otimes_R^{\cat{L}} S)\otimes_S^{\cat{L}} (S\otimes_R^{\cat{L}} T)))\\
	& \cong \cat{L} Q_{S\otimes_R^{\cat{L}} T}(\cat{R} I_S(S\otimes_R^{\cat{L}} S) \otimes_S^{\cat{L}} (S\otimes_R^{\cat{L}} T)) \\
	& \cong \cat{L} Q_S(\cat{R} I_S(S\otimes_R^{\cat{L}} S)) \otimes_S^{\cat{L}} (S\otimes_R^{\cat{L}} T)\\
	& \cong \Omega_{S/R}\otimes_R^{\cat{L}} T.
	\end{align*}
	The second assertion follows from observing that the transitivity cofiber sequences for $R\to S\to S\otimes_R^{\cat{L}} T$ and $R\to T\to S\otimes_R^{\cat{L}} T$ fit together to define splittings for each other, and applying the first assertion to the resulting cotangent complexes.
\end{proof}

\subsection{Topological André-Quillen cohomology of \texorpdfstring{$\bm G$}{G}-global ring spectra}
By Proposition~\ref{prop:white-assumptions}, the positive $G$-global model structures on $\cat{$\bm G$-Spectra}$ are symmetric monoidal and satisfy the monoid and strong commutative monoid axiom; moreover, coproducts in them are homotopical by \cite[Lemma~3.1.43]{g-global}. We can therefore specialize the above discussion to this setting, yielding:

\begin{defi}\label{def:TAQ}
	Let $R$ be a $G$-global ultra-commutative ring spectrum and $S$ be an $R$-algebra. The cotangent complex of $S$ over $R$ is defined as
	\[ \Omega_{S/R} = (\cat{L}Q) (\cat{R}I)(S \smashp_R^{\cat{L}} S).\]
	This is an $S$-module, and the homology and cohomology theories represented by it are called \emph{($G$-global) topological André-Quillen (co)homology}. Explicitly, for an $S$-module $M$, topological André-Quillen (co)homology of $S$ over $R$ with coefficients in $M$ is defined as
	\begin{align*}
		\ul{\TAQ}_\ast (S, R;M) & = \ul{\hat\pi}_\ast (\Omega_{S/R}\smashp_S^{\cat{L}} M) \\
		\ul{\TAQ}^\ast (S, R;M) & = \ul{\hat\pi}_{-\ast} (\cat{R} F(\Omega_{S/R}, M)).
	\end{align*}
	Here, $F$ denotes the internal hom in the category of $S$-modules.
\end{defi}

As a consequence of \Cref{thm:transitivity_sequence_topological_cotangent_complex} and \Cref{prop:base_change-cotangent-complex}, we obtain a transitivity sequence and base change for these. We explicitly state the transitivity sequence in homotopy groups.

\begin{cor}\label{cor:transitivity-seq-homotopy-groups}
	Let $R\to S\to T$ be morphisms of $G$-global ultra-commutative ring spectra, and let $M$ be a $T$-module. Then there are long exact sequences
	\[\hskip-11.67pt\hfuzz=12pt \cdots \to \ul{\TAQ}_{n+1}(T, S; M) \to \ul{\TAQ}_n(S,R; M)\to \ul{\TAQ}_n(T,R; M) \to \ul{\TAQ}_{n}(T,S;M) \to \cdots \]
	and
	\[\hskip-11.67pt\hfuzz=12pt \cdots \to \ul{\TAQ}^n(T, S; M) \to \ul{\TAQ}^n(T,R; M)\to \ul{\TAQ}^n(S,R; M) \to \ul{\TAQ}^{n+1}(T,S;M) \to \cdots \]
	induced from the cofibre sequence from \Cref{thm:transitivity_sequence_topological_cotangent_complex}. \qed
\end{cor}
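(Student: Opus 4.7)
The plan is to deduce both long exact sequences directly from the homotopy cofiber sequence
\begin{equation*}
\Omega_{S/R}\smashp_S^{\cat L}T\to\Omega_{T/R}\to\Omega_{T/S}
\end{equation*}
of $T$-modules provided by Theorem~\ref{thm:transitivity_sequence_topological_cotangent_complex}, by applying suitable exact functors to the stable homotopy category $\Ho(\cat{Mod}_T^G)$ and then reading off the associated long exact sequences in homotopy groups from the triangulated structure.

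For the \emph{homology} sequence, I would apply the derived functor $\blank\smashp_T^{\cat L}M$ to the above cofiber sequence. Since $\glo{Mod}_T^G$ is stable (Corollary~\ref{cor:modules-stable}) and the relative smash product arises from a left Quillen bifunctor, $\blank\smashp_T^{\cat L}M$ is an exact functor of triangulated categories and thus preserves cofiber sequences. The second and third terms are unambiguously $\Omega_{T/R}\smashp_T^{\cat L}M$ and $\Omega_{T/S}\smashp_T^{\cat L}M$, while for the first term I would identify
\begin{equation*}
(\Omega_{S/R}\smashp_S^{\cat L}T)\smashp_T^{\cat L}M\cong \Omega_{S/R}\smashp_S^{\cat L}M
\end{equation*}
via associativity of the derived relative smash product together with $T\smashp_T^{\cat L}M\cong M$ (viewing $M$ as $S$-module through $S\to T$). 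The long exact sequence in $\ul{\hat\pi}_*$ then follows from the standard long exact sequence attached to any distinguished triangle in a triangulated category.

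For the \emph{cohomology} sequence, I would instead apply $\cat{R}F_T(\blank,M)$, which arises from a right Quillen bifunctor and is therefore a contravariant exact functor on $\Ho(\cat{Mod}_T^G)$; in particular, it sends cofiber sequences to fiber sequences. The only subtle step is again the first term, where I would use that base change $\blank\smashp_S^{\cat L}T$ is left adjoint to restriction from $T$-modules to $S$-modules, yielding a natural isomorphism
\begin{equation*}
\cat{R}F_T(\Omega_{S/R}\smashp_S^{\cat L}T,M)\cong \cat{R}F_S(\Omega_{S/R},M).
\end{equation*}
Passing to $\ul{\hat\pi}_{-*}$ of the resulting fiber sequence and reindexing then yields the claimed long exact sequence.

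There is no real obstacle here: the argument is a formal consequence of stability of $\glo{Mod}_T^G$ together with standard properties of left/right Quillen bifunctors and their associated derived adjunctions. The only bookkeeping point is to confirm that the connecting maps of the two long exact sequences are those induced by functoriality of $\Omega$ together with the boundary map of the transitivity cofiber sequence, which follows directly from naturality of the long exact sequence associated to a distinguished triangle.
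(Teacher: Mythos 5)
Your argument is correct and is exactly the intended one: the paper states the corollary as immediate from Theorem~\ref{thm:transitivity_sequence_topological_cotangent_complex}, and the implicit proof is precisely your application of the exact functors $\blank\smashp_T^{\cat L}M$ and $\cat{R}F_T(\blank,M)$ to the transitivity cofiber sequence, together with the base-change identification $(\Omega_{S/R}\smashp_S^{\cat L}T)\smashp_T^{\cat L}M\cong\Omega_{S/R}\smashp_S^{\cat L}M$ and its adjoint counterpart, followed by the long exact sequence of a distinguished triangle in $\ul{\hat\pi}_*$.
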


We now present two applications of the above theory to the study of $G$-global ultra-commutative ring spectra. The first is a Hurewicz theorem, which says that vanishing of the (relative) André-Quillen homology detects equivalences of $G$-global ultra-commutative ring spectra. The other application is a construction of Postnikov towers for global ultra-commutative ring spectra, with $k$-invariants in global topological André-Quillen cohomology. These applications are analogous to the usage of topological André-Quillen homology in \cite[Chapter 8]{basterra-TAQ}, but the proofs are simplified by the use of t-structures.

Both of these results need a connectivity hypothesis, and the first step is to consider how the indecomposables functor $Q$ interacts with connectivity.

\begin{lemma}\label{lemma:Q-connectivity}
	Let $R$ be a connective $G$-global ultra-commutative ring spectrum and $J$ be a cofibrant non-unital commutative $R$-algebra. Suppose moreover that $J$ is $n$-connected for $n\geq 0$. Then also $Q(J)$ is $n$-connected, and the adjunction unit $q\colon J\to Q(J)$ induces an isomorphism on $\ul{\hat\pi}_k$ for $n+1\leq k\leq 2n+1$.
\end{lemma}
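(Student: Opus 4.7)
The plan is to establish a homotopy cofiber sequence
\begin{equation*}
J\smashp_R^{\cat L}J\xrightarrow{\mu} J\xrightarrow{q} Q(J)
\end{equation*}
in $\Ho(\cat{Mod}_R^G)$, after which both the connectivity of $Q(J)$ and the stated range of isomorphisms will follow quickly from the $t$-structure of Proposition~\ref{prop:t-structure-mod} together with the smash compatibility from Proposition~\ref{prop:t-structure-smash}.

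Granting such a cofiber sequence, the argument is essentially formal. Since $R$ is connective and $J\in\Ho(\cat{Mod}_R^G)_{\ge n+1}$ by hypothesis, Proposition~\ref{prop:t-structure-smash} gives $J\smashp_R^{\cat L}J\in\Ho(\cat{Mod}_R^G)_{\ge 2n+2}$, i.e.\ $J\smashp_R^{\cat L}J$ is $(2n+1)$-connected. The associated long exact sequence
\begin{equation*}
\cdots\to\ul{\hat\pi}_k(J\smashp_R^{\cat L}J)\to\ul{\hat\pi}_k(J)\xrightarrow{q_*}\ul{\hat\pi}_k(Q(J))\to\ul{\hat\pi}_{k-1}(J\smashp_R^{\cat L}J)\to\cdots
\end{equation*}
therefore has both its left and right neighbouring terms vanishing throughout the range $k\le 2n+1$, so that $q_*$ is an isomorphism in that range. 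Combined with the hypothesis $\ul{\hat\pi}_k(J)=0$ for $k\le n$, this yields the vanishing of $\ul{\hat\pi}_k(Q(J))$ in the same range (hence the $n$-connectivity of $Q(J)$) and the Hurewicz-type isomorphisms for $n+1\le k\le 2n+1$.

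The main obstacle is the construction of the cofiber sequence itself, i.e.\ showing that the strict pushout in $\cat{Mod}_R^G$ defining $Q(J)$ is in fact a homotopy pushout. Since $J$ is a cofibrant NUCA and $Q\colon\cat{NUCA}_R^G\to\cat{Mod}_R^G$ is left Quillen, $Q(J)$ already computes the derived indecomposables; what needs to be argued is that these agree with the homotopy cofiber of the multiplication map $\mu$ in $R$-modules. I would prove this by induction on the cellular filtration of $J$: the explicit filtration of pushouts of NUCAs recalled in Remark~\ref{rk:pushout-analysis-nucas} resolves the attachment of a free cell along $\mathbb P^{>0}h$ into associated graded pieces in $\cat{Mod}_R^G$ built from the symmetric iterated pushout products $h^{\ppo m}/\Sigma_m$, and both $\mu$ and $Q$ interact compatibly with this filtration. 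The class of cofibrant NUCAs for which the statement holds is then closed under transfinite composition along injective cofibrations and filtered colimits by Lemma~\ref{lemma:g-global-sp-sp-colim}, reducing us to the base case $J=\mathbb P^{>0}X$ for a cofibrant $R$-module $X$, where the underlying module is $\bigvee_{m\ge1}X^{\smashp_R m}/\Sigma_m$, the map $\mu$ is concatenation of wedge summands, and a direct inspection identifies its module-theoretic (homotopy) cofiber with $X=Q(\mathbb P^{>0}X)$.
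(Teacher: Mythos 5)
Your first two paragraphs coincide with the paper's proof: once one accepts the homotopy cofiber sequence $J\smashp_R^{\cat L}J\to J\to Q(J)$, Proposition~\ref{prop:t-structure-smash} makes the first term $(2n+1)$-connected and the long exact sequence gives everything. The gap is in your third paragraph: the statement you set out to prove there --- that for cofibrant $J$ the strict pushout defining $Q(J)$ is a homotopy pushout, i.e.\ that $Q(J)$ is the homotopy cofiber of $\mu$ on the nose --- is false, and your base case is exactly where this shows. For $J=\mathbb P^{>0}X$ with $X$ positively cofibrant, $\mu$ preserves the weight decomposition, and in weight $2$ it is the quotient map $X^{\smashp_R 2}\to X^{\smashp_R 2}/\Sigma_2$. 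Since the $\Sigma_2$-action is levelwise free outside the basepoint (Lemma~\ref{lemma:smash-power-free}), the target models the homotopy orbits, so the homotopy cofiber of this weight-$2$ piece is the cofiber of the canonical map $X^{\smashp_R 2}\to (X^{\smashp_R 2})_{h\Sigma_2}$, which is not trivial: already for $R=\mathbb S$ and $X$ a cofibrant replacement of $\mathbb S$ its underlying non-equivariant spectrum is $\Sigma^\infty B\Sigma_2$. So direct inspection identifies the \emph{strict} cofiber of $\mu$ with $X=Q(\mathbb P^{>0}X)$, but the \emph{homotopy} cofiber is $X$ together with nontrivial higher-weight summands; since free NUCAs on cofibrant modules are cofibrant, the identification you want already fails for the free cells, and no cell induction can repair it.

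What saves the lemma (and explains the range $n+1\le k\le 2n+1$ in its statement) is a comparison in a range of degrees rather than an identification: in the free case with $X$ $n$-connected, the weight-$N$ pieces above are maps between $(N(n+1)-1)$-connected objects, so the difference between the homotopy cofiber of $\mu$ and $Q(\mathbb P^{>0}X)$ sits in degrees $\ge 2n+2$ and is invisible where the lemma makes assertions; a careful argument propagates this kind of estimate through the cell filtration of Remark~\ref{rk:pushout-analysis-nucas}, in the spirit of Basterra's original proof. The paper itself does not address this point --- its proof is precisely your first two paragraphs, reading the defining pushout as a cofiber sequence and applying the long exact sequence --- so to make your strategy rigorous you should replace your on-the-nose claim by the weaker one that the comparison map from the homotopy cofiber of $\mu$ to $Q(J)$ is an isomorphism on $\ul{\hat\pi}_k$ for $k\le 2n+1$ (or directly estimate the connectivity of $Q(J)$ and of $q$ along the filtration), after which your first two paragraphs go through unchanged.
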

\begin{proof}
	The module of indecomposables is defined via the cofiber sequence
	\[ J\smashp_R J \to J\xrightarrow{\eta} Q(J) \]
	of $R$-modules. Here, $J$ is $n$-connected by assumption, while $J\smashp_RJ$ is $(2n+1)$-connected by Proposition~\ref{prop:t-structure-smash}; the claim follows from the long exact sequence.
\end{proof}

We now come to the Hurewicz theorem. To state this, recall the universal derivation $d_{S/R}\colon S\to\Omega_{S/R}$ for any $R$-algebra $\eta\colon R\to S$. By naturality, the composite
\begin{equation*}
	R\xrightarrow{\;\eta\;}S\xrightarrow{d_{S/R}}\Omega_{S/R}
\end{equation*}
in $\Ho(\cat{Mod}^G_R)$ will factor through $\Omega_{R/R}$, so it is zero by Example~\ref{ex:R=S}. Thus, $d_{S/R}$ always \emph{non-canonically} factors through an $R$-module map $C(\eta)\to\Omega_{S/R}$.

\begin{thm}[Hurewicz theorem]\label{thm:Hurewicz}
	Let $R$ be a connective $G$-global ultra-commuta\-tive ring spectrum and let $S$ be a connective commutative $R$-algebra such that the unit map $\eta\colon R\to S$ is an $n$-equivalence for $n\geq 1$. Then $\Omega_{S/R}$ is $n$-connected, the factorization of $d_{S/R}$ through $C(\eta)$ is unique, and this induces an isomorphism $\ul{\hat\pi}_{n+1} (C(\eta)) \cong \ul{\hat\pi}_{n+1}(\Omega_{S/R})$.
\end{thm}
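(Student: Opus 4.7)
The plan is to express $\Omega_{S/R}$ through a natural zigzag beginning at $C(\eta)$ and to track connectivity at each stage. The key observation is that $S\smashp_R^{\cat L}S$, regarded as an augmented $S$-algebra via the multiplication $\mu$, admits a splitting of its augmentation: the unit $\iota_R\colon S\to S\smashp_R^{\cat L}S$ satisfies $\mu\iota_R=\id_S$. Smashing the cofiber sequence $R\to S\to C(\eta)$ in $\Ho(\cat{Mod}^G_R)$ with $S$ over $R$ therefore exhibits the cofiber of $\iota_R$ as $C(\eta)\smashp_R^{\cat L}S$, and stability (together with the splitting $S\smashp_R^{\cat L}S\simeq S\oplus\cofib(\iota_R)$) produces a natural isomorphism $\cat{R}I(S\smashp_R^{\cat L}S)\simeq C(\eta)\smashp_R^{\cat L}S$ in $\Ho(\cat{Mod}^G_S)$.

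Next, I extract connectivity. Since $\eta$ is an $n$-equivalence, $C(\eta)$ is $n$-connected, and as $R$ and $S$ are connective, Proposition~\ref{prop:t-structure-smash} makes $C(\eta)\smashp_R^{\cat L}S$ (and hence $\cat{R}I(S\smashp_R^{\cat L}S)$) $n$-connected. Applying Lemma~\ref{lemma:Q-connectivity} I deduce first that $\Omega_{S/R}=\cat{L}Q\cat{R}I(S\smashp_R^{\cat L}S)$ is $n$-connected, and second that the unit $\cat{R}I(S\smashp_R^{\cat L}S)\to\Omega_{S/R}$ is an isomorphism on $\ul{\hat\pi}_{n+1}$ (as $n+1\leq 2n+1$). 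To compare with $C(\eta)$ itself, I smash $R\to S\to C(\eta)$ once more with $C(\eta)$ over $R$; Proposition~\ref{prop:t-structure-smash} makes $C(\eta)\smashp_R^{\cat L}C(\eta)$ $(2n+1)$-connected, so the long exact sequence of the resulting cofiber sequence $C(\eta)\to C(\eta)\smashp_R^{\cat L}S\to C(\eta)\smashp_R^{\cat L}C(\eta)$ yields $\ul{\hat\pi}_{n+1}C(\eta)\cong\ul{\hat\pi}_{n+1}(C(\eta)\smashp_R^{\cat L}S)$. Composing all comparisons assembles a natural map $\phi\colon C(\eta)\to\Omega_{S/R}$ realizing the desired isomorphism on $\ul{\hat\pi}_{n+1}$.

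Finally, I verify existence and uniqueness of the factorization of $d_{S/R}$ through $C(\eta)$, and identify it with $\phi$. Existence follows from $\Ab_{S/R}(R)=\cat{L}Q\cat{R}I(S\smashp_R^{\cat L}R)=\cat{L}Q\cat{R}I(S)=0$ (as $I(S)=0$ when $S$ is augmented over itself by the identity), which forces $d_{S/R}\circ\eta=0$ in $\Ho(\cat{Mod}^G_R)$. For uniqueness, any two factorizations differ by a morphism $C(\eta)\to\Omega_{S/R}$ restricting to zero on $S$, hence factoring through $\Sigma R=\cofib(S\to C(\eta))$; the free-forgetful adjunction identifies $[\Sigma R,\Omega_{S/R}]_{\cat{Mod}^G_R}$ with $[\Sigma\mathbb S,\mathbb U\Omega_{S/R}]$ in the $G$-global stable homotopy category, which is a summand of $\ul{\hat\pi}_1\Omega_{S/R}$ and hence vanishes since $n\geq 1$. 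It remains to check that $\phi$ itself factors $d_{S/R}$: unwinding the construction, the composite $S\to C(\eta)\to C(\eta)\smashp_R^{\cat L}S\simeq\cat{R}I(S\smashp_R^{\cat L}S)$ agrees up to sign with the residual component $\iota_L-\iota_R$ coming from the splitting $S\smashp_R^{\cat L}S\simeq S\oplus\cat{R}I(S\smashp_R^{\cat L}S)$, and post-composing with the unit to $\Omega_{S/R}$ recovers $d_{S/R}$ up to the same sign, absorbed by uniqueness. The principal conceptual step is the cofiber-splitting identification in the first paragraph; the remainder reduces to the connectivity calculus already established.
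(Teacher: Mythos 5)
Your argument is correct and follows essentially the same route as the paper: the same split cofiber sequence identifying $\cat{R}I(S\smashp_R^{\cat L}S)$ with $C(\eta)\smashp_R^{\cat L}S$, the same connectivity bookkeeping via Proposition~\ref{prop:t-structure-smash} and Lemma~\ref{lemma:Q-connectivity} (including the comparison through the $(2n+1)$-connected term $C(\eta)\smashp_R^{\cat L}C(\eta)$), and the same uniqueness argument from the vanishing of $[\Sigma R,\Omega_{S/R}]$, which the paper pins down precisely as $\hat\pi_1^{\id_G}(\Omega_{S/R})$ where you say ``a summand of $\ul{\hat\pi}_1$.'' The only cosmetic differences are that you obtain the splitting directly from the section $\iota_R$ of the augmentation rather than quoting the counit equivalence of Proposition~\ref{prop:aug-ideal-Quillen-equivalence}, and that you re-prove existence of the factorization of $d_{S/R}$, which the paper records just before the theorem.
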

\begin{proof}
	After cofibrant replacement, we may assume that $S$ is a cofibrant commutative $R$-algebra. We consider the diagram
	\[ \begin{tikzcd}
	R \arrow[r, "\eta"] \arrow[d, "\eta", swap] & S \arrow[r] \arrow[d] & C(\eta)\arrow[d, "\iota"]\\
	S \arrow[r] & S\smashp_R S \arrow[r] & S\smashp_R C(\eta).
	\end{tikzcd}\]
	The first line of the diagram is a cofiber sequence by definition, and the second line arises from it by applying $S\smashp_R \_$. Since $S$ is cofibrant, this again is a cofiber sequence. The vertical morphisms are the inclusions as the right factors. By \Cref{prop:aug-ideal-Quillen-equivalence}, the counit
	\[ S\vee \cat{R}I(S\smashp_R S) \to S\smashp_R S\]
	is an equivalence, and thus by comparing cofibers we obtain $S\smashp_R C(\eta) \cong \cat{R}I (S\smashp_R S)$. We now consider the composition
	\[ C(\eta)\xrightarrow{\iota} S\smashp_R C(\eta) \cong \cat{R}I(S\smashp_R S) \xrightarrow{q} \Omega_{S/R},\]
	where the last map is an instance of the unit map $J\to \cat{L}Q(J)$ considered in \Cref{lemma:Q-connectivity}, for the non-unital algebra $J= S\smashp_R C(\eta)\cong \cat{R}I(S\smashp_R S) $. Since $\eta$ is $n$-connected, so is $C(\eta)$. As $S$ is connective, also $S\smashp_R C(\eta)$ is $n$-connected, and the morphism $\iota$ is an isomorphism on $\ul{\hat\pi}_k$ for $k\leq {n+1}$. By \Cref{lemma:Q-connectivity}, also $q$ induces an isomorphism on $\ul{\hat\pi}_k$ for $k\leq n+1$, finishing the proof that $\Omega_{S/R}$ is $n$-connected and that $\ul{\hat\pi}_{n+1}(C(\eta))\cong\ul{\hat\pi}_{n+1}(\Omega_{S/R})$ via the above composite $C(\eta)\to\Omega_{S/R}$.

	Unravelling the definition of this morphism $C(\eta)\to \Omega_{S/R}$, we observe that it is indeed induced by the universal derivation $d_{S/R}\colon S\to \Omega_{S/R}$. It then only remains to prove uniqueness of the factorization through $C(\eta)$, for which we consider the following portion of the long exact sequence in cohomology associated to the cofiber sequence $R\to S\to C(\eta)$:
	\begin{equation*}
		\cdots\gets[S,\Omega_{S/R}]\gets[C(\eta),\Omega_{S/R}]\gets[\Sigma R,\Omega_{S/R}]\gets\cdots
	\end{equation*}
	Here $[\,{,}\,]$ denotes hom sets in $\Ho(\cat{Mod}^G_R)$. The claim amounts to saying that $[S,\Omega_{S/R}]\gets [C(\eta),\Omega_{S/R}]$ is injective, for which it suffices that $[\Sigma R,\Omega_{S/R}]=0$. But indeed, by adjunction $[\Sigma R,\Omega_{S/R}]\cong\hat\pi_1^{\id_G}(\Omega_{S/R})$ which vanishes as $\Omega_{S/R}$ is $n$-connected by the above and we have assumed $n\ge1$.
\end{proof}

\begin{cor}\label{cor:TAQ-detects-equiv}
	Let $R$ be a connective $G$-global ultra-commutative ring spectrum, and let $S$ be a connective $R$-algebra such that the unit map $\eta\colon R\to S$ is $1$-connected and $\Omega_{S/R}\simeq0$. Then $\eta\colon R\to S$ is already an equivalence.
\end{cor}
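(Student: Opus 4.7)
The plan is a standard Hurewicz-style bootstrap argument. I will prove by induction on $n\geq 1$ that $\eta$ is $n$-connected, where $\eta$ is declared to be \emph{$n$-connected} if its cofibre $C(\eta)$ in $\cat{Mod}_R^G$ is $n$-connected. The base case $n=1$ is precisely the hypothesis of the corollary.

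For the inductive step, assume $\eta$ is $n$-connected for some $n\geq 1$ and apply Theorem~\ref{thm:Hurewicz} to obtain a natural isomorphism
\[
\ul{\hat\pi}_{n+1}(C(\eta))\cong\ul{\hat\pi}_{n+1}(\Omega_{S/R}).
\]
Since $\Omega_{S/R}\simeq 0$ by assumption, the right-hand side vanishes; combined with the $n$-connectivity of $C(\eta)$, this upgrades $C(\eta)$ to being $(n+1)$-connected, so $\eta$ is $(n+1)$-connected, completing the inductive step.

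Once the induction is complete, $C(\eta)$ satisfies $\ul{\hat\pi}_k(C(\eta))=0$ for every $k\in\mathbb Z$: the vanishing in positive degrees follows by taking $n\geq k$ in the induction, and the vanishing in non-positive degrees is immediate from the long exact sequence associated to the cofibre sequence $R\to S\to C(\eta)$, using that both $R$ and $S$ are connective. An $R$-module with trivial true homotopy groups is itself zero—the forgetful functor to $\Ho(\cat{$\bm G$-Spectra})$ is conservative (cf.~the proof of Proposition~\ref{prop:t-structure-mod}), and the true homotopy groups already detect zero objects in the $G$-global stable homotopy category—so $C(\eta)\simeq 0$ in $\Ho(\cat{Mod}_R^G)$. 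The cofibre sequence $R\to S\to C(\eta)$ then forces $\eta$ to be an equivalence. No step presents a real obstacle; all of the substantial work is carried by Theorem~\ref{thm:Hurewicz}, and the present argument is merely its standard inductive consequence.
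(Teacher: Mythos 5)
Your proof is correct and is essentially the paper's argument in inductive rather than contradictory form: the paper picks the first non-vanishing homotopy group of $C(\eta)$ (in degree $k\ge 2$) and contradicts $\Omega_{S/R}\simeq 0$ via Theorem~\ref{thm:Hurewicz}, while you run the same Hurewicz input degree by degree to kill all homotopy groups of the cone. Both versions rest on exactly the same ingredients (the Hurewicz theorem plus detection of equivalences by the true homotopy groups), so there is nothing substantive to add.
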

\begin{proof}
	Suppose $\eta$ is not an equivalence, and let $\ul{\hat\pi}_k(C(\eta))$, $k\ge2$ be the first non-trivial homotopy group of the cone. Then also $\ul{\hat\pi}_k(\Omega_{S/R})\neq 0$ by \Cref{thm:Hurewicz}, in contradiction to triviality of the cotangent complex.
\end{proof}

Next, we construct Postnikov towers for global ultra-commutative ring spectra. For this, we explain how elements of the topological André-Quillen cohomology parametrize extensions of algebras.

\begin{constr}\label{constr:extension-k-inv}
	Let $R$ be a $G$-global ultra-commutative ring spectrum, $S$ be a commutative $R$-algebra and $M$ be an $S$-module. By adjunction and the definition of topological André-Quillen cohomology, we have the identifications
	\begin{align*}
		\TAQ_{\id_G}^n(S, R; M) &\cong
		\Ho(\cat{$\bm G$-Spectra})\big(\mathbb S, \Sigma^n\cat{R}F(\Omega_{S/R},M)\big)
		\\&\cong\Ho(\cat{$\bm G$-Spectra})\big(\mathbb S, \cat{R}F(\Omega_{S/R},\Sigma^nM)\big)\\&\cong
		\Ho(\cat{Mod}^G_S)\big(S, \cat{R}F(\Omega_{S/R},\Sigma^nM)\big)\\&\cong
		\Ho(\cat{Mod}^G_S)(\Omega_{S/R}, \Sigma^n M)\\
		&\cong \Ho(\cat{UCom}^G_R/S) (S, S\vee \Sigma^nM).
	\end{align*}
	In particular, for a given class $k\in \TAQ_{\id_G}^n(S, R; M)$, we may interpret it as a morphism $S\to S\vee \Sigma^nM$ of commutative $R$-algebras over $S$. We form the homotopy pullback
	\[ \begin{tikzcd}
	S[k] \arrow[r] \arrow[d] & S \arrow[d, "\iota"]\\
	S \arrow[r, "k", swap] & S\vee \Sigma^nM
	\end{tikzcd}\]
	in $R$-algebras and call it the extension of $S$ by $k$. Here, the right vertical map is the inclusion of $S$ as the first wedge summand.

	Topological André-Quillen cohomology can be related to usual cohomology represented by a $G$-global spectrum by the universal derivation $d_{S/R}\colon S\to \Omega_{S/R}$ constructed in \eqref{eq:universal-derivation}. Precomposition with this derivation takes a map $\tilde{k}\colon \Omega_{S/R}\to \Sigma^n M$ representing an element in $\TAQ_{\id_G}^n(S, R; M)$ to the map $\tilde{k}\circ d_{S/R}\colon S\to \Sigma^n M$. By the definition of the universal derivation, this agrees (in the homotopy category) with the composition
	\[ \pr\circ k\colon S\to S\vee \Sigma^n M \to \Sigma^n M,\]
	where $k$ is the adjoint to $\tilde{k}$ under the adjunction between square-zero extensions and Kähler differentials.

	Using this translation, we observe that $S[k]$ is also the homotopy pullback in the total square
	\[ \begin{tikzcd}
	S[k] \arrow[r] \arrow[d] & S \arrow[d, "\iota"] \arrow[r] & \ast \arrow[d]\\
	S \arrow[r, "k", swap] & S\vee \Sigma^nM \arrow[r, "\pr", swap] & \Sigma^n M
	\end{tikzcd}\]
	and hence the homotopy fiber of $\tilde{k}\circ d_{S/R}$ in the category of $R$-modules.
\end{constr}

If $X$ is any ultra-commutative ring spectrum, then its zeroth homotopy groups $\ul{\hat\pi}_0X$ come with additional norm maps, defined via the multiplication on smash powers, giving them the structure of a so-called \emph{global power functor} \cite[Definition~5.1.6]{schwede-book}. In the next theorem, we will need that conversely any global power functor $F$ gives rise to an \emph{ultra-commutative} Eilenberg-MacLane spectrum $HF$, i.e.~an ultra-commutative global ring spectrum with $\ul{\hat\pi}_0(HF)\cong F$ as global power functors and $\ul{\hat\pi}_k(HF)=0$ for $k\not=0$, see \cite[Theorem~5.4.14]{schwede-book}. As the corresponding result in $G$-global homotopy theory for general $G$ has not been established yet, this means we have to restrict to $G=1$ here; however, once the corresponding theory of Eilenberg-MacLane spectra is set up, the same result will hold for arbitrary $G$, with the same proof.

\begin{thm}\label{thm:Postnikov-towers}
	Let $R$ be a connective global ultra-commutative ring spectrum. Then there is a sequence $R_0, R_1, \ldots$ of commutative $R$-algebras together with maps $R_{n+1}\to R_n$ of $R$-algebras and classes $k_n\in \TAQ_e^{n+1}(R_n, R; H\ul{\hat\pi}_{n+1}(R))$, such that the following properties are satisfied:
	\begin{enumerate}
		\item $R_0\cong H\ul{\hat\pi}_0(R)$ and $R_{n+1} \cong R_{n}[k_n]$,
		\item $\ul{\hat\pi}_k(R_n)=0$ for $k>n$,
		\item the unit maps $\eta_n\colon R\to R_n$ are $(n+1)$-equivalences.
	\end{enumerate}
\end{thm}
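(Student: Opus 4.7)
The plan is to proceed by induction on $n$, following the strategy of Basterra's classical Postnikov tower construction \cite{basterra-TAQ}. For the base case $n=0$, I would set $R_0\mathrel{:=} H\ul{\hat\pi}_0(R)$; this exists as an ultra-commutative ring spectrum by Schwede's equivalence between global power functors and connective Eilenberg--MacLane ultra-commutative ring spectra \cite[Theorem~5.4.14]{schwede-book}, and the identity of $\ul{\hat\pi}_0(R)$ then provides a canonical unit map $\eta_0\colon R\to R_0$ which is tautologically a $1$-equivalence.

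For the inductive step, assume $R_n$ has been constructed. Applying the Hurewicz theorem (\Cref{thm:Hurewicz}) to the $(n+1)$-equivalence $\eta_n$ shows that $\Omega_{R_n/R}$ is $(n+1)$-connected and yields an isomorphism $\ul{\hat\pi}_{n+2}(C(\eta_n))\cong\ul{\hat\pi}_{n+2}(\Omega_{R_n/R})$; a diagram chase in the long exact sequence of $R\xrightarrow{\eta_n}R_n\to C(\eta_n)$, combined with the hypothesis $\ul{\hat\pi}_{>n}(R_n)=0$, then identifies both of these groups with $\ul{\hat\pi}_{n+1}(R)$. A Postnikov truncation of $\Omega_{R_n/R}$ realising the identity on this first nontrivial homotopy group produces the desired class $k_n$ in the appropriate $\TAQ$-cohomology group via the adjunction identifications of \Cref{constr:extension-k-inv}.

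I would then set $R_{n+1}\mathrel{:=} R_n[k_n]$. Since the universal derivation vanishes on any $R$-algebra structure map, the universal property of the pullback defining $R_n[k_n]$ lifts $\eta_n$ canonically (up to homotopy) to an ultra-commutative ring spectrum map $\eta_{n+1}\colon R\to R_{n+1}$ with composition $R\xrightarrow{\eta_{n+1}}R_{n+1}\to R_n$ equal to $\eta_n$. The homotopy groups of $R_{n+1}$ can then be computed from the $R$-module fiber sequence of \Cref{constr:extension-k-inv}: the associated long exact sequence, together with $\ul{\hat\pi}_{>n}(R_n)=0$ and the concentration of the cofiber in a single homotopy degree, directly yields $\ul{\hat\pi}_{>n+1}(R_{n+1})=0$ and abstract isomorphisms $\ul{\hat\pi}_k(R_{n+1})\cong\ul{\hat\pi}_k(R)$ for $k\le n+1$. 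That $\eta_{n+1}$ realises these isomorphisms for $k\le n$ then follows immediately by two-out-of-three.

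The hard part will be showing that $\eta_{n+1}$ itself realises the isomorphism in the critical degree $n+1$, where two-out-of-three provides no information since $\ul{\hat\pi}_{n+1}(R_n)=0$. This amounts to identifying the connecting homomorphism of the long exact sequence with the composite of the Hurewicz isomorphism for $\eta_n$ and the canonical identifications used in defining $k_n$. I expect this to follow from naturality of the Hurewicz map together with the precise choice of $k_n$ as a Postnikov truncation, modulo careful bookkeeping through the adjunction chain between $\TAQ$-classes, derivations, and square-zero extensions developed in \Cref{subsec:abstract-cotangent}.
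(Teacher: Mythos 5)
Your proposal takes essentially the same route as the paper: set $R_0=H\ul{\hat\pi}_0(R)$, inductively use the Hurewicz theorem to see that $\Omega_{R_n/R}$ is $(n+1)$-connected with $\ul{\hat\pi}_{n+2}(\Omega_{R_n/R})\cong\ul{\hat\pi}_{n+1}(R)$, take $k_n$ to be the resulting truncation class, set $R_{n+1}=R_n[k_n]$, and read off the homotopy groups from the fiber sequence of \Cref{constr:extension-k-inv}. The ``hard part'' you flag is real (a wrong choice of $k_n$, e.g.\ $k_n=0$, would break the degree-$(n+1)$ statement) but goes through exactly as you expect: the uniqueness clause of \Cref{thm:Hurewicz} identifies the induced map $C(\eta_n)\to\Sigma^{n+2}H\ul{\hat\pi}_{n+1}(R)$ in the octahedral cofiber sequence $C(\eta_{n+1})\to C(\eta_n)\to\Sigma^{n+2}H\ul{\hat\pi}_{n+1}(R)$ with the Hurewicz factorization composed with $\tilde k_n$, which is an isomorphism on $\ul{\hat\pi}_{n+2}$ by the choice of $k_n$, so $C(\eta_{n+1})$ is $(n+2)$-connected; the paper itself is no more explicit at this point.
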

\begin{proof}
	We define $R_0= H\ul{\hat\pi}_0(R)$ as an Eilenberg-MacLane spectrum for the global power functor $\ul{\hat\pi}_0(R)$. This is a global ultra-commutative ring spectrum, and it comes with a morphism $\eta_0\colon R\to H\ul{\hat\pi}_0(R)$ of ultra-commutative ring spectra inducing an isomorphism on $\ul{\hat\pi}_0$ (see the remark before \cite[Theorem 5.4.14]{schwede-book}). Hence $R_0$ is a possible first stage of the Postnikov tower.

	Now suppose that we have constructed the tower up to level $n$. In particular, we have a morphism $\eta_n\colon R\to R_n$ of ultra-commutative ring spectra that is an $(n+1)$-equivalence, and $\ul{\hat\pi}_{n+2}(R_n)= \ul{\hat\pi}_{n+1}(R_n) = 0$. The Hurewicz theorem \ref{thm:Hurewicz} shows that thus $\Omega_{R_n/R}$ is $(n+1)$-connected and $\ul{\hat\pi}_{n+2}(\Omega_{R_n/R})\cong \ul{\hat\pi}_{n+1}(R)$. This isomorphism defines a morphism $\tilde{k}_n\colon \Omega_{R_n/R}\to \Sigma^{n+2} H\ul{\hat\pi}_{n+1}(R)$ of $R$-modules, which corresponds to an element $k_n\in \TAQ_e^{n+1}(R_n, R; H\ul{\hat\pi}_{n+1}(R))$.

	Using this element and \Cref{constr:extension-k-inv}, we define $R_{n+1}= R_n[k_n]$. This comes with a map $R_{n+1}\to R_n$ of $R$-algebras. Furthermore, as an $R$-module, the algebra $R_{n+1}$ is the homotopy fiber of the map \[R_n\xrightarrow{d_{R_n/R}} \Omega_{R_n/R} \xrightarrow{\tilde{k}_n} \Sigma^{n+2} H\ul{\hat\pi}_{n+1}(R).\] Hence the morphism $\eta_{n+1}\colon R\to R_{n+1}$ is indeed an $(n+2)$-equivalence and all higher homotopy groups of $R_{n+1}$ vanish. Thus, the theorem follows by induction.
\end{proof}

\subsection{Equivariant topological André-Quillen (co)homology}\label{subsec:equivariant-TAQ}
By \cite[Theorem~4.7]{hausmann-equivariant} the $G$-equivariant flat model structure recalled in Theorem~\ref{thm:equiv-stable} also admits a `positive' version, with the same weak equivalences, but with cofibrations the \emph{positive} flat cofibrations. In this subsection, we will specialize the cotangent complex formalism to this setting, yielding a definition of $G$-equivariant topological André-Quillen (co)homology, and we will explain how it relates to the $G$-global theory developed in the previous section.

\begin{prop}
	The positive $G$-equivariant flat model structure on $G$-spectra satisfies all the assumptions for the construction of the the abstract cotangent complex, i.e.
	\begin{enumerate}
		\item It is a combinatorial stable model category.
		\item It is a symmetric monoidal model category and it satisfies both the monoid axiom and the commutative monoid axiom.
		\item Finite coproducts are homotopical.
	\end{enumerate}
	Moreover, the induced model structure on commutative monoids (i.e.~commutative $G$-ring spectra) is left proper.
	\begin{proof}
		By \cite[discussion after Theorem~4.8]{hausmann-equivariant}, the positive flat equivariant model structure is combinatorial. Moreover, it is stable as the usual equivariant flat model structure is so.

		By \cite[Subsection~6.1]{hausmann-equivariant}, the model structure is moreover symmetric monoidal and satisfies the monoid axiom, while the strong commutative monoid axiom is verified in Proposition~6.22 of \emph{op.~cit.}

		The third property is an instance of \cite[Proposition~4.2-(1)]{hausmann-equivariant}.

		Finally, to show that the induced model structure on commutative monoids is left proper, it suffices to verify the assumptions of Proposition~\ref{prop:alg-left-proper}.

		The positive flat model structure is left proper by \cite[Proposition~4.2-(4)]{hausmann-equivariant}, and more generally \emph{loc.~cit.} shows that for any cofibration $i$ pushouts along $X\smashp i$ are homotopy pushouts. To prove that filtered colimits in it are homotopical, it suffices to consider the usual equivariant flat model structure. By \cite[Lemma~A.2.4]{g-global} it then further suffices to prove the analogous statement for the equivariant flat \emph{level} model structure, where this is clear.

		The standard generating cofibrations are maps of cofibrant objects. Finally, the equivariant Flatness Theorem \cite[Proposition~6.2-(i)]{hausmann-equivariant} in particular shows that smashing with any fixed cofibrant object is homotopical. This completes the verification of the assumptions of Proposition~\ref{prop:alg-left-proper} and hence the proof that the transferred model structure on commutative monoids is left proper.
	\end{proof}
\end{prop}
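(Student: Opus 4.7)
The plan is to verify each of the listed properties by reducing to corresponding facts about the usual (non-positive) equivariant flat model structure, most of which are established in Hausmann's paper~\cite{hausmann-equivariant}. The key observation throughout is that the positive and ordinary equivariant flat model structures on $\cat{$\bm G$-Spectra}$ share the same class of weak equivalences and differ only in their cofibrations (namely, by the condition that the map be an isomorphism in level~$\varnothing$, cf.~Lemma~\ref{lemma:positive-vs-absolute}).

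First I would settle items~(1)--(3). Combinatoriality is in~\cite[discussion after Theorem~4.8]{hausmann-equivariant}, and stability follows from stability of the usual equivariant flat model structure since the homotopy categories agree (both being localizations of the underlying category at the same class of weak equivalences). The pushout-product axiom, the monoid axiom and the strong commutative monoid axiom are~\cite[Subsection~6.1 and Proposition~6.22]{hausmann-equivariant}; in particular, this implies the (non-strong) commutative monoid axiom. Homotopicality of finite coproducts is a special case of~\cite[Proposition~4.2-(1)]{hausmann-equivariant}.

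For the last claim---left properness of the transferred model structure on commutative monoids---I would invoke Proposition~\ref{prop:alg-left-proper} and verify its four hypotheses. Left properness of $\cat{$\bm G$-Spectra}$ itself and the fact that pushouts along maps of the form $X\smashp i$ for $i$ a cofibration are homotopy pushouts both appear in~\cite[Proposition~4.2-(4)]{hausmann-equivariant}. That filtered colimits are homotopical can be reduced to the analogous statement for the level model structure (via~\cite[Lemma~A.2.4]{g-global}), where it is immediate. The standard generating cofibrations of the positive $G$-equivariant flat model structure are maps between cofibrant objects by direct inspection. Finally, that $X\smashp\blank$ is homotopical for cofibrant $X$ is the equivariant Flatness Theorem~\cite[Proposition~6.2-(i)]{hausmann-equivariant}.

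There is no real obstacle here; the proposition is essentially a packaging lemma whose content lies entirely in already-cited work of Hausmann combined with Proposition~\ref{prop:alg-left-proper}. The only mild subtlety is keeping track of the passage between the positive and non-positive variants, but this is handled uniformly by Lemma~\ref{lemma:positive-vs-absolute} (noting also that all properties in question are insensitive to the addition of the condition ``isomorphism at $\varnothing$'' on cofibrations, since both sides of any pushout-product or smash construction with a positive cofibration are canonically isomorphic at level~$\varnothing$).
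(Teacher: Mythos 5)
Your proposal is correct and follows essentially the same route as the paper: the same citations to Hausmann for items (1)--(3), and the same verification of the hypotheses of Proposition~\ref{prop:alg-left-proper} (via Proposition~4.2, Lemma~A.2.4 of \cite{g-global}, cofibrancy of the sources of the generating cofibrations, and the Flatness Theorem) for left properness of commutative monoids. The only difference is cosmetic: you additionally appeal to (an equivariant analogue of) Lemma~\ref{lemma:positive-vs-absolute} to mediate between the positive and non-positive variants, which the paper leaves implicit.
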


We therefore obtain a $G$-equivariant cotangent complex for any map $R\to S$ of commutative $G$-ring spectra, that we will denote by $\Omega_{S/R}^\textup{$G$-equiv}$ to avoid confusion with the $G$-global cotangent complex $\smash{\Omega_{S/R}^\textup{$G$-gl}}$ of the previous section. Explicitly:

\begin{defi}
	Let $R$ be a commutative $G$-ring spectrum, and let $S$ be a commutative $R$-algebra. We define
	\begin{equation*}
		\Omega_{S/R}^\textup{$G$-equiv}\mathrel{:=} \cat{L}Q\cat{R}I(S\smashp_R^\cat{L}S)
	\end{equation*}
	and call it the \emph{$G$-equivariant cotangent complex}. Here $Q$ is the indecomposables functor as before, $I$ denotes the augmentation ideal, and all functors are derived with respect to the (lifts of the) \emph{equivariant} positive flat model structure.

	For any $S$-module $M$ we then define the \emph{$G$-equivariant topological André-Quillen (co)homology groups} as the true equivariant homotopy groups
	\begin{align*}
		\ul\TAQ_*^\textup{equiv}(S,R;M)&=\ul{\hat\pi}_*(\Omega_{S/R}\smashp_S^\cat{L} M)\\
		\ul\TAQ^*_\textup{equiv}(S,R;M)&=\ul{\hat\pi}_{-*}(\cat{R}F(\Omega_{S/R}, M)).
	\end{align*}
\end{defi}

\begin{rk}
	For $G=1$, this recovers Basterra's original construction of topological André-Quillen cohomology \cite{basterra-TAQ} (translated from $S$-modules to symmetric spectra).
\end{rk}

The results of Subsection~\ref{subsec:abstract-cotangent} then specialize to show that this is homotopy invariant, comes with a transitivity long exact sequence, and satisfies the base-change formula.

\begin{rk}\label{rk:proj-transfer}
	There is also a positive analogue of the equivariant \emph{projective} model structure \cite[Theorem 4.8]{hausmann-equivariant}, with the usual weak equivalences and with generating cofibrations the maps
	\[
		(\bm\Sigma(A,\blank)\smashp G_+)/H\smashp(\del\Delta^n\hookrightarrow\Delta^n)_+
	\]
	for all finite \emph{non-empty} $A$, $H\subset G\times\Sigma_A$, and $n\ge0$. While this does \emph{not} satisfy the commutative monoid axiom, the transferred model structures on commutative algebras and NUCAs still exist as the Crans-Kan transfer criterion for the positive projective model structures is a formal consequence from the one for the positive \emph{flat} model structures. As both $Z$ and $I$ are still right Quillen for these model structures by direct inspection, we can then equivalently define the equivariant cotangent complex by deriving with respect to the positive equivariant \emph{projective} model structures everywhere.
\end{rk}

\subsubsection{$G$-global vs.~$G$-equivariant topological André-Quillen cohomology}
In order to compare these construction to their $G$-global analogues, we first have to understand how to embed commutative $G$-ring spectra into $G$-global ultra-commutative ring spectra.

\begin{lemma}
	The identity constitutes a Quillen adjunction
	\[
		\cat{$\bm G$-Spectra}_\textup{$G$-equivariant pos.~proj.}\rightleftarrows
		\cat{$\bm G$-Spectra}_\textup{$G$-global pos.~flat}.
	\]
	\begin{proof}
		By Proposition~\ref{prop:equivariant-vs-global}, (acyclic) cofibrations in the projective $G$-equivariant model structure are also (acyclic) cofibrations in the $G$-global flat model structure. As the positive equivariant projective model structure has fewer cofibrations than the usual equivariant projective model structure, it then only remains to show that the generating cofibrations of the \emph{positive} equivariant projective model structure are \emph{positive} flat cofibrations. This follows immediately from Lemma~\ref{lemma:positive-vs-absolute} as the above generating cofibrations are clearly isomorphisms in degree $\varnothing$.
	\end{proof}
\end{lemma}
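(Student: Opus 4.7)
The plan is to reduce the claim to the already-established comparison between the non-positive equivariant projective and $G$-global flat model structures (Proposition~\ref{prop:equivariant-vs-global}), and then bookkeep the effect of the `positivity' condition on both sides via the characterization from Lemma~\ref{lemma:positive-vs-absolute}.

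First I would observe that the positive and non-positive versions of each of the two model structures in question have the same weak equivalences, so to obtain a Quillen adjunction it suffices to show that the identity preserves (acyclic) cofibrations from the positive equivariant projective model structure to the $G$-global positive flat model structure. By Lemma~\ref{lemma:positive-vs-absolute}, a map is a positive $G$-global flat cofibration precisely when it is a $G$-global flat cofibration whose evaluation at $\varnothing$ is an isomorphism. Proposition~\ref{prop:equivariant-vs-global} already tells us that every (acyclic) cofibration in the \emph{usual} equivariant projective model structure is an (acyclic) flat cofibration in the $G$-global flat model structure; since the positive equivariant projective model structure has no more cofibrations than the usual one, it is therefore enough to check that any positive equivariant projective (acyclic) cofibration is an isomorphism in level $\varnothing$.

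For the generating cofibrations, this is immediate from the explicit description: each generator is of the form $(\bm\Sigma(A,\blank)\smashp G_+)/H\smashp(\del\Delta^n\hookrightarrow\Delta^n)_+$ with $A\ne\varnothing$, and $\bm\Sigma(A,\varnothing)=*$ whenever $A\ne\varnothing$, so evaluation at $\varnothing$ gives the identity on the basepoint. A standard small-object-argument cell-induction argument then propagates this to all cofibrations, since evaluation at $\varnothing$ commutes with colimits and pushouts of isomorphisms are isomorphisms. For acyclic cofibrations one could either invoke the positive analogue of Remark~\ref{rk:proj-transfer} (which provides a set of generating acyclic cofibrations for the positive equivariant projective model structure that are likewise isomorphisms at $\varnothing$) or simply observe that the above cell argument applies to any retract of a transfinite composition of pushouts of generators.

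The only mildly subtle point is the acyclic cofibration case, since the paper (as in Remark~\ref{rk:gen-acyclic-pos-flat}) does not pin down the generating acyclic cofibrations of the positive equivariant projective model structure explicitly. I expect this to be the main technical point, but it is resolved by the cell-induction observation above, together with the fact that \emph{every} positive equivariant projective cofibration has an isomorphism at $\varnothing$; an acyclic cofibration is in particular a cofibration, so the characterization of positive flat cofibrations via Lemma~\ref{lemma:positive-vs-absolute} then applies verbatim.
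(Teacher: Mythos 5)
Your argument is correct and takes essentially the same route as the paper: both reduce via Proposition~\ref{prop:equivariant-vs-global} to the claim that positive equivariant projective cofibrations are $G$-global flat cofibrations that are isomorphisms in level $\varnothing$, and then invoke Lemma~\ref{lemma:positive-vs-absolute}, your cell-induction step merely making explicit the saturation argument the paper leaves implicit. One small caveat: Remark~\ref{rk:proj-transfer} does not actually supply generating acyclic cofibrations for the positive equivariant projective structure, but your alternative observation---that an acyclic cofibration is in particular a cofibration, so the level-$\varnothing$ check and Lemma~\ref{lemma:positive-vs-absolute} apply verbatim, with Proposition~\ref{prop:equivariant-vs-global} providing the $G$-global acyclicity---is exactly the right fix and is what the paper's proof implicitly uses.
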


In particular, we have an exact adjunction
\[
	\mathcal{L}\colon\Ho(\cat{$\bm G$-Spectra}_\textup{$G$-equiv})\rightleftarrows\Ho(\cat{$\bm G$-Spectra}_\textup{$G$-global}):\!\forget
\]
of triangulated categories in which the right adjoint is a localization and $\mathcal L$ is therefore fully faithful; we call the objects in its essential image \emph{left induced}.

Using the transferred model structures (Remark~\ref{rk:proj-transfer}), the identity then also defines Quillen adjunctions between commutative monoids, yielding an adjunction
\begin{equation*}
	\mathcal{L}^\text{com}\colon\Ho(\cat{UCom}^\textup{$G$-equiv})\rightleftarrows
	\Ho(\cat{UCom}^\textup{$G$-gl}):\!{\forget}.
\end{equation*}
Here the right adjoint is still a localization, so that $\mathcal L^\text{com}$ is fully faithful; again, we call the objects in its essential image \emph{left induced}.

\begin{warn}
	The diagram
	\begin{equation*}
		\begin{tikzcd}
			\Ho(\cat{UCom}^\textup{$G$-equiv})\arrow[r, "\mathcal{L}^\text{com}"]\arrow[d, "\forget"'] & \Ho(\cat{UCom}^\textup{$G$-gl})\arrow[d, "\forget"]\\
			\Ho(\cat{$\bm G$-Spectra}_\textup{$G$-equiv})\arrow[r, "\mathcal{L}"']&\Ho(\cat{$\bm G$-Spectra}_\textup{$G$-gl})
		\end{tikzcd}
	\end{equation*}
	does \emph{not} commute, even for $G=1$.

	For example, the spectrum $X\mathrel{:=}\Sigma^\bullet_+I(\bm1,\blank)\cong\bm\Sigma(\bm1,\blank)\smashp S^1$ is cofibrant in the positive non-equivariant projective model structure, so the free commutative ring spectrum $\mathbb P(X)\mathrel{:=}\bigvee_{n\ge 0}\Sigma^\bullet_+I(\bm n,\blank)/\Sigma_n$ is cofibrant in the corresponding model structure on commutative monoids, hence represents a left induced commutative ring spectrum. However, the global stable homotopy type represented by its underlying spectrum is \emph{not} left induced: for example, it contains $\Sigma^\bullet_+I(\bm2,\blank)/\Sigma_2$ as a retract, which is non-equivariantly but not globally weakly equivalent to $\mathcal L\Sigma^\infty_+B\Sigma_2$.
\end{warn}

\begin{constr}
	Let $R$ be a commutative $G$-ring spectrum. In the same way as before, we get an adjunction $\Ho(\cat{Mod}_R^\text{$G$-equiv})\rightleftarrows\Ho(\cat{Mod}_R^\text{$G$-gl})$. We will now explain how we can modify this to instead obtain an adjunction that lands in modules over $\mathcal L^\text{com}R$.

	For this note that on the pointset level, $\mathcal L^\text{com}R$ is simply a cofibrant replacement in the positive projective equivariant model structure, and in particular it comes with an equivariant weak equivalence $\epsilon\colon\mathcal L^\text{com}R\to R$ (modelling the counit of the derived adjunction). We can therefore consider the composite
	\begin{equation*}
		\mathcal L_R\colon\Ho(\cat{Mod}^\text{$G$-equiv}_R)\xrightarrow{\epsilon^*}
		\Ho(\cat{Mod}^\text{$G$-equiv}_{\mathcal L^\text{com}R})\xrightarrow{\mathcal L}\Ho(\cat{Mod}^\text{$G$-gl}_{\mathcal L^\text{com}R}).
	\end{equation*}
\end{constr}

\begin{lemma}
	The functor $\epsilon^*$ is an equivalence. In particular, $\mathcal L_R$ is fully faithful.
	\begin{proof}
		The functor $\epsilon^*$ is conservative and it has a left adjoint $\epsilon_!= \cat{L}(R\smashp_{\mathcal L^\text{com}R}\blank)$. It therefore suffices that the unit $X\to\epsilon^*\epsilon_!X$ is an isomorphism for every $X$. As both adjoints preserve arbitrary coproducts, it suffices to check this on generators of the triangulated category of $R$-modules.

		The homotopy category of $G$-spectra is generated by the objects $\Sigma^\infty_+G/H$ for $H\subset G$ by \cite[Proposition~4.9-(3)]{hausmann-equivariant}, so $\smash{\Ho(\cat{Mod}_{R}^\text{$G$-equiv})}$ is generated by the objects $R\smashp\Sigma^\infty_+G/H$ by the same argument as in the proof of Proposition~\ref{prop:t-structure-mod}. But on these the unit can be identified with the map $\mathcal L^\text{com}R\smashp\Sigma^\infty_+G/H\to R\smashp\Sigma^\infty_+G/H$, so the claim follows by flatness of $\Sigma^\infty_+G/H$.
	\end{proof}
\end{lemma}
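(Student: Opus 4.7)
The plan is to treat this as a ``Morita invariance under weak equivalence of ring spectra'' statement: since $\epsilon\colon\mathcal L^\text{com}R\to R$ is a weak equivalence of commutative $G$-ring spectra, restriction should be an equivalence on derived module categories. The ``in particular'' assertion then drops out formally once we know this, since $\mathcal L_R$ factors as $\epsilon^*$ followed by the equivalence $\cat L\colon\Ho(\cat{Mod}_{\mathcal L^\textup{com}R}^\textup{$G$-equiv})\hookrightarrow\Ho(\cat{Mod}_{\mathcal L^\textup{com}R}^\textup{$G$-gl})$, whose full faithfulness is proved just as for spectra (the argument of the previous displayed adjunction localises to modules).

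First I would note that $\epsilon^*$ is conservative: fibrations and weak equivalences in either module category are detected on underlying $G$-spectra, and $\epsilon^*$ is the identity on those. Combined with the existence of a left adjoint $\epsilon_!\mathrel{:=}\cat{L}(R\smashp_{\mathcal L^\textup{com}R}\blank)$, this reduces the claim to checking that the unit $\eta_X\colon X\to\epsilon^*\epsilon_!X$ is an isomorphism in $\Ho(\cat{Mod}_{\mathcal L^\textup{com}R}^\textup{$G$-equiv})$. Both $\epsilon_!$ and $\epsilon^*$ are exact and preserve arbitrary coproducts (as always, the right adjoint preserves coproducts because module coproducts are underlying-spectrum coproducts, which are homotopical), so the full subcategory on which $\eta$ is an isomorphism is localizing; hence it suffices to verify this on a set of compact generators.

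For generators I would invoke the standard fact that $\Ho(\cat{$\bm G$-Spectra}_\textup{$G$-equiv})$ is compactly generated by the $\Sigma^\infty_+G/H$ for $H\subset G$ (Hausmann), and then mimic the argument from Proposition~\ref{prop:t-structure-mod}: the adjunction $\mathcal L^\textup{com}R\smashp^{\cat L}\blank\dashv\mathbb U$ has conservative right adjoint preserving coproducts, so $\Ho(\cat{Mod}_{\mathcal L^\textup{com}R}^\textup{$G$-equiv})$ is compactly generated by the free modules $\mathcal L^\textup{com}R\smashp\Sigma^\infty_+G/H$. On such a generator the unit $\eta$ is represented, after using flatness of $\Sigma^\infty_+G/H$ to strip off the derived base change, by the map $\mathcal L^\textup{com}R\smashp\Sigma^\infty_+G/H\xrightarrow{\epsilon\smashp\id}R\smashp\Sigma^\infty_+G/H$, which is a $G$-equivariant weak equivalence.

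The only point that needs care---and what I expect to be the main obstacle---is justifying the identification of $\epsilon_!(\mathcal L^\textup{com}R\smashp\Sigma^\infty_+G/H)$ with the underived smash product $R\smashp\Sigma^\infty_+G/H$ without cofibrantly replacing $R$ over $\mathcal L^\textup{com}R$. This is precisely where flatness of the orbit suspension spectrum enters: by the equivariant Flatness Theorem, smashing with the flat spectrum $\Sigma^\infty_+G/H$ preserves weak equivalences of spectra, so the canonical comparison from the derived tensor product is an equivalence, and the computation of the unit collapses to the above weak equivalence.
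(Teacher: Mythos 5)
Your proposal is correct and follows essentially the same route as the paper: conservativity of $\epsilon^*$ plus the left adjoint $\epsilon_!=\cat{L}(R\smashp_{\mathcal L^\textup{com}R}\blank)$, reduction of the unit to the compact generators $\mathcal L^\textup{com}R\smashp\Sigma^\infty_+G/H$ obtained as in Proposition~\ref{prop:t-structure-mod} from Hausmann's generators $\Sigma^\infty_+G/H$, and identification of the unit there with $\epsilon\smashp\Sigma^\infty_+G/H$, using flatness of $\Sigma^\infty_+G/H$ to collapse the derived base change. Your explicit flagging of the derived-versus-underived smash issue is exactly the point the paper handles implicitly via the Flatness Theorem, so there is no gap.
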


Using this, we can now make precise in which sense the $G$-equivariant cotangent complex can be recovered from the $G$-global one via passing to left-induced $G$-global ultra-commutative ring spectra:

\begin{prop}
	Let $R\to S$ be a map of commutative $G$-ring spectra. Then there exists a canonical isomorphism
	\[
		\mathcal L_S\Omega_{S/R}^\textup{$G$-equiv}\cong\Omega_{\mathcal L^\textup{com}S/\mathcal L^\textup{com}R}^\textup{$G$-gl}
	\]
	in $\Ho(\cat{Mod}^\textup{$G$-gl}_{\mathcal L^\textup{com}S})$. In particular, $\Omega_{S/R}^\textup{$G$-equiv}$ and $\Omega_{\mathcal L^\textup{com}S/\mathcal L^\textup{com}R}^\textup{$G$-gl}$ become canonically isomorphic in $\Ho(\cat{Mod}^\textup{$G$-equiv}_{\mathcal L^\textup{com}S})$.

	\begin{proof}
		It suffices to prove the first statement; the second one will then follow by applying the forgetful functor from $G$-global $\mathcal L^\text{com} S$-modules to $G$-equivariant ones.

		By Corollary~\ref{cor:cotang-htpy-inv-R} we may assume that $R$ is cofibrant as a commutative $G$-ring spectrum (so that $\mathcal L^\text{com}R=R$). Using Corollary~\ref{cor:cotang-htpy-inv-S} and the definition of $\mathcal L^\text{com}$ and $\mathcal L_S$ we are then further reduced to showing that $\mathcal L\Omega_{S/R}^\textup{$G$-equiv}\cong\Omega^\text{$G$-gl}_{S/R}$ as $G$-global $S$-modules whenever $R\to S$ is a cofibration.

		We now have a commutative diagram
		\begin{equation*}\hskip-39.98pt\hfuzz=39.98pt
			\begin{tikzcd}[cramped]
				\Ho(\cat{UCom}_R^\textup{$G$-gl}/S)\arrow[d,"\forget"']\arrow[from=r, "\;\forget"'] &[1em] \arrow[d,"\forget"']\Ho(\cat{UCom}_S^\textup{$G$-gl}/S)\arrow[from=r, "\Ho(K)"',"\simeq"] & \arrow[d,"\forget"]\Ho(\cat{NUCA}_S^\textup{$G$-gl})\arrow[from=r,"\Ho(Z)"'] & \Ho(\cat{Mod}_S^{\textup{$G$-gl}})\arrow[d,"\forget"]\\
				\Ho(\cat{UCom}_R^\textup{$G$-equiv}/S)\arrow[from=r, "\;\forget"] &[1em] \Ho(\cat{UCom}_S^\textup{$G$-equiv}/S)\arrow[from=r, "\Ho(K)","\simeq"'] & \Ho(\cat{NUCA}_S^\textup{$G$-equiv})\arrow[from=r,"\Ho(Z)"] & \Ho(\cat{Mod}_S^{\textup{$G$-equiv}})
			\end{tikzcd}
		\end{equation*}
		coming from the analogous diagram of commuting homotopical functors. Passing to left adjoints yields a natural isomorphism ${\Ab_{S/R}^\text{$G$-gl}}\circ{\cat{L}\id}\cong{\mathcal L}\circ{\Ab_{S/R}^\text{$G$-equiv}}$, and chasing through the (positively projectively cofibrant) object $\id_S$ of $\cat{UCom}_R^\textup{$G$-equiv}/S$ yields the claim.
	\end{proof}
\end{prop}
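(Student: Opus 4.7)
The natural approach is to realize the cotangent complex as the result of applying a composite of three derived adjoint functors to the identity object $\id_S$, and then to show that all three of these adjunctions commute with the comparison $\mathcal L\dashv\forget$ between the $G$-equivariant and $G$-global setups. First, the second statement of the proposition follows from the first by applying the forgetful functor $\Ho(\cat{Mod}^\textup{$G$-gl}_{\mathcal L^\textup{com}S})\to\Ho(\cat{Mod}^\textup{$G$-equiv}_{\mathcal L^\textup{com}S})$, since $\forget\circ\mathcal L_S\cong\id$ by full-faithfulness of $\mathcal L_S$. For the first, I would apply Corollaries~\ref{cor:cotang-htpy-inv-R} and~\ref{cor:cotang-htpy-inv-S} to reduce to the case where $R\to S$ is a cofibration between cofibrant commutative $G$-ring spectra in the positive projective equivariant model structure, so that $\mathcal L^\textup{com}R$ and $\mathcal L^\textup{com}S$ may be modelled by $R$ and $S$ themselves. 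The claim then reduces to producing a natural isomorphism $\mathcal L(\Omega_{S/R}^\textup{$G$-equiv})\cong\Omega_{S/R}^\textup{$G$-gl}$ in $\Ho(\cat{Mod}^\textup{$G$-gl}_S)$.

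The heart of the argument is the observation that the three right adjoint functors dualizing the pieces of $\Ab_{S/R}=\cat{L}Q\circ\cat{R}I\circ\cat{L}(\blank\smashp_RS)$---namely, the zero-multiplication inclusion $Z\colon\cat{Mod}_S\to\cat{NUCA}_S$, the unitalization $K\colon\cat{NUCA}_S\to\cat{UCom}_S/S$, and the base-change forgetful functor $\cat{UCom}_S/S\to\cat{UCom}_R/S$---are all given by identical pointset-level formulas in the $G$-equivariant and $G$-global settings, are all homotopical (by Proposition~\ref{prop:aug-ideal-Quillen-equivalence}), and strictly commute with the forgetful functors from $G$-global to $G$-equivariant data (since these latter functors are pointset identities as well). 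This produces a strictly commutative diagram of right adjoints between the corresponding homotopy categories in both setups, and passing to left adjoints via the mate correspondence yields a natural isomorphism $\Ab^\textup{$G$-gl}_{S/R}\circ\mathcal L^\textup{com}\cong\mathcal L\circ\Ab^\textup{$G$-equiv}_{S/R}$. Evaluating this at the positively projectively cofibrant object $\id_S\in\Ho(\cat{UCom}_R^\textup{$G$-equiv}/S)$ then gives the desired isomorphism.

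The main technical obstacle is verifying that the comparison $\mathcal L\dashv\forget$ at the level of spectra really lifts to Quillen adjunctions on each of $\cat{Mod}_S$, $\cat{NUCA}_S$, $\cat{UCom}_S/S$, and $\cat{UCom}_R/S$ in a way compatible with $Z$, $K$, and the base-change forgetful functor, so that the mate correspondence genuinely applies at each stage. This should follow from both the positive projective equivariant and positive flat global model structures on $\cat{$\bm G$-Spectra}$ satisfying the commutative monoid and monoid axioms, but some care is required, especially in handling the passage to slice categories and in confirming that $K$ remains a Quillen equivalence in each setting so that $\cat{R}I$ and $\cat{L}K$ can be interchanged when computing the abelianization.
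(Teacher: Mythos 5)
Your proposal takes essentially the same route as the paper: reduce via Corollaries~\ref{cor:cotang-htpy-inv-R} and~\ref{cor:cotang-htpy-inv-S} to a cofibration $R\to S$ of cofibrant commutative ring spectra, observe that the right adjoints $Z$, $K$, and the base-change forgetful functor are given by identical pointset formulas in both settings and strictly commute with the (pointset-identity) forgetful functors from global to equivariant data, pass to left adjoints by the mate correspondence, and evaluate at the positively projectively cofibrant object $\id_S$. The only caveat concerns your final paragraph: the positive projective \emph{equivariant} model structure does not satisfy the commutative monoid axiom, but the transferred model structures on commutative algebras and NUCAs nevertheless exist and $Z$ and $I$ remain right Quillen for them (Remark~\ref{rk:proj-transfer}), which is what actually makes the comparison of Quillen adjunctions, and hence your mate argument, go through.
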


\begin{cor}
	Let $H\subset G$ be any subgroup and let $M\in\Ho(\cat{Mod}^\textup{$G$-equiv}_S)$. Then we have a canonical isomorphism
	\begin{equation*}
		\ul\TAQ_*^H(S,R;M)\cong \ul\TAQ_*^{H\hookrightarrow G}(\mathcal L^\textup{com} S,\mathcal L^\textup{com} R;\mathcal L_S M)
	\end{equation*}
	between the equivariant and global topological André-Quillen homology groups.
	\begin{proof}
		We have canonical identifications
		\begin{align*}
			\mathcal L_S(\Omega_{S/R}^\textup{$G$-equiv}\smashp_S^\cat{L} M)
			=\mathcal L\epsilon^*(\Omega_{S/R}^\textup{$G$-equiv}\smashp_S^\cat{L} M)
			&\cong\mathcal L(\epsilon^*\Omega_{S/R}^\textup{$G$-equiv}\smashp_{\mathcal L^\textup{com}S}^\cat{L}\epsilon^*M)\\
			&\cong
			\mathcal L_S\Omega_{S/R}^\textup{$G$-equiv}\smashp_{\mathcal L^\textup{com}S}^\cat{L}\mathcal L_SM
		\end{align*}
		where the final isomorphism comes from the equality of underived functors, while the second one follows from the fact that the inverse $\epsilon_!$ of $\epsilon^*$ is induced by a strong symmetric monoidal functor. Combining this with the previous proposition, we obtain $\mathcal L_S(\Omega_{S/R}^\textup{$G$-equiv}\smashp_S^\cat{L} M)\cong\Omega_{\mathcal L^\textup{com}S/\mathcal L^\textup{com}R}^\textup{$G$-gl}\smashp^\cat{L}_{\mathcal L^\textup{com}S}\mathcal L_SM$. Forgetting structure, we conclude that the underlying $G$-equivariant spectra of $\Omega^\textup{$G$-gl}_{\mathcal L^\textup{com}S/\mathcal L^\textup{com}R}\smashp^\cat{L}_{\mathcal L^\textup{com}S}\mathcal L_SM$ and $\Omega^\textup{$G$-equiv}_{S/R}\smashp^\cat{L}_SM$ agree. The claim follows by taking homotopy groups.
	\end{proof}
\end{cor}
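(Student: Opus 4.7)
The plan is to combine the identification of cotangent complexes from the preceding proposition with a monoidality statement for $\mathcal L_S$ against the coefficient module $M$, and then match up the two flavours of true homotopy groups.

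First I would establish that $\mathcal L_S = \mathcal L \circ \epsilon^*$ is symmetric monoidal (up to natural isomorphism) with respect to the derived relative smash products, i.e.~for any $X, M \in \Ho(\cat{Mod}^\textup{$G$-equiv}_S)$ there is a canonical isomorphism $\mathcal L_S(X \smashp_S^\cat{L} M) \cong \mathcal L_S X \smashp^\cat{L}_{\mathcal L^\textup{com} S} \mathcal L_S M$. Both components can be handled separately: the identity-on-underlying-spectra adjunction defining $\mathcal L$ is strong symmetric monoidal on the pointset level (relative smash products are defined via coequalizers in underlying spectra), so by the Flatness Theorem (\Cref{prop:flatness-theorem}) it derives to a symmetric monoidal functor on homotopy categories; while $\epsilon^*$ is symmetric monoidal because its inverse $\epsilon_! = \cat L(R \smashp_{\mathcal L^\textup{com} R} \blank)$ is induced by a strong symmetric monoidal left Quillen functor, as already observed in the paragraph preceding the statement.

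Combining this monoidality with the previous proposition yields
\begin{equation*}
\mathcal L_S\bigl(\Omega^\textup{$G$-equiv}_{S/R} \smashp^\cat L_S M\bigr) \cong \mathcal L_S\Omega^\textup{$G$-equiv}_{S/R} \smashp^\cat L_{\mathcal L^\textup{com} S} \mathcal L_S M \cong \Omega^\textup{$G$-gl}_{\mathcal L^\textup{com} S/\mathcal L^\textup{com} R} \smashp^\cat L_{\mathcal L^\textup{com} S} \mathcal L_S M
\end{equation*}
as $G$-global $\mathcal L^\textup{com} S$-modules, and hence as $G$-global spectra after forgetting.

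It then remains to translate this isomorphism into an identification of homotopy groups. The key observation is that the underlying $G$-spectrum of $\mathcal L_S Y$ agrees (up to a $G$-equivariant weak equivalence, coming from the cofibrant replacement $\epsilon$ and the identity functor $\mathcal L$) with the underlying $G$-spectrum of $Y$. By the description of $\hat\pi_*^{H\hookrightarrow G}$ as the true $H$-equivariant homotopy groups of $(H\hookrightarrow G)^*$ applied to the underlying $G$-global spectrum (using \cite[Corollary~3.3.4]{g-global}), these agree with the true $H$-equivariant homotopy groups $\hat\pi_*^H$ of the corresponding $G$-equivariant spectrum. Applying this to the isomorphism above gives exactly the asserted identification of $\ul\TAQ_*^H$ and $\ul\TAQ_*^{H\hookrightarrow G}$.

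The main obstacle I anticipate is the bookkeeping in the third step: one must check that the $H$-equivariant true homotopy groups of a $G$-equivariant spectrum $Y$ agree naturally with the $(H\hookrightarrow G)$-global homotopy groups of $\mathcal L Y$, and that this identification is compatible with the isomorphism coming from the previous proposition (so that one really recovers the equivariant TAQ groups rather than some twisted variant). Once this identification is granted via the cited comparison result, the remaining arguments are essentially formal.
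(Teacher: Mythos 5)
Your proposal follows essentially the same route as the paper's proof: establish monoidality of $\mathcal L_S$ against the coefficient module (via monoidality of $\epsilon^*$ through its inverse $\epsilon_!$ and the fact that $\mathcal L$ is the identity on the pointset level), combine with the preceding proposition, and then pass to true homotopy groups using that $\hat\pi_*^{H\hookrightarrow G}$ of a $G$-global spectrum agrees with $\hat\pi_*^H$ of its underlying $G$-equivariant spectrum. The argument is correct; the only cosmetic difference is that you cite the Flatness Theorem where the paper simply appeals to the equality of the underived functors.
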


Similarly one shows:

\begin{cor}
	Let $H\subset G$ be any subgroup and let $M\in\Ho(\cat{Mod}^\textup{$G$-gl}_S)$. Then we have a canonical isomorphism
	\[
		\ul\TAQ^*_H(S,R;\forget M)\cong \ul\TAQ^*_{H\hookrightarrow G}(\mathcal L^\textup{com} S,\mathcal L^\textup{com}R; \epsilon^* M).
	\]
	In particular, if $N\in\Ho(\cat{Mod}^\textup{$G$-equiv}_S)$, then we have a canonical isomorphism
	\[
		\ul\TAQ^*_H(S,R; N)\cong \ul\TAQ^*_{H\hookrightarrow G}(\mathcal L^\textup{com} S,\mathcal L^\textup{com} R; \mathcal L_S N).\pushQED{\qed}\qedhere\popQED
	\]
\end{cor}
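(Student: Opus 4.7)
The plan is to parallel the approach of the preceding corollary, replacing the derived relative smash product with the derived internal hom $\cat{R}F$. First I would translate the equivariant side from $S$-modules to $\mathcal L^\textup{com}S$-modules using the equivalence $\epsilon^*\colon\Ho(\cat{Mod}^\textup{$G$-equiv}_S)\to\Ho(\cat{Mod}^\textup{$G$-equiv}_{\mathcal L^\textup{com}S})$ established just above. Since $\epsilon^*$ arises from restriction along a weak equivalence of ring spectra, it is compatible with derived internal hom, so
\[
\cat{R}F^\textup{equiv}_S(\Omega^\textup{equiv}_{S/R},\forget M)\cong\cat{R}F^\textup{equiv}_{\mathcal L^\textup{com}S}(\epsilon^*\Omega^\textup{equiv}_{S/R},\epsilon^*\forget M).
\]
Combined with the preceding Proposition, which identifies $\epsilon^*\Omega^\textup{equiv}_{S/R}$ with $\forget\Omega^\textup{gl}_{\mathcal L^\textup{com}S/\mathcal L^\textup{com}R}$ in $\Ho(\cat{Mod}^\textup{$G$-equiv}_{\mathcal L^\textup{com}S})$, this reduces the claim to comparing $\cat{R}F^\textup{equiv}_{\mathcal L^\textup{com}S}(\forget X,\forget Y)$ with $\forget\cat{R}F^\textup{gl}_{\mathcal L^\textup{com}S}(X,Y)$ for $G$-global $\mathcal L^\textup{com}S$-modules $X,Y$.

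The key technical step will be establishing this compatibility of derived internal hom with the forgetful functor. Since on the pointset level the internal homs in the two module categories are defined by the same end formula, the question reduces to finding cofibrant/fibrant replacements that derive both functors simultaneously. Here I would use that positive flat cofibrations and fibrations in the $G$-global model structure are also positive flat cofibrations and fibrations in the $G$-equivariant flat model structure---a direct consequence of Proposition~\ref{prop:equivariant-vs-global} combined with the characterization of positive (co)fibrations from Lemma~\ref{lemma:positive-vs-absolute}, which carries over to module categories by transfer. Thus a single $G$-global positive flat cofibrant replacement of $X$ together with a $G$-global positive flat fibrant replacement of $Y$ suffice to compute the derived internal hom on both sides, canonically identifying the underlying $G$-spectra.

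To finish, I would invoke the natural isomorphism $\ul{\hat\pi}^H_*(\forget Y)\cong\ul{\hat\pi}^{H\hookrightarrow G}_*(Y)$ for any $G$-global spectrum $Y$. This follows from the derived adjunction $\mathcal L\dashv\forget$ together with the identification $\mathcal L\Sigma^\infty_+G/H\simeq\Sigma^\bullet_+\mathcal I(H,\blank)\times_\iota G$ for $\iota\colon H\hookrightarrow G$ in $\Ho(\cat{$\bm G$-Spectra}^\textup{gl})$, which can be checked via Yoneda (both sides represent $Y\mapsto\hat\pi^H_0(\forget Y)$) or deduced from \cite[Corollary~3.3.4]{g-global}. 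Chaining the three isomorphisms yields the first displayed isomorphism of the corollary; the ``in particular'' clause then follows by setting $M\mathrel{:=}\mathcal L_S N$, observing that $\forget\mathcal L_S N\cong\epsilon^*N$ corresponds to $N$ under the equivalence $\epsilon^*$.

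The main obstacle will be the compatibility step of the second paragraph. Unlike the derived smash product case in the preceding corollary, where strong symmetric monoidality of $\epsilon_!$ and the Flatness Theorem made the comparison transparent, for the derived internal hom one must simultaneously balance cofibrant and fibrant replacements across two different model structures with distinct classes of fibrations and cofibrations. The saving grace is that the Quillen adjunctions $\id\colon\cat{$\bm G$-Spectra}_\textup{$G$-global flat}\rightleftarrows\cat{$\bm G$-Spectra}_\textup{$G$-equivariant flat}$ ensure the required containment of both cofibrations (left-to-right) and fibrations (right-to-left), allowing a single global replacement to serve as an equivariant replacement.
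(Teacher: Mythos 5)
Your outline has the right shape—translate along the equivalence $\epsilon^*$, feed in the preceding proposition, and finish with the identification $\ul{\hat\pi}{}^{H\hookrightarrow G}_*(Z)\cong\ul{\hat\pi}{}^H_*(\forget Z)$—but the step you yourself single out as the key one is false as stated, and your justification for it is incorrect. You claim that $\forget\cat{R}F(X,Y)\simeq\cat{R}F(\forget X,\forget Y)$ for \emph{arbitrary} $G$-global $\mathcal L^\textup{com}S$-modules $X,Y$, because a single $G$-global positive flat cofibrant/fibrant replacement allegedly computes both derived homs. The model-categorical containment goes the wrong way: the positive flat cofibrations agree in the two structures, but since the $G$-global weak equivalences are a \emph{smaller} class with the same cofibrations, the equivariant fibrations are contained in the global ones and not conversely (this is exactly the statement that in Proposition~\ref{prop:equivariant-vs-global} the identity is \emph{right} Quillen from the equivariant flat to the global flat structure). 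So a globally fibrant replacement of $Y$ is in general not equivariantly fibrant, and an equivariantly fibrant replacement can only be reached from $Y$ by an equivariant, not a global, weak equivalence; there is no common replacement computing both derived homs. More importantly, the statement itself is simply not true: already for $G=1$, $S=R=\mathbb S$, take $X=\Sigma^\bullet_+I(C_2,\blank)/C_2$ and $Y=\mathbb S$. Then $\pi_0$ of the underlying spectrum of the globally derived mapping spectrum is $[X,\mathbb S]^\textup{gl}\cong\hat\pi^0_{C_2}(\mathbb S)\cong A(C_2)\cong\mathbb Z^2$, whereas $\pi_0$ of the nonequivariantly derived mapping spectrum out of $\forget X\simeq\Sigma^\infty_+BC_2$ is the completed Burnside ring (Segal conjecture); the global mapping spectrum genuinely sees more. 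Were your step correct, the corollary would hold with the cotangent complex replaced by any module, which it does not.

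What makes the argument work—and what the paper's ``similarly'' points to—is precisely the hypothesis you never use: by the preceding proposition the first variable $\Omega^{\textup{$G$-gl}}_{\mathcal L^\textup{com}S/\mathcal L^\textup{com}R}\cong\mathcal L_S\Omega^{\textup{$G$-equiv}}_{S/R}$ is \emph{left induced}. For a left-induced first variable the compatibility does hold: represent $\mathcal LX_0$ by an equivariantly positively projectively cofibrant object (which is then also globally cofibrant, so the same pointset smash computes both derived relative smash products, giving $\mathcal LA\smashp^{\cat L}_{T}\mathcal LX_0\cong\mathcal L(A\smashp^{\cat L}_{T}X_0)$ for $T=\mathcal L^\textup{com}S$), and use the derived adjunction $\mathcal L\dashv\forget$ twice:
\[
	[A,\forget\cat{R}F(\mathcal LX_0,Y)]^\textup{equiv}\cong[\mathcal LA\smashp^{\cat L}_{T}\mathcal LX_0,Y]^\textup{gl}\cong[A\smashp^{\cat L}_{T}X_0,\forget Y]^\textup{equiv}\cong[A,\cat{R}F(X_0,\forget Y)]^\textup{equiv}.
\]
For the corollary it suffices to evaluate this at $A=\Sigma^k\Sigma^\infty_+G/H$, since, as you correctly note, $\ul{\hat\pi}{}^{H\hookrightarrow G}_*$ is corepresented by a left-induced object; combining with $\epsilon^*\Omega^{\textup{$G$-equiv}}_{S/R}\cong\forget\Omega^{\textup{$G$-gl}}$ and the compatibility of the equivalence $\epsilon^*$ with internal homs gives both displays. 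Finally, your deduction of the ``in particular'' clause by ``setting $M=\mathcal L_SN$'' does not typecheck: in the first display $M$ must be a $G$-global \emph{$S$}-module, while $\mathcal L_SN$ is an $\mathcal L^\textup{com}S$-module, and $\epsilon^*$ is not an equivalence on global module categories (as $\epsilon$ is only an equivariant weak equivalence); the second display is instead obtained by running the same argument with coefficient $\mathcal L_SN$, using $\forget\mathcal L_SN\cong\epsilon^*N$.
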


\subsubsection{The Postnikov tower for commutative $G$-ring spectra} We close this subsection by discussing equivariant analogues of the Hurewicz theorem and the Postnikov tower.

We start with the Hurewicz theorem. Since this is entirely analogous to the $G$-global statements proven above, we will be very brief here.

\begin{lemma}
	Let $R$ be a connective $G$-ring spectrum. Then $\Ho(\cat{Mod}_R^\textup{$G$-equiv})$ carries a t-structure with connective part $\Ho(\cat{Mod}_R^\textup{$G$-equiv})_{\ge0}$ given by those $R$-modules that are connective as $G$-equivariant spectra, and with coconnective part those spectra whose positive homotopy groups vanish.

	Moreover, if $R$ is commutative, then the derived smash product restricts to
	\[
		\Ho(\cat{Mod}_R^\textup{$G$-equiv})_{\ge m}\times\Ho(\cat{Mod}_R^\textup{$G$-equiv})_{\ge n}\to\Ho(\cat{Mod}_R^\textup{$G$-equiv})_{\ge m+n}
	\]
	for all $m,n\in\mathbb Z$.
	\begin{proof}
		We will argue for $R=\mathbb S$. The general case then follows from this by the same arguments as in the proofs of Propositions~\ref{prop:t-structure-mod} and~\ref{prop:t-structure-smash}.

		For the first statement, we recall once more from \cite[Proposition~4.9-(3)]{hausmann-equivariant} that the triangulated category $\Ho(\cat{$\bm G$-Spectra}_\textup{$G$-equiv})$ is compactly generated with generators the objects $\Sigma^\infty_+G/H$ for $H\subset G$. Thus, \cite[Theorem~A.1]{t-struc-compact} yields a t-structure with connective part the subcategory generated by the $\Sigma^\infty_+G/H$ under coproducts, supensions, and extensions. Arguing as in the proof of Proposition~\ref{prop:t-structure-mod} we then see that this t-structure admits the above description.

		For the second statement we may reduce as before to proving that the derived smash product sends the generators to connective spectra. As suspension spectra are flat, the derived smash product is given by the non-derived smash product and we immediately compute $\Sigma^\infty_+G/H\smashp\Sigma^\infty_+G/K\cong \Sigma^\infty_+(G/H\times G/K)$, which is evidently connective.
	\end{proof}
\end{lemma}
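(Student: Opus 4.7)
The plan is to follow the blueprint of Propositions~\ref{prop:t-structure-mod} and~\ref{prop:t-structure-smash} in the $G$-equivariant setting. First I would reduce to the case $R=\mathbb S$ exactly as in Proposition~\ref{prop:t-structure-mod}: the free-forgetful adjunction $\cat{L}(R\smashp\blank)\dashv\mathbb U$ is exact with conservative right adjoint, and coproducts in $\cat{Mod}_R^\textup{$G$-equiv}$ are homotopical (they are so in $\cat{$\bm G$-Spectra}$ and $\mathbb U$ preserves them), so $\mathbb U$ preserves coproducts and therefore the left adjoint preserves compact objects. A set of compact generators for $\Ho(\cat{Mod}_R^\textup{$G$-equiv})$ is then given by $R\smashp^{\cat L}\Sigma^\infty_+G/H$ for $H\subset G$, and the characterizations of connective and coconnective objects in terms of (derived) module-theoretic hom sets transport along adjunction to the corresponding statements for the underlying $G$-spectra.

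For the base case $R=\mathbb S$, I would invoke Hausmann's Proposition~4.9-(3) to produce a set of compact generators $\{\Sigma^\infty_+G/H : H\subset G\}$ for $\Ho(\cat{$\bm G$-Spectra}_\textup{$G$-equiv})$, and then appeal to \cite[Theorem~A.1]{t-struc-compact} to obtain a t-structure whose connective part $\Ho(\cat{$\bm G$-Spectra}_\textup{$G$-equiv})_{\ge0}$ is the smallest subcategory closed under coproducts, suspensions, and extensions containing the generators. To match this with the description by vanishing homotopy groups, I would run the same two-step argument as in Proposition~\ref{prop:t-structure-mod}: an object $X$ lies in the coconnective part iff $[\Sigma T,X]=0$ for all $T$ in the connective part; specializing to $T=\Sigma^{k}\Sigma^\infty_+G/H$ and using the adjunction $\smash{[\Sigma^{k+1}\Sigma^\infty_+G/H,X]\cong\hat\pi_{k+1}^H X}$ gives one inclusion, while the class of $T$ with $[\Sigma^kT,X]=0$ for all $k>0$ is closed under coproducts, suspensions, and extensions, giving the reverse. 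The identification of the connective part then follows by the standard argument: if $X$ has vanishing negative homotopy groups, the truncation triangle $X_{\ge0}\to X\to X_{\le-1}$ forces $X_{\le-1}=0$ since $X_{\le-1}$ is coconnective with vanishing negative homotopy groups.

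For the multiplicative part, the derived smash product over $R$ comes from a left Quillen bifunctor (Proposition~\ref{prop:white-assumptions} applied in the equivariant setting, or directly Hausmann's flat equivariant model structure), hence is exact and coproduct-preserving in each variable. Arguing as in Proposition~\ref{prop:t-structure-smash}, this reduces the claim to connectivity of smash products of the chosen generators. Again I would reduce to $R=\mathbb S$ (using the exact adjunction and the fact that $R\smashp^{\cat L}\blank$ preserves coproducts, suspensions, and extensions), where the computation is immediate: $\Sigma^\infty_+G/H$ is flat, so $\Sigma^\infty_+G/H\smashp^{\cat L}\Sigma^\infty_+G/K\cong\Sigma^\infty_+(G/H\times G/K)$, manifestly connective.

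The only step requiring genuine care is the identification of the abstract connective part produced by \cite[Theorem~A.1]{t-struc-compact} with the concrete one defined by vanishing of negative homotopy groups, but this is exactly the formal Yoneda-style argument already carried out in Proposition~\ref{prop:t-structure-mod} and transports verbatim once the generators are in place; no new geometric input is needed.
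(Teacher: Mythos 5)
Your proposal is correct and follows essentially the same route as the paper: reduce to $R=\mathbb S$ via the exact free-forgetful adjunction, use Hausmann's compact generators $\Sigma^\infty_+G/H$ together with \cite[Theorem~A.1]{t-struc-compact}, identify the (co)connective parts by the same Yoneda-style argument as in Proposition~\ref{prop:t-structure-mod}, and check connectivity of smash products on generators via flatness and $\Sigma^\infty_+G/H\smashp\Sigma^\infty_+G/K\cong\Sigma^\infty_+(G/H\times G/K)$. The only difference is that you spell out the identification argument which the paper merely cites, so there is nothing to add.
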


\begin{thm}[Equivariant Hurewicz Theorem]
	Let $R$ be a connective commutative $G$-ring spectrum, and let $\eta\colon R\to S$ be an $n$-equivalence for some $n\ge1$. Then $\Omega_{S/R}^\textup{$G$-equiv}$ is $n$-connected, the factorization of the universal derivation $d_{S/R}$ through $C(\eta)$ is unique, and it induces an isomorphism $\smash{\ul{\hat\pi}_{n+1}(C(\eta))\cong\ul{\hat\pi}_{n+1}(\Omega^\textup{$G$-equiv}_{S/R})}$.
	\begin{proof}
		With the previous lemma at hand, we can copy the proofs of Lemma~\ref{lemma:Q-connectivity} and Theorem~\ref{thm:Hurewicz} verbatim.
	\end{proof}
\end{thm}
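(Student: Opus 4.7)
The plan is to follow the proof of Theorem~\ref{thm:Hurewicz} almost verbatim, once we establish the equivariant analogue of Lemma~\ref{lemma:Q-connectivity}. The only substantive ingredient needed from the equivariant world is the t-structure on $\Ho(\cat{Mod}_R^\textup{$G$-equiv})$ together with the multiplicativity of connectivity under the derived relative smash product, both of which were just recorded in the preceding lemma. Since the entire argument in Subsection~\ref{subsec:abstract-cotangent} constructing the cotangent complex, the universal derivation $d_{S/R}$, and the adjunction between abelianization and square-zero extensions works in any combinatorial stable symmetric monoidal model category satisfying the monoid and commutative monoid axioms with homotopical finite coproducts, all the formal preliminaries of the $G$-global argument transfer without modification.

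First I would establish the equivariant counterpart of Lemma~\ref{lemma:Q-connectivity}: for $R$ a connective commutative $G$-ring spectrum and $J$ a cofibrant $n$-connected non-unital commutative $R$-algebra with $n\ge 0$, both $\cat{L}Q(J)$ is $n$-connected and the adjunction unit $J\to \cat{L}Q(J)$ induces an isomorphism on $\ul{\hat\pi}_k$ for $n+1\le k\le 2n+1$. This is immediate from the defining cofiber sequence $J\smashp_R^{\cat L} J\to J\to \cat{L}Q(J)$ combined with the previous lemma, which forces $J\smashp_R^{\cat L} J$ to be $(2n+1)$-connected; the long exact sequence in $\ul{\hat\pi}_*$ then concludes.

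Next I would copy the body of the proof of Theorem~\ref{thm:Hurewicz}. After cofibrant replacement, assume $S$ is a cofibrant commutative $R$-algebra, and consider the diagram
\[
\begin{tikzcd}
R \arrow[r, "\eta"] \arrow[d, "\eta"'] & S \arrow[r] \arrow[d] & C(\eta)\arrow[d, "\iota"]\\
S \arrow[r] & S\smashp_R S \arrow[r] & S\smashp_R C(\eta)
\end{tikzcd}
\]
whose rows are cofiber sequences in $\cat{Mod}_R^\textup{$G$-equiv}$. By Proposition~\ref{prop:aug-ideal-Quillen-equivalence} applied to the equivariant model structure, the counit $S\vee\cat{R}I(S\smashp_R S)\to S\smashp_R S$ is an equivalence, so comparing cofibers yields $S\smashp_R C(\eta)\simeq \cat{R}I(S\smashp_R S)$. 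Then the composite
\[
C(\eta)\xrightarrow{\iota} S\smashp_R C(\eta)\simeq \cat{R}I(S\smashp_R S)\xrightarrow{q} \Omega_{S/R}^\textup{$G$-equiv}
\]
represents the factorization of $d_{S/R}$ through $C(\eta)$ (unwinding the definition of the universal derivation exactly as in the $G$-global case). Since $C(\eta)$ is $n$-connected and $S$ is connective, the equivariant t-structure lemma forces $S\smashp_R C(\eta)$ to be $n$-connected with $\iota$ an isomorphism on $\ul{\hat\pi}_{n+1}$, while $q$ enjoys the same property by the first step. This simultaneously proves $n$-connectedness of $\Omega_{S/R}^\textup{$G$-equiv}$ and the claimed isomorphism $\ul{\hat\pi}_{n+1}(C(\eta))\cong\ul{\hat\pi}_{n+1}(\Omega_{S/R}^\textup{$G$-equiv})$.

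Finally, uniqueness of the factorization reduces to showing that $[S,\Omega_{S/R}^\textup{$G$-equiv}]\gets [C(\eta),\Omega_{S/R}^\textup{$G$-equiv}]$ is injective in $\Ho(\cat{Mod}_R^\textup{$G$-equiv})$, for which the cohomological long exact sequence of $R\to S\to C(\eta)$ reduces the claim to $[\Sigma R,\Omega_{S/R}^\textup{$G$-equiv}]=0$. By adjunction this group is $\hat\pi_1^{\id_G}(\Omega_{S/R}^\textup{$G$-equiv})$, which vanishes by the $n$-connectedness just established together with the hypothesis $n\ge 1$. There is no real obstacle to overcome here; the content of the theorem is entirely packaged in the equivariant t-structure lemma, and once that is in hand the argument is formally identical to the $G$-global one.
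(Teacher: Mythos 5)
Your proposal is correct and coincides with the paper's own argument, which literally consists of invoking the equivariant t-structure lemma and then copying the proofs of Lemma~\ref{lemma:Q-connectivity} and Theorem~\ref{thm:Hurewicz} verbatim; you have simply written out those details explicitly (equivariant $Q$-connectivity via the cofiber sequence $J\smashp_R^{\cat L}J\to J\to \cat{L}Q(J)$, the cofiber comparison identifying $S\smashp_R C(\eta)\simeq\cat{R}I(S\smashp_RS)$, and uniqueness via $[\Sigma R,\Omega^{\textup{$G$-equiv}}_{S/R}]\cong\hat\pi_1^{\id_G}(\Omega^{\textup{$G$-equiv}}_{S/R})=0$), exactly as in the $G$-global case.
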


Now we come to the Postnikov tower for connective $G$-equivariant commutative algebras.

\begin{constr}
	Let $R\to S$ be a map of commutative $G$-ring spectra and let $S$ be an $R$-module. Analogously to Construction~\ref{constr:extension-k-inv} we can interpret any $k\in\TAQ_G^n(S,R;M)$ as a morphism $\tilde k\colon\Omega_{S/R}^\text{$G$-equiv}\to\Sigma^nM$ of $S$-modules, or equivalently (via the adjunction between abelianization and square-zero extensions) as a map $k\colon S\to S\vee\Sigma^nM$ of commutative $G$-equivariant $R$-algebras over $S$.

	To such a class we can then associate a commutative $R$-algebra $S[k]$ over $S$ via the homotopy pullback
	\[
		\begin{tikzcd}
			S[k]\arrow[dr,phantom,"\lrcorner"{very near start}]\arrow[d]\arrow[r] & S\arrow[d,hook]\\
			S\arrow[r, "k"'] & S\vee\Sigma^nM.
		\end{tikzcd}
	\]
	As explained in the global case, the underlying $R$-module map $S[k]\to S$ can be equivalently viewed as the homotopy fiber of the composite $\tilde k\circ d_{S/R}\colon S\to\Sigma^nM$.
\end{constr}

If $R$ is a commutative $G$-ring spectrum, then its zeroth homotopy groups $\ul{\hat\pi}_0(R)$ acquire the structure of a so-called \emph{$G$-Tambara functor}. By \cite[Theorem~5.1]{ullman}, this construction admits a section, sending any $G$-Tambara functor $\ul T$ to a suitable Eilenberg-MacLane commutative $G$-ring spectrum $H\ul T$, refining the usual equivariant Eilenberg-MacLane spectra for Mackey functors.

\begin{thm}
	Let $R$ be a connective commutative $G$-ring spectrum. Then there exists an infinite tower $\dots\to R_2\to R_1\to R_0$ of commutative $R$-algebras together with classes $k_n\in\TAQ^{n+1}_G(R_n,R;H\ul{\hat\pi_{n+1}}(R))$ satisfying the following properties:
	\begin{enumerate}
		\item $R_0\cong H\ul{\hat\pi}_0(R)$ and $R_{n+1}\cong R_n[k_n]$.
		\item $\ul{\hat{\pi}}_k(R_n)=0$ for $k>n$
		\item The unit maps $R\to R_n$ are $(n+1)$-equivalences.
	\end{enumerate}
	\begin{proof}
		By \cite[Theorem~5.2]{ullman}, $H$ is right adjoint to $\ul{\hat\pi_0}$ when we restrict to \emph{connective} commutative $G$-ring spectra. In particular, we obtain a map $R\to R_0\mathrel{:=}H\ul{\hat\pi}_0(R)$ yielding the bottom stage of the tower.

		Now assume we have already constructed the tower up to $R_n$. In particular the unit $R\to R_n$ is an $(n+1)$-equivalence, and $\ul{\hat{\pi}}_k(R_n)=0$ for $k>n$. By the equivariant Hurewicz Theorem and the long exact sequence we therefore see that $\Omega_{R_n/R}^\text{$G$-equiv}$ is $(n+1)$-connected with
		\[
			\ul{\hat\pi}_{n+2}(\Omega_{R_n/R})\cong \ul{\hat\pi}_{n+2}(C(\eta)) \cong\ul{\hat\pi}_{n+1}(R).
		\]
		This isomorphism then defines a map of $R$-modules $\tilde k_n\colon\Omega_{R_n/R}\to\Sigma^{n+2}H\ul{\hat\pi}_{n+1}(R)$ corresponding to a class $k_n\in\TAQ_G^{n+1}(R_n,R;H\ul{\hat\pi}_{n+1}(R))$. The previous construction therefore yields an $R$-algebra $R_{n+1}=R_n[k_n]$ together with an $R$-algebra map $R_{n+1}\to R_n$, whose underlying $R$-module map exhibits $R_{n+1}$ as homotopy fiber of the composite $R_n\to\Omega_{R_n/R}^\text{$G$-equiv}\to\Sigma^{n+1}H\ul{\hat\pi}_{n+1}(R)$. The long exact sequence then shows that $R\to R_{n+1}$ is an $(n+2)$-equivalence and that $\ul{\hat\pi_{k}}(R_{n+1})=0$ for $k>n+1$, finishing the inductive step.
	\end{proof}
\end{thm}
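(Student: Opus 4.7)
The plan is to mimic the proof of Theorem~\ref{thm:Postnikov-towers} in the global setting, adapting each step to the $G$-equivariant situation using the tools already established: the equivariant Hurewicz theorem just above, Ullman's Eilenberg-MacLane construction for $G$-Tambara functors, the t-structure on $\Ho(\cat{Mod}^{\text{$G$-equiv}}_R)$, and the interpretation of TAQ classes as square-zero extensions. I would proceed by induction on $n$, constructing $R_n$ together with the map $R\to R_n$ and then, at the top of the tower, the class $k_n$ that produces $R_{n+1}$.

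For the base case, I would invoke Ullman's theorem (which gives $H$ as right adjoint to $\ul{\hat\pi}_0$ on connective commutative $G$-ring spectra) to obtain the unit $R\to R_0\mathrel{:=}H\ul{\hat\pi}_0(R)$. Since $\ul{\hat\pi}_0$ of this map is the identity and $R_0$ has trivial positive homotopy, this realizes $R\to R_0$ as a $1$-equivalence with the desired vanishing.

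For the inductive step, assume $R\to R_n$ has been built as an $(n+1)$-equivalence with $\ul{\hat\pi}_k(R_n)=0$ for $k>n$. The long exact sequence applied to $R\to R_n\to C(\eta_n)$ then identifies $\ul{\hat\pi}_{n+2}(C(\eta_n))\cong\ul{\hat\pi}_{n+1}(R)$. The equivariant Hurewicz theorem therefore yields that $\Omega^{\textup{$G$-equiv}}_{R_n/R}$ is $(n+1)$-connected with $\ul{\hat\pi}_{n+2}(\Omega^{\textup{$G$-equiv}}_{R_n/R})\cong\ul{\hat\pi}_{n+1}(R)$. Using this isomorphism and the fact that an $(n+1)$-connected $R_n$-module $X$ with prescribed lowest homotopy $M$ admits a canonical map to $\Sigma^{n+2}HM$ (factoring the Postnikov truncation), I obtain $\tilde k_n\colon\Omega^{\textup{$G$-equiv}}_{R_n/R}\to\Sigma^{n+2}H\ul{\hat\pi}_{n+1}(R)$, which corresponds under the adjunction between abelianization and square-zero extensions to a class $k_n\in\TAQ^{n+1}_G(R_n,R;H\ul{\hat\pi}_{n+1}(R))$. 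I then set $R_{n+1}\mathrel{:=}R_n[k_n]$.

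The remaining verification is straightforward: by the equivariant analogue of Construction~\ref{constr:extension-k-inv}, the underlying $R$-module map $R_{n+1}\to R_n$ can be identified with the homotopy fiber of $\tilde k_n\circ d_{R_n/R}\colon R_n\to\Sigma^{n+2}H\ul{\hat\pi}_{n+1}(R)$. The long exact sequence on homotopy then shows both that $R\to R_{n+1}$ is an $(n+2)$-equivalence and that $\ul{\hat\pi}_k(R_{n+1})=0$ for $k>n+1$, completing the induction. The main obstacles I anticipate are foundational rather than structural: first, one must check that Ullman's right adjoint behaves well enough (e.g.\ that its counit on a connective $R$ is really the first stage of the Postnikov tower, as opposed to merely some $\ul{\hat\pi}_0$-isomorphism), and second, one needs the canonical map $X\to\Sigma^{n+2}HM$ classifying the lowest homotopy group to exist as a genuinely equivariant refinement, i.e.\ inside $\Ho(\cat{Mod}^{\textup{$G$-equiv}}_{R_n})$ rather than only after forgetting to spectra. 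Both of these should follow from the t-structure formalism together with the Ullman adjunction, but they are the points that need genuine $G$-equivariant input rather than being transported formally from the global case.
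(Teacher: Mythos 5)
Your proposal is correct and follows essentially the same route as the paper: base case via Ullman's adjunction $\ul{\hat\pi}_0\dashv H$ on connective commutative $G$-ring spectra, inductive step via the equivariant Hurewicz theorem to produce $\tilde k_n\colon\Omega^{\textup{$G$-equiv}}_{R_n/R}\to\Sigma^{n+2}H\ul{\hat\pi}_{n+1}(R)$, the square-zero extension $R_{n+1}=R_n[k_n]$, and the long exact sequence of the fiber sequence to verify the connectivity claims. The two "foundational" worries you flag are handled in the paper exactly as you suggest: the map $\tilde k_n$ is simply defined in $\Ho(\cat{Mod}^{\textup{$G$-equiv}}_R)$ from the Hurewicz isomorphism using the t-structure/Eilenberg--MacLane formalism, so no further input is needed.
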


\subsection{(Co)Homology as a stabilization}
We now explain how $G$-global topological André-Quillen cohomology can be interpreted as a global stabilization:

\begin{thm}\label{thm:stabilization-aug-algebras}
	Let $R$ be a flat $G$-global ultra-commutative ring spectrum. Then all the global Quillen adjunctions
	\begin{equation}
	\begin{tikzcd}
	\glo{Mod}_R\arrow[r, "\Sigma^\infty", shift left] &\arrow[l, "\Omega^\infty", shift left] \glo{Sp}(\glo{Mod}_R) \arrow[r, "\glo{Sp}(\ul{Z})", shift right, swap] &[2em]\arrow[l, "\glo{Sp}(\ul{Q})", shift right, swap] \glo{Sp}(\glo{NUCA}_R) \arrow[r, "\glo{Sp}(\ul{K})", shift left] &[2em] \arrow[l, "\glo{Sp}(\ul{I})", shift left] \glo{Sp}(\glo{Comm}_R/R)
	\end{tikzcd}
	\end{equation}
	are global Quillen equivalences. In particular, the composite
	\begin{equation*}
		\glo{Mod}_R\xrightarrow{\;Z\;}\glo{NUCA}_R\xrightarrow{I^{-1}}\glo{Comm}_R/R
	\end{equation*}
	in $\cat{GLOBMOD}$ (which on homotopy categories gives the right adjoint to $\Ab_{R/R}$) is the universal map from a stable global model category to $\glo{Comm}_R/R$.
\end{thm}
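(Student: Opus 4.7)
The plan is to verify that each of the three displayed global Quillen adjunctions is a global Quillen equivalence; the universal property then follows formally from Theorem~\ref{thm:sp-is-stabilization} together with these identifications.

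The two outer adjunctions are essentially formal. Since $\glo{Mod}_R$ is itself stable by Corollary~\ref{cor:modules-stable}, Theorem~\ref{thm:stabilization-preserves-stable} directly gives that $\ul{\Sigma^\infty}\dashv\ul{\Omega^\infty}$ is a global Quillen equivalence between $\glo{Mod}_R$ and $\glo{Sp}(\glo{Mod}_R)$. For the rightmost adjunction, I would first show that the underlying adjunction $\ul K\colon\glo{NUCA}_R\rightleftarrows\glo{Comm}_R/R:\ul I$ is already a global Quillen equivalence, by applying the argument of Proposition~\ref{prop:aug-ideal-Quillen-equivalence} in each $H$-level. The hypotheses of that proposition are satisfied for our positive $G$-global model structures because stability holds (Corollary~\ref{cor:modules-stable}) and the monoid and strong commutative monoid axioms follow from Proposition~\ref{prop:white-assumptions} transferred across the adjunction $R\smashp\blank\dashv\mathbb U$. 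Proposition~\ref{prop:sp-induced-adjunction} then upgrades this to a global Quillen equivalence $\glo{Sp}(\ul K)\dashv\glo{Sp}(\ul I)$ after spectrification.

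The main obstacle is the middle adjunction $\glo{Sp}(\ul Z)\dashv\glo{Sp}(\ul Q)$. At the pointset level, the adjunction $\ul Q\dashv\ul Z$ is very far from being a global Quillen equivalence: the free NUCA on a module $M$ has the form $\bigvee_{n\geq 1}M^{\smashp_R n}/\Sigma_n$, so the higher symmetric powers obstruct $M\to \ul Q\ul{\mathbb P^{>0}}M$ from being an equivalence. The assertion is therefore that these higher symmetric powers become homotopically trivial after passing to spectrum objects in the $G$-global sense, which is the precise analogue of the Basterra--Mandell theorem \cite{basterra-mandell-stab}. To establish this, I would analyze the filtration of the pushout in~\Cref{rk:pushout-analysis-nucas} applied to the counit map on free NUCAs and reduce to showing that the spectrum objects associated to the extended powers $M^{\smashp_R n}_{h\Sigma_n}$ for $n\geq 2$ are contractible after stabilization; equivalently, that the higher pieces of the symmetric algebra split off and vanish in the stabilized setting. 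Carrying this through requires a careful $G$-global Blakers--Massey / filtered-colimit argument in combination with the free-quotient homotopicality of~\Cref{prop:free-quotient-spectra} and the commutation of $\ul\Sp$ with free-forgetful adjunctions from~\Cref{lemma:module-transferred}; the full execution is the content of Section~\ref{section:stab-nucas}.

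Granted all three equivalences, the universal property is immediate: by Theorem~\ref{thm:sp-is-stabilization}, the adjunction $\ul{\Sigma^\infty_+}\dashv\ul{\Omega^\infty}$ exhibits $\glo{Sp}(\glo{Comm}_R/R)$ as the global stabilization of $\glo{Comm}_R/R$, and composing the chain of equivalences identifies $\glo{Mod}_R$ with this stabilization. To identify the universal map on homotopy categories with (the right adjoint to) $\Ab_{R/R}$, one then combines the adjunctions in the definition of abelianization (\Cref{prop:abelianization-adj}): the composite $\ul Z\circ\ul I^{-1}$ in $\cat{GLOBMOD}$ induces $\cat{R}I\circ \cat{L}Q$ after deriving, which is by construction the right adjoint to $\cat{L}Q\circ\cat{R}I=\Ab_{R/R}$.
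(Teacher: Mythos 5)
Your treatment of the two outer adjunctions and of the universal property is the same as the paper's: stability of $\glo{Mod}_R$ (Corollary~\ref{cor:modules-stable}) together with Theorem~\ref{thm:stabilization-preserves-stable} for the left-hand adjunction, Proposition~\ref{prop:aug-ideal-Quillen-equivalence} applied levelwise plus Proposition~\ref{prop:sp-induced-adjunction} for the right-hand one, and Theorem~\ref{thm:sp-is-stabilization} for the universality statement. Where you diverge is the middle adjunction, which is the only substantive step of this particular proof. The paper does not attack $\ul Q\dashv\ul Z$ directly: it quotes Theorem~\ref{thm:stabilization-nucas} (the stabilized free--forgetful adjunction $\ul\Sp(\ul{\mathbb P^{>0}})\dashv\ul\Sp(\ul{\mathbb U})$ is a global Quillen equivalence, with proof deferred to Section~\ref{section:stab-nucas}) and then observes that $\ul Q\circ\ul{\mathbb P^{>0}}$ is already on the pointset level naturally isomorphic to the identity of $\glo{Mod}_R$, so that after stabilizing and deriving the middle adjunction is an equivalence by $2$-out-of-$3$. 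Your proposal instead runs a cell induction on the derived unit of $\ul\Sp(\ul Q)\dashv\ul\Sp(\ul Z)$ directly. That route can be made to work (since $Z$ is the identity on underlying objects it reflects weak equivalences, so only the derived unit matters, and on a free object $\ul\Sp(\ul{\mathbb P^{>0}})(Y)$ the unit is the collapse $\bigvee_{n\ge1}Y^{\diamond_R n}/\Sigma_n\to Y$), but as written it is only a sketch, and carrying it out would essentially duplicate the proof of Theorem~\ref{thm:stabilization-nucas} rather than use it; the $Q\circ\mathbb P^{>0}\cong\id$ observation is exactly what lets the paper avoid that duplication.

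Two points in your sketch of the hard input are also off target. The stable vanishing of the higher symmetric powers here is \emph{not} a Blakers--Massey/connectivity phenomenon: the key result (Theorem~\ref{thm:diamond-sp-sp-trivial}) says that the levelwise smash powers $X^{\diamond n}/\Sigma_n$ of spectrum objects are $G$-globally contractible for $n>1$, and its engine is the equivariant nullhomotopy of the diagonal $S^F\to S^{A\times F}$ for a free orbit $F$ (Proposition~\ref{prop:diamond-sp-trivial}), plus the external-smash flatness theorem, not a connectivity estimate. Moreover, the filtration of Remark~\ref{rk:pushout-analysis-nucas} produces \emph{strict} $\Sigma_n$-quotients, not the homotopy quotients (extended powers) you write; identifying the two uses freeness of the $\Sigma_n$-action on smash powers of \emph{positively} flat objects (Lemma~\ref{lemma:smash-power-free}) together with Proposition~\ref{prop:free-quotient-spectra}, so positive flatness cannot be dropped from the bookkeeping. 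Finally, a small slip at the end: the derived functor of the composite $I^{-1}\circ Z$ is $\cat{L}K\circ\cat{R}Z$, which is the right adjoint of $\Ab_{R/R}=\cat{L}Q\circ\cat{R}I$ by Proposition~\ref{prop:abelianization-adj}; your expression ``$\cat{R}I\circ\cat{L}Q$'' is not that right adjoint and does not even compose in the order written.
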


For the proof, the main step is to show that another stabilization of a global Quillen adjunction is an equivalence, namely the free-forgetful adjunction for non-unital commutative $R$-algebras:

\begin{thm}\label{thm:stabilization-nucas}
	In the above situation, the global Quillen adjunction
	\begin{equation}\label{eq:stab-free-forgetful-nuca}
	\begin{tikzcd}
	\glo{Sp}(\glo{Mod}_R) \arrow[r, "\glo{Sp}(\ul{\mathbb P^{>0}})", yshift=3pt] &[2em]\arrow[l, "\glo{Sp}(\ul{\mathbb U})", yshift=-3pt] \glo{Sp}(\glo{NUCA}_R)
	\end{tikzcd}
	\end{equation}
	is a global Quillen equivalence.
\end{thm}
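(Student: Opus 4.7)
The plan is to show the derived unit of $(\ref{eq:stab-free-forgetful-nuca})$ is a $G$-global weak equivalence on every cofibrant object, and that $\Sp(\ul{\mathbb U})$ detects weak equivalences between fibrant objects. Both functors of the adjunction are already homotopical: for $\Sp(\ul{\mathbb U})$ this follows from Lemma~\ref{lemma:module-adjunctions} together with the fact that weak equivalences and fibrations in $\Sp(\glo{NUCA}_R)$ are detected levelwise on underlying objects, while for $\Sp(\ul{\mathbb P^{>0}})$ on cofibrant objects it follows by applying Corollary~\ref{cor:sym-powers-pos-flat} levelwise. The detection statement for $\Sp(\ul{\mathbb U})$ between fibrant objects then goes through exactly as in the second half of the proof of Lemma~\ref{lemma:module-transferred}.

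For a cofibrant spectrum object $X \in \Sp(\glo{Mod}_R)$, the derived unit is represented by the levelwise inclusion of the first wedge summand
\begin{equation*}
	X \;\longrightarrow\; \bigvee_{n\geq 1} X^{\smashp_R n}/\Sigma_n.
\end{equation*}
Wedges are pushouts along injective cofibrations out of $0$ and thus homotopical by Lemma~\ref{lemma:g-global-sp-sp-colim}, and they model coproducts in the triangulated homotopy category $\Ho(\Sp(\glo{Mod}_R))$, which by stability (Corollary~\ref{cor:modules-stable} and Theorem~\ref{thm:stabilization-preserves-stable}) is equivalent to $\Ho(\glo{Mod}_R)$. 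It therefore suffices to prove the key vanishing statement: \emph{for every $n\geq 2$ and every cofibrant $X$, the spectrum object $X^{\smashp_R n}/\Sigma_n$ is weakly equivalent to zero}. Using compact generation of $\Ho(\Sp(\glo{Mod}_R))$ by suspension spectra of the generators of $\glo{Mod}_R$ described in Proposition~\ref{prop:t-structure-mod}, a cell-induction argument—whose homotopical inputs are the pushout and filtered colimit preservation properties of Lemma~\ref{lemma:g-global-sp-sp-colim} together with Theorem~\ref{thm:smash-power-wreath} controlling the $\Sigma_n$-equivariant behavior of smash powers—reduces this vanishing to the case where $X$ is itself such a suspension spectrum.

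For a generator $X = \Sigma^\infty \bigl(R\smashp\Sigma^\bullet_+ \bm\Sigma(A_0,\blank)\times_\phi G\bigr)$, one computes $X^{\smashp_R n}/\Sigma_n$ levelwise and observes that each level involves the $n$-fold smash power $(S^A)^{\smashp n}$ on which $\Sigma_n$ acts freely outside the basepoint whenever $A\neq\varnothing$ (Lemma~\ref{lemma:smash-power-free}). The free $\Sigma_n$-quotient $(S^A)^{\smashp n}/\Sigma_n$ has connectivity growing linearly in $|A|$. Combining this with Theorem~\ref{thm:smash-power-wreath}, which provides the homotopical control in the $\Sigma_n\wr G$-direction after restriction along the various homomorphisms $H\to G\times\Sigma_n$ coming from the definition of a $G$-global $\Omega$-spectrum, forces the adjoint structure maps of the spectrum object $X^{\smashp_R n}/\Sigma_n$ to be $G$-global weak equivalences with target having arbitrarily high equivariant connectivity as $|A|\to\infty$. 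A cofinality argument then shows that all true $G$-global homotopy groups of $X^{\smashp_R n}/\Sigma_n$ vanish, as needed.

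The main obstacle, and the reason this result warrants its own section, is carrying out the equivariant connectivity estimate uniformly in \emph{all} graph subgroups $\mathcal G_{H,G\times\Sigma_n}$ as $H$ varies over all finite groups, rather than for a single fixed group as in the classical treatment of Basterra--Mandell. This uniformity is essential to interact with the rich change-of-group calculus of Propositions~\ref{prop:sp-functoriality-general} and~\ref{prop:sp-functoriality-injective} that is baked into the definition of a global model category, and it forces the entire argument to be run in the positive flat model structures while exploiting the flatness hypothesis on $R$ to keep the relative smash products homotopically well-behaved throughout the cell induction.
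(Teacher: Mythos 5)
Your overall skeleton---reduce to showing that the higher symmetric-power summands of the free functor vanish after stabilization, plus a detection statement for $\Sp(\ul{\mathbb U})$---has the right shape, but both pillars have genuine gaps. First, you assert that $\Sp(\ul{\mathbb U})$ is homotopical because weak equivalences in $\Sp(\glo{NUCA}_R)$ ``are detected levelwise on underlying objects.'' That is precisely what is \emph{not} known a priori: the stable weak equivalences of $\Sp(\glo{NUCA}_R)$ are produced by the Bousfield localization of Section~\ref{sec:global-stab} carried out inside the NUCA world (only the \emph{level} weak equivalences are detected underlying), and identifying them with maps whose underlying module spectrum objects are weak equivalences is a substantial part of the theorem. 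The paper handles this by showing that the positive flat model structure on $\Sp(\glo{Mod}_R)$ transfers along $\Sp(\ul{\mathbb P^{>0}})\dashv\Sp(\ul{\mathbb U})$ and then coincides with the given model structure; the transfer argument analyses pushouts along free maps via the filtration of Remark~\ref{rk:pushout-analysis-nucas} and already needs the key vanishing theorem at that stage, so you cannot quote it for free before proving the vanishing.

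Second, your proof of the vanishing itself would not go through as sketched. The free functor on spectrum objects is built from the \emph{levelwise} relative smash product $\diamond_R$ (this is how $\Sp(\glo{NUCA}_R)$ is identified with non-unital commutative monoids), and its structure maps in the outer spectrum direction involve the diagonal $S^A\to S^A\smashp S^A$; it is the equivariant nullhomotopy of this diagonal---not a connectivity estimate on the targets---that kills the higher powers, and globally this requires the free-orbit cofinality trick of Proposition~\ref{prop:diamond-sp-trivial}: for a graph subgroup the fixed-point dimension of $S^{\bm n\times B}$ need not exceed that of $S^B$ unless $B$ contains a free orbit, so ``connectivity growing linearly'' fails uniformly over the relevant subgroups. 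Moreover Lemma~\ref{lemma:smash-power-free} is misapplied: $\Sigma_n$ does not act freely on $(S^A)^{\smashp n}$ outside the basepoint (the diagonal points are fixed); that lemma concerns Day-convolution smash powers of positively flat spectra. Finally, the reduction ``to generators by compact generation'' is not available, because $X\mapsto X^{\diamond_R n}/\Sigma_n$ is not an exact functor of triangulated categories; one needs a pointset-level cell induction in which cell attachments are filtered by the symmetric pushout products $i^{\ppo k}/\Sigma_k$ of Construction~\ref{constr:iterated-pushout-product}, with graded pieces controlled by the binary vanishing $X\diamond Y\simeq 0$, the external smash product Theorem~\ref{thm:ext-smash}, and the free-quotient Lemma~\ref{lemma:g-global-sp-sp-quotient}---this is the content of Theorem~\ref{thm:diamond-sp-sp-trivial} and Section~\ref{section:stab-nucas}, and the homotopy-invariance input of Theorem~\ref{thm:smash-power-wreath} cannot substitute for it.
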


The proof of this requires a substantial amount of work, and we devote all of \Cref{section:stab-nucas} to it. For now, let us use it to deduce \Cref{thm:stabilization-aug-algebras}:

\begin{proof}[Proof of \Cref{thm:stabilization-aug-algebras}]
	Since $\ul K\dashv\ul I$ is a global Quillen equivalence, so is the globally stabilized adjunction $\ul{\Sp}(\ul K)\dashv\ul{\Sp}(\ul I)$ by \Cref{prop:sp-induced-adjunction}. Moreover, the global model category $\glo{Mod}_R$ is globally stable (Corollary~\ref{cor:modules-stable}), so the stabilization adjunction
	\begin{equation*}
	\begin{tikzcd}
	\glo{Mod}_R\arrow[r, "\Sigma^\infty", shift left] &\arrow[l, "\Omega^\infty", shift left] \glo{Sp}(\glo{Mod}_R)
	\end{tikzcd}
	\end{equation*}
	is a global Quillen equivalence by \Cref{thm:stabilization-preserves-stable}.

	Finally, we have to consider how the adjunction $\ul Q\dashv\ul Z$ behaves upon stabilization. For this, we use that by \Cref{thm:stabilization-nucas} the adjunction $(\ref{eq:stab-free-forgetful-nuca})$ is a global Quillen equivalence. We now observe that the composite $\ul Q\circ \ul{\power{>0}}\colon \glo{Mod}_R\to \glo{NUCA}_R \to \glo{Mod}_R$ of left Quillen functors is naturally isomorphic to the identity. Thus, the same is true after stabilization, and also the associated composite of left derived functors is naturally isomorphic to the identity. Hence, $\ul Q\dashv\ul Z$ induces a global Quillen equivalence after stabilization by $2$-out-of-$3$, finishing the proof.
\end{proof}

\section{The stabilization of global NUCAs}\label{section:stab-nucas}
\subsection{Computing the stabilization} This section is devoted to the proof of Theorem~\ref{thm:stabilization-nucas}. We begin by recasting the left adjoint $\ul{\Sp}(\ul{\mathbb P^{>0}})$ into a more convenient form:

\begin{constr}\label{constr:diamond-sp-sp}
Let $X,Y\in \Sp(\glo{GlobalSpectra})$. We define $X\diamond Y$ as the bispectrum with $(X\diamond Y)(B)=X(B)\smashp Y(B)$ (where the right hand side is the usual Day convolution smash product on $\cat{Spectra}$) and structure maps
\begin{align*}
S^A\smashp (X\diamond Y)(B)&=S^A\smashp X(B)\smashp Y(B)\xrightarrow{\delta} S^A\smashp X(B)\smashp S^A \smashp Y(B)\\&\xrightarrow{\sigma\smashp\sigma} X(A\amalg B)\smashp Y(A\amalg B)=(X\diamond Y)(A\amalg B)
\end{align*}
where the first map is induced by the diagonal $S^A\to S^A\smashp S^A$. This becomes a functor in the obvious way.

If $G$ acts on $X$ and $H$ acts on $Y$, then $G\times H$ acts on $X\diamond Y$ via functoriality; if $G=H$, we will typically equip $X\diamond Y$ with the diagonal $G$-action, yielding a bifunctor on $G\text-\Sp(\glo{GlobalSpectra})=\Sp(G\text-\glo{GlobalSpectra})$.

Finally, let $X\in G\text-\Sp(\glo{GlobalSpectra})$ and let $n\ge 1$. Then we obtain $n$ commuting $G$-actions on the $n$-fold $\diamond$-product $X^{\diamond n}$, which together with the $\Sigma_n$-action via permuting the factors assemble into a $(\Sigma_n\wr G)$-action. We will also frequently view $X^{\diamond n}$ as an object in $(\Sigma_n\times G)\text-\Sp(\glo{GlobalSpectra})$ via the diagonal $G$-action.

More generally, if $R$ is a ($G$-)global ultra-commutative ring spectrum, then we define $\diamond_R$ as the pointwise smash product over $R$, yielding functors $\Sp(\glo{Mod}^G_R)\times\Sp(\glo{Mod}^G_R)\to\Sp(\glo{Mod}^G_R)$.
\end{constr}

\begin{rk}
For later use we record that the same construction can be applied more generally in the categories $\cat{Fun}(\bm\Sigma,\cat{Spectra})$ and $\cat{Fun}(\bm\Sigma,\cat{Mod}_R^G)$ of all $\cat{SSet}$-enriched (as opposed to $\cat{SSet}_*$-enriched) functors $\bm\Sigma\to\cat{Spectra}$ or $\bm\Sigma\to\cat{Mod}_R^G$, respectively. For these larger categories, $\diamond$ and $\diamond_R$ are the tensor product of a symmetric monoidal structure with unit the constant functor at $\mathbb S$ or $R$, respectively (while unitality fails in $\Sp(\glo{GlobalSpectra})$ and $\Sp(\glo{Mod}^G_R)$).
\end{rk}

If now $X\in \Sp(\glo{GlobalSpectra})$, then comparing right adjoints yields a natural isomorphism $\Sp(\mathbb P_R^{>0})\Sp(R\smashp\blank)(X)\cong\Sp(R\smashp\blank)\Sp(\mathbb P_{\mathbb S}^{>0})(X) =\Sp(R\smashp\blank)\big(\bigvee_{n\ge1} X^{\diamond n}/\Sigma_n\big)$. Non-equivariantly, Basterra and Mandell \cite[Theorem~2.8]{basterra-mandell-stab} proved that the summands indexed by $n>1$ vanish (for suitably cofibrant $X$), and this is the key non-formal ingredient used to compare spectra of $R$-NUCAs with $R$-modules (Theorem~3.7 of \emph{op.~cit.}). Similarly, the following $G$-global comparison will be the key computational ingredient to the proof of our Theorem~\ref{thm:stabilization-nucas}:

\begin{thm}\label{thm:diamond-sp-sp-trivial}
\begin{enumerate}
\item Let $X,Y\in G\text{-}\Sp(\glo{GlobalSpectra})$ levelwise flat. Then $X\diamond Y$ is $G$-globally weakly equivalent to $0$.
\item Let $X\in G\text{-}\Sp(\glo{GlobalSpectra})$ levelwise \emph{positively} flat. Then $X^{\diamond n}/\Sigma_n$ is $G$-globally weakly equivalent to $0$ for every $n>1$.
\item If $i\colon X\to Y$ is a levelwise positive flat cofibration, then $i^{\ppo n}/\Sigma_n\colon Q^n/\Sigma_n\to Y^{\diamond n}/\Sigma_n$ is a levelwise positive flat cofibration and a {$G$-global weak equivalence} for every $n>1$. In particular, if $Y$ is levelwise positively flat, then $Q^n/\Sigma_n$ is $G$-globally weakly equivalent to $0$.
\end{enumerate}
\end{thm}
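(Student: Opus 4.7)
The plan is to establish the three parts in order, with (3) reducing to (2), (2) reducing to (1), and (1) carrying the core computational content. A structural observation used throughout: since $\glo{GlobalSpectra}$ is already stable (Proposition~\ref{prop:GlobalSpectra-stable}), the stabilization adjunction $\Sigma^\infty\dashv\Omega^\infty$ is a global Quillen equivalence by Theorem~\ref{thm:stabilization-preserves-stable}, so to show a bispectrum $Z\in G\text-\Sp(\glo{GlobalSpectra})$ is $G$-globally trivial it suffices to show that $\cat{R}\Omega^\infty Z$ is trivial in $\Ho(\cat{$\bm G$-Spectra}_{\textup{$G$-gl}})$.

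For~(1), I plan to compute $\cat{R}\Omega^\infty(X\diamond Y)$ as the usual sequential telescope $\hocolim_A \cat{R}\Omega^A(X(A)\smashp Y(A))$ and show that each transition map in this telescope is null. Unravelling the $\diamond$ structure, the adjoint structure map $(X\diamond Y)(A)\to\Omega^B(X\diamond Y)(A\amalg B)$ factors as $\tilde\sigma_X\smashp\tilde\sigma_Y$ followed by the canonical ``loop-smash'' map $\Omega^B M\smashp\Omega^B N\to\Omega^B(M\smashp N)$ induced by the diagonal $S^B\to S^B\smashp S^B$. Since $\glo{GlobalSpectra}$ is stable and $X,Y$ are levelwise flat, the first factor becomes an equivalence after derivation, so each transition map is represented by the loop-smash map. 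In $\Ho(\cat{$\bm G$-Spectra}_{\textup{$G$-gl}})$ this map is classified by the stable diagonal $S^B\to S^{2B}$, which represents a class in $\pi_{-|B|}^\phi(\mathbb S)$; by connectivity of the $G$-global sphere spectrum these negative-degree global homotopy groups vanish for nonempty $B$. Thus every transition map in the telescope is null, and a standard argument (the telescope map $1-\textup{shift}$ becomes the identity) shows the hocolim vanishes in any stable homotopy category.

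For~(2), I argue by induction on $n\ge 2$: $X^{\diamond n}=X\diamond X^{\diamond(n-1)}$ with both factors levelwise flat (smash products of flats remain flat), so $X^{\diamond n}\simeq 0$ by~(1). Positive flatness of $X$ together with Lemma~\ref{lemma:smash-power-free}, applied to $X(A)$ for each finite set $A$, produces a levelwise free $\Sigma_n$-action on $X^{\diamond n}$ outside the basepoint, so the strict quotient models the homotopy orbits and inherits triviality---formally, via the bispectrum analog of Proposition~\ref{prop:free-quotient-spectra}, available because $\ul\Sp(\glo{GlobalSpectra})$ is a global model category (Theorem~\ref{thm:sp-global-model-cat}). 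For~(3), the cofibration assertion follows from strong monoidality of the positive flat $G$-global model structure (Proposition~\ref{prop:white-assumptions}) applied levelwise in the bispectrum direction. The weak equivalence assertion is then obtained by the standard pushout-product analysis identifying the cofiber of $i^{\ppo n}/\Sigma_n$ with $(Y/X)^{\diamond n}/\Sigma_n$: since $i$ is a levelwise positive flat cofibration, $Y/X$ is levelwise positively flat, so~(2) gives triviality of this cofiber, and stability of $\Sp(\glo{GlobalSpectra})$ (Theorem~\ref{thm:sp-is-stable}) upgrades this to the claim.

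The main obstacle is~(1), specifically making precise the telescope computation of $\cat{R}\Omega^\infty$ at the model-categorical level and verifying rigorously that the transition maps are modelled by the loop-smash map modulo equivalences involving fibrant replacements; keeping track of $G$-equivariance throughout the nullity argument (so that nullity holds $G$-globally and not just non-equivariantly) is an additional technical point that requires some care.
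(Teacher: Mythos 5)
There is a genuine gap, and it sits exactly where the real content of the theorem lies: the $\Sigma_n$-equivariance in part~(2). Your reduction ``$X^{\diamond n}=X\diamond X^{\diamond(n-1)}\simeq 0$ by (1), then pass to the free $\Sigma_n$-quotient'' only gives triviality of $X^{\diamond n}$ as a $G$-global object, i.e.\ after forgetting the $\Sigma_n$-action (the splitting $X\diamond X^{\diamond(n-1)}$ is not $\Sigma_n$-equivariant, so (1) cannot be applied over $\Sigma_n\times G$). But to conclude anything about the quotient via the free-quotient lemma (Lemma~\ref{lemma:g-global-sp-sp-quotient}, the bispectrum analogue of Proposition~\ref{prop:free-quotient-spectra}) you need a $(\Sigma_n\times G)$-global weak equivalence $X^{\diamond n}\to 0$, i.e.\ triviality after restricting along \emph{all} homomorphisms $H\to\Sigma_n\times G$, including those hitting $\Sigma_n$ nontrivially. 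This stronger statement is not formal: when the group permutes the $\diamond$-factors, the relevant structure maps involve the twisted diagonal $S^F\to S^{\bm n\times F}$ with $H$ acting on $\bm n$, and this map is \emph{not} equivariantly null in general (e.g.\ for $H=\mathbb Z/n$ acting trivially on $F$ and cyclically on $\bm n$, it is an isomorphism on $H$-fixed points). It only becomes null once $F$ contains a free orbit, which is why the paper's Proposition~\ref{prop:diamond-sp-trivial} proves $(\Sigma_n\wr G)$-global contractibility of the $n$-fold power by a geometric fixed-point argument over a complete universe exhausted so that each stage adds a free orbit, and why the proof of part~(2) then proceeds by cell induction using the symmetric-power filtration of \cite[Theorem~22]{sym-powers}, with the corepresentable case (Proposition~\ref{prop:diamond-power-corep}) carrying the wreath-equivariant input, rather than by the quotient shortcut you propose.

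Your argument for~(1) also has problems as stated. First, no formula expressing $\cat{R}\Omega^\infty$ of a $G$-global bispectrum as a sequential telescope $\hocolim_A\cat{R}\Omega^A(X(A)\smashp Y(A))$ is available; the fibrancy conditions involve all finite groups $H$ and all finite $H$-sets, so a colimit over trivially-indexed sets computes something too na\"ive, and making a correct $Q$-type formula precise is itself a substantial task (the paper avoids this by proving the spectrum-level statement Proposition~\ref{prop:diamond-sp-trivial} directly on na\"ive homotopy groups and then transporting it to bispectra via the external smash product Theorem~\ref{thm:ext-smash} and cell induction). Second, the nullity justification is wrong equivariantly: the stable class of the diagonal $S^B\to S^{2B}$ lives in $[S^B,S^{2B}]^H\cong\pi_0^H(S^B)$, not in $\pi_{-|B|}^\phi(\mathbb S)$, so connectivity of the global sphere is not the reason it vanishes; when it does vanish this is a geometric statement about fixed points (the diagonal misses a fixed point of the target), and in the permuted form needed for~(2) it genuinely requires free orbits, as above. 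The germ of your idea---transition maps die because the diagonal is null---is the same as the paper's spectrum-level argument, and your treatment of~(3) (identifying $\cofib(i^{\ppo n}/\Sigma_n)$ with $(Y/X)^{\diamond n}/\Sigma_n$ and using stability) matches the paper; but the bridge from spectra to bispectra and, above all, the genuine $\Sigma_n$- (indeed $\Sigma_n\wr G$-) equivariant contractibility are missing, and they constitute the hard part of the theorem.
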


The proof will be given below after some preparations; for now, let us use it to deduce the theorem:

\begin{proof}[Proof of Theorem~\ref{thm:stabilization-nucas}]
Fix a finite group $H$. We have to show that $H\text-\Sp(\ul{\mathbb P^{>0}})\dashv H\text-\Sp(\ul{\mathbb U})$ is a Quillen equivalence for the flat model structures. Replacing $G$ by $G\times H$ (and letting $H$ act trivially on $R$), we may assume without loss of generality that $H=1$.

\begin{claim*}
The positive flat global model structure on $\Sp(\glo{Mod}_R^G)$ transfers to $\Sp(\glo{NUCA}_R^G)$ along $\Sp(\ul{\mathbb P}^{>0})\dashv \Sp(\ul{\mathbb U})$.
\begin{proof}
As the positive flat global model structure on $\Sp(\glo{Mod}_R^G)$ is in turn transferred from $\Sp(G\text-\glo{GlobalSpectra}^+)$ (Lemma~\ref{lemma:module-transferred}) it suffices to consider the composite adjunction
\begin{equation*}
\Sp(\ul{R\smashp\mathbb P^{>0}_\mathbb S})\colon\Sp(G\text-\glo{GlobalSpectra}^+)\rightleftarrows\Sp(\glo{NUCA}_R^G) :\!\Sp(\ul{\mathbb U}).
\end{equation*}
By local presentability, we only have to show that every $\Sp(\ul{R\smashp\mathbb P^{>0}_\mathbb S})(J)$-cell complex is sent by $\Sp(\ul{\mathbb U})$ to a $G$-global weak equivalence, where $J$ is our favourite set of generating acyclic cofibrations of $G\text-\Sp(\glo{GlobalSpectra})$.

For this, we consider a pushout square
\begin{equation*}
\begin{tikzcd}
\Sp(\ul{R\smashp\mathbb P}^{>0})(A)\arrow[r, "{\Sp(\ul{R\smashp\mathbb P}^{>0})(j)}"]\arrow[d] &[3em] \Sp(\ul{R\smashp\mathbb P}^{>0})(B)\arrow[d]\\
X \arrow[r, "k"'] & Y
\end{tikzcd}
\end{equation*}
in $\Sp(\glo{NUCA}_R^G)$ with $j\in J$. We want to express $\Sp(\ul{\mathbb U})(k)$ as a transfinite composition of weak equivalences, for which we note that we can identify the $1$-category $\Sp(\glo{NUCA}_R^G)$ over $\Sp(\glo{GlobalSpectra})$ with non-unital commutative monoids in $\Sp(\glo{Mod}_R^G)$ with respect to the non-unital symmetric product $\diamond_R$. While we cannot literally apply Remark~\ref{rk:pushout-analysis-nucas} in this setting (for lack of unitality), we can do this inside the larger category $\cat{Fun}(\bm\Sigma,\cat{Mod}^G_R)$, which (as spectrum objects are closed inside $\cat{SSet}$-enriched functors under all colimits) then factors $k$ as a map in $G\text-\Sp(\glo{GlobalSpectra})$ into a transfinite composition of maps $k_n$ fitting into pushout squares
\begin{equation*}\hskip-30.71pt\hfuzz=31pt
\begin{tikzcd}
\Sp(\ul{R\smashp\blank})(Q^n/\Sigma_n \vee X\diamond Q^{n}/\Sigma_{n})\arrow[r, "\Sp(\ul{R\smashp\blank})(j^{\ppo n}/\Sigma_n\vee X\diamond j^{\ppo n}/\Sigma_{n})"]\arrow[d] &[9em] \Sp(\ul{R\smashp\blank})(L^{\diamond n}/\Sigma_n\vee X\diamond L^{\diamond n}/\Sigma_n)\arrow[d]\\
\cdot\arrow[r, "k_n"'] & \cdot
\end{tikzcd}
\end{equation*}
in $G\text-\Sp(\glo{GlobalSpectra})$. Appealing to Theorem~\ref{thm:diamond-sp-sp-trivial} above, the top map is a map between weakly contractible objects, hence a weak equivalence, and moreover a levelwise positive flat cofibration (in particular an injective cofibration). Thus, also $k_n$ is a weak equivalence, and hence so is $\Sp(\ul{\mathbb U})(k)$ since filtered colimits in $\Sp(\glo{Mod}_R^G)$ are homotopical (Lemma~\ref{lemma:g-global-sp-sp-colim}). As $\Sp(\ul{\mathbb U})$ preserves filtered colimits, it then follows by the same argument that also any transfinite compositions of maps of the above form are sent to $G$-global weak equivalences as desired.
\end{proof}
\end{claim*}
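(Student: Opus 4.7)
My plan is to verify the hypotheses of the Crans--Kan transfer criterion. Since the positive flat global model structure on $\Sp(\glo{Mod}_R^G)$ is itself transferred from $\Sp(G\text-\glo{GlobalSpectra}^+)$ by Lemma~\ref{lemma:module-transferred}, I would work with the composite adjunction
\[
\Sp(\ul{R\smashp\mathbb P^{>0}_\mathbb S})\colon\Sp(G\text-\glo{GlobalSpectra}^+)\rightleftarrows\Sp(\glo{NUCA}_R^G) :\!\Sp(\ul{\mathbb U})
\]
instead. Local presentability of $\Sp(\glo{NUCA}_R^G)$ (inherited from $\cat{NUCA}_R^G$) means the small object argument applies to any set, so the only substantive requirement is that $\Sp(\ul{\mathbb U})$ sends every relative $\Sp(\ul{R\smashp\mathbb P_\mathbb S^{>0}})(J)$-cell complex to a $G$-global weak equivalence, for some chosen set $J$ of generating acyclic cofibrations of $\Sp(G\text-\glo{GlobalSpectra}^+)$. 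Because $\Sp(\ul{\mathbb U})$ preserves all colimits and because $G$-global weak equivalences in $\Sp(\glo{Mod}_R^G)$ are closed under transfinite compositions by Lemma~\ref{lemma:g-global-sp-sp-colim}, it suffices to handle a single pushout along $\Sp(\ul{R\smashp\mathbb P_\mathbb S^{>0}})(j)$ for $j\in J$.

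The heart of the argument is a filtration analysis of that pushout. I would identify $\Sp(\glo{NUCA}_R^G)$ with the category of non-unital commutative monoids for the levelwise smash product $\diamond_R$ inside $\Sp(\glo{Mod}_R^G)$. Remark~\ref{rk:pushout-analysis-nucas} is formulated in a unital symmetric monoidal setting, so I cannot apply it directly, but it applies verbatim in the ambient enriched functor category $\cat{Fun}(\bm\Sigma,\cat{Mod}^G_R)$, where $\diamond_R$ is a genuine symmetric monoidal structure with unit the constant functor at $R$. The key technical point is that the inclusion $\Sp(\glo{Mod}_R^G)\hookrightarrow\cat{Fun}(\bm\Sigma,\cat{Mod}_R^G)$ preserves colimits, so the filtration built in $\cat{Fun}(\bm\Sigma,\cat{Mod}^G_R)$ actually computes the underlying pushout in $\Sp(\glo{Mod}_R^G)$, yielding a transfinite sequence $X=P_0\xrightarrow{k_1}P_1\xrightarrow{k_2}\cdots$ with colimit $Y$ and with each $k_n$ fitting into a pushout of the shape
\[
\begin{tikzcd}[column sep=6em]
Q^n/\Sigma_n \vee X\diamond Q^n/\Sigma_n \arrow[r, "j^{\ppo n}/\Sigma_n\vee X\diamond j^{\ppo n}/\Sigma_n"]\arrow[d] & L^{\diamond n}/\Sigma_n\vee X\diamond L^{\diamond n}/\Sigma_n\arrow[d]\\
P_{n-1}\arrow[r, "k_n"'] & P_n
\end{tikzcd}
\]
computed in $\Sp(\glo{Mod}_R^G)$ (after applying $\Sp(\ul{R\smashp\blank})$ and $\Sp(\ul{\mathbb U})$).

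To conclude, I would appeal to Theorem~\ref{thm:diamond-sp-sp-trivial}. Choosing the standard generating acyclic cofibrations $j\colon A\to L$ (which are positive flat cofibrations between positively flat objects), part~(3) of that theorem makes $j^{\ppo n}/\Sigma_n$ a levelwise positive flat cofibration and a $G$-global weak equivalence for every $n\ge 1$. Smashing with $X$ preserves both properties (levelwise positive flatness and the triviality statement propagating through $\diamond$-products), so each top horizontal map in the defining pushout becomes, after $\Sp(\ul{\mathbb U})$, an injective cofibration of $G$-globally weakly trivial objects, hence a $G$-global weak equivalence. Lemma~\ref{lemma:g-global-sp-sp-colim} then shows $\Sp(\ul{\mathbb U})(k_n)$ is a weak equivalence, and homotopicity of filtered colimits upgrades this to $\Sp(\ul{\mathbb U})(k)$. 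The main obstacle I anticipate is the bookkeeping needed to verify that the filtration from $\cat{Fun}(\bm\Sigma,\cat{Mod}_R^G)$ genuinely recovers the pushout in $\Sp(\glo{NUCA}_R^G)$ on underlying objects, and in particular to check that the various levelwise flatness hypotheses needed to apply Theorem~\ref{thm:diamond-sp-sp-trivial} survive at each stage of the filtration.
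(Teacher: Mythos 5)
Your proposal is correct and follows essentially the same route as the paper: reduce via Lemma~\ref{lemma:module-transferred} to the composite adjunction, run the Crans--Kan criterion, analyze a pushout along a generating acyclic cofibration via the filtration of Remark~\ref{rk:pushout-analysis-nucas} carried out in $\cat{Fun}(\bm\Sigma,\cat{Mod}^G_R)$, and kill the higher filtration steps using Theorem~\ref{thm:diamond-sp-sp-trivial} together with Lemma~\ref{lemma:g-global-sp-sp-colim}. The only cosmetic slips are that part~(3) of Theorem~\ref{thm:diamond-sp-sp-trivial} is stated for $n>1$ (the $n=1$ stage is handled directly since there $j^{\ppo 1}/\Sigma_1=j$ is already an acyclic cofibration) and that flatness of $R$ is what keeps the contractibility after applying $\Sp(\ul{R\smashp\blank})$, both of which match the paper's own argument.
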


We now observe that the transferred model structure and the global model structure on $\Sp(\glo{NUCA}_R^G)$ have the same acyclic fibrations (namely, those maps $f$ for which each $f(A)$ is a fibration in the positive flat $(G\times\Sigma_A)$-global model structure on $(G\times\Sigma_A)\text-\Sp(\glo{GlobalSpectra}^+)$) and the same fibrant objects (namely, those $X$ for which each $X(A)$ is $(G\times\Sigma_A)$-globally positively flatly fibrant and for which $X(A)\to\Omega^BX(A\amalg B)$ is a $(G\times H)$-global weak equivalence of NUCAs or equivalently of spectra for all $H$-sets $A,B$). Thus, the two model structures actually agree, and in particular we see that $\Sp(\mathbb U)\colon \Sp(\glo{NUCA}_R^G)\to \Sp(\glo{Mod}_R^G)$ preserves and reflects weak equivalences. To complete the proof, it suffices now to show that the ordinary unit $X\to\Sp(\ul{\mathbb U\mathbb P^{>0}})(X)$ is a weak equivalence for every cofibrant $X$.

This is again a standard cell induction argument \cite[Corollary~1.2.65]{g-global} using \Cref{thm:diamond-sp-sp-trivial}: we let $\mathscr H$ denote the class of all objects for which the above is a weak equivalence. If $Y$ is a levelwise positively flat $G$-bispectrum and $X=\Sp(\ul{R\smashp\blank})(Y)$, then the unit for $X$ agrees up to isomorphism with the inclusion of the first summand of $\bigvee_{n\ge 1} \Sp(\ul{R\smashp\blank})(Y^{\diamond n}/\Sigma_n)$, so it is a weak equivalence by Theorem~\ref{thm:diamond-sp-sp-trivial}, i.e.~$\Sp(\ul{R\smashp\blank})(Y)\in\mathscr H$; in particular, $\mathscr H$ contains the initial object $0$ as well as all sources and targets of the usual generating cofibrations $I$. However, $\mathscr H$ is closed under pushouts along cofibrations (as the right Quillen functor $\Sp(\mathbb U)$ sends these to homotopy pushouts by stability and left properness) as well as filtered colimits (as these are homotopical by Lemma~\ref{lemma:g-global-sp-sp-colim} and preserved by $\Sp(\mathbb U))$, so $\mathscr H$ more generally contains all $I$-cell complexes, whence all cofibrant objects by Quillen's Retract Argument.
\end{proof}

\subsection{The levelwise smash product is trivial} In this subsection we will complete the proof of Theorem~\ref{thm:stabilization-nucas} by proving Theorem~\ref{thm:diamond-sp-sp-trivial}.

\subsubsection{The case of ordinary spectra} For this, we first consider the analogues of Construction~\ref{constr:diamond-sp-sp} and Theorem~\ref{thm:diamond-sp-sp-trivial} for ordinary $G$-spectra:

\begin{constr}
Let $X,Y$ be spectra. We define $X\diamond Y$ as the spectrum with $(X\diamond Y)(B)=X(B)\smashp Y(B)$ (smash product of pointed simplicial sets) and structure maps
\begin{align*}
S^A\smashp (X\diamond Y)(B)&=S^A\smashp X(B)\smashp Y(B)\xrightarrow{\delta} S^A\smashp X(B)\smashp S^A \smashp Y(B)\\&\xrightarrow{\sigma\smashp\sigma} X(A\amalg B)\smashp Y(A\amalg B)=(X\diamond Y)(A\amalg B)
\end{align*}
where the first map is induced by the diagonal $S^A\to S^A\smashp S^A$. This becomes a functor in the obvious way.

If $X$ and $Y$ are $G$-spectra, then we equip $X\diamond Y$ with the induced action. Given any $n\ge1$, we view $X^{\diamond n}$ as a $(\Sigma_n\wr G)$- or $(\Sigma_n\times G)$-spectrum.
\end{constr}

\begin{prop}\label{prop:diamond-sp-trivial}
Let $X,Y$ be $G$-spectra.
\begin{enumerate}
\item $X\diamond Y$ is $G$-globally weakly contractible.
\item Let $n>1$. Then $X^{\diamond n}$ is $(\Sigma_n\wr G)$-globally weakly contractible.\label{item:dst-iterated}
\end{enumerate}
\begin{proof}
We will prove the second statement, the proof of the first one being similar but easier. We will show that it is even $\ul\pi_*$-isomorphic to $0$, for which we will need the following easy geometric input:

\begin{claim*}
Let $H$ be a finite group and let $F,A$ be finite $H$-sets such that $F$ is non-empty and free while $|A| > 1$. Then the diagonal embedding $\delta\colon S^F\to S^{A\times F}$ of $H$-\emph{topological} spaces is $H$-equivariantly based nullhomotopic.
\begin{proof}
The map $\delta$ is the $1$-point compactification of the diagonal map $\delta\colon\mathbb R[F]\to \mathbb R[A\times F]$. Now the $H$-fixed points of the source have dimension $|F/H|$ while the $H$-fixed points of the target have dimension $|(A\times F)/H| = |A|\cdot|F/H| > |F/H|$ where the first equality uses freeness of $F$. In particular, there exists an $H$-fixed point $p\in \mathbb R[A\times F]$ outside the diagonal. But then
\begin{align*}
(0,1]\times \mathbb R[F]&\to \mathbb R[A\times F]\\
(t,x)&\mapsto p + t^{-1}(\delta(x)-p)
\end{align*}
is $H$-equivariant and one easily checks that this extends to an equivariant based homotopy from the map constant at $\infty$ to the diagonal embedding $S^F\to S^{A\times F}$.
\end{proof}
\end{claim*}
Fix now a finite group $H$ and a homomorphism $\phi\colon H\to\Sigma_n\wr G$; we will show that $\pi_0^\phi(X^{\diamond n})=0$, the argument in other dimensions being analogous but requiring slightly more notation. For this we pick an exhaustive sequence $B_0\subset B_1\subset\cdots\subset\mathcal U_H$ of our favourite complete $H$-set universe $\mathcal U_H$ such that each $B_{k+1}\setminus B_k$ contains a free $H$-orbit. Then
\begin{equation*}
\pi_0^\phi(X^{\diamond n})=\colim_{B\in s(\mathcal U_H)}[S^B, |\phi^*(X^{\diamond n})(B)|]^H_*\cong \colim_k [S^{B_k},|\phi^*(X^{\diamond n})(B_k)|]^H_*
\end{equation*}
by cofinality. But on the other hand, if $F\subset B_{k+1}\setminus B_k$ is a free $H$-orbit, then the transition map $[S^{B_k},|\phi^*(X^{\diamond n})(B_k)|]^H_*\to [S^{B_{k+1}},|\phi^*(X^{\diamond n})(B_{k+1})|]^H_*$ factors by definition through
\begin{equation*}\hskip-20.384pt\hfuzz=21pt
[S^F\smashp S^{B_k}, S^F\smashp |\phi^*(X^{\diamond n})(B_{k})|]^H_*\xrightarrow{\delta\smashp |\phi^*(X^{\diamond n})(B_k)|} [S^F\smashp S^{B_k}, S^{(\pr_{\Sigma_n}\circ\phi)^*\bm{n}\times F}\smashp |\phi^*(X^{\diamond n})(B_{k})|]^H_*
\end{equation*}
which is null by the claim.
\end{proof}
\end{prop}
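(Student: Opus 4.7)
My plan is to prove both parts by showing the stronger statement that $X\diamond Y$ and $X^{\diamond n}$ are $\ul\pi_*$-isomorphic to zero, since $\ul\pi_*$-isomorphisms are in particular $G$-global (or $(\Sigma_n\wr G)$-global) weak equivalences. The two parts are essentially parallel, so I will concentrate on part (\ref{item:dst-iterated}); the first part is the special case where one reads $n=2$ and takes $Y$ in place of the second copy of $X$, with the action adjusted accordingly.

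The key conceptual observation is that unlike the ordinary smash product of spectra, the pointwise-smash spectrum $X^{\diamond n}$ has structure maps built from the \emph{diagonal} $S^A\to S^A\smashp\cdots\smashp S^A = S^{\bm n\times A}$ followed by the $n$-fold smash of individual structure maps of $X$. Thus any class in $\pi_k^\phi(X^{\diamond n})$ represented on $S^B\to |\phi^*(X^{\diamond n})(B)|$ becomes, after enlarging $B$ to $B\amalg F$, a class that factors through the based $H$-map $\delta\smashp\id\colon S^F\smashp S^B\to S^{\bm n\times F}\smashp S^B$ smashed on the right with the original representative. The plan is to exhibit a free $H$-orbit $F$ inside each successive increment of the universe filtration and then invoke a purely geometric claim: for any free $H$-set $F\neq\varnothing$ and any finite $H$-set $A$ with $|A|>1$, the diagonal $S^F\to S^{A\times F}$ is $H$-equivariantly based nullhomotopic.

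For the geometric claim I would argue by dimension counting on fixed points of real representations: $\dim\mathbb R[F]^H = |F/H|$ while $\dim\mathbb R[A\times F]^H = |A|\cdot|F/H|$ by freeness of $F$, so for $|A|>1$ there is an $H$-fixed point $p$ off the diagonal image. A straight-line homotopy to $\infty$ going through $p$ then extends (after one-point compactifying) to an equivariant based nullhomotopy, since any $H$-equivariant affine ray in $\mathbb R[A\times F]$ emanating from an $H$-fixed point lies in the fixed subspace. Here $|A|=n>1$ is precisely where the assumption $n>1$ enters.

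Having established the geometric lemma, the argument for vanishing of $\pi_k^\phi(X^{\diamond n})$ is a cofinality argument: choose an exhaustive chain $B_0\subset B_1\subset\cdots$ in the complete $H$-set universe $\mathcal U_H$ such that each $B_{k+1}\setminus B_k$ contains a free $H$-orbit $F_k$ (which is possible by completeness of $\mathcal U_H$). Then the transition map in
\[
\pi_k^\phi(X^{\diamond n})\cong\colim_k [S^{B_k},|\phi^*(X^{\diamond n})(B_k)|]^H_*
\]
from stage $k$ to stage $k+1$ factors through a map induced by $\delta\smashp\id\colon S^{F_k}\smashp S^{B_k}\to S^{(\pr_{\Sigma_n}\circ\phi)^*(\bm n\times F_k)}\smashp S^{B_k}$, which is nullhomotopic by the geometric claim (applied to $H$ acting through $\pr_{\Sigma_n}\circ\phi$ on $\bm n$, noting this action is on a set of cardinality $n>1$). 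Hence every transition map is zero, so the colimit vanishes. The same argument applies in all degrees $k\in\mathbb Z$ with the obvious modification of the indexing spheres, giving the claim. The only real obstacle is the geometric lemma; everything else is essentially bookkeeping with the definition of $\diamond$ and $\pi_*^\phi$.
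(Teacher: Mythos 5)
Your proposal is correct and follows essentially the same route as the paper: the identical geometric lemma (a fixed-point dimension count producing an $H$-fixed point $p$ off the diagonal, then a straight-line homotopy through $p$ pushing everything to $\infty$) combined with the same cofinality argument over an exhaustion of $\mathcal U_H$ whose successive increments contain free orbits, applied with $A=(\pr_{\Sigma_n}\circ\phi)^*\bm n$ so that $|A|=n>1$. One cosmetic remark: equivariance of the nullhomotopy follows simply from $p$ being $H$-fixed and $\delta$ being equivariant, not because the affine rays lie in the fixed subspace as you assert, but this slip does not affect the argument.
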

Beware that Theorem~\ref{thm:diamond-sp-sp-trivial} does not follow simply by applying the proposition levelwise; in particular, taking $X=Y=\Sigma^\infty\mathbb S$ in the first item we have $(X\diamond Y)(A)\cong\Sigma^{2\cdot A}\mathbb S$, so $X\diamond Y$ is not globally \emph{level} equivalent to $0$. Instead, the reduction will require further preparation.

\subsubsection{The external smash product}
We begin by introducing yet another smash product:

\begin{constr}
Let $X,Y\in\cat{Spectra}$. Then we define the \emph{external smash product} $X\extsmash Y\in\Sp(\cat{Spectra})$ as the bispectrum with $(X\extsmash Y)(A)(B)= X(A)\smashp Y(B)$ and with the evident structure maps, i.e.~$(X\extsmash Y)(A)=X(A)\smashp Y$ and $(X\extsmash Y)(\blank)(B)=X\smashp Y(B)$ as spectra. Again, we extend this to $G$-spectra by pulling through the $G$-action.
\end{constr}

Note that the above agrees with the functor $\blank\smashp\blank\colon\cat{Spectra}\times\mathscr C\to\Sp(\ul{\mathscr C})$ considered in Subsection~\ref{subsec:spectrification}, specialized to $\mathscr C=\cat{Spectra}$. However, in our setting using `$\smashp$' again would be highly ambiguous, which is why we introduced the above notation.

\begin{thm}\label{thm:ext-smash}
Let $G$ be any finite group. Then the external smash product
\begin{equation*}
\blank\extsmash\blank\colon\cat{$\bm G$-Spectra}_\textup{$G$-global}\times\cat{$\bm G$-Spectra}_\textup{$G$-global}\to G\text-\Sp(\glo{GlobalSpectra})_\textup{$G$-global}
\end{equation*}
is homotopical in both variables.
\end{thm}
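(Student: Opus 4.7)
My plan is to establish the two homotopy invariances separately, with rather different arguments in the two variables.

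For the second variable, fix a $G$-spectrum $X$ and let $g\colon Y\to Y'$ be a $G$-global weak equivalence. I will show that $X\extsmash g$ is in fact a $G$-global \emph{level} weak equivalence in $G\text-\Sp(\glo{GlobalSpectra})$ and hence, by the definition of the $G$-global model structure as a Bousfield localization of the $G$-global level model structure, a $G$-global weak equivalence. Since $(X\extsmash g)(A) = X(A)\smashp g$ at each level $A$, the bulk of the work amounts to the following key lemma: for any pointed $(G\times H)$-simplicial set $K$, the functor $K\smashp\blank\colon\cat{$\bm G$-Spectra}\to\cat{$\bm{(G\times H)}$-Spectra}$ (with the diagonal actions) preserves $G$-global weak equivalences. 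I plan to prove this lemma in analogy with Proposition~\ref{prop:tensor-homotopical} by showing that $K\smashp g$ is in fact a $\ul\pi_*$-isomorphism: for any $\phi\colon L\to G\times H$, the effect of $\phi^*(K\smashp g)$ on $\pi^L_*$ can be computed after stabilization (using a complete $L$-universe) and related to the corresponding effect of $\Sigma^\infty K\smashp g$, which is homotopical by flatness of the suspension spectrum $\Sigma^\infty K$ together with the Flatness Theorem (Proposition~\ref{prop:flatness-theorem}).

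For the first variable, I will use the second-variable result together with a flat replacement. Given $f\colon X\to X'$ a $G$-global weak equivalence and arbitrary $Y$, pick a flat replacement $q\colon Y^\flat\to Y$. By the second-variable result, both vertical maps in the naturality square
\begin{equation*}
\begin{tikzcd}[column sep=large]
X\extsmash Y^\flat \arrow[r, "f\extsmash Y^\flat"]\arrow[d, "X\extsmash q"', "\sim"] & X'\extsmash Y^\flat \arrow[d, "X'\extsmash q", "\sim"']\\
X\extsmash Y \arrow[r, "f\extsmash Y"'] & X'\extsmash Y
\end{tikzcd}
\end{equation*}
are $G$-global weak equivalences, so by $2$-out-of-$3$ it suffices to treat the case in which $Y^\flat$ is flat. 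For this remaining case, I plan to exploit the genuine stability of $\glo{GlobalSpectra}$ (Proposition~\ref{prop:GlobalSpectra-stable}), which by Theorem~\ref{thm:stabilization-preserves-stable} makes $\Sigma^\infty\dashv\Omega^\infty$ into a global Quillen equivalence between $G\text-\glo{GlobalSpectra}$ and $G\text-\Sp(\glo{GlobalSpectra})$; thus $\cat R\Omega^\infty$ detects and reflects $G$-global weak equivalences. I will identify $\cat R\Omega^\infty(X\extsmash Y^\flat)$ with $X\smashp Y^\flat$ naturally in $X$ for flat $Y^\flat$, using that a model for $\cat R\Omega^\infty$ is given by a derived homotopy colimit $\hocolim_A \cat R\Omega^A(\blank(A))$, that smashing with the flat $Y^\flat$ commutes (up to $G$-global weak equivalence) with $\cat R\Omega^A$ at each level by another application of the Flatness Theorem, and that the inner stabilization $\hocolim_A \cat R\Omega^A X(A)$ recovers $X$ up to weak equivalence. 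Under this identification $\cat R\Omega^\infty(f\extsmash Y^\flat)$ becomes $f\smashp Y^\flat$, which is a $G$-global weak equivalence of $G$-spectra by Proposition~\ref{prop:flatness-theorem}, finishing the argument.

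The main obstacle will be the identification of $\cat R\Omega^\infty(X\extsmash Y^\flat)$ with $X\smashp Y^\flat$ for flat $Y^\flat$: this requires carefully pushing a fibrant replacement in $G\text-\Sp(\glo{GlobalSpectra})$ through the smash with $Y^\flat$ levelwise, controlling the interaction of derived loops with smash products. The key lemma in the second variable is technical but closely modelled on existing arguments in the paper.
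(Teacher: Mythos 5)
Your treatment of the second variable is essentially the paper's: the paper's Lemma~\ref{lemma:ext-smash-easy} also shows $X\extsmash g$ is a $G$-global \emph{level} weak equivalence, and your reduction of the key sub-claim to the Flatness Theorem via $\Sigma^\infty K\smashp Y\cong K\smashp Y$ is fine. The problem is the first variable, where your argument has a genuine gap at its central step. You want to identify $\cat{R}\Omega^\infty(X\extsmash Y^\flat)$ with $X\smashp Y^\flat$ by (i) computing $\cat{R}\Omega^\infty$ of a bispectrum as a homotopy colimit $\hocolim_A\cat{R}\Omega^A(\blank(A))$ and (ii) asserting that the resulting ``inner stabilization'' recovers an arbitrary $X$. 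Neither statement is available in this setting, and both are exactly the kind of claims that fail for symmetric-spectrum-based models without semistability-type hypotheses: a colimit indexed by finite sets and injections (with $\Omega^A$ for trivial actions) only computes a na\"ive, $\ul\pi_*$-flavoured localization, whereas fibrancy in $G\text-\Sp(\glo{GlobalSpectra})$ requires $Z(A)\to\cat{R}\Omega^BZ(A\amalg B)$ to be an equivalence for all finite $H$-sets $A,B$ with $H$ acting through the $\Sigma$-actions on the levels, i.e.\ genuine stabilization with respect to representation spheres. The paper is careful never to use such colimit formulas (this is precisely why the weak equivalences of these models are only defined indirectly), and for a general, non-cofibrant $X$ the comparison map you need is not an equivalence. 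Note also that you cannot repair this by cofibrantly replacing $X$: doing so would require knowing that $\blank\extsmash Y^\flat$ sends the replacement map to a weak equivalence, which is the statement being proved, so the reduction is circular. (A smaller point: commuting $\cat{R}\Omega^A$ past $\blank\smashp Y^\flat$ is a genuine-stability/dualizability statement, not an application of the Flatness Theorem alone, though that part could be fixed using Proposition~\ref{prop:GlobalSpectra-stable}.)

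For contrast, the paper keeps the first variable completely arbitrary throughout and instead runs an induction over the \emph{second} variable: first $\Sigma^\bullet Y$ for $\mathcal I$-spaces $Y$ (using that $\Delta^*$ creates weak equivalences, Proposition~\ref{prop:Delta-star-homotopical}, together with Proposition~\ref{prop:tensor-homotopical}), then $I$-spaces, then semifree spectra $\bm\Sigma(A,\blank)\smashp K$ via stability and $S^A\smashp\bm\Sigma(A,\blank)\cong\Sigma^\bullet_+I(A,\blank)$, then induced objects $\bm\Sigma(A,\blank)\smashp_\phi G_+\smashp K$ via the free-quotient Lemma~\ref{lemma:g-global-sp-sp-quotient}, then all projectively cofibrant $Y$ by cell induction, and finally all $Y$ using the easy second-variable lemma. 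This structure is what lets one avoid ever computing $\cat{R}\Omega^\infty$ of a bispectrum explicitly; if you want to salvage your strategy, you would in effect have to prove the identification $\cat{R}\Omega^\infty(X\extsmash Y^\flat)\simeq X\smashp Y^\flat$ for arbitrary $X$ by a comparable induction (say via the canonical map $X\extsmash Y\to\Pi^*(X\smashp Y)$), at which point you have reproduced the paper's work rather than bypassed it.
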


The statement for the second variable is actually quite easy:

\begin{lemma}\label{lemma:ext-smash-easy}
Let $X$ be a $G$-spectrum and let $f$ be a $G$-global weak equivalence of $G$-spectra. Then $X\extsmash f$ is a $G$-global weak equivalence in $G\text-\Sp(\glo{GlobalSpectra})$.
\begin{proof}
If $A$ is any finite set, then $f$ is a $(G\times\Sigma_A)$-global weak equivalence (with respect to the trivial $\Sigma_A$-actions), and hence so is $X(A)\smashp f$. Thus, $X\extsmash f$ is even a $G$-global level weak equivalence.
\end{proof}
\end{lemma}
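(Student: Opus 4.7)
The strategy is to prove the stronger statement that $X \extsmash f$ is in fact a $G$-global \emph{level} weak equivalence, which will then imply the lemma since every level weak equivalence is a weak equivalence. Unpacking the definition of level weak equivalences in $G\text-\Sp(\glo{GlobalSpectra})$, this reduces to showing that for each finite set $A$ the map $(X \extsmash f)(A) = X(A) \smashp f$ is a $(G \times \Sigma_A)$-global weak equivalence of $(G \times \Sigma_A)$-spectra, where $\Sigma_A$ acts diagonally via the tautological action on the level $X(A)$ and the trivial action on both the source and target of $f$.

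The first step is to observe that $f$ is itself a $(G \times \Sigma_A)$-global weak equivalence with respect to the trivial $\Sigma_A$-action on its source and target. This is an immediate consequence of Proposition~\ref{prop:sp-functoriality-general}, which ensures that restriction along any group homomorphism---and in particular along the projection $G \times \Sigma_A \twoheadrightarrow G$---is homotopical.

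The remaining task is to verify that smashing with the fixed $(G \times \Sigma_A)$-pointed simplicial set $X(A)$ sends the above weak equivalence to another $(G \times \Sigma_A)$-global weak equivalence of spectra. Since this operation is precisely the tensoring of $(G \times \Sigma_A)$-spectra over $(G \times \Sigma_A)$-pointed simplicial sets---a left Quillen bifunctor with respect to the flat model structures by Corollary~\ref{cor:pointed-g-sset-enriched} applied to the global model category $\glo{GlobalSpectra}$---and since $X(A)$ is automatically cofibrant as a simplicial set, this follows for weak equivalences between flat objects by Ken Brown's Lemma. The (mild) remaining subtlety, which I expect to be the only point requiring real attention, is the extension to arbitrary $f$; I would handle this by mimicking the argument of Proposition~\ref{prop:tensor-homotopical}, i.e.~by showing directly that the tensoring preserves $\ul\pi_*$-isomorphisms and then using the Flatness Theorem (Proposition~\ref{prop:flatness-theorem}) to reduce a general $G$-global weak equivalence to one between flat spectra.
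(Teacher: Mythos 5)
Your argument is correct and is essentially the paper's proof: reduce to showing that $X\extsmash f$ is a $G$-global \emph{level} weak equivalence, observe that $f$ remains a $(G\times\Sigma_A)$-global weak equivalence for the trivial $\Sigma_A$-action because restriction along the projection $G\times\Sigma_A\twoheadrightarrow G$ is homotopical, and conclude that $X(A)\smashp f$ is a $(G\times\Sigma_A)$-global weak equivalence. The only (harmless) inefficiency is the final ``subtlety'' you flag: for $\glo{GlobalSpectra}$ the flat model structures are the \emph{injective} ones, in which every $G$-spectrum is cofibrant, so Corollary~\ref{cor:pointed-g-sset-enriched} together with Ken Brown's Lemma already handles arbitrary $f$ (alternatively, smashing levelwise with $X(A)$ agrees with smashing with its suspension spectrum, which is flat, so Proposition~\ref{prop:flatness-theorem} applies directly), and no $\ul\pi_*$-isomorphism argument in the style of Proposition~\ref{prop:tensor-homotopical} is needed.
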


The proof that the external smash product is also homotopical in the first variable is much harder. We begin with some further closure properties of the $G$-global weak equivalences of $G\text-\Sp(\glo{GlobalSpectra})$ similar to Lemma~\ref{lemma:g-global-sp-sp-colim}:

\begin{lemma}\label{lemma:g-global-sp-sp-quotient}
Let $f\colon X\to Y$ be a $G$-global weak equivalence in $G\text-\Sp(\glo{GlobalSpectra})$ and let $\alpha\colon G\to G'$ be a homomorphism of finite groups. Assume that $\ker(\alpha)$ acts levelwise freely outside the basepoint on $X$ and $Y$. Then $\alpha_!f$ is a $G'$-global weak equivalence.
\begin{proof}
Employing functorial factorizations in the $G$-global projective \emph{level} model structure we obtain a commutative diagram
\begin{equation*}
\begin{tikzcd}
X'\arrow[d, "\sim"']\arrow[r, "f'"] & Y'\arrow[d, "\sim"]\\
X\arrow[r, "f"'] & Y
\end{tikzcd}
\end{equation*}
such that the vertical maps are $G$-global level weak equivalences and $X',Y'$ are projectively cofibrant. Then $f'$ is a $G$-global weak equivalence by $2$-out-of-$3$, and hence $\alpha_!f'$ is a $G'$-global weak equivalence by Ken Brown's Lemma. By another application of $2$-out-of-$3$, it will then be enough to show that $\alpha_!$ sends the vertical maps to $G'$-global weak equivalences. However, $G$ (and hence in particular $\ker\alpha$) acts levelwise freely outside the basepoint on $X'$ and $Y'$ by \cite[Remark~3.1.22]{g-global} together with Lemma~\ref{lemma:projective-cofibrations-levelwise} (for the trivial $G$-action on the indexing set), whence the claim follows from Proposition~\ref{prop:free-quotient-spectra}.
\end{proof}
\end{lemma}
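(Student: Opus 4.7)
The plan is to mimic the proof of Proposition~\ref{prop:free-quotient-spectra} (the analogous statement one level down, for ordinary $G$-spectra) by reducing via cofibrant replacement to the case where Ken Brown's Lemma applies, and then handling the replacement maps by applying Proposition~\ref{prop:free-quotient-spectra} itself levelwise.

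More concretely, I would first employ functorial factorizations in the projective $G$-global \emph{level} model structure on $G\text-\Sp(\glo{GlobalSpectra})$ to replace $f$ by a $G$-global weak equivalence $f'\colon X'\to Y'$ between projectively level cofibrant bispectra, together with $G$-global level weak equivalences $p_X\colon X'\to X$ and $p_Y\colon Y'\to Y$ filling a commutative square. Now $\alpha_!$ is left Quillen for the projective $G$-global model structure (Theorem~\ref{thm:sp-global-model-cat}, since $\ul\Sp(\glo{GlobalSpectra})$ is a global model category by Theorem~\ref{thm:sp-is-stable} applied to $\glo{GlobalSpectra}$), and $X',Y'$ are in particular projectively cofibrant in the stable sense, so Ken Brown's Lemma yields that $\alpha_! f'$ is a $G'$-global weak equivalence. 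By $2$-out-of-$3$, it then remains to show that $\alpha_! p_X$ and $\alpha_! p_Y$ are $G'$-global weak equivalences.

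For this, I will check that $\ker(\alpha)$ still acts levelwise freely outside the basepoint on $X'$ and $Y'$. By Lemma~\ref{lemma:projective-cofibrations-levelwise}, any projective cofibration in $G\text-\Sp(\glo{GlobalSpectra})$ restricts on each level to a projective cofibration in $G\text-\glo{GlobalSpectra}$, and projectively cofibrant $G$-spectra have their $G$-action levelwise free outside the basepoint (see~\cite[Remark~3.1.22]{g-global}); since $X',Y'$ are built from $0$ by attaching cells of this form, their $G$-actions -- and a fortiori the $\ker(\alpha)$-actions -- are levelwise free outside the basepoint.

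It then suffices to argue levelwise: for each finite set $A$, $(\alpha_! p_X)(A)=\alpha_!(p_X(A))$ and $p_X(A)$ is a $(G\times\Sigma_A)$-global weak equivalence of $G$-spectra on which $\ker(\alpha)$ acts levelwise freely outside the basepoint. Proposition~\ref{prop:free-quotient-spectra} (applied with $\alpha$ replaced by $\alpha\times\id_{\Sigma_A}$, whose kernel is $\ker(\alpha)\times 1$) then says that $\alpha_!(p_X(A))$ is a $(G'\times\Sigma_A)$-global weak equivalence, which is exactly the condition for $\alpha_! p_X$ to be a $G'$-global level weak equivalence, whence a $G'$-global weak equivalence. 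The same argument handles $\alpha_! p_Y$, and the conclusion follows. The main technical obstacle is the verification that projective level cofibrancy propagates levelwise freeness of the group action, but this is already packaged in Lemma~\ref{lemma:projective-cofibrations-levelwise} together with the standard description of generating cofibrations of the $G$-global projective model structure on symmetric spectra.
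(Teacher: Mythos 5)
Your proposal is correct and follows essentially the same route as the paper's proof: factor $f$ through projectively (level) cofibrant replacements via functorial factorization, handle $f'$ by Ken Brown's Lemma for the left Quillen functor $\alpha_!$, and deal with the replacement maps by combining Lemma~\ref{lemma:projective-cofibrations-levelwise} and the levelwise freeness of projectively cofibrant $G$-spectra with Proposition~\ref{prop:free-quotient-spectra} applied level by level. The only difference is that you spell out the levelwise application of Proposition~\ref{prop:free-quotient-spectra} (with $\alpha\times\id_{\Sigma_A}$) explicitly, which the paper leaves implicit.
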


Moreover, one proves in the same way:

\begin{lemma}\label{lemma:G-sset-bispectra-homotopical}
The levelwise smash product
\begin{equation*}
(\cat{$\bm G$-SSet}_*)_\textup{$G$-equivariant}\times G\text-\Sp(\glo{GlobalSpectra})_\textup{$G$-global}\to G\text-\Sp(\glo{GlobalSpectra})_\textup{$G$-global}
\end{equation*}
is homotopical in each variable.\qed
\end{lemma}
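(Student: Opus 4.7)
The plan is to adapt the template of the proof of Lemma~\ref{lemma:g-global-sp-sp-quotient} to both variables in parallel, with Corollary~\ref{cor:pointed-g-sset-enriched} providing the crucial model-categorical input. The key observation is that every pointed $G$-simplicial set is cofibrant in the $G$-equivariant model structure on $\cat{$\bm G$-SSet}_*$, since the initial object is the one-point space and any pointed map from it is automatically an injection.

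For the statement about the first variable, I would fix $K \in \cat{$\bm G$-SSet}_*$ and let $f\colon X \to Y$ be a $G$-global weak equivalence of $G$-bispectra. Applying functorial cofibrant replacement in the flat $G$-global level model structure (Lemma~\ref{lemma:level-model-structures-functoriality}), I obtain a commutative square with flat level cofibrant $X^c, Y^c$, vertical level weak equivalences $X^c \to X$ and $Y^c \to Y$, and $f^c\colon X^c \to Y^c$ a $G$-global weak equivalence by $2$-out-of-$3$. By Corollary~\ref{cor:pointed-g-sset-enriched} applied to the pointed global model category $\ul{\Sp}(\glo{GlobalSpectra})$ from Theorem~\ref{thm:sp-global-model-cat}, the functor $K \smashp \blank$ is left Quillen for the flat model structure, so Ken Brown's Lemma gives that $K \smashp f^c$ is a $G$-global weak equivalence. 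The vertical level weak equivalences reduce, one level down, to showing that smashing with a pointed $G$-simplicial set preserves $(G \times \Sigma_A)$-global weak equivalences of $(G \times \Sigma_A)$-spectra; this is handled by an entirely analogous argument, invoking Corollary~\ref{cor:pointed-g-sset-enriched} now for the global model category $\glo{GlobalSpectra}$, and bottoming out in the statement that smashing with a pointed $G$-simplicial set preserves weak equivalences of pointed $(G \times H)$-simplicial sets in any family-equivariant model structure---which follows from Ken Brown applied to the corresponding smash-product left Quillen bifunctor, using once more that every pointed simplicial set is cofibrant.

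For the second variable, I fix $X \in G\text-\Sp(\glo{GlobalSpectra})$. If $X$ is flat cofibrant, then $\blank \smashp X$ is left Quillen by Corollary~\ref{cor:pointed-g-sset-enriched}, and since every $K$ is cofibrant, Ken Brown's Lemma immediately yields that $\blank \smashp X$ preserves all $G$-equivariant weak equivalences. For general $X$, I would take a flat cofibrant replacement $p\colon X^c \to X$; for any $G$-equivariant weak equivalence $f\colon K \to L$, the first statement applied to $p$ shows that $K \smashp p$ and $L \smashp p$ are $G$-global weak equivalences, and combined with $f \smashp X^c$ being a weak equivalence (by the cofibrant case), $2$-out-of-$3$ on the evident comparison square yields that $f \smashp X$ is one as well.

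The hard part will be organizing the recursive argument in the first variable without circularity. This is handled by peeling off one layer at a time---first for bispectra using $\ul{\Sp}(\glo{GlobalSpectra})$, then for spectra using $\glo{GlobalSpectra}$, and finally for pointed simplicial sets directly---the three applications of Corollary~\ref{cor:pointed-g-sset-enriched} being logically independent of one another.
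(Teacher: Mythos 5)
Your proposal is correct and is essentially the argument the paper has in mind when it states that the lemma is proved ``in the same way'' as Lemma~\ref{lemma:g-global-sp-sp-quotient}: cofibrant replacement in a level model structure, Ken Brown's Lemma via Corollary~\ref{cor:pointed-g-sset-enriched} applied to the pointed global model category from Theorem~\ref{thm:sp-global-model-cat} (using that every pointed $G$-simplicial set is cofibrant in the $G$-equivariant model structure), and a levelwise treatment of the comparison maps, with the second variable handled exactly as you do. The one place where you take a longer route is that comparison step: instead of re-running the replacement argument at the level of spectra, observe that in each level $(K\smashp X)(A)=K\smashp X(A)$ is the smash product of the $(G\times\Sigma_A)$-spectrum $X(A)$ with the suspension spectrum of $K$, which is flat, so Proposition~\ref{prop:flatness-theorem} immediately gives that $K\smashp\blank$ preserves $G$-global \emph{level} weak equivalences of bispectra; alternatively one can argue even more elementarily with the fact that fixed points of pointed simplicial sets commute with smash products, which disposes of all level weak equivalences in sight at once. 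Relatedly, the justification at the base of your recursion is slightly circular as phrased: the smash product being a left Quillen bifunctor for a graph-family model structure on pointed equivariant simplicial sets is not among the bifunctors established in the paper (the graph families occurring here do not arise from Example~\ref{ex:exotic}), and verifying it amounts to exactly the fixed-point computation just mentioned, so you should quote that computation, together with the non-equivariant fact that smashing with any pointed simplicial set preserves weak equivalences, rather than appealing to Ken Brown there.
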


\begin{proof}[Proof of Theorem~\ref{thm:ext-smash}]
Write $\mathscr H$ for the class of $G$-global spectra $X$ such that $\blank\extsmash X$ is homotopical. Our goal is to show that $\mathscr H$ consists of all objects, which will be done in several steps.

\medskip\noindent\textit{Step 1. For every $Y\in\cat{$\bm G$-$\bm{\mathcal I}$-SSet}_*$ we have $\Sigma^\bullet Y\in\mathscr H$.}\\
Plugging in the definitions, we have for every $G$-global spectrum $T$ and all finite sets $A,B$ a natural isomorphism
\begin{align*}
(T\extsmash \Sigma^\bullet Y)(A)(B)=T(A)\smashp S^B\smashp Y(B)&\cong S^B\smashp T(A)\smashp Y(B)\\&= \big(G\text-\Sp(\ul{\Sigma^\bullet})(T\extotimes Y)\big)(A)(B)
\end{align*}
where $\extotimes$ denotes the `external tensor product' of a $G$-spectrum with a $G$-$\mathcal I$-simplicial set, i.e.~the $G$-spectrum object in $\cat{$\bm{\mathcal I}$-SSet}_*$ given by $(T\extotimes Y)(A)(B)=T(A)\smashp Y(B)$ with the obvious functoriality. Letting $A$ and $B$ vary, one then easily checks that the above induces a natural isomorphism $T\extsmash \Sigma^\bullet Y\cong G\text-\Sp(\ul{\Sigma^\bullet})(T\extotimes Y)$. However, $G\text-\Sp(\ul{\Sigma^\bullet})$ is left Quillen (say, for the projective model structures) and preserves $G$-global level weak equivalences (Proposition~\ref{prop:suspension-loop-G-gl}), so it is in fact fully homotopical. Thus, it will be enough to show that $\blank\extotimes Y$ sends $G$-global weak equivalences of $G$-spectra to $G$-global weak equivalences in $G\text-\Sp(\glo{GlobalSpaces})$. However, by Proposition~\ref{prop:Delta-star-homotopical} the latter are detected by the diagonal restriction $\Delta^*$, and $\Delta^*(\blank\extotimes Y)=\blank\otimes Y$ preserves $G$-global weak equivalences of $G$-spectra by Proposition~\ref{prop:tensor-homotopical}.

\medskip\noindent\textit{Step 2. For every $Y\in\cat{$\bm G$-$\bm{I}$-SSet}_*$ we have $\Sigma^\bullet Y\in\mathscr H$.}\\
By Lemma~\ref{lemma:ext-smash-easy} and $2$-out-of-$3$, $\mathscr H$ is closed under $G$-global weak equivalences. But \cite[Theorem~1.4.31 and~Proposition~3.2.2]{g-global} yield a $G$-global weak equivalence $\Sigma^\bullet Y\simeq \Sigma^\bullet(\mathcal I\times_IY)$ for some pointed $G$-$\mathcal I$-simplicial set $\mathcal I\times_I Y$, so the claim follows from the previous step.

\medskip\noindent\textit{Step 3. For every finite $G$-set $A$ and every $K\in\cat{$\bm G$-SSet}_*$, $\bm\Sigma(A,\blank)\smashp K\in\mathscr H$.}\\
The endofunctor $S^A\smashp\blank$ of $G\text-\Sp(\glo{GlobalSpectra})$ is homotopical (by the previous lemma) and part of a Quillen equivalence (by stability), so it reflects weak equivalences. Thus, it suffices to show that $S^A\smashp (f\extsmash \bm\Sigma(A,\blank)\smashp K)$ is a $G$-global weak equivalence for every $G$-global weak equivalence $f$. However, this is conjugate to $f\extsmash \big(S^A\smashp\bm\Sigma(A,\blank)\big)\smashp K$, and by a simple Yoneda argument $S^A\smashp\bm\Sigma(A,\blank)\cong\Sigma^\bullet_+ I(A,\blank)$ naturally in $A$ (hence $G$-equivariantly). Thus, the claim follows from the previous step.

\medskip\noindent\textit{Step 4. Let $H$ be a finite group, $\phi\colon H\to G$ a homomorphism, and $A$ a finite faithful $H$-set. Then $\bm\Sigma(A,\blank)\smashp_\phi G_+\smashp K\in\mathscr H$ for every pointed $G$-simplicial set $K$.}\\
Applying the previous step with $G$ replaced by $G\times H$ shows that the functor $\blank\extsmash\bm\Sigma(A,\blank)\smashp G_+\smashp K$ sends $G$-global weak equivalences to $(G\times H)$-global weak equivalences, where $H$ acts on $A$ in the given way, on $G$ from the right via $\phi$, and trivially everywhere else. However, as $A$ is faithful, $H$ acts freely on $\bm\Sigma(A,B)$ outside the basepoint for every finite set $B$. Thus, Lemma~\ref{lemma:g-global-sp-sp-quotient} immediately implies that $(\blank\extsmash\bm\Sigma(A,\blank)\smashp G_+\smashp K)/H\cong \blank\extsmash\bm\Sigma(A,\blank)\smashp_\phi G_+\smashp K$ is homotopical.

\medskip\noindent\textit{Step 5. Every projectively cofibrant $G$-global spectrum is contained in $\mathscr H$.}\\
Fix a $G$-global weak equivalence $f\colon T\to U$, which induces a natural transformation $T\extsmash\blank\Rightarrow U\extsmash\blank$. We want to show that this is a weak equivalence on all projectively cofibrant objects. This is again a standard cell induction argument \cite[Lemma~1.2.64]{g-global}: by the previous step the claim is true for the sources and targets of the standard generating cofibrations, and moreover for any $G$-spectrum $V$ the functor $V\extsmash \blank$ preserves colimits as well as injective cofibrations; the claim therefore follows from Lemma~\ref{lemma:g-global-sp-sp-colim}.

\medskip\noindent\textit{Step 6. All $G$-global spectra belong to $\mathscr H$.}\\
Every $G$-global spectrum is weakly equivalent to a projectively cofibrant one. The claim therefore follows from the previous step together with Lemma~\ref{lemma:ext-smash-easy}.
\end{proof}

\subsubsection{Proof of triviality} Using this, we can prove a key special case of Theorem~\ref{thm:diamond-sp-sp-trivial}:

\begin{prop}\label{prop:diamond-power-corep}
Let $A$ be any finite set, let $X$ be a positively flat $(G\times\Sigma_A)$-spectrum, and let $n>1$. Then $(\bm\Sigma(A,\blank)\extsmash_{\Sigma_A} X)^{\diamond n}/\Sigma_n$ is $G$-globally weakly contractible.
\begin{proof}
Reordering factors we have
\begin{equation*}
(\bm\Sigma(A,\blank)\extsmash X)^{\diamond n}(B)(C)\cong\bm\Sigma(A,B)^{\smashp n}\smashp {X^{\smashp n}}(C)
\end{equation*}
inducing a $G\times(\Sigma_n\wr\Sigma_A)$-equivariant isomorphism of bispectra $(\bm\Sigma(A,\blank)\extsmash X)^{\diamond n}\cong \bm\Sigma(A,\blank)^{\diamond n}\extsmash X^{\smashp n}$. By Proposition~\ref{prop:diamond-sp-trivial}, $\bm\Sigma(A,\blank)^{\diamond n}$ is $\Sigma_n\wr(G\times\Sigma_A)$- and hence also $G\times(\Sigma_n\wr\Sigma_A)$-globally weakly contractible, so Theorem~\ref{thm:ext-smash} shows that $\bm\Sigma(A,\blank)^{\diamond n}\extsmash X^{\smashp n}$ is $G\times(\Sigma_n\wr\Sigma_A)$-globally weakly contractible. Now $\Sigma_A^n$ acts levelwise freely on this (as it already does so on $\bm\Sigma(A,\blank)^{\smashp n}$), whence Lemma~\ref{lemma:g-global-sp-sp-quotient} shows that $(\bm\Sigma(A,\blank)\extsmash_{\Sigma_A}X)^{\diamond n}\cong \bm\Sigma(A,\blank)^{\diamond n}\extsmash_{\Sigma_A^n} X^{\smashp n}$ is $(G\times\Sigma_n)$-globally weakly contractible. But $\bm\Sigma(A,\blank)\extsmash_{\Sigma_A} X$ is levelwise positively flat, so the $\Sigma_n$-action on its $n$-th $\diamond$-power is free by Lemma~\ref{lemma:smash-power-free}, and the proposition follows by another application of Lemma~\ref{lemma:g-global-sp-sp-quotient}.
\end{proof}
\end{prop}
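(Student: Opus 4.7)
The plan is to compute the iterated $\diamond$-power directly, separate out the `sphere part' from the `spectrum part' via the external smash product, and then kill them independently using the triviality results for ordinary spectra together with the homotopical external smash product. Throughout I write $Z\mathrel{:=}\bm\Sigma(A,\blank)\extsmash_{\Sigma_A} X$.

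First I would observe that because $\diamond$ is defined by the levelwise smash product (which is symmetric monoidal in each variable), the iterated $\diamond$-power commutes with the external smash product in a natural, equivariant way: shuffling factors gives an isomorphism
\begin{equation*}
(\bm\Sigma(A,\blank)\extsmash X)^{\diamond n}\;\cong\;\bm\Sigma(A,\blank)^{\diamond n}\extsmash X^{\smashp n}
\end{equation*}
in $\bigl(G\times(\Sigma_n\wr\Sigma_A)\bigr)\text-\Sp(\glo{GlobalSpectra})$; this isomorphism is equivariant for the full wreath structure in the $\Sigma_A$-factors, and it remains an isomorphism after dividing out the (diagonal) action of $\Sigma_A^n\subset\Sigma_n\wr\Sigma_A$, giving $Z^{\diamond n}\cong \bm\Sigma(A,\blank)^{\diamond n}\extsmash_{\Sigma_A^n}X^{\smashp n}$.

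Next I would exploit the known triviality of iterated $\diamond$-products of plain symmetric spectra: Proposition~\ref{prop:diamond-sp-trivial}(\ref{item:dst-iterated}) applied to the $(G\times\Sigma_A)$-spectrum $\bm\Sigma(A,\blank)$ gives that $\bm\Sigma(A,\blank)^{\diamond n}$ is $\Sigma_n\wr(G\times\Sigma_A)$-globally weakly contractible, hence in particular $(G\times(\Sigma_n\wr\Sigma_A))$-globally weakly contractible after restricting along the evident embedding. The homotopicity of the external smash product (Theorem~\ref{thm:ext-smash}, applied with the ambient group enlarged to $G\times(\Sigma_n\wr\Sigma_A)$) then upgrades this to the statement that $\bm\Sigma(A,\blank)^{\diamond n}\extsmash X^{\smashp n}$ is $(G\times(\Sigma_n\wr\Sigma_A))$-globally weakly contractible.

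Finally, the two quotients remain to be handled. The subgroup $\Sigma_A^n\subset\Sigma_n\wr\Sigma_A$ acts levelwise freely (outside basepoints) on $\bm\Sigma(A,\blank)^{\diamond n}$, hence on the external smash product, so Lemma~\ref{lemma:g-global-sp-sp-quotient} (applied to the quotient $G\times(\Sigma_n\wr\Sigma_A)\to G\times\Sigma_n$) shows that $Z^{\diamond n}$ is $(G\times\Sigma_n)$-globally weakly contractible. For the remaining $\Sigma_n$-quotient I would use the positivity hypothesis: since $Z=\bm\Sigma(A,\blank)\extsmash_{\Sigma_A}X$ is levelwise positively flat (it takes the value $\bm\Sigma(A,B)\smashp_{\Sigma_A}X(C)$, a positively flat spectrum in $C$ because $X(C)$ is), Lemma~\ref{lemma:smash-power-free} supplies the free $\Sigma_n$-action on $Z^{\diamond n}$ needed to apply Lemma~\ref{lemma:g-global-sp-sp-quotient} once more. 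The principal obstacle, I expect, is managing the book-keeping: checking that the levelwise-versus-global actions and the reordering isomorphism are equivariant for the intended wreath structure, and confirming that `positively flat' really propagates through $\bm\Sigma(A,\blank)\extsmash_{\Sigma_A}\blank$ as needed for the final freeness input.
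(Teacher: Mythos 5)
Your proposal is correct and follows essentially the same route as the paper's proof: the same shuffling isomorphism $(\bm\Sigma(A,\blank)\extsmash X)^{\diamond n}\cong\bm\Sigma(A,\blank)^{\diamond n}\extsmash X^{\smashp n}$, triviality of $\bm\Sigma(A,\blank)^{\diamond n}$ via Proposition~\ref{prop:diamond-sp-trivial}, upgrading by Theorem~\ref{thm:ext-smash}, and then the two free quotients (by $\Sigma_A^n$ and by $\Sigma_n$, the latter using Lemma~\ref{lemma:smash-power-free}) handled via Lemma~\ref{lemma:g-global-sp-sp-quotient}. No gaps; the book-keeping concerns you flag are exactly the points the paper also treats by direct inspection.
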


In the same way one shows:

\begin{prop}
Let $A,B$ be finite sets, let $X$ be a flat $(G\times\Sigma_A)$-spectrum, and $Y$ a flat $(G\times\Sigma_B)$-spectrum. Then $(\bm\Sigma(A,\blank)\extsmash_{\Sigma_A}X)\diamond (\bm\Sigma(B,\blank)\extsmash_{\Sigma_B} Y)$ is $G$-globally weakly contractible.\qed
\end{prop}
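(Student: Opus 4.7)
The plan is to imitate the proof of the previous proposition, with the $n$-fold smash power of $\bm\Sigma(A,\blank)$ replaced by the mixed product $\bm\Sigma(A,\blank)\diamond\bm\Sigma(B,\blank)$.

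First I would reorder factors to exhibit a $(G\times\Sigma_A\times\Sigma_B)$-equivariant isomorphism of bispectra
\begin{equation*}
(\bm\Sigma(A,\blank)\extsmash X)\diamond(\bm\Sigma(B,\blank)\extsmash Y)\cong \bigl(\bm\Sigma(A,\blank)\diamond\bm\Sigma(B,\blank)\bigr)\extsmash (X\smashp Y),
\end{equation*}
which is immediate upon unravelling the definitions at bispectrum level $(C,D)$: both sides give $\bm\Sigma(A,C)\smashp\bm\Sigma(B,C)\smashp X(D)\smashp Y(D)$ with compatible structure maps. Quotienting by the commuting actions of $\Sigma_A$ and $\Sigma_B$ (which act freely outside the basepoint on $\bm\Sigma(A,\blank)$ and $\bm\Sigma(B,\blank)$, respectively, so also freely on the respective smash factors), this descends to
\begin{equation*}
(\bm\Sigma(A,\blank)\extsmash_{\Sigma_A} X)\diamond(\bm\Sigma(B,\blank)\extsmash_{\Sigma_B} Y)\cong \bigl(\bm\Sigma(A,\blank)\diamond\bm\Sigma(B,\blank)\bigr)\extsmash_{\Sigma_A\times\Sigma_B}(X\smashp Y).
\end{equation*}

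Next, viewing both $\bm\Sigma(A,\blank)$ and $\bm\Sigma(B,\blank)$ as $(G\times\Sigma_A\times\Sigma_B)$-spectra in the evident way (trivial actions on the `wrong' factors), Proposition~\ref{prop:diamond-sp-trivial}(1) shows that $\bm\Sigma(A,\blank)\diamond\bm\Sigma(B,\blank)$ is $(G\times\Sigma_A\times\Sigma_B)$-globally weakly contractible. By homotopy invariance of the external smash product in the first variable (Theorem~\ref{thm:ext-smash}, applied with group $G\times\Sigma_A\times\Sigma_B$), it follows that
\begin{equation*}
\bigl(\bm\Sigma(A,\blank)\diamond\bm\Sigma(B,\blank)\bigr)\extsmash (X\smashp Y)
\end{equation*}
is $(G\times\Sigma_A\times\Sigma_B)$-globally weakly contractible as well.

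Finally, $\Sigma_A\times\Sigma_B$ acts levelwise freely outside the basepoint on the previous display (freeness is inherited from the free actions of $\Sigma_A$ on $\bm\Sigma(A,\blank)$ and $\Sigma_B$ on $\bm\Sigma(B,\blank)$), so Lemma~\ref{lemma:g-global-sp-sp-quotient}, applied to the projection $\alpha\colon G\times\Sigma_A\times\Sigma_B\to G$, shows that the quotient is $G$-globally weakly contractible, which by the isomorphism established in the first step is precisely the claim. The only mildly non-formal ingredient is the identification of $\Sigma_A\times\Sigma_B$ as acting freely outside the basepoint, but this is just a bookkeeping exercise; the real content of the argument has already been packaged into Proposition~\ref{prop:diamond-sp-trivial}, Theorem~\ref{thm:ext-smash}, and Lemma~\ref{lemma:g-global-sp-sp-quotient}.
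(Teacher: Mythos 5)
Your proposal is correct and is essentially the paper's own argument: the paper proves this proposition ``in the same way'' as Proposition~\ref{prop:diamond-power-corep}, i.e.\ exactly via the reordering isomorphism $(\bm\Sigma(A,\blank)\extsmash X)\diamond(\bm\Sigma(B,\blank)\extsmash Y)\cong(\bm\Sigma(A,\blank)\diamond\bm\Sigma(B,\blank))\extsmash(X\smashp Y)$, contractibility of the first factor from Proposition~\ref{prop:diamond-sp-trivial}, Theorem~\ref{thm:ext-smash}, and the free-quotient Lemma~\ref{lemma:g-global-sp-sp-quotient}. One cosmetic slip: at bispectrum level $(C,D)$ the inner smash is the Day convolution, so the common value is $\bm\Sigma(A,C)\smashp\bm\Sigma(B,C)\smashp(X\smashp Y)(D)$ rather than $\bm\Sigma(A,C)\smashp\bm\Sigma(B,C)\smashp X(D)\smashp Y(D)$; this does not affect the displayed isomorphism or the rest of the argument.
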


From this we can immediately easily deduce the following slight strenghtening of the first part of Theorem~\ref{thm:diamond-sp-sp-trivial}:

\begin{prop}\label{prop:diamond-sp-sp-trivial-first-half}
Let $X,Y\in G\text-\Sp(\glo{GlobalSpectra})$ and assume at least one of them is levelwise flat. Then $X\diamond Y$ is $G$-globally weakly contractible.
\begin{proof}
We first observe the following closure properties:
\begin{claim*}
For any (levelwise) flat $X\in G\text-\Sp(\glo{GlobalSpectra})$ the class of objects $Y$ for which $X\diamond Y$ is weakly contractible is closed under (a) filtered colimits \emph{and} (b) pushouts along levelwise flat cofibrations.
\begin{proof}
We will prove the second statement, the argument for the first one being similar. Consider a pushout in $G\text-\Sp(\glo{GlobalSpectra})$ as on the left
\begin{equation*}
\begin{tikzcd}
A\arrow[d]\arrow[dr, phantom, "\ulcorner"{very near end}]\arrow[r, "i"] & B\arrow[d]\\
C\arrow[r]&D
\end{tikzcd}
\qquad\qquad
\begin{tikzcd}
X\diamond A\arrow[dr, phantom, "\ulcorner"{very near end}]\arrow[d]\arrow[r, "X\diamond i"] & X\diamond B\arrow[d]\\
X\diamond C\arrow[r]& X\diamond D
\end{tikzcd}
\end{equation*}
such that $i$ is levelwise flat. Applying $X\diamond\blank$ to this yield a pushout as on the right (as $X\diamond\blank$ is cocontinuous), and $X\diamond i$ is a levelwise flat cofibration (in particular an injective cofibration). Thus, if $X\diamond A,X\diamond B,X\diamond D$ are weakly contractible then so is $X\diamond D$ by Lemma~\ref{lemma:g-global-sp-sp-colim}.
\end{proof}
\end{claim*}

Using this, the previous proposition immediately proves the special case that $Y$ is flat and $X=\bm\Sigma(A,\blank)\extsmash_{\Sigma_A}Z$ for some flat $Z$. But for any flat $Y$, the class of $X$ for which $X\diamond Y$ is weakly contractible is again closed under filtered colimits and pushouts along flat cofibrations (by symmetry), proving the case that both $X$ and $Y$ are flat. As for any levelwise flat $Z$ both $\blank\diamond Z$ and $Z\diamond\blank$ preserve $G$-global \emph{level} weak equivalences, the claim now follows by cofibrant replacement.
\end{proof}
\end{prop}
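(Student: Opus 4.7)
The strategy is to bootstrap from the preceding proposition---which handles the ``cellular'' base case $(\bm\Sigma(A,\blank)\extsmash_{\Sigma_A}X)\diamond(\bm\Sigma(B,\blank)\extsmash_{\Sigma_B}Y)$ for flat $X,Y$---via a double cell induction to the case of both factors projectively level cofibrant, and then to weaken ``both levelwise flat'' to ``at least one levelwise flat'' by a cofibrant replacement argument.

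First, for a fixed levelwise flat $X$, I would record two closure properties of the class $\mathscr H_X$ of bispectra $Y$ such that $X\diamond Y$ is $G$-globally weakly contractible: closure under filtered colimits and closure under pushouts along levelwise flat cofibrations. Both are immediate from cocontinuity of $\diamond$ in each variable together with Lemma~\ref{lemma:g-global-sp-sp-colim}; to invoke the latter one needs that $X\diamond i$ is itself a levelwise flat (in particular injective) cofibration whenever $i$ is, and this reduces to the usual pushout-product axiom applied on each $(G\times\Sigma_B)$-global flat model structure.

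Second, unpacking the generating cofibrations of the $G$-global projective level model structure from Proposition~\ref{prop:level-model-structures}, each such generator has source and target of the cellular form $\bm\Sigma(B,\blank)\extsmash_{\Sigma_B}W$ with $W$ a flat $(G\times\Sigma_B)$-spectrum. Combining these base contractibilities (provided by the preceding proposition) with the closure properties, a standard cell induction \cite[Lemma~1.2.64]{g-global} shows $\mathscr H_X$ contains every projectively level cofibrant bispectrum, provided $X$ itself is of the cellular form $\bm\Sigma(A,\blank)\extsmash_{\Sigma_A}Z$. By symmetry of $\diamond$, running the same induction in the $X$-variable then yields weak contractibility of $X\diamond Y$ whenever \emph{both} $X$ and $Y$ are projectively level cofibrant.

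Finally, I would observe that for any levelwise flat $Z$ the functors $\blank\diamond Z$ and $Z\diamond\blank$ preserve $G$-global \emph{level} weak equivalences---on each $(G\times\Sigma_B)$-level this is a consequence of the Flatness Theorem (Proposition~\ref{prop:flatness-theorem}). Hence a projectively level cofibrant replacement of any levelwise flat bispectrum transports weak contractibility of $\blank\diamond\blank$ in either slot, reducing the ``at least one levelwise flat'' case to the ``both projectively level cofibrant'' case treated above. The main obstacle is not in this closure argument but in the preceding cellular base case: that in turn rests on the highly nontrivial homotopy invariance of the external smash product (Theorem~\ref{thm:ext-smash}), whose proof required a delicate cell induction passing through suspension spectra of $\mathcal I$-simplicial sets to access the homotopical properties of the levelwise smash with a flat object. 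Given that input, together with the triviality of $\diamond$-powers of ordinary spectra (Proposition~\ref{prop:diamond-sp-trivial}), the present proposition is essentially formal.
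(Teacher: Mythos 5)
Your argument is correct and follows essentially the same route as the paper's proof: the same closure claim via Lemma~\ref{lemma:g-global-sp-sp-colim}, a double cell induction anchored in the preceding cellular proposition, and a final reduction by cofibrant replacement using that $\diamond$ with a levelwise flat object preserves level weak equivalences (Flatness Theorem). The only (immaterial) difference is that you induct over projectively cofibrant bispectra while the paper uses cofibrant objects of the flat model structure; both classes of cells have levelwise flat sources and targets, so the argument goes through identically.
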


\begin{proof}[Proof of Theorem~\ref{thm:diamond-sp-sp-trivial}]
The first statement is a special case of the previous proposition. For the second statement, we observe that the class of \emph{levelwise positively flat} $X\in G\text-\Sp(\glo{GlobalSpectra})$ such that $X^{\diamond n}/\Sigma_n$ is $G$-globally weakly contractible contains $0$ and is closed under filtered colimits. To complete the proof it is then enough to show that it is also closed under pushouts along generating cofibrations, for which we more generally consider any pushout
\begin{equation}\label{diag:po-sp-sp-trivial}
\begin{tikzcd}
\bm\Sigma(A,\blank)\extsmash_{\Sigma_A} X\arrow[r, "{\bm\Sigma(A,\blank)\extsmash_{\Sigma_A}i}"]\arrow[d] &[3em] \bm\Sigma(A,\blank)\extsmash_{\Sigma_A} Y\arrow[d]\\
Z\arrow[r, "j"'] & P\arrow[from=ul,phantom, "\ulcorner"{very near end,xshift=.75em,yshift=2pt}]
\end{tikzcd}
\end{equation}
such that $i\colon X\to Y$ is a positive flat cofibration of $(G\times\Sigma_A)$-spectra and $Z$ is levelwise positively flat with $Z^{\diamond n}/\Sigma_n\simeq0$ for all $n>1$; we will prove that also $P^{\diamond n}/\Sigma_n\simeq0$ for all $n>1$. For this, we apply \cite[Theorem~22]{sym-powers} (to the larger category of all $\cat{SSet}$-enriched functors) yielding factorizations
\begin{equation*}
Z^{\diamond n}/\Sigma_n=Q^n_0\to Q^n_1\to\cdots\to Q^n_{n-1}\to Q^n_n=P^{\diamond n}/\Sigma_n
\end{equation*}
of $j^{\diamond n}/\Sigma_n$ for all $n$ such that we have for $k<n-1$ a pushout
\begin{equation}\label{diag:qkn-po}
\begin{tikzcd}
Z^{\diamond (n-k)}/\Sigma_{n-k}\diamond Q^k_{k-1} \arrow[r, "Z^{\diamond(n-k)}/\Sigma_{n-k}\diamond j^{\ppo k}/\Sigma_k"]\arrow[d] &[6.6em] Z^{\diamond (n-k)}/\Sigma_{n-k}\diamond P^{\diamond k}/\Sigma_k\arrow[d]\\
Q^n_k\arrow[r] & Q^n_{k+1}\arrow[from=ul,phantom, "\ulcorner"{very near end, xshift=1.7em,yshift=2pt}]
\end{tikzcd}
\end{equation}
(where for $k=0$ the empty $\diamond$-power has to be interpreted as a formal unit) while $Q^n_{n-1}\to Q^n_n$ recovers $j^{\ppo n}/\Sigma_n$.

\begin{claim*}
For all $0\le k<n$ the map $Q^n_k\to Q^{n}_{k+1}$ is a levelwise flat cofibration.
\begin{proof}
For $k=n-1$ this is an instance of the strong commutative monoid axiom for the (say, non-equivariant) positive flat model structure on symmetric spectra.

For $k<n-1$, on the other hand, we observe that the top arrow in the pushout square $(\ref{diag:qkn-po})$ is a positive levelwise flat cofibration by the above special case and levelwise flatness of $Z^{\diamond(n-k)}/\Sigma_{n-k}$. The claim follows immediately.
\end{proof}
\end{claim*}
By (ordinary) stability, it therefore suffices to show that the $1$-categorical cofiber of $Q^n_k\to Q^n_{k+1}$ is trivial for all $0\le k<n$. However by the aforementioned \cite[Theorem~22]{sym-powers}, this cofiber is isomorphic to $Z^{\diamond(n-k)}/\Sigma_{n-k}\diamond \cofib(j)^{\diamond k}/\Sigma_{k}$. For $0<k<n$ this is $G$-globally weakly contractible by Proposition~\ref{prop:diamond-sp-sp-trivial-first-half}, while for $k=0$ this is $G$-globally weakly contractible by assumption on $Z$. Finally, if $k=n$, then we observe that $\cofib(j)\cong \bm\Sigma(A,\blank)\extsmash_{\Sigma_A}\cofib(i)$ because of the pushout $(\ref{diag:po-sp-sp-trivial})$; as $\cofib(i)$ is levelwise positive flat, the claim now follows from Proposition~\ref{prop:diamond-power-corep}.

Finally, for the third statement, we simply argue as before to see that for any levelwise positive flat cofibration $i$ the pushout product $i^{\ppo n}/\Sigma_n$ is again a levelwise positive flat cofibration, and that its ($1$-categorical) cofiber agrees with $\cofib(i)^{\diamond n}/\Sigma_n$, which is $G$-globally weakly contractible by the second statement.
\end{proof}

\frenchspacing
\bibliographystyle{amsalpha}
\bibliography{literature.bib}
\end{document}